\documentclass[12pt,reqno]{amsart}
\usepackage{amsmath}
\usepackage{latexsym,amssymb,eucal}
\usepackage{mathrsfs}
\usepackage{enumerate}
\usepackage{bm}
\allowdisplaybreaks[1]

\pagestyle{plain}

\setlength{\topmargin}{0in}
\setlength{\oddsidemargin}{0.35in}
\setlength{\evensidemargin}{0.35in}
\setlength{\textwidth}{5.7in}
\setlength{\textheight}{8.7in}
\setcounter{tocdepth}{2}

\tolerance=2000
\hbadness=2000
\vbadness=2000

\newtheorem{thm}{Theorem}[section]
\newtheorem{lem}[thm]{Lemma}
\newtheorem{cor}[thm]{Corollary}
\newtheorem{prop}[thm]{Proposition}
\newtheorem{prob}[thm]{Problem}

\newtheorem{conj}[thm]{Conjecture}

\newtheorem{thme}[]{Theorem}
\newtheorem{coro}[thme]{Corollary}

\newtheorem{clm}[]{Claim}
\newtheorem*{clam}{Claim}

\theoremstyle{definition}

\newtheorem{defn}[thm]{Definition}
\newtheorem{rem}[thm]{Remark}
\newtheorem{exam}[thm]{Example}

\numberwithin{equation}{section}

\renewcommand{\labelenumi}{$(\arabic{enumi})$}

\newcommand{\id}{\mathrm{id}}
\newcommand{\Ad}{\mathrm{Ad}\,}

\newcommand{\Tr}{\mathrm{Tr}}

\newcommand{\C}{{\mathbb{C}}}
\newcommand{\R}{{\mathbb{R}}}
\newcommand{\Z}{{\mathbb{Z}}}
\newcommand{\N}{{\mathbb{N}}}
\newcommand{\Q}{{\mathbb{Q}}}
\newcommand{\T}{{\mathbb{T}}}

\DeclareMathOperator{\Aut}{Aut}

\DeclareMathOperator{\Cnt}{Cnt}

\DeclareMathOperator{\Int}{Int}
\DeclareMathOperator{\oInt}{\overline{Int}}
\DeclareMathOperator{\Log}{Log}
\DeclareMathOperator{\mo}{mod}
\DeclareMathOperator{\pr}{pr}

\DeclareMathOperator{\supp}{supp}
\DeclareMathOperator{\Sp}{Sp}

\DeclareMathOperator{\vN}{vN}

\def\cA{\mathcal{A}}

\def\cF{\mathcal{F}}

\def\cH{\mathcal{H}}
\def\cK{\mathcal{K}}
\def\cM{\mathcal{M}}

\def\tcM{\widetilde{\mathcal{M}}}
\def\tcN{\widetilde{\mathcal{N}}}
\def\cN{\mathcal{N}}
\def\cO{\mathcal{O}}
\def\cP{\mathcal{P}}
\def\tcP{\widetilde{\mathcal{P}}}
\def\cQ{\mathcal{Q}}
\def\tcQ{\widetilde{\mathcal{Q}}}
\def\cR{\mathcal{R}}

\def\sB{\mathscr{B}}
\def\sC{\mathscr{C}}
\def\sE{\mathscr{E}}
\def\sG{\mathscr{G}}

\def\sJ{\mathscr{J}}
\def\sN{\mathscr{N}}

\def\sT{\mathscr{T}}

\def\tJ{\widetilde{J}}

\def\al{\alpha}
\def\cal{{\check{\al}}}
\def\hal{{\hat{\al}}}
\def\tal{{\widetilde{\alpha}}}
\def\be{\beta}
\def\hbe{\hat{\be}}
\def\tbe{{\widetilde{\beta}}}
\def\ga{\gamma}
\def\hga{\hat{\gamma}}

\def\de{\delta}
\def\ka{\kappa}
\def\la{\lambda}

\def\vep{\varepsilon}
\def\ph{{\phi}}
\def\tph{\widetilde{\phi}}
\def\ps{{\psi}}

\def\vph{\varphi}
\def\hvph{{\hat{\varphi}}}

\def\om{\omega}
\def\tpi{\widetilde{\pi}}
\def\trho{\widetilde{\rho}}
\def\si{\sigma}
\def\ta{\tau}

\def\th{\theta}

\def\De{\Delta}
\def\Ga{\Gamma}
\def\La{\Lambda}
\def\Om{\Omega}
\def\Ph{\Phi}
\def\Ps{\Psi}
\def\Th{\Theta}

\def\lG{L^\infty(G)}
\def\Meq{\cM_{\om,\al}}
\def\Mequ{\cM_\al^\om}
\def\tcN{\widetilde{\cN}}

\def\col{\colon}
\def\nin{\notin}
\def\ra{\rightarrow}

\def\subs{\subset}
\def\ovl{\overline}
\def\oti{\otimes}
\def\rti{\rtimes}

\def\btr{{\bm 1}}

\title{Rohlin flows on von Neumann algebras}
\author[T. Masuda]{Toshihiko Masuda$^1$}
\address{$^1$ 
Graduate School of Mathematics, Kyushu University,
Fukuoka\\ \indent \mbox{819-0395},
JAPAN}
\email{masuda@math.kyushu-u.ac.jp}

\author[R. Tomatsu]{Reiji Tomatsu$^2$}
\address{$^2$
Department of Mathematics, Hokkaido University,
Hokkaido\\ \indent \mbox{060-0810},
JAPAN}
\email{tomatsu@math.sci.hokudai.ac.jp}

\subjclass[2000]{Primary 46L40; Secondary 46L55}

\begin{document}
\maketitle

\begin{abstract}
We will introduce the Rohlin property
for flows on von Neumann algebras
and classify them up to strong cocycle conjugacy.
This result provides alternative approaches
to some preceding results
such as Kawahigashi's classification of
flows on the injective type II$_1$ factor,
the classification of injective type III factors
due to Connes, Krieger and Haagerup
and
the non-fullness of type III$_0$ factors.
Several concrete examples are also studied.
\end{abstract}

\section{Introduction}
In this paper,
we study flows on von Neumann algebras.
Our purpose is to classify
highly outer flows called Rohlin flows.

A flow, that is,
a one-parameter automorphism group,
appears in many scenes
in the theory of operator algebras,
and it has
attracted attention
among operator algebraists.
We have known some examples of
classification of non-periodic flows
on injective factors.
In \cite{Ha-III1},
Haagerup has solved the Connes' bicentralizer
problem for injective type III$_1$ factors.
As an important consequence,
the uniqueness of the injective type III$_1$
factor follows.
In other words,
trace scaling flows on the injective type II$_\infty$
factor are (cocycle) conjugate to one another
if their Connes-Takesaki modules are equal.
In the type II$_1$ setting,
Kawahigashi has studied several kinds of flows
on the injective type II$_1$ factor
\cite{Kw-cent,Kw-irrat,Kw-cartan,Kw-Cotriv}.
Among them,
he has obtained the classification
of flows on the injective type II$_1$ factor
such that
they have the full Connes spectrum
and
fix a Cartan subalgebra.

We can expect that
these examples may possess some sort of right ``outerness'',
and consequently
they are classifiable.
Thus it is a natural attempt to give a comprehensive
method of classifying flows on von Neumann algebras.
In classification of group actions,
``outerness'',
which, to be precise, includes the central freeness,
is considered as an essentially important notion.
In this point,
the usual pointwise outerness
is known to be not so sufficiently strong
that we can classify flows up to cocycle conjugacy.
Indeed,
Kawahigashi has found 
a family of non-cocycle conjugate outer flows 
on the injective
factor of type II$_1$
\cite{Kw-Cotriv}.
Thus
it is conceivable
that
both
pointwise outerness
and pointwise central non-triviality
are not right notions of ``outerness'' for flows.

One formulation of ``outerness''
is
to observe how non-trivially
a given group is acting on
a central sequence algebra.
This is the case
for
actions of discrete amenable groups
\cite{Co-outer,J-act,Kat-S-T,Ocn-act}
or
duals of compact groups \cite{Ma-T,Ma-TIII}.
A flow, however,
causes a serious problem
concerning discontinuity
on
a central sequence algebra $\cM_\om$.
One prescription of that
is to focus on
the much smaller subalgebra $\cM_{\om,\al}$,
which
consists of
$(\al,\om)$-equicontinuous sequences
(see Definition \ref{defn:al-om}).
Then the Rohlin property,
which
has been introduced by Kishimoto
to flows on C$^*$-algebras \cite{Kishi-CMP}
and later by Kawamuro
to flows on 
finite von Neumann algebras \cite{Kawamuro-RIMS},
can be a candidate of
``outerness''.
This property means
that
we can find out a unitary eigenvector
in $\cM_{\om,\al}$ with the eigenvalue $p$
for any $p\in\R$.

Assuming the Rohlin property,
we will prove the following
main theorem of this paper
(Theorem \ref{thm:class2}).

\begin{thme}
\label{thme:main}
Let $\al,\be$ be Rohlin flows
on a von Neumann algebra with separable predual.
Then
$\al$ and $\be$ are strongly cocycle conjugate
if and only if
$\al_t\be_{-t}$ is approximately inner for all $t\in\R$.
\end{thme}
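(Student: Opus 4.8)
\emph{The ``only if'' direction is formal.} If $\al$ and $\be$ are strongly cocycle conjugate, choose $\th\in\oInt(\cM)$ and a $\be$-cocycle $(u_t)_{t\in\R}$ with $\th\al_t\th^{-1}=\Ad(u_t)\be_t$ for all $t$. Then
\[
\al_t\be_{-t}=\th^{-1}\,\Ad(u_t)\,\bigl(\be_t\th\be_{-t}\bigr),
\]
and since $\oInt(\cM)$ is a normal subgroup of $\Aut(\cM)$ containing $\Int(\cM)$, the right-hand side is a product of three members of $\oInt(\cM)$; hence $\al_t\be_{-t}\in\oInt(\cM)$ for every $t$.

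\emph{The ``if'' direction is the real content}, and I would prove it by an Evans--Kishimoto-type intertwining argument in which the Rohlin property of each flow supplies the approximate cohomology vanishing needed at every step. The first ingredient is a \textbf{stability lemma} for Rohlin flows: given a Rohlin flow $\ga$, a tolerance $\vep$ and finite data (a finite subset of $\cM_*$ together with a compact parameter interval), there is $\de>0$ so that any $\ga$-cocycle which is $\de$-close to the trivial one on that data agrees, to within $\vep$, with a coboundary $w^{*}\ga_{\cdot}(w)$ for some unitary $w$ that is $\vep$-close to $1$. This is proved by feeding the defect of the cocycle into a family of Rohlin eigenvector-unitaries for $\ga$ --- whose eigenvalue parameter $p$ ranges over all of $\R$ --- and solving the resulting linearised cocycle equation inside the equicontinuous central sequence algebra $\cM_{\om,\ga}$; this is precisely why one must work with $\cM_{\om,\ga}$ rather than the discontinuous $\cM_\om$. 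A companion statement, proved the same way, allows one to absorb into a $\ga$-cocycle any automorphism close to $\id$ that approximately commutes with $\ga$.

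\emph{With these in hand, the construction is the usual back-and-forth.} Fix an increasing sequence of finite subsets of $\cM_*$ with dense union, an increasing sequence of compact intervals exhausting $\R$, and a summable sequence of tolerances. One builds inductively unitaries $w_n\in\cM$, approximately inner automorphisms $\th_n=\Ad(w_n)\circ\cdots\circ\Ad(w_1)$, and $\be$-cocycles $(u^{(n)}_t)$ so that $\th_n\al_t\th_n^{-1}$ and $\Ad(u^{(n)}_t)\be_t$ agree, to within the $n$-th tolerance, on the $n$-th block of data. The inductive step alternates a move governed by the Rohlin property of $\be$ --- in which the ($t$-dependent, non-cocyclic) approximately inner implementers of $\al_t\be_{-t}$ are converted into a genuine correction of the $\be$-cocycle, pushing $\be$ up to cocycle perturbation toward the current $\th_{n-1}\al\th_{n-1}^{-1}$ --- and a move governed by the Rohlin property of $\al$, pushing $\al$ back by a small $\Ad(w_n)$; the stability lemma keeps each correction small and the errors summable. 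In the limit, $\th:=\lim_n\th_n$ exists in $\oInt(\cM)$ (with $\th^{-1}$ also convergent), the $(u^{(n)}_t)$ converge uniformly on compacta to a $\be$-cocycle $(u_t)$, and $\th\al_t\th^{-1}=\Ad(u_t)\be_t$, i.e.\ $\al$ and $\be$ are strongly cocycle conjugate.

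\emph{The main obstacle is the move governed by the Rohlin property of $\be$.} The hypothesis is only that $\al_t\be_{-t}\in\oInt(\cM)$ \emph{pointwise} in $t$, and the unitaries approximately implementing $\al_t\be_{-t}$ carry no compatibility as $t$ varies, whereas the intertwining needs a genuine $\be$-cocycle with estimates uniform on a compact parameter interval. Manufacturing such a cocycle --- while controlling the discontinuity phenomena that forced the passage to $\cM_{\om,\al}$ and keeping all perturbations summable --- is the technical heart of the argument, and is exactly where the full strength of the Rohlin property (unitary eigenvectors of \emph{every} eigenvalue $p\in\R$ in $\cM_{\om,\al}$) is indispensable.
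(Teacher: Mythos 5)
Your overall strategy -- Evans--Kishimoto intertwining powered by an approximate $1$-cohomology vanishing lemma proved with Rohlin unitaries inside $\cM_{\om,\ga}$ -- is exactly the route the paper takes for its core case, and your "only if" direction is fine. But there are two genuine gaps. First, the theorem is stated for an arbitrary von Neumann algebra with separable predual, and your sketch implicitly treats $\cM$ as if it were a factor. The Rohlin averaging that underlies both the stability lemma and the cocycle-manufacturing step requires building a tower $e(\la)$ over a circle $[0,S)$ with $\ta^\om$ faithful on $W^*(v)$, and this forces one to first disintegrate over $Z(\cM)^\al$ and work with centrally ergodic flows, choosing $S$ with $(2\pi/S)\Z\cap\Sp_d(\al|_{Z(\cM)})=\{0\}$. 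Moreover, the centrally ergodic case with $\Sp_d(\al|_{Z(\cM)})=\R$ admits no such $S$ at all; there the paper abandons the intertwining argument entirely and instead identifies $\{\cM,\al\}$ with $\{\cM^\al\oti L^\infty(\R),\id\oti\Ad\la\}$ and invokes groupoid-cohomology results to produce $\th$ and the cocycle directly. One then needs a separate argument (measurable selection of fiberwise intertwiners) to reassemble the fiberwise strong cocycle conjugacies into a global one.

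Second, the step you correctly single out as the technical heart -- converting the pointwise, non-cocyclic approximate implementers of $\al_t\be_{-t}$ into a genuine cocycle with estimates uniform on a compact interval -- is left unproved, and it is not a routine consequence of the stability lemma you describe. The paper's mechanism is a $2$-cohomology vanishing theorem for Borel cocycle actions of $\R$ with the Rohlin property: one first selects a Borel unitary path $u(t)$ with $\Ad u(t)\circ\al_t$ close to $\be_t$ (Lusin-type selection), observes that the resulting perturbation is only a Borel cocycle action with a $2$-cocycle $c(t,s)$ close to $1$ and approximately central, and then trivializes $c$ by averaging $c(t,\cdot-t)^*$ over a Rohlin tower of large period $S$, with quantitative control on both $\|v(t)-1\|_\vph^\sharp$ and $\|[v(t),\vph]\|$ so that the corrections stay summable through the induction. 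Without this $2$-cohomology input (or an equivalent device) the intertwining scheme cannot get started, so as written the proposal identifies the obstacle but does not overcome it.
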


We emphasize that
either the factoriality or the injectivity
are not required in our assumption.
For injective factors,
we obtain the following result
in terms of the Connes-Takesaki module
(Corollary \ref{cor:class-inj}).

\begin{coro}
Let $\al,\be$ be Rohlin flows
on an injective factor.
Then
$\al$ and $\be$ are strongly cocycle
conjugate
if and only if
$\mo(\al_t)=\mo(\be_t)$
for all $t\in\R$.
\end{coro}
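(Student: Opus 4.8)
The plan is to derive the corollary from the main theorem (Theorem~\ref{thme:main}) by translating the condition ``$\al_t\be_{-t}$ is approximately inner for every $t$'' into the language of the Connes--Takesaki module. Recall that the module is a continuous group homomorphism $\mo\colon\Aut(M)\to\Aut(C)$, where $C$ is the center of the core of $M$ equipped with its trace-scaling flow, and that $\mo$ is trivial on $\Int(M)$, hence on $\overline{\Int}(M)$. Since $t\mapsto\al_t$ and $t\mapsto\be_t$ are flows and $\mo$ is a homomorphism, $\mo(\al_t\be_{-t})=\mo(\al_t)\,\mo(\be_t)^{-1}$, so the condition ``$\mo(\al_t)=\mo(\be_t)$ for all $t$'' is equivalent to ``$\al_t\be_{-t}\in\ker(\mo)$ for all $t$''.

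The ``only if'' direction then requires nothing beyond factoriality: if $\al$ and $\be$ are strongly cocycle conjugate, Theorem~\ref{thme:main} shows each $\al_t\be_{-t}$ is approximately inner, hence lies in $\ker(\mo)$, which gives $\mo(\al_t)=\mo(\be_t)$ for all $t$. For the ``if'' direction I would use injectivity through the identification
\[
\overline{\Int}(M)=\ker(\mo).
\]
For $M$ of type III this is precisely the classification theory of injective type III factors: Connes for type III$_\lambda$ with $0<\lambda<1$, the Connes--Takesaki--Krieger description of the flow of weights for type III$_0$, and Haagerup's solution of the bicentralizer problem for type III$_1$; the same equality holds for the injective type II$_\infty$ factor, and when $M$ is of type II$_1$ the module is trivial and one instead verifies directly, from the Rohlin property, that the time-$t$ maps of a Rohlin flow on the injective type II$_1$ factor are approximately inner. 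Granting this, the hypothesis $\mo(\al_t)=\mo(\be_t)$ for all $t$ puts every $\al_t\be_{-t}$ into $\ker(\mo)=\overline{\Int}(M)$, and Theorem~\ref{thme:main} then produces a strong cocycle conjugacy between $\al$ and $\be$.

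Thus, once Theorem~\ref{thme:main} is available, the whole argument is bookkeeping with the module except for the single deep input $\ker(\mo)=\overline{\Int}(M)$ for injective factors, and this is the step I expect to be the genuine obstacle — or, for a reader content to cite, the genuine citation. I would also note that, if one wishes to keep the development self-contained, this equality can itself be recovered from the Rohlin-flow technology of the present paper (one first has to exhibit enough Rohlin flows to realize all prescribed modules), so that the corollary and the classical classification of injective factors appear as two aspects of the same result.
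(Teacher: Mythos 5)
Your argument is correct and is essentially the paper's own proof: the corollary is obtained from Theorem \ref{thme:main} together with the single input $\oInt(\cM)=\ker(\mo)$ for injective factors, which the paper likewise takes from Kawahigashi--Sutherland--Takesaki (via Theorem \ref{thm:genKST} for the non-factor/disintegration bookkeeping). The only cosmetic remark is that in the type II$_1$ case you do not need the Rohlin property to see approximate innerness, since $\Aut(\cR_0)=\oInt(\cR_0)$ already holds by Connes.
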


It turns out that
if
a flow $\al$
on the injective type II$_1$ factor
fixes a Cartan subalgebra
and the Connes spectrum
$\Ga(\al)$ equals $\R$,
then
$\al$ has the Rohlin property.
Thus Theorem \ref{thme:main} implies
the following Kawahigashi's result
(Theorem \ref{thm:Kaw-Cartan}).

\begin{thme}[Kawahigashi]
Let $\al$ be a flow on the injective
type II$_1$ factor $\cM$.
If $\al$ pointwise
fixes a Cartan subalgebra of $\cM$
and $\Ga(\al)=\R$,
then $\al$ is cocycle conjugate
to a product type flow,
and absorbs any product type flows.
Thus such action $\al$ is unique
up to cocycle conjugacy.
\end{thme}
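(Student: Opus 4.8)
The plan is to reduce everything to Theorem~\ref{thme:main}. There are two inputs. First, every automorphism of the injective type~II$_1$ factor $\cM$ is approximately inner, that is, $\oInt(\cM)=\Aut(\cM)$; hence for \emph{any} two flows $\al,\be$ on $\cM$ the condition ``$\al_t\be_{-t}\in\oInt(\cM)$ for all $t$'' of Theorem~\ref{thme:main} holds automatically, so that any two Rohlin flows on $\cM$ are strongly cocycle conjugate. Second, the criterion recalled in the Introduction: a flow on $\cM$ pointwise fixing a Cartan subalgebra and with full Connes spectrum $\Ga=\R$ is a Rohlin flow. Combining these, the given $\al$ is a Rohlin flow, and such flows are \emph{unique} up to strong cocycle conjugacy.

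It remains to exhibit the common model of product type. Fix a countable dense subset $\{q_1,q_2,\dots\}\subset\R$ and a partition $\N=\bigsqcup_{j\ge1}S_j$ into infinite subsets, and choose integers $k_n\to\infty$. For $n\in S_j$ set $u^{(n)}_t=\diag\!\big(e^{iq_jt},e^{2iq_jt},\dots,e^{ik_nq_jt}\big)\in M_{k_n}(\C)$, let $\cM=\bigotimes_{n\ge1}(M_{k_n}(\C),\tr_{k_n})$ (the injective type~II$_1$ factor), $\ga_t=\bigotimes_n\Ad(u^{(n)}_t)$, and let $A=\bigotimes_nD_{k_n}$ be the diagonal Cartan subalgebra, which $\ga$ fixes pointwise. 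If $w^{(n)}\in M_{k_n}(\C)$ denotes the cyclic shift unitary ($w^{(n)}e_l=e_{l+1}$ with indices mod $k_n$), a direct computation in the $n$-th leg gives $\sup_{t}\|\Ad(u^{(n)}_t)(w^{(n)})-e^{iq_jt}w^{(n)}\|_2\le 2k_n^{-1/2}$ for $n\in S_j$. Given $p\in\R$, pick $j_m$ with $q_{j_m}\to p$ and $n(m)\in S_{j_m}$ with $n(m)\to\infty$; then $(w^{(n(m))})_m$ is a central sequence of unitaries in $\cM$ with $\sup_t\|\ga_t(w^{(n(m))})-e^{iq_{j_m}t}w^{(n(m))}\|_2\to0$, hence it defines a unitary in $\cM_{\om,\ga}$ that is a $\ga$-eigenvector for the eigenvalue $p$. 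Thus $\ga$ has the Rohlin property, and by the first paragraph $\al$ is strongly (in particular, cocycle) conjugate to the product type flow $\ga$.

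For the absorption statement, let $\ga'$ be any product type flow, say on an injective type~II$_1$ factor $\cN$. Then $\cM\oti\cN$ is again the injective type~II$_1$ factor, and the unitaries $w^{(n(m))}\oti1$ show that $\ga\oti\ga'$ still has the Rohlin property; by the first paragraph $\ga\oti\ga'$ is strongly cocycle conjugate to $\ga$. Since strong cocycle conjugacy is stable under tensoring, $\al\oti\ga'$ is strongly cocycle conjugate to $\ga\oti\ga'$, hence to $\ga$, hence to $\al$; that is, $\al$ absorbs $\ga'$. Together with the uniqueness noted above, this proves the theorem.

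The real content is hidden in the Rohlin criterion invoked in the first paragraph, and establishing it is the main obstacle. The natural approach is to realize $\cM$ as (a twisted version of) the von Neumann algebra of the unique hyperfinite ergodic type~II$_1$ equivalence relation $\sS$ on a standard probability space $(X,\mu)$, with the fixed Cartan subalgebra identified with $L^\infty(X)$; then $\al$ corresponds to a norm-continuous one-parameter family of $\T$-valued $1$-cocycles on $\sS$, with an $\R$-valued ``infinitesimal'' cocycle $\xi$, and the condition $\Ga(\al)=\R$ translates into $\xi$ having dense essential range on every $\sS$-invariant Borel subset of positive measure. A Rohlin-type exhaustion argument then produces, for a prescribed $p\in\R$ and $\vep>0$, bisections of $\sS$ on which $\xi$ differs from $p$ by less than $\vep$ and whose domains exhaust $X$; the corresponding normalizing partial isometries of $\cM$ can be pieced together into a unitary, and --- because they are governed by the Cartan subalgebra that $\al$ fixes --- the resulting sequence is $(\al,\om)$-equicontinuous and central, yielding the required unitary eigenvector. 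Carrying out the equicontinuity and centrality estimates simultaneously is the delicate technical point.
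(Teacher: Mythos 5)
Your reduction to Theorem~\ref{thme:main}, the uniqueness and absorption arguments, and the explicit cyclic-shift construction of a Rohlin product type flow (a correct, if different, substitute for Example~\ref{exam:ITP}) are all sound. The problem is that the whole theorem rests on the claim invoked in your first paragraph --- that pointwise fixing a Cartan subalgebra together with $\Ga(\al)=\R$ forces the Rohlin property --- and you never actually prove it. Your last paragraph only sketches a direct groupoid--cocycle attack (an infinitesimal $\R$-valued cocycle $\xi$ on the hyperfinite equivalence relation, then an exhaustion by bisections on which $\xi\approx p$) and explicitly concedes that the equicontinuity and centrality estimates, i.e.\ precisely the content of membership in $\cM_{\om,\al}$ rather than merely in $\cM_\om$, are ``the delicate technical point'' left undone. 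As written this is a citation of the theorem's hardest ingredient, not a proof of it; nothing in the sketch explains how $\Ga(\al)=\R$ is converted into dense essential range of $\xi$ on every invariant set, nor how the bisections are pieced into a single unitary whose orbit under $\al$ is $\om$-equicontinuous.

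The paper closes this gap by a completely different, indirect route, which is worth comparing with yours. First, Lemma~\ref{lem:fixCartan} shows that a flow fixing a Cartan subalgebra of an injective factor is \emph{invariantly approximately inner}: by Connes--Feldman--Weiss one writes $\cM=\cA\rti_\th\Z$, and a Shapiro-type argument for the single automorphism $\th$ (Lemma~\ref{lem:Shapiro}) approximates each $\al_T$ by $\Ad v_n$ with $v_n\in\cA$, and these $v_n$ are automatically fixed by $\al$. Second, Corollary~\ref{cor:Rohlin-II1} upgrades ``invariantly approximately inner and $\Ga(\al)=\R$'' to the Rohlin property by passing to the dual: Theorem~\ref{thm:dual} makes $\hal$ a Rohlin flow on the type II$_\infty$ crossed product, the classification (Corollary~\ref{cor:class-inj}) identifies $\hal$ with a model flow, and Takesaki duality together with Lemma~\ref{lem:tensorBH} brings the Rohlin property back to $\al$. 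If you insist on the direct cocycle approach you would essentially be redoing Kawahigashi's original proof, which is exactly what the machinery of this paper is designed to bypass; to make your proposal complete you should instead route the argument through Lemma~\ref{lem:fixCartan} and Corollary~\ref{cor:Rohlin-II1}.
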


Thanks to works due to Connes and Haagerup,
a modular automorphism group
on any injective factor
is an approximately inner flow,
and hence
the dual flow has the Rohlin property
(Theorem \ref{thm:dual}, Proposition \ref{prop:ext-dual}).
Then Theorem \ref{thme:main}
implies the following result
(Theorem \ref{thm:inj-class}).

\begin{thme}[Connes, Haagerup, Krieger]
Let $\cM_1$ and $\cM_2$ be injective factors
of type III.
Then
they are isomorphic
if and only if
their flows of weights are isomorphic.
\end{thme}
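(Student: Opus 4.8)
The plan is to deduce the statement from Theorem~\ref{thme:main} applied to the trace-scaling flows on the continuous cores. The ``only if'' direction is the Connes--Takesaki theorem that the flow of weights is an isomorphism invariant, so only the converse needs an argument. Fix faithful normal semifinite weights $\vph_i$ on $\cM_i$ and pass to the continuous cores $\cN_i:=\cM_i\rti_{\si^{\vph_i}}\R$, each carrying its canonical trace $\ta_i$ and its trace-scaling dual flow $\th^{(i)}$ (the dual action of $\si^{\vph_i}$), for which $\ta_i\circ\th^{(i)}_t=e^{-t}\ta_i$ and whose restriction to $Z(\cN_i)$ is, by definition, the flow of weights of $\cM_i$. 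Since $\R$ is amenable, $\cN_i$ is injective; it is properly infinite because it contains $\cM_i$ with the same unit; and by Takesaki's structure theory it has no type~I summand, so $\cN_i$ is of type~II$_\infty$. Hence, by uniqueness of the injective type~II$_\infty$ factor together with a direct integral decomposition over the center, $\cN_i\cong Z(\cN_i)\ovl{\oti}\cR_{0,1}$. (Separability of the predual of $\cM_i$, and hence of $\cN_i$, is used here and below.)

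Next I would exploit the hypothesis. An isomorphism of the flows of weights is an isomorphism $Z(\cN_1)\cong Z(\cN_2)$ carrying $\th^{(1)}|_{Z(\cN_1)}$ to $\th^{(2)}|_{Z(\cN_2)}$; tensoring it with $\id_{\cR_{0,1}}$ and using the decompositions above, I would extend it (rescaling the trace if necessary) to an isomorphism $\Ph\col\cN_1\ra\cN_2$ with $\ta_2\circ\Ph=\ta_1$ and such that $\Ph\circ\th^{(1)}_t\circ\Ph^{-1}$ and $\th^{(2)}_t$ agree on $Z(\cN_2)$. Set $\tilde\th_t:=\Ph\circ\th^{(1)}_t\circ\Ph^{-1}$. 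Because $\si^{\vph_i}$ is an approximately inner flow on the injective factor $\cM_i$ (Connes, Haagerup), its dual flow $\th^{(i)}$ has the Rohlin property by Theorem~\ref{thm:dual} and Proposition~\ref{prop:ext-dual}; thus $\tilde\th$ and $\th^{(2)}$ are both Rohlin flows on $\cN_2$, and Theorem~\ref{thme:main} is available.

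To invoke it I must verify that $\tilde\th_t\circ\th^{(2)}_{-t}$ is approximately inner for every $t$. This automorphism preserves $\ta_2$ (since $\tilde\th_t$ scales $\ta_2$ by $e^{-t}$ and $\th^{(2)}_{-t}$ by $e^{t}$) and acts trivially on $Z(\cN_2)$ (both flows restrict there to the flow of weights); and a $\ta_2$-preserving, center-fixing automorphism of the injective algebra $\cN_2\cong Z(\cN_2)\ovl{\oti}\cR_{0,1}$ is approximately inner, by a direct integral argument that reduces to the standard fact that every trace-preserving automorphism of $\cR_{0,1}$ is approximately inner. Theorem~\ref{thme:main} then gives that $\tilde\th$ and $\th^{(2)}$ are strongly cocycle conjugate, in particular cocycle conjugate, so $\cN_2\rti_{\tilde\th}\R\cong\cN_2\rti_{\th^{(2)}}\R$. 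Transporting through $\Ph$ and applying Takesaki duality ($\cN_i\rti_{\th^{(i)}}\R\cong\cM_i\ovl{\oti}B(L^2(\R))$), we obtain $\cM_1\ovl{\oti}B(L^2(\R))\cong\cN_1\rti_{\th^{(1)}}\R\cong\cN_2\rti_{\tilde\th}\R\cong\cN_2\rti_{\th^{(2)}}\R\cong\cM_2\ovl{\oti}B(L^2(\R))$; since $\cM_1,\cM_2$ are properly infinite this yields $\cM_1\cong\cM_2$.

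The decisive input, and the only genuinely hard point, is the passage from approximate innerness of the modular flow $\si^{\vph_i}$ to the Rohlin property of its dual flow --- that is, the combination of the theorems of Connes and Haagerup with Theorem~\ref{thm:dual}. Everything else is structure theory of injective semifinite von Neumann algebras, bookkeeping with Takesaki duality, and one application of Theorem~\ref{thme:main}.
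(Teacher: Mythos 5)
Your proposal is correct and follows essentially the same route as the paper: pass to the cores, obtain the Rohlin property of the trace-scaling dual flows from approximate innerness of the modular flows via Proposition~\ref{prop:ext-dual}, apply the main classification theorem, and conclude by Takesaki duality. The only cosmetic difference is that you verify the approximate-innerness hypothesis of Theorem~\ref{thme:main} by hand (trace-preserving, center-fixing automorphisms of $Z\,\ovl{\oti}\,\cR_{0,1}$ are approximately inner), whereas the paper packages this as Corollary~\ref{cor:class-inj} together with the identification of the cores carried out in Lemma~\ref{lem:P1P2}.
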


This paper is organized as follows.
In Section 2,
the basic notions such
as the core of a von Neumann algebra
and
an ultraproduct
von Neumann algebra
are reviewed.

In Section 3,
to a Borel map $\al\col\R\ra\Aut(\cM)$,
we introduce the notion of 
$(\al,\om)$-equicontinuity
and the $(\al,\om)$-equicontinuous
parts $\cM_\al^\om$ and $\cM_{\om,\al}$
of $\cM^\om$ and $\cM_\om$,
respectively.

In Section 4,
the Rohlin property and
the invariant approximate innerness
are introduced.
We show they are dual notions to each other.

Section 5 is
devoted to proving the main classification
result.
We first prove the 
2-cohomology vanishing
for Borel cocycle actions of $\R$
with Rohlin property.
We next obtain the approximate
vanishing of the 1-cohomology of a Rohlin flow.
We show that by disintegration,
it suffices to prove the main theorem
for centrally ergodic flows.
Then the Bratteli-Elliott-Evans-Kishimoto intertwining argument
achieves strong cocycle conjugacy

In Section 6,
we apply the main result
to give alternative proofs
of some known results:
Kawahigashi's results about
flows on the injective type II$_1$ factor,
the classification of
injective type III factors
(assuming Haagerup's work on a bicentralizer)
and
the non-fullness of
an arbitrary type III$_0$ factor,
more precisely,
the approximate innerness
of a modular automorphism group.
We also discuss results obtained
by Hui and Aoi-Yamanouchi
in \cite{AY,Hui}.
Some concrete examples of Rohlin flows
are given.
In particular,
we will classify product type flows
and quasi-free flows coming from a Cuntz algebra
up to cocycle conjugacy.

In Section 7,
we will give a characterization
of the Rohlin property
which states that
a flow $\al$ on a factor $\cM$
has the Rohlin property
if and only if
$\al$ is faithful on $\cM_{\om,\al}$.

In Section 8,
we will pose a plausible conjecture
on a characterization of the Rohlin property.
Some unsolved problems are also mentioned.

We will close this paper
with appendix in Section 9,
where
basic results on measure theory
and
a disintegration of automorphisms are studied.
Also, with some assumptions on a factor,
we will show that
the condition of Theorem \ref{thme:main}
derives an approximation of $\al_t$
by $\Ad v(t)\circ\be_t$
with $v$ being a continuous unitary path.

\vspace{10pt}
\noindent
{\bf Acknowledgements.}
The authors are grateful
to Masamichi Takesaki for various comments on our work.
The second named author would like to
thank Akitaka Kishimoto for stimulating discussions.
We also thank George Elliott and Masaki Izumi for helpful advice.
The authors
are supported by 
Grant-in-Aid for Scientific Research (C)
and
Grant-in-Aid for Young Scientists (B)
of
Japan Society
for the Promotion of Science,
respectively.

\tableofcontents

\section{Preliminary}
Throughout this paper,
we mainly treat a von Neumann algebra with separable predual
unless otherwise noted.

\subsection{Notation}
Let $\cM$ be a not necessarily separable von Neumann algebra.
Let us denote by
$\cM^{\rm U}$, $\cM^{\rm P}$,
$\cM^{\rm PI}$ and $\cM_1$
the set of
unitaries, projections,
partial isometries
and contractions in $\cM$,
respectively.
The center of $\cM$
is denoted by $Z(\cM)$.
The set of faithful normal semifinite weights
is denoted by $W(\cM)$.

For $\alpha\in \Aut(\cM)$,
$a\in \cM$ and $\varphi\in \cM_*$,
let $\alpha(\varphi)$, $a\varphi$,
$\varphi a$, $[a,\varphi]\in \cM_*$ be 
\[
\alpha(\varphi)
:=\varphi\circ \alpha^{-1},\
\varphi a(x)
:=\varphi(ax),\
a\varphi(x)
:=\varphi(xa),\
[a,\varphi]
:=a\varphi-\varphi a,
\]
respectively.
For $a\in\cM$ and $\vph\in(\cM_*)_+$,
we define the following seminorms:
\[
\|a\|_\vph:=\vph(a^*a)^{1/2},
\quad
\|a\|_\vph^\sharp:=2^{-1/2}(\vph(a^*a)+\vph(aa^*))^{1/2}.
\]

In this paper,
$\{\cH,J,\cP\}$ denotes the standard Hilbert space of $\cM$
(see \cite{Ha-stand} for the notations).
We regard $\cH$ as an  $\cM$-$\cM$-bimodule
as follows:
\[
x\xi y:=x Jy^*J\xi,
\quad
x,y\in\cM,
\
\xi\in\cH.
\]

For $\al\in\Aut(\cM)$,
there uniquely exists a unitary
$U(\al)$ on $\cH$
such that
$\al(x)=\Ad U(\al)(x)$
for $x\in\cM$,
$JU(\al)=U(\al) J$ and $U(\al)\cP=\cP$.
We use the notation as
$\al(\xi):=U(\al)\xi$.
Then we have
$\al(x\xi y)=\al(x)\al(\xi)\al(y)$.
Since $U(\Ad u)=uJuJ$ for $u\in\cM^{\rm U}$,
we have $\Ad u(\xi)=u\xi u^*$.

We equip $\Aut(\cM)$
with the $u$-topology as usual.
Namely,
a net $\al_\la\in\Aut(\cM)$
converges to $\al\in\Aut(\cM)$
if
$\al_\la(\vph)\to\al(\vph)$
for all $\vph\in\cM_*$.
Then the map $\Aut(\cM)\ni\al\mapsto U(\al)$
is strongly continuous.
If $\cM$ is separable, that is, $\cM_*$ is norm separable,
then
$\Aut(\cM)$ is a Polish group.

Let us denote by $\Int(\cM)$
the set of inner automorphisms.
An automorphism which belongs to
the closure $\oInt(\cM)$ of $\Int(\cM)$
is said to be
{\it approximately inner}.

Throughout this paper,
we always equip $\R^n$ with the usual Lebesgue measure.

\subsection{Actions and cocycle actions}
\label{subsect:action-cocycle}
In this paper,
we mean by a {\it flow}
a one-parameter automorphism group on
a von Neumann algebra,
that is,
a group homomorphism $\al\col \R\to\Aut(\cM)$
with the following continuity:
\[
\lim_{t\to0}\|\al_t(\vph)-\vph\|=0
\quad
\mbox{for all }\vph\in\cM_*,
\]
or equivalently,
\[
\lim_{t\to0}
\|\al_t(\xi)-\xi\|=0
\quad
\mbox{for all }\xi\in\cH.
\]

By $\cM^\al$,
we denote the fixed point algebra of $\al$.
We say that
$\al$ is
\emph{ergodic}
if $\cM^\al=\C$,
and \emph{centrally ergodic}
if $Z(\cM)^\al=\C$.

A flow $\al$ is said to be
{\it inner} if $\al_t\in\Int(\cM)$
for all $t\in\R$,
and
{\it outer} if $\al_t\nin\Int(\cM)$
for all $t\in\R\setminus\{0\}$.
Thanks to \cite[Theorem 0.1]{Kal} or \cite[Theorem 5]{MooreIV},
if $\cM$ is separable,
then an inner flow $\al$ is implemented
by a one-parameter unitary group
$u\col\R\ra\cM^{\rm U}$.
See also Corollary \ref{cor:ptwiseinner}.

An $\al$-\emph{cocycle} means a strongly continuous
unitary path $v$ in $\cM$
such that $v(s)\al_s(v(t))=v(s+t)$.
The perturbed flow is defined by $\al_t^v:=\Ad v(t)\circ\al_t$.

Let $\al$ and $\be$ be flows on
von Neumann algebras $\cM$ and $\cN$, respectively.
They are said to be
\begin{itemize}
\item 
{\it conjugate}
if there exists an isomorphism
$\th\col\cN\ra\cM$
such that $\al_t=\th\circ\be_t\circ\th^{-1}$.
We write $\al\approx \be$;

\item
{\it cocycle conjugate}
if there exist an isomorphism
$\th\col\cN\ra\cM$
and an $\al$-cocycle $v$
such that $\al_t^v=\th\circ\be_t\circ\th^{-1}$.
We write $\al\sim\be$;

\item
{\it stably conjugate}
if $\al\oti\id_{B(\ell^2)}$
and $\be\oti\id_{B(\ell^2)}$
are cocycle conjugate.
\end{itemize}

When $\cM=\cN$,
$\al$ and $\be$ are said to be
\emph{strongly cocycle conjugate}
if there exist $\th\in\oInt(\cM)$
and an $\al$-cocycle $v$
such that $\al_t^v=\th\circ\be_t\circ\th^{-1}$.

A {\it Borel cocycle action}
means a pair $(\al,c)$
of
Borel maps
$\al\col\R\ra\Aut(\cM)$
and $c\col\R^2\ra\cM^{\rm U}$
such that
for all $r,s,t\in\R$,
$c(s,0)=1=c(0,s)$,
$\al_0=\id$
and
\[
\al_s\circ\al_t=\Ad c(s,t)\circ\al_{s+t},
\]
\[
c(r,s)c(r+s,t)=\al_r(c(s,t))c(r,s+t).
\]

The perturbation of $(\al,c)$
by a Borel unitary path $v\col\R\ra\cM^{\rm U}$
is the Borel cocycle action $(\al^v,c^v)$
defined by
\[
\al_t^v:=\Ad v(t)\circ\al_t,
\quad
c^v(s,t):=v(s)\al_s(v(t))c(s,t)v(s+t)^*
\quad
\mbox{for all }
s,t\in\R.
\]

As is well-known,
if $\cM$ is properly infinite,
then any $2$-cocycle is a coboundary.
However, the solution presented below
is always ``big'' even if a given
2-cocycle is close to 1.

\begin{lem}
\label{lem:propinf2coho}
Let $\cM$ be a properly infinite von Neumann algebra
and 
$(\alpha, c)$ a Borel cocycle action of $\R$ on $\cM$.
Then there exists a Borel unitary path $u(t)\in\cM$
such that 
$u(t)\alpha_t(u(s))c(t,s)u(t+s)^*=1$
for all $(t,s)\in\R^2$.
\end{lem}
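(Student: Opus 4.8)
I would prove the lemma by the classical ``regular representation'' device for the vanishing of $2$-cohomology over a properly infinite algebra, carried out in the Borel category; since $\al$ is only assumed to be a Borel map, Borel dependence on the group variables must be tracked at every stage. The first move is a reduction. Because $\cM$ is properly infinite with separable predual, $(\al,c)$ is cocycle conjugate to $(\al,c)\oti(\id_{B(L^2(\R))},1)$ on $\cM\oti B(L^2(\R))$ (the usual amplification argument), and since the translation flow $\tau_t=\Ad\lambda_t$ on $B(L^2(\R))$ (implemented by the one-parameter unitary group of left translations) is inner, the latter is in turn cocycle conjugate to $(\al\oti\tau,\,c\oti1)$. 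As the property of a $2$-cocycle being a coboundary is a cocycle-conjugacy invariant, it suffices to produce a Borel unitary path $w\col\R\ra(\cM\oti B(L^2(\R)))^{\rm U}$ with
\[
w(s)\,(\al_s\oti\tau_s)(w(t))\,(c(s,t)\oti1)\,w(s+t)^*=1
\qquad\text{for all }s,t\in\R .
\]

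Realising $\cM\oti B(L^2(\R))$ on $L^2(\R,\cH)$, I would look for $w(t)$ inside $\cM\oti L^\infty(\R)$, i.e.\ as multiplication by a Borel map $r\mapsto w(t)(r)\in\cM^{\rm U}$ assembled out of values of $c$ at affine functions of $(r,t)$, together with a ``diagonal'' correction of the shape $r\mapsto c(r,-r)$. Since $(\al_s\oti\tau_s)$ carries such a multiplication operator to a $\tau_s$-translate of $\al_s$ applied pointwise, the displayed identity reduces, evaluated at each $r\in\R$, to an identity among values of $c$ that is a rearrangement of the cocycle identity $\al_r(c(p,q))c(r,p+q)=c(r,p)c(r+p,q)$; matching the two sides pins down the affine combination and the correction term. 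Transporting $w$ back through the two cocycle conjugacies of the first step — composing with the amplification isomorphism $\cM\oti B(L^2(\R))\cong\cM$ and with the relevant Borel perturbation cocycles — then delivers a Borel unitary path $u$ in $\cM$; consistently with the remark preceding the lemma, this $u$ is inevitably ``large''.

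The main obstacle is the middle step: choosing the regular-representation formula for $w$ so that the product genuinely telescopes to $1$ under the cocycle identity. Already the scalar-valued prototype of this computation needs an antisymmetrising correction, and with $\cM$-valued, $\al$-twisted cocycles one must be careful both about the order of the noncommuting factors and about exactly where $\al_s$ is inserted. Running alongside it is the measure-theoretic bookkeeping needed to keep the amplification isomorphism, the perturbation cocycles, and the path $w$ jointly Borel in the group variables, which is really the only reason any care is required — the analytic content is otherwise elementary.
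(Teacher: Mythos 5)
Your proposal is correct and follows essentially the same route as the paper: first perturb $(\al,c)$ by a Borel unitary path so that it splits as $(\be\oti\id,d\oti1)$ on $\cN\oti B(L^2(\R))$ (your ``usual amplification argument'' is exactly the paper's path $w(t)=\sum_i e_{i1}v\al_t(v^*e_{1i})$ fixing a system of matrix units), and then invoke the explicit regular-representation coboundary of Sutherland's Proposition 2.1.3. The only difference is cosmetic: you conjugate by $1\oti\la_t$ so as to put the translation into the flow and seek a pure multiplication-operator solution, whereas the paper keeps the flow trivial on the $B(L^2(\R))$ leg and builds the translation into $u(t)$ itself via $(u(t)\xi)(s)=d(t,s)\xi(t+s)$.
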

\begin{proof}
Let $H$ be a separable infinite dimensional Hilbert space.
Regard $B(H)$ as a von Neumann subalgebra of $\cM$
such that $B(H)'\cap\cM$ is properly infinite.
Let $\{e_{ij}\}_{i,j=1}^\infty$
be a system of matrix units of $B(H)$
such that $\sum_i e_{ii}=1$
and
$e_{11}$ is minimal in $B(H)$.
Take an isometry $v$
with $vv^*=e_{11}$. 
Set $w(t):=\sum_{i}e_{i1}v\alpha_t(v^*e_{1i})$.
It is easy to see $w(t)$ is a Borel unitary path,
and $w(t)\alpha_t(e_{ij})w(t)^*=e_{ij}$. 

Hence we may and do assume that
$(\alpha,c)$ is of the form
$(\beta\otimes \id, d\otimes 1)$
on $\cM=\mathcal{N}\otimes B(L^2(\mathbb{R}))$
for a von Neumann algebra $\cN\subs B(K)$
and a Hilbert space $K$.
As given in the proof of \cite[Proposition 2.1.3]{Su-homoII},
the following $u(t)$ does the job:
\[
(u(t)\xi)(s)=d(t,s)\xi(t+s)
\quad
\mbox{for all }
\xi\in K\oti L^2(\R),
\ s,t\in\R.
\]
\end{proof}

\begin{rem}
\label{rem:alid}
In the proof above,
it turns out that
the unitary path $w$ is in fact an $\al$-cocycle
when $\al$ is a flow.
Thus if $\al$ is a flow on a properly infinite
von Neumann algebra
$\cM$,
then $\al\sim\al\oti\id_{B(H)}$.
Indeed,
\[\al\sim\be\oti\id_{B(H)}
\approx
\be\oti\id_{B(H)}\oti\id_{B(H)}
\sim\al\oti\id_{B(H)}.
\]
Hence the stable conjugacy implies
the cocycle conjugacy if $\cM$ is properly infinite.
When $\cM$ is finite,
this is not true in general
(see \cite[Theorem 2.9]{Kw-Cotriv}).
\end{rem}

Let $\al$ be a flow on $\cM$.
we define
$\pi_\al(x),\la^\al(t)\in\cM\oti B(L^2(\R))$
for $x\in\cM$
and $t\in\R$
as follows:
\[
(\pi_\al(x)\xi)(s)
=
\al_{-s}(x)\xi(s),
\quad
(\la^\al(t)\xi)(s)
=\xi(s-t)
\quad
\mbox{for }
\xi\in\cH\oti L^2(\R),
\ s\in\R.
\]
Then the crossed product
$\cM\rti_\al\R$
is the von Neumann algebra
generated by $\pi_\al(\cM)$
and $\la^\al(\R)$.
Note that $\la^\al(t)=1\oti \la(t)$,
where $\la(t)$ denotes
the left regular representation.
Let us denote by $\rho(t)$
the right regular representation.

The dual flow $\hal$ on $\cM\rti_\al\R$
is defined as
\[
\hal_p(\pi_\al(x))=\pi_\al(x),
\quad
\hal_p(\la^\al(t))=e^{-ipt}\la^\al(t)
\quad\mbox{for }
x\in\cM,\ p,t\in\R.
\]

\subsection{Core and canonical extension}
The \emph{core} $\tcM$
of a von Neumann algebra $\cM$
is introduced in \cite{FT},
and that is generated by
a copy of $\cM$
and a one-parameter unitary group
$\{\la^\vph(t)\}_{t\in\R}$,
$\vph\in W(\cM)$.
Their relations are
described as follows:
for $x\in\cM$, $t\in\R$
and
$\vph,\ps\in W(\cM)$,
\[
\la^\vph(t)x=\si_t^\vph(x)\la^\vph(t),
\quad
\la^\vph(t)=[D\vph:D\ps]_t\la^\ps(t).
\]
Then the core $\tcM$
is naturally isomorphic to
$\cM\rti_{\si^\vph}\R$.

The restriction
of the dual flow $\th$ of $\si^\vph$
on $Z(\tcM)$ is
called the
\emph{$($smooth$)$ flow of weights}
of $\cM$
\cite{CT}.
Note that $Z(\tcM)^\th=Z(\cM)$.

It is known that
the flow of weights 
is a complete invariant
for isomorphic classes
among
injective type III factors.
We will present a proof of this fact
in Theorem \ref{thm:inj-class}
as an application of our classification of Rohlin flows.

Let $\cN$ be another von Neumann algebra.
Any isomorphism $\pi$ from $\cM$ onto $\cN$
extends to the isomorphism
$\tpi\col\tcM\ra\tcN$
such that for $x\in\cM$ and $t\in\R$,
\[
\tpi(x)
:=\pi(x),
\quad
\tpi(\la^\vph(t))
:
=\la^{\pi(\vph)}(t),
\]
where $\vph\in W(\cM)$ and $\pi(\vph):=\vph\circ\pi^{-1}$.
We call $\tpi$
the \emph{canonical extension} of $\pi$
(see \cite[Theorem 2.4]{FT} and \cite[Proposition 12.1]{HS}).
Let $\th^\cM$ and $\th^\cN$ be the dual flows
on $\tcM$ and $\tcN$.
Then $\tpi$ intertwines them,
that is,
$\th_t^{\cN}\circ\tpi=\tpi\circ\th_t^{\cM}$.
The restriction
$\tpi|_{Z(\tcM)}\col Z(\tcM)\ra Z(\tcN)$
is
called
the \emph{Connes-Takesaki module}
of $\pi$
\cite{CT}.

When $\cN=\cM$,
we note that
the canonical extension
$\Aut(\cM)\ni\al\ra \tal\in\Aut(\tcM)$
is a continuous group homomorphism.

Let $G$ be a locally compact group
and $\al\col G\ra\Aut(\cM)$ an action.
For $\vph\in W(\cM)$,
we denote by $\hvph$ the dual weight on $\cM\rti_\al G$.
Then we have
\begin{equation}
\label{eq:dual-modular}
\si_t^\hvph(\pi_\al(x))=\pi_\al(\si_t^\vph(x)),
\quad
\si_t^\hvph(\la^\al(g))
=
\de_G(g)^{it}\la^\al(g)
\pi_\al([D\vph\circ\al_g:D\vph]_t),
\end{equation}
where $\de_G$ denotes the modular function of $G$.
See \cite[Theorem 3.2]{Ha-dualI}.

We introduce the action $\check{\al}\col G\ra\Aut(\tcM)$
defined by
$\cal_g:=\tal_g\circ\th_{\log\de_G(g)}$.
Then the core of $\cM\rti_\al G$
is canonically isomorphic to
$\tcM\rti_\cal G$ as shown below.

\begin{lem}
\label{lem:can-natural}
One has the isomorphism
$\rho\col(\cM\rti_\al G)\,\widetilde{}
\,\ra\tcM\rti_\cal G$
such that
\begin{itemize}
\item
$\rho(\pi_\al(x))=\pi_\cal(x)$
for all $x\in\cM$;

\item
$\rho(\la^\al(g))=\la^{\cal}(g)$
for all $g\in G$;

\item
$\rho(\la^\hvph(t))=\pi_\cal(\la^\vph(t))$
for all $\vph\in W(\cM)$
and $t\in\R$.
\end{itemize}
In particular,
when $G$ is abelian,
we have
$\rho\circ\widetilde{\hal}_p=\widehat{\tal}_p\circ\rho$
for all $p\in\hat{G}$.
\end{lem}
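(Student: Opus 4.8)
The plan is to read off explicit generators for the two algebras and match their defining covariance relations. Fix $\vph\in W(\cM)$. By the construction of the core recalled above, $(\cM\rti_\al G)\,\widetilde{}$ is generated by a copy of $\cM\rti_\al G$ together with the one-parameter unitary group $\{\la^\hvph(t)\}_{t\in\R}$ attached to the dual weight $\hvph\in W(\cM\rti_\al G)$, hence by the three families $\pi_\al(\cM)$, $\la^\al(G)$, $\la^\hvph(\R)$; similarly $\tcM\rti_\cal G$ is generated by $\pi_\cal(\cM)$, $\la^\cal(G)$, $\pi_\cal(\la^\vph(\R))$, since $\tcM$ is generated by $\cM$ and $\la^\vph(\R)$. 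I would therefore aim to produce a normal $*$-isomorphism $\rho$ sending $\pi_\al(x)\mapsto\pi_\cal(x)$, $\la^\al(g)\mapsto\la^\cal(g)$, $\la^\hvph(t)\mapsto\pi_\cal(\la^\vph(t))$, and afterwards deduce the third bullet for arbitrary $\ps\in W(\cM)$ from $\la^\hps(t)=[D\hps:D\hvph]_t\la^\hvph(t)$ together with the standard dual-weight identity $[D\hps:D\hvph]_t=\pi_\al([D\ps:D\vph]_t)$.

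The computational core is to check that these generators obey, on both sides, the same relations. The covariance $\la^\al(g)\pi_\al(x)\la^\al(g)^*=\pi_\al(\al_g(x))$ matches $\la^\cal(g)\pi_\cal(x)\la^\cal(g)^*=\pi_\cal(\al_g(x))$ because $\cal_g$ restricts to $\al_g$ on $\cM$ (the dual flow $\th$ fixing $\cM$ pointwise). The first half of \eqref{eq:dual-modular} gives $\la^\hvph(t)\pi_\al(x)\la^\hvph(t)^*=\pi_\al(\si_t^\vph(x))$, matching $\pi_\cal(\la^\vph(t))\pi_\cal(x)\pi_\cal(\la^\vph(t))^*=\pi_\cal(\si_t^\vph(x))$ inside $\pi_\cal(\tcM)$. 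The delicate relation is the second half of \eqref{eq:dual-modular}, which after rewriting reads $\la^\hvph(t)\la^\al(g)=\de_G(g)^{it}\la^\al(g)\pi_\al([D\vph\circ\al_g:D\vph]_t)\la^\hvph(t)$; under the prescribed map this must correspond to $\pi_\cal(\la^\vph(t))\la^\cal(g)=\de_G(g)^{it}\la^\cal(g)\pi_\cal(\la^{\vph\circ\al_g}(t))$ (using the core identity $\la^{\vph\circ\al_g}(t)=[D\vph\circ\al_g:D\vph]_t\la^\vph(t)$), and this is verified by a short computation: $\pi_\cal(\la^\vph(t))\la^\cal(g)=\la^\cal(g)\pi_\cal(\cal_{g^{-1}}(\la^\vph(t)))$ by $\cal$-covariance, while $\cal_g=\tal_g\circ\th_{\log\de_G(g)}$, $\th_s(\la^\vph(t))=e^{-ist}\la^\vph(t)$, $\tal_g(\la^\vph(t))=\la^{\al_g(\vph)}(t)$ and $\de_G(g^{-1})=\de_G(g)^{-1}$ give $\cal_{g^{-1}}(\la^\vph(t))=\de_G(g)^{it}\la^{\vph\circ\al_g}(t)$. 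The modular-function and Connes-cocycle bookkeeping here is the only place where signs really matter.

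The genuinely delicate point — and the main obstacle — is upgrading this matching of generators and relations to an honest normal $*$-isomorphism of the generated von Neumann algebras, not just an isomorphism of the $*$-algebras they span. I would do this via a common spatial model: put $\cM$ in standard form on $\cH$, realize $\cM\rti_\al G$ on $\cH\oti L^2(G)$ and its core on $\cH\oti L^2(G)\oti L^2(\R)$ by Haagerup's dual-weight construction, and realize $\tcM$ on $\cH\oti L^2(\R)$ and $\tcM\rti_\cal G$ on $\cH\oti L^2(\R)\oti L^2(G)$ in the parallel manner. The flip $\Sigma$ interchanging the last two tensor legs is a unitary between the two ambient Hilbert spaces, and re-running the computations above with the concrete operators (up to an evident unitary normalization) shows that $\Ad\Sigma$ carries $\pi_\al(x),\la^\al(g),\la^\hvph(t)$ to $\pi_\cal(x),\la^\cal(g),\pi_\cal(\la^\vph(t))$. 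Since the source triple generates the core and the target triple generates $\tcM\rti_\cal G$, the restriction $\rho$ of $\Ad\Sigma$ to $(\cM\rti_\al G)\,\widetilde{}$ is the asserted isomorphism — automatically normal and bijective.

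For the last assertion, assume $G$ abelian; then $G$ is unimodular, so $\de_G\equiv1$ and $\cal=\tal$, and $\widehat{\tal}$ is the dual action of $\hat G$ on $\tcM\rti_\cal G$. It suffices to compare $\rho\circ\widetilde{\hal}_p$ and $\widehat{\tal}_p\circ\rho$ on the three generating families. On $\pi_\al(x)$ and $\la^\al(g)$ equality is immediate from $\hal_p(\pi_\al(x))=\pi_\al(x)$, $\hal_p(\la^\al(g))=\langle g,p\rangle\la^\al(g)$ and the parallel formulas for $\widehat{\tal}_p$. On $\la^\hvph(t)$ one uses invariance of the dual weight under the dual action, $\hal_p(\hvph)=\hvph$, so the canonical extension satisfies $\widetilde{\hal}_p(\la^\hvph(t))=\la^{\hal_p(\hvph)}(t)=\la^\hvph(t)$, whence $\rho(\widetilde{\hal}_p(\la^\hvph(t)))=\pi_\cal(\la^\vph(t))=\widehat{\tal}_p(\pi_\cal(\la^\vph(t)))=\widehat{\tal}_p(\rho(\la^\hvph(t)))$, since $\widehat{\tal}_p$ fixes $\pi_\cal(\tcM)$.
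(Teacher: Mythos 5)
Your proposal is correct and takes essentially the same route as the paper: realize both cores as iterated crossed products on the same underlying Hilbert space, conjugate by the flip of the two outer legs corrected by an explicit unitary, and read off the generator correspondence. Your algebraic verification of the covariance relation coming from the second half of \eqref{eq:dual-modular}, and the reduction of the third bullet for general $\ps\in W(\cM)$ to the fixed $\vph$ via $\la^{\hps}(t)=[D\hps:D\hvph]_t\la^{\hvph}(t)$ and $[D\hps:D\hvph]_t=\pi_\al([D\ps:D\vph]_t)$, match the content of the paper's computation. The one place you undersell the work is the phrase \emph{``up to an evident unitary normalization''}: that normalization is the unitary $V\in\cM\oti L^\infty(\R\times G)$ with $V(t,g)=\de_G(g)^{it}[D\vph:D\vph\circ\al_g]_{-t}$, and verifying that $\Ad V(1\oti U)$ (where $U$ is the flip) really carries $\la^\hvph(t)$ and $\la^\al(g)$ to the right places is where most of the paper's proof is spent — the Connes-cocycle and modular-function bookkeeping you rightly identify as the only place signs matter is exactly the content of $V$. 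Your relation-checks do pin down the same data, so this is a presentational rather than a substantive gap, but ``evident'' is not how I would describe it.
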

\begin{proof}
Let $\cN:=\cM\rti_\al G$ and $\vph\in W(\cM)$.
Then we have the canonical isomorphisms
$\Xi_\hvph\col\tcN\ra \cN\rti_{\si^\hvph}\R$
and
$\La_\vph\col\tcM\rti_\cal G
\ra
(\cM\rti_{\si^\vph}\R)\rti_\cal G$.
Let $U\col L^2(G\times \R)\ra L^2(\R\times G)$
be the flip unitary.
Set the unitary
$V\in \cM\oti L^\infty(\R\times G)$
defined by $V(t,g):=\de_G(g)^{it}[D\vph:D\vph\circ\al_g]_{-t}$.

We will show
$\rho:=\La_\vph^{-1}\circ\Ad V(1\oti U)\circ\Xi_\hvph$
is the well-defined isomorphism
from $\tcN$ onto
$\tcM\rti_\cal\R$
satisfying the required conditions.
Let $x\in\cM$.
Then
$\Xi_\hvph(\pi_\al(x))=\pi_{\si^\hvph}(\pi_\al(x))$.
By (\ref{eq:dual-modular}),
we have
$\Ad V(1\oti U)(\Xi_\hvph(\pi_\al(x)))
=
\pi_\cal(\pi_{\si^\vph}(x))
$.
Thus we get
$\rho(\pi_\al(x))=\pi_\cal(x)$.
For $g\in G$,
we have
$\Xi_\hvph(\la^\al(g))=\pi_\hvph(\la^\al(g))$.
Recall that
$\si_{-t}^\hvph(\la^\al(g))
=
\la^\al(g)\pi_\al(V_{t,g}^*)
$.
Thus for $\xi,\eta\in \cH\oti L^2(\R\times G)$,
we have
\begin{align*}
&\langle
\Ad V(1\oti U)(\Xi_\hvph(\la^\al(g)))
\xi,
\eta
\rangle
\\
&=
\int_\R dt\int_G dh
\,
\big{\langle}
V_{t,h}
\cdot
\big{(}
\si_{-t}^\hvph(\la^\al(g))
V^*\xi
\big{)}
(t,h)
,
\eta(t,h)
\big{\rangle}
\\
&=
\int_\R dt\int_G dh
\,
\big{\langle}
V_{t,h}
\cdot
\left(
\la^\al(g)
\pi_\al(V_{t,g}^*)V^*\xi\right)
(t,h)
,
\eta(t,h)
\big{\rangle}
\\
&=
\int_\R dt\int_G dh
\,
\big{\langle}
V_{t,h}
\al_{h^{-1}g}(V_{t,g}^*)
V_{t,g^{-1}h}^*\xi(t,g^{-1}h)
,
\eta(t,h)
\big{\rangle}
\\
&=
\int_\R dt\int_G dh
\,
\big{\langle}
\xi(t,g^{-1}h)
,
\eta(t,h)
\big{\rangle}
\\
&=
\langle\La_\vph(\la^{\cal}(g))\xi,\eta\rangle.
\end{align*}
Thus $\rho(\la^\al(g))=\la^\cal(g)$.

Since $\Xi_\hvph(\la^\hvph(t))=\la^{\si^\hvph}(t)$
and
$\cal_{g^{-1}}(\la^{\si^\vph}(t))
=
\de_G(g)^{it}
\pi_{\si^\vph}([D\vph\circ\al_g:D\vph]_t)\la^{\si^\vph}(t)$,
we have
\[
\pi_\tal(\la^{\si^\vph}(t))
=
\pi_{\si^\vph}(V(-t,\cdot)^*)\la^{\si^\vph}(t).
\]
Then
\[
V\la^{\si^\vph}(t)V^*\pi_\tal(\la^{\si^\vph}(t)^*)
=
V\la^{\si^\vph}(t)V^*\la^{\si^\vph}(t)^*
\pi_{\si^\vph}(V(-t,\cdot)).
\]
For $(s,g)\in\R\times G$,
we have
\begin{align*}
\left(V\la^{\si^\vph}(t)V^*\la^{\si^\vph}(t)^*
\right)(s,g)
&=V(s,g)V(-t+s,g)^*
\\
&=
\de_G(g)^{is}
[D\vph:D\vph\circ\al_g]_{-s}
\cdot
\de_G(g)^{it-is}
[D\vph:D\vph\circ\al_g]_{t-s}^*
\\
&=
\de_G(g)^{it}
\si_{-s}^\vph([D\vph\circ\al_g:D\vph]_{t})
\\
&=
\pi_{\si^\vph}(V(-t,\cdot)^*)(s,g),
\end{align*}
and
$V\la^{\si^\vph}(t)V^*\pi_\tal(\la^{\si^\vph}(t)^*)=1$.
\end{proof}

\begin{rem}
The previous lemma
shows that
$\tcM$ is regarded as a von Neumann subalgebra of
$(\cM\rti_\al G)\,\widetilde{}\ $.
This is generalized as follows.
Let $\cN\subs \cM$ be an inclusion of von Neumann algebras.
When there exists an operator valued weight $T$
from $\cM$ onto $\cN$,
we can regard $\tcN$ as a von Neumann subalgebra of $\tcM$
in such a way that
$\la^\vph(t)=\la^{\vph\circ T}(t)$
for $\vph\in W(\cN)$ and $t\in \R$.
Note that this identification depends on the choice
of $T$.
If we take $T$ as the canonical operator valued weight
$T_{\hal}\col \cM\rti_\al G\ra \pi_\al(\cM)$,
which is given by $T_{\hal}(x)=\int_{\hat{G}} \hal_p(x)\,dp$
when $G$ is abelian,
then the associated map is nothing but
$\pi_\cal\col\tcM\ra\tcM\rti_{\check{\al}} G$.
\end{rem}

\subsection{Ultraproduct von Neumann algebras}
\label{subsect:ultra}
Our standard reference is \cite[Chapter 5]{Ocn-act}.
Let $\cM$ be a von Neumann algebra.
We denote by $\ell^\infty(\cM)$ the C$^*$-algebra
of norm bounded sequences in $\cM$.
Let $\om$ be a free ultrafilter over $\N$.

An element
$(x^\nu)_\nu$ of $\ell^\infty(\cM)$
is said to be
\begin{itemize}
\item
\emph{trivial}
if
$x^\nu\to0$ as $\nu\to\infty$
in the strong$*$ topology;
\item
\emph{$\om$-trivial}
if $x^\nu\to0$ as $\nu\to\om$
in the strong$*$ topology;

\item
\emph{central} if for all $\vph\in\cM_*$,
$\|[\vph,x^\nu]\|_{\cM_*}\to0$ as $\nu\to\infty$;

\item
\emph{$\om$-central} if for all $\vph\in\cM_*$,
$\|[\vph,x^\nu]\|_{\cM_*}\to0$ as $\nu\to\om$.
\end{itemize}

Let $\sT_\om(\cM)$ and $\sC_\om(\cM)$ be the collections of
$\om$-trivial and $\om$-central sequences in $\cM$,
respectively,
which are unital C$^*$-subalgebras of $\ell^\infty(\cM)$.
Let $\sN_\om(\cM)$ be the normalizer of
$\sT_\om(\cM)$ in $\ell^\infty(\cM)$.
Then
$\sT_\om(\cM)\subs \sC_\om(\cM)\subs\sN_\om(\cM)$.
We often simply write
$\sT_\om$, $\sC_\om$ and $\sN_\om$
for them unless otherwise confused.

The quotient C$^*$-algebras
$\cM^\om:=\sN_\om/\sT_\om$
and $\cM_\om:=\sC_\om/\sT_\om$
are in fact von Neumann algebras.
We call them \emph{ultraproduct von Neumann algebras}.
The quotient map from $\sN_\om$ onto $\cM^\om$
is denoted by $\pi_\om$.
Each $x\in \cM$ is mapped to the constant sequence
$(x,x,\dots)\in\sN_\om$.
Then $\cM$ is regarded as a von Neumann subalgebra of $\cM^\om$.

Let $\ta^\om\col\cM^\om\ra\cM$ be the map
defined by
$\ta^\om(\pi_\om((x^\nu)_\nu)):=\lim_{\nu\to\om}x^\nu$,
where the limit is taken in the $\si$-weak topology
in $\cM$.
Then $\ta^\om$ is a faithful normal conditional expectation.
For $\vph\in\cM_*$,
we denote by $\vph^\om$ the functional $\vph\circ\ta^\om$.
Any element $a\in\cM_\om$
commutes with 
$\vph^\om$,
that is, $\vph^\om a=a\vph^\om$.

If $\cM$ is a factor,
then $\ta^\om$ gives a faithful normal tracial state
on $\cM_\om$.
We often denote the trace by $\ta_\om$.
Note that in this case,
$\vph^\om=\vph(1)\ta_\om$ on $\cM_\om$ for all $\vph\in\cM_*$.

Each $\al\in \Aut(\cM)$ extends to the automorphism
$\al^\om\in\Aut(\cM^\om)$
by putting $\al^\om(\pi_\om((x^\nu)_\nu))=\pi_\om((\al(x^\nu))_\nu)$.
Then $\al^\om(\cM_\om)=\cM_\om$.
We often simply write $\al$ for $\al^\om$.

When $\al^\om$ is trivial on $\cM_\om$,
$\al$ is said to be \emph{centrally trivial}.
Denote by $\Cnt(\cM)$ the set of centrally trivial automorphisms
that is a Borel subgroup of $\Aut(\cM)$
as shown in Lemma \ref{lem:cnt-borel}.

In this paper,
the compactness of a subset of $\cH$ or $\cM_*$
means the norm compactness.

\begin{lem}
\label{lem:cpcttriv}
If $(x^\nu)_\nu\in\sT_\om$
and
$\Ps\subs \cH$ is compact,
then
$\sup_{\eta\in \Ps}(\|x^\nu \eta\|+\|\eta x^\nu\|)$
converges to 0
as $\nu\to\om$.
\end{lem}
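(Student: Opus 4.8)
The statement to prove is Lemma~\ref{lem:cpcttriv}: if $(x^\nu)_\nu\in\sT_\om$ and $\Ps\subs\cH$ is norm compact, then $\sup_{\eta\in\Ps}(\|x^\nu\eta\|+\|\eta x^\nu\|)\to0$ as $\nu\to\om$. The plan is to combine the pointwise convergence $x^\nu\eta\to0$ (and $\eta x^\nu\to0$) guaranteed by the definition of $\sT_\om$ with a standard finite-$\vep$-net argument on the compact set $\Ps$, using that the sequence $(x^\nu)_\nu$ is uniformly norm bounded, say by $C:=\sup_\nu\|x^\nu\|<\infty$.

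First I would recall what membership in $\sT_\om$ gives: $x^\nu\to0$ in the strong$^*$ topology as $\nu\to\om$, which unwinds to $\|x^\nu\zeta\|\to0$ and $\|(x^\nu)^*\zeta\|\to0$ along $\om$ for every fixed $\zeta\in\cH$. The key observation is that $\|\eta x^\nu\| = \|x^\nu J\eta\|$ when $\eta=J\eta^*J\cdot(\text{something})$; more directly, in the bimodule notation of the paper, $\eta x^\nu = \eta\, Jx^{\nu*}J$, so $\|\eta x^\nu\| = \|Jx^{\nu*}J\eta\|$. Hence controlling the right action on $\eta$ amounts to controlling the left action of $Jx^{\nu*}J$ on $\eta$, and since $(x^{\nu*})_\nu\in\sT_\om$ as well (strong$^*$ convergence is symmetric in $\ast$) and $J$ is an isometry, it suffices to prove the statement for the quantity $\sup_{\eta\in\Ps}\|x^\nu\eta\|$ with an arbitrary $\sT_\om$-sequence. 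So the problem reduces to: $\sup_{\eta\in\Ps}\|x^\nu\eta\|\to0$ as $\nu\to\om$.

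For this, fix $\vep>0$. By compactness of $\Ps$, choose a finite $\vep/(3C)$-net $\eta_1,\dots,\eta_k\in\Ps$ (if $C=0$ there is nothing to prove). For each $i$, the set $\{\nu : \|x^\nu\eta_i\|<\vep/3\}$ belongs to $\om$ because $x^\nu\to0$ strong$^*$ along $\om$; intersecting these $k$ sets, the set $A:=\{\nu : \|x^\nu\eta_i\|<\vep/3 \text{ for all } i\}$ is again in $\om$. Now for any $\eta\in\Ps$ and $\nu\in A$, pick $i$ with $\|\eta-\eta_i\|<\vep/(3C)$; then $\|x^\nu\eta\| \le \|x^\nu(\eta-\eta_i)\| + \|x^\nu\eta_i\| \le C\cdot\vep/(3C) + \vep/3 = 2\vep/3 < \vep$. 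Taking the supremum over $\eta\in\Ps$ gives $\sup_{\eta\in\Ps}\|x^\nu\eta\| \le \vep$ for all $\nu\in A\in\om$, which is precisely convergence to $0$ along $\om$. Applying the same argument to $(x^{\nu*})_\nu$ and using $\|\eta x^\nu\|=\|Jx^{\nu*}J\eta\|=\|x^{\nu*}(J\eta)\|$ with $J\Ps$ compact handles the right-action term, and adding the two estimates completes the proof.

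There is essentially no serious obstacle here: this is a soft compactness argument. The only point requiring a moment's care is the bookkeeping that turns the right action $\|\eta x^\nu\|$ into a left action so that the bare definition of $\sT_\om$ (phrased via strong$^*$ convergence, i.e. left action of $x^\nu$ and $(x^\nu)^*$ on vectors) applies; once that identification via $J$ is in place, the uniform bound $C$ and the finite net do all the work. I would also note that the argument uses only that $\Ps$ is totally bounded, not closedness, but stating it for compact $\Ps$ is harmless.
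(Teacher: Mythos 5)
Your proof is correct and follows essentially the same route as the paper: a finite $\vep$-net on the compact set $\Ps$ combined with the uniform bound $C=\sup_\nu\|x^\nu\|$ and the pointwise strong$^*$ convergence along $\om$. The explicit reduction of the right action to a left action via $J$ is a harmless spelling-out of a step the paper leaves implicit.
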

\begin{proof}
Take $C>0$ with $C>\sup_\nu\|x^\nu\|$.
Let $\vep>0$ and take $\eta_1,\dots,\eta_n\in \Ps$
so that
any $\eta\in \Ps$ has some $\eta_i$ with
$\|\eta-\eta_i\|<\vep/4C$.
Using such $\eta_i$,
we have
\begin{align*}
\|x^\nu \eta\|+\|\eta x^\nu\|
&\leq
\|x^\nu (\eta-\eta_i)\|
+
\|x^\nu \eta_i\|+\|\eta_i x^\nu\|
+
\|(\eta_i-\eta) x^\nu\|
\\
&\leq
C\vep/4C+\|x^\nu \eta_i\|+\|\eta_i x^\nu\|+C\vep/4C
\\
&\leq
\vep/2
+
\max_i(\|x^\nu \eta_i\|+\|\eta_i x^\nu\|).
\end{align*}
Thus
\[
\sup_{\eta\in \Ps}(\|x^\nu \eta\|+\|\eta x^\nu\|)
\leq
\vep/2
+
\max_i(\|x^\nu \eta_i\|+\|\eta_i x^\nu\|)
\quad
\mbox{for all }
\nu\in\N.
\]
If $\nu$ is sufficiently close to $\om$,
the second term in the right hand side
becomes less than $\vep/2$.
Hence we obtain
$\lim_{\nu\to\om}\sup_{\eta\in\Ps}
(\|x^\nu \eta\|+\|\eta x^\nu\|)\leq\vep$.
\end{proof}

In a similar way,
we can prove the following.

\begin{lem}
\label{lem:centralcpct}
Let $(x^\nu)_\nu\in\sC_\om$.
Then
for any compact set $\Ps\subs\cH$,
$\sup_{\eta\in\Ps}\|[x^\nu,\eta]\|$
converges to 0
as $\nu\to\om$.
\end{lem}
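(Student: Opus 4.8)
The plan is to repeat the proof of Lemma~\ref{lem:cpcttriv} almost verbatim: the compactness of $\Ps$ reduces matters, via a finite net, to finitely many vectors, and the genuinely new input is that for a \emph{single} fixed $\eta\in\cH$ one has $\lim_{\nu\to\om}\|[x^\nu,\eta]\|=0$ whenever $(x^\nu)_\nu\in\sC_\om$. This last assertion is a standard property of $\om$-central sequences; the reader may take it from \cite[Chapter~5]{Ocn-act}, and below I indicate how to see it directly. Granting it, put $C:=\sup_\nu\|x^\nu\|$, so that $\|[x^\nu,\eta]-[x^\nu,\eta']\|=\|[x^\nu,\eta-\eta']\|\le 2C\|\eta-\eta'\|$ for all $\eta,\eta'\in\cH$; given $\vep>0$, choose $\eta_1,\dots,\eta_n\in\Ps$ so that every $\eta\in\Ps$ lies within $\vep/4C$ of some $\eta_i$, and exactly as in Lemma~\ref{lem:cpcttriv},
\[
\sup_{\eta\in\Ps}\|[x^\nu,\eta]\|
\le \vep/2+\max_{1\le i\le n}\|[x^\nu,\eta_i]\|
\qquad\text{for all }\nu,
\]
so that $\limsup_{\nu\to\om}\sup_{\eta\in\Ps}\|[x^\nu,\eta]\|\le\vep$, and $\vep$ is arbitrary.

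For the one-vector statement, fix a cyclic and separating vector $\xi_0\in\cP$ (such a vector exists since $\cM_*$ is separable) and let $\vph_0:=\langle\,\cdot\,\xi_0,\xi_0\rangle$ be the corresponding faithful normal state, with modular operator $\Delta$ and modular conjugation $J$. Since $\cM\xi_0$ is dense in $\cH$ and $\eta\mapsto\|[x^\nu,\eta]\|$ is $2C$-Lipschitz uniformly in $\nu$, it suffices to consider $\eta=a\xi_0$ with $a\in\cM$ fixed. Because $Jx^{\nu*}J\in\cM'$ commutes with $a$ and $J\xi_0=\xi_0$, Tomita--Takesaki theory gives $Jx^{\nu*}J\xi_0=Jx^{\nu*}\xi_0=\Delta^{1/2}x^\nu\xi_0$, whence
\[
[x^\nu,a\xi_0]=[x^\nu,a]\xi_0+a\,(1-\Delta^{1/2})x^\nu\xi_0 .
\]
The first summand has norm $\|[x^\nu,a]\|_{\vph_0}\le\sqrt2\,\|[x^\nu,a]\|_{\vph_0}^{\sharp}$, which tends to $0$ as $\nu\to\om$ by the usual $\sharp$-seminorm characterisation of $\om$-centrality (commutators with every element of $\cM$ tend to $0$ in the $\sharp$-seminorm of a fixed faithful normal state).

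The remaining point, $\lim_{\nu\to\om}\|(1-\Delta^{1/2})x^\nu\xi_0\|=0$, is the real obstacle, and I would handle it by modular theory. An $\om$-central sequence is asymptotically $\sigma^{\vph_0}$-invariant, i.e.\ $\|\sigma_t^{\vph_0}(x^\nu)-x^\nu\|_{\vph_0}^{\sharp}\to0$ as $\nu\to\om$ for each $t$; this follows from the fact noted earlier that every element of $\cM_\om$ commutes with $\vph_0^\om$, hence lies in its centraliser, together with $\sigma^{\vph_0^\om}=(\sigma^{\vph_0})^\om$ on $\cM^\om$. Since $\sigma_t^{\vph_0}(x^\nu)\xi_0=\Delta^{it}x^\nu\xi_0$, this gives
\[
\|(1-\Delta^{it})x^\nu\xi_0\|=\|\sigma_t^{\vph_0}(x^\nu)-x^\nu\|_{\vph_0}\longrightarrow 0
\qquad(\nu\to\om),\ t\in\R .
\]
Writing $\mu_\nu$ for the spectral measure of $\Delta$ associated with $x^\nu\xi_0$, this says $\int|\lambda^{it}-1|^2\,d\mu_\nu\to0$ for each $t$; combined with the uniform tail bound $\int\lambda\,d\mu_\nu=\|\Delta^{1/2}x^\nu\xi_0\|^2=\|x^{\nu*}\xi_0\|^2\le C^2\|\xi_0\|^2$ --- and the analogous facts for $(x^{\nu*})_\nu$, which lies in $\sC_\om$ because $\sC_\om$ is a $*$-algebra --- one concludes that $\mu_\nu$ concentrates at $\lambda=1$ as $\nu\to\om$, and then $\int|\lambda^{1/2}-1|^2\,d\mu_\nu\to0$, as required.

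The delicate part of this is the last paragraph: establishing (or invoking) the asymptotic $\sigma^{\vph_0}$-invariance of $\om$-central sequences, and then promoting the resulting asymptotic invariance under the bounded operators $\Delta^{it}$ to asymptotic invariance under the unbounded $\Delta^{1/2}$. That promotion is a measure-theoretic statement of the same flavour as those collected in the appendix, and it is precisely where the $*$-symmetry $x^\nu\leftrightarrow x^{\nu*}$ (which controls the large-$\Delta$ tail of $\mu_\nu$) enters; everything else is the routine bookkeeping of Lemma~\ref{lem:cpcttriv}.
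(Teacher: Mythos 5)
Your argument takes the same route the paper intends: a finite $\vep/4C$-net in $\Ps$ reduces the claim to a single fixed $\eta$, using the uniform $2C$-Lipschitz estimate $\|[x^\nu,\eta]-[x^\nu,\eta']\|\le2C\|\eta-\eta'\|$, and the single-vector statement --- that $\om$-centrality in the predual sense forces $\lim_{\nu\to\om}\|[x^\nu,\eta]\|=0$ --- is the standard equivalence between characterisations of $\om$-central sequences in \cite[Chapter~5]{Ocn-act}. That is exactly what the paper's ``in a similar way'' means, so the main proof is correct.

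The supplementary modular-theoretic sketch has a genuine gap at its last step. From $\int|\lambda^{it}-1|^2\,d\mu_\nu\to 0$ for every $t$ together with the single tail bound $\int\lambda\,d\mu_\nu\le C^2\|\xi_0\|^2$ one cannot conclude $\int|\lambda^{1/2}-1|^2\,d\mu_\nu\to 0$: the measures $\mu_\nu=(1-\nu^{-1})\delta_1+\nu^{-1}\delta_\nu$ satisfy both of these, yet $\int|\lambda^{1/2}-1|^2\,d\mu_\nu\to 1$. The information coming from $(x^{\nu*})_\nu\in\sC_\om$, which you mention in passing as an ``analogous fact,'' is in fact the indispensable second hypothesis: it gives $\int|\lambda^{it}-1|^2\,\lambda\,d\mu_\nu\to 0$, which rules out the example above. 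One then has to combine the two --- for instance, push the bounded positive measures $(1+\lambda)\,d\mu_\nu$ to $\R$ by $\lambda\mapsto\log\lambda$, note that their Fourier transforms converge pointwise to a constant and appeal to L\'evy continuity to see that these measures concentrate at the origin, and finally test against the bounded continuous function $(\lambda^{1/2}-1)^2/(1+\lambda)$, which vanishes at $\lambda=1$. You gesture at this but, as written, the passage from asymptotic $\Delta^{it}$-invariance to asymptotic $\Delta^{1/2}$-invariance is not justified. The appeals to the $\sharp$-seminorm characterisation of $\om$-centrality and to $\sigma^{\vph_0^\om}=(\sigma^{\vph_0})^\om$ are likewise invoked rather than established (the former is in fact an easy consequence of the identity $\vph_0([x^\nu,a]^*[x^\nu,a])=\bigl([\vph_0 a^*,x^{\nu*}]-[\vph_0,x^{\nu*}]a^*\bigr)([x^\nu,a])$, but the latter needs a reference). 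Since you offer the citation to \cite{Ocn-act} as a fallback for the whole single-vector fact, the proposal overall is sound; the sketch, however, should not be regarded as a complete alternative proof in its present form.
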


\begin{lem}
\label{lem:normalize}
Let $(x^\nu)_\nu\in \ell^\infty(\cM)$
and $\xi\in\cH$ a cyclic and separating vector for $\cM$.
Then the following statements are equivalent:
\begin{enumerate}
\item 
$(x^\nu)_\nu\in \sN_\om$;
\item
For any $\vep>0$ and compact set $\Ps\subs \cH$,
there exist $\de>0$ and $W\in \om$
such that
if $y\in\cM_1$ and $\|y\xi\|+\|\xi y\|<\de$,
then
$\sup_{\eta\in\Ps}(\|x^\nu y\eta\|+\|\eta x^\nu y\|)<\vep$,
and
$\sup_{\eta\in\Ps}(\|yx^\nu\eta\|+\|\eta yx^\nu\|<\vep$
for all $\nu\in W$.
\end{enumerate}
\end{lem}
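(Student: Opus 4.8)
The plan is to prove the two implications separately. The implication $(2)\Rightarrow(1)$ is a routine normalization, while $(1)\Rightarrow(2)$ carries the real content, and I would obtain it by contraposition together with a diagonalization over the ultrafilter $\om$. Two standing facts will be used without further comment: a bounded sequence $(b^\nu)_\nu$ lies in $\sT_\om$ if and only if $\|b^\nu\xi\|+\|\xi b^\nu\|\to 0$ as $\nu\to\om$ (here one uses that $\xi$, hence also $J\xi$, is cyclic and separating for $\cM$), and $(x^\nu)_\nu\in\sN_\om$ means exactly that $(x^\nu a^\nu)_\nu\in\sT_\om$ and $(a^\nu x^\nu)_\nu\in\sT_\om$ for every $(a^\nu)_\nu\in\sT_\om$.

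For $(2)\Rightarrow(1)$, fix $(a^\nu)_\nu\in\sT_\om$, put $C:=1+\sup_\nu\|a^\nu\|$, and let $\vep>0$. Apply $(2)$ with this $\vep$ and $\Ps:=\{\xi\}$ to get $\de>0$ and $W\in\om$. Since $\|(a^\nu/C)\xi\|+\|\xi(a^\nu/C)\|\to 0$ as $\nu\to\om$, there is $W'\in\om$ with $\|(a^\nu/C)\xi\|+\|\xi(a^\nu/C)\|<\de$ for all $\nu\in W'$. For a fixed $\nu_0\in W\cap W'$, taking $y:=a^{\nu_0}/C\in\cM_1$ in $(2)$ and specializing its conclusion to the index $\nu_0\in W$ with $\eta=\xi$ yields $\|x^{\nu_0}a^{\nu_0}\xi\|+\|\xi x^{\nu_0}a^{\nu_0}\|<C\vep$ and, from the second inequality of $(2)$, $\|a^{\nu_0}x^{\nu_0}\xi\|+\|\xi a^{\nu_0}x^{\nu_0}\|<C\vep$. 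Since $W\cap W'\in\om$ and $\vep$ was arbitrary, both $(x^\nu a^\nu)_\nu$ and $(a^\nu x^\nu)_\nu$ lie in $\sT_\om$, so $(x^\nu)_\nu\in\sN_\om$.

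For $(1)\Rightarrow(2)$, I argue the contrapositive: assume $(2)$ fails, with witnesses $\vep_0>0$ and a compact $\Ps_0\subs\cH$. For $\de>0$, let $B_\de$ be the set of indices $\nu$ for which there exists $y\in\cM_1$ with $\|y\xi\|+\|\xi y\|<\de$ such that $\sup_{\eta\in\Ps_0}(\|x^\nu y\eta\|+\|\eta x^\nu y\|)\ge\vep_0$ or $\sup_{\eta\in\Ps_0}(\|yx^\nu\eta\|+\|\eta yx^\nu\|)\ge\vep_0$. The failure of $(2)$, applied with an arbitrary $W\in\om$, says precisely that $W\cap B_\de\ne\emptyset$; since $\om$ is an ultrafilter this forces $B_\de\in\om$ for every $\de>0$. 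Note that $B_{\de'}\subseteq B_\de$ whenever $\de'<\de$, so for $\nu\in B_1$ the set $\{j\ge 1:\nu\in B_{1/j}\}$ is an initial segment of $\N$; let $m_\nu\in\N\cup\{\infty\}$ be its supremum and set $k(\nu):=\min(m_\nu,\nu)$, so that $\nu\in B_{1/k(\nu)}$. A short check gives $\{\nu:k(\nu)\ge j\}=B_{1/j}\cap[j,\infty)\in\om$ for each $j$, that is, $k(\nu)\to\infty$ as $\nu\to\om$. For $\nu\in B_1$ choose $y^\nu\in\cM_1$ realizing $\nu\in B_{1/k(\nu)}$, and set $y^\nu:=0$ for $\nu\notin B_1$; then $(y^\nu)_\nu$ is bounded and $\|y^\nu\xi\|+\|\xi y^\nu\|<1/k(\nu)\to 0$ as $\nu\to\om$, so $(y^\nu)_\nu\in\sT_\om$. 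Splitting $B_1$ according to which of the two displayed inequalities holds for $y^\nu$, one of the two pieces lies in $\om$. If it is the first, then $\sup_{\eta\in\Ps_0}(\|x^\nu y^\nu\eta\|+\|\eta x^\nu y^\nu\|)\ge\vep_0$ along a set in $\om$, so this quantity does not tend to $0$ as $\nu\to\om$, whence $(x^\nu y^\nu)_\nu\notin\sT_\om$ by Lemma \ref{lem:cpcttriv}; if it is the second, then similarly $(y^\nu x^\nu)_\nu\notin\sT_\om$. In either case $(x^\nu)_\nu$ does not normalize $\sT_\om$, contradicting $(1)$.

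The main obstacle is the diagonalization in $(1)\Rightarrow(2)$. Since $\om$ is only finitely additive, one cannot simply intersect the countably many sets $B_{1/j}\in\om$; instead one must build a single offending sequence $(y^\nu)_\nu\in\sT_\om$ by letting the resolution $k(\nu)$ grow to infinity slowly enough --- along $\om$ --- that $\nu$ still lies in $B_{1/k(\nu)}$. A secondary point, handled by Lemma \ref{lem:cpcttriv}, is that the data produced by the failure of $(2)$ is recorded with $\eta$ running over a compact set rather than over $\xi$ alone, so one must pass between strong$*$-smallness of a bounded sequence and uniform smallness on compacta in order to convert ``$\sup_\eta(\cdots)\ge\vep_0$ along $\om$'' into ``$\notin\sT_\om$''.
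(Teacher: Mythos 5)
Your proof is correct and follows essentially the same route as the paper's: $(2)\Rightarrow(1)$ is the easy normalization, and $(1)\Rightarrow(2)$ is proved by contradiction via a diagonalization over $\om$ that assembles the witnesses into a single sequence in $\sT_\om$ whose product with $(x^\nu)_\nu$ fails to be $\om$-trivial, contradicting Lemma \ref{lem:cpcttriv}. The only (cosmetic) difference is in the bookkeeping of the negation: the paper fixes one witness $y_n$ per resolution $1/n$ and intersects nested sets $A_1\cap\cdots\cap A_n\cap[n,\infty)$, whereas you let the witness depend on $\nu$ through the sets $B_\de$ and a rate function $k(\nu)$ — which, if anything, is the more scrupulous reading of the failure of $(2)$.
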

\begin{proof}
(1)$\Rightarrow$(2).
Suppose on the contrary
that
there exist $\vep>0$ and a compact set
$\Ps\subs\cH$
such that
for any $n\in\N$,
there exists $y_n\in\cM_1$ with
$\|y_n\xi\|+\|\xi y_n\|<1/n$,
but
the following set belongs to $\om$:
\[
A_n:=
\left\{\nu\in\N\mid
\sup_{\eta\in\Ps}(\|x^\nu y_n\eta\|+\|\eta x^\nu y_n\|)
+
\sup_{\eta\in\Ps}(\|y_nx^\nu\eta\|+\|\eta y_nx^\nu\|)
\geq
\vep
\right\}.
\]
Let $W_0:=\N$ and $W_n:=A_1\cap\cdots\cap A_n\cap[n,\infty)$
for $n\geq1$.
We may and do assume that $W_n\supsetneq W_{n+1}$.
For $\nu\in W_n\setminus W_{n+1}$,
we set $z^\nu:=y_n$.
Then $(z^\nu)_\nu\in\sT_\om$,
and $(x^\nu z^\nu)_\nu, (z^\nu x^\nu)_\nu\in\sT_\om$
since $(x^\nu)_\nu\in\sN_\om$.
Nevertheless, we have
\[
\sup_{\eta\in\Ps}(\|x^\nu z^\nu\eta\|+\|\eta x^\nu z^\nu\|)
+
\sup_{\eta\in\Ps}(\|z^\nu x^\nu\eta\|+\|\eta z^\nu x^\nu\|)
\geq
\vep
\quad\mbox{for all }\nu\in\N,
\]
which is a contradiction to Lemma \ref{lem:cpcttriv}.

(2)$\Rightarrow$(1).
This implication is trivial.
\end{proof}

The following result is probably well-known for experts
(see \cite[Lemma 2.11]{Co-almost} for example),
but we give a proof for readers' convenience.

\begin{lem}
\label{lem:tensorBH}
Let $\cM$ be a separable von Neumann algebra
and $\cQ$ a separable type I factor.
Put $\cN:=\cM\oti\cQ$.
Then $\cN^\om\cong\cM^\om\oti\cQ$ and $\cN_\om\cong\cM_\om\oti\C$,
naturally.
\end{lem}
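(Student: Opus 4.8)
The plan is to reduce both assertions to matrix-unit bookkeeping. Since $\cQ$ is a separable type I factor, fix an identification $\cQ\cong B(H)$ with $H$ separable, an orthonormal basis $\{\xi_k\}$ of $H$, and the associated matrix units $\{e_{kl}\}$, so that $\sum_k e_{kk}=1$ and the projections $p_n:=\sum_{k\le n}e_{kk}$ increase to $1$. For $(x^\nu)_\nu\in\ell^\infty(\cN)$ define the entries $x_{kl}^\nu\in\cM$ by $x_{kl}^\nu\oti e_{11}:=(1\oti e_{1k})x^\nu(1\oti e_{l1})$, using the canonical isomorphism $(1\oti e_{11})\cN(1\oti e_{11})\cong\cM$; then $x^\nu=\sum_{k,l}x_{kl}^\nu\oti e_{kl}$ $\sigma$-strongly. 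Because $\sN_\om(\cN)$, $\sC_\om(\cN)$ and $\sT_\om(\cN)$ are $\cN$-bimodules (constant sequences lie in $\cN\subset\sC_\om(\cN)$) and are reduced compatibly by the projection $1\oti e_{11}$, each $(x_{kl}^\nu)_\nu$ lies in $\sN_\om(\cM)$ whenever $(x^\nu)_\nu\in\sN_\om(\cN)$; write $a_{kl}\in\cM^\om$ for its class.

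First isomorphism. Put $R:=\cQ'\cap\cN^\om$. Since the image of $\cQ\cong B(H)$ in $\cN^\om$ is a type I factor, $\cN^\om\cong\cQ\oti R$. The normal map $\cM^\om\ni\pi_\om((x^\nu)_\nu)\mapsto\pi_\om((x^\nu\oti1)_\nu)\in\cN^\om$ is injective (if $(x^\nu\oti1)_\nu\in\sT_\om(\cN)$ then $(x^\nu)_\nu\in\sT_\om(\cM)$, tested with $\vph\oti\chi$) and its range commutes with $\cQ$, hence lies in $R$; so we get an embedding $\Phi\colon\cM^\om\oti\cQ\hookrightarrow R\oti\cQ\cong\cN^\om$ with $\Phi(a\oti e_{kl})=a\,(1\oti e_{kl})$. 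For surjectivity, given $(x^\nu)_\nu\in\sN_\om(\cN)$ one computes
\[
(1\oti p_n)\,\pi_\om((x^\nu)_\nu)\,(1\oti p_n)=\sum_{k,l\le n}\Phi(a_{kl}\oti e_{kl})\in\Phi(\cM^\om\oti\cQ).
\]
Faithfulness of $\ta^\om$ forces $\sup_n(1\oti p_n)=1$ in $\cN^\om$, so $1\oti p_n\to1$ $*$-strongly and the left-hand side converges $*$-strongly to $\pi_\om((x^\nu)_\nu)$; as $\Phi(\cM^\om\oti\cQ)$ is $\sigma$-weakly closed (being the range of a normal $*$-homomorphism), $\pi_\om((x^\nu)_\nu)$ lies in it. Hence $\Phi$ is onto, $\cN^\om\cong\cM^\om\oti\cQ$, and $R=\cM^\om$.

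Second isomorphism. The inclusion $\cM_\om\hookrightarrow\cN_\om$, $(x^\nu)_\nu\mapsto(x^\nu\oti1)_\nu$, is routine: a functional on $\cN$ is approximated in norm by finite sums $\vph\oti\chi$, and $x^\nu\oti1$ commutes exactly with $1\oti\cQ$. Conversely, take $(x^\nu)_\nu\in\sC_\om(\cN)$. Recall that an $\om$-central sequence commutes $*$-strongly with $\cN$ in the limit (so $\cN_\om\subset\cN'\cap\cN^\om$; see \cite[Ch.~5]{Ocn-act}); hence $\pi_\om((x^\nu)_\nu)$ commutes with $1\oti\cQ$ inside $\cN^\om\cong\cM^\om\oti\cQ$, and since $\cQ$ is a factor this places it in $(\cM^\om\oti\cQ)\cap(1\oti\cQ)'=\cM^\om\oti1$. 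Writing $\pi_\om((x^\nu)_\nu)=a\oti1$ and cutting by $1\oti e_{11}$ identifies $a$ with $a_{11}$, the class of $(x_{11}^\nu)_\nu$. Finally, testing $\om$-centrality of $(x^\nu)_\nu$ against $\vph\oti\chi$, $\vph\in\cM_*$, where $\chi\in\cQ_*$ is the vector state with $\chi(e_{kl})=\de_{k1}\de_{l1}$, yields $\|[x_{11}^\nu,\vph]\|\to0$ as $\nu\to\om$ for all $\vph$; thus $(x_{11}^\nu)_\nu\in\sC_\om(\cM)$, so $a=a_{11}\in\cM_\om$ and $\cN_\om=\cM_\om\oti1$.

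The step I expect to be the main obstacle is the case $\dim H=\infty$: there is no trace on $\cQ$ with which to average the diagonal entries, and a priori only finitely many entries $a_{kl}$ are controlled at a time. This is exactly where the hypothesis ``type I'' (rather than merely ``finite-dimensional'') is used — it gives the splitting $\cN^\om\cong\cQ\oti(\cQ'\cap\cN^\om)$, which both makes $\Phi$ well defined and injective and, via $1\oti p_n\nearrow1$ inside $\cN^\om$, lets the finite matrix truncations of $x^\nu$ recover $\pi_\om((x^\nu)_\nu)$. The remaining ingredients — the reduction theory of $\sN_\om$, $\sC_\om$, $\sT_\om$ by a projection, and the identification $\cN_\om\subset\cN'\cap\cN^\om$ — are standard.
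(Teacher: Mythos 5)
Your overall strategy coincides with the paper's: split $\cN^\om$ as $(\cQ'\cap\cN^\om)\vee\cQ$ using that $\cQ$ is a type I factor, recover elements from their matrix entries via the reduction by $1\oti e_{11}$, and locate $\cN_\om$ inside $\cQ'\cap\cN^\om$. Your surjectivity argument (truncation by $1\oti p_n\nearrow 1$ plus $\sigma$-weak closedness of the range of $\Phi$) is a legitimate variant of the paper's, which instead cuts down by a single minimal projection and uses faithfulness of $y\mapsto y(1\oti p)$ on $\cQ'\cap\cN^\om$.

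There is, however, a gap at the very first step, where you introduce ``the normal map $\cM^\om\ni\pi_\om((x^\nu)_\nu)\mapsto\pi_\om((x^\nu\oti1)_\nu)\in\cN^\om$.'' For the right-hand side to make sense you must first show that $(x^\nu)_\nu\in\sN_\om(\cM)$ implies $(x^\nu\oti1)_\nu\in\sN_\om(\cN)$, i.e.\ that $x^\nu\oti1$ normalizes the $\om$-trivial sequences \emph{of $\cN$}. This is not automatic: $\sT_\om(\cN)$ is strictly larger than $\sT_\om(\cM)\oti1$ (an $\om$-trivial sequence of $\cN$ can have nontrivial behaviour in the $\cQ$-direction), so the condition to be verified in $\cN$ is genuinely stronger than the hypothesis you are given in $\cM$. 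You justify injectivity of the map but not its well-definedness, and the same issue reappears when you write $\Phi(a_{kl}\oti e_{kl})=\pi_\om\bigl((x_{kl}^\nu\oti e_{kl})_\nu\bigr)$ in the surjectivity step. This is precisely the content of the Claim that the paper isolates: for $(y^\nu)_\nu\in\sT_\om(\cN)$ and faithful states $\th_1\in(\cM_*)_+$, $\th_2\in(\cQ_*)_+$ one computes
\[
\|y^\nu(x^\nu\oti1)\|_{\th_1\oti\th_2}^2
=\th_1\bigl((x^\nu)^*(\id\oti\th_2)((y^\nu)^*y^\nu)\,x^\nu\bigr),
\]
notes that $(\id\oti\th_2)((y^\nu)^*y^\nu)^{1/2}$ is an $\om$-trivial sequence in $\cM$, and then invokes the normalizer property of $(x^\nu)_\nu$ in $\cM$ (and similarly for the remaining three products needed for strong$*$ convergence). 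Once this slice-map verification is inserted, the rest of your argument goes through.
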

\begin{proof}
Let us use the notations
$\sT_\om(\cN)$, $\sN_\om(\cN)$, $\sT_\om(\cM)$ and $\sN_\om(\cM)$
to distinguish $\sT_\om$ and $\sN_\om$ of $\cN$ and $\cM$.
We prove the following claim.

\begin{clam}
$(x^\nu)_\nu\in\sN_\om(\cM)$
if and only if
$(x^\nu\oti1)_\nu\in\sN_\om(\cN)$.
\end{clam}
\begin{proof}[Proof of Claim.]
The ``if'' part is trivial.
We show the ``only if'' part.
Suppose that
$(x^\nu)_\nu\in\sN_\om(\cM)$.
Let $(y^\nu)_\nu\in\sT_\om(\cN)$.
Let $\th_1\in(\cM_*)_+$ and $\th_2\in(\cQ_*)_+$
be faithful states.
Then
$\|y^\nu (x^\nu\oti1)\|_{\th_1\oti \th_2}^2
=
\th_1((x^\nu)^* (\id\oti\th_2)((y^\nu)^*y^\nu)x^\nu)
$.
Since $(\id\oti\th_2)((y^\nu)^*y^\nu)^{1/2}\in\sT_\om(\cM)$,
it turns out that
$\|y^\nu (x^\nu\oti1))\|_{\th_1\oti \th_2}\to0$
as $\nu\to\om$.
Thus $y^\nu (x^\nu\oti1)\to0$ strongly as $\nu\to\om$.
In a similar way,
we can show that $y^\nu(x^\nu\oti1)$ and $(x^\nu\oti1)y^\nu$
converges to 0 in the strong* topology as $\nu\to\om$.
Thus $(x^\nu\oti1)_\nu\in\sN_\om(\cN)$.
\end{proof}

Let us consider the inclusion
$\cQ\subs\cN^\om$.
Since $\cQ$ is a type I factor,
we have the tensor product decomposition
$\cN^\om=(\cQ'\cap \cN^\om)\vee\cQ\cong(\cQ'\cap \cN^\om)\oti\cQ$.
Let $p$ be a minimal projection of $\cQ$.
Let $x\in \cQ'\cap \cN^\om$ and
$(a^\nu)_\nu$ its representing sequence.
Take $x^\nu\in\cM$ with $(1\oti p)a^\nu(1\oti p)=x^\nu\oti p$.
Then $x(1\oti p)=(1\oti p)x(1\oti p)=\pi_\om((x^\nu\oti p)_\nu)$.
Since $(x^\nu\oti p)_\nu\in\sN_\om(\cN)$,
$(x^\nu)_\nu\in\sN_\om(\cM)$.

By the claim above,
we can consider the element $\pi_\om((x^\nu\oti1)_\nu)\in\cN^\om$.
Hence we have
$x(1\oti p)=\pi_\om((x^\nu\oti1)_\nu)(1\oti p)$.
Since the normal $*$-homomorphism
$\cQ'\cap \cN^\om\ni y\mapsto y(1\oti p)\in (\cQ'\cap\cN^\om)_p$
is faithful,
we have
$x=\pi_\om((x^\nu\oti1)_\nu)$.

Thus we obtain the natural $*$-homomorphism
$\Ph\col\cQ'\cap \cN^\om\to\cM^\om$
defined by
$\Ph(x)=\pi_\om((x^\nu)_\nu)$.
The faithfulness of $\Ph$ is trivial.
The claim above implies the surjectivity of $\Ph$.
Hence $\Ph$ is an isomorphism,
and we obtain an isomorphism
$\Ps\col \cN^\om\ra\cM^\om\oti\cQ$.

Since $\cN_\om\subs \cQ'\cap \cN^\om$,
$\Ph$ maps $\cN_\om$ into $\cM^\om\oti\C$.
Then it is immediately verified that
the image is precisely equal to $\cM_\om\oti\C$.

We verify the naturality of $\Ps$ as follows.
Let $\{e_{ij}\}_{i,j\in I}$ be a system of matrix units
of $\cQ$
such that $e_{ii}$ are minimal projections
and $\sum_i e_{ii}=1$.
Let $x=\pi_\om((x^\nu)_\nu)\in\cN^\om$.
For each $\nu$,
we have the decomposition
$x^\nu=\sum_{i,j}x_{ij}^\nu\oti e_{ij}$
with $x_{ij}^\nu\in\cM$.
It is easy to see that
$(x_{ij}^\nu)_\nu\in\sN_\om(\cM)$
for all $i,j\in I$.

Then
$\Ps\left(
\pi_\om((x_{ij}^\nu\oti e_{ij})_\nu)
\right)
=
\pi_\om((x_{ij}^\nu)_\nu)\oti e_{ij}$.
For a diagonal
finite rank projection $q\in \cQ$,
we obtain
\[
\Ps((1\oti q)x(1\oti q))
=(1\oti q)
\cdot
\big{(}\sum_{i,j}\pi_\om((x_{ij}^\nu)_\nu)\oti e_{ij}
\big{)}
\cdot(1\oti q)
\]
Since $\|\Ps((1\oti q)x(1\oti q))\|\leq\|x\|$ for any $q$,
the operator
$\sum_{i,j}\pi_\om((x_{ij}^\nu)_\nu)\oti e_{ij}$
is norm bounded.
Hence letting $q\to1$,
we have
\[
\Ps(x)=\sum_{i,j}\pi_\om((x_{ij}^\nu)_\nu)\oti e_{ij}.
\]
\end{proof}

\subsection{Ultraproduct of reduced von Neumann algebras}

Let $\cM$ be a von Neumann algebra.
For a projection $p\in\cM$,
we denote by $\cM_p$ the reduced von Neumann algebra.

\begin{lem}
\label{lem:reduced}
The following properties hold:
\begin{enumerate}
\item
$\sT_\om(\cM_p)
=\{(px^\nu p)_\nu\mid (x^\nu)_\nu\in\sT_\om(\cM)\}
\subs\sT_\om(\cM)$;

\item
$\sN_\om(\cM_p)
=\{(px^\nu p)_\nu\mid (x^\nu)_\nu\in\sN_\om(\cM)\}
\subs\sN_\om(\cM)$;

\item
$\sC_\om(\cM_p)=\{(px^\nu p)_\nu\mid (x^\nu)_\nu\in\sC_\om(\cM)\}$.
\end{enumerate}
\end{lem}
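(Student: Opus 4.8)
The plan is to prove each of the three equalities by exhibiting the obvious ``compression by $p$'' map and checking that it behaves well with respect to the ultrafilter limits. In all three cases the inclusion of the right-hand set into $\ell^\infty(\cM_p)$ is clear since $p\cM_p p=\cM_p$, and the real content is that compression is surjective onto the corresponding sequence algebra of $\cM_p$ and that it preserves and reflects triviality, normalization, and centrality. First I would fix a faithful normal state $\vph$ on $\cM$ with $\vph(p)\ne 0$, so that $\vph|_{\cM_p}$ (suitably normalized) is a faithful normal state on $\cM_p$; a vector $\xi\in\cP$ implementing $\vph$ then has $p\xi$ cyclic and separating for $\cM_p$ acting on $\overline{\cM_p p\xi}$. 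The key elementary observation is that for $(x^\nu)_\nu\in\ell^\infty(\cM)$, the sequence $(px^\nu p)_\nu$ is $\om$-trivial in $\cM_p$ if and only if $px^\nu p\to 0$ strongly$*$ as $\nu\to\om$ in $\cM$, because the strong$*$ topology of $\cM_p$ is just the restriction of that of $\cM$ to $p\cM p$.

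For (1): given $(y^\nu)_\nu\in\sT_\om(\cM_p)$, simply view each $y^\nu\in\cM_p\subs\cM$; then $y^\nu=py^\nu p$ and $y^\nu\to 0$ strongly$*$ in $\cM$, so $(y^\nu)_\nu\in\sT_\om(\cM)$, and it is of the stated form. Conversely if $(x^\nu)_\nu\in\sT_\om(\cM)$ then $px^\nu p\to 0$ strongly$*$, so $(px^\nu p)_\nu\in\sT_\om(\cM_p)$. For (3): recall that $\om$-centrality of $(z^\nu)_\nu$ in $\cM_p$ means $\|[\psi,z^\nu]\|\to 0$ for all $\psi\in(\cM_p)_*$; since every $\psi\in(\cM_p)_*$ extends (via $x\mapsto\psi(pxp)$) to a normal functional on $\cM$, and conversely restriction maps $\cM_*$ onto $(\cM_p)_*$, one checks that $(px^\nu p)_\nu\in\sC_\om(\cM_p)$ iff $\|[\psi,px^\nu p]\|\to 0$ for all $\psi\in\cM_*$. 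Writing $[\psi,px^\nu p]$ and using that for $(x^\nu)_\nu\in\sC_\om(\cM)$ the commutator $[\psi a, x^\nu]\to 0$ for all $a\in\cM$ together with the near-commutation of $x^\nu$ with $p$, one gets that $(x^\nu)_\nu\in\sC_\om(\cM)$ implies $(px^\nu p)_\nu\in\sC_\om(\cM_p)$; the reverse inclusion is the surjectivity, handled below exactly as in (2).

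For (2), and for the surjectivity clauses of (1) and (3): the point is to show every $(y^\nu)_\nu\in\sN_\om(\cM_p)$ arises as $(px^\nu p)_\nu$ for some $(x^\nu)_\nu\in\sN_\om(\cM)$ — the naive choice $x^\nu:=y^\nu$ already works since $y^\nu=py^\nu p$, provided we check $(y^\nu)_\nu\in\sN_\om(\cM)$, i.e. that $y^\nu$ multiplied on either side by an $\om$-trivial sequence of $\cM$ stays $\om$-trivial. If $(t^\nu)_\nu\in\sT_\om(\cM)$ then $(pt^\nu p)_\nu\in\sT_\om(\cM_p)$ by (1), so $(y^\nu\cdot pt^\nu p)_\nu$ and $(pt^\nu p\cdot y^\nu)_\nu$ lie in $\sT_\om(\cM_p)\subs\sT_\om(\cM)$; but $y^\nu t^\nu=y^\nu p t^\nu=y^\nu(pt^\nu p)+y^\nu p t^\nu(1-p)$, and the second piece converges strongly$*$ to $0$ as $\nu\to\om$ since $t^\nu\to 0$ strongly$*$ and $y^\nu$ is bounded, hence $(y^\nu t^\nu)_\nu\in\sT_\om(\cM)$; symmetrically for $t^\nu y^\nu$. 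Thus $(y^\nu)_\nu\in\sN_\om(\cM)$ and $y^\nu=py^\nu p$, giving the nontrivial inclusion in (2). The main obstacle I anticipate is the careful bookkeeping in (3), where from $(px^\nu p)_\nu$ $\om$-central in $\cM_p$ one must recover genuine $\om$-centrality of some sequence in all of $\cM$: the clean fix is to note $(px^\nu p)_\nu$ is already a sequence in $\cM_p\subs\cM$, and its $\om$-centrality as a sequence in $\cM$ follows because a normal functional $\psi$ on $\cM$ splits as $p\psi p+(1-p)\psi+\psi(1-p)$ relative to $p$, the last two terms contribute $0$ to $[\psi,px^\nu p]$, and $[p\psi p,px^\nu p]=[\psi|_{\cM_p},px^\nu p]\to 0$ by hypothesis — so in fact $(px^\nu p)_\nu\in\sC_\om(\cM)$ directly, and it equals the compression of itself, completing all three identifications.
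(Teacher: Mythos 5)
There is a genuine gap in part (3). You claim that $(px^\nu p)_\nu\in\sC_\om(\cM_p)$ directly gives $(px^\nu p)_\nu\in\sC_\om(\cM)$, arguing that in the split of a normal functional relative to $p$ ``the last two terms contribute $0$.'' This is false. Writing $q:=1-p$, for $z\in\cM_p$ and $\vph\in\cM_*$ one has, as elements of $\cM_*$,
\[
[\vph,z]=[p\vph p,z]+q\vph z-z\vph q,
\]
and the cross-terms $q\vph z-z\vph q$ vanish only after restriction to $\cM_p$; as elements of $\cM_*$ they are generally nonzero and are \emph{not} controlled by $(\cM_p)_*$-centrality of the sequence. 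The constant sequence $(p)_\nu$ is already a counterexample: it lies in $\sC_\om(\cM_p)$ trivially (being $1_{\cM_p}$), yet it lies in $\sC_\om(\cM)$ only if $p\in Z(\cM)$. Notice that, unlike items (1) and (2), item (3) does \emph{not} assert $\sC_\om(\cM_p)\subs\sC_\om(\cM)$ --- this is exactly why, and it should have been a warning that ``take $x^\nu:=z^\nu$'' cannot work here.

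The whole content of (3) is to handle these cross-terms. Given $(z^\nu)_\nu\in\sC_\om(\cM_p)$ one must construct an auxiliary $(y^\nu)_\nu\in\sC_\om(\cM_q)$ so that $\lim_{\nu\to\om}\|q\vph z^\nu-y^\nu\vph p\|=0$ and $\lim_{\nu\to\om}\|z^\nu\vph q-p\vph y^\nu\|=0$ for every $\vph\in\cM_*$; then $(z^\nu+y^\nu)_\nu\in\sC_\om(\cM)$ and $p(z^\nu+y^\nu)p=z^\nu$. The paper builds $y^\nu$ by choosing a maximal family $\{q_i\}$ of sub-projections of $q$ with $q_i\preceq p$ (after reducing by a central projection so that $\sum_i q_i=q$), taking partial isometries $v_i$ with $v_i^*v_i=q_i$, $v_iv_i^*\le p$, and setting $y^\nu:=\sum_i v_i^*z^\nu v_i$. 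In the counterexample above this produces $y^\nu=q$ and $x^\nu=1$, which is of course $\om$-central.

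There is also a smaller gap in (2). You assert $y^\nu p t^\nu q\to0$ strongly$*$ ``since $t^\nu\to0$ strongly$*$ and $y^\nu$ is bounded,'' but on the adjoint side $q(t^\nu)^*p(y^\nu)^*\eta$ applies $(t^\nu)^*$ to the $\nu$-dependent vector $p(y^\nu)^*\eta$, and strong convergence of a bounded operator sequence is not uniform over bounded sets of vectors. You must again use the normalizer property of $(y^\nu)_\nu$: since $0\le pt^\nu q(t^\nu)^*p\le pt^\nu(t^\nu)^*p$ one has $(pt^\nu q(t^\nu)^*p)_\nu\in\sT_\om(\cM_p)$, hence $\|q(t^\nu)^*p(y^\nu)^*\eta\|^2=\langle pt^\nu q(t^\nu)^*p(y^\nu)^*\eta,(y^\nu)^*\eta\rangle\to0$. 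The paper avoids the split altogether and computes
\[
\|t^\nu y^\nu\xi\|^2=\langle p|t^\nu|^2p\,y^\nu\xi,\,y^\nu\xi\rangle
\]
directly, using $(p|t^\nu|^2p)_\nu\in\sT_\om(\cM_p)$. Part (1) and the easy inclusions in (2), (3) are fine.
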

\begin{proof}
(1).
It is trivial.

(2).
It suffices to show that
the inclusion
$\sN_\om(\cM_p)\subs\sN_\om(\cM)$
because the others are clear.
Let $(x^\nu)_\nu\in\sN_\om(\cM_p)$
and $(y^\nu)_\nu\in\sT_\om(\cM)$.
Let $\xi\in\cH$.
Since $p|y^\nu|^2 p\to0$ in the strong$*$
topology as $\nu\to\om$,
we obtain
\[
\|y^\nu x^\nu\xi\|^2
=
\langle p|y^\nu|^2 px^\nu\xi,x^\nu\xi\rangle
\to0.
\]
Hence $(x^\nu)_\nu\in\sN_\om$.

(3).
It is clear that
the right hand side is contained in the left.
We will show the converse inclusion.
Let $(x^\nu)_\nu\in\sC_\om(\cM_p)$
and $\vph\in\cM_*$.
In the following,
we assume that $\|x^\nu\|\leq1$.
Put $q:=1-p$.
For any $y\in\cM_q$,
we have
\[
[\vph,x^\nu+y]
=
[p\vph p,x^\nu]
+
q\vph x^\nu-x^\nu\vph q
+
[q\vph q,y]
+
p\vph y-y\vph p.
\]
Thus it suffices to
show that
there exists $(y^\nu)_\nu\in\sC_\om(\cM_q)$
such that
\begin{equation}
\label{eq:qxnuy}
\lim_{\nu\to\om}
\|q\vph x^\nu-y^\nu\vph p\|=0,
\quad
\lim_{\nu\to\om}
\|x^\nu\vph q-p\vph y^\nu\|=0
\quad
\mbox{for all }
\vph\in\cM_*.
\end{equation}
If this is the case,
then indeed we obtain
$(x^\nu+y^\nu)_\nu\in\sC_\om(\cM)$
and
$x^\nu=p(x^\nu+y^\nu)p$.

Take a maximal orthogonal family
of projections
$\{q_i\}_{i\in I}$
such that $q_i\leq q$
and $q_i\preceq p$.
We let $r:=q-\sum_i q_i$.
Then there exist
$z_1,z_2\in Z(\cM)^{\rm P}$
such that
$z_1+z_2=1$
and
$rz_1\succeq pz_1$
$rz_2\preceq pz_2$.
By maximality,
we deduce $pz_1=0$ and $rz_2=0$.
Hence $pz_2=p$.
Since
$z_2 x^\nu=x^\nu=x^\nu z_2$,
we get
$q\vph x^\nu= q\vph z_2 x^\nu=qz_2 \vph x^\nu$,
and likewise,
$x^\nu\vph q=x^\nu\vph qz_2$
for all $\nu$ and $\vph$.
Hence
we may and do assume that
$z_2=1$, that is, $r=0$.
Then $q=\sum_i q_i$.

For each $i$,
take $v_i\in\cM^{\rm PI}$
such that
$q_i=v_i^*v_i$
and
$v_iv_i^*\leq p$.
Set
$y^\nu:=\sum_i v_i^* x^\nu v_i$.
Then $\|y^\nu\|\leq \|x^\nu\|\leq1$,
and $(y^\nu)_\nu\in \ell^\infty(\cM_q)$.
We will check that
$(y^\nu)_\nu\in\sC_\om(\cM_q)$.
Let $\vep>0$ and $\vph\in \cM_*$ a state.
Take a finite subset $J\subs I$
such that
$\vph(q-q_J)<\vep$
with $q_J:=\sum_{i\in J}q_i$.
Then
\begin{align*}
\|q\vph q-q_J \vph q_J\|
&\leq
\|q_J\vph\cdot(q-q_J)\|
+
\|(q-q_J)\vph q\|
\\
&\leq
2\vph(q-q_J)^{1/2}<2\vep^{1/2}.
\end{align*}
Using this,
we obtain
\begin{align*}
\|y^\nu q\vph q-q\vph q y^\nu\|
&\leq
2\|y^\nu\|\|q\vph q-q_J\vph q_J\|
+
\|y^\nu q_J\vph q_J-q_J\vph q_J y^\nu\|
\\
&\leq
4\vep^{1/2}
+
\sum_{i,j\in J}
\|v_i^* x^\nu v_i\vph q_j
-q_j\vph v_i^* x^\nu v_i\|
\\
&=
4\vep^{1/2}
+
\sum_{i,j\in J}
\|v_i^* [x^\nu, v_i\vph v_j^*]v_j\|
\\
&\leq
4\vep^{1/2}
+
\sum_{i,j\in J}
\|[x^\nu, v_i\vph v_j^*]\|.
\end{align*}
Since $\|[x^\nu, v_i\vph v_j^*]\|\to0$
as $\nu\to\om$,
we have
$\|y^\nu q\vph q-q\vph q y^\nu\|<5\vep^{1/2}$
for $\nu$ being close to $\om$.
Thus $(y^\nu)_\nu\in\sC_\om(\cM_q)$.

Now we will check (\ref{eq:qxnuy}).
Using $\vph(q-q_J)<\vep$,
we have
\begin{align*}
\|q\vph x^\nu-y^\nu\vph p\|
&\leq
\|q\vph x^\nu-q_J\vph x^\nu\|
+
\|y^\nu(q_J-q)\vph p\|
+
\sum_{i\in J}
\|q_i \vph x^\nu-v_i^* x^\nu v_i\vph p\|
\\
&<
2\vep^{1/2}
+
\sum_{i\in J}
\|v_i^*[v_i\vph p,x^\nu]\|.
\end{align*}
Hence for $\nu$ being close to $\om$,
we have
$\|q\vph x^\nu-y^\nu\vph p\|<3\vep^{1/2}$.
Similarly,
$\|x^\nu\vph q-p\vph y^\nu\|<3\vep^{1/2}$
Thus (\ref{eq:qxnuy}) holds.
\end{proof}

The previous lemma implies the following
result.

\begin{prop}
Let $\cM$ be a von Neumann algebra
and $p\in\cM^{\rm P}$.
Then the ultraproduct von Neumann algebras
$(\cM_p)^\om$ and $(\cM_p)_\om$
are realized in $\cM^\om$ as follows:
\[
(\cM_p)^\om=(\cM^\om)_p,
\quad
(\cM_p)_\om=(\cM_\om)_p.
\]
\end{prop}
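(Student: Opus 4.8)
The plan is to push everything through the quotient map $\pi_\om$, using Lemma~\ref{lem:reduced}. Since $p\in\cM$ sits inside $\cM^\om$ as a constant sequence, the corner $(\cM^\om)_p=p\cM^\om p$ is a von Neumann algebra, and the goal is a canonical identification of it with $(\cM_p)^\om=\sN_\om(\cM_p)/\sT_\om(\cM_p)$. By Lemma~\ref{lem:reduced}(2), $\sN_\om(\cM_p)$ consists exactly of the sequences $(px^\nu p)_\nu$ with $(x^\nu)_\nu\in\sN_\om(\cM)$, so $\sN_\om(\cM_p)\subs\sN_\om(\cM)$, and the restriction of $\pi_\om$ to $\sN_\om(\cM_p)$ is a $*$-homomorphism into $p\cM^\om p$, sending $(px^\nu p)_\nu$ to $p\,\pi_\om((x^\nu)_\nu)\,p$. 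It is onto $(\cM^\om)_p$: any $a\in\cM^\om$ equals $\pi_\om((x^\nu)_\nu)$ for some $(x^\nu)_\nu\in\sN_\om(\cM)$, whence $pap=\pi_\om((px^\nu p)_\nu)$ with $(px^\nu p)_\nu\in\sN_\om(\cM_p)$.

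Next I would identify the kernel of this map, which is $\sN_\om(\cM_p)\cap\sT_\om(\cM)$, with $\sT_\om(\cM_p)$. One inclusion is Lemma~\ref{lem:reduced}(1). For the other, the only thing to check is that for a norm-bounded sequence $(y^\nu)_\nu$ in $\cM_p=p\cM p$, the conditions ``$y^\nu\to0$ in the strong$*$ topology of $\cM$'' and ``$y^\nu\to0$ in the strong$*$ topology of $\cM_p$'' coincide; this is immediate from $y^\nu\eta=y^\nu(p\eta)$ and $(y^\nu)^*\eta=(y^\nu)^*(p\eta)$ for $\eta\in\cH$. Hence $\om$-triviality of such a sequence is the same inside $\cM$ and inside $\cM_p$, and $\pi_\om$ induces a $*$-isomorphism $(\cM_p)^\om\cong(\cM^\om)_p$; being a $*$-isomorphism of von Neumann algebras, it is automatically normal, and this is the asserted realization.

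For the central sequence algebras I would run the identical argument with Lemma~\ref{lem:reduced}(3) in place of (2): it gives $\sC_\om(\cM_p)=\{(px^\nu p)_\nu\mid(x^\nu)_\nu\in\sC_\om(\cM)\}$, so under the isomorphism above $(\cM_p)_\om=\sC_\om(\cM_p)/\sT_\om(\cM_p)$ is carried onto $\{p\,\pi_\om((x^\nu)_\nu)\,p\mid(x^\nu)_\nu\in\sC_\om(\cM)\}=p\cM_\om p=(\cM_\om)_p$; in particular this also shows $p\cM_\om p$ is a von Neumann algebra. The only mildly technical point is the strong$*$ topology comparison in the kernel computation, and even that is routine once Lemma~\ref{lem:reduced} is in hand, so I do not anticipate any serious obstacle.
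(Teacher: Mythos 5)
Your argument is correct and is essentially the paper's own argument: the paper simply states that Lemma~\ref{lem:reduced} implies the proposition, and you have supplied exactly the bookkeeping that makes that implication precise (restricting $\pi_\om$ to $\sN_\om(\cM_p)\subs\sN_\om(\cM)$, checking surjectivity onto $(\cM^\om)_p$, and identifying the kernel with $\sT_\om(\cM_p)$ via the equivalence of strong$*$ nullity of a corner sequence in $\cM$ and in $\cM_p$).
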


\section{Flows on ultraproduct von Neumann algebras}

Let $\al,\be$ be flows on a von Neumann algebra $\cM$.
Assume that $\al_t\be_t^{-1}\in\oInt(\cM)$ for each $t\in\R$.
Then, as will be shown in Lemma \ref{lem:adBorel},
we can take a Borel unitary path
$u$ such that $\Ad u(t)\circ\al_t$ is close to $\be_t$
on a closed interval.
The path may be arranged to be strongly continuous
with a certain assumption on $\cM$
(see Proposition \ref{prop:appro}).
However, we do not know whether this is true
for a general von Neumann algebra.
Therefore, we have to treat a Borel unitary path,
and a Borel cocycle action.

When one classifies flows,
an analysis of them on an ultraproduct von Neumann algebra
shall be inevitable.
Nevertheless,
a flow
is usually acting on $\cM_\om$ discontinuously,
which is the most significant
difference
from discrete group actions.
One way to treat a flow or a Borel map on $\cM^\om$
is to collect elements
which behave continuously by the given flow.
However,
the continuity is insufficient
in lifting a continuous or
Borel path from $\cM^\om$ to $\cM$.
As a result,
we have to think of a much smaller von Neumann
subalgebra in $\cM^\om$
that is called the $(\al,\om)$-equicontinuous part
(see Definition \ref{defn:equicont}).

\subsection{$\om$-equicontinuity}

\begin{defn}
\label{defn:metequicont}
Let $(\Omega,d)$ be a metric space
and $\{x^\nu\col \Omega\ra \cM\}_{\nu\in\N}$
a family of maps.
We will say that
$\{x^\nu\}_\nu$
is \emph{$\om$-equicontinuous}
if
for any $\vep>0$ and finite set $\Ph\subset \cH$,
there exist $\de>0$
and $W\in\om$
such that
for
all $s,t\in \Omega$
with $d(s,t)<\de$,
$\nu\in W$
and
$\xi\in\Ph$,
we have
\[
\|(x^\nu(s)-x^\nu(t))\xi\|<\vep,
\quad
\|\xi(x^\nu(s)-x^\nu(t))\|<\vep.
\]
\end{defn}

Several statements in this paper
can be replaced
with normal functionals for vectors
in a standard Hilbert space.
We should note that the $\om$-equicontinuity
does not necessarily require the continuity of each $x^\nu$.

\begin{lem}
\label{lem:eqctcpt}
Let $(\Om,d)$ be a metric space
and $\{x^\nu\col \Om\ra\cM\}_\nu$
a family of uniformly bounded maps,
that is,
$\sup_{t\in \Om,\nu\in\N}\|x^\nu(t)\|<\infty$.
Then the following statements are equivalent:
\begin{enumerate}
\item
$\{x^\nu\col \Om\ra\cM\}_\nu$ is $\om$-equicontinuous;

\item
For any $\vep>0$
and compact set $\Ps\subs\cH$,
there exist $\de>0$ and $W\in\om$
such that
for all $s,t\in \Om$
with $d(s,t)<\de$,
$\nu\in W$ and $\xi\in \Ps$,
we have
\[
\|(x^\nu(s)-x^\nu(t))\xi\|<\vep,
\quad
\|\xi(x^\nu(s)-x^\nu(t))\|<\vep;
\]

\item
Let $\xi_0\in\cH$ be a cyclic and separating
vector for $\cM$.
For any $\vep>0$,
there exist $\de>0$ and $W\in\om$
such that
for all $s,t\in \Om$
with $d(s,t)<\de$
and
$\nu\in W$,
we have
\[
\|(x^\nu(s)-x^\nu(t))\xi_0\|<\vep,
\quad
\|\xi_0(x^\nu(s)-x^\nu(t))\|<\vep.
\]
\end{enumerate}
\end{lem}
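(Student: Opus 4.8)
The plan is to prove the cyclic chain of implications $(2)\Rightarrow(1)\Rightarrow(3)\Rightarrow(2)$. The first two are immediate: a finite subset of $\cH$ is compact, so $(2)$ applied with $\Ps$ equal to the given finite set yields $(1)$; and a singleton is a finite set, so $(1)$ applied with $\Ph=\{\xi_0\}$ yields $(3)$. All the content is therefore in the implication $(3)\Rightarrow(2)$, whose proof rests on two facts: the bimodule structure $x\xi y=xJy^*J\xi$, under which $\cM$ acts on the right of $\cH$ through the commutant $\cM'=J\cM J$; and the density in $\cH$ of both $\cM\xi_0$ and $\{\xi_0 a : a\in\cM\}=\cM'\xi_0$ (note $\xi_0 a=Ja^*J\xi_0$), which holds because $\xi_0$ is cyclic and separating for $\cM$.

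For $(3)\Rightarrow(2)$ I would fix a constant $C$ with $\|x^\nu(t)\|\le C$ for all $\nu\in\N$ and $t\in\Om$, so that $\|x^\nu(s)-x^\nu(t)\|\le 2C$ always. Given $\vep>0$ and a compact set $\Ps\subs\cH$, first choose a finite $\vep_1$-net $\eta_1,\dots,\eta_n$ in $\Ps$, and then, using the two density statements, choose $a_i,b_i\in\cM$ with $\|\eta_i-\xi_0 a_i\|<\vep_1$ and $\|\eta_i-b_i\xi_0\|<\vep_1$, where $\vep_1>0$ is to be specified below. Set $K:=1+\max_i(\|a_i\|+\|b_i\|)$; the point is that the net and the approximants, hence $K$, are fixed before invoking $(3)$.

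The key computation is that, writing $y:=x^\nu(s)-x^\nu(t)\in\cM$, the operator $Ja_i^*J$ lies in the commutant, so $y(\xi_0 a_i)=Ja_i^*J(y\xi_0)$ and thus $\|y(\xi_0 a_i)\|\le\|a_i\|\,\|y\xi_0\|$; symmetrically $(b_i\xi_0)y=b_i(\xi_0 y)$ and $\|(b_i\xi_0)y\|\le\|b_i\|\,\|\xi_0 y\|$. Hence, for any $\eta\in\Ps$ with a net point $\eta_i$ satisfying $\|\eta-\eta_i\|<\vep_1$,
\[
\|y\eta\|\le\|y(\eta-\eta_i)\|+\|y(\eta_i-\xi_0 a_i)\|+\|y(\xi_0 a_i)\|\le 4C\vep_1+K\,\|y\xi_0\|,
\]
and likewise $\|\eta y\|\le 4C\vep_1+K\,\|\xi_0 y\|$. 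Now I would apply $(3)$ with tolerance $\vep/(3K)$ to obtain $\de>0$ and $W\in\om$ such that $\|y\xi_0\|,\|\xi_0 y\|<\vep/(3K)$ whenever $d(s,t)<\de$ and $\nu\in W$; choosing $\vep_1<\vep/(12C)$ from the outset, the displayed estimates give $\|y\eta\|,\|\eta y\|<\vep$ for all $\eta\in\Ps$, which is exactly $(2)$.

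The only slightly delicate matter is bookkeeping rather than mathematics: the finite net and the approximants $a_i,b_i$ must be fixed first so that $K$ does not depend on the tolerance later handed to $(3)$, and uniform boundedness of the family is precisely what allows the net and approximation errors to be absorbed into the single term $4C\vep_1$. I do not expect any genuine obstacle beyond arranging the quantifiers in this order.
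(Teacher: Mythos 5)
Your proof is correct and rests on exactly the same two ingredients as the paper's: a finite $\vep$-net in the compact set $\Ps$, and approximation of the net vectors by $\xi_0 a_i$ and $b_i\xi_0$ (cyclic and separating property) combined with the commutant estimates $\|y(\xi_0 a)\|\le\|a\|\,\|y\xi_0\|$ and $\|(b\xi_0)y\|\le\|b\|\,\|\xi_0 y\|$. The only difference is organizational: the paper splits these two ingredients between the implications $(1)\Rightarrow(2)$ and $(3)\Rightarrow(1)$, while you merge both into the single implication $(3)\Rightarrow(2)$ and make the remaining arrows trivial; your quantifier bookkeeping is in order.
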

\begin{proof}
(1)$\Rightarrow$(2).
Take $C>0$
with $C\geq \sup_{t,\nu}\|x(t)^\nu\|$.
Let $\Ps$ and $\vep$ be given.
Choose
$\{\xi_i\}_{i=1}^N$ in $\Ps$
such that
for any $\xi\in \Ps$,
there exists $\xi_i$
such that $\|\xi-\xi_i\|<\vep/4C$.
Using the $\om$-equicontinuity of $x^\nu$,
we can take $\de>0$ and $W\in\om$
such that
for any $s,t\in \Om$ with $d(s,t)<\de$,
$\nu\in W$ and $i=1,\dots,N$,
we have
\[
\|(x^\nu(s)-x^\nu(t))\xi_i\|<\vep/2,
\quad
\|\xi_i(x^\nu(s)-x^\nu(t))\|<\vep/2.
\]
Then it is clear that
these $\de$ and $W$ are desired ones.

(2)$\Rightarrow$(3).
This implication is trivial.

(3)$\Rightarrow$(1).
Let $\vep>0$
and $\Ph:=\{\xi_i\mid i=1,\dots,N\}\subs\cH$.
Take $C>0$ as $C\geq\sup_{t,\nu}\|x^\nu(t)\|$.
Let $a_i, b_i\in\cM$
such that $\|\xi_i-\xi_0 a_i\|<\vep/4C$
and $\|\xi_i-b_i\xi_0\|<\vep/4C$.
Set $M:=\max\{\|a_i\|,\|b_i\|,1\mid i=1,\dots,N\}$.
By our assumption,
there exist $\de>0$ and $W\in\om$
such that
for all $s,t\in \Om$ with $d(s,t)<\de$
and $\nu\in W$,
we have
\[
\|(x^\nu(s)-x^\nu(t))\xi_0\|<\vep/2M,
\quad
\|\xi_0(x^\nu(s)-x^\nu(t))\|<\vep/2M.
\]
Then
\begin{align*}
\|(x^\nu(s)-x^\nu(t))\xi_i\|
&\leq
\|(x^\nu(s)-x^\nu(t))(\xi_i-\xi_0 a_i)\|
+
\|(x^\nu(s)-x^\nu(t))\xi_0 a_i\|
\\
&\leq
2C\cdot \vep/4C+M\cdot \vep/2M=\vep.
\end{align*}
Similarly, we obtain
$\|\xi_i(x^\nu(s)-x^\nu(t))\|<\vep$.
Thus $\{x^\nu\}_\nu$ is $\om$-equicontinuous.
\end{proof}

The following result is frequently used in this paper.

\begin{lem}
\label{lem:uniconverge}
Let $(\Om,d)$ be a metric space and $E\subs \Om$
a relatively compact set.
Let $\{f^\nu\col \Om\ra \C\}_\nu$ be
a family of functions.
Suppose that
$\{f^\nu\col E\ra \C\}_\nu$ is $\om$-equicontinuous.
Then the convergence
$\lim_{\nu\to\omega}f^\nu(t)$
is uniform on $E$.
\end{lem}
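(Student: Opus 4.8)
The plan is to argue by contradiction, extracting from a failure of uniform convergence a pair of sequences in $E$ that are close in the metric $d$ but on which the $f^\nu$ differ substantially, and then play this off against $\om$-equicontinuity. First I would note that $\overline{E}$ is compact (a relatively compact subset of a metric space has compact closure in the completion, but here we only need sequential compactness of $\overline{E}$ in $\Om$, or one passes to the completion and observes the argument is unaffected). Fix a countable dense subset $D=\{t_1,t_2,\dots\}$ of $E$; since each pointwise limit $f(t):=\lim_{\nu\to\om}f^\nu(t)$ exists in $\C$ (the $f^\nu(t)$ are bounded — indeed $\om$-equicontinuity on the relatively compact set $E$ together with finiteness at one point forces uniform boundedness on $E$, or one simply assumes it as in the companion lemmas), the limit function $f$ is defined on all of $E$. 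The $\om$-equicontinuity passes to $f$, so $f$ is uniformly continuous on $E$ and extends continuously to $\overline{E}$.

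Next, suppose uniform convergence fails: there is $\vep>0$ such that for every $W\in\om$ and every $\de>0$ there exist $\nu\in W$ and $t\in E$ with $|f^\nu(t)-f(t)|\ge\vep$. Apply the $\om$-equicontinuity of $\{f^\nu\}$ on $E$ with this $\vep/3$ (here $\cH$ plays no role since the $f^\nu$ are scalar-valued — the statement of $\om$-equicontinuity for scalar functions is simply: for all $\vep'>0$ there are $\de>0$, $W\in\om$ with $|f^\nu(s)-f^\nu(t)|<\vep'$ whenever $d(s,t)<\de$, $\nu\in W$): this yields $\de>0$ and $W_0\in\om$. Cover $\overline{E}$ by finitely many $\de/2$-balls centered at points $p_1,\dots,p_k\in\overline{E}$, which we may take in $D$ after shrinking $\de$. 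By pointwise convergence at the $p_j$, the set $W_1:=\{\nu: |f^\nu(p_j)-f(p_j)|<\vep/3 \text{ for all } j\}$ lies in $\om$. Then for $\nu\in W_0\cap W_1$ and any $t\in E$, choosing $p_j$ with $d(t,p_j)<\de$, a three-$\vep/3$ estimate
\[
|f^\nu(t)-f(t)|\le |f^\nu(t)-f^\nu(p_j)|+|f^\nu(p_j)-f(p_j)|+|f(p_j)-f(t)|<\vep
\]
holds, where the third term is controlled using uniform continuity of $f$ (shrinking $\de$ once more at the outset). Since $W_0\cap W_1\in\om$, this contradicts the assumed failure.

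The only genuinely delicate point is the passage from $\om$-equicontinuity of the family $\{f^\nu\}$ to uniform continuity of the single limit function $f$, together with making sure all the shrinkages of $\de$ (for $\om$-equicontinuity, for the finite cover, for uniform continuity of $f$) can be done consistently; this is routine once one fixes the order: first choose $\vep$, then get $\de_0,W_0$ from $\om$-equicontinuity, then a finite cover at scale $\de_0/2$ with centers in a fixed countable dense set, then enlarge $W$ by the finitely many pointwise-convergence conditions. Everything else is a standard $\vep/3$ argument, and no properties of $\cM$ or $\cH$ are used beyond the scalar case of Definition \ref{defn:metequicont}.
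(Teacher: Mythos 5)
Your proof is correct and is essentially the paper's argument: both rest on a finite $\delta$-net of the relatively compact set $E$, pointwise convergence at the finitely many centers, uniform continuity of the limit function (obtained by letting $\nu\to\om$ in the equicontinuity estimate), and a three-$\vep$ estimate on a set in $\om$. The only cosmetic difference is that you phrase it as a contradiction, whereas the paper proves the uniform estimate directly; the worries about boundedness and completion are not needed beyond what the lemma implicitly assumes.
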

\begin{proof}
Put $F(t):=\lim_{\nu\rightarrow\omega}f^\nu(t)$
for $t\in E$.
For $\vep>0$,
take $\de>0$ and $W_1\in\om$
such that for all $s,t\in E$ with $d(s,t)<\de$
and $\nu\in W_1$,
we have
$|f^\nu(s)-f^\nu(t)|<\vep$.
Letting $\nu\to\om$,
we obtain $|F(s)-F(t)|\leq \vep$
which shows the uniform continuity of $F$ on $E$.

Let us keep $\vep$, $\de$ and $W_1$
introduced above.
Since $E$ is relatively compact,
there exists a finite set $E_0\subset E$
such that 
$E\subs\bigcup_{t\in E_0}B(t,\delta)$,
where
$B(t,\delta):=\{s\in \Om\mid d(s,t)<\delta\}$. 
Then
there exists $W_2\in \omega$ such that
for all $s\in E_0$ and $\nu\in W_2$,
we have
$|f^\nu(s)-F(s)|
<\vep$.
Let $t\in E$,
and take $s\in E_0$ with $d(s,t)<\delta$,
Then for all $\nu\in W_1\cap W_2$, we have
\begin{align*}
|f^\nu(t)-F(t)|
&\leq
|f^\nu(t)-f^\nu(s)|
+
|f^\nu(s)-F(s)|
+|F(s)-F(t)|
\\
&<
3\vep.
\end{align*}
Thus we are done.
\end{proof}

\subsection{Borel maps and flows}
\begin{defn}
\label{defn:al-om}
Let $\al\col\R\ra\Aut(\cM)$
be a Borel map.
An element $(x^\nu)_\nu\in\ell^\infty(\cM)$
is said to be
\emph{$(\al,\om)$-equicontinuous}
if
for any Borel set $E\subs\R$
with $\mu(E)<\infty$
and $\vep>0$,
there exists
a compact $K\subs E$ such that
\begin{itemize}
\item 
$\al|_K$ is continuous;

\item
$\mu(E\setminus K)<\vep$;

\item
the family
$\{K\ni t\mapsto \al_t(x^\nu)\in\cM\}_\nu$
is $\om$-equicontinuous.
\end{itemize}
We denote by $\mathscr{E}_\al^\om$
the set of $(\al,\om)$-equicontinuous
sequences.
\end{defn}

In the definition above,
$\mu$ denotes the Lebesgue measure on $\R$
with $\mu([0,1])=1$.
Lemma \ref{lem:Lusin}
implies that
for any Borel set $E\subs \R$,
we can take a compact $K\subs E$
satisfying
the first and second conditions above.
The reason why we must consider the third is
to make the stability holds
with respect to a perturbation of a cocycle action
(Lemma \ref{lem:omeqstab}).
To flows,
the following characterization
is useful.

\begin{prop}
\label{prop:flow-equi}
Let $\al$ be a flow on $\cM$
and $(x^\nu)_\nu\in\ell^\infty(\cM)$.
Let $\xi\in\cH$ be a cyclic and separating
vector for $\cM$.
Then the following statements are
equivalent:
\begin{enumerate}

\item
$(x^\nu)_\nu$ is $(\al,\om)$-equicontinuous;

\item
For any $\vep>0$,
there exist $\de>0$ and $W\in\om$
such that
if $|t|<\de$ and $\nu\in W$,
we have
\[
\|(\al_t(x^\nu)-x^\nu)\xi\|
+
\|\xi(\al_t(x^\nu)-x^\nu)\|
<\vep.
\]

\item
For any $\vep>0$
and compact set $\Ps\subs\cH$,
there exist $\de>0$ and $W\in\om$
such that
if $|t|<\de$ and $\nu\in W$,
we have
\[
\sup_{\eta\in\Ps}
(\|(\al_t(x^\nu)-x^\nu)\eta\|
+
\|\eta(\al_t(x^\nu)-x^\nu)\|)
<\vep.
\]

\item
For any $T>0$,
$\{[-T,T]\ni t\mapsto \al_t(x^\nu)\in\cM\}_\nu$
is $\om$-equicontinuous.
\end{enumerate}
\end{prop}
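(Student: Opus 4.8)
The plan is to establish the cyclic chain $(1)\Rightarrow(2)\Rightarrow(3)\Rightarrow(4)\Rightarrow(1)$. Two standing observations lighten the bookkeeping: since $\al$ is a flow, $t\mapsto\al_t$ is $u$-continuous, so the clause ``$\al|_K$ is continuous'' in Definition \ref{defn:al-om} is automatic; and $\|\al_t(x^\nu)\|=\|x^\nu\|$ is bounded uniformly in $t$ and $\nu$, so Lemma \ref{lem:eqctcpt} applies to every family of the shape $\{t\mapsto\al_t(x^\nu)\}_\nu$ and lets me pass freely between its finite-set, compact-set, and single-vector formulations. All the estimates rest on the identity $\al_t(x^\nu)-\al_s(x^\nu)=\al_s\bigl(\al_{t-s}(x^\nu)-x^\nu\bigr)$ together with the norm formulas $\|\al_t(y)\eta\|=\|y\,U(\al_{-t})\eta\|$ and $\|\eta\,\al_t(y)\|=\|(U(\al_{-t})\eta)\,y\|$, which follow from $\al_t(z\xi w)=\al_t(z)\al_t(\xi)\al_t(w)$, $JU(\al_t)=U(\al_t)J$, and the fact that left and right multiplication by the unitary $U(\al_t)$ on $\cH$ are isometric. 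I expect $(1)\Rightarrow(2)$ to be the only step requiring a genuine idea; the others are routine.

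For $(1)\Rightarrow(2)$, fix $\vep>0$ and apply $(1)$ to $E=[-2,2]$ with error $1/2$: there is a compact $K\subs[-2,2]$ with $\mu([-2,2]\setminus K)<1/2$ for which $\{K\ni t\mapsto\al_t(x^\nu)\}_\nu$ is $\om$-equicontinuous. Let $\Ps_0:=\{U(\al_s)\xi\mid |s|\le1\}$, which is compact because $(s,\eta)\mapsto U(\al_s)\eta$ is jointly continuous. By Lemma \ref{lem:eqctcpt} there are $\de_0>0$ and $W\in\om$ so that $s,s'\in K$, $|s-s'|<\de_0$, $\nu\in W$ imply $\sup_{\eta'\in\Ps_0}\bigl(\|(\al_s(x^\nu)-\al_{s'}(x^\nu))\eta'\|+\|\eta'(\al_s(x^\nu)-\al_{s'}(x^\nu))\|\bigr)<\vep$. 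Put $\de:=\min(\de_0,1)$. The crucial point is that for every $t$ with $|t|<\de$ the set $K\cap(K-t)\cap[-1,1]$ is nonempty: since $[-1,1]\setminus K\subs[-2,2]\setminus K$ one has $\mu(K\cap[-1,1])>3/2$, and since $[-1,1]\subs[-2,2]-t$ when $|t|\le1$ one likewise gets $\mu\bigl((K-t)\cap[-1,1]\bigr)>3/2$, so the triple intersection has measure $>1$. Picking such an $s$, we have $s\in K$, $s+t\in K$, $|t|<\de_0$ and $|s|\le1$; then $\al_t(x^\nu)-x^\nu=\al_{-s}\bigl(\al_{s+t}(x^\nu)-\al_s(x^\nu)\bigr)$ and the norm formulas give $\|(\al_t(x^\nu)-x^\nu)\xi\|=\|(\al_{s+t}(x^\nu)-\al_s(x^\nu))U(\al_s)\xi\|<\vep$ and $\|\xi(\al_t(x^\nu)-x^\nu)\|=\|(U(\al_s)\xi)(\al_{s+t}(x^\nu)-\al_s(x^\nu))\|<\vep$ because $U(\al_s)\xi\in\Ps_0$.

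The remaining implications are standard. For $(2)\Rightarrow(3)$ one argues exactly as in the proof of Lemma \ref{lem:eqctcpt}: cover the compact set $\Ps$ by finitely many small balls about vectors $\eta_i$, approximate each $\eta_i$ in norm by $\xi a_i$ and by $b_i\xi$ with $a_i,b_i\in\cM$ (using that $\xi$ is cyclic and separating for $\cM$), and use $\|(\al_t(x^\nu)-x^\nu)\xi a_i\|\le\|a_i\|\,\|(\al_t(x^\nu)-x^\nu)\xi\|$ and $\|b_i\xi(\al_t(x^\nu)-x^\nu)\|\le\|b_i\|\,\|\xi(\al_t(x^\nu)-x^\nu)\|$ together with the uniform bound $\|\al_t(x^\nu)-x^\nu\|\le2\sup_\nu\|x^\nu\|$. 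For $(3)\Rightarrow(4)$, fix $T>0$; by Lemma \ref{lem:eqctcpt} it suffices to verify the compact-set criterion on $[-T,T]$, so given $\vep>0$ and compact $\Ps\subs\cH$ set $\Ps':=\{U(\al_{-t})\eta\mid |t|\le T,\ \eta\in\Ps\}$ (compact), apply $(3)$ to $\Ps'$, and for $|s-t|<\de$, $\nu\in W$ bound $\|(\al_s(x^\nu)-\al_t(x^\nu))\eta\|$ and $\|\eta(\al_s(x^\nu)-\al_t(x^\nu))\|$ via $\al_s(x^\nu)-\al_t(x^\nu)=\al_t(\al_{s-t}(x^\nu)-x^\nu)$ and the norm formulas by $\sup_{\eta'\in\Ps'}\bigl(\|(\al_{s-t}(x^\nu)-x^\nu)\eta'\|+\|\eta'(\al_{s-t}(x^\nu)-x^\nu)\|\bigr)<\vep$. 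Finally, for $(4)\Rightarrow(1)$, given Borel $E$ with $\mu(E)<\infty$ and $\vep>0$, choose $T$ with $\mu(E\setminus[-T,T])<\vep/2$ and then, by Lemma \ref{lem:Lusin}, a compact $K\subs E\cap[-T,T]$ with $\mu((E\cap[-T,T])\setminus K)<\vep/2$; continuity of $\al|_K$ is automatic, and $\om$-equicontinuity of $\{K\ni t\mapsto\al_t(x^\nu)\}_\nu$ follows by restriction from that on $[-T,T]$.

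The main obstacle is $(1)\Rightarrow(2)$: the compact set $K$ produced by $(\al,\om)$-equicontinuity may miss the origin and be thoroughly fragmented, so equicontinuity on $K$ does not at face value control $t$ near $0$. The remedy is the elementary measure-theoretic observation that $K$ is large enough that $K\cap(K-t)$ stays nonempty for all small $t$, which lets one transport the estimate on $K$ to a neighbourhood of $0$ by means of the flow, at the price of testing against the compact family $\Ps_0$ of $U(\al_s)$-translates of $\xi$ rather than $\xi$ alone; beyond that the only care needed is keeping the roles of $U(\al_s)$ and $J$ straight in the norm identities.
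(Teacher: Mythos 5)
Your proposal is correct and follows essentially the same route as the paper's proof: the crux, $(1)\Rightarrow(2)$, is handled by the identical device of noting that the compact set $K$ furnished by $(\al,\om)$-equicontinuity has large measure, so that for small $t$ one can find $s$ with $s,s+t\in K$ (the paper phrases this as $(-\de,\de)\subs K-K$, you phrase it as $\mu(K\cap(K-t)\cap[-1,1])>0$, which is the same fact), and then transporting the estimate back to $t$ near $0$ by the flow against the compact family of translates $\al_s(\xi)$. The remaining implications $(2)\Rightarrow(3)\Rightarrow(4)\Rightarrow(1)$ also match the paper's arguments.
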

\begin{proof}
(4)$\Rightarrow$(1)
is trivial.

(1)$\Rightarrow$(2).
Suppose that
$(x^\nu)_\nu$ is $(\al,\om)$-equicontinuous.
For $E:=[0,1]$,
there exists a compact set $K\subs E$
such that $\mu(K)\geq1/2$,
$\al|_K$ is continuous,
and
$\{K\ni t\mapsto \al_t(x^\nu)\in\cM\}_\nu$
is $\om$-equicontinuous.
Since $\mu(K)>0$,
we can find $\de>0$ with
$(-\de,\de)\subs K-K$.

Set $\Ps:=\{\al_s(\xi)\mid s\in K\}$
that is compact.
Then by Lemma \ref{lem:eqctcpt},
for any $\vep>0$,
there exist $\de'>0$ and $W\in\om$
such that
for all $s,t\in K$ with $|s-t|<\de'$
and $\nu\in W$,
we have
\[
\sup_{\eta\in\Ps}
(
\|(\al_s(x^\nu)-\al_t(x^\nu))\eta\|
+
\|\eta(\al_s(x^\nu)-\al_t(x^\nu))\|
)
<\vep.
\]

Then for $s,t\in K$ with $|s-t|<\de'$
and $\nu\in W$,
we have
\begin{align*}
&\|(\al_{s-t}(x^\nu)-x^\nu)\xi\|
+
\|\xi(\al_{s-t}(x^\nu)-x^\nu)\|
\\
&=
\|(\al_s(x^\nu)-\al_t(x^\nu))\al_t(\xi)\|
+
\|\al_t(\xi)(\al_s(x^\nu)-\al_t(x^\nu))\|
\\
&\leq
\sup_{\eta\in\Ps}
(
\|(\al_s(x^\nu)-\al_t(x^\nu))\eta\|
+
\|\eta(\al_s(x^\nu)-\al_t(x^\nu))\|
)
\\
&<\vep.
\end{align*}
Hence
if $|t|<\min(\de,\de')$
and $\nu\in W$,
then
\[
\|(\al_{t}(x^\nu)-x^\nu)\xi\|
+
\|\xi(\al_{t}(x^\nu)-x^\nu)\|
<\vep.
\]

(2)$\Rightarrow$(3).
By compactness,
it suffices to prove (3)
for a finite $\Ps$.
We may and do assume that $\|x^\nu\|\leq1/2$
for all $\nu\in\N$.
Let $\vep>0$.
Take $\de>0$ so that
if $a\in\cM$ satisfies
$\|a\|\leq 1$ and $\|a\xi\|+\|\xi a\|<\de$,
then $\sup_{\eta\in\Ps}(\|a\eta\|+\|\eta a\|)<\vep$.
By (2),
there exist $\de'>0$ and $W\in\om$
such that
if $|t|\leq \de'$
and $\nu\in W$,
then
\[
\|(\al_{t}(x^\nu)-x^\nu)\xi\|
+
\|\xi(\al_{t}(x^\nu)-x^\nu)\|
<\de.
\]
Hence it implies
\[
\sup_{\eta\in\Ps}
(\|(\al_{t}(x^\nu)-x^\nu)\eta\|
+
\|\eta(\al_{t}(x^\nu)-x^\nu)\|)
<\vep.
\]

(3)$\Rightarrow$(4).
Let $\vep>0$,
$T>0$
and $\Ps:=\{\al_t(\xi)\mid |t|\leq T\}$.
By (3),
there exist $\de>0$ and $W\in\om$
such that for all $t$ with $|t|<\de$ and $\nu\in W$,
\[
\sup_{\eta\in\Ps}
(\|(\al_{t}(x^\nu)-x^\nu)\eta\|
+
\|\eta(\al_{t}(x^\nu)-x^\nu)\|)
<\vep.
\]
This implies the following:
\[
\|(\al_{t+s}(x^\nu)-\al_s(x^\nu))\xi\|
+
\|\xi(\al_{t+s}(x^\nu)-\al_s(x^\nu))\|
<\vep
\quad
\mbox{if }
|t|<\de,
\
|s|\leq T,
\
\nu\in W.
\]
Hence we are done.
\end{proof}

\begin{lem}
\label{lem:multi-equicont}
Let $(\Om,d)$ be a compact metric space,
$\{x^\nu\col \Om\ra\cM\}_\nu$
and
$\{y^\nu\col \Om\ra\cM\}_\nu$ families of maps.
Suppose that the following conditions hold:
\begin{itemize}
\item
They are uniformly bounded
and $\om$-equicontinuous;

\item
For each $t\in\Om$,
$(x(t)^\nu)_\nu$ and $(y(t)^\nu)_\nu$
belong to $\sN_\om(\cM)$.
\end{itemize}
Then the family of their multiplications
$\{x^\nu y^\nu\col \Om\ra\cM\}_\nu$
is also $\om$-equicontinuous.
\end{lem}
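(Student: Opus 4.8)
The plan is to argue by contradiction, after reducing $\om$-equicontinuity of $\{x^\nu y^\nu\}_\nu$ to a statement about one cyclic separating vector and pinning down the offending parameters by the standard ultrafilter diagonalization. Write $\zeta^\nu(s,t):=x^\nu(s)y^\nu(s)-x^\nu(t)y^\nu(t)$ and fix $C\geq\sup_{t\in\Om,\,\nu}(\|x^\nu(t)\|+\|y^\nu(t)\|)$, so that $\{x^\nu y^\nu\}_\nu$ is uniformly bounded. If it were not $\om$-equicontinuous, then Lemma \ref{lem:eqctcpt} (equivalence of (1) and (3)) would furnish a cyclic separating vector $\xi_0\in\cH$ and an $\vep_0>0$ such that, for every $n\in\N$, the set $B_n$ of those $\nu$ admitting $s,t\in\Om$ with $d(s,t)<1/n$ and $\|\zeta^\nu(s,t)\xi_0\|+\|\xi_0\zeta^\nu(s,t)\|\geq\vep_0$ meets every member of $\om$; since $\om$ is an ultrafilter this forces $B_n\in\om$. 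Setting $W_n:=B_1\cap\dots\cap B_n\cap[n,\infty)\in\om$ gives a decreasing sequence with $\bigcap_n W_n=\emptyset$; for $\nu\in W_1$ the index $n(\nu):=\max\{n\mid\nu\in W_n\}$ tends to $\infty$ along $\om$, and choosing witnesses $s_\nu,t_\nu$ for $\nu\in B_{n(\nu)}$ we obtain $d(s_\nu,t_\nu)<1/n(\nu)\to0$ along $\om$. As $\Om$ is compact, $t_\infty:=\lim_{\nu\to\om}s_\nu=\lim_{\nu\to\om}t_\nu$ exists in $\Om$.

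The next step is to convert the equicontinuity hypotheses into genuine $\om$-trivial sequences. Membership in $\sT_\om(\cM)$ is tested vector by vector, so fix $\eta\in\cH$ and $\vep>0$: $\om$-equicontinuity of $\{x^\nu\}_\nu$ (Definition \ref{defn:metequicont} with $\Ph=\{\eta\}$) gives $\de>0$ and $W\in\om$ with $\|(x^\nu(s)-x^\nu(t))\eta\|<\vep$ and $\|\eta(x^\nu(s)-x^\nu(t))\|<\vep$ whenever $d(s,t)<\de$ and $\nu\in W$; on the set $W\cap\{\nu\mid d(s_\nu,t_\infty)<\de\}\in\om$ this applies with $s=s_\nu$, $t=t_\infty$, and letting $\eta$ vary over $\cH$ yields $(x^\nu(s_\nu)-x^\nu(t_\infty))_\nu\in\sT_\om(\cM)$. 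Likewise the three analogous sequences built from $t_\nu$ and from $\{y^\nu\}_\nu$ lie in $\sT_\om(\cM)$. Writing $x^\nu(s_\nu)=x^\nu(t_\infty)+a^\nu$, $x^\nu(t_\nu)=x^\nu(t_\infty)+b^\nu$, $y^\nu(s_\nu)=y^\nu(t_\infty)+c^\nu$, $y^\nu(t_\nu)=y^\nu(t_\infty)+d^\nu$ with $(a^\nu)_\nu,(b^\nu)_\nu,(c^\nu)_\nu,(d^\nu)_\nu\in\sT_\om(\cM)$, a direct expansion gives
\[
\zeta^\nu(s_\nu,t_\nu)=x^\nu(t_\infty)(c^\nu-d^\nu)+(a^\nu-b^\nu)y^\nu(t_\infty)+(a^\nu c^\nu-b^\nu d^\nu).
\]
Since $(x^\nu(t_\infty))_\nu$ and $(y^\nu(t_\infty))_\nu$ belong to $\sN_\om(\cM)$ by hypothesis and $\sT_\om(\cM)$ is a two-sided ideal of $\sN_\om(\cM)$, the right-hand side is an $\om$-trivial sequence; in particular $\|\zeta^\nu(s_\nu,t_\nu)\xi_0\|+\|\xi_0\zeta^\nu(s_\nu,t_\nu)\|\to0$ along $\om$, contradicting the choice of $s_\nu,t_\nu$. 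This establishes that $\{x^\nu y^\nu\}_\nu$ is $\om$-equicontinuous.

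The step I expect to be the real obstacle is controlling the two mixed terms $x^\nu(t_\infty)(c^\nu-d^\nu)$ and $(a^\nu-b^\nu)y^\nu(t_\infty)$: in $\ell^\infty(\cM)$ the product of an $\om$-trivial sequence with an arbitrary bounded one need not be $\om$-trivial, and it is precisely the assumption that each $(x^\nu(t))_\nu$ and $(y^\nu(t))_\nu$ normalizes $\sT_\om(\cM)$ --- together with the compactness of $\Om$, which supplies the single ultralimit $t_\infty$ needed to turn the equicontinuity estimates into honest $\sT_\om(\cM)$-sequences --- that makes these terms disappear. A direct, non-contradiction variant is also possible: split $\zeta^\nu(s,t)=(x^\nu(s)-x^\nu(t))y^\nu(s)+x^\nu(t)(y^\nu(s)-y^\nu(t))$; the terms in which a difference acts on a fixed vector are handled by $\om$-equicontinuity of $\{x^\nu\}_\nu$ and $\{y^\nu\}_\nu$, while for the remaining term one covers $\Om$ by finitely many small balls to replace $y^\nu(s)$ (resp.\ $x^\nu(t)$), up to an $\om$-small error, by $y^\nu(t_i)$ (resp.\ $x^\nu(t_i)$) for finitely many $t_i$, and then absorbs the leftover $\om$-small factor via Lemma \ref{lem:normalize}(2) applied to the $\sN_\om$-sequences $(y^\nu(t_i))_\nu$ and $(x^\nu(t_i))_\nu$. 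This route is conceptually the same but heavier in bookkeeping.
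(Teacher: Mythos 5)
Your argument is correct, and it takes a genuinely different route from the paper's. The paper proves the statement directly: it fixes a cyclic separating vector $\xi$, inserts a point $t_0$ from a finite $\de$-net of $\Om$ into the telescoping decomposition $x(s)^\nu y(s)^\nu-x(t)^\nu y(t)^\nu$, and controls the one problematic term $(x(s)^\nu-x(t)^\nu)y(t_0)^\nu\xi$ by invoking Lemma \ref{lem:normalize}\,(2) for the finitely many $\sN_\om$-sequences $(y(t_0)^\nu)_\nu$, $t_0\in F$; this produces explicit $\de$ and $W$ at the cost of the bookkeeping you anticipated. Your contradiction argument instead uses the ultrafilter diagonalization ($B_n\in\om$ because $\om$ is an ultrafilter and $B_n$ meets every member of $\om$; then $W_n:=B_1\cap\cdots\cap B_n\cap[n,\infty)$) together with compactness of $\Om$ to extract a single ultralimit $t_\infty$, which upgrades the equicontinuity estimates to genuine membership of the four difference sequences in $\sT_\om(\cM)$; the conclusion then reduces to the purely algebraic fact that $\sT_\om$ is a two-sided ideal of $\sN_\om$ and is closed under multiplication. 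This makes the role of the hypothesis $(x(t)^\nu)_\nu,(y(t)^\nu)_\nu\in\sN_\om(\cM)$ completely transparent (it is used exactly once, to kill the two mixed terms), and it correctly exploits the equivalence (1)$\Leftrightarrow$(3) of Lemma \ref{lem:eqctcpt}, which is legitimate here since $\{x^\nu y^\nu\}_\nu$ is uniformly bounded. The only thing you give up is constructiveness of $\de$ and $W$, which the statement does not require; the ``direct variant'' you sketch at the end is essentially the paper's proof.
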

\begin{proof}
We may and do assume that
$\|x(t)^\nu\|,\|y(t)^\nu\|\leq1$
for all $t\in\Om$ and $\nu\in\N$.
Let $\vep>0$ and $\xi\in\cH$ a cyclic separating vector for $\cM$.
Then there exists $\de>0$ and $W_1\in\om$
such that
if $s,t\in\Om$ satisfies $d(s,t)<\de$ and $\nu\in W_1$,
then
\begin{equation}
\label{eq:ysnu}
\|(y(s)^\nu-y(t)^\nu)\xi\|<\vep.
\end{equation}

Since $\Om$ is compact,
there exists a finite subset $F\subs\Om$
such that each $s\in\Om$ has $t\in F$
with $d(s,t)<\de$.
Then by Lemma \ref{lem:normalize},
we can take
$\de'>0$ and $W_2\in\om$
such that
if $a\in\cM$ with $\|a\|\leq2$
satisfies $\|a\xi\|+\|\xi a\|\leq\de'$,
then $\|ay(s)^\nu\xi\|<\vep$
for $s\in F$.

By $\om$-equicontinuity of $\{x^\nu\}_\nu$,
we take $\de''>0$ and $W_3\in\om$
such that
if $s,t\in\Om$ satisfies $d(s,t)<\de''$ and $\nu\in W_2$,
then
\[
\|(x(s)^\nu-x(t)^\nu)\xi\|
+
\|\xi(x(s)^\nu-x(t)^\nu)\|<\de'.
\]
This implies
\begin{equation}
\label{eq:xsnu}
\|(x(s)^\nu-x(t)^\nu)y(t_0)^\nu\xi\|<\vep
\quad\mbox{for all }t_0\in F.
\end{equation}

Let $s,t\in\Om$ with $d(s,t)<\min(\de,\de'')$
and $\nu\in W_1\cap W_2\cap W_3$.
Take $t_0\in F$ with $d(t,t_0)<\de$.
Then we have
\begin{align*}
&\|(x(s)^\nu y(s)^\nu-x(t)^\nu y(t)^\nu)\xi\|
\\
&\leq
\|x(s)^\nu (y(s)^\nu-y(t)^\nu)\xi\|
+
\|x(s)^\nu (y(t)^\nu-y(t_0)^\nu)\xi\|
\\
&\quad
+
\|(x(s)^\nu-x(t)^\nu) y(t_0)^\nu)\xi\|
+
\|x(t)^\nu (y(t_0)^\nu-y(t)^\nu)\xi\|
\\
&\leq
\|(y(s)^\nu-y(t)^\nu)\xi\|
+
\|(y(t)^\nu-y(t_0)^\nu)\xi\|
\\
&\quad
+
\|(x(s)^\nu-x(t)^\nu) y(t_0)^\nu)\xi\|
+
\|(y(t_0)^\nu-y(t)^\nu)\xi\|
\\
&<
\vep+\vep+\vep+\vep
=4\vep
\quad
\mbox{by }
(\ref{eq:ysnu}), (\ref{eq:xsnu}).
\end{align*}

Likewise,
we can show that there exist $\de'''>0$ and $W_4\in\om$
such that
if $s,t\in\Om$ satisfies $d(s,t)<\de''$
and $\nu\in W_4$,
then
\[
\|\xi(x(s)^\nu y(s)^\nu-x(t)^\nu y(t)^\nu)\|
<4\vep.
\]
Hence we are done.
\end{proof}

\begin{lem}
\label{lem:Ealom}
Let $\al\col\R\ra\Aut(\cM)$ be a Borel map.
Then the following hold:
\begin{enumerate}

\item
If $(x^\nu)_\nu\in\sE_\al^\om$
and $(y^\nu)_\nu\in\ell^\infty(\cM)$
satisfy $(x^\nu-y^\nu)_\nu\in\sT_\om$,
then $(y^\nu)_\nu\in\sE_\al^\om$;

\item
$\sE_\al^\om$ contains $\sT_\om$;

\item
$\sE_\al^\om\cap\sN_\om$
is a C$^{\,*}$-subalgebra of $\ell^\infty(\cM)$.
\end{enumerate}
\end{lem}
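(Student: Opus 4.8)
The plan is to verify the three assertions in turn, each time reducing to the definition of $(\al,\om)$-equicontinuity and exploiting the compactness tools already established (Lemmas \ref{lem:cpcttriv}, \ref{lem:normalize}, \ref{lem:multi-equicont}).

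For (1), let $(x^\nu)_\nu\in\sE_\al^\om$ and $(y^\nu)_\nu\in\ell^\infty(\cM)$ with $(x^\nu-y^\nu)_\nu\in\sT_\om$. Fix a Borel set $E\subs\R$ with $\mu(E)<\infty$ and $\vep>0$. Choose a compact $K\subs E$ with $\mu(E\setminus K)<\vep$, with $\al|_K$ continuous, and with $\{K\ni t\mapsto\al_t(x^\nu)\}_\nu$ $\om$-equicontinuous. The point is that $(\al_t(x^\nu)-\al_t(y^\nu))_\nu=(\al_t(x^\nu-y^\nu))_\nu$ is $\om$-trivial for \emph{each} $t$, but we need a uniform control over $t\in K$. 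For this I would apply Lemma \ref{lem:cpcttriv} with the compact set $\Ps:=\{\al_t(\xi_0)\mid t\in K\}$ (where $\xi_0$ is a fixed cyclic separating vector), noting that $\|(\al_t(x^\nu-y^\nu))\xi_0\|=\|(x^\nu-y^\nu)\al_{-t}(\xi_0)\|$; since $\{\al_{-t}(\xi_0)\mid t\in K\}$ is compact, Lemma \ref{lem:cpcttriv} gives $\sup_{t\in K}\|(\al_t(x^\nu)-\al_t(y^\nu))\xi_0\|\to0$ and likewise on the right. Combining this with the $\om$-equicontinuity of the $x$-family and a triangle-inequality estimate (using the characterization (3) of Lemma \ref{lem:eqctcpt} via the single vector $\xi_0$), the $y$-family is $\om$-equicontinuous on $K$. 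Hence $(y^\nu)_\nu\in\sE_\al^\om$.

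Assertion (2) is immediate once (1) is in hand: the zero sequence lies in $\sE_\al^\om$ trivially (for any $E$ and $\vep$, take any compact $K\subs E$ with $\al|_K$ continuous and $\mu(E\setminus K)<\vep$, guaranteed by Lemma \ref{lem:Lusin}; the family of constant maps equal to $0$ is vacuously $\om$-equicontinuous), so by (1) every $\om$-trivial sequence lies in $\sE_\al^\om$, i.e. $\sT_\om\subs\sE_\al^\om$.

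For (3), I would show $\sE_\al^\om\cap\sN_\om$ is closed under sums, adjoints, products, and norm limits. Sums and adjoints are routine from the definition (the $\om$-equicontinuity estimates add up, and taking adjoints swaps the two inequalities in Definition \ref{defn:metequicont}). For products, given $(x^\nu)_\nu,(y^\nu)_\nu\in\sE_\al^\om\cap\sN_\om$ and a Borel set $E$ with $\mu(E)<\infty$, pick a common compact $K\subs E$ (intersect the two compacta produced separately) on which $\al$ is continuous and both $\{K\ni t\mapsto\al_t(x^\nu)\}_\nu$ and $\{K\ni t\mapsto\al_t(y^\nu)\}_\nu$ are $\om$-equicontinuous; since $(\al_t(x^\nu))_\nu,(\al_t(y^\nu))_\nu\in\sN_\om$ for each $t\in K$ (as $\sN_\om$ is a $*$-algebra and $\al^\om$ restricts to an automorphism), Lemma \ref{lem:multi-equicont} applied with $\Om=K$ gives that $\{K\ni t\mapsto\al_t(x^\nu)\al_t(y^\nu)=\al_t(x^\nu y^\nu)\}_\nu$ is $\om$-equicontinuous, so $(x^\nu y^\nu)_\nu\in\sE_\al^\om$; and $\sN_\om$ being an algebra gives $(x^\nu y^\nu)_\nu\in\sN_\om$. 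For norm-closedness, if $(x_k^\nu)_\nu\in\sE_\al^\om\cap\sN_\om$ converges in $\ell^\infty(\cM)$-norm to $(x^\nu)_\nu$, then on any compact $K$ adapted to $x_k$ for $k$ large, the uniform bound $\sup_{t,\nu}\|\al_t(x^\nu-x_k^\nu)\|=\sup_\nu\|x^\nu-x_k^\nu\|$ is small, so an $\vep/3$-argument transfers the $\om$-equicontinuity estimate from $x_k$ to $x$; and $\sN_\om$ is norm-closed. The only mildly delicate point — and the one I expect to require the most care — is the uniformity over $t\in K$ in part (1): one must resist the temptation to argue ``pointwise in $t$'' and instead package all the $\al_{-t}(\xi_0)$ into a single compact subset of $\cH$ so that Lemma \ref{lem:cpcttriv} applies in one stroke.
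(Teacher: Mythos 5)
Your proposal matches the paper's proof in all essentials: for (1) and (2) you pack $\{\al_t^{-1}(\xi_0)\mid t\in K\}$ into a single compact set so that Lemma~\ref{lem:cpcttriv} gives a control uniform over $K$, and then a triangle inequality finishes; for (3) you invoke Lemma~\ref{lem:multi-equicont} after intersecting the two compacta. One cosmetic difference: you derive (2) from (1) via the zero sequence rather than arguing directly as the paper does, which is a clean simplification. Do fix the notation $\al_{-t}(\xi_0)$ to $\al_t^{-1}(\xi_0)$ throughout — here $\al$ is only a Borel map into $\Aut(\cM)$, not a flow, so $\al_{-t}$ is not the inverse of $\al_t$; what you need (and what the paper writes) is $\al_t^{-1}(\xi_0)$, and the compactness of $\{\al_t^{-1}(\xi_0)\mid t\in K\}$ follows from continuity of inversion in $\Aut(\cM)$ together with $\al|_K$ being continuous.
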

\begin{proof}
(1).
Let $E$ be a Borel set
and $K$ a compact set in $E$
such that
$\al|_K$ is continuous
and
$\{K\ni t\mapsto \al_t(x^\nu)\}_\nu$
is $\om$-equicontinuous.
Then for $\vep>0$ and a finite set $\Ph\subset\cH$,
there exist $\de>0$
and $W\in \om$
such that
for $s,t\in K$ with $|s-t|<\de$,
$\nu\in W$
and $\xi\in\Ph$,
we have
\[
\|(\al_s(x^\nu)-\al_t(x^\nu))\xi\|
+
\|\xi(\al_s(x^\nu)-\al_t(x^\nu))\|<\vep/2.
\]
Set
$\Ps:=\{\al_s^{-1}(\xi)\mid \xi\in\Ph,s\in K\}$
that is a compact subset of $\cH$.
We let
$s^\nu
:=\sup_{\eta\in \Ps}
(\|(x^\nu-y^\nu)\eta\|
+\|\eta(x^\nu-y^\nu)\|)$.
By Lemma \ref{lem:cpcttriv},
we have $\lim_{\nu\to\om}s^\nu=0$.
Thus we may and do assume that $s^\nu<\vep/4$
for $\nu\in W$.

Then for $\xi\in \Ph$, $s,t\in K$
with
$|s-t|<\de$ and $\nu\in W$,
we have
\begin{align*}
&\|(\al_s(y^\nu)-\al_t(y^\nu))\xi\|
+
\|\xi(\al_s(y^\nu)-\al_t(y^\nu))\|
\\	
&\leq
\|\al_s(y^\nu-x^\nu)\xi\|
+
\|(\al_s(x^\nu)-\al_t(x^\nu))\xi\|
+
\|\al_t(x^\nu-y^\nu)\xi\|
\\
&\quad
+
\|\xi\al_s(y^\nu-x^\nu)\|
+
\|\xi(\al_s(x^\nu)-\al_t(x^\nu))\|
+
\|\xi\al_t(x^\nu-y^\nu)\|
\\
&=
\|(y^\nu-x^\nu)\al_s^{-1}(\xi)\|
+
\|(\al_s(x^\nu)-\al_t(x^\nu))\xi\|
+
\|(x^\nu-y^\nu)\al_t^{-1}(\xi)\|
\\
&\quad
+
\|\al_s^{-1}(\xi)(y^\nu-x^\nu)\|
+
\|\xi(\al_s(x^\nu)-\al_t(x^\nu))\|
+
\|\al_t^{-1}(\xi)(x^\nu-y^\nu)\|
\\
&<
2s^\nu
+
\vep/2
<
\vep/2+\vep/2=\vep.
\end{align*}

Hence $(y^\nu)_\nu$ is $(\al,\om)$-equicontinuous.

(2).
Let $K\subs\R$ be a compact set
on which $\al$ is continuous.
Then $\Ps:=\{\al_s^{-1}(\xi)\mid s\in K\}$
is compact in $\cH$.
Thus if $(x^\nu)_\nu\in\sT_\om$,
then $s^\nu
:=\sup_{\eta\in\Ps}(\|x^\nu\eta\|+\|\eta x^\nu\|)\to0$
as $\nu\to\om$ by Lemma \ref{lem:cpcttriv}.
Then the statement is clear
because of the inequalities
$\|(\al_s(x^\nu)-\al_t(x^\nu))\xi\|
\leq
2s^\nu$
and
$\|\xi(\al_s(x^\nu)-\al_t(x^\nu))\|
\leq
2s^\nu$.

(3).
It is easy to see that
$\sE_\al^\om$ is a norm closed operator system
in $\ell^\infty(\cM)$.
We show that $\sE_\al^\om\cap\sN_\om$
is closed under multiplication.

Let $E\subs \R$ be a Borel set
with $0<\mu(E)<\infty$
and $0<\ka<1/2$.
Let
$(x^\nu)_\nu, (y^\nu)_\nu\in \sE_\al^\om\cap\sN_\om$.
Take a compact set $K_0\subs E$
such that $\mu(E\setminus K_0)<\ka$,
$\al|_K$ is continuous
and
the maps
$\{K_0\ni t\mapsto\al_t(x^\nu)\}_\nu$,
$\{K_0\ni t\mapsto\al_t(y^\nu)\}_\nu$
are $\om$-equicontinuous.
Hence
$\{K_0\ni t\mapsto\al_t(x^\nu y^\nu)\}_\nu$
is $\om$-equicontinuous
by the previous lemma.
\end{proof}

\begin{lem}
\label{lem:omeqstab}
Let $(\al,c)$ be a Borel cocycle action of $\R$.
Then the following statements hold:
\begin{enumerate}

\item
$\sE_\al^\om\cap\sN_\om$ is $\al$-invariant;

\item
Let $(\al^v,c^v)$ be the perturbation
by a Borel unitary path $v\col\R\ra\cM^{\rm U}$.
Then $\sE_\al^\om\cap\sN_\om
=\sE_{\al^v}^\om\cap\sN_\om$.
\end{enumerate}
\end{lem}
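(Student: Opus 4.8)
The plan is to treat both parts by the same mechanism: in each case the conjugated family of interest is a triple product, on a single compact set chosen by Lusin's theorem (Lemma~\ref{lem:Lusin}) every piece of Borel data becomes continuous, and then Lemma~\ref{lem:multi-equicont} converts $\om$-equicontinuity of the factors into $\om$-equicontinuity of the product. Two preliminary reductions lighten the bookkeeping. First, $\sE_\al^\om$ depends only on the map $\al\col\R\ra\Aut(\cM)$ and not on $c$; so for (2) it suffices to prove the inclusion $\sE_\al^\om\cap\sN_\om\subs\sE_{\al^v}^\om\cap\sN_\om$, since perturbing $\al^v$ by the Borel unitary path $t\mapsto v(t)^*$ recovers $\al$ at the level of $\Aut(\cM)$-valued maps ($\Ad v(t)^*\circ\Ad v(t)\circ\al_t=\al_t$). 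Second, each $\al_r\in\Aut(\cM)$ preserves $\sN_\om$, so for (1) it is enough to show $(\al_r(x^\nu))_\nu\in\sE_\al^\om$ whenever $(x^\nu)_\nu\in\sE_\al^\om\cap\sN_\om$ and $r\in\R$ is fixed.

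For (1), the cocycle identity yields $\al_t(\al_r(x^\nu))=c(t,r)\,\al_{t+r}(x^\nu)\,c(t,r)^*$. Given a Borel set $E\subs\R$ with $\mu(E)<\infty$ and $\vep>0$, I would apply $(\al,\om)$-equicontinuity of $(x^\nu)_\nu$ to the translate $E+r$ to get a compact $K_1\subs E+r$ with $\mu((E+r)\setminus K_1)<\vep/2$, with $\al|_{K_1}$ continuous and $\{K_1\ni s\mapsto\al_s(x^\nu)\}_\nu$ $\om$-equicontinuous; translating by $-r$ (an isometry of $\R$) shows $\{(K_1-r)\ni t\mapsto\al_{t+r}(x^\nu)\}_\nu$ is $\om$-equicontinuous. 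Then Lusin's theorem (Lemma~\ref{lem:Lusin}) applied to the Borel map $t\mapsto(\al_t,c(t,r))$ produces a compact $K\subs K_1-r$ with $\mu((K_1-r)\setminus K)<\vep/2$ on which both $t\mapsto\al_t$ and $t\mapsto c(t,r)$ are continuous, so that $K\subs E$ and $\mu(E\setminus K)<\vep$. The constant-in-$\nu$ families $t\mapsto c(t,r)$ and $t\mapsto c(t,r)^*$ are $\om$-equicontinuous on $K$ with components in $\sN_\om$ (they are constant sequences from $\cM$), and $\{t\mapsto\al_{t+r}(x^\nu)\}_\nu$ is $\om$-equicontinuous with components in $\sN_\om$; so two applications of Lemma~\ref{lem:multi-equicont} give that $\{K\ni t\mapsto\al_t(\al_r(x^\nu))\}_\nu$ is $\om$-equicontinuous, which is exactly what Definition~\ref{defn:al-om} demands for $(\al_r(x^\nu))_\nu$.

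For (2), take $(x^\nu)_\nu\in\sE_\al^\om\cap\sN_\om$ and use $\al^v_t(x^\nu)=v(t)\,\al_t(x^\nu)\,v(t)^*$. Given $E$ and $\vep$ as above, apply $(\al,\om)$-equicontinuity to $E$ itself to get a compact $K_0\subs E$ with $\mu(E\setminus K_0)<\vep/2$, $\al|_{K_0}$ continuous, and $\{K_0\ni t\mapsto\al_t(x^\nu)\}_\nu$ $\om$-equicontinuous; then Lusin's theorem gives a compact $K\subs K_0$ with $\mu(K_0\setminus K)<\vep/2$ on which $t\mapsto v(t)$ is continuous, and hence $\al^v|_K$ is continuous (using that $u\mapsto\Ad u$ is continuous and the identity $\al^v_t(\vph)=(\Ad v(t))(\al_t(\vph))$). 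As in (1), Lemma~\ref{lem:multi-equicont} applied to the uniformly bounded $\om$-equicontinuous families $t\mapsto v(t)$, $t\mapsto\al_t(x^\nu)$, $t\mapsto v(t)^*$ (all with components in $\sN_\om$) shows $\{K\ni t\mapsto v(t)\al_t(x^\nu)v(t)^*\}_\nu$ is $\om$-equicontinuous on $K$, with $\mu(E\setminus K)<\vep$; this gives $(x^\nu)_\nu\in\sE_{\al^v}^\om$, and membership in $\sN_\om$ is unchanged.

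The routine parts are the measure bookkeeping when intersecting the successive ``Lusin-good'' compact sets, the continuity of $u\mapsto\Ad u$, and the verification that $\al^v|_K$ is continuous. The step I expect to need the most care is the translation trick in (1): rerouting the available $(\al,\om)$-equicontinuity of $(x^\nu)_\nu$ on $E+r$ into control of the shifted map $t\mapsto\al_{t+r}(x^\nu)$ on $K\subs E$, together with the check that the constant unitary sequences $c(t,r)$ and $v(t)$ genuinely lie in $\sN_\om$, so that Lemma~\ref{lem:multi-equicont} is applicable. Beyond that I do not anticipate a real obstacle.
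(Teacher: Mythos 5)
Your proof is correct and takes essentially the same route as the paper: for (1) use the cocycle identity $\al_t\al_s=\Ad c(t,s)\circ\al_{t+s}$, translate $E$ by $s$ to transport the $(\al,\om)$-equicontinuity of $(x^\nu)_\nu$ into $\om$-equicontinuity of $t\mapsto\al_{t+s}(x^\nu)$ on a subset of $E$, shrink by Lusin so that $\al$ and $c(\cdot,s)$ are continuous, and conclude that the triple product is $\om$-equicontinuous; for (2) shrink a good compact set for $(x^\nu)_\nu$ further so that $v$ is continuous and multiply. The only difference is that for part (1) the paper writes out the three-term estimate for $c(t,s)\al_{t+s}(x^\nu)c(t,s)^*$ by hand (using Lemma~\ref{lem:normalize} to control the middle factor $x^\nu$), whereas you invoke Lemma~\ref{lem:multi-equicont} twice — which is the same mechanism packaged as a black box and is exactly what the paper does in part (2); your two preliminary reductions (it suffices to fix $r$ and to prove one inclusion in (2)) are clean and unobjectionable.
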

\begin{proof}
(1).
Let $E\subs\R$ be a Borel set
with $0<\mu(E)<\infty$.
Let $\vep,\ka>0$ and $(x^\nu)_\nu\in\sE_\al^\om\cap\sN_\om$.
We may and do assume $\|x^\nu\|\leq1$ for all $\nu\in\N$.
Fix $s\in\R$.
Then we can take a compact set $K_1\subs E+s$
such that
$\mu((E+s)\setminus K_1)<\ka$,
$\al|_{K_1}$ is continuous,
and $\{K_1\ni t\mapsto \al_t(x^\nu)_\nu\}_\nu$
is $\om$-equicontinuous.

Next, we take a compact set $K_2\subs E$
such that $\mu(E\setminus K_2)<\ka$,
and the map
$K_2\ni t\mapsto c(t,s)$ is continuous.
Set $K:=(K_1-s)\cap K_2$, which satisfies
$K\subs E$
and
$\mu(E\setminus K)<2\ka$.
Let $\xi\in\cH$ be a cyclic and separating
vector for $\cM$.
We set the following compact set
\[
\Ps:=\{c(t,s)^*\xi \mid t\in K\}
\cup
\{\al_{t+s}^{-1}(c(t,s)^*\xi)
\mid
t\in K
\}.
\]
Take $\de>0$ and $W_1\in\om$
such that
for all $t,t'\in K_1$ with $|t-t'|<\de$
and $\nu\in W_1$,
we have
\begin{equation}
\label{eq:etaPsal}
\sup_{\eta\in\Ps}
\|\eta(\al_t(x^\nu)-\al_{t'}(x^\nu))\|<\vep,
\quad
\sup_{\eta\in\Ps}
\|(\al_t(x^\nu)-\al_{t'}(x^\nu))\eta\|<\vep.
\end{equation}

By Lemma \ref{lem:normalize},
there exist $\de'>0$ and $W_2\in\om$
such that
if $a\in\cM$ with $\|a\|\leq2$
and $\|a\xi\|+\|\xi a\|<\de'$,
then
$\sup_{\eta\in\Ps}(\|ax^\nu\eta\|+\|\eta ax^\nu\|)<\vep$,
and
$\sup_{\eta\in\Ps}(\|x^\nu a\eta\|+\|\eta x^\nu a\|)<\vep$
for all $\nu\in W_2$.

Take $\de''>0$ so that
if $t,t'\in K$ with $|t-t'|<\de''$,
then
\begin{equation}
\label{eq:ctxi}
\|(c(t,s)-c(t',s))\xi\|
+
\|\xi(c(t,s)-c(t',s))\|<\vep,
\end{equation}
\[
\|\al_{t'+s}^{-1}(c(t,s)-c(t',s))\xi\|
+
\|\xi\al_{t'+s}^{-1}(c(t,s)-c(t',s))\|<\de'.
\]

Then for $t,t'\in K$ with $|t-t'|<\min(\de,\de'')$
and $\nu\in W_1\cap W_2$,
we have
\begin{equation}
\label{eq:etaxnu}
\sup_{\eta\in\Ps}
\|\al_{t'+s}^{-1}(c(t,s)-c(t',s))x^\nu \eta\|<\vep,
\end{equation}
and
\begin{align*}
&\|(\al_t(\al_s(x^\nu))-\al_{t'}(\al_s(x^\nu)))\xi\|
\\
&=
\|(c(t,s)\al_{t+s}(x^\nu)c(t,s)^*
-c(t',s)\al_{t'+s}(x^\nu)c(t',s)^*)\xi\|
\\
&=
\|c(t,s)\al_{t+s}(x^\nu)(c(t,s)^*-c(t',s)^*)\xi\|
+
\|c(t,s)(\al_{t+s}(x^\nu)-\al_{t'+s}(x^\nu))
c(t',s)^*\xi\|
\\
&\quad+
\| (c(t,s)-c(t',s))\al_{t'+s}(x^\nu)c(t',s)^*\xi\|
\\
&\leq
\|(c(t,s)^*-c(t',s)^*)\xi\|
+
\sup_{\eta\in\Ps}
\|(\al_{t+s}(x^\nu)-\al_{t'+s}(x^\nu))\eta\|
\\
&\quad+
\|\al_{t'+s}^{-1}(c(t,s)-c(t',s))x^\nu
\al_{t'+s}^{-1}(c(t',s)^*\xi)\|
\\
&<
\vep+\vep
+
\sup_{\eta\in\Ps}
\|\al_{t'+s}^{-1}(c(t,s)-c(t',s))x^\nu \eta\|
\quad
\mbox{by }
(\ref{eq:etaPsal}),
\
(\ref{eq:ctxi})
\\
&<3\vep
\quad
\mbox{by }
(\ref{eq:etaxnu}).
\end{align*}

We can obtain a similar estimate
for $\|\xi(\al_t(\al_s(x^\nu))-\al_{t'}(\al_s(x^\nu)))\|$.
Therefore, $(\al_s(x^\nu))_\nu\in\sE_\al^\om$.

(2).
Let $(x^\nu)_\nu\in\sE_\al^\om\cap\sN_\om$.
Let $E\subs\R$ be a Borel
set with $0<\mu(E)<\infty$.
Take a compact set $K\subs E$
such that
\begin{itemize}
\item 
$\mu(E\setminus K)<\ka$;
\item
$\al,v$ are continuous on $K$;
\item
$\{K\ni t\mapsto \al_t(x^\nu)\}_\nu$
is $\om$-equicontinuous.
\end{itemize}
Then $\{K\ni t\mapsto v_t\al_t(x^\nu)v_t^*\}_\nu$
is $\om$-equicontinuous
by Lemma \ref{lem:multi-equicont}.
\end{proof}

In the following,
we generalize
the $\om$-equicontinuous part of $\cM^\omega$
introduced in \cite[Definition 2.2]{Kawamuro-RIMS}
to a Borel map.

\begin{defn}
\label{defn:equicont}
Let $\al\col\R\ra\Aut(\cM)$ be a Borel map.
We let $\cM_\al^\om$ be the quotient
C$^*$-algebra $(\sE_\al^\om\cap\sN_\om)/\sT_\om$,
and $\cM_{\om,\al}:=\cM_\al^\om\cap \cM_\om$.
We call them the $(\al,\om)$-\emph{equicontinuous parts}
of $\cM^\om$ and $\cM_\om$,
respectively.
\end{defn}

\begin{lem}
The C$^{\,*}$-subalgebras
$\cM_\al^\omega$ and $\cM_{\om,\al}$
are von Neumann subalgebras of $\cM^\omega$
and $\cM_\om$, respectively.
\end{lem}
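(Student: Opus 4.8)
The plan is as follows. By Lemma \ref{lem:Ealom}, $\sE_\al^\om\cap\sN_\om$ is a $\Cst$-subalgebra of $\ell^\infty(\cM)$ containing $\sT_\om$, so $\cM_\al^\om=(\sE_\al^\om\cap\sN_\om)/\sT_\om$ is a $\Cst$-subalgebra of $\cM^\om$ and $\cM_{\om,\al}=\cM_\al^\om\cap\cM_\om$ is a $\Cst$-subalgebra of $\cM_\om$. What remains is to see that they are $\si$-weakly closed. Fix a faithful normal state $\vph$ on $\cM$; then $\vph^\om=\vph\circ\ta^\om$ is a faithful normal state on $\cM^\om$, so the strong$*$ topology on bounded subsets of $\cM^\om$ is metrized by $\|\cdot\|_{\vph^\om}^\sharp$. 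As the unit ball of $\cM_\al^\om$ is convex, it therefore suffices to show that it is closed under strong$*$ limits of sequences; since the $\si$-weak closure of $\cM_\al^\om$ is a von Neumann algebra whose unit ball is the strong$*$-closure of $(\cM_\al^\om)_1$ (Kaplansky density), this yields that $\cM_\al^\om$ is a von Neumann subalgebra of $\cM^\om$.

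So suppose $x_k\in\cM_\al^\om$, $\|x_k\|\le1$, and $\|x_k-x\|_{\vph^\om}^\sharp\to0$ for some $x\in\cM^\om$ with $\|x\|\le1$; I must produce a representing sequence of $x$ inside $\sE_\al^\om\cap\sN_\om$. First I would lift $x_k=\pi_\om((x_k^\nu)_\nu)$ with $(x_k^\nu)_\nu\in\sE_\al^\om\cap\sN_\om$, $\|x_k^\nu\|\le1$, and $x=\pi_\om((y^\nu)_\nu)$ with $(y^\nu)_\nu\in\sN_\om$, $\|y^\nu\|\le1$. Since $\vph$ is normal and $\ta^\om$ takes the $\si$-weak limit along $\om$, one has $\|x_k-x\|_{\vph^\om}^\sharp=\lim_{\nu\to\om}\|x_k^\nu-y^\nu\|_\vph^\sharp$, hence $V_k:=\{\nu\mid\|x_k^\nu-y^\nu\|_\vph^\sharp<\|x_k-x\|_{\vph^\om}^\sharp+2^{-k}\}\in\om$. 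Replacing $V_k$ by $V_1\cap\dots\cap V_k$ we may assume $V_1\supset V_2\supset\dots$; set $k(\nu):=\max\{m\le\nu\mid\nu\in V_m\}$ (and $k(\nu):=0$ if there is none), so that $k(\nu)\to\infty$ along $\om$. Put $z^\nu:=x_{k(\nu)}^\nu$ (with $z^\nu:=0$ when $k(\nu)=0$). For $\nu$ with $k(\nu)\ge m$ one has $\nu\in V_{k(\nu)}$, so
\[
\|z^\nu-y^\nu\|_\vph^\sharp
=\|x_{k(\nu)}^\nu-y^\nu\|_\vph^\sharp
<\sup_{j\ge m}\|x_j-x\|_{\vph^\om}^\sharp+2^{-m};
\]
since the right-hand side tends to $0$ as $m\to\infty$, we get $\lim_{\nu\to\om}\|z^\nu-y^\nu\|_\vph^\sharp=0$, whence $(z^\nu-y^\nu)_\nu\in\sT_\om$ (bounded sequences, $\vph$ faithful normal). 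Thus $(z^\nu)_\nu\in\sN_\om$ represents $x$.

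The remaining, and crucial, point is that $(z^\nu)_\nu\in\sE_\al^\om$. Here, given a Borel set $E\subs\R$ with $\mu(E)<\infty$ and $\vep>0$, I would use Lemma \ref{lem:Lusin} and $(x_k^\nu)_\nu\in\sE_\al^\om$ to choose, for each $k$, a compact $K_k\subs E$ with $\al|_{K_k}$ continuous, $\mu(E\setminus K_k)<\vep\,2^{-k-1}$, and $\{K_k\ni t\mapsto\al_t(x_k^\nu)\}_\nu$ $\om$-equicontinuous; then $K:=\bigcap_{k\ge1}K_k$ is compact, $\al|_K$ is continuous, and $\mu(E\setminus K)<\vep$. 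One must then show that $\{K\ni t\mapsto\al_t(z^\nu)\}_\nu$ is $\om$-equicontinuous, which, fixing a cyclic and separating vector $\xi\in\cH$ for $\cM$, by Lemma \ref{lem:eqctcpt} reduces to estimating $\|(\al_s(z^\nu)-\al_t(z^\nu))\xi\|$ and its companion with $\xi$ on the right, using $z^\nu=x_{k(\nu)}^\nu$ and the $\om$-equicontinuity of each $\{t\mapsto\al_t(x_k^\nu)\}_\nu$ on $K\subs K_k$. I expect this to be the hard part: because $k(\nu)\to\infty$ along $\om$, the modulus of continuity of $\{t\mapsto\al_t(x_k^\nu)\}_\nu$ at a fixed tolerance may shrink as $k$ grows, so the construction of $k(\nu)$ must be tuned — letting it grow slowly, and if needed replacing $z^\nu$ by a modification with the same $\sT_\om$-class — so that for each tolerance $1/m$ a single pair $(\de_m,W_m)$ with $W_m\in\om$ controls all the indices $k(\nu)$ occurring on $W_m$ simultaneously. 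Granting this, $(z^\nu)_\nu\in\sE_\al^\om\cap\sN_\om$ represents $x$, so $x\in\cM_\al^\om$, which completes the argument for $\cM_\al^\om$.

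Finally, $\cM_\om$ is a von Neumann subalgebra of $\cM^\om$ (as recalled in Section 2), and $\cM_{\om,\al}=\cM_\al^\om\cap\cM_\om$; being an intersection of two von Neumann subalgebras of $\cM^\om$ it is a von Neumann subalgebra of $\cM^\om$, and since it lies in $\cM_\om$ it is a von Neumann subalgebra of $\cM_\om$.
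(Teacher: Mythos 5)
There is a genuine gap, and you have flagged it yourself: the entire content of the lemma is the assertion that the limit has a representing sequence in $\sE_\al^\om$, and your proof of this reads ``Granting this\dots''. Your worry is well founded: with $z^\nu:=x_{k(\nu)}^\nu$ and $k(\nu)\to\infty$ along $\om$, the data $(\de_k,W_k)$ witnessing $\om$-equicontinuity of $\{t\mapsto\al_t(x_k^\nu)\}_\nu$ at a fixed tolerance may degenerate as $k\to\infty$ ($\de_k\to0$, $W_k$ shrinking), so nothing forces $(z^\nu)_\nu$ to be $(\al,\om)$-equicontinuous. The diagonal construction could in principle be repaired by folding the equicontinuity control into the sets $V_k$, but as written the argument stops exactly where the lemma begins.

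The paper avoids this entirely by a simpler observation: one never needs a new representing sequence. Fix a representing sequence $(x^\nu)_\nu$ of the limit $X$ and, for a given tolerance $\vep$, a \emph{single} approximant $X_{n_0}$ with $\sup_{t\in K_0}\|X_{n_0}-X\|^\sharp_{\al_t^{-1}(\vph^\om)}<\vep/3$; this supremum makes sense and is attained uniformly because $\{\al_t^{-1}(\vph^\om)\mid t\in K_0\}$ is norm compact ($\al|_{K_0}$ being continuous). Writing
\begin{align*}
\|\al_s(x^\nu)-\al_t(x^\nu)\|_\vph^\sharp
&\leq
\|x^\nu-x_{n_0}^\nu\|^\sharp_{\al_s^{-1}(\vph)}
+
\|\al_s(x_{n_0}^\nu)-\al_t(x_{n_0}^\nu)\|_\vph^\sharp
+
\|x_{n_0}^\nu-x^\nu\|^\sharp_{\al_t^{-1}(\vph)},
\end{align*}
the outer terms are $<\vep/3$ for $\nu$ in a single $W_1\in\om$ uniformly over $K_0$, and the middle term is controlled by the $(\al,\om)$-equicontinuity of the one fixed sequence $(x_{n_0}^\nu)_\nu$ on a compact $K_1\subs K_0$ with $\mu(K_0\setminus K_1)$ small. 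Thus the original representing sequence of $X$ is itself $(\al,\om)$-equicontinuous, and no diagonalization or retuning of indices is needed. I recommend replacing your construction of $(z^\nu)_\nu$ by this triangle-inequality argument; the rest of your reduction (the $\Cst$-algebra structure from Lemma \ref{lem:Ealom}, the metrizability of the strong$*$ topology on bounded sets, Kaplansky density, and the final sentence about $\cM_{\om,\al}=\cM_\al^\om\cap\cM_\om$) is correct and matches the paper.
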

\begin{proof}
We show the unit ball of $\cM_\al^\om$
is strongly closed
in $\cM^\om$.
Suppose a sequence $X_n\in (\cM_\al^\om)_1$
strongly converges to
$X\in (\cM^\omega)_1$
as $n\to\infty$.
Let $0<\ka<1/2$.
Let $E$ be a Borel set
with $0<\mu(E)<\infty$
and $K_0\subs\R$ a compact set
such that
$\mu(E\setminus K_0)<\ka$
and $\al\col K_0\ra\Aut(\cM)$ is continuous.
Let $\vph\in\cM_*$ be
a faithful state.
Recall the fact that
for any $\ph\in \cM_*$,
the function
$K_0\ni t\mapsto \al_t(\ph^\om)
\in (\cM^\om)_*$
is continuous
since
$\al_t(\ph^\om)=\al_t(\ph)^\om$.
Thus $\Ps:=\{\al_t^{-1}(\vph^\om)\mid t\in K_0\}$
is compact in $(\cM^\om)_*$,
and we have
$\sup_{t\in K_0}
\|X_n-X\|_{\al_t^{-1}(\vph^\om)}^\sharp\to 0$
as $n\to\infty$.

Let $\vep>0$.
Then we can find $n_0$
such that
$\sup_{t\in K_0}
\|X_{n_0}-X\|_{\al_t^{-1}(\vph^\om)}^\sharp
<\vep/3$.
Fix representing sequences
of $X_n$ and $X$,
$(x_n^\nu)_\nu$
and $(x^\nu)_\nu$ with $\|x_n^\nu\|,\|x^\nu\|\leq 1$,
respectively.
Then again by compactness of $\Ps$,
there exists $W_1\in \omega$ such that
\[
\|x_{n_0}^\nu-x^\nu\|_{\al_t^{-1}(\vph)}^\sharp
<\vep/3
\quad
\mbox{for all}\
t\in K,
\
\nu\in W_1.
\]
Since $X_{n_0}\in \cM_\al^\om$, 
there exist a compact set $K_1\subs K_0$,
$0<\delta<1$
and $W_2\in \omega$ such that
$\mu(K_0\setminus K_1)<\ka$,
and
\[
\|\al_s(x_{n_0}^\nu)-\al_t(x_{n_0}^\nu)
\|_{\vph}^\sharp
<\vep/3
\quad
\mbox{for all}\
s,t\in K_1,
\
|s-t|<\de,
\
\nu\in W_2.
\]
Thus for $s,t\in K_1$ with $|s-t|<\delta$
and $\nu\in W_1\cap W_2$,
we have
$\|\alpha_s(x^{\nu})-\al_t(x^{\nu})\|_\vph<\vep$.
This shows that $X\in \cM_\al^\om$
since $\mu(E\setminus K_1)<2\ka$.
Hence $\cM_\al^\om$ is a von Neumann algebra,
and so is $\cM_{\om,\al}=\cM_\al^\om\cap\cM_\om$. 
\end{proof}

We should note
that
$\cM\subset \cM_\al^\om$
and $\cM_{\om,\al}\subset \cM'\cap\cM_\al^\om$.

Suppose that an flow $\al$ fixes $p\in\cM^{\rm P}$.
Denote the reduced flow by $\al^p$.
It is trivial that
$\sE_{\al^p}^\om\subs \sE_{\al}^\om$.
By Lemma \ref{lem:reduced},
we obtain the following result.

\begin{cor}
\label{cor:reduced}
Let $\cM$ be a von Neumann algebra
and $\al$ a flow on $\cM$.
Suppose that
$p\in \cM^{\rm P}$ is fixed by $\al$.
Then the $(\al,\om)$-equicontinuous parts
of $(\cM_p)^\om$ and $(\cM_p)_\om$
are described as follows:
\[
(\cM_p)_{\al^p}^\om=(\cM_\al^\om)_p,
\quad
(\cM_p)_{\om,\al^p}=(\cM_{\om,\al})_p.
\]
\end{cor}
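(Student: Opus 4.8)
The plan is to argue at the level of representing sequences, deducing everything from Lemma~\ref{lem:reduced}, the identifications $(\cM_p)^\om=(\cM^\om)_p$ and $(\cM_p)_\om=(\cM_\om)_p$ already established, and the inclusion $\sE_{\al^p}^\om\subs\sE_\al^\om$ noted just above. Inside $(\cM^\om)_p=(\cM_p)^\om$ we then have the two candidate von Neumann subalgebras $(\cM_\al^\om)_p=p\cM_\al^\om p$ and $(\cM_p)_{\al^p}^\om$, and the task is to show they coincide; the assertion for $(\cM_p)_{\om,\al^p}$ will follow by running the same argument with $\sN_\om$ replaced by $\sC_\om$ and with Lemma~\ref{lem:reduced}(3) in place of (2), noting that $\sC_\om(\cM_p)\subs\sC_\om(\cM)$ because $\sC_\om(\cM)$ is a $C^*$-algebra containing the constant sequence $p$.

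For $(\cM_p)_{\al^p}^\om\subs(\cM_\al^\om)_p$ I would take a representative $(y^\nu)_\nu\in\sE_{\al^p}^\om\cap\sN_\om(\cM_p)$, so $y^\nu=py^\nu p$; by Lemma~\ref{lem:reduced}(2) and $\sE_{\al^p}^\om\subs\sE_\al^\om$ we get $(y^\nu)_\nu\in\sE_\al^\om\cap\sN_\om(\cM)$, hence $\pi_\om((y^\nu)_\nu)\in\cM_\al^\om$, and it lies in $p\cM_\al^\om p$ since $y^\nu=py^\nu p$. For the reverse inclusion I would start from $Y=pXp$ with $X=\pi_\om((x^\nu)_\nu)$ and $(x^\nu)_\nu\in\sE_\al^\om\cap\sN_\om(\cM)$, so $Y=\pi_\om((px^\nu p)_\nu)$ and, by Lemma~\ref{lem:reduced}(2), $(px^\nu p)_\nu\in\sN_\om(\cM_p)$; it remains to check $(px^\nu p)_\nu\in\sE_{\al^p}^\om$. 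Here I would re-use the \emph{same} compact $K$ that the $(\al,\om)$-equicontinuity of $(x^\nu)_\nu$ supplies for a prescribed Borel set $E$ and $\vep>0$: the reduction map $\{\ga\in\Aut(\cM):\ga(p)=p\}\to\Aut(\cM_p)$ is continuous, so $\al^p|_K$ is continuous once $\al|_K$ is; and writing $\cH_p:=p\,JpJ\,\cH\subs\cH$ for the standard Hilbert space of $\cM_p$, so that $p\zeta=\zeta=JpJ\,\zeta$ for $\zeta\in\cH_p$, one has, with $b:=\al_s(x^\nu)-\al_t(x^\nu)$,
\[
\|(pbp)\zeta\|=\|pb\zeta\|\le\|b\zeta\|,\qquad \|\zeta(pbp)\|=\|(JpJ)(Jb^*J)\zeta\|\le\|\zeta b\|.
\]
Hence the family $\{K\ni t\mapsto\al^p_t(px^\nu p)=p\,\al_t(x^\nu)\,p\}_\nu$ inherits $\om$-equicontinuity from $\{K\ni t\mapsto\al_t(x^\nu)\}_\nu$, so $(px^\nu p)_\nu\in\sE_{\al^p}^\om$ and $Y\in(\cM_p)_{\al^p}^\om$. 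This yields the first equality, and the second follows as indicated.

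The step I expect to be the real obstacle — although it is asserted above to be trivial — is the inclusion $\sE_{\al^p}^\om\subs\sE_\al^\om$ used in the first half. Given $(y^\nu)_\nu$ in $\cM_p$ that is $(\al^p,\om)$-equicontinuous, one must control the $\cH$-seminorms $\|(\al_s(y^\nu)-\al_t(y^\nu))\xi\|$ and $\|\xi(\al_s(y^\nu)-\al_t(y^\nu))\|$ for arbitrary $\xi\in\cH$, whereas the hypothesis only bounds them for $\xi\in\cH_p\subsetneq\cH$. The resolution uses standard form: for $b\in\cM_p$ and $\xi\in\cH$ one has $\|b\xi\|^2=\langle b^*b\xi,\xi\rangle=(\om_\xi|_{\cM_p})(b^*b)$, and $\om_\xi|_{\cM_p}$, being a normal positive functional on $\cM_p$, equals $\om_\zeta$ for some $\zeta$ in the natural cone of $\cM_p$ inside $\cH_p$; the right-action seminorm is treated the same way via the functional $x\mapsto\langle JxJ\xi,\xi\rangle$ on $\cM_p$. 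Replacing each vector of a prescribed finite subset of $\cH$ by the corresponding vector of $\cH_p$ thus reduces $\om$-equicontinuity in $\cM$ to that in $\cM_p$; one also shrinks $K$ once more by Lemma~\ref{lem:Lusin} so that $\al$, not merely $\al^p$, is continuous on it. Everything else is routine bookkeeping with Lemma~\ref{lem:reduced} and the definitions.
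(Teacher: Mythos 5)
Your handling of the first equality is correct, and your standard-form argument for the inclusion $\sE_{\al^p}^\om\subs\sE_\al^\om$ (replacing a test vector $\xi\in\cH$ by a vector in the natural cone of $\cH_p$ inducing the same functional $b\mapsto\|b\xi\|^2$ on $\cM_p$, and likewise for the right seminorm) is exactly what is needed to pass from test vectors in $\cH_p$ to test vectors in $\cH$. The gap is in the last sentence of your first paragraph: the inclusion $\sC_\om(\cM_p)\subs\sC_\om(\cM)$ is \emph{false}. A sequence that is $\om$-central in the corner $\cM_p$ need not be $\om$-central in $\cM$: for $\vph\in\cM_*$ and $y^\nu=py^\nu p$ the commutator $[y^\nu,\vph]$ contains cross terms such as $q\vph y^\nu$ with $q=1-p$, which have no reason to vanish; indeed the constant sequence $p$ is the unit of $\cM_p$, hence trivially central there, yet $[p,\vph]\neq0$ unless $p\in Z(\cM)$. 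Your justification (``$\sC_\om(\cM)$ is a C$^*$-algebra containing the constant sequence $p$'') fails for the same reason: $\sC_\om(\cM)$ contains the constant sequence $x$ only for $x\in Z(\cM)$. This is precisely why Lemma \ref{lem:reduced} asserts the containments in $\sT_\om(\cM)$ and $\sN_\om(\cM)$ in items (1) and (2) but conspicuously omits the analogue in item (3), giving only the surjectivity statement, whose proof must construct a compensating sequence in $\cM_q$.

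Consequently ``running the same argument'' yields only one inclusion of the second equality. The inclusion $(\cM_{\om,\al})_p\subs(\cM_p)_{\om,\al^p}$ does follow by compressing a sequence in $\sE_\al^\om\cap\sC_\om(\cM)$ by $p$ and invoking Lemma \ref{lem:reduced}(3) together with your first-equality argument. But for the converse, starting from $(y^\nu)_\nu\in\sE_{\al^p}^\om\cap\sC_\om(\cM_p)$, Lemma \ref{lem:reduced}(3) only provides \emph{some} $(x^\nu)_\nu\in\sC_\om(\cM)$ with $y^\nu=px^\nu p$, namely $x^\nu=y^\nu+\sum_i v_i^*y^\nu v_i$, and there is no reason for this $(x^\nu)_\nu$ to lie in $\sE_\al^\om$. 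So you obtain $W:=\pi_\om((y^\nu)_\nu)\in(\cM_\al^\om)_p\cap(\cM_\om)_p$, but not yet $W\in(\cM_\al^\om\cap\cM_\om)_p=(\cM_{\om,\al})_p$, and these two sets are not obviously equal. One way to close the gap: since $W\in\cM_\al^\om$ and $(x^\nu p)_\nu$ represents $W$, Lemmas \ref{lem:Ealom}(1), \ref{lem:smoothing} and \ref{lem:alfxnu} give, for every $f\in L^1(\R)$,
\[
\al_f(W)=\pi_\om\big((\al_f(x^\nu p))_\nu\big)=\pi_\om\big((\al_f(x^\nu))_\nu\big)\,p,
\]
using $\al_t(p)=p$ to pull $p$ out of the integral; here $(\al_f(x^\nu))_\nu\in\sE_\al^\om\cap\sC_\om(\cM)$, so $\al_f(W)\in(\cM_{\om,\al})_p$. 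Letting $f$ run through an approximate identity, $\al_f(W)\to W$ strongly because $\al$ is a continuous flow on $\cM_\al^\om$, and $(\cM_{\om,\al})_p$ is strongly closed on bounded sets, whence $W\in(\cM_{\om,\al})_p$. Some argument of this kind must be supplied before the second displayed identity can be asserted.
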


The following result
is a direct consequence of Lemma \ref{lem:omeqstab},
and this shows
that
the $(\al,\om)$-equicontinuous parts
$\cM_\al^\om$ and $\cM_{\om,\al}$
are invariant
under perturbation.

\begin{lem}
\label{lem:cocpertstab}
If $(\al,c)$ be a Borel cocycle action of $\R$
on a von Neumann algebra $\cM$.
and $(\be,d)$ is a perturbation of $(\al,c)$
by a Borel unitary path.
Then $\cM_\al^\om=\cM_{\be}^\om$
and $\cM_{\om,\al}=\cM_{\om,\be}$.
\end{lem}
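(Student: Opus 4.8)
The plan is to deduce the statement directly from Lemma~\ref{lem:omeqstab}(2). By definition, saying that $(\be,d)$ is the perturbation of $(\al,c)$ by a Borel unitary path means precisely that there is a Borel unitary path $v\col\R\ra\cM^{\rm U}$ with $\be=\al^v$ (that is, $\be_t=\Ad v(t)\circ\al_t$) and $d=c^v$. Hence Lemma~\ref{lem:omeqstab}(2) applies verbatim and gives the equality of C$^*$-subalgebras of $\ell^\infty(\cM)$:
\[
\sE_\al^\om\cap\sN_\om=\sE_{\be}^\om\cap\sN_\om.
\]

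Next I would pass to the ultraproduct level. By Lemma~\ref{lem:Ealom}(2) we have $\sT_\om\subs\sE_\al^\om$, and trivially $\sT_\om\subs\sN_\om$, so $\sT_\om$ is a closed two-sided ideal of the common C$^*$-algebra $\sE_\al^\om\cap\sN_\om=\sE_\be^\om\cap\sN_\om$. Forming the quotient of this one algebra by this one ideal therefore produces literally the same von Neumann subalgebra of $\cM^\om=\sN_\om/\sT_\om$, i.e.
\[
\cM_\al^\om=(\sE_\al^\om\cap\sN_\om)/\sT_\om
=(\sE_\be^\om\cap\sN_\om)/\sT_\om=\cM_\be^\om .
\]
Intersecting with $\cM_\om$ then yields $\cM_{\om,\al}=\cM_\al^\om\cap\cM_\om=\cM_\be^\om\cap\cM_\om=\cM_{\om,\be}$, as required.

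There is no genuine obstacle: all of the analytic work — the construction of the exhausting compact sets on which $\al$ and $v$ are continuous, and the control of the perturbed maps $t\mapsto v_t\al_t(x^\nu)v_t^*$ via Lemma~\ref{lem:multi-equicont} — has already been carried out in the proof of Lemma~\ref{lem:omeqstab}(2). The present lemma is just the statement of that stability after passing from the level of representing sequences in $\ell^\infty(\cM)$ to the level of elements of $\cM^\om$ and $\cM_\om$; the only point to check is the purely formal one that the notion of perturbation of a Borel cocycle action coincides, on the level of the flow part, with the perturbation by a Borel unitary path appearing in Lemma~\ref{lem:omeqstab}(2).
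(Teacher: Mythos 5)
Your proposal is correct and is exactly the argument the paper intends: the paper gives no separate proof but states the lemma as a direct consequence of Lemma~\ref{lem:omeqstab}(2), and your passage to the quotient by $\sT_\om$ (using Lemma~\ref{lem:Ealom}(2)) together with the observation that $\sE_\be^\om$ depends only on the Borel map $\be=\al^v$ and not on the $2$-cocycle $d$ is precisely the routine verification being left to the reader.
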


\subsection{Flows on $\cM_\al^\om$ or $\cM_{\om,\al}$}

\begin{lem}
\label{lem:flowequipart}
Let $\cM$ be a von Neumann algebra.
The following statements hold:
\begin{enumerate}

\item 
If $\al$ is a flow on $\cM$,
then
so is $\al$ on $\cM_\al^\om$;

\item
If $(\al,c)$ is a Borel cocycle action of $\R$ on $\cM$,
then
$\al$ is a flow on $\cM_{\om,\al}$.
\end{enumerate}
\end{lem}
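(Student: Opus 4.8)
The plan is to handle both parts by the same two moves: first push $\al^\om$ down to the subalgebra in question and recognize it as an honest one-parameter automorphism group, then promote its (a priori missing) continuity using the $(\al,\om)$-equicontinuity that is built into $\cM_\al^\om$ and $\cM_{\om,\al}$. For the algebraic part, Lemma~\ref{lem:omeqstab}(1) says $\sE_\al^\om\cap\sN_\om$ is $\al^\om$-invariant, and $\sT_\om$ is trivially $\al^\om$-invariant, so $\al^\om$ descends to a one-parameter family of automorphisms of $\cM_\al^\om$; since each $\al_t$ also preserves $\sC_\om$, in case~(2) this family preserves $\cM_\om$ as well, hence $\cM_{\om,\al}=\cM_\al^\om\cap\cM_\om$. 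In case~(1), $\al$ is an honest flow, so $\al^\om$ is an honest action on $\cM_\al^\om$. In case~(2), every element of $\cM_{\om,\al}\subset\cM_\om\subset\cM'\cap\cM^\om$ commutes with each $c(s,t)\in\cM$, so the cocycle identity $\al_s\circ\al_t=\Ad c(s,t)\circ\al_{s+t}$ collapses on $\cM_{\om,\al}$ to $\al^\om_s\circ\al^\om_t=\al^\om_{s+t}$. Thus in both cases it remains only to prove $\sigma$-strong$*$ continuity.

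Fix a faithful $\vph\in\cM_*$ with GNS vector $\xi\in\cP$ (cyclic and separating), and note $\vph^\om=\vph\circ\ta^\om$ is a faithful normal state on $\cM^\om$, hence restricts to a faithful normal state on any von Neumann subalgebra, and that on bounded sets the $\sigma$-strong$*$ topology is metrized by $\|\cdot\|_{\vph^\om}^\sharp$; recall also $\vph^\om(Y^*Y)=\lim_{\nu\to\om}\|y^\nu\xi\|^2$ and $\vph^\om(YY^*)=\lim_{\nu\to\om}\|\xi y^\nu\|^2$ for $Y=\pi_\om((y^\nu)_\nu)$. In case~(1), take $X=\pi_\om((x^\nu)_\nu)\in\cM_\al^\om$ with $\|x^\nu\|\le1$; by Proposition~\ref{prop:flow-equi}(2), for each $\vep>0$ there are $\de>0$ and $W\in\om$ with $\|(\al_t(x^\nu)-x^\nu)\xi\|+\|\xi(\al_t(x^\nu)-x^\nu)\|<\vep$ for $|t|<\de$, $\nu\in W$, and applying the above with $y^\nu=\al_t(x^\nu)-x^\nu$ gives $\|\al_t(X)-X\|_{\vph^\om}^\sharp\le\vep$ for $|t|<\de$. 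So $t\mapsto\al_t(X)$ is $\sigma$-strong$*$ continuous at $0$ for every $X$, hence everywhere by the group law, so $\al$ is a flow on $\cM_\al^\om$.

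In case~(2), Proposition~\ref{prop:flow-equi} is unavailable since $\al$ is only Borel, so I would rerun the argument of its implication $(1)\Rightarrow(2)$ directly from Definition~\ref{defn:al-om}. Given $X=\pi_\om((x^\nu)_\nu)\in\cM_{\om,\al}$ with $\|x^\nu\|\le1$ and $\vep>0$, apply Definition~\ref{defn:al-om} with $E=[-1,1]$ to get a compact $K\subset[-1,1]$ with $\mu(K)>0$, $\al|_K$ continuous, and $\{t\mapsto\al_t(x^\nu)\}_\nu$ $\om$-equicontinuous; with $\Ps=\{\al_s(\xi):s\in K\}$ compact in $\cH$, Lemma~\ref{lem:eqctcpt} yields $\de'>0$ and $W\in\om$ with $\sup_{\eta\in\Ps}(\|(\al_s(x^\nu)-\al_t(x^\nu))\eta\|+\|\eta(\al_s(x^\nu)-\al_t(x^\nu))\|)<\vep$ for $s,t\in K$, $|s-t|<\de'$, $\nu\in W$. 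Since $\mu(K)>0$, the Steinhaus theorem gives $\de>0$ with $(-\de,\de)\subset K-K$; for $|r|<\min(\de,\de')$ pick $s,t\in K$ with $r=s-t$. The cocycle identity gives $\al_r(x^\nu)=c(-t,s)^*\al_{-t}(\al_s(x^\nu))c(-t,s)$ and $\al_{-t}(\al_t(x^\nu))=c(-t,t)x^\nu c(-t,t)^*$, and since $(x^\nu)_\nu$ and its automorphic images lie in $\sC_\om$, these show $(\al_r(x^\nu)-x^\nu)_\nu\equiv(\al_{-t}(\al_s(x^\nu)-\al_t(x^\nu)))_\nu$ modulo $\sT_\om$; therefore
\[
\lim_{\nu\to\om}\|(\al_r(x^\nu)-x^\nu)\xi\|
=\lim_{\nu\to\om}\|(\al_s(x^\nu)-\al_t(x^\nu))\al_t(\xi)\|\le\vep ,
\]
with $\al_t(\xi)\in\Ps$, and symmetrically $\lim_{\nu\to\om}\|\xi(\al_r(x^\nu)-x^\nu)\|\le\vep$. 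As in case~(1) this gives $\|\al_r(X)-X\|_{\vph^\om}^\sharp\le\vep$ for $|r|<\min(\de,\de')$, hence $\al$ is a flow on $\cM_{\om,\al}$.

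I expect the only delicate point to be the cocycle bookkeeping in case~(2): recognizing $\al$ on $\cM_{\om,\al}$ as an honest action, and in the continuity estimate absorbing both the cocycle $c(-t,s)$ and the group-law defect $c(-t,t)$ into a $\sT_\om$-error via centrality of the representing sequences, while matching the Steinhaus trick $(-\de,\de)\subset K-K$ with the $\om$-equicontinuity estimate that is available only on $K$. Everything else is the routine dictionary between $\om$-equicontinuity and $\sigma$-strong$*$ continuity supplied by Lemmas~\ref{lem:omeqstab} and~\ref{lem:eqctcpt}, Proposition~\ref{prop:flow-equi}, and the faithful normal state $\vph^\om$ on $\cM^\om$.
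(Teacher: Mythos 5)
Your proposal is correct in substance, and its overall strategy --- descend $\al^\om$ to the equicontinuous part via Lemma~\ref{lem:omeqstab}, kill the $2$-cocycle using $\cM_{\om,\al}\subset\cM'\cap\cM^\om$, then extract continuity at $0$ from $(\al,\om)$-equicontinuity together with the Steinhaus trick $(-\de,\de)\subset K-K$ --- is the same as the paper's. Part~(1) is essentially identical. In part~(2) you take a genuinely different (and valid) route for the key estimate: you work at the level of representing sequences and vectors in $\cH$, rewriting $\al_{s-t}(x^\nu)$ via the cocycle identities and absorbing $c(-t,s)$ and $c(-t,t)$ into $\sT_\om$ by centrality of $(x^\nu)_\nu$. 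The paper instead works at the level of states: it notes that $\vph^\om=\chi\circ\ta^\om$ on $\cM_\om$ with $\chi=\vph|_{Z(\cM)}$, that $\al|_{Z(\cM)}$ is an honest continuous flow (so $\{\al_r(\chi)\circ E\}_{r\in[0,1]}$ is compact in $\cM_*$), and that equivariance of $\ta^\om$ converts $\|\al_s(a)-\al_t(a)\|_{\al_t(\chi)\circ\ta^\om}$ into $\|\al_{s-t}(a)-a\|_{\vph^\om}$. Your version avoids introducing $E$ and $\chi$ at the cost of the cocycle bookkeeping you flag; note in passing that $\al_{-t}^{-1}(\xi)=c(t,-t)^*\al_t(\xi)c(t,-t)$ rather than $\al_t(\xi)$, one more fixed-unitary conjugation that disappears in the limit by the same centrality argument.

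The one step you leave implicit, in both parts, is the last one: a flow in this paper means $\lim_{t\to0}\|\al_t(\psi)-\psi\|=0$ for every $\psi$ in the predual, whereas you establish pointwise $\sigma$-strong$*$ continuity of the orbit maps $t\mapsto\al_t(X)$. These are equivalent (a $\sigma(M_*,M)$-continuous one-parameter group of isometries of $M_*$ is norm continuous, $\sigma(M_*,M)$ being the weak topology of the Banach space $M_*$), but you should either cite that equivalence or close the loop as the paper does, via the estimate $\|\al_t(a\vph^\om)-a\vph^\om\|\le\|a\|\,\|\al_t(\vph)-\vph\|+\|\al_t(a)-a\|_{\vph^\om}$ (with $\chi$ in place of $\vph$ in part~(2)) and density of $\{a\vph^\om\}$ in the predual.
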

\begin{proof}
(1)
Let $\vph\in\cM_*$ be a faithful state.
Since $\vph^\om$ is faithful,
$\{a\vph^\om \mid a\in\cM_\al^\om\}$
is dense in $(\cM_\al^\om)_*$,
the predual of $\cM_\al^\om$.
Then
\begin{align*}
\|\al_t(a\vph^\om)-a\vph^\om\|_{(\cM_\al^\om)_*}
&\leq
\|\al_t(a)(\al_t(\vph^\om)-\vph^\om)\|_{(\cM_\al^\om)_*}
+
\|(\al_t(a)-a)\vph^\om\|_{(\cM_\al^\om)_*}
\\
&\leq
\|a\|\|\al_t(\vph)-\vph\|_{\cM_*}
+
\|\al_t(a)-a\|_{\vph^\om}.
\end{align*}
If $t\to0$,
the last two terms converge to 0
because
$\R\ni t\mapsto \al_t(a)$ is strongly continuous
for all $a\in\cM_\al^\om$
by Proposition \ref{prop:flow-equi}.

(2).
Note that $\al_s\al_t=\al_{s+t}$
on $Z(\cM)$.
Since the group homomorphism
$\al\col\R\ra\Aut(Z(\cM))$ is the composition
of the Borel map $\al\col\R\ra\Aut(\cM)$
and the restriction $\Aut(\cM)\ra\Aut(Z(M))$,
which is continuous,
$\al$ is a Borel homomorphism
that is in fact continuous
because $\Aut(Z(\cM))$ is Polish.

Let $\vph\in\cM_*$ be a faithful normal state
and $E\col\cM\ra Z(\cM)$ be the conditional expectation
such that $\vph\circ E=\vph$.
We put $\chi:=\vph|_{Z(M)}$.
As in (1),
$\{a\vph^\om\mid a\in \cM_{\om,\al}\}$ is
dense in $(\cM_{\om,\al})_*$.
Using $\vph^\om=\chi\circ\ta^\om$ on $\cM_\om$,
we have
\[
\|\al_t(a\vph^\om)-a\vph^\om\|_{(\cM_{\om,\al})_*}
\leq
\|a\|
\|\al_t(\chi)-\chi\|_{Z(\cM)_*}
+
\|\al_t(a)-a\|_{\vph^\om}.
\]
Since the first term in the right hand side
converges to 0 as $t\to0$,
it suffices to show
$\|\al_t(a)-a\|_{\vph^\om}\to0$
as $t\to0$.

Let $a\in\Meq$ and $(a^\nu)_\nu$ a representing sequence.
Since $\al$ is a flow on $Z(\cM)$,
the set $\{\al_r(\chi)\circ E\mid r\in [0,1]\}$
is compact in $\cM_*$.
Then for any $\vep>0$,
there exist a compact set $K\subs[0,1]$ with $\mu(K)>0$,
$\de>0$
and $W\in\om$
such that
for $s,t\in K$ with $|s-t|<\de$
and $\nu\in W$,
we have
$\|\al_s(a^\nu)-\al_t(a^\nu)\|_{\al_r(\chi)\circ E}<\vep$
for all $r\in[0,1]$.
Letting $\nu\to\om$,
we have
$\|\al_s(a)-\al_t(a)\|_{\al_r(\chi)\circ\ta^\om}
\leq\vep$.
Then
\[
\|\al_{s-t}(a)-a\|_{\vph^\om}
=
\|\al_s(a)-\al_t(a)\|_{\al_t(\chi)\circ\ta^\om}
\leq\vep
\quad
\mbox{for all }
s,t\in K,
\
|s-t|<\de
\]
because
$\al_s\al_t=\al_{s+t}$ on $\cM_{\om,\al}$.
Since the set $K-K$ contains an open neighborhood
of $0$,
there exists $\de'>0$ such that
if $|t|<\de'$,
then
$\|\al_t(a)-a\|_{\vph^\om}\leq\vep$.
Therefore, $\al$ is a flow on $\cM_{\om,\al}$.
\end{proof}

Let $\al$ be a flow on a von Neumann algebra $\cM$.
For $f\in L^1(\R)$ and $x\in\cM$,
we let
$\al_f(x):=\int_\R f(t)\al_t(x)\,dt$.
The following result provides us
with a method of creating elements
which belongs to $\cM_\al^\om$
though those may be trivial
sequences.

\begin{lem}
\label{lem:smoothing}
Let $(x^\nu)_\nu\in \ell^\infty(\cM)$
and $f\in L^1(\R)$.
If $\al$ is a flow,
then the following statements hold:
\begin{enumerate}
\item
$(\al_f(x^\nu))_\nu\in\sE_\al^\om$;

\item
If $(x^\nu)_\nu\in \sE_\al^\om\cap \sN_\om$,
then
$(\al_f(x^\nu))_\nu\in\sE_\al^\om\cap \sN_\om$;

\item
If $(x^\nu)_\nu\in \sC_\om$,
then $(\al_f(x^\nu))_\nu\in\sC_\om$.
\end{enumerate}

\end{lem}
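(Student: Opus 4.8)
The plan is to prove the three assertions in turn; note first that each $\al_f(x^\nu)=\int_\R f(s)\al_s(x^\nu)\,ds$ is a well-defined element of $\cM$ (as $s\mapsto\al_s(x^\nu)$ is bounded and strong$^*$-continuous), with $\sup_\nu\|\al_f(x^\nu)\|\le\|f\|_1\sup_\nu\|x^\nu\|$. For (1) I would verify condition (2) of Proposition~\ref{prop:flow-equi}. Writing $f_t(s):=f(s-t)$, the flow law $\al_t\al_s=\al_{t+s}$ together with normality of $\al_t$ (which permits pulling $\al_t$ through the $\si$-weak integral) gives $\al_t(\al_f(x^\nu))=\al_{f_t}(x^\nu)$, hence $\al_t(\al_f(x^\nu))-\al_f(x^\nu)=\al_{f_t-f}(x^\nu)$. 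For a cyclic separating $\xi\in\cP$,
\[
\|(\al_t(\al_f(x^\nu))-\al_f(x^\nu))\xi\|\le\int_\R|f_t(s)-f(s)|\,\|\al_s(x^\nu)\xi\|\,ds\le\|x^\nu\|\,\|\xi\|\,\|f_t-f\|_1,
\]
and the same bound holds for $\|\xi(\al_t(\al_f(x^\nu))-\al_f(x^\nu))\|$ (pass to adjoints, using $\al_s(x)^*=\al_s(x^*)$ and $JU(\al_s)=U(\al_s)J$). Since translation is continuous on $L^1(\R)$, $\|f_t-f\|_1\to0$ as $t\to0$ uniformly in $\nu$, so $(\al_f(x^\nu))_\nu\in\sE_\al^\om$, without using $\om$ at all.

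For (2) and (3) I would first reduce to $f$ of compact support: $\sN_\om$ and $\sC_\om$ are norm-closed in $\ell^\infty(\cM)$ and $\|\al_f(x^\nu)-\al_g(x^\nu)\|\le\|f-g\|_1\sup_\nu\|x^\nu\|$, so it suffices to treat $f$ with $\supp f\subs[-T,T]$, in which case $\Ps_0:=\{\al_{-s}(\xi)\mid|s|\le T\}\subs\cH$ and $\{\al_{-s}(\vph)\mid|s|\le T\}\subs\cM_*$ are compact. For (2), since (1) already gives $(\al_f(x^\nu))_\nu\in\sE_\al^\om$, it remains to see $(\al_f(x^\nu))_\nu\in\sN_\om$, i.e.\ that $\al_f(x^\nu)y^\nu$ and $y^\nu\al_f(x^\nu)$ are $\om$-trivial for every $(y^\nu)_\nu\in\sT_\om$. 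Two of the four half-estimates are immediate: $\|\al_f(x^\nu)y^\nu\xi\|\le\|f\|_1\|x^\nu\|\,\|y^\nu\xi\|$ and $\|\xi y^\nu\al_f(x^\nu)\|=\|(\al_f(x^\nu))^*(y^\nu)^*\xi\|\le\|f\|_1\|x^\nu\|\,\|(y^\nu)^*\xi\|$ tend to $0$ by $\om$-triviality of $(y^\nu)_\nu$. For $\|y^\nu\al_f(x^\nu)\xi\|$ I would use $y^\nu\al_s(x^\nu)=\al_s\big(\al_{-s}(y^\nu)x^\nu\big)$ and $\|\al_s(w)\xi\|=\|w\,\al_{-s}(\xi)\|$ to get $\|y^\nu\al_f(x^\nu)\xi\|\le\int_{[-T,T]}|f(s)|\,\|\al_{-s}(y^\nu)\,x^\nu\,\al_{-s}(\xi)\|\,ds$; Lemma~\ref{lem:cpcttriv} applied to $(y^\nu)_\nu$ and the compact set $\{\al_s(\xi)\mid|s|\le T\}$ shows $\sup_{|s|\le T}(\|\al_{-s}(y^\nu)\xi\|+\|\xi\al_{-s}(y^\nu)\|)\to0$ as $\nu\to\om$, and feeding a suitable scalar multiple of $\al_{-s}(y^\nu)$ into the $\sN_\om$-criterion of Lemma~\ref{lem:normalize}(2) with $\Ps_0$ then yields $\sup_{|s|\le T}\|\al_{-s}(y^\nu)\,x^\nu\,\al_{-s}(\xi)\|\to0$, whence $\|y^\nu\al_f(x^\nu)\xi\|\to0$. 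The last estimate $\|\xi\,\al_f(x^\nu)y^\nu\|=\|(y^\nu)^*\,\al_{\bar f}((x^\nu)^*)\,\xi\|$ is handled identically after replacing $x^\nu,y^\nu,f$ by $(x^\nu)^*,(y^\nu)^*,\bar f$. I expect this uniform-in-$s$ use of Lemma~\ref{lem:normalize} to be the main obstacle of the lemma.

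For (3), for $\vph\in\cM_*$ one has $\vph\,\al_f(x^\nu)=\int f(s)\,\vph\,\al_s(x^\nu)\,ds$ and the analogous identity on the other side, so $[\vph,\al_f(x^\nu)]=\int f(s)\,[\vph,\al_s(x^\nu)]\,ds$; the identity $\al_{-s}(\vph\,a)=\al_{-s}(\vph)\,\al_{-s}(a)$ (for $\vph\in\cM_*$, $a\in\cM$) gives $\al_{-s}\big([\vph,\al_s(x^\nu)]\big)=[\al_{-s}(\vph),x^\nu]$, and $\al_{-s}$ is isometric on $\cM_*$, so
\[
\|[\vph,\al_f(x^\nu)]\|\le\int_{[-T,T]}|f(s)|\,\|[\al_{-s}(\vph),x^\nu]\|\,ds .
\]
Setting $g^\nu(s):=\|[\al_{-s}(\vph),x^\nu]\|$ on $[-T,T]$: since $s\mapsto\al_{-s}(\vph)$ is norm-continuous and $\sup_\nu\|x^\nu\|<\infty$, the family $\{g^\nu\}_\nu$ is equicontinuous there, and $g^\nu(s)\to0$ as $\nu\to\om$ for each $s$ because $(x^\nu)_\nu\in\sC_\om$; Lemma~\ref{lem:uniconverge} then upgrades this to uniform convergence on $[-T,T]$, so $\|[\vph,\al_f(x^\nu)]\|\le\|f\|_1\sup_{|s|\le T}g^\nu(s)\to0$, i.e.\ $(\al_f(x^\nu))_\nu\in\sC_\om$.
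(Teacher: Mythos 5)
Your proposal is correct, and its overall strategy mirrors the paper's: establish the uniform bound $\|\al_t(\al_f(x^\nu))-\al_f(x^\nu)\|\le C\|f_t-f\|_1$ for (1), reduce to compactly supported $f$, and then get the remaining two parts by pushing uniformity over a compact $s$-interval through the integral. There are, however, a couple of interesting differences in how you obtain the uniform estimates. For (2), the paper invokes the $(\al,\om)$-equicontinuity of $(x^\nu)_\nu$ together with Lemma~\ref{lem:uniconverge} to get $y^\nu\al_t(x^\nu)\to0$ compact-uniformly; your route instead rewrites $\|y^\nu\al_s(x^\nu)\xi\|=\|\al_{-s}(y^\nu)\,x^\nu\,\al_{-s}(\xi)\|$ and pushes the uniformity over $s\in[-T,T]$ through $(y^\nu)_\nu$ via Lemma~\ref{lem:cpcttriv} and the quantitative $\sN_\om$-characterization of Lemma~\ref{lem:normalize}, which notably never uses $(x^\nu)_\nu\in\sE_\al^\om$ at all -- so your argument actually gives the slightly stronger statement that $(x^\nu)_\nu\in\sN_\om$ alone already implies $(\al_f(x^\nu))_\nu\in\sE_\al^\om\cap\sN_\om$, since (1) supplies the $\sE_\al^\om$ part for free. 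For (3), the paper works with vectors $\xi\in\cH$ and Lemma~\ref{lem:centralcpct}, while you work directly in $\cM_*$ (closer to the definition of $\sC_\om$) with Lemma~\ref{lem:uniconverge} applied to the equicontinuous family $g^\nu(s)=\|[\al_{-s}(\vph),x^\nu]\|$; both are valid, and yours is arguably slightly cleaner since it avoids the translation between vector-commutators and functional-commutators. The one small stylistic caveat is that your identities $\|\xi a\|=\|a^*\xi\|$ and $\|\al_s(\xi)y^\nu\|=\|\xi\,\al_{-s}(y^\nu)\|$ implicitly use $J\xi=\xi$, so the chosen cyclic separating vector should be taken in the positive cone $\cP$, as you do in (1).
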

\begin{proof}
(1).
Observe that
$\al_t(\al_f(x^\nu))-\al_f(x^\nu)=\al_{\la_t f-f}(f)$,
where $(\la_t f)(s)=f(s-t)$.
Let $C:=\sup_\nu\|x^\nu\|$.
Then
\[
\|\al_t(\al_f(x^\nu))-\al_f(x^\nu)\|
\leq
C\|\la_t f-f\|_1
\quad
\mbox{for all }
\nu\in\N.
\]
Hence $(\al_f(x^\nu))_\nu\in\sE_\al^\om$
by Lemma \ref{prop:flow-equi}.

(2).
Suppose that
$(x^\nu)_\nu\in \sE_\al^\om\cap \sN_\om$.
Let $(y^\nu)_\nu\in\sT_\om$.
Then
\[
y^\nu\al_f(x^\nu)
=
\int_{-\infty}^\infty f(t)y^\nu\al_t(x^\nu)\,dt
\quad
\mbox{for all }
\nu\in\N.
\]
By Lemma \ref{lem:uniconverge},
$y^\nu\al_t(x^\nu)\to0$
compact uniformly
in the strong topology as $\nu\to\om$.
This implies that $y^\nu\al_f(x^\nu)\to0$.
Hence $(\al_f(x^\nu))_\nu\in \sN_\om$.

(3).
Suppose that $(x^\nu)_\nu$ is $\om$-central.
For $\vep>0$,
take $T>0$
such that
$\|f-f1_{[-T,T]}\|_1<\vep$.
By Lemma \ref{lem:centralcpct},
there exists $W\in \om$
such that
if $\nu\in W$,
then
$\sup_{t\in[-T,T]}
\|x^\nu\al_{-t}(\xi)-\al_{-t}(\xi)x^\nu\|<\vep
$.
Then for any $\xi\in\cH$,
\begin{align*}
\|\al_f(x^\nu)\xi-\xi\al_f(x^\nu)\|
&\leq
\int_\R
|f(t)|
\|\al_t(x^\nu)\xi-\xi\al_t(x^\nu)\|
\,dt
\\
&\leq
\int_{-T}^T
|f(t)|
\|x^\nu\al_{-t}(\xi)-\al_{-t}(\xi)x^\nu\|
\,dt
\\
&\quad
+\int_{[-T,T]^c}
|f(t)|
\|\al_t(x^\nu)\xi-\xi\al_t(x^\nu)\|
\,dt
\\
&\leq
\|f\|_1\vep
+
2C\|\xi\|\vep.
\end{align*}
Hence $(\al_f(x^\nu))_\nu$ is $\om$-central.
\end{proof}

\begin{lem}
\label{lem:alfxnu}
Let $x=\pi_\om((x^\nu)_\nu)\in\cM_\al^\om$
and $f\in L^1(\R)$.
If $\al$ is a flow,
then $\al_f(x)=\pi_\om((\al_f(x^\nu)_\nu))$.
\end{lem}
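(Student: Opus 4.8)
Let $x=\pi_\om((x^\nu)_\nu)\in\cM_\al^\om$ and $f\in L^1(\R)$. If $\al$ is a flow, then $\al_f(x)=\pi_\om((\al_f(x^\nu))_\nu)$.

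The statement says that applying the smoothing operator $\al_f$ commutes with the ultraproduct construction. The subtlety is that $\al_f(x)$ on the left is computed inside $\cM_\al^\om$ using the flow $\al$ restricted to that von Neumann algebra (Lemma \ref{lem:flowequipart}(1)), while on the right we integrate the representing sequence first and then pass to $\pi_\om$; by Lemma \ref{lem:smoothing}(1)--(2) the sequence $(\al_f(x^\nu))_\nu$ does lie in $\sE_\al^\om\cap\sN_\om$, so the right-hand side is at least a well-defined element of $\cM_\al^\om$.
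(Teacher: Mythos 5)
Your proposal is not a proof; it is a restatement of the problem. You correctly identify what has to be shown and you correctly invoke Lemma \ref{lem:smoothing} and Lemma \ref{lem:Ealom} to see that the right-hand side $a:=\pi_\om((\al_f(x^\nu))_\nu)$ is a well-defined element of $\cM_\al^\om$. But establishing that both sides are well-defined elements of the same algebra is where the work begins, not where it ends. You never compare $a$ with $\al_f(x)$, and the whole content of the lemma is precisely that comparison: one must justify interchanging the Bochner integral $\int_\R f(t)\,\al_t(\cdot)\,dt$ with the ultralimit $\lim_{\nu\to\om}$, and the student gives no argument for why these two limiting operations commute here.

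The paper's proof does this by testing against $\ta^\om$. One checks that $\ta^\om(ay)=\ta^\om(\al_f(x)y)$ for all $y\in\cM_\al^\om$; since $\ta^\om$ is a faithful normal conditional expectation, this forces $a=\al_f(x)$. For the left side, write $\vph(\al_f(x^\nu)y^\nu)=\int_\R f(t)\vph(\al_t(x^\nu)y^\nu)\,dt$ for each $\nu$ and each $\vph\in\cM_*$; the crucial step is that Lemma \ref{lem:uniconverge} (uniform convergence on compacta for $\om$-equicontinuous families) lets you pass the ultralimit through the integral to get $\vph(\ta^\om(ay))=\int_\R f(t)\vph(\ta^\om(\al_t(x)y))\,dt$. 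For the right side, the normality of $\ta^\om$ and the strong continuity of $\al$ on $\cM_\al^\om$ (Lemma \ref{lem:flowequipart}) give $\int_\R f(t)\ta^\om(\al_t(x)y)\,dt=\ta^\om(\al_f(x)y)$. It is this equicontinuity-based interchange of limits that your proposal is missing, and without it the claimed identity does not follow from the hypotheses you listed.
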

\begin{proof}
Put $a:=\pi_\om((\al_f(x^\nu)_\nu))$
that belongs to $\cM_\al^\om$ by the previous lemma.
It suffices to show that
$\ta^\om(ay)=\ta^\om(\al_f(x)y)$
for all $y\in \cM_\al^\om$,
where $\al_f(x)$ is well-defined by Lemma \ref{lem:flowequipart}.
On the one hand,
we have
\[
\vph(\al_f(x^\nu)y^\nu)
=\int_{-\infty}^\infty f(t)
\vph(\al_t(x^\nu)y^\nu)\,dt
\quad
\mbox{for all }
\vph\in\cM_*,\
\nu\in\N.
\]
By Lemma \ref{lem:uniconverge},
$\vph(\al_t(x^\nu)y^\nu)\to\vph(\ta^\om(\al_t(x)y))$
compact uniformly as $\nu\to\om$.
Hence
\[
\vph(\ta^\om(ay))
=
\lim_{\nu\to\om}\vph(\al_f(x^\nu)y^\nu)
=\int_{-\infty}^\infty
f(t)\vph(\ta^\om(\al_t(x)y))\,dt.
\]

On the other hand,
the normality of the conditional expectation
$\ta^\om\col \cM_\al^\om\ra\cM$
and the continuity of $\al$ on $\cM_\al^\om$
implies that
$\int_{-\infty}^\infty f(t)\ta^\om(\al_t(x)y)\,dt
=\ta^\om(\al_f(x)y)$.
Hence
$\vph(\ta^\om(ay))=\vph(\ta^\om(\al_f(x)y))$
for any $\vph\in\cM_*$,
and we have
$\ta^\om(ay)=\ta^\om(\al_f(x)y)$.
\end{proof}

\subsection{Connes spectrum of $\al_\om$}
We show the fast reindexation trick
is applicable to our interesting case.
Namely,
we will construct a reindexation map
in the $(\al,\om)$-equicontinuous part $\cM_\al^\om$.
Our proof is almost in parallel
with \cite[Lemma 5.3]{Ocn-act},
but we should be careful of a construction of that
because
a reindexation map constructed in \cite[Lemma 5.3]{Ocn-act}
may not send given elements into $\cM_\al^\om$
nor commute with $\al_t$ for all $t\in\R$.

\begin{lem}[Fast reindexation trick]
\label{lem:FRT}
Let $\al$ be a flow on a von Neumann algebra $\cM$,
and
$F\subs\cM^\om$ and $N\subs\Mequ$
separable von Neumann subalgebras.
Suppose that $N$ is $\al$-invariant.
Then there exists a faithful normal
$*$-homomorphism $\Ph\col N\ra \cM_\al^\om$
with the following properties:
\begin{enumerate}

\item
$\Ph=\id$ on $N\cap \cM$;
\item
$\Ph(N\cap\Meq)\subs F'\cap\Meq$;
\item
$\ta^\om(\Ph(a)x)=\ta^\om(a)\ta^\om(x)$
for all $a\in N$, $x\in F$;
\item
$\al_t\circ\Ph=\Ph\circ\al_t$
on $N$ for all $t\in\R$.
\end{enumerate}
We call such $\Ph$ a fast reindexation map.
\end{lem}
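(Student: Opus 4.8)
The plan is to construct $\Ph$ as a \emph{fast reindexation} of representing sequences, in the spirit of \cite[Lemma~5.3]{Ocn-act}, choosing the reindexation function carefully enough that $\Ph$ lands in $\cM_\al^\om$ and commutes with the whole flow. Before doing so I would fix a faithful normal state $\vph\in\cM_*$ with associated cyclic and separating vector $\xi_0\in\cP$, a countable norm-dense family $\sF\subs L^1(\R)$ containing an approximate identity, and a countable $*$-strongly dense $*$-subalgebra $N_1\subs N$ that is invariant under $\al_f$ for every $f\in\sF$ (obtained by closing a countable dense subset alternately under the $*$-algebra operations and under the countably many maps $\al_f$), arranged so that $N_1$ also contains $*$-strongly dense subsets of $N\cap\cM$ and of $N\cap\Meq$. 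Presenting $N_1$ as a quotient of a free $*$-algebra, I attach to each generator a representing sequence: in $\sE_\al^\om\cap\sN_\om$ in general, \emph{constant} for generators drawn from $N\cap\cM$, in $\sC_\om\cap\sE_\al^\om\cap\sN_\om$ for those drawn from $N\cap\Meq$, and equal to $(\al_f(y^\nu))_\nu$ for a generator $\al_f(y)$ (which it represents by Lemma~\ref{lem:alfxnu}, and which stays in $\sE_\al^\om\cap\sN_\om$ by Lemmas~\ref{lem:smoothing} and~\ref{lem:Ealom}). I also fix a countable $*$-strongly dense family $\{y_j\}_j\subs F$ with representing sequences in $\sN_\om$, and a countable norm-dense set $\{\vph_l\}_l$ of normal states.

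Next I would choose a strictly increasing $k\col\N\ra\N$ so that reindexing the distinguished sequences, $W\mapsto\pi_\om((W^{k(\nu)})_\nu)$, descends to a $*$-homomorphism $\Ph_0\col N_1\ra\cM_\al^\om$ with the right properties. The conditions to impose on $k$ are: (a) $(W^{k(\nu)})_\nu\in\sN_\om$ for each generator $W$ (via the criterion of Lemma~\ref{lem:normalize} tested on $\xi_0$); (b) $(W^{k(\nu)})_\nu\in\sT_\om$ for each of the countably many $W$ in the kernel of the evaluation (via Lemma~\ref{lem:cpcttriv}), so that $\Ph_0$ is well defined; (c) $(W^{k(\nu)})_\nu\in\sE_\al^\om$ for each generator $W$ (via Proposition~\ref{prop:flow-equi} tested on $\xi_0$; automatic for the generators $\al_f(y)$), whence $\Ph_0$ maps into $\cM_\al^\om$ because $\sE_\al^\om\cap\sN_\om$ is a $\Cst$-algebra; (d) $\vph_l(W^{k(\nu)})\to\vph_l(\ta^\om(W))$ as $\nu\to\om$ for all $W\in N_1$ and all $l$ (legitimate since $W^\nu\to\ta^\om(W)$ $\si$-weakly), which forces $\vph^\om\circ\Ph_0=\vph^\om$; (e) for each generator $b$ of $N\cap\Meq$, each $y_j$ and each $l$, $\|[\vph_l,b^\mu]\|\to0$ and $\|[b^\mu,y_j^{\nu_0}]\,\xi_0\|+\|\xi_0\,[b^\mu,y_j^{\nu_0}]\|\to0$ as $\mu\to\om$ for every fixed $\nu_0$ (legitimate since $(b^\mu)_\mu\in\sC_\om$ is asymptotically $*$-strongly central, cf.\ Lemma~\ref{lem:centralcpct}); (f) for each generator $a$ of $N$, each $y_j$ and each $l$, $\vph_l((a^\mu-\ta^\om(a))y_j^{\nu_0})\to0$ as $\mu\to\om$ for every fixed $\nu_0$. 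Each of (a)--(f) is a countable list of requirements of the shape ``$k(\nu)$ lies in a prescribed member of $\om$ for all large $\nu$'' (some of those members depending on $\nu$, in (e) and (f)); enumerating all the requirements and constructing $k$ recursively with $k(\nu)>\max(k(\nu-1),\nu)$ and $k(\nu)$ inside the intersection of the first $\nu$ of them (a finite, hence infinite, intersection of members of $\om$) makes every requirement hold along $\om$.

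With such a $k$, $\Ph_0\col N_1\ra\cM_\al^\om$ is a $*$-homomorphism which is $\|\cdot\|_{\vph^\om}$-isometric by (d), and since $\vph^\om$ is faithful it extends uniquely to a faithful normal $*$-homomorphism $\Ph\col N\ra\cM_\al^\om$. Property (1) holds because $\Ph_0=\id$ on constant sequences and $\Ph$ is normal. Property (2): by (e), $\Ph(b)$ is $\om$-central, $(\al,\om)$-equicontinuous and commutes with every $y_j$, hence $\Ph(b)\in F'\cap\Meq$ for each generator $b$ of $N\cap\Meq$, so $\Ph(N\cap\Meq)\subs F'\cap\Meq$ by normality and weak closedness. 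Property (3): by (d) and (f), $\ta^\om(\Ph(a)y_j)=\ta^\om(a)\ta^\om(y_j)$ on generators, hence $\ta^\om(\Ph(a)x)=\ta^\om(a)\ta^\om(x)$ for all $a\in N$ and $x\in F$ by normality and the uniform bound. Property (4): since $N_1$ is $\al_f$-invariant with compatibly chosen representing sequences, Lemma~\ref{lem:alfxnu} gives $\Ph_0\circ\al_f=\al_f\circ\Ph_0$ on $N_1$ for $f\in\sF$; by $*$-strong density of $N_1$, norm density of $\sF$ and normality this passes to $\Ph\circ\al_f=\al_f\circ\Ph$ on $N$ for all $f\in L^1(\R)$; and taking $f_n$ an approximate identity concentrated at $t$ and using the continuity of $\al$ on $\cM_\al^\om$ (Lemma~\ref{lem:flowequipart}) yields $\Ph\circ\al_t=\al_t\circ\Ph$.

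I expect the substantive obstacle to be exactly the point flagged before the statement: a plain reindexation need neither preserve $(\al,\om)$-equicontinuity nor commute with $\al$, and a priori each estimate one must control is a \emph{continuum} of $\vep$--$\de$ conditions (over all vectors, all compact subsets of $\cH$, all $t\in\R$). This is overcome in two ways: one collapses every such family of conditions to a countable one by testing against the single cyclic separating vector $\xi_0$ and invoking the compactness and uniform-convergence lemmas (\ref{lem:cpcttriv}, \ref{lem:eqctcpt}, \ref{lem:normalize}, \ref{lem:uniconverge}) before diagonalizing in $\om$; and --- this is what makes (4) go through without ever asking $k$ to push $\om$ to $\om$ --- one sets up $\Ph$ on an $\al_f$-invariant countable dense subalgebra, so that compatibility with the smoothing operators is automatic from Lemma~\ref{lem:alfxnu} and full $\al$-equivariance then follows by an approximate-identity argument.
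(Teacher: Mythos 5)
Your proposal is correct and follows essentially the same route as the paper: an Ocneanu-style fast reindexation in which all the uncountably many normalization, equicontinuity and centrality requirements are collapsed to countably many by testing against a single cyclic separating vector, the reindexing function is built by a diagonal choice over members of $\om$ (with the $F$-dependent conditions allowed to depend on the index, which is the "fast" part), and flow-equivariance is first secured on a countable dense invariant subalgebra and then upgraded to all of $\R$ by continuity of $\al$ on $\cM_\al^\om$. The only (harmless) variation is that you enforce equivariance via the smoothing operators $\al_f$ for $f$ in a countable dense subset of $L^1(\R)$, using Lemma \ref{lem:alfxnu} to make compatibility automatic, whereas the paper imposes approximate commutation with $\al_t$ for $t\in\Q$ directly in the choice of the reindexing.
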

\begin{proof}
Let us introduce the same notations as the proof
of \cite[Lemma 5.3]{Ocn-act}.
We may suppose that $\cM\subs N$.
For $n\in N$,
we take finite subsets $N_n\subs N_{n+1}$
of $N$,
$F_n\subs F_{n+1}$ of $F$,
$M_n\subs M_{n+1}$ of $\cM_*$
and $B_n\subs B_{n+1}$
of $\mathscr{B}:=\{\al_t\mid t\in\Q\}$
such that
\begin{itemize}
\item 
$\widetilde{N}:=\bigcup_n N_n$
is a unital $*$-algebra over $\Q+i\Q$,
weakly dense in $N$;

\item
$\widetilde{N}$ is
globally invariant by
$\mathscr{B}$;

\item
$\widetilde{N}\cap \cM$ is weakly dense in $\cM$;

\item
$\widetilde{N}\cap \cM_{\om,\al}$ is weakly dense
in $N\cap \cM_{\om,\al}$;

\item
$\widetilde{F}:=\bigcup_n F_n$
is weakly dense in $F$;

\item
$\bigcup_n M_n$ is norm dense in $\cM_*$;

\item
$\mathscr{B}=\bigcup_n B_n$.
\end{itemize}

For each $x\in F\cup N$,
we choose a representing sequence $(x^\nu)_\nu$
such that
for all $\nu\in\N$ and $\la\in\C$,
we have
$\|x^\nu\|\leq\|x\|$,
$(x^*)^\nu=(x^\nu)^*$,
$(\la x)^\nu=\la x^\nu$,
and
$(x^\nu)_\nu$ is constant if $x\in \cM$.

Let $\ph\in\cM_*$ be a faithful state.
For each $x\in\cM_\al^\om$
and $n\in\N$,
we find $\de_n(x)>\de_{n+1}(x)>0$
and a neighborhood $W_n(x)\supsetneq W_{n+1}(x)$
of $\om$ in $\N$ such that
for all $y\in\cM_1$ with
$\|y\|_\ph^\sharp<\de_n(x)$,
we have
$
\|x^\nu y\|_\ph^\sharp+\|yx^\nu\|_\ph^\sharp
<1/n
$
for
$\nu\in W_n(x)$.

For $n\in\N$ and $\vep>0$,
take $\ga_{n,\vep}>0$ such that
the following set belongs to $\om$:
\[
E_{n,\vep}
:=\{\nu\in\N\mid \|\al_t(x^\nu)-x^\nu\|_\ph^\sharp<\vep,
\
|t|\leq \ga_{n,\vep},
\
x\in N_n\}
\]

For $n\geq1$,
we choose $p(n)\in \N$
such that $p(n)\geq n$ and
\begin{itemize}
\item
$p(n)\in \bigcap_{x\in N_n}W_n(x)
\cap\bigcap_{m=1}^n E_{m,1/m}$;

\item
$\|x^{p(n)}y^{p(n)}-(xy)^{p(n)}\|_\ph^\sharp<1/n$
for
$x,y \in N_n$;

\item
$\|[x^{p(n)},a^n]\|_\ph^\sharp<1/n$
for
$x\in N_n\cap \cM_{\om,\al}$,
$a\in F_n$;

\item
$|\ps(a^n x^{p(n)})-\ps(a^n\ta^\om(x))|<1/n$
for
$x\in N_n$,
$a\in F_n$,
$\ps\in M_n$;

\item
$\|\be(x^{p(n)})-(\be^\om(x))^{p(n)}\|_\ph^\sharp<1/n$
for
$x\in N_n$,
$\be \in B_n$.
\end{itemize}

Letting $\Ph(x)=\pi_\om((x^{p(n)})_n)$
for $x\in \widetilde{N}$,
we obtain a faithful normal $*$-homomorphism
$\Ph\col N\ra \cM^\om$
which satisfies (1), (2) and (3),
and commutes with $\al_t$ for $t\in\Q$.
We will check that $\Ph(N)$ is contained in
the $(\al,\om)$-equicontinuous part.
Let $\vep>0$ and $x\in N_m$.
Take a large $m_\vep\in\N$ such that
$1/m_\vep<\vep$ and $m\leq m_\vep$.
Then
\[
\{n\in\N\mid n\geq m_\vep,
\|\al_t(x^{p(n)})-x^{p(n)}\|_\ph^\sharp<\vep,
|t|\leq \ga_{m_\vep,1/m_\vep}\}
=[m_\vep,\infty)\cap\N.
\]
Indeed, let $n\geq m_\vep$.
Then $p(n)\in E_{m_\vep,1/m_\vep}$.
It turns out that
$\|\al_t(x^{p(n)})-x^{p(n)}\|_\ph^\sharp<1/m_\vep<\vep$
for all $|t|\leq \ga_{m_\vep,1/m_\vep}$
since $x\in N_{m_\vep}$.
This implies that $(x^{p(n)})_n$
is $(\al,\om)$-equicontinuous,
and $\Ph(x)\in\Mequ$ for $x\in N_m$.
Since $\Ph$ is normal,
we see that $\Ph$ maps $N$ into $\Mequ$.

Then the commutativity
$\Ph\circ\al_t=\al_t\circ\Ph$ holds
for all $t\in\R$
since $\al$ is a flow
on $\cM_\al^\om$ by Lemma \ref{lem:flowequipart}.
\end{proof}

\begin{lem}\label{lem:SpGa}
Let $\al$ be a flow on a von Neumann algebra $\cM$.
Then the following statements hold:
\begin{enumerate}
\item
$\Ga(\al|_{\Meq})\subs\Ga(\al)$;

\item
If $\al$ is centrally ergodic,
then
$\Sp(\al|_{\Meq})=\Ga(\al|_{\Meq})$.
In particular,
$\Sp(\al|_{\Meq})$ is the annihilator
group of $\ker(\al|_{\Meq})$.
\end{enumerate}
\end{lem}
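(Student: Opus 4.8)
The plan is to exploit the standard description of the Connes spectrum, $\Ga(\gamma)=\bigcap_e\Sp(\gamma^e)$ with $e$ ranging over nonzero $\gamma$-invariant projections, together with the smoothing operators $\al_f$ (Lemmas~\ref{lem:smoothing} and \ref{lem:alfxnu}), the fast reindexation map of Lemma~\ref{lem:FRT}, and the center-valued conditional expectation $\ta^\om\col\cM_\om\to Z(\cM)$. Write $\be:=\al|_{\Meq}$; recall $\Meq\subseteq\cM'\cap\cM^\om$, that $\be$ is a genuine flow on $\Meq$ (Lemma~\ref{lem:flowequipart}), and that $Z(\cM)\subseteq\Meq$. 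In both parts the only real work is to verify that a cut-down operator does not vanish.

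For (1): given $p\in\Ga(\be)$, $\vep>0$ and a nonzero projection $e\in\cM^\al$, I would produce a nonzero $w\in\cM_e$ with $\Sp_{\al^e}(w)\subseteq(p-\vep,p+\vep)$; since $\Sp(\al^e)$ is closed, letting $\vep\to0$ then gives $p\in\Sp(\al^e)$, and as $e$ is arbitrary $p\in\Ga(\al)$. I would first take $z:=z_\cM(e)\in Z(\cM)$, which is $\al$-fixed since $\al_t(e)=e$, so $z\in(\Meq)^\be$ and $p\in\Sp(\be^z)=\Sp(\al|_{z\Meq z})$. Hence there is a nonzero $x\in z\Meq z$ with $\Sp_\be(x)\subseteq(p-\vep/2,p+\vep/2)$; taking a representing sequence in $\cM z$ and smoothing by a $g\in L^1(\R)$ with $\hat g$ supported in $(p-\vep,p+\vep)$ and $\hat g\equiv1$ near $\Sp_\be(x)$, Lemmas~\ref{lem:alfxnu} and \ref{lem:smoothing} give a representing sequence $(y^\nu)_\nu$ of $x$ that is $\om$-central, lies in $\cM z$, and has $\Sp_\al(y^\nu)\subseteq(p-\vep,p+\vep)$ for each $\nu$. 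Set $w^\nu:=ey^\nu e\in\cM_e$; then $\Sp_{\al^e}(w^\nu)\subseteq\Sp_\al(y^\nu)$ because $e$ is invariant, and since $(ey^\nu e)_\nu$ represents $exe=xe$ we get $\lim_{\nu\to\om}\|w^\nu\|_\vph^2=\vph(\ta^\om(x^*x)\,e)$ for every $\vph\in\cM_*$. This is positive for a suitable $\vph$: the support $q$ of $\ta^\om(x^*x)\in Z(\cM)_+$ is nonzero (as $x\ne0$ and $\ta^\om$ is faithful) and satisfies $q\le z=z_\cM(e)$, so $qe\ne0$ (otherwise $z_\cM(e)-q\ge e$ would be central, contradicting minimality of $z_\cM(e)$). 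Hence $w^\nu\ne0$ for $\nu$ in some member of $\om$, which is the sought $w$.

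For (2): $\Ga(\be)\subseteq\Sp(\be)$ is trivial (take $e=1$), so it suffices to show $\Sp(\be)\subseteq\Sp(\be^e)$ for every nonzero $e\in(\Meq)^\be$. Given $p\in\Sp(\be)$ and $\vep>0$, choose a nonzero $x\in\Meq$ with $\Sp_\be(x)\subseteq(p-\vep,p+\vep)$ and apply Lemma~\ref{lem:FRT} with a separable $\al$-invariant $N\subseteq\Meq$ containing $x$ and a separable $F\subseteq\cM^\om$ containing $\cM$ and $e$; this yields a faithful, normal, $\al$-equivariant $*$-homomorphism $\Ph\col N\to\cM_\al^\om$ with $\Ph(N)\subseteq F'\cap\Meq$ and $\ta^\om(\Ph(a)f)=\ta^\om(a)\ta^\om(f)$ for $a\in N$, $f\in F$. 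Then $\Ph(x)\in\Meq$ commutes with $e$, has $\Sp_\be(\Ph(x))=\Sp_\be(x)$ (equivariance, normality, faithfulness of $\Ph$), and $y:=e\Ph(x)e\in e\Meq e$ satisfies $\Sp_{\be^e}(y)\subseteq(p-\vep,p+\vep)$ and $\ta^\om(y^*y)=\ta^\om(\Ph(x^*x)e)=\ta^\om(x^*x)\,\ta^\om(e)$. Here central ergodicity enters: $e$ is $\al$-fixed, so $\ta^\om(e)\in Z(\cM)^\al=\C1$ is a strictly positive scalar, while $\ta^\om(x^*x)\ne0$; hence $y\ne0$ and $p\in\Sp(\be^e)$. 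This gives $\Sp(\be)=\Ga(\be)$, which is a closed subgroup of $\R$. For the last clause I would invoke the standard fact that $\gamma_f=0$ iff $\hat f$ vanishes on $\Sp(\gamma)$ for any flow $\gamma$: feeding in $f=g(\cdot-t)-g$ with $\hat g$ a nowhere-vanishing approximate identity shows $\Sp(\gamma)^\perp=\ker(\gamma)$, and since $\Sp(\be)$ is a closed subgroup this gives $\Sp(\be)=\Sp(\be)^{\perp\perp}=\ker(\al|_{\Meq})^\perp$.

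The main obstacle in each part is exactly the non-vanishing of the cut-down operator; in (1) it is resolved by using $z_\cM(e)$ as the test projection inside $\Ga(\be)$, so that $\supp\ta^\om(x^*x)$ is forced to meet $e$, and in (2) by central ergodicity forcing $\ta^\om(e)$ to be a scalar. One also has to keep the smoothed sequences simultaneously $(\al,\om)$-equicontinuous, $\om$-central, and supported under the relevant central projection, but this is routine bookkeeping with Lemmas~\ref{lem:Ealom}, \ref{lem:reduced} and \ref{lem:smoothing}.
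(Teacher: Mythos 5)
Your proof is correct, and part (2) is essentially the paper's argument: both invoke the fast reindexation map of Lemma \ref{lem:FRT} and use central ergodicity to force $\ta^\om(e)\in Z(\cM)^\al=\C$, so that the cut-down $\Ph(x)e$ cannot vanish (the paper phrases this as the isometry $\|\Ph(a)f\|_2=\|a\|_2\|f\|_2$ applied to almost-eigenvectors, you as $\ta^\om(y^*y)=\ta^\om(x^*x)\ta^\om(e)\neq 0$ applied to spectral-subspace elements — an immaterial difference). Part (1) reaches the same conclusion by a genuinely different route. The paper first proves the clean inclusion $\Sp(\al|_{\Meq})\subs\Sp(\al)$ from the implication ``$\al_f=0$ on $\cM$ $\Rightarrow$ $\al_f=0$ on $\Meq$'', applies it to the reduced flow $\al^e$ via the identification $(\Meq)_e=(\cM_e)_{\om,\al^e}$ of Corollary \ref{cor:reduced}, and then passes from $\Ga(\al|_{\Meq})$ to $\Sp(\al|_{(\Meq)_e})$ using the support projection of $x\mapsto xe$ in $Z(\Meq)$; no explicit non-vanishing computation is needed. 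You instead work at the level of representing sequences: smooth by a $g$ with $\supp\hat g\subs(p-\vep,p+\vep)$ and $\hat g\equiv1$ on $\Sp_\be(x)$, cut by $e$, and verify non-vanishing through $\ta^\om(x^*x)e\neq0$, which is where your choice of the invariant projection $z_\cM(e)\in Z(\cM)\subs(\Meq)^\al$ (rather than the paper's support projection in $Z(\Meq)$) earns its keep. Both are valid; the paper's version is shorter because Corollary \ref{cor:reduced} absorbs the bookkeeping, while yours is self-contained modulo the smoothing lemmas. Your treatment of the final clause ($\Sp=\ker^\perp$) is fine, with the implicit point that the closed subgroups $\{0\}$, $a\Z$, $\R$ of $\R$ are sets of spectral synthesis, so that $\hat f$ vanishing on $\Sp(\be)$ (not merely on a neighborhood) already forces $\be_f=0$.
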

\begin{proof}
(1).
By Lemma \ref{lem:smoothing} and \ref{lem:alfxnu},
if $f\in L^1(\R)$ satisfies
$\al_f=0$ on $\cM$,
then $\al_f=0$ on $\cM_{\om,\al}$.
Hence
$\Sp(\al|_{\cM_{\om,\al}})\subs\Sp(\al)$.
Applying this observation to $\al^e$ with a projection
$e\in\cM^\al$,
we have
$\Sp(\al^e|_{(\cM_e)_{\om,\al^e}})\subs\Sp(\al^e)$.
By Corollary \ref{cor:reduced},
we have the natural identification
$(\Meq)_e=(\cM_e)_{\om,\al^e}$.
Thus
$\Sp(\al|_{(\Meq)_e})\subs\Sp(\al^e)$.

Let $z$ be the central support projection
of $e$ in $\Meq$.
Then $z$ is fixed by $\al$,
and the map $(\Meq)_z\ni x\mapsto xe\in(\Meq)_e$
is an isomorphism.
Obviously, this intertwines the flows coming from $\al$.
Hence,
\[
\Ga(\al|_{\Meq})
\subs
\Sp(\al|_{(\Meq)_z})
=
\Sp(\al|_{(\Meq)_e})\subs\Sp(\al^e).
\]
Since $e$ is arbitrary,
we have
$\Ga(\al|_{\Meq})\subs\Ga(\al)$.

(2).
Let $p\in\Sp(\al|_{\Meq})$,
$\vep>0$ and $T>0$
be given.
Then there exists a non-zero $x\in\Meq$
such that $\|\al_t(x)-e^{ipt}x\|
<\vep\|x\|$
for all $t\in [-T,T]$.
Let $f\in(\Meq)^\al$ be a non-zero projection,
$N:=\{\al_t(x)\mid t\in\R\}''$
and
$F=\{f\}''$.
Take a fast reindexation map
$\Ph\col N\ra F'\cap\Meq$
as in the previous lemma.
Since $\al$ is centrally ergodic,
$\ta^\om(f)\in Z(\cM)^\al=\C$.
This implies
$\|\Ph(a)f\|_2=\|a\|_2\|f\|_2$ for all $a\in N$.
Hence the $*$-homomorphism
$N\ni a\mapsto \Ph(a)f$
is faithful,
and we have $\|\Ph(a)f\|=\|a\|$.
Thus for $t\in[-T,T]$,
we obtain
\begin{align*}
\|\al_t(\Ph(x)f)-e^{ipt}\Ph(x)f\|
&=
\|\Ph(\al_t(x)-e^{ipt}x)f\|
\\
&=
\|\al_t(x)-e^{ipt}x\|
\\
&<\vep\|x\|
=
\vep\|\Ph(x)f\|.
\end{align*}
This means $p\in \Sp(\al^f|_{\Meq})$.
Therefore $p\in \Ga(\al|_{\Meq})$.
\end{proof}

In particular,
if $\al$ is a flow on a factor $\cM$
with $\Ga(\al)=\{0\}$, then $\Ga(\al|_{\Meq})=\{0\}$,
that is, $\al=\id$ on $\Meq$.
We do not know whether the converse holds or not
for injective factors.

\begin{prop}
Let $\al$ be a centrally ergodic flow
on a von Neumann algebra $\cM$.
If $\Ga(\al)=\{0\}$ and $0$ is isolated in $\Sp(\al)$,
then any element in $\cM_{\om,\al}$
is represented by a sequence in $\cM^\al$.
\end{prop}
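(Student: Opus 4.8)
The plan is to exploit the spectral gap at $0$ to regularise an arbitrary representing sequence of a given $a\in\cM_{\om,\al}$ into the fixed-point algebra $\cM^\al$, after first observing that $\al$ acts trivially on $\cM_{\om,\al}$. Indeed, since $\al$ is centrally ergodic, Lemma \ref{lem:SpGa} gives $\Sp(\al|_{\Meq})=\Ga(\al|_{\Meq})\subs\Ga(\al)=\{0\}$, and a flow with trivial Arveson spectrum is the identity; hence $\al_t(a)=a$ for every $a\in\cM_{\om,\al}$ and $t\in\R$. Recalling from Lemma \ref{lem:flowequipart} that $\al$ is genuinely a flow on $\cM_\al^\om$, it follows that $\al_f(a)=\hat f(0)\,a$ for every $f\in L^1(\R)$, where $\hat f(p):=\int_\R f(t)e^{-ipt}\,dt$.

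Next, using that $0$ is isolated in $\Sp(\al)$, I would fix $\de>0$ with $\Sp(\al)\cap(-\de,\de)=\{0\}$ and pick $f\in L^1(\R)$ with $\hat f\in C_c^\infty(\R)$, $\supp\hat f\subs(-\de,\de)$ and $\hat f(0)=1$ (for instance, the inverse transform of a bump function). Given $a\in\cM_{\om,\al}$, choose a representing sequence $(x^\nu)_\nu\in\sE_\al^\om\cap\sC_\om$ and set $a^\nu:=\al_f(x^\nu)\in\cM$. By Lemma \ref{lem:smoothing} the sequence $(a^\nu)_\nu$ again lies in $\sE_\al^\om\cap\sC_\om$, and by Lemma \ref{lem:alfxnu} together with the previous paragraph $\pi_\om((a^\nu)_\nu)=\al_f(a)=\hat f(0)a=a$; thus $(a^\nu)_\nu$ is still a representing sequence of $a$. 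Finally each $a^\nu$ lies in $\cM^\al$ by Arveson spectral theory: the Arveson spectrum of $\al_f(x^\nu)$ is contained in $\supp\hat f$ as well as in $\Sp(\al)$, hence in $\supp\hat f\cap\Sp(\al)\subs(-\de,\de)\cap\Sp(\al)=\{0\}$, which forces $\al_t(a^\nu)=a^\nu$ for all $t\in\R$.

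The computation itself is short once Lemmas \ref{lem:SpGa}, \ref{lem:smoothing}, \ref{lem:flowequipart} and \ref{lem:alfxnu} are in hand; the points that require care are the two standard spectral facts (a flow with Arveson spectrum $\{0\}$ is trivial, and convolution by $f$ confines the spectrum of the output to $\supp\hat f$) and, crucially, the opening observation that $\al=\id$ on $\cM_{\om,\al}$ — this is what guarantees that the regularised sequence represents $a$ rather than collapsing to something unrelated, and it is the only place where the hypothesis $\Ga(\al)=\{0\}$ (as opposed to merely $\al$ being a flow) enters.
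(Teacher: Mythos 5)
Your proposal is correct and follows essentially the same route as the paper: first deduce from Lemma \ref{lem:SpGa} that $\al=\id$ on $\cM_{\om,\al}$, then smooth a representing sequence by $\al_f$ for an $f$ whose Fourier transform is supported in the spectral gap with $\hat f(0)=1$, and invoke Lemma \ref{lem:alfxnu} to see the smoothed sequence still represents the same element. The only cosmetic difference is that the paper packages the last step as ``$\al_f$ is a conditional expectation onto $\cM^\al$'' while you argue directly via the Arveson spectral support of $\al_f(x^\nu)$; these are the same fact.
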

\begin{proof}
Let $x=\pi_\om((x^\nu)_\nu)\in \cM_{\om,\al}$.
By the previous lemma,
$\al_t(x)=x$ for all $x\in\R$.
Since $0$ is isolated in $\Sp(\al)$,
there exists a non-negative $f\in L^1(\R)$
such that $\al_f$ gives a faithful normal
conditional expectation from
$\cM$ onto $\cM^\al$.
By Lemma \ref{lem:alfxnu},
we have
$x=\al_f(x)=\pi_\om((\al_f(x^\nu))_\nu)$.
\end{proof}

\subsection{Lift of Borel unitary path}
In this subsection,
we solve the problem concerning
a lift of a Borel unitary path
$U\col \R\ra\cM_\al^\om$
in Lemma \ref{lem:Borelpathlift}.
A Borel path $U\col\R\ra\Mequ$ means that
$\{U(t)\mid t\in\R\}$
generates a separable von Neumann subalgebra,
and $U$ is a Borel map into it.

\begin{lem}\label{lem:Log}
Let $\cM$ be a von Neumann algebra,
$\ph\in \cM_*$ a state
and $u\in \cM^{\rm U}$.
Then
$\|e^{\th\Log(u)}-1\|_\ph\leq \sqrt{2} \|u-1\|_\ph^{1/2}$
for $|\th|\leq1$,
where $\Log e^{ix}=ix$ for $-\pi\leq x<\pi$.
\end{lem}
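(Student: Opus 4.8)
The plan is to reduce the operator estimate to an elementary pointwise inequality for scalar functions, via the Borel functional calculus applied to the unitary $u$, and then to upgrade the resulting $L^2$-type bound to the stated square-root bound using only the trivial estimate $\|u-1\|_\ph\le 2$. Throughout, $\th$ is a real number with $|\th|\le 1$.

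First I would fix the spectral resolution $u=\int_{[-\pi,\pi)}e^{ix}\,dE(x)$, so that, by definition of the principal branch, $\Log(u)=i\int_{[-\pi,\pi)}x\,dE(x)$ and hence $e^{\th\Log(u)}=\int_{[-\pi,\pi)}e^{i\th x}\,dE(x)$. Let $\mu$ be the Borel probability measure on $[-\pi,\pi)$ given by $\mu(A):=\ph(E(A))$. Since $\ph$ is a state,
\[
\|e^{\th\Log(u)}-1\|_\ph^2=\ph\!\left(\int_{[-\pi,\pi)}|e^{i\th x}-1|^2\,dE(x)\right)=\int_{[-\pi,\pi)}|e^{i\th x}-1|^2\,d\mu(x),
\]
and likewise $\|u-1\|_\ph^2=\int_{[-\pi,\pi)}|e^{ix}-1|^2\,d\mu(x)$.

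Next I would prove the pointwise bound $|e^{i\th x}-1|\le|e^{ix}-1|$ for all $x\in[-\pi,\pi)$. Writing $|e^{i\th x}-1|^2=4\sin^2(\th x/2)$ and $|e^{ix}-1|^2=4\sin^2(x/2)$, this follows because $|\th x/2|\le|x/2|\le\pi/2$ and $t\mapsto\sin^2 t$ is nondecreasing on $[0,\pi/2]$ (and even) --- this is precisely where it is used that $\Log$ takes values in $i[-\pi,\pi)$. Integrating against $\mu$ gives $\|e^{\th\Log(u)}-1\|_\ph\le\|u-1\|_\ph$. Finally, since $u$ is unitary and $\ph$ is a state, $\|u-1\|_\ph\le\|u-1\|\le 2$, so $\|u-1\|_\ph^{1/2}\le\sqrt 2$, whence $\|u-1\|_\ph=\|u-1\|_\ph^{1/2}\,\|u-1\|_\ph^{1/2}\le\sqrt2\,\|u-1\|_\ph^{1/2}$, and combining with the previous inequality yields the claim. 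There is no real obstacle here; the only point to watch is that the principal branch is needed to guarantee $|\th x/2|\le\pi/2$, which makes the monotonicity of $\sin^2$ applicable.
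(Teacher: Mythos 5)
Your proof is correct. The reduction via the spectral decomposition of $u$ to a scalar inequality against the measure $A\mapsto\ph(E(A))$ is exactly what the paper does, and your pointwise bound $4\sin^2(\th x/2)\le 4\sin^2(x/2)$ for $x\in[-\pi,\pi)$, $|\th|\le 1$ is valid precisely for the reason you give (both arguments lie in $[-\pi/2,\pi/2]$, where $\sin^2$ is even and increasing in $|x|$). The difference is in the endgame. The paper, with $\vep:=\|u-1\|_\ph$, introduces the set $A_\vep=\{\la\mid \sin^2(\la/2)\ge\vep\}$, bounds $\ph(E(A_\vep))\le\vep/4$ by a Chebyshev-type estimate, uses $\sin^2\le1$ on $A_\vep$ and the pointwise domination only on $A_\vep^c$, arriving at $\|e^{\th\Log(u)}-1\|_\ph^2\le\vep+\vep^2\le2\vep$. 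You instead apply the pointwise domination on all of $[-\pi,\pi)$, which yields the stronger conclusion $\|e^{\th\Log(u)}-1\|_\ph\le\|u-1\|_\ph$, and then recover the stated form from the trivial bound $\|u-1\|_\ph\le2$. Since the paper's own proof already invokes the same pointwise inequality on $A_\vep^c$, your streamlining loses nothing and in fact sharpens the lemma; the $\sqrt2\,\vep^{1/2}$ form is only what is needed downstream (Lemma \ref{lem:pathdist}), where $\vep$ is small and the square-root form is the natural way to quote it.
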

\begin{proof}
Let $u=\int_{-\pi}^{\pi}e^{i\la}\,dE(\la)$
be the spectral decomposition on the torus $\R/2\pi\Z=[-\pi,\pi)$,
and $\vep:=\|u-1\|_{\ph}\leq2$.
Then we have
\[
\vep^2
=\int_{-\pi}^{\pi}|e^{i\la}-1|^2\,d \ph(E(\la))
=\int_{-\pi}^{\pi}4\sin^2(\la/2)\,d \ph(E(\la)).
\]
Thus if we set
$A_\vep:=\{\la\in[-\pi,\pi)\mid \sin(\la/2)\geq \vep^{1/2}\}$,
then we have
$\ph(E(A_\vep))\leq\vep/4$.
Using
$\Log(u)=\int_{-\pi}^{\pi}i\la\,dE(\la)$
and $e^{\th\Log(u)}=\int_{-\pi}^{\pi}e^{i\th\la}\,dE(\la)$,
we have
\begin{align*}
\|e^{\th\Log(u)}-1\|_\ph^2
&=
\int_{-\pi}^{\pi}|e^{i\th\la}-1|^2\,d\ph(E(\la))
=
\int_{-\pi}^{\pi}4\sin^2(\th\la/2)\,d\ph(E(\la))
\\
&=
\int_{A_\vep}4\sin^2(\th\la/2)\,d\ph(E(\la))
+
\int_{A_\vep^c}4\sin^2(\th\la/2)\,d\ph(E(\la))
\\
&\leq
4\ph(E(A_\vep))
+
\int_{A_\vep^c}4\sin^2(\la/2)\,d\ph(E(\la))
\\
&\leq
4\ph(E(A_\vep))
+
\int_{-\pi}^\pi4\sin^2(\la/2)\,d\ph(E(\la))
\\
&=
4\ph(E(A_\vep))
+
\vep^2.
\end{align*}
If $\vep\leq1$,
then $4\ph(E(A_\vep))\leq\vep$.
If $\vep>1$,
then $A_\vep=\emptyset$,
and $\ph(E(A_\vep))=0$.
As a result,
we obtain $\|e^{\th\Log(u)}-1\|_\ph^2\leq2\vep$
in both cases.
\end{proof}

\begin{lem}\label{lem:pathdist}
Let $t_1,\dots,t_n\in\R$ with $t_1<t_2<\dots<t_n$.
Suppose that unitaries $u_1,\dots,u_n\in \cM$ are given.
If for $\vep>0$ and a faithful state $\ph\in\cM_*$,
we have $\|u_j-u_{j+1}\|_\ph^\sharp<\vep$ for $j=1,\dots,n-1$,
then there exists a continuous unitary path
$u\col[t_1,t_n]\ra \cM$
such that $u(t_j)=u_j$ for all $j$,
and $\|u(t)-u(t_j)\|_\ph^\sharp<\sqrt{2}\vep^{1/2}$
for $t\in[t_j,t_{j+1}]$.
If moreover, we have $\|u_i-u_j\|_\ph^\sharp<\vep$
for all $i,j$,
then $\|u(s)-u(t)\|_\ph^\sharp<4\vep^{1/2}$
for all $s,t\in[t_1,t_n]$.
\end{lem}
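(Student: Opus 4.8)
The plan is to build $u$ by interpolating along a \emph{geodesic} on each subinterval. On $[t_j,t_{j+1}]$ put $h_j:=\Log(u_j^*u_{j+1})\in\cM$ (a bounded operator, since $\|\Log(w)\|\le\pi$ for every unitary $w$) and set
\[
u(t):=u_j\exp\!\Big(\frac{t-t_j}{t_{j+1}-t_j}\,h_j\Big),\qquad t\in[t_j,t_{j+1}].
\]
Each $\exp(\th h_j)$ is unitary for real $\th$, the map $\th\mapsto\exp(\th h_j)$ is norm continuous, and at the endpoints $u(t_j)=u_j$ while $u(t_{j+1})=u_je^{h_j}=u_ju_j^*u_{j+1}=u_{j+1}$; the two formulas therefore agree at the common endpoint $t_{j+1}$, so $u$ is a well-defined norm-continuous unitary path on $[t_1,t_n]$ with $u(t_j)=u_j$. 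This structural part presents no difficulty.

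For the estimate on a subinterval, fix $j$ and $t\in[t_j,t_{j+1}]$, let $\th:=(t-t_j)/(t_{j+1}-t_j)\in[0,1]$ and $v:=e^{\th\Log(u_j^*u_{j+1})}$, so $u(t)-u_j=u_j(v-1)$. Since $v$ is unitary, $(v-1)^*(v-1)=2-v-v^*=(v-1)(v-1)^*$, so, writing $\ps$ for the state $x\mapsto\ph(u_jxu_j^*)$, one gets $\|u(t)-u_j\|_\ph=\|v-1\|_\ph$ and $\|(u(t)-u_j)^*\|_\ph=\|v-1\|_\ps$. I would then apply Lemma \ref{lem:Log} to $\ph$ and to $\ps$ separately, and translate the right-hand sides using the elementary identities $u_j^*u_{j+1}-1=u_j^*(u_{j+1}-u_j)$ and $u_j(u_j^*u_{j+1}-1)^*(u_j^*u_{j+1}-1)u_j^*=(u_{j+1}-u_j)(u_{j+1}-u_j)^*$, which yield $\|u_j^*u_{j+1}-1\|_\ph=\|u_{j+1}-u_j\|_\ph=:p$ and $\|u_j^*u_{j+1}-1\|_\ps=\|(u_{j+1}-u_j)^*\|_\ph=:q$. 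Hence
\[
2\big(\|u(t)-u_j\|_\ph^\sharp\big)^2=\|v-1\|_\ph^2+\|v-1\|_\ps^2\le 2(p+q).
\]
Since $\|u_{j+1}-u_j\|_\ph^\sharp=2^{-1/2}(p^2+q^2)^{1/2}<\vep$, the Cauchy--Schwarz inequality gives $p+q\le\sqrt2\,(p^2+q^2)^{1/2}=2\|u_{j+1}-u_j\|_\ph^\sharp<2\vep$, so $\|u(t)-u_j\|_\ph^\sharp<\sqrt2\,\vep^{1/2}$, as required.

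For the last assertion, given $s,t\in[t_1,t_n]$ pick indices $i,j$ with $s\in[t_i,t_{i+1}]$ and $t\in[t_j,t_{j+1}]$. If $\vep\ge1$, then $\|u(s)-u(t)\|_\ph^\sharp\le\|u(s)\|_\ph^\sharp+\|u(t)\|_\ph^\sharp=2\le4\vep^{1/2}$; if $\vep<1$, then $\vep<\vep^{1/2}$, and the triangle inequality through $u_i$ and $u_j$, together with the estimate just proved and the hypothesis $\|u_i-u_j\|_\ph^\sharp<\vep$, gives
\[
\|u(s)-u(t)\|_\ph^\sharp<\sqrt2\,\vep^{1/2}+\vep+\sqrt2\,\vep^{1/2}<(2\sqrt2+1)\vep^{1/2}<4\vep^{1/2}.
\]

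The one delicate point is bookkeeping of the constant: one must keep the two seminorms $\|v-1\|_\ph$ and $\|v-1\|_\ps$ separate, apply Lemma \ref{lem:Log} to the states $\ph$ and $\ps=\ph(u_j\cdot u_j^*)$ independently, and only combine $p$ and $q$ at the very end via $p+q\le\sqrt2\,(p^2+q^2)^{1/2}$; collapsing $\|\cdot\|_\ph$ into $\|\cdot\|_\ph^\sharp$ any earlier would spoil the sharp constant $\sqrt2$. Everything else is routine.
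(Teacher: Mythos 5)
Your proof is correct and follows essentially the same route as the paper: the same geodesic interpolation $u_j\exp(\th\Log(u_j^*u_{j+1}))$, the same reduction to Lemma \ref{lem:Log}, and the same combination $p+q\le\sqrt2\,(p^2+q^2)^{1/2}$ to get the constant $\sqrt2$. The only cosmetic difference is that for the adjoint estimate the paper conjugates the exponential (via $u_je^{\th\Log(u_{j+1}^*u_j)}u_j^*=e^{\th\Log(u_ju_{j+1}^*)}$) so as to apply Lemma \ref{lem:Log} with $\ph$ itself, whereas you apply it to the transported state $\ps=\ph(u_j\cdot u_j^*)$ — these are the same computation.
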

\begin{proof}
Let $1\leq j\leq n-1$.
Set $v_j(\th):=u_j\exp(\th\Log(u_j^*u_{j+1}))$
for $\th\in[0,1]$.
The previous lemma implies the following:
\[
\|v_j(\th)-u_j\|_\ph
=
\|\exp(\th\Log(u_j^*u_{j+1}))-1\|_\ph
\leq
\sqrt{2}\|u_j-u_{j+1}\|_\ph^{1/2},
\]
and
\begin{align*}
\|v_j(\th)^*-u_j^*\|_\ph
&=
\|e^{\th\Log(u_{j+1}^*u_j)}u_j^*-u_j^*\|_\ph
=
\|u_j^*e^{\th\Log(u_ju_{j+1}^*)}-u_j^*\|_\ph
\\
&=
\|e^{\th\Log(u_ju_{j+1}^*)}-1\|_\ph
\\
&\leq
\sqrt{2}\|u_j^*-u_{j+1}^*\|_\ph^{1/2}.
\end{align*}
Hence
\[
\|v_j(\th)-u_j\|_\ph^\sharp
\leq
\sqrt{2}
(\|u_j-u_{j+1}\|_\ph^\sharp)^{1/2}
<\sqrt{2}\vep^{1/2}.
\]
Then $v_j\col[0,1]\ra\cM^{\rm U}$
is strongly continuous
and $v_j(0)=u_j$ and $v(1)=u_{j+1}$.
By connecting $v_j$'s,
we have a desired path $u(t)$.
The last statement is verified
by using the triangle inequality.
\end{proof}

\begin{lem}[Lift of Borel unitary path]
\label{lem:Borelpathlift}
Let $\al\col\R\ra\Aut(\cM)$ be a Borel map.
Let $U\col\R\ra\Mequ$
be a Borel unitary path.
Then for any Borel set
$E\subs \R$ with $0<\mu(E)<\infty$
and $\vep>0$,
there exist a compact set $K\subs E$
and a sequence $(u(t)^\nu)_\nu$ for $t\in E$
such that
\begin{itemize}
\item
$\pi_\om((u(t)^\nu)_\nu)=U(t)$
for almost every $t\in E$,
and the equality holds for all $t\in K$;

\item
$\mu(E\setminus K)<\vep$;

\item
For all $\nu\in\N$,
the map
$E\ni t\mapsto u(t)^\nu$
is Borel,
and
the map
$K\ni t\mapsto u(t)^\nu$
is strongly continuous;

\item
the family
$\{K\ni t \mapsto u(t)^\nu\in\cM\}_\nu$
is $\om$-equicontinuous.

\end{itemize}
\end{lem}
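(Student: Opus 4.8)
The plan is to realize the lift in three stages: first produce a measurable family of representing sequences, then invoke Lusin-type measurability (Lemma~\ref{lem:Lusin}) together with the $(\al,\om)$-equicontinuity of $U$ to restrict to a compact set $K$ on which everything is continuous, and finally use Lemma~\ref{lem:pathdist} to smooth the restricted sequences into strongly continuous unitary paths without destroying $\om$-equicontinuity. Throughout I work with a fixed faithful state $\vph\in\cM_*$ and the associated $\sharp$-norms.

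\textbf{Step 1: a Borel choice of representing sequences.}
Since $U\col\R\ra\Mequ$ is Borel into a separable von Neumann subalgebra, I want a Borel map $(t,\nu)\mapsto u(t)^\nu\in\cM^{\rm U}$ with $\pi_\om((u(t)^\nu)_\nu)=U(t)$ for a.e.\ $t\in E$. The existence of a single representing sequence for each fixed $t$ is automatic; to make the choice jointly Borel in $t$ I would use a measurable selection argument (von Neumann's selection theorem, as packaged in the measure-theory appendix, Section~9) applied to the Borel set of pairs $(t,(x^\nu)_\nu)$ with $(x^\nu)_\nu\in\sN_\om$ representing $U(t)$; choosing these inside the unitary group causes no trouble since one may polar-decompose and replace by the unitary part, which stays Borel. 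This yields Borel maps $t\mapsto u(t)^\nu$ for each $\nu$.

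\textbf{Step 2: pass to a compact $K$ using Lusin and $\om$-equicontinuity.}
Apply Lemma~\ref{lem:Lusin} (iteratively, or to the countable family) to find a compact $K_0\subs E$ with $\mu(E\setminus K_0)$ small on which $\al$ is continuous and each $t\mapsto u(t)^\nu$ is continuous; simultaneously shrink so that $t\mapsto U(t)$, viewed via $t\mapsto \al_t(\vph^\om)$-type functionals, behaves well. Because $U$ takes values in $\Mequ$, the defining property of $\sE_\al^\om$ gives, after a further Lusin reduction, a compact $K\subs K_0$ with $\mu(E\setminus K)<\vep$ such that $\{K\ni t\mapsto \al_t(u(t)^\nu)\}_\nu$ is $\om$-equicontinuous: here one uses that $U$ is a \emph{unitary} path in the equicontinuous part, so its representing sequences can be arranged $(\al,\om)$-equicontinuous by Lemma~\ref{lem:Ealom}(1) (modifying by a $\sT_\om$ term does not matter). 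On such $K$ the functions $t\mapsto \|u(t)^\nu-u(t')^\nu\|_\vph^\sharp$ are controlled uniformly in $\nu$ near $\om$ by a modulus of continuity in $|t-t'|$.

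\textbf{Step 3: discretize, smooth by Lemma~\ref{lem:pathdist}, and conclude.}
Cover $K$ by finitely many intervals of mesh $<\de$ where $\de$ comes from the $\om$-equicontinuity estimate at scale $\vep$. On a finite partition $t_1<\dots<t_n$ of a slightly larger interval $[\min K,\max K]$, the unitaries $u(t_j)^\nu$ satisfy $\|u(t_j)^\nu-u(t_{j+1})^\nu\|_\vph^\sharp<\vep$ for $\nu$ in some $W\in\om$; Lemma~\ref{lem:pathdist} then replaces $u(\cdot)^\nu$ on $[t_j,t_{j+1}]$ by the explicit continuous interpolant $u_j\exp(\th\Log(u_j^*u_{j+1}))$, whose values stay within $\sqrt2\,\vep^{1/2}$ of the endpoints. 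Doing this for every $\nu$ (even those outside $W$, where one simply takes the same interpolation formula) produces strongly continuous paths on $K$; the interpolant is a Borel function of $(t,\nu)$ on all of $E$, so Borelness off $K$ is retained. The $\sqrt2\,\vep^{1/2}$ bound, together with the original $\om$-equicontinuity of $\{\al_t(u(t)^\nu)\}_\nu$ on the partition points and with Lemma~\ref{lem:multi-equicont} to handle the $\al_t$-twist on the interpolants, shows the new family is still $\om$-equicontinuous on $K$; and since the interpolants agree with the original unitaries at the $t_j$ and the $t_j$ may be taken to include a dense set, $\pi_\om((u(t)^\nu)_\nu)=U(t)$ persists for all $t\in K$ and a.e.\ $t\in E$.

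\textbf{Main obstacle.}
The delicate point is Step~3: ensuring that the smoothing operation preserves $\om$-equicontinuity of $\{K\ni t\mapsto \al_t(u(t)^\nu)\}_\nu$, not merely of $\{K\ni t\mapsto u(t)^\nu\}_\nu$. Conjugating the interpolant by $\al_t$ mixes the $t$-dependence coming from the flow with that coming from the $\Log$-interpolation, and one must check that $\al_t$ applied to $u_j\exp(\th\Log(u_j^*u_{j+1}))$ is close to $\al_{t_j}(u_j)$ uniformly in $\nu\in W$. This is where Lemma~\ref{lem:multi-equicont} and Proposition~\ref{prop:flow-equi} are essential, and where some care with the choice of compact test sets $\Ps\subs\cH$ (e.g.\ $\{\al_s^{-1}(\xi)\mid s\in K\}$) is needed; the rest is a routine three-$\vep$ bookkeeping.
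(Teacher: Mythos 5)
There is a genuine gap in Step~3, and it is the heart of the lemma. You interpolate with a \emph{single} partition $t_1<\dots<t_n$ of mesh $\de$ chosen for one fixed tolerance $\vep$, and apply Lemma~\ref{lem:pathdist} once, "for every $\nu$". This cannot yield either of the two delicate conclusions. First, $\om$-equicontinuity of $\{K\ni t\mapsto u(t)^\nu\}_\nu$ is a statement for \emph{every} $\vep>0$ (with $\de$ and $W\in\om$ depending on $\vep$); a path that is piecewise interpolated at one fixed scale only satisfies the estimate at that scale. Second, at a point $t$ strictly between partition points the $\om$-limit of your interpolant $u_j^\nu\exp(\th\Log((u_j^\nu)^*u_{j+1}^\nu))$ is not $U(t)$ but (at best) an interpolation between $U(t_j)$ and $U(t_{j+1})$, so $\pi_\om((u(t)^\nu)_\nu)=U(t)$ fails off the finite set $\{t_j\}$; your remark that "the $t_j$ may be taken to include a dense set" is exactly what a single finite partition cannot achieve. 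The paper's proof resolves both points with a diagonal construction you are missing: it builds a nested sequence of finite sets $A_k\subs K$ whose union is dense, meshes $1/N_k\to0$, and a decreasing chain $F_k\in\om$, and then sets $u(t)^\nu:=U(t)^{k,\nu}$ for $\nu\in F_k\setminus F_{k+1}$, so that the interpolation scale refines as $\nu\to\om$. Only then do the estimates $\|u(s)^\nu-u(t)^\nu\|_\vph^\sharp\leq 10/k^{1/2}$ for $|s-t|<1/2N_k$, $\nu\in F_k$, give $\om$-equicontinuity at all scales, and only then does agreement on the dense set $\bigcup_k A_k$ plus continuity force $\pi_\om((u(t)^\nu)_\nu)=U(t)$ for all $t\in K$.

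Two further remarks. You have misread the fourth bullet: the required $\om$-equicontinuity is of $\{K\ni t\mapsto u(t)^\nu\}_\nu$ with no $\al$ present, so the "main obstacle" you isolate (controlling $\al_t$ applied to the interpolants, via Lemma~\ref{lem:multi-equicont}) is not part of this lemma at all; that interaction is handled separately in later lemmas (e.g.\ Lemma~\ref{lem:Borelpatheq}). Also, your Step~1 measurable-selection argument is both unnecessary and shaky: the set of representing sequences of $U(t)$ inside $\ell^\infty(\cM)$ does not obviously carry a standard Borel structure for which membership in $\sN_\om$ and the quotient condition are Borel. The paper sidesteps this entirely by choosing representing sequences \emph{pointwise, with no measurability}, using them only at the countable dense set $\bigcup_k A_k$, and letting the explicit $\Log$-interpolation formula supply Borelness (indeed continuity) of $t\mapsto u(t)^\nu$ on $K$; Borelness on $E$ then comes from decomposing $E$ into countably many disjoint compacts via Lemma~\ref{lem:Lusin} and lifting piecewise.
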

\begin{proof}
By Lemma \ref{lem:Lusin},
we have
a compact set $K\subs E$
such that
$\mu(E\setminus K)<\vep$
and $U$ is continuous on $K$.
Continuing this process,
we get a mutually disjoint
series of compact sets
$K=K_0,K_1,\dots\subs E$
such that
$\mu(E\setminus \bigcup_j K_j)=0$
and $U$ is continuous on each $K_j$.
By lifting piecewise,
we see that it suffices to show the existence
of a continuous lift
for $U\col K\ra\Mequ$.

We may and do assume that $K\subs [0,1]$
by changing the variable of $U(t)$.
Let $\vph\in\cM_*$ be a faithful state.
For each $t\in K$,
we choose a representing unitary sequence
$(\tilde{U}(t)^\nu)_\nu$ of $U(t)$.
Then for each $k\in\N$,
we can construct by induction
$N_k\in\N$ ($N_0:=1$), $F_k\in\om$ ($F_0:=\N$)
and
a finite set $A_k\subset K$ ($A_0:=\emptyset$)
with the following properties:
\begin{itemize}
\item
If $s,t\in K$
satisfies $|s-t|\leq 1/N_k$,
then
$\|U(s)-U(t)\|_{\vph^\om}^\sharp<1/2k$;

\item
$N_k>N_{k-1}$ and $2/N_k+1/(2N_{k-1})<1/N_{k-1}$;

\item
$[k,\infty)\supset F_{k-1}\supsetneq F_{k}$;

\item
$A_k:=\{a_j^k,b_j^k\}_{j=0}^{N_k-1}\cup A_{k-1}$,
where
\[
a_j^k:=\min[j/N_k,(j+1)/N_k]\cap K,
\quad
b_j^k:=\max[j/N_k,(j+1)/N_k]\cap K;
\]

\item
If $s,t\in A_k$ and $\nu\in F_k$,
then
\begin{equation}\label{eq:Utik}
\|\tilde{U}(s)^\nu-\tilde{U}(t)^\nu\|_\vph^\sharp
\leq\|U(s)-U(t)\|_{\vph^\om}^\sharp
+1/2k.
\end{equation}

\end{itemize}
Note that $\De_j^k:=[j/N_k,(j+1)/N_k]\cap K$
may be empty,
and $a_j,b_j$ are not defined in this case.
Since $|s-t|\leq 1/N_k$
for $s,t\in A_k\cap[a_j^k,b_j^k]$,
we have
\[
\|\tilde{U}(s)^\nu-\tilde{U}(t)^\nu\|_\vph^\sharp
\leq\|U(s)-U(t)\|_{\vph^\om}^\sharp
+1/2k
<1/k
\quad
\mbox{for all }\nu\in F_k.
\]
Applying Lemma \ref{lem:pathdist}
to $A_k\cap[a_j^k,b_j^k]$ for each $j$,
$\tilde{U}(t)^\nu$ and $\vep:=1/k$,
we obtain a continuous unitary path
$U(t)^{k,\nu}$ on $\bigcup_j [a_j^k,b_j^k]$
such that $U(t)^{k,\nu}=\tilde{U}(t)^\nu$
for all $t\in\bigcup_j A_k\cap[a_j^k,b_j^k]=A_k$,
and
\begin{equation}\label{eq:Ut6}
\|U(s)^{k,\nu}-U(t)^{k,\nu}\|_\vph^\sharp
\leq
4/k^{1/2}
\quad
\mbox{for all }
s,t\in[a_j^k,b_j^k],
\ \nu\in F_k.
\end{equation}
Put $u(t)^\nu:=U(t)^{k,\nu}$
for $\nu\in F_k\setminus F_{k+1}$
and $t\in K$.
We show
$\{K\ni t\mapsto u(t)^\nu\}_\nu$ is $\om$-equicontinuous.
Let $s,t\in K$ with $|s-t|<1/2N_k$
and $\nu\in F_k$.
Take $m\geq k$ with $\nu\in F_m\setminus F_{m+1}$.
Let $s_0,t_0\in A_m$ be the nearest
points from $s,t$, respectively.
Then we have
\begin{align*}
&\|u(s)^\nu-u(t)^\nu\|_\vph^\sharp
\\
&\leq
\|U(s)^{m,\nu}-U(s_0)^{m,\nu}\|_\vph^\sharp
+
\|\tilde{U}(s_0)^{\nu}-\tilde{U}(t_0)^{\nu}\|_\vph^\sharp
+
\|U(t_0)^{m,\nu}-U(t)^{m,\nu}\|_\vph^\sharp
\\
&\leq
4/m^{1/2}
+\|\tilde{U}(s_0)^{\nu}-\tilde{U}(t_0)^{\nu}\|_\vph^\sharp
+4/m^{1/2}
\quad
\mbox{by }(\ref{eq:Ut6})
\\
&\leq
8/k^{1/2}
+
\|U(s_0)-U(t_0)\|_{\vph^\om}^\sharp+1/m
\quad
\mbox{by }(\ref{eq:Utik}).
\end{align*}
Since
\[
|s_0-t_0|\leq|s_0-s|+|s-t|+|t-t_0|
\leq 1/N_m+1/(2N_k)+1/N_m\leq 1/N_k,
\]
we have
$\|U(s_0)-U(t_0)\|_{\vph^\om}^\sharp
<1/k$,
and
\[
\|u(s)^\nu-u(t)^\nu\|_\vph^\sharp
\leq 8/k^{1/2}+1/k+1/k
\leq 10/k^{1/2}.
\]
Thus
$\{K\ni t\mapsto u(t)^\nu\}_\nu$ is $\om$-equicontinuous,
and the function
$K\ni t\mapsto \pi_\om((u(t)^\nu)_\nu)\in\cM^\om$
is continuous.
Since $u(t)^\nu=\tilde{U}(t)^\nu$
for all $t\in A_k$ and $\nu\in F_k$,
$\pi_\om((u(t)^\nu)_\nu)=U(t)$
for all $t\in\bigcup_k A_k$.
It is clear that $\bigcup_k A_k$
is dense in $K$,
and we have $\pi_\om((u(t)^\nu)_\nu)=U(t)$
for all $t\in K$.
\end{proof}

We close this section
with the following three lemmas.

\begin{lem}\label{lem:alweqct}
Let $K_1,K_2\subs\R$ be compact sets.
Let $\al\col \R\ra \Aut(\cM)$ be a Borel map
and
$\{w^\nu\col K_2\ra \cM\}_\nu$ a family of
continuous maps.
Suppose that
\begin{itemize}
\item
$\al$ is continuous on $K_1$;
\item
$\{K_1\ni s\mapsto \al_s(w(t)^\nu)\}_\nu$
is $\om$-equicontinuous
for each $t\in K_2$;

\item
$\{K_2\ni t\mapsto w(t)^\nu\}_\nu$
is $\om$-equicontinuous.
\end{itemize}
Then
$\{K_1\times K_2\ni (s,t)\mapsto \al_s(w^\nu(t))\}_\nu$
is $\om$-equicontinuous.
\end{lem}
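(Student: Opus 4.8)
The plan is to verify the $\om$-equicontinuity of the family $\{(s,t)\mapsto\al_s(w^\nu(t))\}_\nu$ straight from Definition \ref{defn:metequicont}, equipping $K_1\times K_2$ with the maximum metric. Thus I would fix $\vep>0$ and a finite set $\Ph\subset\cH$ and produce $\de>0$ and $W\in\om$ controlling $\|(\al_s(w^\nu(t))-\al_{s'}(w^\nu(t')))\xi\|$ and its right-hand analogue for all $\xi\in\Ph$, $\nu\in W$ and $|s-s'|,|t-t'|<\de$. Since $\al$ is continuous on $K_1$ and $\Aut(\cM)\ni\be\mapsto U(\be)$ is strongly continuous, $\Ps:=\{\al_s^{-1}(\xi)\mid s\in K_1,\ \xi\in\Ph\}$ is a compact subset of $\cH$; and the identities $\|\al_s(a)\xi\|=\|a\,\al_s^{-1}(\xi)\|$, $\|\xi\,\al_s(a)\|=\|\al_s^{-1}(\xi)\,a\|$ let me trade the automorphism for a change of test vector lying in $\Ps$.

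The one real subtlety is that the second hypothesis is only assumed pointwise in $t\in K_2$, and I would bridge this using the third hypothesis together with the compactness of $K_2$. First, applying Lemma \ref{lem:eqctcpt} to the third hypothesis with the compact test set $\Ps$ gives $\de_1>0$ and $W_1\in\om$ such that $|t-t'|<\de_1$ and $\nu\in W_1$ imply $\sup_{\eta\in\Ps}\big(\|(w^\nu(t)-w^\nu(t'))\eta\|+\|\eta(w^\nu(t)-w^\nu(t'))\|\big)<\vep$. Next I cover the compact set $K_2$ by finitely many balls $B(t_j,\de_1/2)$ with $t_j\in K_2$, $j=1,\dots,k$. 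For each $j$ the second hypothesis, used directly with the finite test set $\Ph$, provides $\de_2^{(j)}>0$ and $W_2^{(j)}\in\om$ such that $|s-s'|<\de_2^{(j)}$ and $\nu\in W_2^{(j)}$ force
\[
\|(\al_s(w^\nu(t_j))-\al_{s'}(w^\nu(t_j)))\xi\|+\|\xi(\al_s(w^\nu(t_j))-\al_{s'}(w^\nu(t_j)))\|<\vep
\quad\text{for all }\xi\in\Ph.
\]
I then set $\de:=\min(\de_1/2,\de_2^{(1)},\dots,\de_2^{(k)})$ and $W:=W_1\cap W_2^{(1)}\cap\cdots\cap W_2^{(k)}\in\om$.

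Given $|s-s'|,|t-t'|<\de$ and $\nu\in W$, I choose $j$ with $|t'-t_j|<\de_1/2$, so that also $|t-t_j|\le|t-t'|+|t'-t_j|<\de_1$, and write
\[
\al_s(w^\nu(t))-\al_{s'}(w^\nu(t'))
=\al_s\big(w^\nu(t)-w^\nu(t_j)\big)
+\big(\al_s(w^\nu(t_j))-\al_{s'}(w^\nu(t_j))\big)
+\al_{s'}\big(w^\nu(t_j)-w^\nu(t')\big).
\]
Against any $\xi\in\Ph$, the first and third summands equal in norm $\|(w^\nu(t)-w^\nu(t_j))\al_s^{-1}(\xi)\|$ and $\|(w^\nu(t_j)-w^\nu(t'))\al_{s'}^{-1}(\xi)\|$, each $<\vep$ by the choice of $\de_1,W_1$ since $\al_s^{-1}(\xi),\al_{s'}^{-1}(\xi)\in\Ps$; the middle summand is $<\vep$ by the choice of $\de_2^{(j)},W_2^{(j)}$. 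Hence $\|(\al_s(w^\nu(t))-\al_{s'}(w^\nu(t')))\xi\|<3\vep$, and the identical computation with the vectors acting on the right yields the matching bound, so running the argument with $\vep/3$ in place of $\vep$ finishes the proof. The main work is precisely this reduction of the pointwise-in-$t$ second hypothesis to a finite subcover of $K_2$, glued together by the third hypothesis and the $\Ps$-trick for commuting $\al_s$ past the test vectors; everything else is routine triangle-inequality bookkeeping.
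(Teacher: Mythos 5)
Your proof is correct and follows essentially the same path as the paper's: you form the compact set $\Ps$ of pulled-back test vectors, use Lemma \ref{lem:eqctcpt} on the third hypothesis to get uniformity of $t\mapsto w^\nu(t)$ against $\Ps$, take a finite $\de$-net of $K_2$, and apply the pointwise second hypothesis at the net points, combining via the triangle inequality. The paper splits the difference into four summands (passing through both $t'$ and $t_i$ at the same $s$) and arrives at a $4\vep$ bound, while you use a cleaner three-term decomposition through a single net point $t_j$ and get $3\vep$; this is a cosmetic difference, and both are valid.
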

\begin{proof}
Let $\vep>0$ and $\xi\in \cH$ a cyclic and separating vector
for $\cM$.
Set $\Ps:=\{\al_s^{-1}(\xi)\mid s\in K_1\}$
that is a compact set.
By Lemma \ref{lem:eqctcpt},
there exist $\de>0$ and $W_1\in\om$
such that
for all $t,t'\in K_2$ with $|t-t'|<\de$,
$\nu\in W_1$ and $\eta\in\Ps$,
we have
\begin{equation}
\|(w(t)^\nu-w(t')^\nu)\eta\|<\vep,
\quad
\|\eta(w(t)^\nu-w(t')^\nu)\|<\vep.
\label{eq:pswnu}
\end{equation}

Take $\{t_1,\dots,t_N\}$ in $K_2$
such that each $t\in K_2$ has $t_i$
with $|t-t_i|<\de$.
By the second condition,
there exist $\de'>0$ and
$W_2\in \om$ such that
for all $s,s'\in K_1$ with $|s-s'|<\de'$,
$\nu\in W_2$
and $i=1,\dots,N$,
we have
\begin{equation}
\|(\al_s(w(t_i)^\nu)-\al_{s'}(w(t_i)^\nu))\xi\|
<\vep,
\quad
\|\xi(\al_s(w(t_i)^\nu)-\al_{s'}(w(t_i)^\nu))\|
<\vep.
\label{eq:vphalw}
\end{equation}

Now let $s,s'\in K_1$ and $t,t'\in K_2$
with $|s-s'|<\de'$ and $|t-t'|<\de$.
Take $t_i$ such that
$|t'-t_i|<\de$.
Then for $\nu\in W_1\cap W_2$,
we obtain
\begin{align*}
\|(\al_s(w(t)^\nu)-\al_{s'}(w(t')^\nu))\xi\|
&\leq
\|(\al_s(w(t)^\nu)-\al_{s}(w(t')^\nu))\xi\|
\\
&\quad
+
\|(\al_s(w(t')^\nu)-\al_{s}(w(t_i)^\nu))\xi\|
\\
&\quad
+
\|(\al_s(w(t_i)^\nu)-\al_{s'}(w(t_i)^\nu))\xi\|
\\
&\quad
+
\|(\al_{s'}(w(t_i)^\nu)-\al_{s'}(w(t')^\nu))\xi\|
\\
&
\leq
\|(w(t)^\nu-w(t')^\nu)\al_{s}^{-1}(\xi)\|
\\
&\quad
+
\|(w(t')^\nu-w(t_i)^\nu)\al_{s}^{-1}(\xi)\|
\\
&\quad
+
\vep
\quad
\mbox{by }
(\ref{eq:vphalw})
\\
&\quad
+
\|(w(t_i)^\nu-w(t')^\nu)\al_{s'}^{-1}(\xi)\|
\\
&\leq
4\vep
\quad
\mbox{by }
(\ref{eq:pswnu}).
\end{align*}
Similarly, we obtain
\[
\|\xi(\al_s(w(t)^\nu)-\al_{s'}(w(t')^\nu))\|<4\vep.
\]
Hence we are done.
\end{proof}

\begin{lem}
Let $\al\col\R\ra\Aut(\cM)$
be a Borel map
and $C\subs\R$ a compact set.
Suppose that
$\{C\ni t\mapsto x(t)^\nu\in\cM\}_\nu$
is $\om$-equicontinuous
and
$(x(t)^\nu)_\nu\in\sE_\al^\om$
for all $t\in C$.
Then
for all $\ka>0$
and Borel set $E\subs\R$ with $0<\mu(E)<\infty$,
there exists a compact set
$L\subs E$
such that
\begin{itemize}
\item
$\mu(E\setminus L)<\ka$;

\item
$\al$ is continuous on $L$;

\item
$\{L\ni s\mapsto \al_s(x(t)^\nu)\}_\nu$
is $\om$-equicontinuous
for all $t\in C$.
\end{itemize}
\end{lem}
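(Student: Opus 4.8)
The plan is to reduce the ``uniformly over $t\in C$'' requirement to countably many values of $t$: since $C$ is a compact metric space it has a countable dense subset $D=\{d_1,d_2,\dots\}$, and the $\om$-equicontinuity of $\{C\ni t\mapsto x(t)^\nu\}_\nu$ will let us transfer the conclusion from the $d_j$'s to an arbitrary $t\in C$. The finite-net argument that one is tempted to use does not work, because the target scale $\vep$ is only revealed \emph{after} $L$ has been fixed, so no finite net of $C$ of a prescribed mesh can supply a $d_j$ as close to $t$ as a small $\vep$ would demand; intersecting \emph{countably} many compact sets with summable complement measures is the way around this.

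First I would fix a cyclic and separating vector $\xi_0\in\cH$ and, by Lemma \ref{lem:Lusin}, choose a compact $L_0\subs E$ with $\mu(E\setminus L_0)<\ka/2$ on which $\al$ is continuous; then $\Ps:=\{\al_s^{-1}(\xi_0)\mid s\in L_0\}$ is a compact subset of $\cH$, being the continuous image of $L_0$ under $s\mapsto\al_s^{-1}(\xi_0)$ (as in the proof of Lemma \ref{lem:alweqct}). For each $j\geq1$, since $(x(d_j)^\nu)_\nu\in\sE_\al^\om$, pick a compact $K_j\subs E$ with $\mu(E\setminus K_j)<\ka\,2^{-j-2}$, with $\al$ continuous on $K_j$, and with $\{K_j\ni s\mapsto\al_s(x(d_j)^\nu)\}_\nu$ $\om$-equicontinuous. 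Set $L:=L_0\cap\bigcap_{j\geq1}K_j$. Then $L$ is compact, $L\subs E$, $\mu(E\setminus L)\leq\mu(E\setminus L_0)+\sum_{j\geq1}\mu(E\setminus K_j)<\ka$, $\al$ is continuous on $L$, and (by restriction) $\{L\ni s\mapsto\al_s(x(d_j)^\nu)\}_\nu$ is $\om$-equicontinuous for every $j$; moreover $\al_s^{-1}(\xi_0)\in\Ps$ for every $s\in L\subs L_0$. As usual we may and do assume $\|x(t)^\nu\|\leq1$ for all $t\in C$, $\nu$, so that Lemma \ref{lem:eqctcpt} applies to the families in question.

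It remains to check the third bullet for this $L$. Fix $t\in C$ and $\vep>0$ and choose $k$ with $3/k<\vep$. By Lemma \ref{lem:eqctcpt} applied to $\{C\ni t\mapsto x(t)^\nu\}_\nu$ with the compact set $\Ps$, there are $\de_k>0$ and $W_k\in\om$ such that $|t'-t''|<\de_k$ and $\nu\in W_k$ imply $\sup_{\eta\in\Ps}\bigl(\|(x(t')^\nu-x(t'')^\nu)\eta\|+\|\eta(x(t')^\nu-x(t'')^\nu)\|\bigr)<1/k$. Using density of $D$, pick $d_j$ with $|t-d_j|<\de_k$. By $\om$-equicontinuity of $\{L\ni s\mapsto\al_s(x(d_j)^\nu)\}_\nu$ there are $\de'>0$ and $W'\in\om$ with $\|(\al_s(x(d_j)^\nu)-\al_{s'}(x(d_j)^\nu))\xi_0\|+\|\xi_0(\al_s(x(d_j)^\nu)-\al_{s'}(x(d_j)^\nu))\|<1/k$ for $s,s'\in L$, $|s-s'|<\de'$, $\nu\in W'$. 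Since $\|\al_s(y)\xi_0\|=\|y\,\al_s^{-1}(\xi_0)\|$ and $\|\xi_0\al_s(y)\|=\|\al_s^{-1}(\xi_0)\,y\|$ with $\al_s^{-1}(\xi_0)\in\Ps$, the decomposition $\al_s(x(t)^\nu)-\al_{s'}(x(t)^\nu)=\bigl(\al_s(x(d_j)^\nu)-\al_{s'}(x(d_j)^\nu)\bigr)+\al_s(x(t)^\nu-x(d_j)^\nu)-\al_{s'}(x(t)^\nu-x(d_j)^\nu)$ yields, for $s,s'\in L$ with $|s-s'|<\de'$ and $\nu\in W_k\cap W'$, that $\|(\al_s(x(t)^\nu)-\al_{s'}(x(t)^\nu))\xi_0\|+\|\xi_0(\al_s(x(t)^\nu)-\al_{s'}(x(t)^\nu))\|<3/k<\vep$. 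By Lemma \ref{lem:eqctcpt}(3) this shows $\{L\ni s\mapsto\al_s(x(t)^\nu)\}_\nu$ is $\om$-equicontinuous, completing the proof.

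The one genuinely delicate point is the circularity described in the first paragraph; everything else is bookkeeping. Note that, once this lemma is available, feeding $L$ and $C$ into Lemma \ref{lem:alweqct} upgrades the conclusion to joint $\om$-equicontinuity of $\{L\times C\ni(s,t)\mapsto\al_s(x(t)^\nu)\}_\nu$, though that stronger form is not needed here.
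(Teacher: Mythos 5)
Your proof is correct and follows essentially the same route as the paper's: a countable dense subset of $C$ (the paper organizes it as an increasing union of finite sets $C_n$), a countable intersection $L$ of compact sets with geometrically summable complements, and then the same three-term triangle inequality using the compact set $\Ps=\{\al_s^{-1}(\xi_0)\mid s\in L\}$ together with the $\om$-equicontinuity of $t\mapsto x(t)^\nu$ to pass from the dense points to an arbitrary $t\in C$. The only differences are cosmetic bookkeeping.
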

\begin{proof}
Take an increasing sequence
of finite sets
$C_1\subs C_2\subs\cdots C$
such that
their union is dense in $C$.
Then for each $n\in\N$,
we can find a compact set $L_n\subs E$
such that
\begin{itemize}
\item
$\mu(E\setminus L_n)<\ka/2^{n+1}$;

\item
$\al$ is continuous on $L_n$;

\item
$\{L\ni s\mapsto \al_s(x(t)^\nu)\}_\nu$
is $\om$-equicontinuous
for all $t\in C_n$.
\end{itemize}
Set $L:=\bigcap_n L_n$.
Then $\mu(E\setminus L)\leq\sum_n\ka/2^{n+1}<\ka$,
and $\al$ is continuous on $L$.

We will check the third condition.
Let $\xi\in\cH$ be a cyclic and separating vector.
Let $\vep>0$
and $\Ps:=\{\al_s^{-1}(\xi)\mid s\in L\}$
that is compact.
Then there exist
$\de>0$ and $W\in\om$
such that
if $t,t'\in C$
with
$|t'-t|<\de$
and $\nu\in W$,
then
\begin{equation}
\label{eq:xnuze}
\|(x(t)^\nu-x(t')^\nu)\zeta\|
+
\|\zeta(x(t)^\nu-x(t')^\nu)\|
<\vep
\quad
\mbox{for all }
\zeta\in\Ps.
\end{equation}

Fix $t\in C$ and take $t_0\in C_n$ with $|t-t_0|<\de$.
Then by $(\al,\om)$-equicontinuity,
we have $\de'>0$ and $W'\in\om$
such that
if $s,s'\in L$ with $|s-s'|<\de'$
and $\nu\in W'$,
then
\begin{equation}
\label{eq:alsxt0}
\|(\al_s(x(t_0)^\nu)-\al_{s'}(x(t_0)^\nu))\xi\|
+
\|\xi(\al_s(x(t_0)^\nu)-\al_{s'}(x(t_0)^\nu))\|
<\vep.
\end{equation}

Then for all $s,s'\in L$ with $|s-s'|<\de'$
and $\nu\in W\cap W'$,
\begin{align*}
\|(\al_s(x(t)^\nu)-\al_{s'}(x(t)^\nu))\xi\|
&\leq
\|(\al_s(x(t)^\nu)-\al_{s}(x(t_0)^\nu))\xi\|
\\
&\quad
+
\|(\al_s(x(t_0)^\nu)-\al_{s'}(x(t_0)^\nu))\xi\|
\\
&\quad
+
\|(\al_{s'}(x(t_0)^\nu)-\al_{s'}(x(t)^\nu))\xi\|
\\
&<
\|(x(t)^\nu-x(t_0)^\nu)\al_s^{-1}(\xi)\|
\\
&\quad
+\vep
\quad
\mbox{by }
(\ref{eq:alsxt0})
\\
&\quad
+
\|(x(t_0)^\nu-x(t)^\nu)\al_{s'}^{-1}(\xi)\|
\\
&
<3\vep
\quad
\mbox{by }
(\ref{eq:xnuze}).
\end{align*}
In a similar way,
we obtain
$\|\xi(\al_s(x(t)^\nu)-\al_{s'}(x(t)^\nu))\|
<3\vep$.
Hence $\{L\ni s\mapsto \al_s(x(t)^\nu)\}_\nu$
is $\om$-equicontinuous.
\end{proof}

\begin{lem}\label{lem:Borelpatheq}
Let $(\al,c)$ be a Borel cocycle action of $\R$ on $\cM$.
Suppose that
$U\col\R\ra\Mequ$ is a Borel unitary path.
Then for any $T>0$, $\de>0$ with $0<\de<1$
and finite set $\Ph\subs\cM_*^+$,
there exist a compact set
$K\subs[-T,T]\times[-T,T]$
and
a lift
$(u(t)^\nu)_\nu$
of $U$ as in Lemma \ref{lem:Borelpathlift}
such that
\begin{itemize}
\item
$\mu(K)\geq 4T^2(1-\de)$;

\item
$
\{K\ni (t,s)\mapsto
u(t)^\nu\al_t(u(s)^\nu)c(t,s)(u(t+s)^\nu)^*
\}_\nu$
is $\om$-equicontinuous;

\item
The following limit is the uniform convergence
on $K$ for all $\vph\in\Ph$:
\[
\lim_{\nu\to\om}
\|u(t)^\nu\al_t(u(s)^\nu)c(t,s)(u(t+s)^\nu)^*-1\|_\vph^\sharp
=
\|U(t)\al_t(U(s))c(t,s)U(t+s)^*-1\|_{\vph^\om}^\sharp.
\]
\end{itemize}
\end{lem}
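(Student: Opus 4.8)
The plan is to lift $U$ to a sequence of Borel unitary paths on a compact set slightly larger than $[-T,T]$, then shrink its domain several times until the four building blocks $u(t)^\nu$, $\al_t(u(s)^\nu)$, $c(t,s)$ and $(u(t+s)^\nu)^*$ become \emph{simultaneously} $\om$-equicontinuous on a set $K$ of the required size, and finally read off the uniform convergence from Lemma \ref{lem:uniconverge}. First I would fix, for each $\vph\in\Ph$, its representing vector $\xi_\vph\in\cP$, so that $\|a\|_\vph^\sharp\le\max(\|a\xi_\vph\|,\|\xi_\vph a\|)$ for $a\in\cM$ and $\|\pi_\om((y^\nu)_\nu)\|_{\vph^\om}^\sharp=\lim_{\nu\to\om}\|y^\nu\|_\vph^\sharp$ for bounded $(y^\nu)_\nu\in\sN_\om$ (the latter because $\ta^\om$ is a $\si$-weak limit and $\vph$ is normal), and pick a single tolerance $\ka>0$ with $(8T+1)\ka<4T^2\de$. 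Applying Lemma \ref{lem:Borelpathlift} to $U$, the set $E:=[-2T,2T]$ and this $\ka$ yields a compact $K_1\subs E$ with $\mu(E\setminus K_1)<\ka$ and a lift $(u(t)^\nu)_\nu$, $t\in E$, which is Borel in $t$, strongly continuous and $\om$-equicontinuous on $K_1$, with $\pi_\om((u(t)^\nu)_\nu)=U(t)$ for $t\in K_1$. Since $U(t)\in\Mequ$, any representing sequence of $U(t)$ differs from an element of $\sE_\al^\om\cap\sN_\om$ by a $\sT_\om$-sequence, so Lemma \ref{lem:Ealom}(1) gives $(u(t)^\nu)_\nu\in\sE_\al^\om\cap\sN_\om$ for every $t\in K_1$.

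Next I would feed $C=K_1$, $x(t)^\nu=u(t)^\nu$ into the lemma immediately preceding this one (with target set $[-T,T]$ and tolerance $\ka$) to obtain a compact $L\subs[-T,T]$, $\mu([-T,T]\setminus L)<\ka$, on which $\al$ is continuous and with $\{L\ni r\mapsto\al_r(u(t)^\nu)\}_\nu$ $\om$-equicontinuous for every $t\in K_1$; and by Lemma \ref{lem:Lusin} take a compact $K_2\subs[-T,T]^2$, $\mu([-T,T]^2\setminus K_2)<\ka$, on which $c$ is continuous. Setting $K_1':=K_1\cap[-T,T]$, $M:=L\cap K_1'$ and
\[
K:=(M\times K_1')\cap K_2\cap\{(t,s)\in\R^2:t+s\in K_1\},
\]
a compact subset of $[-T,T]^2$, the one nonroutine part of the measure estimate is the ``diagonal'' set $\{(t,s)\in[-T,T]^2:t+s\nin K_1\}$, whose measure is $\le\int_{-T}^{T}\mu(E\setminus K_1)\,dt<2T\ka$ by Fubini since $t+[-T,T]\subs E$ for $t\in[-T,T]$; together with the obvious bounds on $[-T,T]^2\setminus(M\times K_1')$ and $[-T,T]^2\setminus K_2$ this gives $\mu([-T,T]^2\setminus K)<(8T+1)\ka<4T^2\de$, i.e. $\mu(K)\ge4T^2(1-\de)$.

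For the $\om$-equicontinuity on $K$ of $z^\nu(t,s):=u(t)^\nu\al_t(u(s)^\nu)c(t,s)(u(t+s)^\nu)^*$, note that on $K$ one has $t,s,t+s\in K_1$, so $u(t)^\nu,u(s)^\nu,u(t+s)^\nu$ represent $U(t),U(s),U(t+s)$. The families $\{(t,s)\mapsto u(t)^\nu\}_\nu$ and $\{(t,s)\mapsto u(t+s)^\nu\}_\nu$ are $\om$-equicontinuous, being precompositions of the $\om$-equicontinuous $\{K_1\ni r\mapsto u(r)^\nu\}_\nu$ with the $1$-Lipschitz maps $(t,s)\mapsto t$ and $(t,s)\mapsto t+s$; passing to adjoints only swaps the two inequalities in Definition \ref{defn:metequicont}; $\{(t,s)\mapsto c(t,s)\}_\nu$ is $\om$-equicontinuous since $c$ is continuous on $K_2$ and the sequence is constant in $\nu$; and $\{(t,s)\mapsto\al_t(u(s)^\nu)\}_\nu$ is $\om$-equicontinuous by Lemma \ref{lem:alweqct} applied with the sets $M$ (on which $\al$ is continuous and $\{M\ni r\mapsto\al_r(u(s)^\nu)\}_\nu$ is $\om$-equicontinuous for each $s\in K_1'$, by the previous step) and $K_1'$ (on which $\{s\mapsto u(s)^\nu\}_\nu$ is $\om$-equicontinuous). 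All four families are uniformly bounded with values in $\sN_\om$ ($\al_t(\sN_\om)=\sN_\om$), so three applications of Lemma \ref{lem:multi-equicont} show $\{K\ni(t,s)\mapsto z^\nu(t,s)\}_\nu$ is $\om$-equicontinuous, which is the second assertion. Finally, for $(t,s)\in K$ the sequence $(z^\nu(t,s)-1)_\nu$ represents $U(t)\al_t(U(s))c(t,s)U(t+s)^*-1$, so $f^\nu(t,s):=\|z^\nu(t,s)-1\|_\vph^\sharp$ converges pointwise to the asserted limit; moreover $|f^\nu(t,s)-f^\nu(t',s')|\le\|z^\nu(t,s)-z^\nu(t',s')\|_\vph^\sharp\le\max(\|(z^\nu(t,s)-z^\nu(t',s'))\xi_\vph\|,\|\xi_\vph(z^\nu(t,s)-z^\nu(t',s'))\|)$, so the $\om$-equicontinuity of $\{z^\nu\}_\nu$ tested against the finite set $\{\xi_\vph:\vph\in\Ph\}$ makes $\{K\ni(t,s)\mapsto f^\nu(t,s)\}_\nu$ $\om$-equicontinuous, and Lemma \ref{lem:uniconverge} upgrades the convergence to uniform on $K$.

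The main obstacle is the term $\al_t(u(s)^\nu)$: its joint $\om$-equicontinuity in $(t,s)$ demands that $\al$ be continuous, the lift $\om$-equicontinuous in $s$, \emph{and} $\al_{(\cdot)}$ applied to the lift $\om$-equicontinuous in the $\al$-variable, all over the same compact sets --- and it is precisely this that forces the successive passage from $K_1$ to $L$ and then to $M$, and is the reason the two auxiliary lemmas above are phrased the way they are. The secondary delicate point is keeping $t+s$ inside the ``good'' set $K_1$ without spoiling the bound $\mu(K)\ge 4T^2(1-\de)$, which is handled by the Fubini estimate on the diagonal set.
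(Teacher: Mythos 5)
Your proposal is correct and follows essentially the same route as the paper's proof: lift $U$ over $[-2T,2T]$ via Lemma \ref{lem:Borelpathlift}, shrink to compacta where $\al$, $c$, and $\al_t(u(s)^\nu)$ behave well (using the two preceding lemmas), handle the $u(t+s)^\nu$ term by pulling back the good set under $(t,s)\mapsto t+s$ with a Fubini measure estimate, and conclude with Lemmas \ref{lem:multi-equicont} and \ref{lem:uniconverge}. The only differences are bookkeeping (a generic tolerance $\ka$ versus the paper's $\eta=\de/6$) and that you spell out the final uniform-convergence step, which the paper leaves implicit.
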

\begin{proof}
Let $\eta:=\de/6$, $k\in\N$.
Take a compact set
$C\subs[-2T,2T]$ for $U(t)$
as in Lemma \ref{lem:Borelpathlift},
that is,
$\mu(C)\geq 4T(1-\eta)$,
$\mu(C\cap[-T,T])\geq 2T(1-\eta)$
and
$\{C\ni t\mapsto u(t)^\nu\}_\nu$
is $\om$-equicontinuous.

By the previous lemma,
we have a compact subset $L\subs [-T,T]$
such that
$\mu(L)\geq 2T(1-\eta)$,
$L\ni t\mapsto \al_t\in\Aut(\cM)$
is continuous
and
$\{L\ni t\mapsto \al_t(u(s)^\nu)\}_\nu$
is $\om$-equicontinuous
for all $s\in C$.
Then
$\mu(C\cap L)
\geq 2T(1-2\eta)$,
and
the family
$\{L\times C\ni(t,s)\mapsto
\al_t(u(s)^\nu)\}_\nu$
is $\om$-equicontinuous
by Lemma \ref{lem:alweqct}.

Next we consider
the Borel map
$[-T,T]^2\ni(t,s)\mapsto c(t,s)\in\cM^{\rm U}$.
Take a compact subset
$M\subs [-T,T]^2$
such that
$\mu(M)\geq 4T^2(1-\eta)$,
and
$c$ is continuous on $M$
as before.

Note that
the map
$C\times C\ni (t,s)\mapsto u(t+s)^\nu$
may not be $\om$-equicontinuous.
Let $f(t,s)=t+s$ on $[-T,T]^2$
and set the compact set
$N:=f^{-1}(C)$.
Then
$\{N\ni (t,s)\mapsto u(t+s)^{\nu}\}_\nu$
is $\om$-equicontinuous,
and we have
\begin{align*}
\mu(N^c\cap[-T,T]^2)
&=
\int_{-T}^T\,dt
\int_{-T}^T\,ds
\,
1_{\{(t,s)\mid t+s\in C^c\cap[-2T,2T]\}}
(t,s)
\\
&=
\int_{-T}^T
\mu((C^c\cap[-2T,2T]-t)\cap[-T,T])
\,dt
\\
&\leq
\int_{-T}^T
\mu(C^c\cap[-2T,2T]-t)
\,dt
\\
&=2T\mu(C^c\cap[-2T,2T])
\leq
8T^2\eta.
\end{align*}

Now we set
the compact subset $K$ in $[-T,T]^2$
as follows:
\[
K:=((C\cap L)\times C)\cap M\cap N.
\]
Then
\begin{align*}
\mu(K^c\cap[-T,T]^2)
&=
\mu\left(
\left(
((C\cap L)\times C)\cap[-T,T]^2\right)^c
\cup M^c
\cup N^c
\right)
\\
&\leq
4T^2-\mu\left(
(C\cap L)
\times
(C\cap[-T,T])
\right)
+
4T^2\eta
+
8T^2\eta
\\
&=
4T^2
-
\mu(C\cap L)
\mu(C\cap[-T,T])
+
12T^2\eta
\\
&\leq
4T^2
-2T(1-2\eta)\cdot 2T(1-\eta)
+
12T^2\eta
\\
&=
4T^2\eta(6-2\eta)
<24T^2\eta
=4T^2\de.
\end{align*}

Then
$\{K\ni (t,s)\mapsto
u(t)^\nu\al_t(u(s)^{\nu})c(t,s)
(u(t+s)^\nu)^*\}_\nu$
is $\om$-equicontinuous by Lemma \ref{lem:multi-equicont},
and
we have
the uniform convergence stated
in Lemma \ref{lem:uniconverge}.
\end{proof}

\section{Rohlin flows}

\subsection{Rohlin flows}
In \cite{Kishi-CMP},
Kishimoto has introduced the notion of
the Rohlin property
for flows on C$^*$-algebras.
This property has been defined
also for finite von Neumann algebras
by Kawamuro \cite{Kawamuro-RIMS}.
Following their works,
we will introduce the Rohlin property
for a Borel cocycle action.

\begin{defn}
Let $(\al,c)$ be a Borel cocycle action of $\R$
on a separable von Neumann algebra $\cM$.
We will say that $\al$ has the \emph{Rohlin property}
if
for any $p\in\R$,
there exists a unitary $v\in\Meq$
such that
$\al_t(v)=e^{ipt}v$ for all $t\in\R$.
\end{defn}

A flow $\al$ with Rohlin property
is simply called a Rohlin flow.
We call the unitary $v$ in the above a Rohlin unitary
for $p\in\R$.
By definition,
$\al_t$ is centrally non-trivial if $t\neq0$.
Therefore,
any full factor does not admit a Rohlin flow.
Several examples are investigated in Section \ref{sec:appl}.

Lemma \ref{lem:SpGa} implies the following result.

\begin{lem}
If $\al$ is a Rohlin flow on a factor,
then $\Ga(\al)=\R$.
\end{lem}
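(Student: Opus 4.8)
The plan is to obtain this as an immediate consequence of Lemma~\ref{lem:SpGa}. First I would record that a factor $\cM$ has $Z(\cM)=\C$, hence $Z(\cM)^\al=\C$, so that $\al$ is centrally ergodic; this is exactly the hypothesis needed to invoke part~(2) of Lemma~\ref{lem:SpGa} for $\al$.

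Next I would compute the Arveson spectrum of the restricted flow $\al|_{\Meq}$ using the Rohlin property. Fix $p\in\R$ and let $v\in\Meq$ be a Rohlin unitary for $p$, so $\al_t(v)=e^{ipt}v$ for all $t\in\R$. Since $v$ is a unitary it is nonzero, so it is a genuine eigenvector of $\al|_{\Meq}$ with character $t\mapsto e^{ipt}$; concretely, for $f\in L^1(\R)$ one has $\al_f(v)=\hat f(p)\,v$, which is nonzero whenever $\hat f(p)\neq0$, so $p\in\Sp(\al|_{\Meq})$. As $p\in\R$ was arbitrary, $\Sp(\al|_{\Meq})=\R$.

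Finally I would combine the two parts of Lemma~\ref{lem:SpGa}: by central ergodicity and part~(2), $\Ga(\al|_{\Meq})=\Sp(\al|_{\Meq})=\R$, while part~(1) gives $\Ga(\al|_{\Meq})\subs\Ga(\al)$. Since $\Ga(\al)\subs\R$ always, we conclude $\Ga(\al)=\R$. There is no real obstacle in this argument; the only point that deserves a line of justification is that a Rohlin unitary actually contributes a point to the Arveson spectrum of $\al|_{\Meq}$, which follows at once from the eigenvalue equation together with the fact that a unitary is nonzero.
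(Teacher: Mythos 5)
Your proof is correct and follows exactly the route the paper intends: the lemma is stated immediately after Lemma~\ref{lem:SpGa} as a direct consequence of it, and your unpacking (central ergodicity on a factor, a Rohlin unitary being a nonzero eigenvector so that $\Sp(\al|_{\Meq})=\R$, then combining parts (1) and (2) of that lemma) is precisely the argument the authors leave implicit.
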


Thus it is natural to ask
if an outer flow with full Connes spectrum
on the injective type II$_1$ factor
has the Rohlin property or not.
This problem has been open so far.
See Section \ref{sect:concl} for related problems.

We remark that there does not exist
a strongly continuous path $\R\ni p\mapsto w_p\in\cM_{\om,\al}$
such that $\al_t(w_p)=e^{ipt}w_p$
when $\cM$ is a factor.
Indeed,
$\ta_\om$ gives an $\al$-invariant inner product on $\cM_{\om,\al}$,
and $\{w_p\}_p$ is an orthonormal system.
In particular, this spans a non-separable Hilbert space.

Lemma \ref{lem:cocpertstab} implies the stability
of the Rohlin property under cocycle perturbation.

\begin{lem}
\label{lem:pertRohstab}
If a Borel cocycle action of $\R$ on a von Neumann algebra
has the Rohlin property,
then so does its any perturbation.
\end{lem}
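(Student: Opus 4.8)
The plan is to show that a Rohlin unitary for the original cocycle action serves, verbatim, as a Rohlin unitary for the perturbed one. First I would fix a Borel unitary path $w\col\R\ra\cM^{\rm U}$ and write the given perturbation as $(\be,d):=(\al^w,c^w)$, so that $\be_t=\Ad w(t)\circ\al_t$ on $\cM$ and hence $\be_t=\Ad w(t)\circ\al_t^\om$ on $\cM^\om$, with $w(t)$ read as a constant sequence. Given $p\in\R$, I would invoke the Rohlin property of $\al$ to produce a unitary $v\in\cM_{\om,\al}$ with $\al_t(v)=e^{ipt}v$ for all $t\in\R$.

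The key point is that $\cM_{\om,\al}=\cM_{\om,\be}$ by Lemma \ref{lem:cocpertstab}, so $v$ already lies in the $(\be,\om)$-equicontinuous part of $\cM_\om$; nothing has to be done to relocate it. Moreover $\cM_{\om,\al}\subset\cM'\cap\cM_\al^\om$, so $v$ commutes with every element of $\cM$, in particular with each $w(t)$, and so does $\al_t(v)=e^{ipt}v$. Therefore, for all $t\in\R$,
\[
\be_t(v)=w(t)\al_t(v)w(t)^*=e^{ipt}\,w(t)vw(t)^*=e^{ipt}v.
\]
Hence $v$ is a Rohlin unitary for $\be$ and $p$, and since $p\in\R$ was arbitrary, $\be$ has the Rohlin property.

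There is no substantial obstacle here: the entire content is packaged into Lemma \ref{lem:cocpertstab} (invariance of the equicontinuous parts under cocycle perturbation) together with the elementary fact that elements of $\cM_\om$ commute with $\cM$ inside $\cM^\om$. The only place meriting a moment's care is the identification of the canonical extension of $\be_t$ to $\cM^\om$ with $\Ad w(t)\circ\al_t^\om$, which makes the one-line computation above legitimate.
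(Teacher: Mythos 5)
Your proposal is correct and is precisely the argument the paper intends: the paper states the lemma as an immediate consequence of Lemma \ref{lem:cocpertstab}, and your write-up simply fills in the (routine) details that a Rohlin unitary $v\in\cM_{\om,\al}=\cM_{\om,\be}$ commutes with each $w(t)\in\cM$, so $\be_t(v)=w(t)\al_t(v)w(t)^*=e^{ipt}v$.
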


The following result states a sequence-version of
the definition of the Rohlin property.

\begin{lem}
\label{lem:Rohlin-ptwise}
Let $\al$ be a flow on a von Neumann algebra $\cM$.
Then the following statements hold:
\begin{enumerate}
\item
$\al$ has the Rohlin property;

\item
For any $p\in\R$,
there exists a unitary central sequence
$(v^\nu)_\nu$
such that
$\al_t(v^\nu)-e^{ipt}v^\nu\to0$
compact uniformly
in the strong topology
as $\nu\to\infty$;

\item
For any $p\in\R$,
there exists a unitary central sequence
$(v^\nu)_\nu$
such that
for each $t\in\R$,
one has
$\al_t(v^\nu)-e^{ipt}v^\nu\to0$
in the strong topology
as $\nu\to\infty$.
\end{enumerate}
\end{lem}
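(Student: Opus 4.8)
The plan is to run the cyclic chain $(1)\Rightarrow(2)\Rightarrow(3)\Rightarrow(1)$, the implication $(2)\Rightarrow(3)$ being immediate.

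For $(1)\Rightarrow(2)$ I would fix $p\in\R$ and a Rohlin unitary $v\in\Meq$ with $\al_t(v)=e^{ipt}v$, choose a representing sequence $(v^\nu)_\nu\in\sE_\al^\om\cap\sC_\om$, and first arrange each $v^\nu$ to be unitary by a standard unitarization (replacing $v^\nu$ by $v^\nu g(v^{\nu*}v^\nu)$ for a suitable continuous $g$); this only perturbs the sequence inside $\sT_\om$, which is harmless by Lemma \ref{lem:Ealom}$(1)$ and because $\sT_\om\subset\sC_\om$. Then, since $\al$ is a flow on $\cM$, Proposition \ref{prop:flow-equi} gives that $\{[-T,T]\ni t\mapsto\al_t(v^\nu)\}_\nu$ is $\om$-equicontinuous for each $T>0$; and since $\al_t(v^\nu)-e^{ipt}v^\nu$ represents $\al_t(v)-e^{ipt}v=0$, Lemma \ref{lem:uniconverge} applied to the functions $t\mapsto\|(\al_t(v^\nu)-e^{ipt}v^\nu)\xi\|$ upgrades the pointwise‑in‑$t$ convergence to convergence uniform on $[-T,T]$, along $\om$. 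A diagonal extraction over $\om$ against norm‑dense sequences in $\cM_*$ (for centrality) and in $\cH$ (for the strong convergence) then produces the required honest unitary central sequence.

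For $(3)\Rightarrow(1)$, which I expect to be the crux, I would start from a unitary central sequence $(v^\nu)_\nu$ with $\al_t(v^\nu)-e^{ipt}v^\nu\to0$ strongly for every $t$, and first observe that this strong convergence is automatically strong$^*$: since an automorphism carries central sequences to central sequences, $y^\nu:=\al_s(v^\nu)-e^{ips}v^\nu$ is a uniformly bounded central sequence, and for $\vph\in(\cM_*)_+$ one has $\|y^\nu\|_\vph^2-\|(y^\nu)^*\|_\vph^2=\vph\bigl((y^\nu)^*y^\nu\bigr)-\vph\bigl(y^\nu(y^\nu)^*\bigr)\to0$ by centrality of $(y^\nu)_\nu$ (the difference is bounded in modulus by $\|[\vph,y^\nu]\|\,\|y^\nu\|$), so $\|y^\nu\|_\vph\to0$ forces $\|(y^\nu)^*\|_\vph\to0$; hence $\al_s(v^\nu)-e^{ips}v^\nu\in\sT_\om$ for each $s$. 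Next I would choose $f\in L^1(\R)$ with $\hat{f}(p)=1$, set $w^\nu:=\al_f(v^\nu)$, and note on one hand that $(w^\nu)_\nu\in\sE_\al^\om\cap\sC_\om$ by Lemma \ref{lem:smoothing}, so it represents an element of $\Meq$, and on the other hand that $w^\nu-v^\nu=\int_\R f(s)\bigl(\al_s(v^\nu)-e^{ips}v^\nu\bigr)\,ds$ (using $\hat{f}(p)=1$) tends to $0$ in the strong$^*$ topology by dominated convergence, so $(w^\nu-v^\nu)_\nu\in\sT_\om$. Therefore the unitary $v:=\pi_\om((v^\nu)_\nu)=\pi_\om((w^\nu)_\nu)$ lies in $\Meq$, and $\al_t(v^\nu)-e^{ipt}v^\nu\in\sT_\om$ yields $\al_t(v)=e^{ipt}v$; thus $\al$ has the Rohlin property.

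The main obstacle is exactly this passage from the sequential condition $(3)$ to the algebraic condition $(1)$: a representing sequence of a would‑be Rohlin unitary need not be $(\al,\om)$‑equicontinuous, so one has to smooth it by $\al_f$, and to see that smoothing does not alter the class in $\cM^\om$ one needs the strong‑to‑strong$^*$ upgrade for central sequences — this is the one genuinely non‑formal input, and it is also what legitimizes the identity $\al_t(v)=e^{ipt}v$ inside $\cM^\om$. I expect the remaining bookkeeping in $(1)\Rightarrow(2)$, namely the unitarization and the diagonalization over $\om$, to be routine.
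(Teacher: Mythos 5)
Your proposal is correct and follows essentially the same route as the paper: for $(1)\Rightarrow(2)$, both use $(\al,\om)$-equicontinuity of a representing sequence together with Lemma \ref{lem:uniconverge} to get uniform convergence on compacta, then diagonalize; for $(3)\Rightarrow(1)$, both upgrade strong to strong$^*$ convergence via centrality and then smooth with $\al_f$ (the paper's specific $f(t)=e^{-ipt}1_{[0,1]}(t)$ is exactly an instance of your $\hat f(p)=1$), invoking Lemma \ref{lem:smoothing} to conclude that the smoothed sequence lies in $\sE_\al^\om\cap\sC_\om$. One small cosmetic slip: the unitarization formula $v^\nu g(v^{\nu*}v^\nu)$ does not in general produce a unitary for any continuous $g$; the existence of a unitary representing sequence for a unitary in $\cM^\om$ is the standard fact being used (the paper simply asserts it), but your explicit recipe would only give a partial isometry.
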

\begin{proof}
(1)$\Rightarrow$(2).
Let $p\in\R$.
Take a unitary $v\in\cM_{\om,\al}$
with $\al_t(v)=e^{ipt}v$.
Let $(v^\nu)_\nu$ be a unitary representing sequence of $v$.

Take a compact set $K\subs\R$ with $\mu(K)>0$
such that $K=-K$,
$\al|_K$ is continuous
and $\{K\ni t\mapsto \al_t(v^\nu)\}_\nu$
is $\om$-equicontinuous.

Let $\xi\in\cH$ be a cyclic and separating vector for $\cM$.
Then
it turns out that
$\sup_{s,t\in K}\|(\al_t(v^\nu)-e^{ipt}v^\nu)\al_s(\xi)\|$
converges to 0 as $\nu\to\om$
by Lemma \ref{lem:uniconverge}
and the compactness of $\{\al_s(\xi)\mid s\in K\}$.
By taking an appropriate subsequence,
we may and do assume that
$(v^\nu)_\nu$ is central,
and
$\sup_{s,t\in K}\|(\al_t(v^\nu)-e^{ipt}v^\nu)\al_s(\xi)\|$
converges to 0 as $\nu\to\infty$.
Let $s,t\in K$.
Then
\begin{align*}
\|(\al_{s-t}(v^\nu)-e^{ip(s-t)}v^\nu)\xi\|
&=
\|\al_s(\al_{-t}(v^\nu)-e^{-ipt}v^\nu)\xi\|
+
\|e^{-ipt}(\al_s(v^\nu)-e^{ips}v^\nu)\xi\|
\\
&=
\|(\al_{-t}(v^\nu)-e^{-ipt}v^\nu)\al_{-s}(\xi)\|
+
\|(\al_s(v^\nu)-e^{ips}v^\nu)\xi\|.
\end{align*}
Hence
\[
\lim_{\nu\to\infty}
\sup_{s,t\in K}
\|(\al_{s-t}(v^\nu)-e^{ip(s-t)}v^\nu)\xi\|
=0.
\]
Let $\de>0$ with $(-\de,\de)\subs K-K$.
Then we have
$\sup_{|t|\leq\de}\|(\al_t(v^\nu)-e^{ipt}v^\nu)\xi\|$
converges to 0 as $\nu\to\infty$.
From this fact,
we can deduce that the uniform convergence
on any compact sets.

(2)$\Rightarrow$(3).
This implication is trivial.

(3)$\Rightarrow$(1).
Let $p\in\R$,
and take such a sequence $(v^\nu)_\nu$.
Let $\xi\in\cH$.
Then for each $t\in\R$,
we obtain
\begin{align*}
\|\xi(\al_t(v^\nu)-e^{ipt}v^\nu)\|
&\leq
\|[\xi,\al_t(v^\nu)]\|
+
\|(\al_t(v^\nu)-e^{ipt}v^\nu)\xi\|
+
\|[v^\nu,\xi]\|
\\
&=
\|[\al_{-t}(\xi),v^\nu]\|
+
\|(\al_t(v^\nu)-e^{ipt}v^\nu)\xi\|
+
\|[v^\nu,\xi]\|,
\end{align*}
which converges to 0 since $(v^\nu)_\nu$ is central.
Thus we have the strong$*$ convergence
$\al_t(v^\nu)-e^{ipt}v^\nu\to0$
as $\nu\to\infty$.
Let $f(t)=e^{-ipt}1_{[0,1]}(t)\in L^1(\R)$.
Then we have
\[
\|(\al_f(v^\nu)-v^\nu)\xi\|
\leq
\int_0^1 \|(\al_t(v^\nu)-e^{ipt}v^\nu)\xi\|\,dt,
\]
which converges to 0 as $\nu\to\infty$
by the dominated convergence theorem.
Likewise, we obtain
$\|\xi(\al_f(v^\nu)-v^\nu)\|\to0$ as $\nu\to\infty$.
Thus $(v^\nu)_\nu$ belongs to $\sE_\al^\om\cap \sC_\om$
by Lemma \ref{lem:Ealom} and \ref{lem:smoothing}.
Hence $v:=\pi_\om((v^\nu)_\nu)\in \cM_{\om,\al}$
satisfies $\al_t(v)=e^{ipt}v$.
\end{proof}

\subsection{Invariant approximate innerness}

We investigate a relation between the Rohlin property
and the invariant approximate innerness.

\begin{defn}
\label{defn:IAI}
Let $\al$ be a flow on a von Neumann algebra
$\cM$.
We will say that $\al$ is
\emph{invariantly approximately inner}
if for any $T\in\R$,
there exists a sequence of unitaries
$(w^\nu)_\nu$ in $\cM$
such that
\begin{itemize}
\item 
$\al_T=\lim_{\nu\to\infty}\Ad w^\nu$ in $\Aut(\cM)$;
\item
$\|(\al_t(w^\nu)-w^\nu)\xi\|+\|\xi(\al_t(w^\nu)-w^\nu)\|
\to0$
compact uniformly for $t\in\R$
as $\nu\to\infty$
for all $\xi\in\cH$.
\end{itemize}
\end{defn}

\begin{lem}
\label{lem:IAI-local}
Let $\al$ be a flow
on a von Neumann algebra $\cM$.
Then the following statements are equivalent:
\begin{enumerate}
\item
$\al$ is invariantly approximately inner;

\item
For any $T\in\R$,
there exists a sequence of unitaries
$(w^\nu)_\nu$ in $\cM$
such that
\begin{itemize}
\item 
$\al_T=\lim_{\nu\to\infty}\Ad w^\nu$ in $\Aut(\cM)$;

\item
$\|(\al_t(w^\nu)-w^\nu)\xi\|+\|\xi(\al_t(w^\nu)-w^\nu)\|
\to0$
for each $t\in\R$ and $\xi\in\cH$
as $\nu\to\infty$.
\end{itemize}

\item
For any $T\in\R$,
there exists a unitary
$w\in \cM_\al^\om$
such that
\begin{itemize}
\item
$\al_T(x)=wxw^*$ for all $x\in\cM$;
\item
$\al_t(w)=w$ for all $t\in\R$;
\end{itemize}
\end{enumerate}
\end{lem}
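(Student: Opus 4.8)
The plan is to prove the cycle $(1)\Rightarrow(2)\Rightarrow(3)\Rightarrow(1)$. The implication $(1)\Rightarrow(2)$ is immediate: compact uniform convergence in $t$ entails pointwise convergence, so the very sequence $(w^\nu)_\nu$ furnished by (1) already witnesses (2).

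For $(2)\Rightarrow(3)$, fix $T\in\R$ and take the unitaries $(w^\nu)_\nu$ given by (2). First I would check $(w^\nu)_\nu\in\sN_\om$: for $(y^\nu)_\nu\in\sT_\om$ one has $\|w^\nu y^\nu\xi\|=\|y^\nu\xi\|$, $\|(w^\nu y^\nu)^*\xi\|^2=\langle\Ad w^\nu(y^\nu(y^\nu)^*)\xi,\xi\rangle$ and the analogous identities with $y^\nu w^\nu$; since $\Ad w^\nu\to\al_T$ in $\Aut(\cM)$ forces $\ph\circ\Ad w^\nu\to\ph\circ\al_T$ in norm for every $\ph\in\cM_*$, and $y^\nu(y^\nu)^*,(y^\nu)^*y^\nu\to0$ $\si$-weakly as $\nu\to\om$, these quantities tend to $0$, so $(w^\nu)_\nu$ normalises $\sT_\om$. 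Hence $w:=\pi_\om((w^\nu)_\nu)\in\cM^\om$ is well defined, and since $U(\Ad w^\nu)\to U(\al_T)$ strongly we get $\Ad w^\nu(x)\to\al_T(x)$ strong$*$ for all $x\in\cM$, i.e. $\al_T(x)=wxw^*$. The pointwise-in-$t$ clause of (2) says $\al_t(w^\nu)-w^\nu\to0$ strong$*$ as $\nu\to\infty$ for each $t$, hence $\al_t(w)=w$. It remains to place $w$ in $\cM_\al^\om$, i.e. to show $(w^\nu)_\nu\in\sE_\al^\om$, and for this I would run the smoothing argument from the proof of Lemma \ref{lem:Rohlin-ptwise}, $(3)\Rightarrow(1)$. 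With $f:=1_{[0,1]}\in L^1(\R)$, Lemma \ref{lem:smoothing}(1) gives $(\al_f(w^\nu))_\nu\in\sE_\al^\om$, while
\[
\|(\al_f(w^\nu)-w^\nu)\xi\|\le\int_0^1\|(\al_t(w^\nu)-w^\nu)\xi\|\,dt\longrightarrow0
\quad(\nu\to\infty)
\]
by dominated convergence and (2), and symmetrically for $\|\xi(\al_f(w^\nu)-w^\nu)\|$; thus $(\al_f(w^\nu)-w^\nu)_\nu\in\sT_\om$, and Lemma \ref{lem:Ealom}(1) yields $(w^\nu)_\nu\in\sE_\al^\om$. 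So $(w^\nu)_\nu\in\sE_\al^\om\cap\sN_\om$ and $w\in\cM_\al^\om$, which is (3).

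For $(3)\Rightarrow(1)$, fix $T\in\R$ and choose a unitary representing sequence $(w^\nu)_\nu\in\sE_\al^\om\cap\sN_\om$ of $w$. Since $w\in\cM_\al^\om$, Proposition \ref{prop:flow-equi}(4) says $\{[-R,R]\ni t\mapsto\al_t(w^\nu)\}_\nu$ is $\om$-equicontinuous for each $R>0$, so for a fixed $\xi\in\cH$ the scalar functions $t\mapsto\|(\al_t(w^\nu)-w^\nu)\xi\|$ and $t\mapsto\|\xi(\al_t(w^\nu)-w^\nu)\|$ form $\om$-equicontinuous families on $[-R,R]$ whose $\om$-limit is $0$ (because $\al_t(w)=w$); Lemma \ref{lem:uniconverge} makes this convergence uniform on $[-R,R]$. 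Also $\Ad w^\nu(\ph)\to\al_T(\ph)$ in norm as $\nu\to\om$ for every $\ph\in\cM_*$, since $\Ad w|_\cM=\al_T$. A standard diagonal extraction now finishes: fix countable dense sets $\{\xi_k\}\subset\cH$, $\{\ph_k\}\subset\cM_*$; for each $m\in\N$ the set $W_m$ of those $\nu$ for which $\|\Ad w^\nu(\ph_k)-\al_T(\ph_k)\|<1/m$ and $\sup_{|t|\le m}(\|(\al_t(w^\nu)-w^\nu)\xi_k\|+\|\xi_k(\al_t(w^\nu)-w^\nu)\|)<1/m$ hold for all $k\le m$ belongs to $\om$; picking $\nu_1<\nu_2<\cdots$ with $\nu_m\in W_m$ and setting $\tilde w^m:=w^{\nu_m}$ produces, after approximating arbitrary $\xi\in\cH$ and $\ph\in\cM_*$ by the dense sets, a sequence with $\al_T=\lim_m\Ad\tilde w^m$ in $\Aut(\cM)$ and compact uniform convergence of $\al_t(\tilde w^m)-\tilde w^m\to0$ in $t$. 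This gives (1).

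The main obstacle is the implication $(2)\Rightarrow(3)$: the hypothesis of (2) only controls $\al_t(w^\nu)-w^\nu$ for each fixed $t$ (which by itself merely yields $\al_t(w)=w$), whereas membership $w\in\cM_\al^\om$ demands the a priori much stronger $(\al,\om)$-equicontinuity of a representing sequence. The device that bridges this gap is convolution by $1_{[0,1]}$ together with Lemma \ref{lem:Ealom}(1); one also has to verify separately, as above, that the implementing sequence normalises $\sT_\om$.
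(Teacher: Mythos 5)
Your proof is correct and follows essentially the same route as the paper's: $(1)\Rightarrow(2)$ is trivial, $(2)\Rightarrow(3)$ is handled by exactly the smoothing argument (convolution with $1_{[0,1]}$ plus Lemmas \ref{lem:smoothing} and \ref{lem:Ealom}) that the paper invokes by reference to Lemma \ref{lem:Rohlin-ptwise}, and $(3)\Rightarrow(1)$ combines $(\al,\om)$-equicontinuity with $\al_t(w)=w$ to get uniform convergence on compacta before extracting a subsequence. The only cosmetic differences are that you package the uniform-convergence step via Lemma \ref{lem:uniconverge} where the paper writes out the finite partition of $[-T,T]$ explicitly, and that both arguments treat the passage from $wxw^*=\al_T(x)$ in $\cM^\om$ to $\al_T=\lim_{\nu\to\om}\Ad w^\nu$ in $\Aut(\cM)$ as standard.
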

\begin{proof}
(1)$\Rightarrow$(2).
This implication is trivial.

(2)$\Rightarrow$(3).
Take such a sequence $(w^\nu)_\nu$.
Then as in the proof of Lemma \ref{lem:Rohlin-ptwise},
we can show that $(w^\nu)_\nu\in\sE_\al^\om$.
Since $\Ad w^\nu\to \al_T$ in $\Aut(\cM)$,
$(w^\nu)_\nu$ normalizes $\sT_\om$.
Thus we can consider a unitary
$w:=\pi_\om((w^\nu)_\nu)$
in $\cM_\al^\om$
which satisfies the required properties.

(3)$\Rightarrow$(1).
We suppose that the conditions of (2)
are fulfilled.
Let $(w^\nu)_\nu$ be a unitary
representing sequence of $w$.
Let $T>0$, $\vep>0$ and $\Ph\subs\cH$
a finite set.
By $(\al,\om)$-equicontinuity,
there exist $N\in\N$ and $W_1\in\om$
such that
if $s,t\in[-T,T]$ satisfies
$|s-t|<T/N$
and $\nu\in W_1$,
then
\[
\|(\al_s(w^\nu)-\al_t(w^\nu))\xi\|
<\vep,
\quad
\|\xi(\al_s(w^\nu)-\al_t(w^\nu))\|
<\vep
\quad
\mbox{for all }
\xi\in\Ph.
\]

Put $t_j:=jT/N$, $j=-N,\dots,N$.
Since $\al_t(w)=w$,
there exists $W_2\in\om$
such that
if $\nu\in W_2$,
then
\[
\|(\al_{t_j}(w^\nu)-w^\nu)\xi\|
<\vep,
\quad
\|\xi(\al_{t_j}(w^\nu)-w^\nu)\|
<\vep
\quad
\mbox{for all }
j=-N,\dots,N,
\
\xi\in\Ph.
\]

Let $t\in [-T,T]$ and take $t_j$
with $|t-t_j|<T/N$.
If $\nu\in W_1\cap W_2$,
then
\begin{align*}
\|(\al_{t}(w^\nu)-w^\nu)\xi\|
&\leq
\|(\al_{t}(w^\nu)-\al_{t_j}(w^\nu))\xi\|
+
\|(\al_{t_j}(w^\nu)-w^\nu)\xi\|
\\
&<2\vep.
\end{align*}
Likewise,
we obtain
$\|\xi(\al_{t}(w^\nu)-w^\nu)\|<2\vep$
for $\xi\in\Ph$, $t\in[-T,T]$ and $\nu\in W_1\cap W_2$.
Then an appropriate subsequence of $w^\nu$
satisfies the condition of Definition \ref{defn:IAI}.
\end{proof}

\begin{lem}
\label{lem:invappinn-stab}
The invariant approximate innerness
is stable under cocycle perturbation.
\end{lem}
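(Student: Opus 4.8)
The plan is to reduce everything to the equivalent formulation of invariant approximate innerness proved in Lemma~\ref{lem:IAI-local}: a flow $\be$ on $\cM$ is invariantly approximately inner iff for every $T\in\R$ there is a unitary $w\in\cM_\be^\om$ with $\be_T=\Ad w$ on $\cM$ and $\be_t(w)=w$ for all $t\in\R$. Since Lemma~\ref{lem:cocpertstab} gives $\cM_\al^\om=\cM_{\al^v}^\om$ for any $\al$-cocycle $v$, all I need is to manufacture, for $\al^v$, a unitary in this common von Neumann algebra with the two required properties, starting from the one provided by the hypothesis on $\al$.

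Concretely, fix $T\in\R$ and, using that $\al$ is invariantly approximately inner, pick $w\in\cM_\al^\om$ with $\al_T(x)=wxw^*$ for all $x\in\cM$ and $\al_t(w)=w$ for all $t\in\R$. I set $w':=v(T)w$. Then $w'\in\cM_{\al^v}^\om$: indeed $v(T)\in\cM\subset\cM_{\al^v}^\om$ and $w\in\cM_\al^\om=\cM_{\al^v}^\om$, and $\cM_{\al^v}^\om$ is an algebra. Moreover, on $\cM$ we have $\al_T^v=\Ad v(T)\circ\al_T=\Ad v(T)\circ\Ad w=\Ad w'$, which is the first required property.

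The only computation of substance is checking $\al_t^v(w')=w'$ for all $t\in\R$. Extending $\al^v$ to $\cM^\om$ we still have $\al_t^v=\Ad v(t)\circ\al_t$, so, using $\al_t(w)=w$,
\[
\al_t^v(w')=v(t)\al_t(v(T))\al_t(w)v(t)^*=v(t)\al_t(v(T))\,w\,v(t)^*.
\]
Since $w$ implements $\al_T$ on $\cM$ and $v(t)\in\cM$, we have $w\,v(t)^*=\al_T(v(t))^*w$, hence $\al_t^v(w')=v(t)\al_t(v(T))\al_T(v(t))^*\,w$. Applying the $\al$-cocycle identity $v(s)\al_s(v(r))=v(s+r)$ twice, once as $v(t)\al_t(v(T))=v(t+T)$ and once as $v(T)\al_T(v(t))=v(T+t)$, gives $v(t)\al_t(v(T))=v(T)\al_T(v(t))$, so $v(t)\al_t(v(T))\al_T(v(t))^*=v(T)$ and therefore $\al_t^v(w')=v(T)w=w'$. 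By Lemma~\ref{lem:IAI-local}, $\al^v$ is invariantly approximately inner. I do not expect any real obstacle here: the substantive input is entirely contained in Lemmas~\ref{lem:IAI-local} and~\ref{lem:cocpertstab}, and the remaining argument is the short cocycle manipulation above; the one point needing a moment's care is confirming $w'\in\cM_{\al^v}^\om$, which is exactly where the perturbation-invariance of the equicontinuous part is used.
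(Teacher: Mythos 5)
Your proof is correct and is essentially identical to the paper's: both set $u:=v(T)w$ with $w$ the $\al$-fixed implementing unitary from Lemma \ref{lem:IAI-local}, note $u\in\cM_\al^\om=\cM_{\al^v}^\om$, and verify $\al_t^v(u)=u$ via the same cocycle manipulation $v(t)\al_t(v(T))=v(t+T)=v(T)\al_T(v(t))$. No issues.
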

\begin{proof}
Let $\al$ be an invariantly approximately inner
flow on a von Neumann algebra $\cM$.
Let $v$ be an $\al$-cocycle.
For $T\in\R$,
take a unitary $w\in \cM_\al^\om$
such that
$\al_T(x)=wxw^*$ for $x\in\cM$
and $w$ is fixed by $\al$.
We set $u:=v_Tw$
that belongs to $\cM_\al^\om=\cM_{\al^v}^\om$.
Then
$\al_T^v(x)=uxu^*$,
and
\[
\al_t^v(u)=v_t\al_t(v_T)\al_t(w)v_t^*
=
v_{t+T}wv_t^*
=
v_{t+T}\al_T(v_t^*)w
=u.
\]
By the previous lemma,
$\al^v$ is invariantly approximately inner.
\end{proof}

\begin{lem}
\label{lem:Rohdual}
Let $\al$ be a flow on a von Neumann algebra $\cM$
and $p\in\R$.
Suppose that there exists
a unitary central sequence
$(v^\nu)_\nu$ in $\cM$
such that
for each $t\in\R$,
$\lim_{\nu\to\infty}(\al_t(v^\nu)-e^{ipt}v^\nu)=0$
in the strong topology.
Then $\hal_p=\lim_{\nu\to\infty}\Ad \pi_\al(v^\nu)$
in $\Aut(\cM\rti_\al\R)$.
\end{lem}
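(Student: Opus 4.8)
The plan is to verify the convergence on the generators $\pi_\al(\cM)$ and $\la^\al(\R)$ of $\cM\rti_\al\R$ in the $\sigma$-strong$*$ topology, which then gives convergence in $\Aut(\cM\rti_\al\R)$ by the standard fact that, for uniformly bounded families, the $u$-topology is the topology of pointwise $\sigma$-strong$*$ convergence on a generating subset. Before the computation I would record two consequences of the hypotheses. (i) Since $(v^\nu)_\nu$, hence also $(v^{\nu*})_\nu$, is a central sequence, $[a,v^{\nu*}]\to0$ $\sigma$-strongly$*$ for every $a\in\cM$, and $v^\nu\xi-\xi v^\nu\to0$ in norm for every $\xi\in\cH$ (the latter is exactly what is used in the proof of Lemma \ref{lem:Rohlin-ptwise}). (ii) For each $t\in\R$ one has $v^\nu\al_t(v^\nu)^*\to e^{-ipt}1$ in the strong topology. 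Equivalently, writing $h_\nu(t):=\al_t(v^\nu)-e^{ipt}v^\nu$, one has $h_\nu(t)^*\to0$ strongly; this is the one place where the hypothesis on $\al_t(v^\nu)$ must genuinely be combined with centrality, since strong convergence is not stable under adjoints. To see it, expand $h_\nu(t)h_\nu(t)^*=2-A_\nu-A_\nu^*$ with $A_\nu:=e^{-ipt}\al_t(v^\nu)v^{\nu*}$, so that $\|h_\nu(t)^*\xi\|^2=2\|\xi\|^2-2\Rea\langle A_\nu\xi,\xi\rangle$; and $A_\nu\xi\to\xi$ follows from the hypothesis $\al_t(v^\nu)\xi-e^{ipt}v^\nu\xi\to0$ after commuting $v^{\nu*}$ past $\xi$ by means of (i).

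The main computation is then short. For $x\in\cM$,
\[
\Ad\pi_\al(v^\nu)(\pi_\al(x))=\pi_\al(v^\nu x v^{\nu*}),
\qquad
v^\nu x v^{\nu*}-x=v^\nu[x,v^{\nu*}],
\]
and since the same identity applies to $x^*$, consequence (i) gives $v^\nu x v^{\nu*}\to x$ in the $\sigma$-strong$*$ topology, whence $\Ad\pi_\al(v^\nu)(\pi_\al(x))\to\pi_\al(x)=\hal_p(\pi_\al(x))$. For $t\in\R$, using $\la^\al(t)\pi_\al(v^\nu)^*=\pi_\al(\al_t(v^\nu)^*)\la^\al(t)$,
\[
\Ad\pi_\al(v^\nu)(\la^\al(t))=\pi_\al\big(v^\nu\al_t(v^\nu)^*\big)\la^\al(t)\longrightarrow e^{-ipt}\la^\al(t)=\hal_p(\la^\al(t))
\]
strongly by (ii); for the adjoint one rewrites $\big(\pi_\al(v^\nu\al_t(v^\nu)^*)\la^\al(t)\big)^*=\pi_\al\big(v^\nu\al_{-t}(v^\nu)^*\big)\la^\al(-t)$ and applies (ii) with $t$ replaced by $-t$. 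As $\pi_\al(\cM)$ and $\la^\al(\R)$ generate $\cM\rti_\al\R$ and all the operators involved are uniformly bounded, this yields $\hal_p=\lim_{\nu\to\infty}\Ad\pi_\al(v^\nu)$ in $\Aut(\cM\rti_\al\R)$.

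The step I expect to be the real obstacle is consequence (ii): passing from strong convergence of $\al_t(v^\nu)-e^{ipt}v^\nu$ to strong convergence of its adjoint, which fails in general for bounded sequences and is rescued here precisely by centrality of $(v^\nu)_\nu$. If one prefers not to appeal to the ``generators suffice'' principle for the $u$-topology, an alternative is to establish $\Ad\pi_\al(v^\nu)(y)\to\hal_p(y)$ in the $\sigma$-strong$*$ topology directly for every $y=\int_\R\pi_\al(f(s))\la^\al(s)\,ds$ with $f\in C_c(\R,\cM)$: one computes
\[
\Ad\pi_\al(v^\nu)(y)-\hal_p(y)=\int_\R\pi_\al\big(v^\nu f(s)h_\nu(s)^*+e^{-ips}v^\nu[f(s),v^{\nu*}]\big)\la^\al(s)\,ds,
\]
whose integrand, together with its adjoint, tends to $0$ strongly pointwise in $s$ by (i) and (ii) (again commuting $v^{\nu*}$ past vectors via centrality), and then applies dominated convergence on $\cH\oti L^2(\R)$.
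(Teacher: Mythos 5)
Your computations on the generators are correct, and your consequence (ii) --- upgrading strong convergence of $\al_t(v^\nu)-e^{ipt}v^\nu$ to strong$*$ convergence via centrality --- is sound; it is the same mechanism the paper uses in the proof of Lemma \ref{lem:Rohlin-ptwise}, so that is not where the difficulty lies. The genuine gap is the final step: the ``standard fact'' that pointwise $\sigma$-strong$*$ convergence of automorphisms on a generating subset (or, in your alternative route, on a $\sigma$-weakly dense $*$-subalgebra) implies convergence in the $u$-topology is false. The $u$-topology is pointwise \emph{norm} convergence on the predual, i.e.\ control of $\chi\circ\th_\nu^{-1}$ uniformly over the unit ball of $\cN$, and pointwise $\sigma$-strong$*$ convergence of $\th_\nu(y)$ for $y$ in a weakly dense subalgebra gives no such uniform control; this is exactly what blocks the usual $3\vep$-argument when passing from the subalgebra to its weak closure. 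Already on $\cM=L^\infty[0,1]$ one can take $\th_\nu$ induced by homeomorphisms $T_\nu$ converging uniformly to the identity with uniformly bounded distortion but with Jacobian staying far from $1$ on a set of fixed measure: then $\th_\nu(f)\to f$ $\sigma$-strongly$*$ for \emph{every} $f\in\cM$, while $\|\th_\nu(\mu)-\mu\|_{L^1}$ is bounded away from $0$. So the inference you invoke is precisely the nontrivial content of the lemma and cannot be cited as standard.

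The paper closes this gap by a different route. It proves the single integrated estimate $\pi_\al(v^\nu)-v^\nu\oti{\bf e}_{-p}\to0$ strongly in $\cM\oti B(L^2(\R))$, where ${\bf e}_{-p}(t)=e^{-ipt}$ (this is your (ii) for a.e.\ $t$ fed into dominated convergence over $L^2(\R)$), and then works directly on the predual: for unitaries $a,b$ one has $\|a\chi a^*-b\chi b^*\|\leq\|a-b\|_{|\chi^*|}+\|a-b\|_{|\chi|}$, so the strong convergence of the difference of the implementing unitaries gives $\|\Ad\pi_\al(v^\nu)(\ph\oti\ps)-v^\nu\ph v^{\nu*}\oti{\bf e}_{-p}\ps{\bf e}_{-p}^*\|\to0$, and centrality gives $\|v^\nu\ph v^{\nu*}-\ph\|\to0$; this yields $\Ad\pi_\al(v^\nu)\to\Ad(1\oti{\bf e}_{-p})$ in $\Aut(\cM\oti B(L^2(\R)))$, which restricts to $\hal_p$ on $\cN$. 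Your argument can be repaired along the same lines: replace the fixed-$t$ statement (ii) by this strong convergence of $\pi_\al(v^\nu)-v^\nu\oti{\bf e}_{-p}$, and then estimate $\Ad\pi_\al(v^\nu)(\chi)$ in the norm of $(\cM\oti B(L^2(\R)))_*$ rather than testing $\Ad\pi_\al(v^\nu)$ on elements of $\cN$.
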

\begin{proof}
Set $\cN:=\cM\rti_\al\R$
and ${\bf e}_{-p}\in C_b(\R)$
defined by ${\bf e}_{-p}(t):=e^{-ipt}$.
Then $\pi_\al(v^\nu)-v^\nu\oti {\bf e}_{-p}\to0$
in the strong topology
in $\cM\oti B(L^2(\R))$.
Indeed,
let $\xi\in \cH$ and $f\in L^2(\R)$.
Then
\[
\|(\pi_\al(v^\nu)-v^\nu\oti {\bf e}_{-p})(\xi\oti f)\|^2
=
\int_\R|f(t)|^2
\|(\al_{-t}(v^\nu)-e^{-ipt}v^\nu)\xi\|^2\,dt,
\]
which converges to 0
by the dominated convergence theorem.
Thus for $\ph\in\cM_*$
and $\ps\in B(L^2(\R))_*$,
\[
\|\pi_\al(v^\nu)(\ph\oti\ps)\pi_\al((v^\nu)^*)
-
v^\nu\ph (v^\nu)^*\oti {\bf e}_{-p}\ps {\bf e}_{-p}^*
\|
\to0.
\]
Since $(v^\nu)_\nu$ is central,
we have
\[
\|\pi_\al(v^\nu)(\ph\oti\ps)\pi_\al((v^\nu)^*)
-
\ph\oti {\bf e}_{-p}\ps {\bf e}_{-p}^*\|
\to0.
\]
This means $\Ad \pi_\al(v^\nu)\to \Ad (1\oti {\bf e}_{-p})$
in $\Aut(\cM\oti B(L^2(\R)))$.
Since $\hal_p=\Ad(1\oti {\bf e}_{-p})$ on $\cN$,
we have $\Ad \pi_\al(v^\nu)\to \hal_p$
in $\Aut(\cN)$.
\end{proof}

\begin{rem}
In the proof above,
we have used the following fact.
Let $\cN\subs\cM$ be an inclusion of von Neumann algebras.
Denote by $\Aut(\cM,\cN)$ the set of
automorphisms $\al$ on $\cM$
such that $\al(\cN)=\cN$.
It is fairly easy to see
that $\Aut(\cM,\cN)$ is a closed subgroup
of $\Aut(\cM)$ with respect to the $u$-topology.
Then the map
$\Aut(\cM,\cN)\ni\al\mapsto\al|_\cN\in\Aut(\cN)$
is continuous.

Indeed, let $\al,\be\in\Aut(\cM,\cN)$.
Take $\vph\in\cN_*$ and its normal extension
$\ps\in\cM_*$.
Then trivially,
$\|\al(\vph)-\be(\vph)\|_{\cN_*}
\leq
\|\al(\ps)-\be(\ps)\|_{\cM_*}$.
This shows the continuity.
\end{rem}

We recall the modular conjugation
of $\cM\rti_\al\R$
introduced in \cite[Lemma 2.8]{Ha-dualI}:
\[
(\tJ\xi)(s)
=
J\al_{-s}(\xi(-s))
\quad
\mbox{for }
\xi\in\cH\oti L^2(\R).
\]

\begin{lem}
\label{lem:invdual}
Let $\al$ be a flow on a von Neumann algebra $\cM$
and $p\in\R$.
Suppose that
there exists a sequence of unitaries
$(w^\nu)_\nu$ in $\cM$
such that
$\al_p=\lim_{\nu\to\infty}\Ad w^\nu$ in $\Aut(\cN)$
and
$\al_t(w^\nu)-w^\nu\to0$
as $\nu\to\infty$
in the strong$*$ topology
for each $t\in\R$.
Then the sequence $(v^\nu)_\nu$
defined by $v^\nu:=\la^\al(p)^*\pi_\al(w^\nu)$
is central in $\cM\rti_\al\R$,
and
belongs to $\sE_{\hal}^\om$.
In particular,
one has
$\hal_t(v)=e^{ipt}v$ for $t\in\R$
putting $v:=\pi_\om((v^\nu)_\nu)$.
\end{lem}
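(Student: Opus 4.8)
The plan is to verify the three assertions in turn: that $(v^\nu)_\nu$ lies in $\sE_{\hal}^\om$, that it is a central sequence in $\cN:=\cM\rti_\al\R$, and finally the eigenvalue relation for $v=\pi_\om((v^\nu)_\nu)$. The equicontinuity is immediate and uses neither hypothesis on $(w^\nu)$: since $\hal_t$ fixes $\pi_\al(\cM)$ pointwise and $\hal_t(\la^\al(p))=e^{-ipt}\la^\al(p)$, one has the exact identity $\hal_t(v^\nu)=e^{ipt}v^\nu$ for all $\nu$ and $t$, so for a cyclic and separating vector $\xi$ for $\cN$ the quantity $\|(\hal_t(v^\nu)-v^\nu)\xi\|+\|\xi(\hal_t(v^\nu)-v^\nu)\|$ equals $2|e^{ipt}-1|\,\|\xi\|$, which tends to $0$ as $t\to0$ uniformly in $\nu$. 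By Proposition \ref{prop:flow-equi} applied to the flow $\hal$ on $\cN$, this gives $(v^\nu)_\nu\in\sE_{\hal}^\om$.

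For centrality the point is to prove $\Ad v^\nu\to\id$ in $\Aut(\cN)$. Since for unitaries $\|[\vph,v^\nu]\|_{\cN_*}=\|(v^\nu)^*\vph v^\nu-\vph\|_{\cN_*}=\|\Ad(v^\nu)^*(\vph)-\vph\|_{\cN_*}$, this is exactly the centrality of $(v^\nu)_\nu$; and to obtain it I would show that the canonical implementing unitary $U(\Ad v^\nu)=v^\nu\tJ v^\nu\tJ$ on $\cH\oti L^2(\R)$ converges to $1$ strongly — this suffices, since $\Ad v^\nu(\omega_\xi)=\omega_{U(\Ad v^\nu)^*\xi}$ and vector functionals are norm-dense in $\cN_*$. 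I would compute $U(\Ad v^\nu)$ explicitly. Realizing $\cN$ on $\cH\oti L^2(\R)$ one has $(v^\nu\xi)(s)=\al_{-s-p}(w^\nu)\,\xi(s+p)$, and from the displayed formula for $\tJ$ a short computation gives $((\tJ v^\nu\tJ)\xi)(r)=J\al_{-p}(w^\nu)J\,U(\al_{-p})\,\xi(r-p)$; multiplying,
\[
(U(\Ad v^\nu)\xi)(s)=\al_{-s-p}(w^\nu)\,J\al_{-p}(w^\nu)J\,U(\al_{-p})\,\xi(s).
\]
Using that $U(\al_{-p})$ commutes with $J$ and normalizes $\cM$, so that $\Ad U(\al_p)$ sends $JxJ\in\cM'$ to $J\al_p(x)J$, one rewrites the operator coefficient $\al_{-s-p}(w^\nu)\,J\al_{-p}(w^\nu)J\,U(\al_{-p})$ of $\xi(s)$ as $U(\al_{-p})\cdot\bigl(\al_{-s}(w^\nu)\,Jw^\nu J\bigr)$.

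It remains to show $\al_{-s}(w^\nu)\,Jw^\nu J\to U(\al_p)$ strongly for every $s$. Write $\al_{-s}(w^\nu)\,Jw^\nu J=w^\nu Jw^\nu J+(\al_{-s}(w^\nu)-w^\nu)\,Jw^\nu J=U(\Ad w^\nu)+Jw^\nu J\,(\al_{-s}(w^\nu)-w^\nu)$, the last step using that $\al_{-s}(w^\nu)-w^\nu\in\cM$ commutes with $Jw^\nu J\in\cM'$. Here $U(\Ad w^\nu)\to U(\al_p)$ strongly because $\Ad w^\nu\to\al_p$ in $\Aut(\cM)$ and $\al\mapsto U(\al)$ is strongly continuous, while $Jw^\nu J\,(\al_{-s}(w^\nu)-w^\nu)\eta\to0$ for each $\eta$ because $Jw^\nu J$ is unitary and $\al_{-s}(w^\nu)-w^\nu\to0$ strongly by the second hypothesis. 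Hence the operator coefficient above tends strongly to $U(\al_{-p})U(\al_p)=1$ for every $s$; being unitary it is norm-bounded by $1$, so the dominated convergence theorem in the variable $s$ yields $U(\Ad v^\nu)\xi\to\xi$ for all $\xi$, i.e.\ $U(\Ad v^\nu)\to1$ strongly, and thus $(v^\nu)_\nu$ is central. In particular $(v^\nu)_\nu\in\sC_\om(\cN)\subs\sN_\om(\cN)$, so $v=\pi_\om((v^\nu)_\nu)$ is well defined, and applying $\pi_\om$ to $\hal_t(v^\nu)=e^{ipt}v^\nu$ gives $\hal_t(v)=e^{ipt}v$. The delicate point is not the two conjugation computations but arranging each limit to be genuinely strong rather than weak: this is exactly what the commutation of $\cM$ with $\cM'$ makes possible, letting the moving unitary $Jw^\nu J$ be pushed off the convergent vector $(\al_{-s}(w^\nu)-w^\nu)\eta$.
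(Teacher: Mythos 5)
Your proof is correct and follows essentially the same route as the paper's: both reduce centrality of $(v^\nu)_\nu$ to the convergence $w^\nu Jw^\nu J\to U(\al_p)$ combined with the strong$*$ convergence $\al_t(w^\nu)-w^\nu\to0$, pushed through a dominated-convergence argument in the $L^2(\R)$ variable, and both obtain membership in $\sE_{\hal}^\om$ from the exact identity $\hal_t(v^\nu)=e^{ipt}v^\nu$. The only difference is organizational: you compute the implementing unitary $v^\nu\tJ v^\nu\tJ$ fiberwise, whereas the paper evaluates $v^\nu\zeta-\zeta v^\nu$ on elementary tensors $\eta\oti f$ after first establishing $\pi_\al(w^\nu)-w^\nu\oti1\to0$.
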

\begin{proof}
We will check that $(v^\nu)_\nu$ is central.
As in the proof of the previous lemma,
we can show that
$\pi_\al(w^\nu)-w^\nu\oti1\to0$
as $\nu\to\infty$
in the strong$*$ topology.
Then for all $\eta\in\cH$ and $f\in L^2(\R)$,
we have
\[
\limsup_{\nu\to\infty}
\|v^\nu(\eta\oti f)-(\eta\oti f)v^\nu\|
=
\limsup_{\nu\to\infty}
\|\la^\al(p)^*(w^\nu\eta\oti f)
-(\eta\oti f)\la^\al(p)^*\pi_\al(w^\nu)\|
\]
The right hand side equals 0.
Indeed,
\begin{align*}
&\|\la^\al(p)^*(w^\nu\eta\oti f)
-(\eta\oti f)\la^\al(p)^*\pi_\al(w^\nu)\|
\\
&=
\|w^\nu\eta\oti \la(p)^* f
-\tJ\pi_\al(w^\nu)^*\la^\al(p)\tJ(\eta\oti f)\|
\\
&=
\|w^\nu\eta\oti \la(p)^*f
-\al_p(\eta)w^\nu\oti \rho(p)f\|
\\
&=
\|w^\nu\eta-\al_p(\eta)w^\nu\|\|f\|,
\end{align*}
where we have used
$\tJ\la^\al(p)\tJ=U_\al(p)\oti\rho(p)$
and
$\tJ\pi_\al(x^*)\tJ=Jx^*J\oti1$
for all $p\in\R$ and $x\in\cM$.
Hence $(v^\nu)_\nu$ is central.
Since $\hal_t(v^\nu)=e^{ipt}v^\nu$ for all $\nu\in\N$,
$(v^\nu)_\nu$ belongs to $\sE_{\hal}^\om$.
Thus $v=\pi_\om((v^\nu)_\nu)\in(\cM\rti_\al\R)_{\om,\hal}$
and $\hal_t(v)=e^{ipt}v$.
\end{proof}

The following result is the von Neumann algebra version
of \cite[Theorem 1.3]{Kishi-Rohflow}.
This states that the Rohlin property and the invariantly
approximate innerness are mutually dual notions.
See \cite[Lemma 3.8]{Iz-Roh1}
for the corresponding result
in the case of finite group actions
on C$^*$-algebras.

\begin{thm}
\label{thm:dual}
Let $\al$ be a flow on a von Neumann algebra $\cM$.
Then the following statements
hold:
\begin{enumerate}
\item 
$\al$ has the Rohlin property
if and only if
$\hal$ is invariantly approximately inner;

\item
$\al$ is invariantly approximately inner
if and only if
$\hal$ has the Rohlin property.
\end{enumerate}
\end{thm}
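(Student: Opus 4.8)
The plan is to establish the two ``only if'' directions directly from the lemmas of this section, and to obtain the two ``if'' directions by applying those results to the flow $\hal$ in place of $\al$ and invoking Takesaki duality. First I would prove the forward implication of (1): if $\al$ has the Rohlin property, then for each $p\in\R$ Lemma \ref{lem:Rohlin-ptwise} gives a unitary central sequence $(v^\nu)_\nu$ in $\cM$ with $\al_t(v^\nu)-e^{ipt}v^\nu\to0$ strongly for every $t$; by Lemma \ref{lem:Rohdual}, $\hal_p=\lim_\nu\Ad\pi_\al(v^\nu)$, and since $\hal$ fixes $\pi_\al(\cM)$ pointwise we have $\hal_t(\pi_\al(v^\nu))-\pi_\al(v^\nu)=0$, so Lemma \ref{lem:IAI-local}(2) applies with implementing sequence $\pi_\al(v^\nu)$ and $\hal$ is invariantly approximately inner. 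For the forward implication of (2): if $\al$ is invariantly approximately inner, then for each $p$ Lemma \ref{lem:IAI-local} provides unitaries $(w^\nu)_\nu$ in $\cM$ with $\al_p=\lim_\nu\Ad w^\nu$ and $\al_t(w^\nu)-w^\nu\to0$ for each $t$, and Lemma \ref{lem:invdual} turns these into a unitary central sequence $v^\nu:=\la^\al(p)^*\pi_\al(w^\nu)$ lying in $\sE_{\hal}^\om$ whose class $v\in(\cM\rti_\al\R)_{\om,\hal}$ satisfies $\hal_t(v)=e^{ipt}v$; thus $\hal$ has the Rohlin property.

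Next I would record a stabilization fact: for a separable type $\mathrm{I}$ factor $\cQ$, the flow $\al\oti\id_\cQ$ on $\cM\oti\cQ$ has the Rohlin property (resp.\ is invariantly approximately inner) only if $\al$ does. The key observation is that, since $(\al\oti\id)_t(x^\nu\oti1)-x^\nu\oti1=(\al_t(x^\nu)-x^\nu)\oti1$, Proposition \ref{prop:flow-equi} applied with a product cyclic separating vector $\xi\oti f$ shows $(x^\nu\oti1)_\nu\in\sE_{\al\oti\id}^\om$ if and only if $(x^\nu)_\nu\in\sE_\al^\om$. Combining this with Lemma \ref{lem:tensorBH} (which identifies $(\cM\oti\cQ)^\om=\cM^\om\oti\cQ$ and $(\cM\oti\cQ)_\om=\cM_\om\oti\C$ via representing sequences of the form $(x^\nu\oti1)_\nu$), a Rohlin unitary $V$ for $\al\oti\id_\cQ$ at frequency $p$ is of the form $v\oti1$ with $v\in\cM_{\om,\al}$ and $\al_t(v)=e^{ipt}v$. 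For invariant approximate innerness, take by Lemma \ref{lem:IAI-local}(3) a unitary $W\in(\cM\oti\cQ)_{\al\oti\id}^\om$ implementing $(\al\oti\id)_T$ and fixed by $\al\oti\id$; since $(\al\oti\id)_T$ fixes $1\oti\cQ$ pointwise, $W$ commutes with $1\oti\cQ$, hence equals $w\oti1$ with $w\in\cM_\al^\om$ implementing $\al_T$ and fixed by $\al$, and Lemma \ref{lem:IAI-local}(3) then gives that $\al$ is invariantly approximately inner.

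Finally I would bootstrap. By Takesaki duality the double dual flow $\widehat{\widehat{\al}}$ on $(\cM\rti_\al\R)\rti_{\hal}\R$ is conjugate to $\al\oti\Ad\rho$ on $\cM\oti B(L^2(\R))$, with $\rho$ the right regular representation, and since $t\mapsto1\oti\rho(t)$ is an $(\al\oti\id)$-cocycle, $\al\oti\Ad\rho$ is a cocycle perturbation of $\al\oti\id_{B(L^2(\R))}$; by Lemmas \ref{lem:pertRohstab} and \ref{lem:invappinn-stab} the Rohlin property and invariant approximate innerness therefore pass between $\widehat{\widehat{\al}}$ and $\al\oti\id_{B(L^2(\R))}$. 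Then, if $\hal$ is invariantly approximately inner, the forward implication of (2) applied to $\hal$ shows $\widehat{\widehat{\al}}$, hence $\al\oti\id_{B(L^2(\R))}$, has the Rohlin property, and the stabilization fact gives the Rohlin property for $\al$, proving the ``if'' part of (1); dually, if $\hal$ has the Rohlin property, the forward implication of (1) applied to $\hal$ shows $\al\oti\id_{B(L^2(\R))}$ is invariantly approximately inner, whence so is $\al$, proving the ``if'' part of (2). I expect the main obstacle to be the stabilization step --- checking that $(\al,\om)$-equicontinuity is faithfully detected after tensoring with a type $\mathrm{I}$ factor and that the implementing unitary descends into $\cM_\al^\om$ --- together with citing Takesaki duality in the precise form that identifies the bidual flow; the remaining arguments are routine manipulations with the lemmas above.
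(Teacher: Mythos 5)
Your proposal is correct and follows essentially the same route as the paper: the two forward implications via Lemmas \ref{lem:Rohdual} and \ref{lem:invdual}, and the two converse implications by applying those to $\hal$, invoking Takesaki duality, and descending from $\al\oti\id$ to $\al$ via Lemma \ref{lem:tensorBH} (for the Rohlin property) and the commutation argument with $1\oti B(\ell^2)$ (for invariant approximate innerness). Your explicit treatment of the cocycle perturbation from $\al\oti\Ad\rho$ to $\al\oti\id$ and of the $(\al,\om)$-equicontinuity detection under tensoring merely fills in details the paper leaves implicit.
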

\begin{proof}
(1).
Set $\cN:=\cM\rti_\al\R$.
Suppose that $\al$ is a Rohlin flow.
Then Lemma \ref{lem:Rohdual} shows
that $\hal$ is invariantly approximately inner
because $\hal$ fixes $\pi_\al(\cM)$.

Suppose that $\hal$ is invariantly approximately inner.
By the previous lemma,
the dual flow of $\hal$
has the Rohlin property,
and so does the flow $\al\oti\Ad\rho$
on $\cM\oti B(L^2(\R))$
by Takesaki duality \cite{Tak-dual}.
Lemma \ref{lem:tensorBH} implies that
$\al$ has the Rohlin property.

(2).
If a flow $\al$ on $\cM$ is invariantly approximately inner,
then the dual flow $\hal$ has the Rohlin property
by the previous lemma.
Conversely,
suppose that $\hal$ is a Rohlin flow.
Then by Takesaki duality
and Lemma \ref{lem:invappinn-stab},
$\be:=\al\oti\id_{B(\ell^2)}$ on
$\cN:=\cM\oti B(\ell^2)$
is
invariantly approximately inner.
Let $T\in\R$.
Then
by Lemma \ref{lem:IAI-local},
there exists a unitary
$w\in\cN_\be^\om$
such that
$wx=\be_T(x)w$
and $\be_t(w)=w$
for all $x\in\cN$ and $t\in\R$.
By the description
of $\cN^\om$
in Lemma \ref{lem:tensorBH},
we get the natural isomorphism
$\cN^\om\cong \cM^\om\oti B(\ell^2)$.
In fact, it turns out that
the isomorphism maps
$\cN_\be^\om$ onto $\cM_\al^\om\oti B(\ell^2)$.
Hence $w$ is regarded as an element
in $\cM_\al^\om\oti B(\ell^2)$,
and we have
$wx=(\al_T\oti\id)(x)w$
for $x\in\cM\oti B(\ell^2)$.
Then $w$ commutes with $1\oti y$
for any $y\in B(\ell^2)$,
and $w\in \cM_\al^\om\oti\C$.
This shows the invariantly approximate
innerness of $\al$.
\end{proof}

\begin{rem}
Let $\al$ be a flow on a von Neumann algebra $\cM$.
Then the following statements hold:
\begin{enumerate}
\item 
If $\al$ has the Rohlin property,
then
so does $\tal$;

\item
If $\al$ is invariantly approximately inner,
then
so is $\tal$.
\end{enumerate}
The first one follows from
the inclusion $\cM_\om\subs\tcM_\om$
(see the proof of \cite[Lemma 4.11]{Ma-T-endo}).
The second is directly proved.
\end{rem}

We obtain the following useful corollaries
of the previous theorem.

\begin{cor}
\label{cor:relative}
If $\al$ is a Rohlin flow on a von Neumann algebra $\cM$,
then
\[
\pi_\al(\cM)'\cap(\cM\rti_\al\R)=\pi(Z(\cM)),
\]
\[
\pi_{\tal}(\tcM)'\cap(\tcM\rti_\tal\R)
=\pi_{\tal}(Z(\tcM)).
\]
In particular,
$Z(\tcM\rti_\tal\R)=Z(\tcM)^\tal$.
\end{cor}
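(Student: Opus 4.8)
The plan is to identify $\pi_\al(\cM)'\cap(\cM\rti_\al\R)$ with the fixed-point algebra of the dual flow. Write $\cN:=\cM\rti_\al\R$. The inclusion $\pi_\al(Z(\cM))\subseteq\pi_\al(\cM)'\cap\cN$ is immediate, so I only need the reverse one. I will use the standard identification $\cN^{\hal}=\pi_\al(\cM)$ for the dual flow $\hal$. Since $\hal$ fixes $\pi_\al(\cM)$ pointwise, the relative commutant $\cB:=\pi_\al(\cM)'\cap\cN$ is globally $\hal$-invariant, and $\cB^{\hal}=\cB\cap\cN^{\hal}=\pi_\al(\cM)'\cap\pi_\al(\cM)=\pi_\al(Z(\cM))$. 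Hence it will be enough to show that $\hal$ acts trivially on $\cB$.

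To do that, I fix $x\in\cB$ and $p\in\R$ and feed the Rohlin property into Lemma \ref{lem:Rohlin-ptwise}, which yields a unitary central sequence $(v^\nu)_\nu$ in $\cM$ with $\al_t(v^\nu)-e^{ipt}v^\nu\to0$ strongly for every $t$; then Lemma \ref{lem:Rohdual} gives $\hal_p=\lim_{\nu\to\infty}\Ad\pi_\al(v^\nu)$ in $\Aut(\cN)$, and in particular $\pi_\al(v^\nu)\,x\,\pi_\al(v^\nu)^*\to\hal_p(x)$ $\si$-weakly. The key (and only) observation is that each $v^\nu$ lies in $\cM$ itself, so $\pi_\al(v^\nu)\in\pi_\al(\cM)$ commutes with $x\in\pi_\al(\cM)'$; hence $\pi_\al(v^\nu)\,x\,\pi_\al(v^\nu)^*=x$ for all $\nu$, and therefore $\hal_p(x)=x$. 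Since $p$ is arbitrary, $x\in\cN^{\hal}=\pi_\al(\cM)$, and then $x\in\pi_\al(\cM)\cap\pi_\al(\cM)'=\pi_\al(Z(\cM))$. This proves the first identity.

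For the second identity I apply the first one to the flow $\tal$ on the core $\tcM$; $\tal$ is again a Rohlin flow by the Remark preceding the corollary, so $\pi_{\tal}(\tcM)'\cap(\tcM\rti_\tal\R)=\pi_{\tal}(Z(\tcM))$. For the ``in particular'' statement I note that an element of $\tcM\rti_\tal\R$ is central exactly when it both lies in $\pi_{\tal}(\tcM)'\cap(\tcM\rti_\tal\R)=\pi_{\tal}(Z(\tcM))$ and commutes with every $\la^{\tal}(t)$; for $\pi_{\tal}(z)$ with $z\in Z(\tcM)$ the latter condition reads $\tal_t(z)=z$ for all $t$, by the relation $\la^{\tal}(t)\pi_{\tal}(z)=\pi_{\tal}(\tal_t(z))\la^{\tal}(t)$. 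Hence $Z(\tcM\rti_\tal\R)=\pi_{\tal}(Z(\tcM)^{\tal})\cong Z(\tcM)^{\tal}$.

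This argument is short, and I do not expect a serious obstacle: the substance is already contained in Theorem \ref{thm:dual}, concretely in Lemmas \ref{lem:Rohlin-ptwise} and \ref{lem:Rohdual}. The one point that must be handled with care is the source of the implementing unitaries: they have to be taken in $\cM$, not merely in $\cN$ or in $\cM^\om$, and this is exactly what the Rohlin property provides through Lemma \ref{lem:Rohlin-ptwise}; once that is in place, the triviality of $\hal$ on the relative commutant is automatic, because $\pi_\al$ carries those unitaries into $\pi_\al(\cM)$, which commutes with $\pi_\al(\cM)'\cap\cN$ by construction. The only ingredient imported from outside is the classical equality $\cN^{\hal}=\pi_\al(\cM)$.
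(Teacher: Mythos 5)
Your proof is correct and follows essentially the same route as the paper's: both use Lemma \ref{lem:Rohdual} to approximate each $\hal_p$ by $\Ad\pi_\al(v^\nu)$ with unitaries $v^\nu\in\cM$, conclude that $\hal$ fixes the relative commutant pointwise, and then invoke $\cN^{\hal}=\pi_\al(\cM)$ to land in $\pi_\al(Z(\cM))$. You have simply spelled out the steps the paper leaves implicit.
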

\begin{proof}
By the previous theorem,
$\hal$ is invariantly approximately inner.
In fact,
by Lemma \ref{lem:Rohdual},
each
$\hal_T$ is approximated by
$\Ad \pi_\al(w^\nu)$ with
$w^\nu\in\cM^{\rm U}$.
Thus $\hal$
fixes $\pi_\al(\cM)'\cap(\cM\rti_\al\R)$,
and we get the first equality.
The second equality is proved
similarly.
\end{proof}

Hence if $\cM$ is a type III$_1$ factor,
then so is $\cM\rti_\al\R$.

\begin{cor}
\label{cor:relative-appinn}
If $\al$ is an invariantly approximately inner
on a von Neumann algebra $\cM$,
then
\[
\pi_\al(\cM)'\cap(\cM\rti_\al\R)=\pi(Z(\cM\rti_\al\R)),
\]
\[
\pi_{\tal}(\tcM)'\cap(\tcM\rti_\tal\R)
=\pi_{\tal}(Z(\tcM\rti_\tal\R)).
\]
In particular,
$Z(\tcM\rti_\tal\R)^{\widehat{\tal}}=Z(\tcM)$.
\end{cor}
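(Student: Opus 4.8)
The plan is to mimic the proof of Corollary~\ref{cor:relative}, with Lemma~\ref{lem:invdual} (rather than Lemma~\ref{lem:Rohdual}) supplying the relevant central sequence; the only analytic input is that an isometry applied to a fixed vector preserves its norm. Write $\cN:=\cM\rti_\al\R$. Since $Z(\cN)\subset\pi_\al(\cM)'\cap\cN$ is automatic, for the first equality it suffices to show that every $b\in\pi_\al(\cM)'\cap\cN$ commutes with each $\la^\al(T)$, $T\in\R$: together with $b\in\pi_\al(\cM)'$ this forces $b\in Z(\cN)$, because $\pi_\al(\cM)$ and $\la^\al(\R)$ generate $\cN$. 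Fix $T$. Since $\al$ is invariantly approximately inner, Definition~\ref{defn:IAI} furnishes unitaries $w^\nu\in\cM$ with $\al_T=\lim_\nu\Ad w^\nu$ in $\Aut(\cM)$ and $\al_t(w^\nu)-w^\nu\to0$ in the strong$*$ topology for each $t\in\R$. By Lemma~\ref{lem:invdual} (with $p=T$) the sequence $v^\nu:=\la^\al(T)^*\pi_\al(w^\nu)$ is central in $\cN$; hence $v:=\pi_\om((v^\nu)_\nu)\in\cN_\om\subset\cN'\cap\cN^\om$ commutes with $b$, so $(bv^\nu-v^\nu b)_\nu\in\sT_\om(\cN)$.

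Because $b$ commutes with every $\pi_\al(w^\nu)\in\pi_\al(\cM)$, putting $c:=b\la^\al(T)^*-\la^\al(T)^*b$ gives
\[
bv^\nu-v^\nu b=b\la^\al(T)^*\pi_\al(w^\nu)-\la^\al(T)^*b\,\pi_\al(w^\nu)=c\,\pi_\al(w^\nu).
\]
Hence $(c\,\pi_\al(w^\nu))_\nu\in\sT_\om(\cN)$, so $\pi_\al(w^\nu)^*c^*=(c\,\pi_\al(w^\nu))^*\to0$ strongly as $\nu\to\om$; applying this to any vector $\xi$ and using that $\pi_\al(w^\nu)^*$ is isometric yields $\|c^*\xi\|=\lim_{\nu\to\om}\|\pi_\al(w^\nu)^*c^*\xi\|=0$. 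Thus $c^*=0$, i.e.\ $b$ commutes with $\la^\al(T)$. Since $T$ was arbitrary, the first equality follows.

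By the Remark after Theorem~\ref{thm:dual}, $\tal$ is again invariantly approximately inner on $\tcM$, so applying the first equality to $(\tcM,\tal)$ gives the second: $\pi_{\tal}(\tcM)'\cap(\tcM\rti_\tal\R)=Z(\tcM\rti_\tal\R)$. For the final assertion, recall that the fixed point algebra of a dual flow is the image of the original algebra, $(\tcM\rti_\tal\R)^{\widehat{\tal}}=\pi_{\tal}(\tcM)$; intersecting the identity just obtained with this algebra gives
\begin{align*}
Z(\tcM\rti_\tal\R)^{\widehat{\tal}}
&=Z(\tcM\rti_\tal\R)\cap\pi_{\tal}(\tcM)\\
&=\pi_{\tal}(\tcM)'\cap\pi_{\tal}(\tcM)=\pi_{\tal}(Z(\tcM)),
\end{align*}
which is $Z(\tcM)$. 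The only point requiring care is checking that the hypotheses of Lemma~\ref{lem:invdual} are genuinely met, so that $v^\nu$ is central — this centrality is precisely what drives the argument — together with the routine handling of the $\nu\to\om$ limits; I expect no substantial obstacle beyond this.
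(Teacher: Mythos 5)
Your proof is correct, but it follows a genuinely different route from the paper's. The paper deduces the first identity from Corollary \ref{cor:relative} applied to the dual flow: since $\al$ is invariantly approximately inner, Theorem \ref{thm:dual} makes $\hal$ a Rohlin flow on $\cN:=\cM\rti_\al\R$, so $\cN'\cap(\cM\oti B(L^2(\R)))=Z(\cN)$ after Takesaki duality, and the identity $\tJ(\pi_\al(\cM)'\cap\cN)\tJ=\cN'\cap(\cM\oti B(L^2(\R)))$ (the ``mirroring'' of \cite[Lemma 5.7]{Iz-Can}) transports this back to the relative commutant of $\pi_\al(\cM)$. You instead argue directly inside $\cN$: the central sequence $v^\nu=\la^\al(T)^*\pi_\al(w^\nu)$ of Lemma \ref{lem:invdual} asymptotically commutes with any $b\in\pi_\al(\cM)'\cap\cN$, and since $b$ exactly commutes with $\pi_\al(w^\nu)$, the commutator collapses to $[b,\la^\al(T)^*]\,\pi_\al(w^\nu)$, whose $*$-strong vanishing forces $[b,\la^\al(T)]=0$ because $\pi_\al(w^\nu)$ is unitary. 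This is precisely the mirror image of the proof of Corollary \ref{cor:relative} (there one shows $\hal$ fixes the relative commutant; here one shows the relative commutant commutes with the generators $\la^\al(T)$), and it buys self-containedness: you avoid Takesaki duality, the passage through $\cM\oti B(L^2(\R))$, and the external mirroring lemma. Your derivations of the second identity (via the Remark after Theorem \ref{thm:dual}) and of $Z(\tcM\rti_\tal\R)^{\widehat{\tal}}=Z(\tcM)$ (intersecting with $(\tcM\rti_\tal\R)^{\widehat{\tal}}=\pi_\tal(\tcM)$) are exactly as intended. All hypotheses of Lemma \ref{lem:invdual} are indeed met, since Definition \ref{defn:IAI} gives convergence $\al_t(w^\nu)-w^\nu\to0$ that is even uniform on compacta.
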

\begin{proof}
By Theorem \ref{thm:dual},
$\hal$ has the Rohlin property.
Then we get the result by employing the previous result
and
the following mirroring
(see \cite[Lemma 5.7]{Iz-Can} for its proof):
\[
\tJ(\pi_\al(\cM)'\cap(\cM\rti_\al\R))\tJ
=
(\cM\rti_\al\R)'\cap(\cM\oti B(L^2(\R))).
\]
\end{proof}

Therefore,
if $\al$ is a Rohlin flow on $\cM$
and invariantly approximately inner,
then
the inclusion
$\pi_\al(\cM)\subs\cM\rti_\al\R$
has the common flow of weights.
In particular,
$\cM\rti_\al\R$ is of the same type as $\cM$
when $\cM$ is a factor.
This assumption corresponds to
the central freeness and the approximate innerness
for discrete group actions on a factor.

\begin{cor}
Let $\al$ be a Rohlin flow on a von Neumann algebra $\cM$.
Suppose that $\al$ is centrally ergodic,
Then
\[
\Sp_d(\al|_{Z(\cM)})
=
\{p\in\R\mid \hal_p\in\Int(\cM\rti_\al\R)\}.
\]
\end{cor}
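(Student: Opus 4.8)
The plan is to prove the two inclusions separately, writing $\cN:=\cM\rti_\al\R$ and abbreviating $\pi:=\pi_\al$ and $\la:=\la^\al$. In each case the engine is the commutation relation $\la(t)\pi(y)=\pi(\al_t(y))\la(t)$ in the crossed product together with the defining formulas $\hal_p(\pi(x))=\pi(x)$, $\hal_p(\la(t))=e^{-ipt}\la(t)$.

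For the inclusion $\Spd(\al|_{Z(\cM)})\subseteq\{p\mid \hal_p\in\Int(\cN)\}$ I would start from a nonzero $x\in Z(\cM)$ with $\al_t(x)=e^{ipt}x$ for all $t$. Then $x^*x$ and $xx^*$ lie in $Z(\cM)^\al$, which is $\C$ by central ergodicity, so after rescaling we may take $x$ to be a unitary $u\in Z(\cM)^{}$ with $\al_t(u)=e^{ipt}u$, hence $\al_t(u^*)=e^{-ipt}u^*$. Since $u$ is central, $\Ad\pi(u)$ fixes $\pi(\cM)$ pointwise; and $\pi(u)\la(t)\pi(u)^*=\pi(u\al_t(u^*))\la(t)=e^{-ipt}\la(t)$. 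Thus $\Ad\pi(u)=\hal_p$ on all of $\cN$, so $\hal_p\in\Int(\cN)$. Note this direction only uses central ergodicity, not the Rohlin property.

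For the reverse inclusion I would assume $\hal_p=\Ad W$ with $W\in\cN^{\rm U}$. Since $\hal_p$ fixes $\pi(\cM)$ pointwise, $W\in\pi(\cM)'\cap\cN$, and this is exactly the point where the Rohlin hypothesis enters: by Corollary \ref{cor:relative}, $\pi_\al(\cM)'\cap\cN=\pi_\al(Z(\cM))$, so $W=\pi(u)$ for some unitary $u\in Z(\cM)$. Then $e^{-ipt}\la(t)=\hal_p(\la(t))=\pi(u)\la(t)\pi(u)^*=\pi(u\al_t(u^*))\la(t)$ forces $u\al_t(u^*)=e^{-ipt}1$, i.e. $\al_t(u)=e^{ipt}u$ for every $t$, so $u$ witnesses $p\in\Spd(\al|_{Z(\cM)})$. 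All the manipulations here are routine crossed-product computations; the only substantive input is Corollary \ref{cor:relative}, so I do not expect a real obstacle — the only step needing a word of care is invoking central ergodicity to replace an arbitrary eigenvector in $Z(\cM)$ by a unitary one.
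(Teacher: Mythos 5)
Your proposal is correct and follows essentially the same route as the paper: both directions rest on the relative commutant identity $\pi_\al(\cM)'\cap(\cM\rti_\al\R)=\pi_\al(Z(\cM))$ from Corollary \ref{cor:relative} together with the standard crossed-product commutation relations, and both use central ergodicity to upgrade a central eigenvector to a unitary (the paper via polar decomposition, you via $x^*x\in Z(\cM)^\al=\C$, which amounts to the same thing). Your remark that the inclusion $\Sp_d(\al|_{Z(\cM)})\subseteq\{p\mid\hal_p\in\Int\}$ needs only central ergodicity is accurate and matches the structure of the paper's argument.
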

\begin{proof}
If $\hal_p=\Ad u$ for some unitary
$u\in \cM\rti_\al\R$,
then $u\in \pi_\al(\cM)'\cap(\cM\rti_\al\R)
=\pi_\al(Z(\cM))$.
Then putting $u=\pi_\al(v)$, $v\in Z(\cM)$,
we have
\[
\pi_\al(\al_t(v))
=\la_t^\al u(\la_t^\al)^*
=\la_t^\al \hal_p(\la_t^\al)^*u
=e^{ipt}\pi_\al(v).
\]
Hence $p\in \Sp_d(\al|_{Z(\cM)})$.

Suppose conversely
that $p\in \Sp_d(\al|_{Z(\cM)})$.
By polar decomposition,
there exists a non-zero partial isometry
$v\in Z(\cM)$ such that
$\al_t(v)=e^{ipt}v$ for $t\in\R$.
The central ergodicity implies that
$v$ is in fact a unitary.
Then $\hal_p=\Ad \pi_\al(v)$.
\end{proof}

A classification of invariantly approximately inner
flows will be treated in \S\ref{subsect:Cartan}.
A typical example of an invariantly approximately inner
flow not of infinite tensor product type
comes from a modular flow, or more generally,
an extended modular flow as introduced below.

\begin{defn}
We will say that
a flow $\be$ on a von Neumann algebra $\cN$
is \emph{extended modular}
when
$\be_t$ is an extended modular automorphism
for each $t\in\R$,
that is,
$\tbe_t\in\Int(\tcN)$.
\end{defn}

The definition above is slightly different
from that of \cite[Proposition IV.2.1]{CT}.
However,
it is essential to consider the canonical
extension in what follows,
and we adopt the definition above
(see also \cite[Proposition 5.4]{HS-pt}
and \cite[Definition 3.1]{Iz-Can}).

\begin{lem}
\label{lem:extmod-invapprox}
Let $\be$ be an extended modular flow
on a von Neumann algebra $\cN$
and $T\in\R$.
Suppose that
there exists a unitary $(v^\nu)_\nu$
in $\ell^\infty(\cN)$
such that
$\be_T=\lim_{\nu\to\infty}\Ad v^\nu$.
Then
$\be_t(v^\nu)-v^\nu$ converges to 0
compact uniformly
in the strong$*$ topology
as $\nu\to\infty$.
\end{lem}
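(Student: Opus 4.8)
The plan is to move to the core $\tcN$, where the extended modular flow becomes genuinely inner, and then reduce the whole statement to a fact about a norm‑bounded \emph{central} sequence of unitaries and an inner flow whose generator may be unbounded.

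First I would pass to $\tcN$. Since $\be$ is a flow on $\cN$ and the canonical extension $\Aut(\cN)\ni\al\mapsto\tal\in\Aut(\tcN)$ is a continuous homomorphism, $\tbe$ is a flow on $\tcN$; as $\tcN$ has separable predual and every $\tbe_t$ is inner by the extended‑modular hypothesis, \cite[Theorem 0.1]{Kal} (or \cite[Theorem 5]{MooreIV}) gives a strongly continuous one‑parameter unitary group $u\col\R\ra\tcN^{\rm U}$ with $\tbe_t=\Ad u(t)$. Viewing each $v^\nu\in\cN\subs\tcN$ and using that the canonical extension of $\Ad v^\nu$ is again $\Ad v^\nu$, the hypothesis $\be_T=\lim_\nu\Ad v^\nu$ in $\Aut(\cN)$ upgrades to $\Ad u(T)=\tbe_T=\lim_\nu\Ad v^\nu$ in $\Aut(\tcN)$. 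Hence, putting $z^\nu:=v^\nu u(T)^*\in\tcN^{\rm U}$, we get $\Ad z^\nu\to\id$ in $\Aut(\tcN)$, so $(z^\nu)_\nu$ is a norm‑bounded central sequence: $\|\Ad z^\nu(\chi)-\chi\|\to0$ and $\|[\chi,z^\nu]\|\to0$ for every $\chi\in\tcN_*$, and consequently $\|[k,z^\nu]\|_\chi^\sharp\to0$ for every fixed $k\in\tcN$ and $\chi\in\tcN_*$ (an elementary computation).

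Since $v^\nu=z^\nu u(T)$ and $\tbe_t(u(T))=u(t)u(T)u(t)^*=u(T)$, one has
\[
\be_t(v^\nu)-v^\nu=\tbe_t(v^\nu)-v^\nu=\big(\tbe_t(z^\nu)-z^\nu\big)u(T).
\]
As $u(T)$ is a fixed unitary, and $\sigma$‑strong$*$ convergence of a sequence lying in $\cN$ is unaffected by viewing it inside $\tcN$ (and $\sigma$‑strong$*$ convergence of a norm‑bounded sequence gives strong$*$ convergence on $\cH$), it suffices to prove: for every $T_0>0$ and every faithful normal state $\psi$ on $\tcN$, $\ \sup_{|t|\le T_0}\|\tbe_t(z^\nu)-z^\nu\|_\psi^\sharp\to0$ as $\nu\to\infty$. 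To do this I would truncate $u$ spectrally. Write $u(t)=\int_\R e^{it\la}\,dE(\la)$, put $P_N:=E([-N,N])$, $k_N:=\int_{-N}^N\la\,dE(\la)\in\tcN$ (bounded self‑adjoint) and $u_N(t):=e^{itk_N}$; the crucial point is that $u(t)-u_N(t)=\int_{|\la|>N}(e^{it\la}-1)\,dE(\la)$ satisfies $(u(t)-u_N(t))^*(u(t)-u_N(t))\le4(1-P_N)$ and $(u(t)-u_N(t))(u(t)-u_N(t))^*\le4(1-P_N)$ \emph{with a bound independent of $t$}, because $|e^{it\la}-1|\le2$. In the decomposition
\[
\tbe_t(z^\nu)-z^\nu=(u(t)-u_N(t))z^\nu u(t)^*+u_N(t)z^\nu(u(t)^*-u_N(t)^*)+\big(u_N(t)z^\nu u_N(t)^*-z^\nu\big),
\]
the last term equals $i\int_0^t e^{isk_N}[k_N,z^\nu]e^{-isk_N}\,ds$, so its $\|\cdot\|_\psi^\sharp$ is $\le T_0\sup_{\chi\in\cK_N}\|[k_N,z^\nu]\|_\chi^\sharp$ with $\cK_N:=\{\psi\circ\Ad e^{isk_N}\mid|s|\le T_0\}$ norm‑compact, and this tends to $0$ as $\nu\to\infty$ (uniformly over the compact set $\cK_N$, the maps $\chi\mapsto\|[k_N,z^\nu]\|_\chi^\sharp$ being uniformly bounded and equicontinuous). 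For the two correction terms, left/right multiplication by the unitaries $u_N(t)$, $u(t)^*$ only moves $\psi$ through the norm‑compact families $\{\psi\circ\tbe_{\pm t}\mid|t|\le T_0\}$ and $\cK_N$, and applying the bound $(u-u_N)^*(u-u_N)\le4(1-P_N)$ reduces everything to estimating $\chi\big((z^\nu)^*(1-P_N)z^\nu\big)$ and $\chi(1-P_N)$ for $\chi$ in a fixed norm‑compact subset of $\tcN_*$: the former is $\le\chi(1-P_N)+\|\Ad z^\nu(\chi)-\chi\|$ by centrality, and $\sup_\chi\chi(1-P_N)\to0$ as $N\to\infty$ by Dini's theorem since $1-P_N\searrow0$ $\sigma$‑strongly. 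So, given $\vep>0$, one first chooses $N$ making both correction terms $<\vep$ for all $|t|\le T_0$ and all large $\nu$, then lets $\nu\to\infty$ to kill the $u_N$‑term, obtaining the desired uniform‑in‑$t$ convergence.

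The main obstacle is precisely that the implementing group $u$ has, in general, an unbounded generator, so $\Ad u(t)$ cannot be differentiated directly; the spectral truncation is designed so that the error $u(t)-u_N(t)$ is dominated by $1-P_N$ \emph{uniformly in $t$}, and the hypothesis that $(v^\nu)_\nu$ implements $\be_T$ along the \emph{full} sequence (not merely along an ultrafilter) is what makes the $z^\nu$‑estimates uniform in $\nu$. The auxiliary facts used — that a norm‑bounded central sequence of unitaries asymptotically commutes in $\|\cdot\|_\chi^\sharp$ with any fixed element of the algebra, and that all the convergences above are uniform over norm‑compact subsets of the predual (Lipschitz‑type continuity plus Dini) — are routine and can be dispatched in a short preliminary remark.
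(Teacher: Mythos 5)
Your proof is correct, and although it opens exactly as the paper's does --- pass to the core, invoke Kallman--Moore to write $\tbe_t=\Ad u(t)$ for a one-parameter unitary group, and upgrade $\be_T=\lim_\nu\Ad v^\nu$ to $\tbe_T=\lim_\nu\Ad v^\nu$ in $\Aut(\tcN)$ via continuity of the canonical extension --- the key estimate is carried out by a genuinely different mechanism. The paper never forms $z^\nu=v^\nu u(T)^*$ and never touches the spectral resolution of $u$: it works directly in the standard $\tcN$-$\tcN$ bimodule $\cK$, writes $(u(t)v^\nu u(t)^*-v^\nu)\xi=u(t)v^\nu(u(t)^*\xi)-v^\nu\xi$, inserts the term $u(t)\tbe_T(u(t)^*\xi)v^\nu$, and observes that $u(t)\tbe_T(u(t)^*\xi)=\tbe_T(\xi)$ because $\tbe_T(u(t))=u(t)$; the whole bound then collapses to $\sup_{\eta\in\Ps}\|v^\nu\eta-\tbe_T(\eta)v^\nu\|+\|\tbe_T(\xi)v^\nu-v^\nu\xi\|$ with $\Ps=\{u(t)^*\xi\mid |t|\le S\}$ compact, and both terms vanish because $\Ad v^\nu\to\tbe_T$. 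Your route instead reduces the lemma to the assertion that the bounded central sequence $(z^\nu)$ is asymptotically fixed, uniformly on compacta, by the inner flow $\Ad u(t)$, and tames the unbounded generator by the cut-offs $P_N$, $k_N$, $u_N$; the uniform-in-$t$ domination $(u(t)-u_N(t))^*(u(t)-u_N(t))\le4(1-P_N)$, the Duhamel formula for the $u_N$-conjugation term, and the Dini/compactness bookkeeping are all sound (note in addition that $u(t)$ and $u_N(t)$ commute with $P_N$, so your correction terms reduce to $\psi(1-P_N)$ plus a centrality error that is independent of $N$, removing any circularity in the choice of $N$). What your approach buys is a reusable general principle about central sequences and inner flows with unbounded generator; what it costs is length --- the paper's bimodule identity makes the spectral analysis unnecessary and finishes in a few lines.
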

\begin{proof}
The canonical extension
$\tbe$ is inner.
Thanks to the result due to Kallman and Moore
as mentioned in \S\ref{subsect:action-cocycle},
we can take a one-parameter unitary group
$w_t\in\tcN$
such that $\tbe_t=\Ad w_t$.

Let $\cK$ be the standard Hilbert space
of $\tcN$.
We regard $\cK$ as an $\tcN$-$\tcN$ bimodule
as usual.
Let $\xi\in\cK$,
$S>0$
and $\Ps:=\{w_t^*\xi\mid t\in [-S,S]\}$.
Then we have
$\sup_{\eta\in\Ps}\|v^\nu\eta-\tbe_T(\eta)v^\nu\|\to0$
since $\tbe_T=\lim_{\nu\to\infty}\Ad v^\nu$
in $\Aut(\tcN)$,
and $\Ps$ is compact.
Thus,
\begin{align*}
\|(\be_t(v^\nu)-v^\nu)\xi\|
&=
\|(w_t v^\nu w_t^*-v^\nu)\xi\|
\\
&=
\|w_t v^\nu w_t^*\xi-w_t \tbe_T(w_t^*\xi)v^\nu\|
+
\|w_t \tbe_T(w_t^*\xi)v^\nu-v^\nu\xi\|
\\
&\leq
\sup_{\eta\in\Ps}
\|v^\nu \eta-\tbe_T(\eta)v^\nu\|
+
\|w_t \tbe_T(w_t^*\xi)v^\nu-v^\nu\xi\|
\\
&=
\sup_{\eta\in\Ps}
\|v^\nu \eta-\tbe_T(\eta)v^\nu\|
+
\|\tbe_T(\xi)v^\nu-v^\nu\xi\|,
\end{align*}
where we have used
$\tbe_T(w_t)=w_Tw_tw_T^*=w_{T+t-T}=w_t$.
The last terms are converging to 0
as $\nu\to\infty$.
Hence
we have
$\|(\be_t(v^\nu)-v^\nu)\xi\|\to0$
uniformly on $[-S,S]$
as $\nu\to\infty$.
Similarly,
$\|\xi(\be_t(v^\nu)-v^\nu)\|\to0$
uniformly on $[-S,S]$
as $\nu\to\infty$.
\end{proof}

\begin{rem}
For a modular automorphism group,
the previous lemma is shown
without use of the canonical extension.
Indeed,
let us assume that
a faithful state $\vph\in\cN_*$
and $T\in\R$
satisfy
$\si_T^\vph=\lim_{\nu\to\infty}\Ad v^\nu$
in $\cN$ as above.
Using $\si_T^\vph(\vph)=\vph$,
we have
$\|[v^\nu,\vph]\|=\|\Ad v^\nu(\vph)-\vph\|\to0$ as $\nu\to\infty$.
Thus by \cite[Lemma 2.7]{Co-almost},
$\|\si_t^\vph(v^\nu)-v^\nu\|_\vph^\sharp\to0$
compact uniformly as $\nu\to\infty$.
\end{rem}

By Lemma \ref{lem:invdual}, Theorem \ref{thm:dual}
and Lemma \ref{lem:extmod-invapprox},
we have the following result.

\begin{prop}
\label{prop:ext-dual}
Let $\be$ be an extended modular flow
on a von Neumann algebra $\cN$.
If $\be$ is pointwise approximately inner,
the dual flow $\al:=\hbe$
has the Rohlin property.
\end{prop}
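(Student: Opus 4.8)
The plan is to reduce the statement to Theorem \ref{thm:dual}(2), which asserts that a flow is invariantly approximately inner if and only if its dual flow has the Rohlin property. Hence it suffices to prove that the extended modular flow $\be$ is invariantly approximately inner, and for this I would verify the two conditions of Definition \ref{defn:IAI} directly. Fix $T\in\R$. Since $\be$ is pointwise approximately inner, $\be_T\in\oInt(\cN)$, so there is a sequence of unitaries $(v^\nu)_\nu$ in $\cN$ with $\be_T=\lim_{\nu\to\infty}\Ad v^\nu$ in $\Aut(\cN)$; this is precisely the first clause of Definition \ref{defn:IAI}.

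For the second clause I would invoke Lemma \ref{lem:extmod-invapprox}: because $\be$ is extended modular and $\be_T=\lim_{\nu}\Ad v^\nu$, that lemma yields that $\be_t(v^\nu)-v^\nu$ converges to $0$ compact uniformly for $t\in\R$ in the strong$*$ topology as $\nu\to\infty$, i.e. $\|(\be_t(v^\nu)-v^\nu)\xi\|+\|\xi(\be_t(v^\nu)-v^\nu)\|\to0$ compact uniformly for every $\xi\in\cH$. This is exactly the second requirement of Definition \ref{defn:IAI}. Since $T$ was arbitrary, $\be$ is invariantly approximately inner, and then Theorem \ref{thm:dual}(2) shows that $\al=\hbe$ has the Rohlin property.

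Alternatively, one can bypass Theorem \ref{thm:dual}(2) and produce the Rohlin unitaries by hand: given $p\in\R$, take unitaries $(v^\nu)_\nu$ in $\cN$ with $\be_p=\lim_\nu\Ad v^\nu$, apply Lemma \ref{lem:extmod-invapprox} (which in particular gives $\be_t(v^\nu)-v^\nu\to0$ strongly$*$ for each fixed $t$), and feed $(v^\nu)_\nu$ into Lemma \ref{lem:invdual} with the flow $\be$ and the value $p$. The resulting sequence $w^\nu:=\la^\be(p)^*\pi_\be(v^\nu)$ is a unitary central sequence of $\cN\rti_\be\R$ lying in $\sE_{\hbe}^\om$, so $w:=\pi_\om((w^\nu)_\nu)$ is a Rohlin unitary for $p$; as $p$ is arbitrary, $\hbe$ has the Rohlin property.

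I do not expect a genuinely hard step here: the proposition is an assembly of facts already established in this section. The only points that require (minor) care are, first, recognizing that ``pointwise approximately inner'' supplies exactly the input hypothesis of Lemma \ref{lem:extmod-invapprox}, namely the existence, for each $T$, of a unitary sequence implementing $\be_T$ in the $u$-topology; and second, observing that the argument has to be run for every $T\in\R$ (equivalently every $p\in\R$) rather than for a single value, which causes no trouble since each case is handled independently.
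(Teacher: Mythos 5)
Your proposal is correct and is essentially the paper's own argument: the paper proves this proposition by citing exactly Lemma \ref{lem:invdual}, Theorem \ref{thm:dual} and Lemma \ref{lem:extmod-invapprox}, which is the combination you assemble (your second, direct route via Lemma \ref{lem:invdual} is the most literal reading of the paper's citation, and your first route through Definition \ref{defn:IAI} and Theorem \ref{thm:dual}(2) is an equivalent packaging of the same facts).
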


Let $\al,\be$ be as above.
We show that
the Connes-Takesaki
module flow of $\al$ is faithful.
Denote by $\cM$ the crossed product
$\cN\rti_\be\R$.
Then
$\tcM=\tcN\rti_\tbe\R$
and $\tal=\widehat{\tbe}$
by Lemma \ref{lem:can-natural}.
Since $\tbe$ is implemented by a one-parameter
unitary group as mentioned before,
we have an isomorphism
$
\{\tcM,\th,\tal\}
\cong
\{\tcN\oti L(\R),\th,\id\oti\Ad{\bf e}_{-t}\},
$
where ${\bf e}_{-t}\in C_b(\R)$ is
${\bf e}_{-t}(s):=e^{-ist}$.
By simple calculation,
we have $c_s(t)\in Z(\cN)$
satisfying
\[
\th_s(a\oti \la(t))
=(\th_s(a)\oti1)(c_s(t)^*\oti\la(t)).
\]
Since $c_s(t+t')=c_s(t)c_s(t')$,
we have a positive operator $K_s$
affiliated with $Z(\cN)$
such that $c_s(t)=K_s^{it}$.

Then by the isomorphism
$\pi\col L(\R)\to L^\infty(\R_+^*)$
with $\pi(\la(t))(h)=h^{it}$,
we have
$
\{\tcM,\th,\tal\}
\cong
\{\tcN\oti L^\infty(\R_+^*),\th,\id\oti\Ad\la(t)\}
$,
where $(\la(t)\xi)(s)=\xi(e^{-t}s)$ for $t,s>0$.
In particular,
$\mo(\al_t)$ is the translation
on $\R_+^*:=\{h\in\R\mid h>0\}$.

If we regard $K_s$
as the function $K_s\col X_\cN\to\R_+^*$,
we have the following
for all $x\in X_\cN$ and $h>0$:
\[
\th_s(1\oti f)(x,h)= f(K_s(x)h)
\]

Hence the flow space $X_\cM$
is naturally isomorphic to $X_\cN\times\R_+^*$.
Let $F^\cM$ and $F^\cN$ be the flow of weights
of $\cM$ and $\cN$, respectively.
Then we have
$F_s^\cM(x,h)=(F_s^\cN x, K_s(x)h)$.
Summarizing the discussion above,
we have the following result.

\begin{thm}
\label{thm:modular-flow}
Let $\be$ be an extended modular flow
on $\cN$
and $\al:=\hbe$ be the dual flow on
$\cM:=\cN\rti_\be\R$.
Then there exists
an $\R_+^*$-valued
$F^\cN$-cocycle $K\col X_\cN\rtimes \R\to\R_+^*$
such that
\begin{enumerate}

\item
$X_\cM=X_\cN\times\R_+^*$,
$F_s^\cM(x,h)=(F_s^\cN x,K(x,s)h)$
for all $s\in\R$, $x\in X_\cM$
and $h>0$;

\item
$\mo(\al_t)(x,h)=(x,e^{-t}h)$
for all $t\in\R$, $x\in X_\cM$
and $h>0$.
\end{enumerate}
\end{thm}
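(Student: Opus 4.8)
The strategy is to compute the core $\tcM$ of $\cM=\cN\rti_\be\R$ together with its trace-scaling flow $\th$ and its dual flow $\tal$ explicitly, exploiting that $\be$ is extended modular. First I would invoke Lemma~\ref{lem:can-natural} (with $G=\R$, so that $\de_G\equiv1$ and $\cal=\tbe$) to identify $\tcM$ with $\tcN\rti_{\tbe}\R$ and $\tal$ with $\widehat{\tbe}$; under this identification the trace-scaling flow $\th$ of $\cM$ restricts on the canonical copy of $\tcN$ to the trace-scaling flow of $\cN$, hence on $Z(\tcN)\cong L^\infty(X_\cN)$ to $F^\cN$. Since $\be$ is extended modular, $\tbe_t\in\Int(\tcN)$ for all $t$, so by the Kallman--Moore theorem recalled in \S\ref{subsect:action-cocycle} we may pick a one-parameter unitary group $w\col\R\ra\tcN^{\rm U}$ with $\tbe_t=\Ad w_t$; untwisting the crossed product $\tcN\rti_{\tbe}\R$ by $w$ then yields an isomorphism
\[
\{\tcM,\th,\tal\}\;\cong\;\{\tcN\oti L(\R),\ \th,\ \id\oti\Ad{\bf e}_{-t}\},\qquad{\bf e}_{-t}(s)=e^{-ist},
\]
with $L(\R)$ the group von Neumann algebra of $\R$; it remains to compute $\th$ on the right-hand side.

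For that I would observe that $\th_s$ acts on the $\tcN$-factor as its own trace-scaling flow and merely rescales the dual trace, so that
\[
\th_s(a\oti\la(t))=(\th_s(a)\oti 1)\,(c_s(t)^*\oti\la(t)),\qquad a\in\tcN,\ s,t\in\R,
\]
for a Borel family of unitaries $c_s(t)$ in the center of $\tcN$ --- centrality following from the fact that $\th$ commutes with $\tbe=\Ad w$ and hence preserves $\tcN$ together with $w(\R)$. Feeding $\th_s\th_{s'}=\th_{s+s'}$ into this identity shows that $t\mapsto c_s(t)$ is a one-parameter unitary group, so $c_s(t)=K_s^{it}$ for a positive operator $K_s$ affiliated with $Z(\tcN)\cong L^\infty(X_\cN)$, i.e. a positive measurable function $K_s$ on $X_\cN$; the same identity in the $s$-variable shows that $(x,s)\mapsto K_s(x)=K(x,s)$ is an $\R_+^*$-valued $F^\cN$-cocycle, the cocycle $K$ of the statement.

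Finally I would pass to the Fourier picture $\pi\col L(\R)\ra L^\infty(\R_+^*)$, $\pi(\la(t))(h)=h^{it}$, under which $\Ad{\bf e}_{-t}$ becomes the scaling flow $(\la(t)\xi)(h)=\xi(e^{-t}h)$ on $\R_+^*$, while the displayed formula turns into $\th_s(1\oti f)(x,h)=f(K(x,s)h)$. Then $Z(\tcM)\cong Z(\tcN)\oti L^\infty(\R_+^*)\cong L^\infty(X_\cN\times\R_+^*)$, so $X_\cM=X_\cN\times\R_+^*$; restricting $\th$ to $Z(\tcM)$ reads off $F_s^\cM(x,h)=(F_s^\cN x,K(x,s)h)$, which is $(1)$, and restricting $\tal=\widehat{\tbe}=\id\oti\Ad{\bf e}_{-t}$ to $Z(\tcM)$ reads off $\mo(\al_t)(x,h)=(x,e^{-t}h)$, which is $(2)$.

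I expect the main obstacle to be the middle step: correctly transporting the trace-scaling flow through both Lemma~\ref{lem:can-natural} and the untwisting isomorphism to arrive at the displayed formula for $\th_s(a\oti\la(t))$, verifying that the correction unitaries $c_s(t)$ are genuinely central, and checking that they assemble into an honest $\R_+^*$-valued $F^\cN$-cocycle rather than just a Borel family of central unitaries. The Fourier transform and the two restrictions to centers are then routine, provided the sign and variable conventions are fixed coherently throughout.
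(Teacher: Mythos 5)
Your proposal follows essentially the same route as the paper: identify $\tcM$ with $\tcN\rti_{\tbe}\R$ via Lemma~\ref{lem:can-natural}, untwist by a one-parameter unitary group $w$ implementing $\tbe$, extract the central correction unitaries $c_s(t)=K_s^{it}$ from the action of $\th_s$ on $a\oti\la(t)$, and pass to $L^\infty(\R_+^*)$ by Fourier transform to read off $F^\cM$ and $\mo(\al_t)$. The only differences are cosmetic: you spell out the centrality of $c_s(t)$ (via $c_s(t)=w_t^*\th_s(w_t)$ and the commutation of $\th$ with $\tbe=\Ad w$, so that $c_s(t)\in Z(\tcN)$ rather than the paper's misprinted $Z(\cN)$) and the $F^\cN$-cocycle identity for $K$ in the $s$-variable, both of which the paper dismisses as a ``simple calculation.''
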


\section{Classification of Rohlin flows}

In this section,
we will prove our main theorem (Theorem \ref{thm:class2})
of this paper.

\subsection{Rohlin projection and averaging technique}
The classification of general Rohlin flows
will be reduced to that of centrally ergodic Rohlin flows
(see the proof Theorem \ref{thm:class2}).
Hence let us assume that
$(\al,c)$ is a Borel cocycle action of $\R$
on a von Neumann algebra $\cM$
with the following properties:
\begin{itemize}
\item 
Rohlin property;

\item
$Z(\cM)^\al=\C$
and
$\Sp_d(\al|_{Z(\cM)})\neq\R$.
\end{itemize}

The case that $\Sp_d(\al|_{Z(\cM)})=\R$
will be treated separately in the proof of Lemma \ref{lem:class}.
Let us put
$H_\al:=\Sp_d(\al|_{Z(\cM)})$ that is a Borel subgroup of $\R$.
The following result is probably well-known
to experts,
but we present a proof for readers' convenience.

\begin{lem}
For any $\vep>0$,
there exists $p>0$ such that
$p<\vep$ and
$\Z p\cap H_\al=\{0\}$.
\end{lem}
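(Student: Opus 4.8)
The plan is to use that $H_\al=\Sp_d(\al|_{Z(\cM)})$ is, under the present hypotheses, a \emph{proper} Borel subgroup of $\R$ (recall we are in the case $\Sp_d(\al|_{Z(\cM)})\neq\R$, the complementary case being handled separately), hence Lebesgue-null, and then to delete from the interval $(0,\vep)$ a countable union of null sets.

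First I would recall the Steinhaus--Weil argument: if $H\subseteq\R$ is a Borel subgroup with $\mu(H)>0$, then the difference set $H-H$ contains an open neighbourhood of $0$; since $H$ is a group, $H-H=H$, so $H$ contains an open interval about $0$, and as $\R$ is connected this forces $H=\R$. Taking the contrapositive, the proper Borel subgroup $H_\al$ satisfies $\mu(H_\al)=0$. Next, for each integer $n\geq1$ the dilated set $\tfrac1n H_\al=\{p\in\R\mid np\in H_\al\}$ is again Lebesgue-null, since Lebesgue measure transforms into a multiple of itself under dilations. Hence $B:=\bigcup_{n\geq1}\tfrac1n H_\al$ is a countable union of null sets, so $\mu(B)=0$. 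Since $\mu\big((0,\vep)\big)=\vep>0$, the set $(0,\vep)\setminus B$ is non-empty, and I would pick any $p$ in it.

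It remains to verify this $p$ works. Clearly $0<p<\vep$. For $n\in\Z\setminus\{0\}$ one has $np\notin H_\al$: for $n\geq1$ this is exactly $p\notin\tfrac1n H_\al$, and for $n\leq-1$ it follows since $H_\al=-H_\al$. As $0\in H_\al$, we conclude $\Z p\cap H_\al=\{0\}$, as required.

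I do not foresee any real obstacle; the only step calling for a little care is the measure-theoretic one, namely confirming that a proper Borel subgroup of $\R$ is null and making sure it is precisely the hypothesis $\Sp_d(\al|_{Z(\cM)})\neq\R$ that is being invoked to license it.
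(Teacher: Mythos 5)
Your proof is correct and is essentially the paper's argument: both rest on the Steinhaus fact that a Borel set of positive measure has difference set containing an interval (so a proper Borel subgroup of $\R$ cannot have positive measure), and then remove a countable union of null sets from an interval. The only cosmetic difference is that the paper applies the difference-set argument to the level sets $E_k=\{t\mid kt\in H_\al,\ \ell t\nin H_\al\ (\ell<k)\}$ partitioning $[0,1]$, whereas you apply it once to $H_\al$ itself and then use dilation invariance of null sets; the two organizations are interchangeable.
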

\begin{proof}
Let $E:=[0,1]\subs\R$,
$E_0:=\{t\in E\mid \Z t\cap H_\al=\{0\}\}$
and
\[
E_k:=\{t\in E\mid \ell t\nin H_\al,
\
\ell=1,\dots,k-1,
\
kt\in H_\al\},
\quad
k\in\N.
\]
Since
$E_0=\bigcap_{\ell=1}^\infty (1/\ell)H_\al^c\cup\{0\}$,
and
$E_k=(1/k)H_\al\cap
\bigcap_{\ell=1}^{k-1}(1/\ell)H_\al^c$,
$E_k$'s are Borel sets.
Hence
$1=\mu(E)=\sum_{k=0}^\infty\mu(E_k)$.
Suppose that $\mu(E_k)>0$ for some $k>0$.
Then there exists $\de>0$
such that
$(-\de,\de)\subs E_k-E_k$.
Since $k E_k\subs H_\al$,
we have
$(-k\de,k\de)\subs kE_k-kE_k\subs H_\al$.
This forces $H_\al$ to be $\R$,
which is a contradiction.
Thus $\mu(E_0)=1$,
and we are done.
\end{proof}

The above lemma states that
an arbitrarily large $S>0$ can be chosen
in such a way that
$(2\pi/S)\Z\cap H_\al=0$.
Let
$v\in\cM_{\om,\al}$ be a Rohlin unitary for $-2\pi/S$,
that is,
$\al_t(v)=e^{-2\pi it/S}v$.
Then $\al$ is a flow on $W^*(v)$ with period $S$.
By the equality $\ta^\om\circ\al_t=\al_t\circ\ta^\om$,
we have
$\ta^\om(v^n)\in Z(\cM)$ satisfies
$\al_t(\ta^\om(v^n))=e^{-2n\pi it/S}\ta^\om(v^n)$
for $n\in\Z$,
which yields, however,
$\ta^\om(v^n)=0$ if $n\neq0$
because $-2n\pi/S\nin H_\al$.
Hence
$\ta^\om$ is a faithful normal state
on $W^*(v)$.

Let $v=\int_0^{2\pi}e^{i\la}\,dE(\la)$
be the spectral decomposition
on $\T=[0,2\pi)$.
By easy calculation,
we have $\al_t(dE(\la))=dE(\la+2\pi t/S)$.
We set $e(\la):=E(2\pi\la/S)$ for $\la\in[0,S)$.
Then $\al_t(de(\la))=de(\la+t)$
and
$v=\int_0^S e^{2\pi i\la/S}\,de(\la)$.
Thus
$d\ta^\om(e(\cdot))$ coincides
with the Haar measure
on the torus $[0,S)=\R/S\Z$,
that is,
the normalized
Lebesgue measure.
Therefore,
for $f\in L^\infty[0,S)$,
we can define $f(v)=\int_0^S f(t)\,de(\la)$.
Then $L^\infty[0,S)\ni f\mapsto f(v)\in W^*(v)$
is an isomorphism.

Following \cite{Kawamuro-RIMS,Kishi-CMP},
we introduce a normal $*$-homomorphism
$\Th\col \cM\oti L^\infty[0,S)\ra \Mequ$
such that $\Th(a\oti f)=af(v)$.

\begin{lem}
\label{lem:theta}
Let $S>0$
with $(2\pi/S)\Z\cap H_\al=\{0\}$.
Then there exists an isomorphism
$\Th\col \cM\oti L^\infty[0,S)\ra \cM\vee W^*(v)$
such that
\begin{itemize}
\item 
$\Th(a\oti f)=af(v)$
for all $a\in\cM$ and $f\in L^\infty[0,S)$;

\item
$\al_t\circ\Th=\al_t\oti\ga_t$,
where $[0,S)$ is regarded as a circle $\R/S\Z$,
and $\ga_t$ denotes the rotation by $t$ on $[0,S)$;

\item
$\ta^\om\circ\Th=\id_{\cM}\oti\mu$,
where
$\mu$ denotes the integration
by the normalized Lebesgue measure.
\end{itemize}
\end{lem}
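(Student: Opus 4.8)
The plan is to realize $\Th$ as the normal extension of the evident algebraic $*$-homomorphism $a\oti f\mapsto af(v)$, with injectivity coming from the fact --- established just above --- that $\ta^\om$ restricts to a product state, and then to check the two remaining identities on a $\sigma$-weakly generating set. First I would record that $v\in\cM_{\om,\al}\subseteq\cM_\om\cap\cM_\al^\om$, so $v$, hence $W^*(v)$, commutes with $\cM\subseteq\cM^\om$, and both $\cM$ and $W^*(v)$ lie in the von Neumann algebra $\cM_\al^\om$. Thus $a\odot b\mapsto ab$ is a unital $*$-homomorphism $\cM\odot W^*(v)\ra\cM_\al^\om$ whose image is the $*$-algebra generated by $\cM$ and $W^*(v)$, which is $\sigma$-weakly dense in $\cM\vee W^*(v)$; composing with the isomorphism $L^\infty[0,S)\ra W^*(v)$, $f\mapsto f(v)$ obtained above gives an algebraic $*$-homomorphism $\Th_0\col\cM\odot L^\infty[0,S)\ra\cM\vee W^*(v)$ with $\Th_0(a\oti f)=af(v)$. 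Fix a faithful normal state $\vph\in\cM_*$ (available since $\cM$ has separable predual) and set $\ps:=\vph\oti\mu$, a faithful normal state on $\cM\oti L^\infty[0,S)$. Using the $\cM$-bimodule property of $\ta^\om$ and the identity $\ta^\om(f(v))=\mu(f)1$ --- which is precisely the statement, shown just above, that $d\ta^\om(e(\cdot))$ is normalized Lebesgue measure, equivalently $\ta^\om(v^n)=\de_{n,0}$, and which uses $(2\pi/S)\Z\cap H_\al=\{0\}$ --- one computes
\[
\vph^\om(\Th_0(a\oti f))=\vph(\ta^\om(af(v)))=\vph(a)\,\mu(f)=\ps(a\oti f),
\]
so $\vph^\om\circ\Th_0=\ps$ on $\cM\odot L^\infty[0,S)$.

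The main point is then to pass to the von Neumann tensor product. Since $\cM\odot L^\infty[0,S)$ is $\sigma$-weakly dense in $\cM\oti L^\infty[0,S)$ and $\ps$ is faithful, a standard GNS argument (the assignment $x\mapsto\Th_0(x)$ extends to an isometry of the associated GNS Hilbert spaces intertwining the representations) produces a normal $*$-homomorphism $\Th\col\cM\oti L^\infty[0,S)\ra\cM\vee W^*(v)$ extending $\Th_0$ with $\vph^\om\circ\Th=\ps$; faithfulness of $\vph^\om$ forces $\Th$ to be injective, and its range, being $\sigma$-weakly closed and $\sigma$-weakly dense in $\cM\vee W^*(v)$, is all of $\cM\vee W^*(v)$. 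This extension/injectivity step is the only place where care is genuinely needed; everything else is bookkeeping, and the real input --- the vanishing $\ta^\om(v^n)=\de_{n,0}$ --- was already supplied before the lemma.

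It remains to verify covariance and the trace formula, and by normality of all the maps involved --- note $\Th$ takes values in $\cM_\al^\om$, on which $\al$ is a genuine flow by Lemma \ref{lem:flowequipart} --- it suffices to test on $a\oti f$. From $\al_t(de(\la))=de(\la+t)$ one gets $\al_t(f(v))=(\ga_tf)(v)$ with $(\ga_tf)(\la)=f(\la-t)$, whence $\al_t(\Th(a\oti f))=\al_t(a)\,\al_t(f(v))=\Th(\al_t(a)\oti\ga_t(f))$, giving $\al_t\circ\Th=\al_t\oti\ga_t$; and $\ta^\om(\Th(a\oti f))=\ta^\om(af(v))=a\,\mu(f)=(\id_\cM\oti\mu)(a\oti f)$, giving $\ta^\om\circ\Th=\id_\cM\oti\mu$. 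Extending by $\sigma$-weak continuity finishes the proof.
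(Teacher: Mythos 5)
Your proposal is correct and follows essentially the same route as the paper: the paper's (much terser) proof likewise checks $\vph^\om(av^n)=\de_{n,0}\vph(a)=(\vph\oti\mu)(a\oti z^n)$ on the strongly dense $*$-algebras spanned by $\{av^n\}$ and $\{a\oti z^n\}$ and invokes the standard state-matching/GNS argument to obtain $\Th$. Your write-up merely spells out the extension, injectivity, and the verification of the two covariance identities that the paper leaves implicit.
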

\begin{proof}
Let $z(\la):=e^{2\pi i\la/S}$ for $\la\in[0,S)$.
Then we have
\[
\vph^\om(a v^n)
=
\vph(a\ta^\om(v^n))
=
\de_{n,0}\vph(a)
=(\vph\oti\mu)(a\oti z^n).
\]
Since $\{a\oti z^n\mid n\in\Z\}$ and
$\{av^n\mid n\in \Z\}$
span strongly dense $*$-algebras
in $\cM\oti L^\infty[0,S)$
and $\cM\vee W^*(v)$,
respectively,
we have such $\Th$.
\end{proof}

The map $\Th$
plays a role of Shapiro's lemma,
that is,
$\Th(a)$, $a\in \cM\oti L^\infty[0,S)$,
can be regarded as
the average of $a(s)$ along with
the Rohlin tower $e(s)$.
We may write $\Th(a)$
in a formal manner as
\[
\Th(a)=\int_0^S a(s)\,d e(s).
\]
From the previous lemma,
for any
$\vph\in\cM_*^+$,
we obtain the following equality:
\begin{equation}
\label{eq:Thint}
\|\Th(w)\|_{\vph^\om}^2
=\frac{1}{S}\int_0^S \|w(s)\|_\vph^2
\,ds.
\end{equation}

\begin{lem}
\label{lem:Borelasymprep}
Let $(\al,c)$ be
a Borel cocycle action of $\R$
as before.
Let $S>0$
with $(2\pi/S)\Z\cap H_\al=\{0\}$.
Let $u\col[-T,T)\times[0,S)\ra\cM^{\rm U}$
be a Borel map,
$\Phi\subset \cM_*$ a finite set
and
$e(\lambda)\in \cM_{\om,\al}$
a Rohlin projection over $[0,S)$.
Set $w(t):=\Th(u(t,\cdot))$,
which is a Borel unitary path in $\cM_\al^\om$.
Then for any $\vep>0$
with
\[
\frac{1}{2ST}\int_{-T}^Tdt\int_{0}^S ds\,
\|[u(t,s),\varphi]\|
<\vep
\quad
\mbox{for all }
\varphi\in \Phi,
\]
there exist $W\in \omega$
and a lift $(w(t)^\nu)_\nu$
of $w(t)$
as in Lemma \ref{lem:Borelpathlift}
with respect to $E:=[-T,T)$
such that
\[
\frac{1}{2T}
\int_{-T}^T
\|[w(t)^\nu,\varphi]\|\,dt
<
3\vep
\quad
\mbox{for all }
\varphi\in \Phi,
\
\nu\in W.
\]
\end{lem}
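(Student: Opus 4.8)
The plan is to reduce everything to two clean facts: a computation of the commutator norm in $\cM^\om$, and an identity relating it to the $\om$‑limit of commutator norms in $\cM$. After the usual reduction we may assume $\vph$ is a faithful normal state. The first fact is the identity
\[
\|[\Th(u(t,\cdot)),\vph^\om]\|_{(\cM^\om)_*}=\frac1S\int_0^S\|[u(t,s),\vph]\|_{\cM_*}\,ds .
\]
Since $v\in\cM_{\om,\al}\subset\cM_\om$, the algebra $W^*(v)$ lies in the centralizer of $\vph^\om$, and by Lemma \ref{lem:theta} one has $\cM\vee W^*(v)\cong\cM\oti L^\infty[0,S)$ with $\vph^\om$ restricting to the product state $\vph\oti\mu$. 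As $\si^{\vph^\om}$ fixes $W^*(v)$ pointwise and preserves $\cM$, it globally preserves $\cM\vee W^*(v)$, so Takesaki's theorem supplies a $\vph^\om$‑preserving conditional expectation $E_v\colon\cM^\om\to\cM\vee W^*(v)$. Using that $E_v$ is $(\cM\vee W^*(v))$‑bimodular and $\vph^\om$‑preserving, $[\Th(u(t,\cdot)),\vph^\om](x)=[\Th(u(t,\cdot)),\vph^\om](E_v(x))$ for all $x\in\cM^\om$, so the left side above equals the norm of the restriction of $[\Th(u(t,\cdot)),\vph^\om]$ to $\cM\vee W^*(v)$; transporting through $\Th$ this is $\|[u(t,\cdot),\vph\oti\mu]\|_{(\cM\oti L^\infty[0,S))_*}$, which, $\vph\oti\mu$ being a product, is $\frac1S\int_0^S\|[u(t,s),\vph]\|\,ds$. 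Integrating over $t$ and using the hypothesis gives $\frac1{2T}\int_{-T}^T\|[w(t),\vph^\om]\|\,dt<\vep$.

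The second fact is that for a unitary $x\in\cM^\om$ and any representing sequence $(x^\nu)_\nu$ consisting of unitaries,
\[
\lim_{\nu\to\om}\|[x^\nu,\vph]\|_{\cM_*}=\|[x,\vph^\om]\|_{(\cM^\om)_*}.
\]
Indeed, using unitarity, $\|[x^\nu,\vph]\|=\|\vph_x^\nu-\vph\|$ and $\|[x,\vph^\om]\|=\|\vph_x^\om-\vph^\om\|$, where $\vph_x^\nu(y)=\vph((x^\nu)^*yx^\nu)$ and $\vph_x^\om(y)=\vph^\om(x^*yx)$; for $y=\pi_\om((y^\nu)_\nu)$ with $\|y^\nu\|\le1$ one has $(\vph_x^\om-\vph^\om)(y)=\lim_{\nu\to\om}(\vph_x^\nu-\vph)(y^\nu)$, and taking the supremum over such $y$ — realizing the pointwise suprema of the functionals $\vph_x^\nu-\vph$ by a diagonal sequence — yields both inequalities. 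A priori one only gets $\ge$ from weak lower semicontinuity of the norm, so this identity, not a compactness/weak‑limit argument, is what makes the lemma work.

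With these in hand, apply Lemma \ref{lem:Borelpathlift} to the Borel unitary path $w$ on $E=[-T,T)$, taking the exceptional measure so small that $\|\vph\|\,\mu([-T,T)\setminus K)<T\vep$ for every $\vph\in\Phi$; this produces a compact $K$ and a lift $(w(t)^\nu)_\nu$ of unitaries with all the properties listed there — in particular $(w(t)^\nu)_\nu$ represents $w(t)$ for every $t\in K$, and $\{t\mapsto w(t)^\nu\}_\nu$ is $\om$‑equicontinuous on $K$. By the second fact, $\lim_{\nu\to\om}\|[w(t)^\nu,\vph]\|=\|[w(t),\vph^\om]\|$ for each $t\in K$, and the $\om$‑equicontinuity of the lift forces $\{t\mapsto\|[w(t)^\nu,\vph]\|\}_\nu$ to be $\om$‑equicontinuous on the compact $K$ (it dominates the differences of the real numbers $\|[w(t)^\nu,\vph]\|$ by the $\xi_\vph$‑seminorms of $w(s)^\nu-w(t)^\nu$), so Lemma \ref{lem:uniconverge} makes the convergence uniform on $K$. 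Hence there is $W_\vph\in\om$ with $\|[w(t)^\nu,\vph]\|<\|[w(t),\vph^\om]\|+\vep$ for all $t\in K$, $\nu\in W_\vph$, whence, bounding $\|[w(t)^\nu,\vph]\|\le2\|\vph\|$ on $[-T,T)\setminus K$,
\[
\frac1{2T}\int_{-T}^T\|[w(t)^\nu,\vph]\|\,dt<\frac1{2T}\int_{-T}^T\|[w(t),\vph^\om]\|\,dt+\vep+\vep<3\vep .
\]
Taking $W=\bigcap_{\vph\in\Phi}W_\vph\in\om$ concludes the argument; the three $\vep$'s are exactly the hypothesis bound, the uniform‑convergence slack on $K$, and the contribution of the tail $[-T,T)\setminus K$.

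The only delicate step is the second displayed identity: commutator norms of representing sequences of a unitary need not converge to the "right" value, which is why the proof above passes through the explicit functionals $\vph_x^\nu$ and a diagonal selection. Everything else is routine — the existence of $E_v$ hinges on nothing more than $v\in\cM_\om$ (so $W^*(v)$ sits in the centralizer of $\vph^\om$), the product‑state computation is elementary, and the passage from the $\om$‑limit to a fixed $\nu\in W$ is the $\om$‑equicontinuity bookkeeping of Lemma \ref{lem:uniconverge}.
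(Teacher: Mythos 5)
Your first reduction is a genuinely nice repackaging of what the paper does by hand: the identity $\|[\Th(u(t,\cdot)),\vph^\om]\|=\frac1S\int_0^S\|[u(t,s),\vph]\|\,ds$, obtained from the $\vph^\om$-preserving expectation onto $\cM\vee W^*(v)$ and the product-state structure of Lemma \ref{lem:theta}, is correct and captures in one line the "averaging over the Rohlin tower" that drives the lemma. The problem is your second fact, and it is load-bearing. You need the inequality $\lim_{\nu\to\om}\|[x^\nu,\vph]\|\le\|[x,\vph^\om]\|_{(\cM^\om)_*}$ (the other direction is useless for an upper bound on $\|[w(t)^\nu,\vph]\|$), and your justification — choose for each $\nu$ a near-optimal $y^\nu$ in the unit ball and pass to the element of $\cM^\om$ it represents — does not go through: $\cM^\om=\sN_\om/\sT_\om$, and an arbitrary bounded sequence of near-optimizers (say the partial isometries from the polar decompositions of $[x^\nu,\vph]$) has no reason to normalize $\sT_\om$ when $\cM$ is properly infinite, so it defines no element of $\cM^\om$ and cannot be tested against $[x,\vph^\om]$. (For finite $\cM$ one has $\sN_\om=\ell^\infty(\cM)$ and your argument works, but the lemma is applied throughout the paper in type III situations.) You flag yourself that lower semicontinuity only gives "$\ge$" and that the identity "is what makes the lemma work," but you never actually prove the hard direction; as written this is a genuine gap, and I do not see that the identity is even true in general for the Ocneanu ultrapower, since the norm of a functional can drop when restricted from the full sequence space to $\sN_\om/\sT_\om$.

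The paper avoids this issue entirely by never computing commutator norms in $\cM^\om$: it approximates $u$ by a step function $u_0=\sum_{i,j}u_{i,j}1_{\De_{i,j}}$, takes representing projections $E_j^\nu$ of the tower levels $e(J_j)$, sets $V(t)^\nu=\sum_{i,j}1_{I_i}(t)u_{i,j}E_j^\nu$ with the $u_{i,j}$ fixed elements of $\cM$, and proves directly that $\|[\vph,u_{i,j}E_j^\nu]\|<\vep/2N+\|[\vph,u_{i,j}]\|/N$ for $\nu$ close to $\om$, using that $E_j^\nu$ is nearly central and that $|\psi|(E_j^\nu)\approx\|\psi\|/N$ for $\psi=[\vph,u_{i,j}]$. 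The factor $1/N$ is exactly your averaging identity, but established at the level of concrete representing sequences, which is what the conclusion requires; the remaining terms $\|[\vph,w(t)^\nu-V(t)^\nu]\|$ are handled by the $L^2$-estimate (\ref{eq:Thint}) together with Lemma \ref{lem:uniconverge}. To salvage your route you would have to prove the transfer inequality for the specific unitaries $w(t)^\nu$, and the only way I see to do that is essentially to reintroduce the paper's tower decomposition.
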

\begin{proof}
Since $\|[u,\vph]\|=\|[u,\vph^*]\|$
for a unitary $u$,
we may and do assume that $\Ph^*=\Ph$.
Note that
$[-T,T)\ni t\mapsto u(t,\cdot)\in\cM\oti L^\infty(\T)$
is a Borel unitary path.
Hence so is $t\mapsto \Th(u(t,\cdot))\in\Mequ$.
Fix $0<\de<1$ so that for all $\vph\in\Ph$,
\begin{equation}
\label{eq:devep}
\de^2+4\de\|\vph\|
<\de^{1/2},
\
(\de+\vep)/(1-\de^{1/2})+2\de^{1/2}
\|\vph\|
<\de^{1/4}+2\vep,
\
5\de^{1/4}+2\de\|\vph\|<\vep/2.
\end{equation}
Since $u(t,s)$ is Borel,
there exists a compact set
$K\subs[-T,T)\times[0,S)$
such that
$\mu(K)\geq 2ST(1-\de)$
and $u$ is continuous on $K$.

Let $N\in\N$,
and for $i=-N,\dots,N-1$ and $j=0,\dots,N-1$,
we set
\[
I_i:=\{t\in\R\mid 
iT/N\leq t< (i+1)T/N\},
\]
\[
J_j:=\{s\in\R\mid 
jS/N\leq s< (j+1)S/N\},
\]
\[
\De_{i,j}:=I_i\times J_j.
\]
Fix a large $N$
so that
for all
$(t,s),(t',s')\in\De_{i,j}\cap K$
and $\vph\in\Ph$,
we have
\begin{equation}\label{eq:uxyiT}
\|u(t,s)-u(t',s')\|_{|\vph|}^\sharp<\de,
\quad
\|[\vph,u(t,s)-u(t',s')]\|<\de.
\end{equation}

If $\De_{i,j}\cap K\neq\emptyset$,
we fix an element $k_{i,j}\in \De_{i,j}\cap K$.
If empty,
we put $k_{i,j}:=(iT/N,jS/N)$.
We set the following unitary in $\cM$:
\[
u_0(t,s)
:=
\sum_{i,j}
u(k_{i,j})1_{\De_{i,j}}(t,s),
\quad
(t,s)\in[-T,T)\times[0,S).
\]
Then
\begin{equation}
\label{eq:utsu0}
\|u(t,s)-u_0(t,s)\|_{|\vph|}^\sharp
<\de
\quad
\mbox{for all }
(t,s)\in K,
\
\vph\in\Ph.
\end{equation}

Let $V(t):=\Th(u_0(t,\cdot))\in\cM_\al^\om$.
Then
\[
V(t)
=
\sum_{i,j}
u(k_{i,j})1_{I_i}(t)e(J_j).
\]
We estimate $\|w(t)-V(t)\|_{|\vph|^\om}^\sharp$ as follows.
Put $K_0:=\pr(K)\subs[-T,T)$,
where $\pr$ denotes the projection $(x,y)\mapsto x$. 
Then $K_0^c\times [0,S)\subs K^c$,
where $K_0^c$ and $K^c$ are the complements
in $[-T,T)$ and $[-T,T)\times[0,S)$,
respectively.
Hence $\mu(K_0^c)\cdot S\leq 2ST\de$,
and $\mu(K_0)\geq 2T(1-\de)$.
For $t\in [-T,T)$,
we set $K_t:=\{s\in[0,S)\mid (t,s)\in K\}$.
Then for $t\in[-T,T)$,
we have
\begin{align*}
\|w(t)-V(t)\|_{|\vph|^\om}^{\sharp\,2}
&=
\|\Th(u(t,\cdot))
-\Th(u_0(t,\cdot))\|_{|\vph|^\om}^{\sharp\,2}
\\
&
=\frac{1}{S}
\int_{0}^S
\|u(t,s)-u_0(t,s)\|_{|\vph|}^{\sharp\,2}
\,ds
\quad
\mbox{by }
(\ref{eq:Thint})
\\
&=
\frac{1}{S}
\int_{K_t}
\|u(t,s)-u_0(t,s)\|_{|\vph|}^{\sharp\,2}
\,ds
\\
&\quad
+
\frac{1}{S}
\int_{K_t^c}
\|u(t,s)-u_0(t,s)\|_{|\vph|}^{\sharp\,2}
\,ds
\\
&\leq
\de^2+4\mu(K_t^c)\|\vph\|/S
\quad
\mbox{by }(\ref{eq:utsu0}).
\end{align*}
Note that
\[
\int_{-T}^T\mu(K_t^c)\,dt
=\int_{-T}^T(S-\mu(K_t))\,dt
=2ST-\mu(K)
\leq 2ST\de.
\]
Then by (\ref{eq:devep}),
\[
\int_{-T}^T
\|w(t)-V(t)\|_{|\vph|^\om}^{\sharp\,2}
\,dt
\leq
(2\de^2 +8\de\|\vph\| )T
\leq
2T\de^{1/2}
\quad
\mbox{for all }
\vph\in\Ph.
\]

Let $C$ be
a  compact set in $[-T,T]$
as in Lemma \ref{lem:Borelpathlift}
with respect to $w(t)$
such that
$\mu(C)\geq 2T(1-\de)$.
By the inequality above,
we get
\begin{equation}\label{eq:KwV1}
\int_{C}
\|w(t)-V(t)\|_{|\vph|^\om}^{\sharp\, 2}
\,dt
\leq
2T\de^{1/2}
\quad
\mbox{for all }
\vph\in\Ph.
\end{equation}

Put $u_{i,j}:=u(k_{i,j})$.
For $(t,s)\in K$,
we have
\begin{align}
\sum_{i,j}
\|[\vph,u_{i,j}]\|1_{\De_{i,j}\cap K}(t,s)
&\leq
\|[\vph,u(t,s)]\|
+
\sum_{i,j}
\|[\vph,u_{i,j}-u(t,s)]\|
1_{\De_{i,j}\cap K}(t,s)
\notag\\
&\leq
\|[\vph,u(t,s)]\|
+
\de
\quad
\mbox{by }(\ref{eq:uxyiT}).
\end{align}
Integrating them by $(t,s)\in K$,
we have
\begin{align}
\sum_{i,j}\|[\vph,u_{i,j}]\|
\mu(\De_{i,j}\cap K)
&\leq
\int_K
\|[\vph,u(t,s)]\|
\,dtds
+
2ST\de
\notag\\
&\leq
\int_{-T}^T dt
\int_{0}^S ds
\,
\|[\vph,u(t,s)]\|
+
2ST\de
\notag\\
&\leq
2ST\vep
+
2ST\de
=2ST(\vep+\de).
\label{eq:intvep}
\end{align}
Note that
$\sum_{i,j}\mu(\De_{i,j}\cap K^c)=\mu(K^c)
<2ST\de$.
Set
\[
L:=\{(i,j)\mid \mu(\De_{i,j}\cap K^c)
<ST\de^{1/2}/N^2\}.
\]
Then
$|L^c|/2N^2<\de^{1/2}$ by the Chebyshev inequality,
and for $(i,j)\in L$,
\begin{align*}
\mu(\De_{i,j}\cap K)
&=
\mu(\De_{i,j})-\mu(\De_{i,j}\cap K^c)
>
ST/N^2-ST\de^{1/2}/N^2
\\
&=(1-\de^{1/2})ST/N^2.
\end{align*}
Thus by (\ref{eq:intvep}),
we have
\[
\sum_{(i,j)\in L}
\|[\vph,u_{i,j}]\|/2N^2
\leq
(\de+\vep)/(1-\de^{1/2}).
\]
By definition of $L$,
we obtain
\[
\sum_{(i,j)\in L^c}
\|[\vph,u_{i,j}]\|/2N^2
\leq
2\|\vph\||L^c|/2N^2
<2\de^{1/2}\|\vph\|.
\]
Hence by (\ref{eq:devep}),
we get the following inequality
for $\vph\in\Ph$:
\begin{equation}\label{eq:ijphu}
\sum_{i,j}
\|[\vph,u_{i,j}]\|/2N^2
\leq
(\de+\vep)/(1-\de^{1/2})+2\de^{1/2}\|\vph\|
<\de^{1/4}+2\vep.
\end{equation}

Let
$(E_j^\nu)_\nu$
be a representing sequence
of $E_j:=e(J_j)$ consisting of projections
with $\sum_j E_j^\nu=1$ for each $\nu$.
We set
$V(t)^\nu:=\sum_{i,j}1_{I_i}(t)u_{i,j}E_j^\nu$.
On $C\cap I_i$, which may be non-compact,
$\{t\mapsto w(t)^\nu\}_\nu$
and
$\{t\mapsto V(t)^\nu\}_\nu$
are $\om$-equicontinuous
since $V(t)^\nu$ is constant.
Thus by Lemma \ref{lem:uniconverge},
we have
$\lim_{\nu\to\om}
\|w(t)^\nu-V(t)^\nu\|_{|\vph|}^\sharp
=
\|w(t)-V(t)\|_{|\vph|^\om}^\sharp$
is a uniform convergence
on $C \cap I_i$.
We set
\[
\Ph'
:=
\{|[\vph,u_{i,j}]|
\mid\vph\in \Ph,\
-N\leq i\leq N-1,\ 0\leq j\leq N-1\}.
\]

Note that
$\ta^\om(E_j)=1/N$
by Lemma \ref{lem:theta}.
Using (\ref{eq:KwV1}) and the above uniform convergence,
we can find $W\in\om$
such that
\begin{equation}
\label{eq:Cwtnu}
\int_{C}
\|w(t)^\nu-V(t)^\nu\|_{|\vph|}^{\sharp\,2}
\,dt
<3T\de^{1/2}
\quad
\mbox{for all }
\vph\in\Ph,
\
\nu\in W,
\end{equation}
\[
\|[\vph,E_j^\nu]\|<\vep/6N
\quad
\mbox{for all }
\vph\in\Ph\cup\Ph',
\
0\leq j\leq N-1,
\
\nu\in W,
\]
\[
|\|\ps\|/N-\ps(E_j^\nu)|
<\vep/6N
\quad
\mbox{for all }
\ps\in\Ph',
\
0\leq j\leq N-1,
\
\nu\in W.
\]
Then for all $\vph\in\Ph$, $i,j$
and
$\nu\in W$,
\[
\|[\vph,u_{i,j}E_j^\nu]\|
\leq
\|[\vph,u_{i,j}]E_j^\nu\|
+
\|u_{i,j}[\vph,E_j^\nu]\|
\leq
\|[\vph,u_{i,j}]E_j^\nu\|
+\vep/6N.
\]
Putting $\ps:=[\vph,u_{i,j}]$
and $\ps=u|\ps|$, the polar decomposition,
we obtain the following estimate
for $x\in\cM$ with $\|x\|\leq1$ and $\nu\in W$:
\begin{align*}
|([\vph,u_{i,j}]E_j^\nu)(x)|
&=
||\ps|(E_j^\nu xu)|
\leq
|[|\ps|,E_j^\nu](E_j^\nu xu)|
+
||\ps|(E_j^\nu xu E_j^\nu)|
\\
&\leq
\|[\ps,E_j^\nu]\|\|x\|
+
|\ps|(E_j^\nu)^{1/2}
\|x\|
|\ps|(E_j^\nu)^{1/2}
\\
&<
\vep/6N+|\ps|(E_j^\nu)
<
\vep/3N+\|\ps\|/N.
\end{align*}
Then
for all $\vph\in\Ph$, $i,j$
and
$\nu\in W$,
we obtain
\[
\|[\vph,u_{i,j}E_j^\nu]\|
<\vep/2N+\|[\vph,u_{i,j}]\|/N.
\]
Hence for $\vph\in\Ph$ and $\nu\in W$,
\begin{align}
\int_{-T}^T
\|[\vph,V(t)^\nu]\|
\,dt
&\leq
\sum_{i,j}
\int_{-T}^T
\|[\vph,u_{i,j}E_j^\nu]\|
1_{I_i}(t)
\,dt
\notag\\
&<
\sum_{i,j}
(\vep /2N+
\|[\vph,u_{i,j}]\|/N)
\cdot(T/N)
\notag\\
&\leq
\vep T
+
2(\de^{1/4}+2\vep)T
\quad
\mbox{by }(\ref{eq:ijphu})
\notag\\
&\leq
2T(\de^{1/4}+5\vep/2)
\label{eq:phVnu7}.
\end{align}

We estimate
$\int_{C}
\|[\vph,w(t)^\nu-V(t)^\nu]\|\,dt$
as follows:
\begin{align}
&\int_{C}
\|[\vph,w(t)^\nu-V(t)^\nu]\|\,dt
\notag\\
&\leq
\int_{C}
\|(w(t)^\nu)^*-(V(t)^\nu)^*\|_{|\vph|}\,dt
+
\int_{C}
\|w(t)^\nu-V(t)^\nu\|_{|\vph^*|}\,dt
\notag\\
&\leq
\mu(C)^{1/2}
\left(
\int_{C}
\|(w(t)^\nu)^*-(V(t)^\nu)^*\|_{|\vph|}^2\,dt
\right)^{1/2}
\notag\\
&\quad
+
\mu(C)^{1/2}
\left(
\int_{C}
\|w(t)^\nu-V(t)^\nu\|_{|\vph^*|}^2\,dt
\right)^{1/2}
\notag\\
&\leq
\mu(C)^{1/2}
\cdot(2\cdot3T\de^{1/2})^{1/2}
+
\mu(C)^{1/2}
\cdot(2\cdot3T\de^{1/2})^{1/2}
\quad
\mbox{by }
(\ref{eq:Cwtnu})
\notag\\
&\leq
7T\de^{1/4}.
\label{eq:vphwV}
\end{align}

Then for $\nu\in W$,
\begin{align*}
\int_{C}
\|[\vph,w(t)^\nu]\|
\,dt
&\leq
\int_{C}
\|[\vph,w(t)^\nu-V(t)^\nu]\|
\,dt
+
\int_{C}
\|[\vph,V(t)^\nu]\|
\,dt
\\
&\leq
2T(5\de^{1/4}+5\vep/2)
\quad
\mbox{by }
(\ref{eq:phVnu7}),
\
(\ref{eq:vphwV}),
\end{align*}
and
\begin{align*}
\int_{-T}^T
\|[\vph,w(t)^\nu]\|
\,dt
&\leq
\int_{C}
\|[\vph,w(t)^\nu]\|
\,dt
+
\int_{C^c}
\|[\vph,w(t)^\nu]\|
\,dt
\\
&\leq
2T(5\de^{1/4}+5\vep/2)
+
2\|\vph\|\mu(C^c)
\\
&\leq
2T(5\de^{1/4}+5\vep/2)
+
2\|\vph\|\cdot 2T\de 
\\
&=2T
(5\de^{1/4}+2\de\|\vph\|+5\vep/2)
<6T\vep
\quad
\mbox{by }
(\ref{eq:devep}).
\end{align*}
\end{proof}

\subsection{2-cohomology vanishing}
Let $(\al,c)$ be a Borel cocycle action
of $\R$
on a von Neumann algebra $\cM$
as in the previous subsection,
that is,
it has the Rohlin property and
the ergodicity on $Z(\cM)$
such that
$\al$ on $Z(\cM)$
is not conjugate to the translation
on $L^\infty(\R)$.
We will show that the 2-cocycle $c$
can be perturbed to be close to $1$.
Let $0<\de<1$, $T>0$ and
a finite set $\Ph\subs\cM_*^+$.
Take
$S>T$
such that $(2\pi/S)\Z\cap H_\al=\{0\}$
and
\begin{equation}
\label{eq:4T2T}
4T^{1/2}/S^{1/2}
<\de/24T^2.
\end{equation}

Let $e(\la)$ be a Rohlin projection
over $[0,S)$.
We put $U(t):=\Th(\tilde{c}(t,\cdot-t)^*)$,
where $\tilde{c}$ denotes the periodization
of $c$
with respect to the second variable,
that is,
$\tilde{c}(x,y):=c(x,y)$
for $y\in[0,S)$,
and $\tilde{c}(x,y+S)=\tilde{c}(x,y)$
for $y\in\R$.

\begin{lem}\label{lem:Borelperturb}
In the above setting,
there exist $W\in\om$
and a lift $(u(t)^\nu)_\nu$
of $U(t)$
as in
Lemma \ref{lem:Borelpatheq}
such that for all $\vph\in\Ph$,
\[
\int_{-T}^T dt
\int_{-T}^T ds
\,
\|u(t)^\nu\al_t(u(s)^\nu)c(t,s)(u(t+s)^\nu)^*-1\|_\vph^\sharp
<\de
\quad\mbox{for all }
\nu\in W.
\]
If $\vep>0$ satisfies
\[
\frac{1}{2ST}
\int_{-T}^T
dt
\int_{0}^S
ds
\,
\|[c(t,s),\vph]\|
<\vep
\quad
\mbox{for all }
\vph\in\Ph,
\]
then one can take $W$ so that
\[
\frac{1}{2T}
\int_{-T}^T\|[u(t)^\nu,\vph]\|
\,dt<3\vep
\quad
\mbox{for all }
\vph\in\Ph,
\
\nu\in W.
\]
\end{lem}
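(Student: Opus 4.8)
The plan is to build the perturbing path by averaging $c$ against the Rohlin tower and then collapsing the resulting expression with the cocycle identity. Set $u(t,\la):=\tilde c(t,\la-t)^*$ on $[-T,T)\times[0,S)$; this is a Borel unitary map, and $U(t)=\Th(u(t,\cdot))$ is a Borel unitary path in $\cM_\al^\om$. Since $\Th$ is a $*$-isomorphism with $\al_t\circ\Th=\Th\circ(\al_t\oti\ga_t)$ (Lemma \ref{lem:theta}), a direct computation gives
\[
U(t)\al_t(U(s))c(t,s)U(t+s)^*=\Th\bigl(\la\mapsto g(t,s,\la)\bigr),
\]
\[
g(t,s,\la):=\tilde c(t,\la-t)^*\,\al_t\bigl(\tilde c(s,\la-t-s)^*\bigr)\,c(t,s)\,\tilde c(t+s,\la-t-s).
\]

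First I would show that $g(t,s,\la)=1$ for almost every $\la$. Writing $\mu:=\la-t-s$ and letting $\mu_0\in[0,S)$ be the class of $\mu$ modulo $S$, periodicity of $\tilde c$ in the second variable turns $\tilde c(s,\mu)$ and $\tilde c(t+s,\mu)$ into $c(s,\mu_0)$ and $c(t+s,\mu_0)$, and then the cocycle identity $c(t,s)c(t+s,\mu_0)=\al_t(c(s,\mu_0))c(t,s+\mu_0)$ telescopes the middle factors, leaving $g(t,s,\la)=c(t,\langle s+\mu_0\rangle)^*\,c(t,s+\mu_0)$, where $\langle\cdot\rangle$ denotes the representative in $[0,S)$. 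Hence $g(t,s,\la)=1$ unless $s+\mu_0\notin[0,S)$, which (as $s\in[-T,T]$) forces $\mu_0\in[0,T)\cup[S-T,S)$, a set of $\la$-measure $\le 2T$; on it $g$ is a unitary, so $\|g(t,s,\la)-1\|_\vph^\sharp\le 2\|\vph\|^{1/2}$. By the $\sharp$-form of (\ref{eq:Thint}) applied to $g-1$,
\[
\|U(t)\al_t(U(s))c(t,s)U(t+s)^*-1\|_{\vph^\om}^{\sharp}\le\bigl(\tfrac{8T}{S}\|\vph\|\bigr)^{1/2},
\]
and integrating over $[-T,T)^2$ the bound $4T^2(8T/S)^{1/2}$ (taking the $\vph\in\Ph$ to be states) is $<\de/2$ by the choice of $S$ in (\ref{eq:4T2T}).

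Next I would pass to an honest lift in $\cM$. Apply Lemma \ref{lem:Borelasymprep} to the map $u(t,\la)=\tilde c(t,\la-t)^*$ and the finite set $\Ph$: the substitution $s=\la-t$, together with $S$-periodicity of $\tilde c$ and $\|[v^*,\vph]\|=\|[v,\vph]\|$ for unitaries, identifies $\frac1{2ST}\int_{-T}^T dt\int_0^S d\la\,\|[u(t,\la),\vph]\|$ with $\frac1{2ST}\int_{-T}^T dt\int_0^S ds\,\|[c(t,s),\vph]\|<\vep$, so the lemma yields $W_0\in\om$ and a lift $(u(t)^\nu)_\nu$ of $U$, of the type produced by Lemma \ref{lem:Borelpathlift} over $E=[-T,T)$, with $\frac1{2T}\int_{-T}^T\|[u(t)^\nu,\vph]\|\,dt<3\vep$ for $\nu\in W_0$; this is the second asserted bound. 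For the first, I would shrink the compact set $K\subs[-T,T)$ accompanying this lift exactly as in the proof of Lemma \ref{lem:Borelpatheq} — intersecting with compact sets on which $\al$ and $(t,s)\mapsto c(t,s)$ are continuous, on which $\{s\mapsto\al_t(u(s)^\nu)\}_\nu$ is $\om$-equicontinuous (Lemma \ref{lem:alweqct}), and with $f^{-1}(K)$ for $f(t,s)=t+s$ — to obtain $K'\subs[-T,T)^2$ with $\mu(K')\ge 4T^2(1-\de_0)$, $\de_0$ arbitrarily small, on which, by Lemma \ref{lem:multi-equicont}, the family $\{(t,s)\mapsto u(t)^\nu\al_t(u(s)^\nu)c(t,s)(u(t+s)^\nu)^*\}_\nu$ is $\om$-equicontinuous. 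Then by Lemma \ref{lem:uniconverge} the convergence to $\|U(t)\al_t(U(s))c(t,s)U(t+s)^*-1\|_{\vph^\om}^\sharp$ is uniform on $K'$; combining the bound of the previous paragraph on $K'$ with the trivial bound $2\|\vph\|^{1/2}$ on $[-T,T)^2\setminus K'$ gives $\int_{-T}^T\int_{-T}^T\|u(t)^\nu\al_t(u(s)^\nu)c(t,s)(u(t+s)^\nu)^*-1\|_\vph^\sharp<\de$ for $\nu$ in a suitable $W\in\om$, $W\subs W_0$.

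The main obstacle is the periodization error estimate: $c$ obeys the cocycle identity everywhere on $\R^2$ but is not periodic, while $\tilde c$ is periodic but satisfies the identity only away from the wrap-around, so one must pinpoint precisely the set of $\la$ on which the telescoping in $g$ fails, verify it has $\la$-measure $O(T)$, and track the constants against (\ref{eq:4T2T}). The remaining work is bookkeeping — merging the lift from Lemma \ref{lem:Borelasymprep} with the $\om$-equicontinuity refinement underlying Lemma \ref{lem:Borelpatheq} into a single lift serving both conclusions, and keeping the finitely many members of $\om$ under control.
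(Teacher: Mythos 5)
Your proposal is correct and follows essentially the same route as the paper: define $U(t)=\Th(\tilde c(t,\cdot-t)^*)$, use the cocycle identity to show the coboundary expression collapses to $1$ except on a wrap-around set of $\la$-measure $O(T)$, bound the $\sharp$-norm via (\ref{eq:Thint}) and the choice of $S$ in (\ref{eq:4T2T}), then invoke Lemma \ref{lem:Borelpatheq} for the uniform convergence on a large compact set and Lemma \ref{lem:Borelasymprep} (after the change of variables $s=\la-t$ and periodicity) for the commutator estimate. Your reduction-mod-$S$ bookkeeping of where the telescoping fails is only a cosmetic variant of the paper's split of $[0,S)$ into $[t+s,S)$ and $[0,t+s)$.
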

\begin{proof}
By Lemma \ref{lem:theta},
we have
$\al_t(U(s))=\Th(\al_t(\tilde{c}(s,\cdot-t-s)^*))$.
Let $-T\leq s,t\leq T$.
When $t+s\geq0$,
then
\[
\tilde{c}(x,\la-t-s)
=c(x,\la-t-s)1_{[t+s,S)}(\la)
+c(x,\la-t-s+S)1_{[0,t+s)}(\la).
\]
Thus we have
\begin{align*}
&U(t)\al_t(U(s))c(t,s)U(t+s)^*
\\
&=
\Th(
\tilde{c}(t,\cdot-t)^*
\al_t(\tilde{c}(s,\cdot-t-s)^*)
c(t,s)\tilde{c}(t+s,\cdot-t-s)
)
\\
&=
\Th(
c(t,\cdot-t)^*
\al_t(c(s,\cdot-t-s)^*)
c(t,s)c(t+s,\cdot-t-s)
1_{[t+s,S)}(\cdot))
\\
&\quad
+
\Th(
\tilde{c}(t,\cdot-t)^*
\al_t(\tilde{c}(s,\cdot-t-s)^*)
c(t,s)\tilde{c}(t+s,\cdot-t-s)
1_{[0,t+s)}(\cdot))
\\
&=
\Th(
1_{[t+s,S)}(\cdot))
\\
&\quad
+
\Th(
\tilde{c}(t,\cdot-t)^*
\al_t(\tilde{c}(s,\cdot-t-s)^*)
c(t,s)\tilde{c}(t+s,\cdot-t-s)
1_{[0,t+s)}(\cdot)).
\end{align*}
Then
for
$\vph\in\Ph_+$
and
$t,s$ with $t+s\geq0$:
\begin{align*}
&
\|
U(t)\al_t(U(s))c(t,s)U(t+s)^*-1
\|_\vph^\sharp
\\
&\leq
\|1_{[t+s,S)}(\cdot)-1\|_{\vph\oti\mu}^\sharp
+
\|1_{[0,t+s)}(\cdot)\|_{\vph\oti\mu}^\sharp
\\
&=
2\|1_{[0,t+s)}(\cdot)\|_{\vph}^\sharp
\\
&=
2\|\vph\|^{1/2}(t+s)^{1/2}/S^{1/2}
\\
&\leq
2\sqrt{2}\|\vph\|^{1/2}T^{1/2}/S^{1/2}
<
\de/24T^2
\quad
\mbox{by }
(\ref{eq:4T2T}).
\end{align*}
The same inequality also holds when $t+s\leq0$.
Hence for all $t,s\in[-T,T]$
and $\vph\in\Ph$,
\begin{equation}
\label{Uc1}
\|
U(t)\al_t(U(s))c(t,s)U(t+s)^*-1
\|_\vph^\sharp
<\de/24T^2.
\end{equation}
Then by Lemma \ref{lem:Borelpatheq},
there exist a compact subset
$K\subs[-T,T]^2$
and a lift $(u(t)^\nu)_\nu$
of $U(t)$
such that
$\mu(K)\geq 4T^2(1-\de/24T^2)$,
and
for all $\vph\in\Ph$,
we have the following
uniform convergence
on $K$ as $\nu\to\om$:
\begin{align*}
\|u(t)^\nu\al_t(u(s)^{\nu})c(t,s)(u(t+s)^\nu)^*
-1\|_\vph^\sharp
\to
\|U(t)\al_t(U(s))c(t,s)U(t+s)^*
-1\|_{\vph^\om}^\sharp.
\end{align*}
By (\ref{Uc1}),
there exists $W\in\om$
such that if $(t,s)\in K$,
$\nu\in W$ and $\vph\in\Ph$,
then
\[
\|u(t)^\nu\al_t(u(s)^{\nu})c(t,s)(u(t+s)^\nu)^*
-1\|_\vph^\sharp
<\de/24T^2.
\]
If $\nu\in W$ and $\vph\in\Ph$,
then we obtain
\begin{align*}
&
\int_{-T}^T
dt
\int_{-T}^T
ds
\,
\|u(t)^\nu\al_t(u(s)^{\nu})c(t,s)(u(t+s)^\nu)^*
-1\|_\vph^\sharp
\\
&=
\int_{K}dtds\,
\|u(t)^\nu\al_t(u(s)^{\nu})c(t,s)(u(t+s)^\nu)^*
-1\|_\vph^\sharp
\\
&\quad
+
\int_{K^c}dtds\,
\|u(t)^\nu\al_t(u(s)^{\nu})c(t,s)(u(t+s)^\nu)^*
-1\|_\vph^\sharp
\\
&\leq
\mu(K)\de/24T^2
+
2\|\vph\|^{1/2}\mu(K^c)
\\
&\leq
4T^2(1+
2\|\vph\|^{1/2})\de/24T^2
<\de/2
\quad
\mbox{by }
(\ref{eq:4T2T}).
\end{align*}

Next if we have
$(1/2ST)
\int_{-T}^T
dt
\int_0^S
ds
\,
\|[\vph,c(t,s)]\|<\vep$
for all $\vph\in\Ph$,
then
\[
\frac{1}{2ST}
\int_{-T}^T
dt
\int_0^S
ds
\,
\|[\vph,\tilde{c}(t,s-t)]\|
=
\frac{1}{2ST}
\int_{-T}^T
dt
\int_0^S
ds
\,
\|[\vph,c(t,s)]\|
<\vep,
\]
and
we can apply 
Lemma \ref{lem:Borelasymprep}
to $U(t)=\Th(\tilde{c}(t,\cdot-t))$.
Then
we have the following
for $\nu$ close to $\om$:
\[
\frac{1}{2T}
\int_{-T}^T
\|[\vph,u(t)^\nu]\|_\vph\,dt
<3\vep
\quad
\mbox{for all }
\vph\in\Ph.
\]
\end{proof}

Let us take a 
decreasing sequence
$\{\vep_n\}_n$,
increasing sequences
$\{T_n\}_n$
and
$\{S_n\}_n$
such that
$0<\vep_n<1/n$,
$T_n,S_n>n$,
$(2\pi/S_n)\Z\cap H_\al=\{0\}$,
and
\begin{equation}
\label{eq:SnTn}
T_n+S_n<T_{n+1},
\quad
\sum_{k=n+1}^\infty
\sqrt{44T_k\vep_k}<\vep_{n},
\quad
4T_{n+1}^{1/2}/S_{n+1}^{1/2}<\vep_{n+1}/24T_{n+1}^2.
\end{equation}
The last inequality
satisfies (\ref{eq:4T2T}) for $\de=\vep_{n+1}$.
For a finite set $\Ph\subs\cM_*$,
we define
\[
d(\Ph):=\max
\left(\{1\}\cup\{\|\vph\|\mid \vph\in\Ph\}\right).
\]

\begin{thm}[2-cohomology vanishing]
\label{thm:Borel2van}
Let $(\al,c)$ be a Borel cocycle action
of $\R$ on a von Neumann algebra $\cM$.
Suppose that
$(\al,c)$ has the Rohlin property,
and
$\al$ is an ergodic flow on $Z(\cM)$
that is not conjugate to the translation
on $L^\infty(\R)$.
Then the following statements hold:
\begin{enumerate}
\item
The 2-cocycle
$c$ is a coboundary,
that is,
there exists a Borel unitary path $v$ in $\cM$
such that
\[
v(t)\al_t(v(s))c(t,s)v(t+s)^*=1
\quad
\mbox{for almost every }(t,s)\in\R^2;
\]

\item
If for some $n\geq 2$
and a finite set $\Ph\subs(\cM_*)_+$,
one has
\[
\int_{-T_{n+1}}^{T_{n+1}}
dt
\int_{-T_{n+1}}^{T_{n+1}}
ds
\,
\|c(t,s)-1\|_\varphi^{\sharp}
\leq
\vep_{n+1}
\quad
\mbox{for all }
\varphi\in \Phi,
\]
then
one can choose
$v(t)$ in (1)
such that
\[
\int_{-T_n}^{T_n}
\|v(t)-1\|_\vph^{\sharp}
\,dt
<\vep_{n-1}
d(\Ph)^{1/2}
\quad
\mbox{for all }
\varphi\in \Phi;
\]

\item
If for some $n\geq 2$
and a finite set $\Ph\subs\cM_*$,
one has
\[
\int_{-T_{n}}^{T_{n}}
dt
\int_{0}^{T_{n+1}}
ds\,
\|[c(t,s),\vph]\|
<\vep
\quad
\mbox{for all }
\varphi\in \Phi,
\]
then one can take $v$ in (1)
satisfying
\[
\int_{-T_{n}}^{T_{n}}\|[v(t),\vph]\|
\,dt
\leq
(3\vep_{n-1}+3\vep)
d(\Ph)
\quad
\mbox{for all }
\varphi\in \Phi.
\]
\end{enumerate}
\end{thm}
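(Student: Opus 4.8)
The plan is the by-now-standard \emph{approximate $2$-cohomology vanishing} scheme, with Lemma~\ref{lem:Borelperturb} as the single averaging step and Lemma~\ref{lem:Borelasymprep} supplying the extra commutator bookkeeping needed for (3). Fix an increasing sequence $\Ph_1\subs\Ph_2\subs\cdots$ of finite subsets of $(\cM_*)_+$ whose union is norm dense, together with the scales $\{\vep_k\},\{T_k\},\{S_k\}$ of (\ref{eq:SnTn}). I would construct recursively Borel cocycle actions $(\al^{(k)},c^{(k)})$ of $\R$ on $\cM$, with $(\al^{(0)},c^{(0)})=(\al,c)$, and Borel unitary paths $u^{(k)}$ so that $(\al^{(k)},c^{(k)})$ is the perturbation of $(\al^{(k-1)},c^{(k-1)})$ by $u^{(k)}$. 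At the $k$-th step one feeds $(\al^{(k-1)},c^{(k-1)})$ into Lemma~\ref{lem:Borelperturb} with $T=T_k$, $S=S_k$ (the last line of (\ref{eq:SnTn}) is exactly the hypothesis (\ref{eq:4T2T}) with $\de=\vep_k$) and $\Ph=\Ph_k$, obtaining, after fixing a suitable index in the lift given by the lemma, a Borel unitary path $u^{(k)}$ in $\cM$ with $\int_{-T_k}^{T_k}\!\!\int_{-T_k}^{T_k}\|c^{(k)}(t,s)-1\|_\vph^\sharp\,dt\,ds<\vep_k$ for all $\vph\in\Ph_k$ and, after a Lusin truncation, a pointwise bound $\|c^{(k)}(t,s)-1\|_\vph^\sharp<\vep_k/24T_k^2$ on a compact subset of $[-T_k,T_k]^2$ of measure $\geq 4T_k^2(1-\ka_k)$ with $\sum_k\ka_k<\infty$. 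The hypotheses of Lemma~\ref{lem:Borelperturb} persist along the recursion, since the Rohlin property and the equicontinuous parts are invariant under cocycle perturbation (Lemmas~\ref{lem:pertRohstab}, \ref{lem:cocpertstab}) and the restriction of $\al$ to $Z(\cM)$, hence $H_\al$ and the condition $H_\al\neq\R$, is untouched.

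Set $v^{(N)}(t):=u^{(N)}(t)\cdots u^{(1)}(t)$, a Borel unitary path, so that $(\al^{(N)},c^{(N)})$ is the perturbation of $(\al,c)$ by $v^{(N)}$. The crux is that, once $c^{(k-1)}$ is already close to $1$ over the box it was produced on, the next averaged correction $u^{(k)}=\Th(\widetilde{c^{(k-1)}}(t,\cdot-t)^*)$ is itself small on every fixed interval: formula (\ref{eq:Thint}) (in the lifted form used in Lemma~\ref{lem:Borelpatheq}) turns the $L^1$-closeness of $c^{(k-1)}$ into an $L^2$-bound for $\|u^{(k)}(t)-1\|_\vph^\sharp$, and Cauchy--Schwarz together with the summability $\sum_{k>n}\sqrt{44T_k\vep_k}<\vep_n$ of (\ref{eq:SnTn}) give $\sum_k\int_{-T_m}^{T_m}\|u^{(k)}(t)-1\|_\vph^\sharp\,dt<\infty$ for each fixed $m$. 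Hence $v^{(N)}(t)$ converges for almost every $t$ --- a Borel--Cantelli argument over the truncation exceptional sets turns this into an honest a.e.\ convergence to a Borel unitary path $v$ --- and $c^v(t,s)=\lim_N c^{(N)}(t,s)=1$ for almost every $(t,s)$, which is (1). (When $c$ is not assumed close to $1$, the very first correction is a large one: apply Lemma~\ref{lem:Borelperturb} once to reach the hypothesis of (2), then run the convergent recursion on the perturbed pair; the single big factor does not affect convergence of $v^{(N)}$.) For (2), I take the hypothesis $\int\!\!\int_{[-T_{n+1},T_{n+1}]^2}\|c-1\|_\vph^\sharp\leq\vep_{n+1}$ directly as the input to the recursion at level $n$, so that the first correction already only averages $c$ where it is close to $1$, and then sum the estimates $\int_{-T_n}^{T_n}\|u^{(k)}(t)-1\|_\vph^\sharp\,dt\lesssim\sqrt{44T_k\vep_k}\,d(\Ph)^{1/2}$ over $k\geq n$ to get the bound $\vep_{n-1}d(\Ph)^{1/2}$. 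For (3), one replaces the closeness hypothesis by the commutator hypothesis and carries the \emph{second} conclusion of Lemma~\ref{lem:Borelperturb} (which rests on Lemma~\ref{lem:Borelasymprep}) through the recursion: the first step supplies $\frac{1}{2T_n}\int_{-T_n}^{T_n}\|[u^{(n)}(t),\vph]\|\,dt<3\vep$, the later, almost central, steps contribute at most $3\vep_{n-1}$ in total, and the factor $d(\Ph)$ enters on normalising the functionals.

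The main obstacle will be the convergence of the infinite composition $v^{(N)}\to v$ together with the propagation of the quantitative bounds through the \emph{changing} actions $\al^{(k)}$: one has to choose $T_k\nearrow\infty$, $S_k\nearrow\infty$, $\vep_k\searrow0$ (this is the content of (\ref{eq:SnTn})) so that on the one hand the scale constraint $4T_k^{1/2}/S_k^{1/2}<\vep_k/24T_k^2$ demanded by the averaging step holds, and on the other hand the corrections $u^{(k)}$ for large $k$ are negligible on every compact interval, which is precisely what the summability $\sum_{k>n}\sqrt{44T_k\vep_k}<\vep_n$ buys. Two further nuisances: every equicontinuity and norm estimate is available only after Lusin-type restriction to compact sets of near-full measure, so the exceptional measures must be kept summable to extract an a.e.\ limit; and at each stage one must check that the perturbed pair is still a genuine Borel cocycle action in order to re-apply Lemma~\ref{lem:Borelperturb}.
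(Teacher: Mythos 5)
There is a genuine gap at the crux of your argument. You claim that once $c^{(k-1)}$ is small on the box it was produced on, the next correction $u^{(k)}(t)=\Th(\widetilde{c^{(k-1)}}(t,\cdot-t)^*)$ supplied by Lemma~\ref{lem:Borelperturb} is itself small, via (\ref{eq:Thint}). But (\ref{eq:Thint}) gives
\[
\|u^{(k)}(t)-1\|_{\vph^\om}^{\sharp\,2}
=\frac{1}{S_k}\int_0^{S_k}\|c^{(k-1)}(t,s)-1\|_\vph^{\sharp\,2}\,ds ,
\]
an average of $c^{(k-1)}(t,\cdot)$ over the \emph{entire} Rohlin period $[0,S_k)$, and the constraint (\ref{eq:4T2T}) forces $S_k\gtrsim T_k^5/\vep_k^2\gg T_{k}$ (indeed $S_k$ is comparable to $T_{k+1}$). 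The inductive hypothesis only controls $c^{(k-1)}$ on a box of side $\sim T_k$ (or $T_{k+1}$), so most of the range of integration lies where nothing is known and $u^{(k)}(t)$ may be far from $1$. Trying to repair this by demanding smallness of $c^{(k-1)}$ on $[-T_k,T_k]\times[0,S_k)$ only pushes the problem up a level: producing that smallness requires averaging over a still longer period, so the domain on which the input must be small grows strictly faster than the domain on which the output is small. The naive iteration of the $2$-cocycle averaging step therefore cannot converge, and your summability estimate $\int_{-T_m}^{T_m}\|u^{(k)}(t)-1\|_\vph^\sharp\,dt\lesssim\sqrt{44T_k\vep_k}$ has no justification.

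The missing idea is an approximate vanishing of \emph{1}-cohomology interposed at every step. In the paper's proof one first produces, by a second application of Lemma~\ref{lem:Borelperturb} (or by Lemma~\ref{lem:propinf2coho} in the properly infinite case), a possibly \emph{large} unitary path $u$ that trivializes $c_n$ to accuracy $\vep_{n+2}$ on $J_{n+2}$; since $c_n$ is already small on $J_{n+1}$, this $u$ satisfies the approximate $1$-cocycle identity $u(t+s)^*u(t)\al_t^n(u(s))\approx1$ on $J_{n+1}$. One then averages $u$ itself, not $c$, over the Rohlin tower: with $W=\Th(\tilde u)$ the product $\tilde u(\cdot)^*u(t)\al_t^n(\tilde u(\cdot-t))$ collapses to $1$ wherever the cocycle identity holds, and the identity is only needed on $[0,T_n]\times[0,S_n]\subset J_{n+1}$ --- this is exactly what the condition $T_n+S_n<T_{n+1}$ in (\ref{eq:SnTn}) is for. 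The resulting correction $v_{n+1}(t)=(w^\nu)^*u(t)\al_t^n(w^\nu)$ is small on $[-T_n,T_n]$ (giving the summability you need) while still trivializing the cocycle to accuracy $\vep_{n+2}$ on $J_{n+2}$, because conjugating the perturbed $2$-cocycle by the fixed unitary $w^\nu$ does not change its trace norm. Without this two-stage step your recursion does not close; the rest of your outline (the initial large perturbation for (1), the bookkeeping for (2), and deriving (3) by a preliminary commutator-small perturbation) is consistent with the paper.
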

\begin{proof}
We may assume that $\Ph$ is contained in
the unit ball of $\cM_*$.

(1), (2).
First we assume that $\cM$ is finite.
Let $\ta\in\cM_*$ be a faithful tracial state.
Let $I_n:=[-T_n,T_n]$
and $J_n:=I_n\times I_n$.

Employing Lemma \ref{lem:Borelperturb},
we have a Borel unitary path $v_n(t)$
such that
with $\al_t^n:=\Ad v_n(t)\circ\al_t$
and $c_n(t,s):=v_n(t)\al_t(v_n(s))c(t,s)v_n(t+s)^*$,
we get
\begin{equation}
\label{eq:Jncn}
\int_{J_{n+1}}\|c_n(t,s)-1\|_2
\,dtds
<
\vep_{n+1},
\end{equation}
where $\|\cdot\|_2=\|\cdot\|_\ta=\|\cdot\|_\ta^\sharp$.
It suffices to prove (1) and (2)
for $\al^n$.
Then $(\al^n,c^n)$ has the Rohlin property by Lemma \ref{lem:pertRohstab}.
Again by Lemma \ref{lem:Borelperturb},
there exists a Borel path $u(t)$
such that
\[
\int_{J_{n+2}}
\|u(t)\al_t^n(u(s))c_n(t,s)u(t+s)^*-1\|_2
\,dtds
<\vep_{n+2}.
\]
By (\ref{eq:Jncn}),
we have
\begin{equation}\label{eq:Jn1}
\int_{J_{n+1}}
\|u(t)\al_t^n(u(s))u(t+s)^*-1\|_2
\,dtds
<2\vep_{n+1}.
\end{equation}

Let $e(\la)\in\Meq$ be a Rohlin projection
over $[0,S_n)$ with respect to $\al$.
Then
$e(\la)$ is also a Rohlin projection
for $\al^n$ by Lemma \ref{lem:pertRohstab}.
Set $W:=\Th(\tilde{u}(\cdot))\in\Mequ$,
where $\tilde{u}$ is the periodization
of $u$ with period $S_n$.
By Lemma \ref{lem:theta},
$\al_t^n(W)=\Th(\al_t^n(\tilde{u}(\cdot-t)))$.
For $0\leq t\leq T_n$,
we have
\[
\tilde{u}(\la-t)
=u(\la-t)1_{[t,S_n)}(\la)
+u(\la-t+S_n)1_{[0,t)}(\la).
\]
Then
\begin{align*}
W^*u(t)\al_t^n(W)
&=
\Th(\tilde{u}(\cdot)
u(t)\al_t^n(\tilde{u}(\cdot-t)))
\\
&=
\Th(u(\cdot)^*
u(t)\al_t^n(u(\cdot-t))
1_{[t,S_n)}(\cdot))
\\
&\quad+
\Th(u(\cdot)^*
u(t)\al_t^n(u(\cdot-t+S_n))1_{[0,t)}(\cdot)).
\end{align*}
Thus
\begin{align*}
&\int_{0}^{T_n}
\|W^*u(t)\al_t^n(W)-1\|_2^2
\,dt
\\
&\leq
2\int_{0}^{T_n}
\|
\Th((u(\cdot)^*u(t)\al_t^n(u(\cdot-t)-1)
1_{[t,S_n)}(\cdot))\|_2^2
\,dt
\\
&\quad
+
2\int_{0}^{T_n}
\|\Th(u(\cdot)^*
u(t)\al_t^n(u(\cdot-t+S_n)-1)1_{[0,t)}(\cdot))
\|_2^2
\,dt
\\
&\leq
\frac{2}{S_n}
\int_{0}^{T_n}
dt
\int_{t}^{S_n}
ds\,
\|u(s)^*u(t)\al_t^n(u(s-t))-1\|_2^2
\\
&\quad
+
\frac{2}{S_n}
\int_{0}^{T_n}
4t
\,dt
\\
&\leq
\frac{2}{S_n}
\int_{J_{n+1}}
\|u(t+s)^*u(t)\al_t^n(u(s))-1\|_2^2
\,
dtds
\\
&\quad
+
4T_n^2/S_n
\\
&\leq
8\vep_{n+1}/S_n+4T_n^2/S_n<9\vep_n
\quad\mbox{by }
(\ref{eq:SnTn}),
\
(\ref{eq:Jn1}).
\end{align*}
Similarly, we have the same inequality as the above
for the integration over $[-T_n,0]$.
Hence
\[
\int_{-T_n}^{T_n}
\|W^*u(t)\al_t^n(W)-1\|_2^2
\,dt
<18\vep_n.
\]

Let $(w^\nu)_\nu$ be a representing
unitary sequence of $W\in\Mequ$.
Take a compact set $K\subs[-T_n,T_n]$
with
$\mu(K)>2T_n(1-\vep_n/2T_n)$,
such that
$\al$, $v_n$
and $u$
are continuous on $K$,
and moreover,
the family
$\{K\ni t\mapsto \al_t^n(w^\nu)\}_\nu$
is $\om$-equicontinuous.
Then we have the following estimate
by Lemma \ref{lem:uniconverge}:
\[
\lim_{\nu\to\om}
\int_{K}
\|(w^\nu)^*u(t)\al_t^n(w^\nu)-1\|_2^2
\,dt
=
\int_{K}
\|W^*u(t)\al_t^n(W)-1\|_2^2
\,dt
<18\vep_n.
\]
Hence
\begin{align*}
&\lim_{\nu\to\om}
\int_{-T_n}^{T_n}
\|(w^\nu)^*u(t)\al_t^n(w^\nu)-1\|_2^2
\,dt
\\
&<18\vep_n
+
\lim_{\nu\to\om}
\int_{K^c\cap[-T_n,T_n]}
\|(w^\nu)^*u(t)\al_t^n(w^\nu)-1\|_2^2
\,dt
\\
&\leq
18\vep_n
+
4\mu(K^c\cap[-T_n,T_n])
\\
&<
18\vep_n
+
4\vep_n=22\vep_n.
\end{align*}

Thus for some $\nu\in\N$,
we obtain
\[
\int_{-T_n}^{T_n}
\|(w^\nu)^*u(t)\al_t^n(w^\nu)-1\|_2^2
\,dt
<22\vep_n.
\]
We set
$v_{n+1}(t):=(w^\nu)^*u(t)\al_t^n(w^\nu)$
for $t\in\R$.
Then
\begin{align*}
\int_{-T_n}^{T_n}\|v_{n+1}(t)-1\|_2\,dt
&\leq
(2T_n)^{1/2}
\left(\int_{-T_n}^{T_n}\|v_{n+1}(t)-1\|_2^2\,dt\right)^{1/2}
\\
&
<(44T_n\vep_n)^{1/2},
\end{align*}
and
\[
\int_{J_{n+2}}
\|v_{n+1}(t)\al_t^n(v_{n+1}(s))
c^n(t,s)v_{n+1}(t+s)^*-1\|_2
\,dtds
<\vep_{n+2}.
\]
Let
$(\al^{n+1},c_{n+1})$ be the perturbation
of $(\al^n,c_n)$ by $v_{n+1}$.

Repeating the above process,
we obtain a family of Borel cocycle actions
$(\al^k,c_k)$
and Borel unitary paths $v_k$,
$k=n,n+1,\dots$
such that
$(\al^{k+1},c_{k+1})$
is the perturbation
of $(\al^k,c_k)$ by $v_{k+1}$,
and
for $k\geq n+1$,
\[
\int_{I_{k-1}}
\|v_k(t)-1\|_2
\,dt
<
(44T_{k-1}\vep_{k-1})^{1/2},
\quad
\int_{J_{k+1}}
\|c_k(t,s)-1\|_2
\,dtds
<\vep_{k+1}.
\]
Then a subsequence of
$v_k(t)v_{k-1}(t)\cdots v_{n+1}(t)$
strongly converges
to a Borel path $v(t)$
almost everywhere on $\R$,
and
we have
$v(t)\al_t^n(v(s))c(t,s)v(t+s)^*=1$
almost everywhere on $\R^2$.
On the norm $\|v(t)-1\|_2$,
we have
\begin{align*}
\int_{-T_n}^{T_n}
\|v(t)-1\|_2\,dt
&=\limsup_{k\to\infty}
\int_{-T_n}^{T_n}
\|v_k(t)v_{k-1}(t)\cdots v_{n+1}(t)-1\|_2
\,dt\\
&\leq
\sum_{k=n}^\infty\sqrt{44 T_k\vep_k}
<\vep_{n-1}
\quad
\mbox{by }
(\ref{eq:SnTn}).
\end{align*}

Next we consider the case
that $\cM$ is properly infinite.
By Lemma \ref{lem:Borelperturb},
we perturb $(\al,c)$ to $(\al^n,c^n)$ so that
\[
\int_{J_{n+1}}\|c^n(t,s)-1\|_\vph^\sharp\,dtds
<\vep_{n+1}
\quad
\vph\in\Ph.
\]
By Lemma \ref{lem:propinf2coho},
we can take
a Borel map $u(t)$ as
\[
u(t)\alpha_t^n(u(s))c^n(t,s)u(t+s)^*=1
\quad
\mbox{for all }
(t,s)\in\R^2.
\]
Thus
we have
\[
\int_{J_{n+1}}
\|u(t+s)^*u(t)\alpha_t^n(u(s))-1\|_\vph^{\sharp}
\,dtds
<\varepsilon_{n},
\quad
|t|,|s|\leq T_{n+1},
\
\vph\in\Ph.
\]

A computation as given in the finite case
shows that for $W:=\Th(\tilde{u})$,
we have
\[
\int_{-T_n}^{T_n}
\|W^*u(t)\alpha^n_t(W)-1\|_{\vph}^{\sharp\,2}
\,dt
<18\vep_n
\quad
\vph\in\Ph.
\]
Then we can prove (1) and (2) in a similar
way to the above.

(3).
We may assume that $\Ph^*=\Ph$.
By Lemma \ref{lem:Borelperturb},
we find a Borel unitary path $w$
such that
\[
\int_{-T_{n+1}}^{T_{n+1}} dt
\int_{-T_{n+1}}^{T_{n+1}} ds\,
\|w(t)\al_t(w(s))c(t,s)w(t+s)^*-1\|_{|\vph|}^\sharp
<\vep_{n+1},
\]
and
$\int_{-T_n}^{T_n}
\|[\vph,w(t)]\|
\,dt<3\vep$
for all $\vph\in \Ph$.
Let $(\al',c')$ be the perturbation
of $(\al,c)$ by $w$.
Then $(\al',c')$ is a Borel cocycle action
with Rohlin property.
By (2),
there exists a Borel unitary path
$v$ such that
$v(t)\al_t'(v(s))c'(t,s)v(t+s)^*=1$
almost everywhere on $\R^2$,
and
$\int_{-T_n}^{T_n}
\|v(t)-1\|_{|\vph|}^\sharp\,dt
<\vep_{n-1}$
for all $\vph\in\Ph$.
Thus
\begin{align*}
\int_{-T_n}^{T_n}
\|[\vph,v(t)]\|\,dt
&=
\int_{-T_n}^{T_n}
\|[\vph,v(t)-1]\|\,dt
\\
&\leq
\int_{-T_n}^{T_n}
\|v(t)^*-1\|_{|\vph|}\,dt
+
\int_{-T_n}^{T_n}
\|v(t)-1\|_{|\vph^*|}\,dt
\\
&<
2\sqrt{2}\vep_{n-1}.
\end{align*}
Then
we obtain
\begin{align*}
\int_{-T_n}^{T_n}
\|[\vph,v(t)w(t)]\|\,dt
&\leq
\int_{-T_n}^{T_n}
\|[\vph,v(t)]\|\,dt
+
\int_{-T_n}^{T_n}
\|[\vph,w(t)]\|\,dt
\\
&<
2\sqrt{2}\vep_{n-1}+3\vep.
\end{align*}
\end{proof}

\subsection{Approximation by cocycle perturbation}
Let $\al,\be$ be flows on a von Neumann algebra $\cM$
with $\al_t\be_t^{-1}\in\oInt(\cM)$.
Then
we can approximate $\be_t$ by a perturbation
of $\al_t$ for finite $t$'s.
We would like to connect these unitaries
by a continuous path,
but we have not solved this problem.
We can do for an ITPFI factor
with a lacunary product state
(see Proposition \ref{prop:appro}).
Instead, we connect those by a Borel unitary path.

\begin{lem}
\label{lem:adBorel}
For any $T>0$, $\vep>0$
and a compact set $\Ph\subs\cM_*$,
there exists a Borel unitary path
$\{u(t)\}_{|t|\leq T}$
such that
\[
\|\Ad u(t)\circ\al_t(\vph)-\be_t(\vph)\|
<\vep
\quad
\mbox{for all }
\vph\in\Ph,
\
t\in[-T,T].
\]
\end{lem}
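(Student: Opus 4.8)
The plan is to take $u$ to be a step function: cover $[-T,T]$ by finitely many short intervals, assign a single unitary to each, and invoke the hypothesis at one point of each interval together with the continuity of the flows $\al,\be$. This yields a Borel (in fact piecewise constant) path and avoids any measurable selection.

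First I would reduce to the case that $\Ph$ is finite. Since $\Ph$ is norm compact, choose a finite $\vep/3$-net $\Ph_0\subs\Ph$. Because $\Ad u$ (for $u\in\cM^{\rm U}$) and every $\be_t$ act isometrically on $\cM_*$, a bound $\|\Ad u(t)\circ\al_t(\ps)-\be_t(\ps)\|<\vep/3$ for all $\ps\in\Ph_0$ propagates, by the triangle inequality, to $\|\Ad u(t)\circ\al_t(\vph)-\be_t(\vph)\|<\vep$ for all $\vph\in\Ph$. So we may assume $\Ph$ is finite.

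With $\Ph$ finite, the maps $t\mapsto\al_t(\vph)$ and $t\mapsto\be_t(\vph)$ are norm continuous (the defining continuity of a flow, extended by the group law), hence uniformly continuous on $[-T,T]$; pick $\de>0$ so that $|t-s|\le\de$ forces $\|\al_t(\vph)-\al_s(\vph)\|<\vep/3$ and $\|\be_t(\vph)-\be_s(\vph)\|<\vep/3$ for every $\vph\in\Ph$, and fix a partition $-T=t_0<t_1<\dots<t_n=T$ with $t_i-t_{i-1}\le\de$. Choose points $s_i\in[t_{i-1},t_i]$. Since $\oInt(\cM)$ is a subgroup of $\Aut(\cM)$ (it is the closure of the subgroup $\Int(\cM)$), the hypothesis $\al_{s_i}\be_{s_i}^{-1}\in\oInt(\cM)$ gives $\be_{s_i}\al_{s_i}^{-1}\in\oInt(\cM)$, so $\be_{s_i}=(\be_{s_i}\al_{s_i}^{-1})\circ\al_{s_i}$ is a $u$-limit of automorphisms of the form $\Ad w\circ\al_{s_i}$, $w\in\cM^{\rm U}$ (right composition by $\al_{s_i}$ being continuous on $\Aut(\cM)$). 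Hence there is $u_i\in\cM^{\rm U}$ with $\|\Ad u_i\circ\al_{s_i}(\vph)-\be_{s_i}(\vph)\|<\vep/3$ for all $\vph\in\Ph$.

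Finally I would set $u(t):=u_i$ for $t\in[t_{i-1},t_i)$, $i=1,\dots,n$, and $u(T):=u_n$; this is a Borel unitary path on $[-T,T]$. For $t\in[t_{i-1},t_i]$ and $\vph\in\Ph$, using that $\Ad u_i$ is isometric on $\cM_*$,
\[
\|\Ad u_i\circ\al_t(\vph)-\be_t(\vph)\|
\le\|\al_t(\vph)-\al_{s_i}(\vph)\|+\|\Ad u_i\circ\al_{s_i}(\vph)-\be_{s_i}(\vph)\|+\|\be_{s_i}(\vph)-\be_t(\vph)\|<\vep ,
\]
which is the assertion. There is essentially no obstacle here; the only point requiring care is that the modulus of continuity of $t\mapsto\al_t(\vph)$ and $t\mapsto\be_t(\vph)$ be uniform over $\vph\in\Ph$, which is precisely what the reduction to finite $\Ph$ supplies.
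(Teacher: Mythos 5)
Your proof is correct and follows essentially the same route as the paper's: reduce to a finite set of functionals by compactness, use uniform continuity of the flows on $[-T,T]$ to fix a fine partition, pick an approximating unitary at one point of each subinterval from the hypothesis $\al_t\be_t^{-1}\in\oInt(\cM)$, and glue them into a piecewise constant (hence Borel) path, closing with the same three-term triangle inequality.
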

\begin{proof}
Since $\Ph$ is compact,
we can take a finite set
$\Ph_0\subs\Ph$
such that
each $\vph\in\Ph$ has $\vph_0\in\Ph_0$
such that
$\|\vph-\vph_0\|<\vep/4$.
Choose a large $N\in\N$
so that
\[
\|\al_t(\vph)-\vph\|<\vep/6,
\quad
\|\be_t(\vph)-\vph\|<\vep/6
\quad
\mbox{for all }
\vph\in\Ph_0,
\
|t|\leq T/N.
\]
For each $t_j:=jT/N$,
$j=-N,\dots,N$,
we can take a unitary $u_j$
such that
\[
\|\Ad u_j\circ\al_{t_j}(\vph)-\be_{t_j}(\vph)\|
<\vep/6
\quad
\mbox{for all }
\vph\in\Ph_0,
\
j=-N,\dots,N.
\]
We put
a unitary
$u(t):=
\sum_{j=-N}^{N-1}u_j 1_{[t_j,t_{j+1})}(t)+u_N 1_{\{t_N\}}(t)$.
Then for $t\in[t_j,t_{j+1})$, $j=-N,\dots,N-1$
and $\vph\in\Ph_0$,
we have
\begin{align*}
\|\Ad u(t)\circ\al_t(\vph)-\be_t(\vph)\|
&=
\|\Ad u_j\circ\al_t(\vph)-\be_t(\vph)\|
\\
&\leq
\|\Ad u_j(\al_t(\vph)-\al_{t_j}(\vph))\|
+
\|\Ad u_j\circ\al_{t_j}(\vph)-\be_{t_j}(\vph)\|
\\
&\quad
+
\|\be_{t_j}(\vph)-\be_t(\vph)\|
\\
&<\vep/6+\vep/6+\vep/6=\vep/2.
\end{align*}
For $\vph\in\Ph$,
take $\vph_0\in\Ph_0$ such that
$\|\vph-\vph_0\|<\vep/4$.
Then
\begin{align*}
\|\Ad u(t)\circ\al_t(\vph)-\be_t(\vph)\|
&\leq
\|\Ad u(t)
\left(
\al_t(\vph)
-\al_t(\vph_0)
\right)\|
\\
&\quad
+
\|\Ad u(t)\circ\al_t(\vph_0)-\be_t(\vph_0)\|
+\|\be_t(\vph_0)-\be_t(\vph)\|
\\
&\leq
\|\vph-\vph_0\|+\vep/2+\|\vph-\vph_0\|
<\vep.
\end{align*}
\end{proof}

\begin{lem}\label{lem:Borel2-cocycle1}
For any $T>0$, $\vep>0$
and a finite set $\Phi\subset \cM_*$,
there exists a Borel unitary path
$\{u(t)\}_{|t|\leq 2T}$
such that 
\[
\|\Ad u(t)\circ \alpha_t(\varphi)-\beta_t(\varphi)\|
<\varepsilon
\quad
\mbox{for all }
\varphi\in \Phi,
\
t\in[-2T,2T]
\]
\[
\|[u(t)\alpha_t(u(s))u(t+s)^*, \vph]\|
<\varepsilon
\quad
\mbox{for all }
\varphi\in \Phi,
\
t,s\in [-T,T].
\]
\end{lem}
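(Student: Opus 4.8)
The plan is to deduce this lemma from Lemma~\ref{lem:adBorel} by a bookkeeping argument. The key observation is the elementary identity
\[
\|[w,\vph]\|=\|w\vph w^*-\vph\|=\|\Ad w(\vph)-\vph\|\qquad(w\in\cM^{\rm U},\ \vph\in\cM_*),
\]
which holds because right multiplication by the unitary $w^*$ is an isometry of $\cM_*$ and $[w,\vph]w^*=w\vph w^*-\vph$. Writing $\gamma_t:=\Ad u(t)\circ\al_t$ and $c_u(t,s):=u(t)\al_t(u(s))u(t+s)^*$, a direct computation gives $\Ad c_u(t,s)=\gamma_t\circ\gamma_s\circ\gamma_{t+s}^{-1}$, so by the identity above the second estimate we must prove is exactly
\[
\|\gamma_t\gamma_s\gamma_{t+s}^{-1}(\vph)-\vph\|<\vep\qquad(\vph\in\Phi,\ t,s\in[-T,T]).
\]
Since $\be$ is a genuine flow, $\be_t\be_s\be_{t+s}^{-1}=\id$; hence, once $\gamma_r$ is sufficiently close to $\be_r$ on a suitable set of functionals for all $|r|\le 2T$, the composite $\gamma_t\gamma_s\gamma_{t+s}^{-1}$ will be close to $\id$ there.

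To make the three successive comparisons close up, I would enlarge $\Phi$ to the set $\Psi:=\{\be_r(\vph)\mid |r|\le 2T,\ \vph\in\Phi\}\subset\cM_*$. This set is compact because $\be$ is strongly continuous and $\Phi$ is finite, and it contains $\Phi$ (take $r=0$). Apply Lemma~\ref{lem:adBorel} with $(2T,\vep/3,\Psi)$ in place of $(T,\vep,\Phi)$ to obtain a Borel unitary path $\{u(t)\}_{|t|\le 2T}$ such that $\|\gamma_t(\psi)-\be_t(\psi)\|<\vep/3$ for all $\psi\in\Psi$ and $|t|\le 2T$ (and note $u$ is automatically a Borel path, being produced by that lemma). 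The first asserted inequality is then immediate from $\Phi\subset\Psi$ and $\vep/3<\vep$.

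For the second, fix $\vph\in\Phi$ and $t,s\in[-T,T]$, so that $|t|,|s|,|t+s|\le 2T$ and each of $\gamma_t,\gamma_s,\gamma_{t+s}$ is an isometric automorphism on $\cM_*$ with $\gamma$--$\be$ comparison available on $\Psi$. Using that $\be_{-(t+s)}(\vph)\in\Psi$ one first gets $\|\gamma_{t+s}^{-1}(\vph)-\be_{-(t+s)}(\vph)\|<\vep/3$; applying $\gamma_s$ and using $\be_{-(t+s)}(\vph)\in\Psi$ together with $\be_s\be_{-(t+s)}=\be_{-t}$ gives $\|\gamma_s\gamma_{t+s}^{-1}(\vph)-\be_{-t}(\vph)\|<2\vep/3$; finally applying $\gamma_t$ and using $\be_{-t}(\vph)\in\Psi$ with $\be_t\be_{-t}=\id$ gives $\|\gamma_t\gamma_s\gamma_{t+s}^{-1}(\vph)-\vph\|<\vep$. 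By the identity above this is $\|[c_u(t,s),\vph]\|<\vep$, as required.

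There is no genuine obstacle here; the only point needing a little care is choosing the enlarged, sufficiently $\be$-saturated compact set $\Psi$, which is precisely what lets each of the three comparison steps be carried out against functionals lying in the set on which Lemma~\ref{lem:adBorel} controls $\gamma$.
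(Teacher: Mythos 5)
Your proof is correct and follows essentially the same route as the paper: the same enlarged compact set $\Psi=\{\be_r(\vph)\mid \vph\in\Ph,\ |r|\le 2T\}$, the same application of Lemma \ref{lem:adBorel} with tolerance $\vep/3$ on $[-2T,2T]$, and the same three-fold triangle inequality, merely organized via the identity $\Ad(u(t)\al_t(u(s))u(t+s)^*)=\gamma_t\gamma_s\gamma_{t+s}^{-1}$ rather than the paper's forward comparison of $\Ad(u(t)\al_t(u(s)))\circ\al_{t+s}$ with $\be_{t+s}$ followed by evaluation at $\be_{-t-s}(\vph)$.
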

\begin{proof}
Let
$\Ps:=
\{\be_t(\vph)\mid \vph\in\Ph, |t|\leq 2T\}$,
which is a compact set by continuity of $\be$.
By the previous lemma,
we can take a Borel unitary path
$u(t)$
such that
\[
\|\Ad u(s)\circ \alpha_s(\ps)-\beta_s(\ps)\|
<\vep/3
\quad
\mbox{for all }
\ps\in\Ps,
s\in[-2T,2T].
\]
Then for $t,s\in[-T,T]$ and $\ps\in\Ps$,
we have
\begin{align*}
&\|\Ad u(t)\al_t(u(s))\circ \alpha_{t+s}(\ps)-
\beta_{t+s}(\ps)\|
\\
&\leq
\|\Ad u(t)\circ\al_t(\Ad u(s)\circ
\alpha_{s}(\ps))
-
\Ad u(t)\circ\al_t(\beta_{s}(\ps))\|
\\
&\quad
+
\|\Ad u(t)\circ\al_t(\beta_{s}(\ps))
-\be_t(\be_s(\ps))\|
\\
&=
\|\Ad u(s)\circ \alpha_s(\ps)-\beta_s(\ps)\|
+
\|\Ad u(t)\circ\al_t(\beta_{s}(\ps))
-\be_t(\be_s(\ps))\|
\\
&<
\vep/3+\vep/3=2\vep/3.
\end{align*}
Together with
$\|\al_{t+s}(\ps)-\Ad u(t+s)^*\be_{t+s}(\ps)\|<\vep/3$,
we have
\[
\|\Ad u(t)\al_t(u(s))u(t+s)^*
\circ\be_{t+s}(\ps)
-
\be_{t+s}(\ps)\|<\vep
\quad
\mbox{for all }
\ps\in\Ps,
\
t,s\in[-T,T].
\]
Since $\be_{-t-s}(\vph)\in\Ps$
for $\vph\in\Ph$,
we are done.
\end{proof}

\begin{lem}\label{lem:Borelapproby1cocycle}
Suppose that
$\alpha$ is a centrally ergodic
Rohlin flow on $\cM$
such that $\Sp_d(\al|_{Z(\cM)})\neq\R$.
Then for any $T>0$, $\vep>0$
and finite set $\Phi\subset \cM_*$,
there exists an $\al$-cocycle $u$
for $\alpha$ such that
\[
\int_{-T}^T
\left\|\Ad u(t)\circ \alpha_t(\varphi)
-\beta_t(\varphi) \right\|
\,dt
<\vep
\quad\mbox{for all }
\varphi\in \Phi.
\]
\end{lem}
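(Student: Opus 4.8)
The plan is to feed the two preceding results into each other: Lemma~\ref{lem:Borel2-cocycle1} provides a Borel perturbation of $\al$ that approximates $\be$ with an \emph{almost central} $2$-cocycle, and Theorem~\ref{thm:Borel2van}(1),(3) trivialises that $2$-cocycle by a Borel unitary path which is \emph{itself} almost central; multiplying the two paths produces a Borel $\al$-cocycle, which is then upgraded to a genuine (strongly continuous) one. I would first fix the bookkeeping: since $T_n\to\infty$ and $\vep_n\to0$, choose $n\ge2$ with $T_n\ge T$ and $\vep_{n-1}$ as small as needed; enlarge $\Phi$ to a finite set $\Psi\subset(\cM_*)_+$, normalised to the unit ball and satisfying $\Psi^*=\Psi$, containing $\Phi$ together with a $\de$-net of the norm-compact set $\{\be_t(\vph)\mid\vph\in\Phi,\ |t|\le T_n\}$; and introduce small auxiliary constants $\vep_0,\vep_1,\de>0$ to be adjusted at the end.

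Step 1. Apply Lemma~\ref{lem:Borel2-cocycle1} with data $T_{n+1}$, $\vep_0$, $\Psi$ to get a Borel unitary path $\{v_0(t)\}_{|t|\le 2T_{n+1}}$ with $\|\Ad v_0(t)\circ\al_t(\vph)-\be_t(\vph)\|<\vep_0$ for $|t|\le2T_{n+1}$, $\vph\in\Psi$, and with $c_0(t,s):=v_0(t)\al_t(v_0(s))v_0(t+s)^*$ satisfying $\|[c_0(t,s),\vph]\|<\vep_0$ for $|t|,|s|\le T_{n+1}$, $\vph\in\Psi$. Put $v_0(t):=1$ for $|t|>2T_{n+1}$, so that $(\al^{v_0},c_0)$ is a Borel cocycle action of $\R$ on $\cM$. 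By Lemma~\ref{lem:pertRohstab} it has the Rohlin property; since $\Ad v_0(t)$ and $\Ad c_0(t,s)$ act trivially on $Z(\cM)$ we have $\al^{v_0}=\al$ on $Z(\cM)$, hence $\al^{v_0}$ is centrally ergodic, and the hypothesis $\Sp_d(\al|_{Z(\cM)})\ne\R$ prevents $\al^{v_0}|_{Z(\cM)}$ from being conjugate to the translation flow on $L^\infty(\R)$ (whose point spectrum is all of $\R$, as $e^{ip\cdot}$ is an eigenvector). Thus Theorem~\ref{thm:Borel2van} applies. Choosing $\vep_0$ small so that $2T_nT_{n+1}\vep_0<\vep_1$, the pointwise bound on $\|[c_0,\vph]\|$ verifies the hypothesis of part~(3), and we obtain a Borel unitary path $w$ on $\R$ with
\[
w(t)\,\al_t^{v_0}(w(s))\,c_0(t,s)\,w(t+s)^*=1\quad\text{for a.e. }(t,s)\in\R^2
\]
and $\int_{-T_n}^{T_n}\|[w(t),\vph]\|\,dt<3\vep_{n-1}+3\vep_1$ for all $\vph\in\Psi$.

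Step 2. Set $u(t):=w(t)v_0(t)$. Substituting $\al_t^{v_0}=\Ad v_0(t)\circ\al_t$ and the formula for $c_0$ into the coboundary relation, the $v_0$-factors telescope and one gets $u(t)\al_t(u(s))=u(t+s)$ for a.e. $(t,s)$, so $u$ is a Borel $\al$-cocycle (defined a.e.). Hence $\al^u_t:=\Ad u(t)\circ\al_t$ is a Borel homomorphism of $\R$ into the Polish group $\Aut(\cM)$, therefore continuous, so $\al^u$ is a flow; moreover $\al^u_t\al_{-t}=\Ad u(t)\in\Int(\cM)$ for a.e. $t$, hence for every $t$ (write $t$ as a sum of two points at which $u$ is defined). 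By Corollary~\ref{cor:ptwiseinner} there is a strongly continuous $\al$-cocycle $\tilde u$ with $\al^{\tilde u}=\al^u$. Finally, for $\vph\in\Phi$ and $|t|\le T\le T_n$, picking $\psi\in\Psi$ with $\|\be_t(\vph)-\psi\|<\de$,
\[
\|\al^u_t(\vph)-\be_t(\vph)\|\le\|\Ad v_0(t)\al_t(\vph)-\be_t(\vph)\|+\|[w(t),\be_t(\vph)]\|<\vep_0+2\de+\|[w(t),\psi]\|,
\]
so that $\int_{-T}^{T}\|\al^u_t(\vph)-\be_t(\vph)\|\,dt\le2T(\vep_0+2\de)+\sum_{\psi\in\Psi}\int_{-T_n}^{T_n}\|[w(t),\psi]\|\,dt$, which is $<\vep$ once $n$ is large and $\de,\vep_1,\vep_0$ are small. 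As $\al^{\tilde u}=\al^u$, the cocycle $\tilde u$ satisfies the required estimate.

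The routine part is the juggling of the constants, whose admissible choices are interlocked through Lemma~\ref{lem:Borel2-cocycle1}, Theorem~\ref{thm:Borel2van} and the fixed sequences $\{T_n\},\{\vep_n\}$ (the safe order being: $n$, then $\de$, then $\vep_1$, then $\vep_0$). The one genuinely non-formal point is Step~2: converting the almost-everywhere-defined Borel $\al$-cocycle $u=wv_0$ into a strongly continuous $\al$-cocycle, which I expect to be the main obstacle and which I resolve via automatic continuity of Borel homomorphisms into $\Aut(\cM)$ together with Corollary~\ref{cor:ptwiseinner}; a secondary nuisance is that $w(t)$ must almost commute with $\be_t(\vph)$, where $\be_t(\vph)$ ranges over a compact rather than finite set, which is why $\Psi$ is taken to contain a net of $\{\be_t(\vph)\}$.
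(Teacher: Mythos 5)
Your overall plan is the paper's: perturb $\al$ by the Borel path supplied by Lemma~\ref{lem:Borel2-cocycle1}, kill the resulting almost-central $2$-cocycle with Theorem~\ref{thm:Borel2van}(1),(3), and multiply. Your replacement of the paper's finite set $A(N,T)$ by a $\de$-net of $\{\be_t(\vph)\}$ is a harmless variant: the paper's Chebyshev estimate on the sets $B_{\vph,j}$ gives a slightly sharper constant, but your crude bound $\int_{-T}^T\|[w(t),\be_t(\vph)]\|\,dt\le 4T\de+\sum_{\ps\in\Ps}\int_{-T_n}^{T_n}\|[w(t),\ps]\|\,dt$ also closes once $\vep_{n-1}$ is taken small depending on $|\Ps|$, and the paper already chooses $\vep_{n-1}$ depending on $N$, so the bookkeeping is comparable.

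The genuine gap is in Step~2, at the conversion of the a.e.-defined product $u=wv_0$ into a strongly continuous $\al$-cocycle. Corollary~\ref{cor:ptwiseinner} is not the right tool: it applies to an \emph{inner flow} $\ga$ (i.e.\ $\ga_t\in\Int(\cM)$ for all $t$), and produces a one-parameter unitary group implementing $\ga$. What you have is a flow $\al^u$ with $\al^u_t\al_{-t}\in\Int(\cM)$; this is not a flow in $\Int(\cM)$, and the corollary says nothing about producing an $\al$-cocycle. Your preliminary observation that the a.e.-defined $\al^u$ agrees a.e.\ with a continuous homomorphism into $\Aut(\cM)$ is correct (this is Weil--Mackey automatic continuity), but it only gives you a flow, not a continuous unitary path implementing the perturbation. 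The paper closes this step by a different route: it invokes \cite[Remark III.1.9]{CT} to replace the a.e.-defined Borel $\al$-cocycle by an everywhere-defined Borel $\al$-cocycle agreeing with it a.e., and then records (in the remark immediately following the proof) that any Borel $\al$-cocycle is automatically strongly continuous, because $t\mapsto u(t)\la^\al(t)$ is a Borel one-parameter group in the Polish group $(\cM\rti_\al\R)^{\rm U}$. If you want to avoid quoting \cite{CT}, the cleanest self-contained fix is to run your Weil--Mackey argument in $(\cM\rti_\al\R)^{\rm U}$ rather than in $\Aut(\cM)$: the a.e.-defined map $t\mapsto u(t)\la^\al(t)$ is an a.e.\ Borel one-parameter group, hence coincides a.e.\ with a continuous one-parameter unitary group $V$, and $\tilde u(t):=V(t)\la^\al(t)^*$ is the desired continuous $\al$-cocycle (it lies in $\cM$ for all $t$ by continuity and a.e.\ membership, and equals $u(t)$ a.e., so the integral estimate is unchanged). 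As written, the appeal to Corollary~\ref{cor:ptwiseinner} does not carry the weight your argument places on it.
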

\begin{proof}
We may and do assume that $\Ph$
is contained in the unit ball of $\cM_*$.
Let $\eta>0$ be such that
$8\eta^{1/4}<1$
and
$2\eta+8\eta^{1/4}<\vep$.
Take a large $N\in\N$
so that
\begin{equation}
\label{eq:alvphibe}
\|\alpha_t(\varphi)-\varphi\|<\eta,
\quad
\|\beta_t(\varphi)-\varphi\|<\eta/T
\quad
\mbox{for all }
\varphi\in \Phi,
\
|t|\leq T/N.
\end{equation}
Set
\begin{equation}
\label{eq:ANT}
A(N,T)
:=
\{t_j\mid j=-N,\dots,N\},
\quad
t_j:=jT/N,
\end{equation}
and
\[
\Ps:=\{\beta_t(\varphi)
\mid \varphi\in \Phi,
\
t\in A(N,T)\}.
\]
Recall (\ref{eq:SnTn})
and
fix $n\in\N$ with $T<T_n$ and $3\vep_{n-1}<\eta/(4N+2)$.
By Lemma \ref{lem:Borel2-cocycle1},
there exists a Borel unitary path
$v(t)$
such that 
\[
\|\Ad v(t)
\circ\alpha_t(\ps)
-\beta_t(\ps)\|
\,dt
<\eta/T_{n+1}
\quad
\mbox{for all }
\ps\in \Ps,
\
|t|,|s|
\leq
2T_{n+1},
\]
\[
\|[v(t)\alpha_t(v(s))v(s+t)^*, \ps]\|
<\eta/(8N+4)T_nT_{n+1}
\quad
\mbox{for all }
\ps\in \Ps,
\
|t|,|s|
\leq T_{n+1}.
\]

Set
$\ga_t:=\Ad v(t)\circ\alpha_t$,
and $c(t,s):=v(t)\alpha_t(v(s))v(t+s)^*$.
Then $(\ga,c)$ is a Borel cocycle action.
By Theorem \ref{thm:Borel2van},
there exists a Borel unitary path $w$
such that
$w(t)\ga_t(w(s))c(t,s)w(t+s)^*=1$
almost everywhere on $\R^2$,
and
\[
\int_{-T_n}^{T_n}
\|[\beta_{t_j}(\vph),w(t)]\|\,dt
<2\eta/(2N+1)
\quad\mbox{for all }
j=-N,\dots,N,
\
\vph\in\Ph.
\]
It is known that
there exists a Borel $\al$-cocycle
$u(t)$ that coincides with $w(t)v(t)$
almost everywhere on $\R$
(see \cite[Remark III.1.9]{CT}).
Moreover,
any Borel $\al$-cocycle is automatically strongly continuous
(see the remark after the proof).
We set
\[
B_{\vph,j}
:=\{t\in[-T_n,T_n]
\mid
\|[\be_{t_j}(\vph),w(t)]\|
<\eta^{1/2}/T_n
\},
\quad
j=-N,\dots,N,
\
\vph\in\Ph.
\]
By the Chebyshev inequality,
we have
$\mu(B_{\vph,j}^c)<\eta^{1/2}/(2N+1)$,
where $B_{\vph,j}^c=[-T_n,T_n]\setminus B_{\vph,j}$.
We let
$B_\vph:=\bigcap_{j=-N}^N B_{\vph,j}$.
Then
$
\mu(B_\vph^c)\leq\sum_j\mu(B_{\vph,j}^c)
<
\eta^{1/2}
$.
Thus
for all $\vph\in\Ph$,
\begin{align}
\int_{t_j}^{t_{j+1}}
\|[\be_{t_j}(\vph),w(t)]\|\,dt
&=
\int_{B_\vph\cap[t_j,t_{j+1}]}
\|[\be_{t_j}(\vph),w(t)]\|\,dt
\notag\\
&\quad
+
\int_{B_\vph^c\cap[t_j,t_{j+1}]}
\|[\be_{t_j}(\vph),w(t)]\|\,dt
\notag\\
&<
\eta^{1/2}T/NT_n
+
2\mu(B_\vph^c\cap[t_j,t_{j+1}]).
\label{eq:TjBvph}
\end{align}

Therefore,
for $\vph\in\Ph$,
we have
\begin{align*}
\int_{-T}^{T}
\|[\be_t(\vph),w(t)]\|\,dt
&=
\sum_{j=-N}^{N-1}
\int_{t_j}^{t_{j+1}}
\|[\be_t(\vph),w(t)]\|\,dt
\\
&\leq
\sum_{j=-N}^{N-1}
\int_{t_j}^{t_{j+1}}
\|[\be_t(\vph)-\be_{t_j}(\vph),w(t)]\|\,dt
\\
&\quad
+
\sum_{j=-N}^{N-1}
\int_{t_j}^{t_{j+1}}
\|[\be_{t_j}(\vph),w(t)]\|\,dt
\\
&<
\sum_{j=-N}^{N-1}
\int_{t_j}^{t_{j+1}}
2\|\be_t(\vph)-\be_{t_j}(\vph)\|
\,dt
\\
&\quad
+
\sum_{j=-N}^{N-1}
\left(
\eta^{1/2}T/NT_n
+
2\mu(B_\vph^c\cap[t_j,t_{j+1}])
\right)
\quad
\mbox{by }
(\ref{eq:TjBvph})
\\
&<
2N\times 2\eta/N
+
2\eta^{1/2}
+2\mu(B_\vph^c)
\quad
\mbox{by }
(\ref{eq:alvphibe})
\\
&\leq
4\eta+2\eta^{1/2}
+2\eta^{1/2}
<
8\eta^{1/2}.
\end{align*}

By the inequality above,
we obtain
\begin{align*}
\int_{-T}^{T}
\|\Ad u(t)\circ\al_t(\vph)
-\be_t(\vph)\|
\,dt
&=
\int_{-T}^{T}
\|\Ad w(t)v(t)\circ\al_t(\vph)
-\be_t(\vph)\|
\,dt
\\
&\leq
\int_{-T}^{T}
\|\Ad w(t)(\Ad v(t)\circ\al_t(\vph)-\be_t(\vph))\|
\,dt
\\
&\quad+
\int_{-T}^{T}
\|\Ad w(t)\circ\be_t(\vph)-\be_t(\vph)\|
\,dt.
\\
&<
2\eta+
\int_{-T}^{T}
\|[w(t),\be_t(\vph)]\|
\,dt
\\
&<2\eta+8\eta^{1/2}
<\vep.
\end{align*}
\end{proof}

\begin{rem}
Let $\al$ be a flow
on a separable von Neumann algebra
$\cM$,
and
$v$ a Borel $\al$-cocycle in $\cM$.
Then $v$ is strongly continuous.
Indeed,
in the crossed product $\cM\rtimes_\al\R$,
we have the Borel
one-parameter unitary group
$v(t)\la^\al(t)$.
Since $(\cM\rti_\al\R)^{\rm U}$ is Polish,
$v(t)\la^\al(t)$ is continuous,
and so is $v(t)$.
\end{rem}

\begin{lem}\label{lem:uvphal}
Let $\alpha$ be a flow on a von Neumann algebra
$\cM$
and $u$ an $\al$-cocycle.
Then for all $t\in\R$ and $\vph\in\cM_*$,
one has
\[
\|[u(t),\vph]\|
=
\|\al_{-t}(\vph)-\Ad u(-t)\circ\al_{-t}(\vph)\|.
\]
\end{lem}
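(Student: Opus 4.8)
\emph{Proof proposal.} The plan is to recognize the right-hand side as the distance between $\al$ and its perturbation $\al^u$ at time $-t$, and then to transport the computation to a pure commutator by composing with an isometry of $\cM_*$. Recall from \S\ref{subsect:action-cocycle} that the perturbed flow is $\al_s^u=\Ad u(s)\circ\al_s$; in particular $\Ad u(-t)\circ\al_{-t}=\al_{-t}^u$, so the right-hand side is exactly $\|\al_{-t}(\vph)-\al_{-t}^u(\vph)\|$.

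First I would use that $\al_t^u$ is an automorphism of $\cM$, hence induces an isometry of $\cM_*$, to write
\[
\|\al_{-t}(\vph)-\al_{-t}^u(\vph)\|
=\|\al_t^u(\al_{-t}(\vph))-\al_t^u(\al_{-t}^u(\vph))\|
=\|(\al_t^u\circ\al_{-t})(\vph)-\vph\|,
\]
where the last equality uses that $u$ is an $\al$-cocycle, so $\al^u$ is a flow and $\al_t^u\circ\al_{-t}^u=\id$. Next, since $\al_t^u=\Ad u(t)\circ\al_t$ and $\al$ is a flow, $\al_t^u\circ\al_{-t}=\Ad u(t)\circ\al_t\circ\al_{-t}=\Ad u(t)$, whence the quantity above equals $\|\Ad u(t)(\vph)-\vph\|$.

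It then remains to identify $\|\Ad w(\vph)-\vph\|$ with $\|[w,\vph]\|$ for the unitary $w:=u(t)$, which is a routine manipulation with the conventions of Section 2: from $\Ad w(\vph)=\vph\circ\Ad w^{-1}$ and $a\vph(x)=\vph(xa)$, $\vph a(x)=\vph(ax)$, one has $\Ad w(\vph)=w\vph w^*$, and since right multiplication by $w$ is an isometry of $\cM_*$ (because $x\mapsto wx$ is an isometric bijection of $\cM$),
\[
\|\Ad w(\vph)-\vph\|=\|w\vph w^*-\vph\|=\|(w\vph w^*)w-\vph w\|=\|w\vph-\vph w\|=\|[w,\vph]\|,
\]
using $w^*w=1$. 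Combining the three displays gives the asserted identity. The only point requiring care—rather than a genuine obstacle—is keeping the module-action and the $\al(\vph)=\vph\circ\al^{-1}$ conventions straight on the left versus the right; once these are fixed, no estimate is involved and every step is an exact equality between norms.
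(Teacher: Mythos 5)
Your proposal is correct and is essentially the paper's own argument run in the opposite direction: both rest on the identity $\|[u(t),\vph]\|=\|\Ad u(t)(\vph)-\vph\|$, the cocycle consequence $\Ad u(t)\circ\al_t\circ\Ad u(-t)\circ\al_{-t}=\id$, and the fact that automorphisms act isometrically on $\cM_*$. The only difference is that you additionally spell out the routine verification of $\|\Ad w(\vph)-\vph\|=\|[w,\vph]\|$, which the paper uses without comment.
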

\begin{proof}
Since $\Ad u(t)\circ \alpha_t\circ \Ad u(-t)\circ
\alpha_{-t}=\id$,
we have
\begin{align*}
\|[u(t),\vph]\|
&=
\|\Ad u(t)(\varphi)-\varphi\|
\\
&=
\|\Ad u(t)\circ \alpha_t\circ\alpha_{-t}(\varphi)
-\Ad u(t)\circ 
\alpha_t\circ \Ad u(-t)\circ\alpha_{-t}(\varphi)\|\\
&=
\|\alpha_{-t}(\varphi)-\Ad u(-t)\circ \alpha_{-t}(\varphi)\|. 
\end{align*}
\end{proof}

\subsection{Approximate vanishing of 1-cohomology}

\begin{thm}
\label{thm:Borelnear1vanish}
Let $\alpha$ be a Rohlin flow on a von Neumann algebra $\cM$.
Suppose that
$\al$ is centrally ergodic
and $\Sp_d(\al|_{Z(\cM)})\neq\R$.
Let $\vep,\de,T>0$
and $\Ph\subs\cM_*$ a finite set.
Take $S>0$
such that
$T\|\vph\|/S<\vep^2/4$
for all $\vph\in\Ph$
and $(2\pi/S)\Z\cap \Sp_d(\al|_{Z(\cM)})=\{0\}$.
Then for
any $\al$-cocycle $u$
with
\[
\frac{1}{S}
\int_{0}^S
\|[u(t), \varphi]\|
\,dt
<\delta
\quad\mbox{for all }
\varphi \in \Phi,
\]
there exists a unitary $w\in\cM$ such that 
\[
\|[w,\varphi]\|<3\delta
\quad\mbox{for all } \varphi\in \Phi,
\]
\[
\|\varphi\cdot (u(t)\alpha_t(w)w^*-1)\|<\vep,
\ 
\|(u(t)\alpha_t(w)w^*-1)\cdot \varphi\|<\vep
\quad\mbox{for all }
|t|\leq T,
\varphi\in \Phi.
\]
\end{thm}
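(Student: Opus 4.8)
\emph{Plan.} The plan is to imitate the Rohlin--tower averaging of Kishimoto and Kawamuro, as in the proof of Lemma~\ref{lem:Borelasymprep}, applied to the given cocycle $u$, and then to transport the resulting $\cM_\al^\om$-level estimates down to a single unitary of $\cM$.

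First I would fix the Rohlin apparatus for the period $S$. Since $(2\pi/S)\Z\cap H_\al=\{0\}$, the Rohlin property gives a unitary $v\in\Meq$ with $\al_t(v)=e^{-2\pi it/S}v$; as recalled before Lemma~\ref{lem:theta}, $\ta^\om$ is then faithful on $W^*(v)$, the spectral projections $e(\la)$, $\la\in[0,S)$, of $v$ satisfy $\al_t(de(\la))=de(\la+t)$ and $\ta^\om(e(A))=\mu(A)/S$, and Lemma~\ref{lem:theta} provides the isomorphism $\Th\col\cM\oti L^\infty[0,S)\ra\cM\vee W^*(v)\subs\cM_\al^\om$ together with the $L^2$-identity $(\ref{eq:Thint})$. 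Let $\tilde u$ be the $S$-periodization of $u|_{[0,S)}$ and set $W:=\Th(\tilde u(\cdot))\in\cM_\al^\om$, a unitary. For the descent step it is convenient to discretize: for a fine partition $[0,S)=\bigsqcup_iA_i$ into intervals with sample points $s_i\in A_i$, the unitary $V:=\sum_iu(s_i)e(A_i)\in\cM\vee W^*(v)$ is arbitrarily close to $W$, and fixing representing projection sequences $(e(A_i)^\nu)_\nu$ of $e(A_i)$ with $\sum_ie(A_i)^\nu=1$, the sequence $w^\nu:=\sum_iu(s_i)e(A_i)^\nu\in\cM^{\rm U}$ represents $V$, so $(w^\nu)_\nu\in\sE_\al^\om\cap\sN_\om$.

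The key computation is that $W$ is an approximate coboundary trivializing $u$ on $[-T,T]$. Using the cocycle identity $u(t)\al_t(u(r))=u(t+r)$ and $\al_t\circ\Th=\Th\circ(\al_t\oti\ga_t)$ one gets, for $|t|\le T$, that $u(t)\al_t(W)W^*=\Th(h_t)$ where $h_t(s)=1$ off an arc of length $|t|$ and is a unitary on that arc; equivalently $u(t)\al_t(W)W^*-1=p_t\bigl(u(t)\al_t(W)W^*-1\bigr)p_t$ with $p_t\in W^*(v)\subs\cM_\om$ a projection of trace $\ta^\om(p_t)=|t|/S$. Since $p_t$ commutes with $\vph^\om$, a Cauchy--Schwarz estimate together with $(\ref{eq:Thint})$ gives
\[
\bigl\|\vph^\om\cdot(u(t)\al_t(W)W^*-1)\bigr\|
\le\bigl(\ta^\om(p_t)\,\|\vph\|\bigr)^{1/2}\bigl\|u(t)\al_t(W)W^*-1\bigr\|_{\vph^\om}
\lesssim T\|\vph\|/S<\vep
\]
for all $\vph\in\Ph$ (the exponent in $T\|\vph\|/S<\vep^2/4$ absorbing the Cauchy--Schwarz loss), and symmetrically with $\vph$ on the right. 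Since $(w^\nu)_\nu$ represents an element of $\cM_\al^\om$, Proposition~\ref{prop:flow-equi} shows $\{[-T,T]\ni t\mapsto\al_t(w^\nu)\}_\nu$ is $\om$-equicontinuous, hence so is $\{t\mapsto u(t)\al_t(w^\nu)(w^\nu)^*\}_\nu$ by Lemma~\ref{lem:multi-equicont}; by Lemma~\ref{lem:uniconverge} the convergence $\|\vph\cdot(u(t)\al_t(w^\nu)(w^\nu)^*-1)\|\to\|\vph^\om\cdot(u(t)\al_t(W)W^*-1)\|$ is uniform on $[-T,T]$, so the bound $<\vep$ holds for all such $t$ once $\nu$ is close to $\om$ (and likewise with $\vph$ on the right). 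For the centrality, $[w^\nu,\vph]=\sum_i[u(s_i),\vph]e(A_i)^\nu+\sum_iu(s_i)[e(A_i)^\nu,\vph]$; the last sum tends to $0$ as $\nu\to\om$ because each $(e(A_i)^\nu)_\nu$ is central, while
\[
\limsup_{\nu\to\om}\|[w^\nu,\vph]\|
=\limsup_{\nu\to\om}\Bigl\|\sum_i[u(s_i),\vph]e(A_i)^\nu\Bigr\|<3\de\qquad(\vph\in\Ph),
\]
the last bound following, for a sufficiently fine partition, from the hypothesis $\tfrac1S\int_0^S\|[u(t),\vph]\|\,dt<\de$, $\ta^\om(e(A_i))=\mu(A_i)/S$, and $\|[e(A_i)^\nu,\vph]\|\to0$, by the same estimate as in the proof of Lemma~\ref{lem:Borelasymprep}. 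Choosing $\nu$ close to $\om$ so that all of these finitely many estimates hold at once, $w:=w^\nu$ is the desired unitary.

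The hard part is this last descent: lifting $W$ to an honest unitary of $\cM$ while keeping $\|[w,\vph]\|$ below $3\de$. A generic representing sequence of $W$ need not inherit the operator-norm bound $\|[W,\vph^\om]\|<\de$, since passing from functionals on $\cM^\om$ to functionals on $\cM$ only gives the inequality in the useless direction; so the lift must be built explicitly out of the Rohlin tower and the commutator $\bigl\|\sum_i[u(s_i),\vph]e(A_i)^\nu\bigr\|$ estimated by hand. This is precisely the computation in the proof of Lemma~\ref{lem:Borelasymprep}, and it is what produces the constant $3\de$. A secondary point, handled by the $\om$-equicontinuity machinery rather than a pointwise-in-$t$ argument, is making the cohomology estimate hold uniformly over $t\in[-T,T]$.
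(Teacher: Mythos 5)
Your proposal is correct and follows essentially the same route as the paper: periodize $u$, average it over the Rohlin tower via $\Th$ to get $W=\Th(\tilde u)$, use the cocycle identity to see that $u(t)\al_t(W)W^*-1$ lives over an arc of length $|t|$ (giving the $2(t\|\vph\|/S)^{1/2}<\vep$ bound), descend by $\om$-equicontinuity and uniform convergence, and obtain $\|[w,\vph]\|<3\de$ by the discretized-tower estimate of Lemma~\ref{lem:Borelasymprep} applied to $u(t,s):=u(s)$. The only cosmetic wrinkle is that your sequence $(w^\nu)_\nu$ represents the discretization $V$ rather than $W$ itself, but as you note this is absorbed by taking the partition fine enough, exactly as in the paper's proof of Lemma~\ref{lem:Borelasymprep}.
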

\begin{proof}
We may and do assume that $\Ph^*=\Ph$.
Let $e(\la)\in \cM_{\om,\al}$ be
a Rohlin projection over $[0,S)$. 
Put $W:=\Th(\tilde{u})\in U(\cM_\al^\om)$,
where $\tilde{u}$ is the periodization of $u$
with period $S$.
Then it is trivial that
\[
\tilde{u}(\la-t)=u(\la-t)1_{[t,S)}(\la)
+u(\la-t+S)1_{[0,t)}(\la)
\quad\mbox{for all }
\la\in[0,S).
\]
For $0\leq t\leq T$, we have
\begin{align*}
u(t)\alpha_t(W)W^*
&=
\Th(u(t)\al_t(\tilde{u}(\cdot-t))
\tilde{u}(\cdot)^*)
\\
&=
\Th(u(t)\al_t(u(\cdot-t))u(\cdot)^*1_{[t,S)}(\cdot))
+
\Th(u(t)\al_t(u(\cdot-t+S))u(\cdot)^*1_{[0,t)}(\cdot))
\\
&=
\Th(1_{[t,S)}(\cdot))
+
\Th(u(t)\al_t(u(\cdot-t+S))u(\cdot)^*1_{[0,t)}(\cdot)).
\end{align*}
Hence for all $\vph\in\Ph$,
we have
\begin{align*}
\|u(t)\alpha_t(W)W^*-1\|_{|\vph|^\om}^\sharp
&\leq
\|\Th(1_{[t,S)}(\cdot)-1)\|_{|\vph|^\om}^\sharp
\\
&\quad
+
\|\Th(u(t)\al_t(u(\cdot-t+S))u(\cdot)^*1_{[0,t)}(\cdot))
\|_{|\vph|^\om}^\sharp
\\
&=
2\|1_{[0,t)}(\cdot)\|_{|\vph|\oti\mu}^\sharp
\\
&\leq
2t^{1/2}\|\vph\|^{1/2}/S^{1/2}.
\end{align*}
The same estimate as the above
is valid for $-T\leq t\leq 0$.
Thus
if $|t|\leq T$,
we have
\[
\|u(t)\alpha_t(W)W^*-1\|_{|\vph|^\om}^\sharp
\leq
2t^{1/2}\|\vph\|^{1/2}/S^{1/2}
<\vep
\quad\mbox{for all }
\vph\in\Ph.
\]
Take a representing sequence
$(w^\nu)_\nu$
of $W\in \cM_\al^\om$. 
By Lemma \ref{lem:uniconverge},
we have the following uniform convergence
on $[-T,T]$:
\[
\lim_{n\rightarrow \omega}
\|u(t)\alpha_t(w^\nu)(w^\nu)^*-1\|_{|\vph|}^\sharp
=
\|u(t)\alpha_t(W)W^*-1\|_{|\vph|^\om}^\sharp.
\]
Hence if $\nu\in\N$ is close to $\om$,
then
for all $t\in[-T,T]$,
\[
\|\vph\cdot(u(t)\alpha_t(w^\nu)(w^\nu)^*-1)\|
<\vep,
\quad
\|(u(t)\alpha_t(w^\nu)(w^\nu)^*-1)\cdot\vph\|
<\vep.
\]
Applying Lemma \ref{lem:Borelasymprep}
to $u(t,s):=u(s)$,
we have
$\|[w^\nu, \vph]\|<3\de$
for $\nu$ being close to $\om$.
\end{proof}

\subsection{Proof of the main theorem}

We will prove our main theorem
for centrally ergodic Rohlin flows
by using the Bratteli-Elliott-Evans-Kishimoto intertwining argument \cite{EK}.

\begin{lem}
\label{lem:class}
Let $\alpha$ and $\beta$ be Rohlin flows
on a von Neumann algebra $\cM$.
Suppose that
$\al$ is centrally ergodic.
Then
$\alpha$ and $\beta$ are strongly cocycle conjugate
if and only if
$\al_t\be_{-t}\in\oInt(\cM)$ for all $t\in\R$.
\end{lem}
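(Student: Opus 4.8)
The plan is to handle the two implications of the lemma separately. The \emph{only if} direction is immediate: if $\al_t^v=\th\circ\be_t\circ\th^{-1}$ for some $\th\in\oInt(\cM)$ and some $\al$-cocycle $v$, then $\al_t\be_{-t}=\Ad v(t)^*\circ\th\circ(\be_t\circ\th^{-1}\circ\be_{-t})$, and since $\oInt(\cM)$ is a normal subgroup of $\Aut(\cM)$ the last factor lies in $\oInt(\cM)$, hence so does $\al_t\be_{-t}$. For the converse, I first record that every approximately inner automorphism is the identity on $Z(\cM)$, so the hypothesis forces $\al_t|_{Z(\cM)}=\be_t|_{Z(\cM)}$ for all $t$; in particular $\be$ is again centrally ergodic and $\Sp_d(\be|_{Z(\cM)})=\Sp_d(\al|_{Z(\cM)})=:H_\al$. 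If $H_\al=\R$, then by central ergodicity $\al|_{Z(\cM)}$ is conjugate to the translation flow on $L^\infty(\R)$, a degenerate situation that I would dispose of by hand (the structure of $\cM$ as an algebra induced over $\R$ lets one build the cocycle and the approximately inner intertwiner directly). So I may assume $H_\al\neq\R$, which is exactly the standing hypothesis under which Theorem~\ref{thm:Borel2van}, Lemma~\ref{lem:Borelapproby1cocycle} and Theorem~\ref{thm:Borelnear1vanish} apply; moreover, since $\be$ shares all the relevant properties, these results are available with $\al$ and $\be$ interchanged.

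The heart is the Bratteli--Elliott--Evans--Kishimoto intertwining argument \cite{EK}. Fix rapidly growing $T_n\nearrow\infty$, rapidly decreasing $\vep_n\searrow0$, and an increasing chain of finite sets $\Ph_n\subs\cM_*$ with norm total union. I will construct inductively $\al$-cocycles $\mu_n$ and unitaries $w_n\in\cM^{\rm U}$ so that, with $\al^{(n)}:=\al^{\mu_n}$ and $\si_n:=\Ad(w_n w_{n-1}\cdots w_1)\in\Int(\cM)$: (i) $\al^{(n)}_t$ and $\si_n\circ\be_t\circ\si_n^{-1}$ agree on $\Ph_n$ within $\vep_n$ for $|t|\le T_n$; (ii) $\mu_{n+1}(t)$ is within $\vep_n$ of $\mu_n(t)$ in the relevant seminorms for $\vph\in\Ph_n$ and $|t|\le T_n$; (iii) $\|[w_{n+1},\vph]\|<\vep_{n+1}$ for $\vph\in\Ph_n$. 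Condition (ii) makes $v(t):=\lim_n\mu_n(t)$ exist in the strong topology, uniformly on compact sets; the limit is a Borel $\al$-cocycle, hence strongly continuous by the Remark following Lemma~\ref{lem:uvphal}. Condition (iii) makes $\Ad w_n\to\id$ fast enough that $\si_n$ converges in $\Aut(\cM)$ to some $\th\in\oInt(\cM)$, with $\si_n^{-1}\to\th^{-1}$ as well. Passing to the limit in (i) then gives $\al_t^v=\th\circ\be_t\circ\th^{-1}$ for every $t$, i.e.\ $\al$ and $\be$ are strongly cocycle conjugate.

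For the inductive step, suppose $\al^{(n)}$ and $\si_n$ are given. The pair $(\al^{(n)},\si_n\circ\be\circ\si_n^{-1})$ again consists of centrally ergodic Rohlin flows with point spectrum $\neq\R$ on the center (Lemmas~\ref{lem:pertRohstab} and~\ref{lem:cocpertstab}), and $\al^{(n)}_t\circ\si_n\circ\be_{-t}\circ\si_n^{-1}$ is approximately inner by the hypothesis and normality of $\oInt(\cM)$. Applying Lemma~\ref{lem:Borelapproby1cocycle} over a large interval $[-S_n,S_n]$, with $T_n\ll S_n\ll T_{n+1}$, produces an $\al^{(n)}$-cocycle $\rho$ pushing $\al^{(n)}$ onto $\si_n\be\si_n^{-1}$ in $L^1$-average there; equivalently $\mu_{n+1}':=\rho\cdot\mu_n$ is an $\al$-cocycle with $\al_t^{\mu_{n+1}'}$ close to $\si_n\be_t\si_n^{-1}$ on that interval. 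Since by (i) at stage $n$ we already have $\al^{(n)}_t$ close to $\si_n\be_t\si_n^{-1}$ on $[-T_n,T_n]$, Lemma~\ref{lem:uvphal} shows that $\rho$ is \emph{approximately central} there, and, by the choice of scales, in $L^1$-average on a sufficiently long window. Theorem~\ref{thm:Borelnear1vanish} then yields a unitary $w_{n+1}$ with $\|[w_{n+1},\vph]\|$ small such that conjugating $\al^{\mu_{n+1}'}$ by $\Ad w_{n+1}$ turns $\rho$ into a cocycle that is $\approx1$ on $[-T_n,T_n]$; put $\mu_{n+1}$ equal to the resulting $\al$-cocycle and $\si_{n+1}:=\Ad w_{n+1}\circ\si_n$. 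With the usual telescoping choice of the constants $\vep_n,T_n,S_n$ and of the auxiliary periods in Theorem~\ref{thm:Borelnear1vanish} avoiding $H_\al$, one checks that (i)--(iii) hold at stage $n+1$.

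The main obstacle is this simultaneous bookkeeping. The cocycle $\rho$ supplied by Lemma~\ref{lem:Borelapproby1cocycle} genuinely deforms the flow---it is far from constant---so one cannot take limits of the running cocycles directly; the role of interleaving Theorem~\ref{thm:Borelnear1vanish} is precisely to peel off, at each stage, the portion of $\rho$ already absorbed on the previous interval as an \emph{approximate coboundary} that a near-$1$ unitary can undo, leaving a true deformation only on the new window. Making the $L^1$-in-$t$ estimates output by Lemma~\ref{lem:Borelapproby1cocycle} match the $L^1$-in-$t$ hypotheses of Theorem~\ref{thm:Borelnear1vanish}, balancing the three scales $T_n\ll S_n\ll T_{n+1}$, and verifying at the end that the limiting $\th$ genuinely lies in $\oInt(\cM)$ rather than merely in $\Aut(\cM)$, are the technical crux; the remaining arrangements are the routine BEEK-type estimates.
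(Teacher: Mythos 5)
Your overall strategy for the ``if'' direction --- a Bratteli--Elliott--Evans--Kishimoto intertwining driven by Lemma~\ref{lem:Borelapproby1cocycle}, Lemma~\ref{lem:uvphal} and Theorem~\ref{thm:Borelnear1vanish}, with the scales $T_n\ll S_n\ll T_{n+1}$ and the auxiliary periods avoiding $H_\al$ --- is the paper's strategy, and the ``only if'' direction is handled the same way. But there are two genuine gaps. First, the case $\Sp_d(\al|_{Z(\cM)})=\R$ cannot be ``disposed of by hand'': in that regime none of the Rohlin machinery you invoke is available (Theorem~\ref{thm:Borel2van}, Lemma~\ref{lem:Borelapproby1cocycle} and Theorem~\ref{thm:Borelnear1vanish} all presuppose $H_\al\neq\R$, since the Rohlin averaging needs periods $S$ with $(2\pi/S)\Z\cap H_\al=\{0\}$). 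The paper treats this case by identifying $\{Z(\cM),\al\}$ with the translation flow, decomposing $\cM\cong\cM^\al\oti Z(\cM)$, invoking Takesaki's $2$-cohomology vanishing for the groupoid $\R\ltimes\R$ to produce $\th=\int_X^\oplus\th_x\,d\mu(x)$ with each $\th_x$ approximately inner, and then using the disintegration result (Theorem~\ref{thm:appdisint}) to conclude $\th\in\oInt(\cM)$, plus the vanishing of $L^\infty(\R)$-valued $2$-cocycles to upgrade the Borel path to an $\al$-cocycle. None of this is routine, and your proposal supplies no substitute.

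Second, your intertwining is one-sided (only $\al$ is cocycle-perturbed; $\be$ is only conjugated by the inner automorphisms $\si_n$), whereas the paper alternates, setting $\ga^{(-1)}=\be$, $\ga^{(0)}=\al$ and $\ga^{(n)}=\Ad u^n\circ\ga^{(n-2)}$ with each $u^n$ chosen to approximate the \emph{previously constructed flow} $\ga^{(n-1)}$ directly. This is not cosmetic. In your scheme the target at stage $n+1$ is the drifting flow $\si_n\circ\be_t\circ\si_n^{-1}$, so to extract the approximate centrality of the new cocycle $\rho$ over the long window $[0,S_{n+1}]$ you must know that $\al^{(n)}_t$ is close to the \emph{new} target $\si_n\be_t\si_n^{-1}$ (not the old one $\si_{n-1}\be_t\si_{n-1}^{-1}$) on all of $[-T_{n+1},T_{n+1}]$; this forces you to control $\|[w_n,\si_{n-1}\be_t\si_{n-1}^{-1}(\vph)]\|$ for $t$ ranging over that whole interval, i.e.\ over a continuum of functionals chosen before $w_n$. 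The paper's alternating scheme avoids conjugated targets entirely --- the inner perturbations enter only through the auxiliary cocycles $\hat v^n$ and the reconstruction at the end --- and the analogous continuum problem is resolved there by the explicit grid-and-equicontinuity bookkeeping in conditions $(n.1)$ and $(n.8)$. You correctly name this bookkeeping as ``the technical crux'' but do not carry it out, and as stated your induction hypothesis (i) is pointwise in $t$ while Lemma~\ref{lem:Borelapproby1cocycle} only delivers $L^1$-in-$t$ estimates; the paper's condition $(n.5)$ is accordingly an integral estimate. Until the drifting-target issue is resolved (or the scheme is converted to the alternating form), the inductive step is incomplete.
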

\begin{proof}
The ``only if'' part is trivial.
We will prove the ``if'' part.
Assume
$\al_t\be_{-t}\in\oInt(\cM)$ for all $t\in\R$.

\vspace{5pt}
\noindent
{\bf Case 1.}
$\Sp_d(\al|_{Z(\cM)})=\R$.
\vspace{5pt}

In this case,
the covariant system $\{L^\infty(\R),\Ad\la\}$
embeds into $\{Z(\cM),\al\}$.
Since $\al$ is centrally ergodic,
this embedding is surjective.
(See Remark \ref{rem:duality-cocycle}.)
Note that $\al=\be$ on $Z(\cM)$
since $\al_t\be_{-t}\in\oInt(\cM)$.
By duality theorem,
we obtain the following decompositions:
\begin{equation}
\label{eq:Mtensordecomp}
\cM
=
\cM^\al\vee Z(\cM)
\cong\cM^\al\oti Z(\cM).
\end{equation}
Note that $Z(\cM^\al)\subs Z(\cM)$,
and $Z(\cM^\al)=\C=Z(\cM^\be)$,
that is,
the fixed point algebra
$\cM^\al$ is a factor.

The ergodic flow $\al=\be$ on $Z(\cM)$
is identified with the translation
on $L^\infty(\R)$, which produces
the groupoid $\sG:=\R\ltimes\R$.
Applying \cite[Corollary XIII.3.29]{TaIII}
to $\sG$
and $\al,\be\col \sG\ra G$
with $G:=\oInt(\cM^\al)$
and $H:=\Int(\cM^\al)$,
we can find $\th\in \Aut(\cM)$
and a Borel unitary path $u\col\R\ra\cM$
such that
$\th=\int_\R \th_x\,dx$ with $\th_x\in\oInt(\cM^\al)$
and
\[
\Ad u(t)\circ\al_t=\th\circ\be_t\circ\th^{-1}.
\]
We should note that
the statement of \cite[Corollary XIII.3.29]{TaIII}
is concerned with a properly ergodic flow,
but the proof is also applicable to $\R\ltimes\R$
by setting a base space $\Z$ with natural transformation
and a ceiling function $r=1$.

By Theorem \ref{thm:appdisint},
$\th$ is approximately inner.
If we use \cite[Proposition XIII.3.34]{TaIII},
then it turns out that we may arrange $u(t)$
to an $\al$-cocycle.
In our case, it is directly checked as follows.
Since $\be$ is a flow,
the cocycle action $(\Ad u(t)\circ\al_t,c(t,s))$
must be a flow, where we have put
$c(t,s)=u(t)\al_t(u(s))u(t+s)^*$ which belongs to $Z(\cM)$.
By the conjugacy
$\{\cM,\al\}\cong\{\cM^\al\oti L^\infty(\R),\id\oti\Ad\la\}$,
$c$ is regarded as an $L^\infty(\R)$-valued
2-cocycle with respect to the translation.
Then $c$ is a coboundary by \cite[Proposition A.2]{CT}.
Hence we may assume that $v$ is an $\al$-cocycle,
and we are done.

\vspace{5pt}
\noindent
{\bf Case 2.}
$\Sp_d(\al|_{Z(\cM)})\neq\R$.
\vspace{5pt}

Take $\vep_n,T_n>0$ and $S_n>0$
which satisfy (\ref{eq:SnTn}).
Let us denote $H_\al:=\Sp_d(\al|_{Z(\cM)})$
as before.
We should note that the choice of
$S_n$ depends on $\al$,
that is,
$(2\pi/S_n)\Z\cap H_\al=\{0\}$.
In what follows,
we introduce a sequence of
flows $\ga^{(m)}$.
They are cocycle perturbations
of $\al$ and $\be$,
and $H_\al=H_\be=H_{\ga^{(n)}}$.
Hence $(2\pi/S_n)\Z\cap H_{\ga^{(m)}}=\{0\}$,
and we can apply the preceding results
to $\ga^{(m)}$.

Now for $N\in\N$ and $T>0$,
let
$A(N,T)$ be as defined in (\ref{eq:ANT}).
Let $\{\varphi_i\}_{i=1}^\infty$
be
a dense countable set of
the unit ball of $\cM_*$,
and set $\Phi_n=\{\varphi_i\}_{i=0}^n$
with a faithful state $\vph_0\in\cM_*$.
Set $\hat{\Ph}_0:=\Ph_0$,
$\hat{\Ph}_1:=\Ph_1$,
$\gamma^{(-1)}_t:=\beta_t$
and $\gamma^{(0)}_t:=\alpha_t$.

By Lemma \ref{lem:Borelapproby1cocycle},
there exists a $\gamma^{(-1)}$-cocycle
$u^1(t)$ such that
\[
\int_{-T_2}^{T_2}
\|\Ad u^1(t)\circ \gamma^{(-1)}_t(\varphi)-\gamma_t^{(0)}(\varphi)\|
\,dt
<\vep_1
\quad\mbox{for all }
\varphi\in \hat{\Phi}_1.
\]

Set $\gamma^{(1)}_t:=\Ad u^1(t)\circ \gamma_t^{(-1)}$,
$w_1:=1$,
$\hat{v}^{-1}(t):=1$,
$v^1(t):=u^1(t)=:\hat{v}^1(t)$
and
$\th_{-1}=\th_0=\th_1:=\id$.
Choose $M_1\in\N$ such that
$\|(\hat{v}^1(t)-\hat{v}^1(s))\varphi\|<\vep_1$ and
$\|\varphi\cdot(\hat{v}^1(t)-\hat{v}^1(s))\|<\vep_1$ 
for $t,s\in [-T_1,T_1]$ with
$|t-s|\leq T_1/M_1$ and $\varphi\in \hat{\Phi}_{0}$.

We will inductively construct a flow
$\gamma^{(n)}$,
an automorphism $\theta_n\in \Int(\cM)$,
a $\ga^{(n-2)}$-cocycle  $u^n(t)$, 
unitary paths $v^n(t)$, $\hat{v}^n(t)$,
a unitary $w_n\in \cM^{\rm U}$,
a natural number $M_n\in \mathbb{N}$,
and a finite set $\hat{\Phi}_n\subset \cM$
satisfying the following conditions:
\begin{enumerate}
\renewcommand{\labelenumi}{$(n.\arabic{enumi})$}
\setlength{\itemsep}{-1pt}
\item
$\hat{\Phi}_n
=
\Ph_n
\cup \theta_{n-1}(\Phi_n)
\cup
\left
\{\varphi_0\hat{v}^{k}(t),
\hat{v}^{k}(t)\varphi_0
\mid
t\in A(M_k,T_k),
1\leq k\leq n-1\right\}$;

\item
$\gamma_t^{(n)}=\Ad u^n(t)\circ \gamma^{(n-2)}_t$;

\item
$v^n(t)=u^n(t)\gamma^{(n-2)}_t(w_n)w_n^*$,
$\hat{v}^n(t)=v^n(t)w_n\hat{v}^{n-2}(t)w_n^*$;
\item
$\theta_{n}=\Ad w_n\circ\theta_{n-2}$;

\item
$
\int_{-T_{n+1}}^{T_{n+1}}
\|\gamma_t^{(n)}(\varphi)-\gamma^{(n-1)}_t(\varphi)\|\,dt
<\vep_n$
for
$\varphi\in 
\hat{\Phi}_n$;

\item
$\|(v^n(t)-1)\varphi\|< \vep_n$, 
$\|\varphi\cdot (v^n(t)-1)\|< \vep_n$, $|t|\leq T_n$, $\varphi\in \hat{\Phi}_{n-1}$;

\item
$\|[w_n,\varphi]\|
<3\vep_{n-1}$, $\varphi\in \hat{\Phi}_{n-1}$;

\item
$\|(\hat{v}^n(t)-\hat{v}^n(s))\varphi\|<\vep_n$ and
$\|\varphi\cdot(\hat{v}^n(t)-\hat{v}^n(s))\|<\vep_n$ 
for $t,s\in [-T_n,T_n]$ with
$|t-s|\leq T_n/M_n$ and $\varphi\in \hat{\Phi}_{n-1}$.
\end{enumerate}

Suppose we have constructed them up to the $n$-th step.
Define $\hat{\Phi}_{n+1}$ as the condition $(n+1.1)$.
Employing Lemma \ref{lem:Borelapproby1cocycle},
we take a $\ga^{(n-1)}$-cocycle
$u^{n+1}(t)$
such that 
\[
\int_{-T_{n+2}}^{T_{n+2}}
\|\Ad u^{n+1}(t)\circ \gamma^{(n-1)}_t(\varphi)-\gamma^{(n)}_t(\varphi)\|
\,dt
<\vep_{n+1}
\quad\mbox{for }
\vph\in \hat{\Phi}_n\cup \hat{\Phi}_{n+1}.
\]
Combining this with $(n.5)$, we have
\[
\int_{-T_{n+1}}^{T_{n+1}}
\|\Ad u^{n+1}(t)\circ\gamma^{(n-1)}_t(\varphi)-\gamma^{(n-1)}_t(\varphi)\|
\,dt
<
2\vep_{n}
\quad\mbox{for }
\vph\in \hat{\Phi}_{n}.
\]
By Lemma \ref{lem:uvphal}, 
\[
\int_{-T_{n+1}}^{T_{n+1}}
\|[u^{n+1}(t), \varphi]\|<2\vep_{n}
\quad\mbox{for }
\varphi\in \hat{\Phi}_{n}.
\]
Using $T_n/S_n<\vep_n/4$,
$S_n< T_{n+1}$
and
Theorem \ref{thm:Borelnear1vanish},
we get a unitary $w_{n+1}\in\cM$
such that
\[
\|[w_{n+1},\varphi]\|
<6\vep_n/S_n<3\vep_n
\quad\mbox{for }
\varphi\in \hat{\Phi}_n,
\]
\[
\|\varphi\cdot
(u^{n+1}(t)\gamma^{(n-1)}_t(w_{n+1})w_{n+1}^*-1)\|<\vep_n
\quad
\mbox{if }
|t|\leq T_n,
\
\vph\in\hat{\Ph}_n,
\]
and
\[
\|(u^{n+1}(t)\gamma^{(n-1)}_t(w_{n+1})w_{n+1}^*-1)
\vph\|_2<\vep_n
\quad
\mbox{if }
|t|\leq T_n,
\
\vph\in\hat{\Ph}_n.
\]
Set
$\gamma^{(n+1)}_t=\Ad
u^{n+1}(t)\circ\gamma^{(n-1)}_t$,
$\theta_{n+1}=\Ad w_{n+1}\circ \theta_{n-1}$
and
\[ 
v_{t}^{n+1}=u^{n+1}(t)\gamma^{(n-1)}_t(w_{n+1})w_{n+1}^*,
\quad
\hat{v}^{n+1}(t)
=
v^{n+1}(t)w_{n+1}\hat{v}^{n-1}(t)w_{n+1}^*.
\]
Then the conditions
from $(n+1,2)$ to $(n+1.7)$ are satisfied.
Choose $M_{n+1}\in \mathbb{N}$ as in $(n+1.8)$
and the induction procedure is done.

We first show the convergence
of
$\{\theta_{2n}\}_n$ and $\{\theta_{2n+1}\}_n$.
By $(n+2.7)$,
\[
\|\theta_{n+2}(\varphi)-\theta_n(\varphi)\|=\|[w_{n+2},\theta_n(\varphi)]\|_2<4\vep_{n+1}
\]
and
\[
\|\theta_{n+2}^{-1}(\varphi)-\theta_n^{-1}(\varphi)\|=\|[w_{n+2},\varphi]\|_2< 4\vep_{n+1}
\quad
\mbox{for all }
\varphi\in \Phi_{n+1}.
\]
Thus the limits $\lim_{n\rightarrow
\infty}\theta_{2n}=\sigma_{0}$ and 
$\lim_{n\rightarrow
\infty}\theta_{2n+1}=\sigma_{1}$ exist.

We next show $\hat{v}^{2n}(t)$ 
converges compact uniformly.
If $|t|\leq T_{n+2}$,
then
\begin{align}
&\|\varphi_0\cdot (\hat{v}^{n+2}(t)-\hat{v}^n(t))\|
\notag\\
&=
\|\varphi_0\cdot({v}^{n+2}(t)w_{n+2}\hat{v}^n(t)w_{n+2}^*-\hat{v}^n(t))\|
\notag
\\
&\leq
\|\varphi_0\cdot ({v}^{n+2}(t)-1)\| 
+
\|\varphi_0\cdot (w_{n+2}\hat{v}^n(t)w_{n+2}^*-\hat{v}^n(t))\|
\notag\\
&<
\vep_{n+2}
+
\|\varphi_0\cdot (w_{n+2}\hat{v}^n(t)w_{n+2}^*-\hat{v}^n(t))\|
\quad
\mbox{by }
(n+2.6).
\label{eq:ph0w}
\end{align}
For $|t|\leq T_{n}$, take $t_0\in 
A(M_n,T_n)$ so that
$0\leq |t-t_0|<T_{n}/M_{n}$.
Note $\varphi_0\hat{v}^{n}(t_0)\in \hat{\Phi}_{n+1}$. 
By $(n.8)$,
\[
\|\varphi_0\cdot (\hat{v}^n(t)-\hat{v}^n(t_0))\|
<\vep_n.
\]
Hence the second term of (\ref{eq:ph0w})
can be estimated as follows;
\begin{align*}
&\|\varphi_0\cdot
(w_{n+2}\hat{v}^n(t)w_{n+2}^*-\hat{v}^n(t))\|
\\
&\leq
\|[w_{n+2},\vph_0]\|
+ \|w_{n+2}\varphi_0\hat{v}^n(t)
-\varphi_0\hat{v}^n(t)w_{n+2}\|
\\
&\leq
3\vep_{n+1}
+
2\vep_{n}
+
\|w_{n+2}\varphi_0\hat{v}^n(t_0)
-\varphi_0\hat{v}^n(t_0)w_{n+2}\|
\quad
\mbox{by }
(n+2.7),
\
(n.8)\\
&<
3\vep_{n+1}
+
2\vep_{n}
+
3\vep_{n+1}
<8\vep_n
\quad
\mbox{by }
(n+2.7).
\end{align*}
Thus we get
\[
\|\varphi_0\cdot (\hat{v}^{n+2}(t)
-\hat{v}^n(t))\|<9\vep_n
\quad\mbox{if }
|t|\leq T_n.
\]
We estimate
$\|(\hat{v}^{n+2}(t) -\hat{v}^n(t))\varphi_0\|$
as follows:
\begin{align*}
&\left
\|(\hat{v}^{n+2}(t) -\hat{v}^n(t))\varphi_0\right\|
\\
&=
\|({v}^{n+2}(t)w_{n+2}\hat{v}^n(t)w_{n+2}^* -\hat{v}^n(t))
\varphi_0\|\\ 
&\leq
\|({v}^{n+2}(t)-1)w_{n+2}\hat{v}^n(t)w_{n+2}^*
\varphi_0\|
+
\|(w_{n+2}\hat{v}^n(t)w_{n+2}^*
-\hat{v}^n(t))\varphi_0\|
\end{align*}
In the same way as above,
we can show
$\|(w_{n+2}\hat{v}^n(t)w_{n+2}^*
-\hat{v}^n(t))\varphi_0\|<8\vep_n$.
The first term is estimated as follows.
We take $t_0$ as above.
Since $\hat{v}^n(t_0)\varphi_0\in \hat{\Phi}_{n+1}$,
we have
\begin{align*}
&\|({v}^{n+2}(t)-1)w_{n+2}\hat{v}^n(t)w_{n+2}^*\varphi_0\|
\\
&\leq
2\|[w_{n+2}^*,\vph_0]\|
+
\|({v}^{n+2}(t)-1)w_{n+2}\hat{v}^n(t)\varphi_0\|
\\
&\leq
6\vep_{n+1}
+
\|({v}^{n+2}(t)-1)
w_{n+2}
(\hat{v}^n(t)-\hat{v}^n(t_0))\varphi_0\|
\\
&\quad
+
\|({v}^{n+2}(t)-1)[w_{n+2},\hat{v}^n(t_0)\varphi_0]\|
+\|({v}^{n+2}(t)-1)\hat{v}^n(t_0)\varphi_0 w_{n+2}\|
\\
&\leq
6\vep_{n+1}
+2\vep_{n}
+3\vep_{n+1}
+
\vep_{n+2}
\\
&<12\vep_{n+1}.
\end{align*}
Hence we have
\[
\|(\hat{v}^{n+2}(t)
-\hat{v}^n(t))\cdot \varphi_0\|
<20\vep_n
\quad\mbox{if }
|t|\leq T_n.
\]
Thus $\hat{v}^{2n}(t)$
converges compact uniformly
in the strong* topology.
In the same way, so does $\hat{v}^{2n+1}(t)$. 
Put
$\bar{v}^0(t):=\lim_{n\rightarrow \infty}\hat{v}^{2n}(t)$
and
$\bar{v}^1(t):=\lim_{n\rightarrow \infty}\hat{v}^{2n+1}(t)$,
which are cocycles
of $\sigma_0\circ \alpha_t\circ\sigma_0^{-1}$
and
$\sigma_1\circ \beta_t\circ \sigma_1^{-1}$,
respectively.
By $(n.5)$, we have
\[
\Ad \bar{v}^0(t)\circ \sigma_0\circ \alpha_t\circ
\sigma_0^{-1}=
\Ad \bar{v}^1(t)\circ \sigma_1\circ \beta_t\circ \sigma_1^{-1}
\quad\mbox{for all }
t\in\R.
\]
Therefore, $\alpha$ and $\beta$ are strongly cocycle conjugate.
\end{proof}

\begin{rem}
\label{rem:duality-cocycle}
Let $\cP$ be a von Neumann algebra
and $\al$ a flow on $\cP$.
Suppose that
the covariant system
$\{L^\infty(\R),\Ad\la\}$ is embedded into
$\{Z(\cP),\al\}$.
Then any $\al$-cocycle is a coboundary as checked below.

Thanks to \cite[Theorem 1]{Land},
it turns out that $\al$ is a dual flow.
Indeed,
${\bm e}_s\in L^\infty(\R)$
satisfies
$\Ad\la_t({\bm e}_s)=e^{-ist}{\bm e}_s$,
where ${\bm e}_s(x)=e^{isx}$.
Let $\pi\col L^\infty(\R)\ra Z(\cP)$
be an equivariant normal $*$-homomorphism.
Put $w(s):={\bm e}_s$.
Since $w(s)$ belongs to $Z(\cP)$,
we obtain
\[
\cP=\cP^\al\vee\{w(\R)\}''
\cong
\cP^\al\rti_{\Ad w}\R
\cong
\cP^\al\oti L^\infty(\R).
\]
Then $\al$ is conjugate to $\id\oti\Ad\la$.
Thus $\al$ is stable
(see the proof of \cite[Theorem XII.1.11]{TaII}).
\end{rem}

Now we will prove the main theorem for general Rohlin flows.

\begin{thm}
\label{thm:class2}
Let $\alpha$ and $\beta$ be Rohlin flows
on a von Neumann algebra $\cM$.
Then
$\alpha$ and $\beta$ are strongly cocycle conjugate
if and only if
$\al_t\be_{-t}\in\oInt(\cM)$ for all $t\in\R$.
\end{thm}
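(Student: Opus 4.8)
The ``only if'' implication is immediate from the definitions of strong cocycle conjugacy and approximate innerness. So assume $\al_t\be_{-t}\in\oInt(\cM)$ for all $t\in\R$; in particular $\al$ and $\be$ restrict to one and the same flow $\zeta$ on the center $Z(\cM)$. By Lemma \ref{lem:class} the assertion already holds whenever $\al$ is centrally ergodic, so the plan is to reduce to that case by disintegrating $\cM$ over $Z(\cM)$ along the common central flow $\zeta$.

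Write $Z(\cM)=L^\infty(X,\mu)$ with $\mu$ a probability measure; then $\zeta$ is implemented by a one-parameter group of nonsingular Borel transformations of $(X,\mu)$. Let $X\to Y$ be a Borel quotient onto the space of $\zeta$-ergodic components, with disintegration $\mu=\int_Y\mu_y\,d\ovl{\mu}(y)$ into $\zeta$-ergodic measures. Using the disintegration of Borel one-parameter automorphism groups developed in the appendix (Section 9), this induces direct-integral decompositions $\cM=\int_Y^\oplus\cM^{(y)}\,d\ovl{\mu}(y)$, $\al=\int_Y^\oplus\al^{(y)}\,d\ovl{\mu}(y)$ and $\be=\int_Y^\oplus\be^{(y)}\,d\ovl{\mu}(y)$, where $\cM^{(y)}$ is a von Neumann algebra with $Z(\cM^{(y)})=L^\infty(X,\mu_y)$ and $\al^{(y)},\be^{(y)}$ are flows on $\cM^{(y)}$ agreeing on $Z(\cM^{(y)})$; by construction each $\al^{(y)}$ and $\be^{(y)}$ is centrally ergodic.

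Next I would verify, for $\ovl{\mu}$-almost every $y$, the two hypotheses of Lemma \ref{lem:class}: (i) $\al^{(y)}$ and $\be^{(y)}$ have the Rohlin property, and (ii) $\al^{(y)}_t\circ(\be^{(y)}_t)^{-1}\in\oInt(\cM^{(y)})$ for all $t$. For (i) I would use the sequential form of the Rohlin property from Lemma \ref{lem:Rohlin-ptwise}: for each $p$ in a fixed countable dense subset of $\R$ choose a unitary central sequence $(v_p^\nu)_\nu$ in $\cM$ with $\al_t(v_p^\nu)-e^{ipt}v_p^\nu\to0$ strongly for each $t$ in a countable dense set, disintegrate each $v_p^\nu$, and use a Fubini argument (together with the separability of the predual) to obtain, on a conull set of $y$, unitary central sequences in $\cM^{(y)}$ witnessing the Rohlin property of $\al^{(y)}$ for all such $p$; a routine approximation in $p$ then upgrades this to all of $\R$, and likewise for $\be^{(y)}$. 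For (ii) I would invoke that $\oInt(\cM)$ localizes under central direct integrals (the disintegration results for approximately inner automorphisms in Section 9), so $\al^{(y)}_t\circ(\be^{(y)}_t)^{-1}\in\oInt(\cM^{(y)})$ for a.e.\ $y$ and all $t$ in a countable dense set, hence for all $t$ by $u$-continuity.

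Finally, Lemma \ref{lem:class} yields, for each such $y$, an automorphism $\th_y\in\oInt(\cM^{(y)})$ and an $\al^{(y)}$-cocycle $v_y$ with $(\al^{(y)})^{v_y}_t=\th_y\circ\be^{(y)}_t\circ\th_y^{-1}$; the remaining task, which I expect to be the genuine obstacle, is to make the assignment $y\mapsto(\th_y,v_y)$ measurable so that $\th:=\int_Y^\oplus\th_y\,d\ovl{\mu}(y)$ and $v:=\int_Y^\oplus v_y\,d\ovl{\mu}(y)$ are well-defined, $\th\in\oInt(\cM)$, $v$ is a strongly continuous $\al$-cocycle, and $\al_t^v=\th\circ\be_t\circ\th^{-1}$. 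I would achieve this by re-running the Bratteli-Elliott-Evans-Kishimoto intertwining construction of Lemma \ref{lem:class} with every auxiliary choice (the cocycles $u^n$, the unitaries $w_n$, the automorphisms $\th_n$) made via a measurable selection theorem over the standard Borel space $Y$; the quantitative bounds $(\ref{eq:SnTn})$ used there are independent of $y$, so the convergence is uniform in $y$ and the limiting conjugacy data stay measurable, while the approximate innerness of $\th$ follows from that of the $\th_y$ together with the measurable form of Theorem \ref{thm:appdisint} already used in Lemma \ref{lem:class}.
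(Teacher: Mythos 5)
Your overall architecture is exactly the paper's: reduce to the centrally ergodic case by disintegrating over the ergodic decomposition of the common central flow (the paper phrases this as disintegration over the spectrum of $Z(\cM)^\al=Z(\cM)^\be$), verify the Rohlin property and approximate innerness fiberwise (the paper's Claims 1 and 2, proved with the same Fubini/subsequence and $\al_f$-smoothing devices you describe, and Theorem \ref{thm:appdisint} for the $\oInt$ part), and then apply Lemma \ref{lem:class} fiber by fiber. The one place you diverge is the final reassembly, and there your plan is both heavier and shakier than what the paper does. Re-running the Bratteli--Elliott--Evans--Kishimoto induction with measurable selections at every step is not as uniform as you claim: the constants in $(\ref{eq:SnTn})$ are not independent of the fiber, because in Lemma \ref{lem:class} the choice of $S_n$ must satisfy $(2\pi/S_n)\Z\cap\Sp_d(\al|_{Z(\cM)})=\{0\}$, and $\Sp_d(\al^{(y)}|_{Z(\cM^{(y)})})$ varies with $y$; worse, the dichotomy between Case 1 ($\Sp_d=\R$) and Case 2 of that lemma is itself fiber-dependent, so there is no single induction to run uniformly over $Y$.

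The paper avoids all of this with Theorem \ref{thm:cocdisint} in the appendix: for actions fixing a central subalgebra, (strong) cocycle conjugacy holds globally if and only if it holds in almost every fiber. Its proof does not track how the fiberwise conjugacies were produced; it only uses that the set of triples $(x,v,w)$ consisting of a point, a cocycle, and an implementing unitary (with $\Ad w|_{\cM_x}$ approximately inner, for the strong version) is a Borel set projecting onto a conull set of $X$, so a single application of the measurable cross-section theorem yields measurable $x\mapsto(v_x,w_x)$ and hence the global $\al$-cocycle and the global $\th\in\oInt(\cM)$. You correctly identified the measurability of $y\mapsto(\th_y,v_y)$ as the genuine obstacle; the fix is to select from the Borel set of all conjugating data at once rather than to make the BEEK construction measurable step by step.
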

\begin{proof}
We only prove the ``if'' part.
The assumption implies that
$\al=\be$ on $Z(\cM)$.
Let $(X,\mu)$ be a measure theoretic
spectrum of $Z(\cM)^\al=Z(\cM)^\be$.
Then we obtain the following disintegrations:
\[
\cM=\int_X^\oplus \cM_x\,d\mu(x),
\quad
\al_t^x=\int_X^\oplus\al_t^x\,d\mu(x),
\quad
\be_t^x=\int_X^\oplus\be_t^x\,d\mu(x).
\]
Note that $\al^x$ and $\be^x$ are
centrally ergodic flows on $\cM_x$
for almost every $x$.

\begin{clm}
For almost every $x\in X$,
$\al_t^x\be_{-t}^x\in\oInt(\cM_x)$
for all $t\in\R$.
\end{clm}
\begin{proof}[Proof of Claim 1.]
Employing Theorem \ref{thm:appdisint},
we deduce that
for each $t\in\R$,
$\al_t^x\be_{-t}^x\in\oInt(\cM_x)$
for almost every $x\in X$.
Thus by usual measure theoretic discussion,
it turns out that
for almost every $x\in X$,
$\al_t^x\be_{-t}^x\in\oInt(\cM_x)$
for all $t\in\Q$.
Since $\R\ni t\mapsto\al_t^x\be_{-t}^x\in\Aut(\cM_x)$
is continuous,
we see that
for almost every $x\in X$,
$\al_t^x\be_{-t}^x\in\oInt(\cM_x)$
for all $t\in\R$.
\end{proof}

\begin{clm}
For almost every $x\in X$,
$\al^x$ and $\be^x$ are Rohlin flows.
\end{clm}
\begin{proof}[Proof of Claim 2.]
Let $p\in\R$.
Employing Lemma \ref{lem:Rohlin-ptwise},
we take a central sequence $(v^\nu)_\nu$
in $\cM$ such that
$v^\nu\in\cM^{\rm U}$
and
$\al_t(v^\nu)-e^{ipt}v^\nu\to0$
compact uniformly
in the strong$*$ topology
as $\nu\to\infty$.
By Lemma \ref{lem:centralseq-disint},
a subsequence of
$(v_x^\nu)_\nu$ is central
for almost every $x\in X$.
Hence we may and do assume that
$(v_x^\nu)_\nu$ is central
for almost every $x\in X$.

Let $\{\vph^k\}_k$ be a norm dense sequence in $\cM_*$.
Then
\[
\|(\al_t(v^\nu)-e^{ipt}v^\nu)\vph^k\|
=
\int_X \|(\al_t^x(v_x^\nu)-e^{ipt}v_x^\nu)\vph_x^k\|\,d\mu(x),
\]
\[
\|\vph^k\cdot(\al_t(v^\nu)-e^{ipt}v^\nu)\|
=
\int_X \|\vph^k\cdot(\al_t^x(v_x^\nu)-e^{ipt}v_x^\nu)\|\,d\mu(x),
\]
As the discussion in the proof of Lemma \ref{lem:centralseq-disint},
we may and do assume that for each $t\in\R$,
$\al_t^x(v_x^\nu)-e^{ipt}v_x^\nu$ converges to $0$
in the strong$*$ topology
as $\nu\to\infty$ for almost every $x\in X$.

Let $f(t)=e^{-ipt}1_{[0,1]}(t)$.
Then for all $\nu\in\N$ and $x\in X$,
we have
\[
\|(\al_f(v_x^\nu)-v_x^\nu)\vph_x^k\|
\leq
\int_0^1\|(\al_t^x(v_x^\nu)-e^{ipt}v_x^\nu)\vph_x^k\|\,dt.
\]
Hence
\begin{align*}
\int_X
\|(\al_f(v_x^\nu)-v_x^\nu)\vph_x^k\|
\,d\mu(x)
&\leq
\int_0^1dt
\int_Xd\mu(x)\,
\|(\al_t^x(v_x^\nu)-e^{ipt}v_x^\nu)\vph_x^k\|
\\
&=
\int_0^1
\|(\al_t(v^\nu)-e^{ipt}v^\nu)\vph^k\|
\,dt,
\end{align*}
which converges to 0 as $\nu\to\infty$.
Similarly we have
$\int_X
\|\vph_x^k\cdot(\al_f(v_x^\nu)-v_x^\nu)\|
\to0$.
Again by taking a subsequence if necessary,
we may and do assume that
$(\al_f(v_x^\nu)-v_x^\nu)_\nu$ is a trivial sequence
for almost every $x\in X$.

By Lemma \ref{lem:smoothing},
$(v_x^\nu)_\nu$ is a Rohlin unitary for $p\in\R$
for almost every $x\in X$.
\end{proof}

Combining the above claims,
Lemma \ref{lem:class} and Theorem \ref{thm:cocdisint},
we see that
$\al$ and $\be$ are strongly cocycle conjugate.
\end{proof}

Thanks to \cite[Theorem 1 (i)]{KawST}
and Theorem \ref{thm:genKST},
we know $\oInt(\cM)=\ker(\mo)$
when $\cM$ is injective.
Hence we obtain the following corollary.

\begin{cor}
\label{cor:class-inj}
Let $\cM$ be an injective von Neumann algebra
and $\al,\be$ Rohlin flows on $\cM$.
Then $\al$ and $\be$ are strongly cocycle conjugate
if and only if
$\mo(\al_t)=\mo(\be_t)$ for all $t\in\R$.
\end{cor}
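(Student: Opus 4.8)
The plan is to deduce the statement directly from the main classification theorem together with the known description of $\oInt(\cM)$ for injective von Neumann algebras. By Theorem \ref{thm:class2}, since $\al$ and $\be$ are Rohlin flows on $\cM$, they are strongly cocycle conjugate if and only if $\al_t\be_{-t}\in\oInt(\cM)$ for every $t\in\R$. Hence the only thing to do is to translate the condition ``$\al_t\be_{-t}\in\oInt(\cM)$ for all $t$'' into ``$\mo(\al_t)=\mo(\be_t)$ for all $t$''.

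First I would recall that the Connes--Takesaki module is a group homomorphism $\mo\col\Aut(\cM)\ra\Aut(Z(\tcM))$, being the restriction to the center of the canonical extension functor $\Aut(\cM)\ra\Aut(\tcM)$, which is a homomorphism as noted in \S2.3. Consequently, for a fixed $t\in\R$, the equality $\mo(\al_t)=\mo(\be_t)$ holds if and only if $\mo(\al_t\be_{-t})=\mo(\al_t)\mo(\be_{-t})=\id$, that is, if and only if $\al_t\be_{-t}\in\ker(\mo)$. Thus the two conditions appearing in the statement are equivalent as soon as one knows that $\oInt(\cM)=\ker(\mo)$.

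The remaining step is to invoke precisely that identification: for an injective von Neumann algebra one has $\oInt(\cM)=\ker(\mo)$, which (as indicated just before the corollary in the excerpt) follows from \cite[Theorem 1 (i)]{KawST} combined with Theorem \ref{thm:genKST}. This is the only place where injectivity is used. Chaining the equivalences, $\al$ and $\be$ are strongly cocycle conjugate if and only if $\al_t\be_{-t}\in\oInt(\cM)=\ker(\mo)$ for all $t\in\R$, which holds if and only if $\mo(\al_t)=\mo(\be_t)$ for all $t\in\R$.

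I do not expect any genuine obstacle internal to the corollary; its content is entirely carried by Theorem \ref{thm:class2} and by the injective-case equality $\oInt(\cM)=\ker(\mo)$. The only minor points needing care are the multiplicativity of $\mo$ and the fact that both conditions are imposed for all real $t$ rather than on a dense set, but the continuity of $t\mapsto\al_t\be_{-t}$ in $\Aut(\cM)$ and of $\mo$ make this routine.
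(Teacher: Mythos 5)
Your proposal is correct and coincides with the paper's argument: the corollary is stated immediately after the remark that $\oInt(\cM)=\ker(\mo)$ for injective $\cM$ (via \cite[Theorem 1 (i)]{KawST} and Theorem \ref{thm:genKST}), and is obtained by combining this with Theorem \ref{thm:class2} exactly as you do. The multiplicativity of $\mo$ handles the translation between the two conditions, so nothing further is needed.
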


\begin{cor}
\label{cor:II1III1}
If $\cM$ is an injective factor of type II$_1$ or III$_1$,
then any Rohlin flow
is cocycle conjugate to $\id_{\cM}\oti \al^0$,
where $\al^0$ is a (unique) Rohlin flow on the injective factor
of type II$_1$.
\end{cor}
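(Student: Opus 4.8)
The idea is that this corollary is essentially a repackaging of Corollary \ref{cor:class-inj}, once one observes that every Rohlin flow on an injective factor of type II$_1$ or III$_1$ has trivial Connes--Takesaki module. First I would record the structural fact $\cM\oti R\cong\cM$, where $R$ denotes the injective factor of type II$_1$: the algebra $\cM\oti R$ is injective, with separable predual, and it is of type II$_1$ (resp.\ III$_1$) exactly when $\cM$ is, so by uniqueness of the injective factor of each of these two types it is isomorphic to $\cM$. Fixing such an isomorphism $\th\col\cM\oti R\to\cM$ and setting $\be:=\th\circ(\id_\cM\oti\al^0)\circ\th^{-1}$, which is again a Rohlin flow, it suffices to prove that $\al$ and $\be$ are strongly cocycle conjugate on $\cM$ and then undo $\th$.

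Next I would check that $\id_\cM\oti\al^0$, hence $\be$, has the Rohlin property. Given $p\in\R$, choose by Lemma \ref{lem:Rohlin-ptwise} a unitary central sequence $(v^\nu)_\nu$ in $R$ with $\al^0_t(v^\nu)-e^{ipt}v^\nu\to0$ compact uniformly in the strong$*$ topology. Then $(1\oti v^\nu)_\nu$ is a unitary central sequence in $\cM\oti R$, and since $(\id_\cM\oti\al^0)_t(1\oti v^\nu)-e^{ipt}(1\oti v^\nu)=1\oti(\al^0_t(v^\nu)-e^{ipt}v^\nu)$, the same convergence persists; Lemma \ref{lem:Rohlin-ptwise} then shows $\id_\cM\oti\al^0$ is a Rohlin flow.

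The only type-specific input is the triviality of the module. If $\cM$ is of type III$_1$, its flow of weights is the trivial flow on a point, so $Z(\tcM)=\C$ and $\mo$ is the trivial homomorphism; in particular $\mo(\al_t)=\id=\mo(\be_t)$ for all $t$. If $\cM$ is of type II$_1$, each $\al_t$ preserves the unique normal tracial state $\tau$, i.e.\ $\al_t(\tau)=\tau$, so its canonical extension satisfies $\tal_t(\la^\tau(s))=\la^{\al_t(\tau)}(s)=\la^\tau(s)$; since $\tcM=\cM\rti_{\id}\R$ is generated by $\cM$ together with the unitaries $\la^\tau(\R)$, and $Z(\tcM)=\{\la^\tau(s):s\in\R\}''$ because $\cM$ is a factor, we conclude $\mo(\al_t)=\id$, and the same argument applied to $\al^0$ on $R$ (with its unique trace) gives $\mo(\be_t)=\id$. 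In both cases $\mo(\al_t)=\mo(\be_t)$ for every $t\in\R$, so Corollary \ref{cor:class-inj} yields that $\al$ and $\be$ are strongly cocycle conjugate, and composing with $\th$ gives $\al\sim\id_\cM\oti\al^0$. The uniqueness of $\al^0$ is the instance $\cM=R$, $\al=\al^0$: any two Rohlin flows on $R$ have trivial module and are therefore strongly cocycle conjugate by Corollary \ref{cor:class-inj}; existence of a Rohlin flow on $R$ is provided by the product type examples of Section \ref{sec:appl}.

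I expect the only real care needed to be bookkeeping rather than a conceptual difficulty: verifying that tensoring with $R$ preserves injectivity and type (so that $\cM\oti R\cong\cM$), and that a Rohlin unitary for $\al^0$ survives the passage to $\id_\cM\oti\al^0$, i.e.\ that a central sequence of $R$ remains central after tensoring with $1\in\cM$. Everything substantive is already contained in Corollary \ref{cor:class-inj} and the triviality of the Connes--Takesaki module for flows on factors of type II$_1$ and III$_1$.
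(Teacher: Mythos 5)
Your proof is correct and follows essentially the same route as the paper, which compresses the whole argument into the single observation that $\Aut(\cM)=\oInt(\cM)$ for these two factors (so the hypothesis of Theorem \ref{thm:class2} is automatic) and leaves implicit the facts you spell out, namely that $\cM\oti\cR_0\cong\cM$ and that $\id_\cM\oti\al^0$ is again a Rohlin flow. Your explicit verification that the Connes--Takesaki module is trivial is just the unpacking, via Theorem \ref{thm:genKST}, of that same equality $\Aut(\cM)=\oInt(\cM)=\ker(\mo)$, so no new input is involved.
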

\begin{proof}
Since $\Aut(\cM)=\oInt(\cM)$,
any Rohlin flows are cocycle conjugate.
\end{proof}

\begin{cor}
\label{cor:Rohlin-II1}
Let $\al$ be a flow on
the type II$_1$ injective factor $\cM$.
Then the following statements are equivalent:
\begin{enumerate}
\item
$\al$ has the Rohlin property;

\item
$\al$ is invariantly approximately inner
and $\Ga(\al)=\R$.
\end{enumerate}
In this case,
$\al$ is stably self-dual,
that is,
$\hal\sim \al\oti\id_{B(\ell^2)}$.
\end{cor}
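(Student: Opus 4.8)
The plan is to prove the equivalence $(1)\Leftrightarrow(2)$ first and then read off stable self-duality from Corollary \ref{cor:class-inj} through the duality of Theorem \ref{thm:dual}. Write $\cM_0$ for the injective type II$_1$ factor, so that $\cM\cong\cM\oti\cM_0$, and recall that $\al^0$ is a fixed Rohlin flow on $\cM_0$. Since the injective factors of type II$_1$ and of type II$_\infty$ are unique, once $\cM\rti_\al\R$ is seen to be the injective type II$_\infty$ factor it is isomorphic to $\cM\oti B(\ell^2)$, and flows on the two can be compared through Corollary \ref{cor:class-inj}.

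For $(1)\Rightarrow(2)$: that $\Ga(\al)=\R$ is just the statement that a Rohlin flow on a factor has full Connes spectrum. For invariant approximate innerness, by Corollary \ref{cor:II1III1} the flow $\al$ is cocycle conjugate to $\id_\cM\oti\al^0$; taking $\al^0$ of product type, say $\al^0_t=\bigotimes_k\Ad u_k(t)$, the finite products $w_n:=u_1(T)\cdots u_n(T)$ are fixed by $\al^0$ and $\Ad w_n\to\al^0_T$, so $\al^0$, and hence $\id_\cM\oti\al^0$, is invariantly approximately inner; then so is $\al$ by Lemma \ref{lem:invappinn-stab}.

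For $(2)\Rightarrow(1)$: since $\cM$ is a factor and $\Ga(\al)=\R$, the crossed product $\cN:=\cM\rti_\al\R$ is a factor (the standard factoriality criterion for crossed products of full Connes spectrum); it is injective because $\R$ is amenable, and, as $\al$ preserves the trace of $\cM$, its dual weight is a non-finite trace, so $\cN$ is the injective type II$_\infty$ factor. The dual flow $\hal$ preserves this canonical trace, hence has trivial Connes-Takesaki module, and it has the Rohlin property by Theorem \ref{thm:dual}(2). On the other hand $\be:=\al^0\oti\id_{B(\ell^2)}$ is, by Lemma \ref{lem:tensorBH}, a Rohlin flow on the injective type II$_\infty$ factor $\cM_0\oti B(\ell^2)\cong\cN$, it has trivial module, and it is invariantly approximately inner since $\al^0$ is. Transporting $\be$ to $\cN$ and applying Corollary \ref{cor:class-inj}, the two Rohlin flows $\hal$ and $\be$ having equal (trivial) module, we conclude that $\hal$ and $\be$ are strongly cocycle conjugate; hence $\hal$ is invariantly approximately inner by Lemma \ref{lem:invappinn-stab}, and then $\al$ has the Rohlin property by Theorem \ref{thm:dual}(1).

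Finally, once $(1)\Leftrightarrow(2)$ is in hand, $\al$ is simultaneously Rohlin and invariantly approximately inner, so by Theorem \ref{thm:dual} its dual $\hal$ on the injective type II$_\infty$ factor $\cN=\cM\rti_\al\R$ is again both, in particular a Rohlin flow of trivial module; likewise $\al\oti\id_{B(\ell^2)}$ on $\cM\oti B(\ell^2)\cong\cN$ is a Rohlin flow of trivial module, so Corollary \ref{cor:class-inj} gives $\hal\sim\al\oti\id_{B(\ell^2)}$. The step requiring the most care will be the identification of $\cM\rti_\al\R$ as the injective type II$_\infty$ factor---factoriality from $\Ga(\al)=\R$, injectivity, and semifiniteness and infiniteness of the dual trace---together with the parallel verifications that $\hal$ and $\al^0\oti\id_{B(\ell^2)}$ (resp. $\al\oti\id_{B(\ell^2)}$) have trivial Connes-Takesaki module, as Corollary \ref{cor:class-inj} can be invoked only after all of these are available.
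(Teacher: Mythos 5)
Your proof is correct and follows essentially the same route as the paper: (1)$\Rightarrow$(2) via the classification of Rohlin flows on $\cR_0$ reducing to a product type flow, and (2)$\Rightarrow$(1) by identifying $\cM\rti_\al\R$ as the injective type II$_\infty$ factor and comparing the Rohlin flows $\hal$ and $\al^0\oti\id_{B(\ell^2)}$ through Corollary \ref{cor:class-inj}. The only (harmless) repackaging is in the last step of (2)$\Rightarrow$(1): you transfer invariant approximate innerness from $\al^0\oti\id$ to $\hal$ and invoke Theorem \ref{thm:dual}(1), whereas the paper takes duals of the cocycle conjugacy and applies Takesaki duality plus Lemma \ref{lem:tensorBH} explicitly --- which is exactly what the cited direction of Theorem \ref{thm:dual} does internally.
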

\begin{proof}
(1)$\Rightarrow$(2).
The $\al$ is cocycle conjugate
to a product type action as we will see
in Example \ref{exam:ITP} in the next section.
Thus $\al$ is invariantly approximately inner
by Lemma \ref{lem:invappinn-stab}.

(2)$\Rightarrow$(1).
Suppose that the condition (2) holds.
Then the dual flow $\hal$ has the Rohlin property
by Theorem \ref{thm:dual}.
Since $\hal$ preserves the trace
on the type II$_\infty$ factor
$\cN:=\cM\rti_\al\R$,
$\mo(\hal)$ is trivial.
Hence $\hal$
is cocycle conjugate to 
$\sigma\oti\id$ on $\cM\oti B(\ell^2)$,
where $\sigma$ is the a product type action
given in Example \ref{exam:ITP}.
Thus
the bidual flow
$\widehat{\hal}$
is conjugate
to $\hat{\sigma}\oti\id$.

By Takesaki duality,
$\widehat{\hal}
\sim
\al\oti\id$
on $\cM\oti B(\ell^2)$.
Hence $\al\oti\id\sim\hat{\sigma}\oti\id$
on
$\cM\oti B(\ell^2)$.
Since $\sigma$ is invariantly approximately inner, 
$\hat{\sigma}$ has the  Rohlin property by Theorem \ref{thm:dual}.
Thus $\al$ has the Rohlin property
by Lemma \ref{lem:tensorBH}. 
Since both $\alpha\otimes \id$ and $\hal$ are trace preserving 
Rohlin flows, they are cocycle conjugate.
\end{proof}

Since the tensor product flow of an arbitrary flow
and a Rohlin flow has the Rohlin property,
we see the following result.

\begin{cor}
Let $\al$ and $\be$ be flows on an injective factor $\cM$
such that $\mo(\al_t)=\mo(\be_t)$ for all $t\in\R$.
Let $\si$ be a Rohlin flow on the injective factor $\cN$
of type II$_1$.
Then $\al\oti\si$ is strongly cocycle conjugate to $\be\oti\si$.
\end{cor}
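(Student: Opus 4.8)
The plan is to reduce the statement to Corollary \ref{cor:class-inj} applied to the flows $\al\oti\si$ and $\be\oti\si$ on $\cM\oti\cN$. First, $\cM\oti\cN$ is again an injective factor, since injectivity passes to tensor products and the tensor product of two factors is a factor. Second, as already noted just before this corollary, since $\si$ has the Rohlin property both $\al\oti\si$ and $\be\oti\si$ have the Rohlin property: if $v\in\cN_{\om,\si}$ is a Rohlin unitary for $p\in\R$, then $(\al\oti\si)_t(1\oti v)=1\oti\si_t(v)=e^{ipt}(1\oti v)$, and $1\oti v$ lies in $(\cM\oti\cN)_{\om,\al\oti\si}$ because $(\al\oti\si)_t$ acts on it only through $\si_t$. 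Thus by Corollary \ref{cor:class-inj} it suffices to prove $\mo((\al\oti\si)_t)=\mo((\be\oti\si)_t)$ for all $t\in\R$.

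For this I would compute, for an arbitrary flow $\ga$ on $\cM$, the module of $\ga\oti\si$. Fix $\vph\in W(\cM)$ and let $\tr$ be the tracial state on $\cN$; since every automorphism of a type II$_1$ factor preserves its canonical trace, $\si_t(\tr)=\tr$. As $\tr$ is a trace, $\si_s^{\vph\oti\tr}=\si_s^\vph\oti\id$, so the core of $\cM\oti\cN$ is
\[
(\cM\oti\cN)\,\widetilde{}\,=(\cM\oti\cN)\rti_{\si^\vph\oti\id}\R\cong(\cM\rti_{\si^\vph}\R)\oti\cN=\tcM\oti\cN,
\]
under the identification $\la^{\vph\oti\tr}(s)\mapsto\la^\vph(s)\oti1$, with dual flow $\th^\cM\oti\id$. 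From the defining property of the canonical extension and $(\ga_t\oti\si_t)(\vph\oti\tr)=\ga_t(\vph)\oti\tr$, one checks that this isomorphism carries $\widetilde{\ga_t\oti\si_t}$ to $\tga_t\oti\si_t$. Since $\cN$ is a factor, $Z(\tcM\oti\cN)=Z(\tcM)\oti\C\cong Z(\tcM)$, and the restriction of $\tga_t\oti\si_t$ to this center is $\tga_t|_{Z(\tcM)}$; that is, under the canonical identification of the flows of weights of $\cM$ and of $\cM\oti\cN$, one has $\mo((\ga\oti\si)_t)=\mo(\ga_t)$.

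Applying this with $\ga=\al$ and with $\ga=\be$, the hypothesis $\mo(\al_t)=\mo(\be_t)$ gives $\mo((\al\oti\si)_t)=\mo((\be\oti\si)_t)$ for all $t\in\R$, and Corollary \ref{cor:class-inj} then shows that $\al\oti\si$ and $\be\oti\si$ are strongly cocycle conjugate. The only point needing any care is the identification $(\cM\oti\cN)\,\widetilde{}\,\cong\tcM\oti\cN$ together with the check that it intertwines the dual flows and transports the canonical extension of $\ga_t\oti\si_t$ to $\tga_t\oti\si_t$; this is a routine computation from the definition of the core, using only that $\cN$ carries a finite trace that is fixed by $\si$.
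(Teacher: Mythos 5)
Your proposal is correct and follows exactly the route the paper intends: the paper justifies this corollary with the single remark that the tensor product of an arbitrary flow with a Rohlin flow again has the Rohlin property, and then (implicitly) invokes Corollary \ref{cor:class-inj} together with the identification $\mo((\ga\oti\si)_t)=\mo(\ga_t)$ coming from $(\cM\oti\cN)\,\widetilde{}\,\cong\tcM\oti\cN$ and $Z(\tcM\oti\cN)=Z(\tcM)\oti\C$. Your write-up simply supplies the routine verifications (that $1\oti v$ lies in $(\cM\oti\cN)_{\om,\al\oti\si}$, and that the core identification intertwines the dual flows and carries $\widetilde{\ga_t\oti\si_t}$ to $\tga_t\oti\si_t$) that the paper leaves implicit.
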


\section{Applications}
\label{sec:appl}

In this section,
we discuss several applications of
our classification result.
Among them,
we present a new proof
of Kawahigashi's work on
flows on the injective type II$_1$ factor
\cite{Kw-cent,Kw-irrat,Kw-cartan}
and also that
of
classification of injective type III
factors.

\subsection{Classification of invariantly approximately inner flows}
\label{subsect:Cartan}

Let $\al,\be$ be flows on an injective factor $\cM$.
Let $\cN_1:=\cM\rti_\al\R$
and $\cN_2:=\cM\rti_\be\R$.
We denote
by $\{Z(\tcN_1),\th^1,\mo(\hal)\}$
and
$\{Z(\tcN_2),\th^2,\mo(\hbe)\}$
the triples
of the flow spaces, the flows of weights
and the Connes-Takesaki modules
for $\hal$ and $\hbe$,
respectively.

\begin{thm}
\label{thm:class-invapprox}
Let $\al,\be$ be flows
on an injective factor $\cM$.
Suppose that they are invariantly approximately inner.
Then the following statements hold:
\begin{enumerate}
\item
Two flows $\al$ and $\be$ are stably conjugate
if and only if
the types (I, II and III)
of $\cM\rti_\al\R$ and $\cM\rti_\be\R$ are same,
and
the $\R^2$-actions
$\th_s^1\circ\mo(\hal_t)$
and
$\th_s^2\circ\mo(\hbe_t)$
on the corresponding flow spaces
are conjugate;

\item
In (1),
if one of the following conditions holds,
then $\al$ and $\be$ are cocycle conjugate:
\begin{itemize}
\item
$\cM$ is infinite;

\item
$\cM\rti_\al\R$ and $\cM\rti_\be\R$ are not
of type I.
\end{itemize}
\end{enumerate}
\end{thm}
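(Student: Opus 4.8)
The plan is to reduce the statement to the classification of Rohlin flows by passing to the dual flows. Since $\al$ and $\be$ are invariantly approximately inner, Theorem \ref{thm:dual} shows that $\hal$ on $\cN_1:=\cM\rti_\al\R$ and $\hbe$ on $\cN_2:=\cM\rti_\be\R$ have the Rohlin property, and both $\cN_1,\cN_2$ are injective since $\cM$ is injective and $\R$ is amenable. The first step I would record is that $\al$ and $\be$ are stably conjugate if and only if $\hal$ and $\hbe$ are: a cocycle conjugacy of $\al$ and $\be$ transports through the crossed product to one of $\hal$ and $\hbe$, and tensoring by $\id_{B(\ell^2)}$ is carried along; conversely, by Takesaki duality \cite{Tak-dual} one has $\widehat{\widehat{\al}}\cong\al\oti\Ad\rho$ on $\cM\oti B(L^2(\R))$, and $\Ad\rho$ is a coboundary perturbation of the trivial flow, so $\al\sim_s\widehat{\widehat{\al}}$, whence a stable conjugacy of $\hal,\hbe$ yields $\al\sim_s\widehat{\widehat{\al}}\sim_s\widehat{\widehat{\be}}\sim_s\be$.

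For (1), I would then classify $\hal$ and $\hbe$. If the types of $\cN_1,\cN_2$ agree and the $\R^2$-actions $\th^1_s\circ\mo(\hal_t)$ and $\th^2_s\circ\mo(\hbe_t)$ on $Z(\tcN_1)$, $Z(\tcN_2)$ are conjugate, then in particular the flows of weights are isomorphic, so by the classification of injective factors (uniqueness of the injective type II factors, and Theorem \ref{thm:inj-class} in type III, together with the realization of flow-of-weights automorphisms) there is an isomorphism $\Psi\col\cN_1\ra\cN_2$ whose Connes--Takesaki module $\widetilde\Psi|_{Z(\tcN_1)}$ is the given conjugacy; conjugating $\hbe$ by $\Psi$ produces a Rohlin flow on $\cN_1$ with the same module as $\hal$, so Corollary \ref{cor:class-inj} gives $\hal\sim\hbe$, hence $\al\sim_s\be$. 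Conversely a stable conjugacy of $\al,\be$ induces one of $\hal,\hbe$, which forces $\cN_1\oti B(\ell^2)\cong\cN_2\oti B(\ell^2)$ (so the types agree) and, by the ``only if'' part of Corollary \ref{cor:class-inj}, forces the modules, hence the $\R^2$-actions, to be conjugate. This settles (1).

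For (2), the point is that under either hypothesis stable conjugacy already upgrades to cocycle conjugacy. If $\cM$ is infinite it is properly infinite, so stable and cocycle conjugacy of flows on $\cM$ coincide by Remark \ref{rem:alid}. If instead $\cN_1,\cN_2$ are not of type I, then $\cM$ is of type II or III; the type III case is covered by the first bullet, so the remaining case is $\cM$ the injective type II$_1$ factor, where $\cN_i=\cM\rti_\al\R$ is the injective type II$_\infty$ factor and $\hal,\hbe$ are trace-preserving Rohlin flows on it. By (1) we already have $\hal\sim\hbe$ on $\cN_1\cong\cN_2$; dualizing once more (cocycle conjugacy is preserved by the dual construction) and invoking Takesaki duality as above gives $\al\oti\id\sim\be\oti\id$ on the type II$_\infty$ factor $\cM\oti B(L^2(\R))$. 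It then remains to destabilize: after composing the conjugating automorphism with a trace-scaling automorphism so that it becomes trace-preserving, and after using a Rohlin-tower argument to arrange that the perturbed flow leaves the corner projection $1\oti e_{11}$ invariant, one restricts the flows and the cocycle to that corner and obtains a cocycle conjugacy of $\al$ and $\be$ on $\cM\cong\cM\oti e_{11}$.

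The hard part is precisely this last destabilization for the type II$_1$ factor: stable conjugacy of flows on the injective type II$_1$ factor is in general strictly weaker than cocycle conjugacy (witness the outer flows of \cite{Kw-Cotriv}), so one must genuinely exploit invariant approximate innerness — concretely, the absorption and stable-self-duality features of the associated Rohlin flow recorded in Corollary \ref{cor:Rohlin-II1} — in order to move a corner of the stabilized algebra to an invariant one. Everything else is bookkeeping with Takesaki duality, Corollary \ref{cor:class-inj}, and the classification of injective factors.
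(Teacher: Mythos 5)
Part (1) of your proposal is essentially the paper's argument: dualize to get Rohlin flows $\hal,\hbe$ on $\cN_1,\cN_2$, realize the given conjugacy of $\R^2$-actions as a Connes--Takesaki module of an isomorphism $\cN_1\to\cN_2$ (this is exactly Lemma \ref{lem:P1P2}, which produces $\rho$ and $h$ with $\Th=\th_h^2\circ\mo(\rho)$), apply Corollary \ref{cor:class-inj} to conclude $\hal\sim\hbe$, and dualize back. The infinite case of (2) via Remark \ref{rem:alid} is also fine.

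The destabilization in the type II$_1$ case of (2) has a genuine gap. You propose to correct the module of the conjugating automorphism $\th$ by ``composing with a trace-scaling automorphism'' and then to cut down to a flow-invariant corner by a ``Rohlin-tower argument.'' Neither step works as stated. First, replacing $\th$ by $\th\circ\la$ for a trace-scaling $\la$ destroys the intertwining relation unless $\la\circ(\be_t\oti\id)\circ\la^{-1}$ is a cocycle perturbation of $\be_t\oti\id$ via an automorphism of trivial module, which is exactly what you are trying to prove. Second, even granted $\mo(\th)=1$, reducing the two flows to the two projections $1\oti e_{11}$ and $\th(1\oti e_{11})$ does not yield a cocycle conjugacy on $\cM$: cutting a flow to equivalent but not flow-equivariantly equivalent corners is precisely the mechanism behind Kawahigashi's examples \cite[Theorem 2.9]{Kw-Cotriv} of stably conjugate, non-cocycle-conjugate flows on $\cR_0$, which the paper flags in Remark \ref{rem:alid}. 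Third, the tool you invoke, Corollary \ref{cor:Rohlin-II1}, concerns Rohlin flows, whereas the flows in Theorem \ref{thm:class-invapprox}~(2) are merely invariantly approximately inner and may well have $\Ga(\al)=\{0\}$, so it does not apply.

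The missing idea in the paper is a module-correction trick that stays entirely at the level of the stabilized flows. One first observes, from part (1), that $\al$ and $\al\oti\id_{\cM}$ are themselves stably conjugate, giving an isomorphism $\th_\al\col\cM\oti\cM\oti B(\ell^2)\to\cM\oti B(\ell^2)$ intertwining $\al\oti\id_\cM\oti\id$ with a cocycle perturbation of $\al\oti\id$. Setting $\th_1:=\th_\al\circ(\id_\cM\oti\th^{-1})\circ\th_\al^{-1}\circ\th$ produces a conjugating automorphism with $\mo(\th_1)=1$ that still intertwines $\be\oti\id$ with a cocycle perturbation of $\al\oti\id$. A trace-preserving automorphism of the II$_\infty$ factor $\cM\oti B(\ell^2)$ factors as $\Ad u\circ(\si\oti\id_{B(\ell^2)})$, and then the resulting cocycle $u_t$ satisfies $\Ad u_t\circ(\al_t\oti\id)=(\si\circ\be_t\circ\si^{-1})\oti\id$, which forces $u_t\in\cM\oti\C$ and yields the cocycle conjugacy on $\cM$ directly, with no corner-cutting at all.
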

\begin{proof}
(1).
The ``only if'' part is clear.
We will show the ``if'' part.
Recall that the dual flows
$\hal$ and $\hbe$ have the Rohlin property
by Theorem \ref{thm:dual}.
Since $\cM$ is a factor,
the dual flows are centrally ergodic.
In particular, $\cN_1$ and $\cN_2$
are von Neumann algebras of type I, II$_1$, II$_\infty$
or III.
To show the stable conjugacy,
we may and do assume that $\cN_1$ and $\cN_2$
are properly infinite by considering $\al\oti\id_{B(\ell^2)}$
and $\be\oti\id_{B(\ell^2)}$.
Then their core von Neumann algebras are isomorphic.
Let $\Th\col Z(\widetilde{\cN_1})\ra Z(\widetilde{\cN_2})$
be an isomorphism which conjugates
$\th_s^1\circ\mo(\hal_t)$
and
$\th_s^2\circ\mo(\hbe_t)$.
Then by Lemma \ref{lem:P1P2}, which will be proved later,
there exist an isomorphism $\rho\col\cN_1\ra \cN_2$
and a self-adjoint operator $h$ affiliated with $Z(\cN_2)$
such that
$\Th=\th_h^2\circ\mo(\rho)$.
(See (\ref{eq:thhx}) for the definition of $\theta_h$.)

We set $\ga_t:=\rho\circ\hal_t\circ\rho^{-1}$.
Then
\begin{align*}
\mo(\ga_t)
&=\mo(\rho)\circ\mo(\hal_t)\circ\mo(\rho)^{-1}
\\
&=
\th_{-h}^2\circ\Th\circ
\mo(\hal_t)\circ\Th^{-1}\circ\th_h^2
\\
&=
\th_{-h}^2\circ
\Th\circ\mo(\hal_t)\circ\Th^{-1}
\circ\th_h^2
\\
&=
\th_{-h}^2\circ
\mo(\hbe_t)
\circ\th_h^2
\\
&=
\mo(\hbe_t).
\end{align*}

By Corollary \ref{cor:class-inj},
it turns out that $\ga\sim\hbe$.
Hence $\hal\sim\hbe$,
and $\al\oti\id_{B(\ell^2)}
\sim\be\oti\id_{B(\ell^2)}$
by Takesaki duality.

(2).
If $\cM$ is infinite,
then $\al\sim\be$ as usual.
Hence we suppose that $\cM$ is of type II$_1$.
Then $\cN_1$ and $\cN_2$ are of type II by our assumption.
Since $\alpha$ and $\beta$ are stably conjugate,
there exist an
$\alpha\otimes \id_{B(\ell^2)}$-cocycle $w$
and $\theta\in\Aut(\cM\otimes B(\ell^2))$ such that
\[
\Ad w_t \circ(\alpha_t\otimes \id_{B(\ell^2)})
=\theta\circ (\beta_t\otimes \id_{B(\ell^2)})\circ \theta^{-1}.
\]

It turns out from (1) that
$\alpha$ and $\alpha\otimes\id_{\cM}$ are also stably conjugate.
Thus
there exist an $\alpha\otimes \id_{B(\ell^2)}$-cocycle $v$
and an isomorphism
$\theta_\alpha\col \cM\otimes \cM\otimes B(\ell^2)
\longrightarrow  \cM\otimes B(\ell^2)$
such that
\[
\Ad v_t \circ (\alpha_t\otimes \id_{B(\ell^2)})
=\theta_\alpha\circ
(\alpha_t\otimes\id_{\cM}
\otimes \id_{B(\ell^2)})\circ \theta_\alpha^{-1}.
\]
\[
\Ad \theta^{-1}_\alpha(v_t^*)
\circ
(\alpha_t\otimes \id_\cM\otimes \id_{B(\ell^2)})
=\theta_\alpha^{-1}\circ
(\alpha_t\otimes \id_{B(\ell^2)})\circ \theta_\alpha.
\]

We set
\[
\theta_1
:=\theta_\alpha\circ
(\id_{\cM}\otimes\th^{-1} )\circ\theta_\alpha^{-1}\circ\theta,
\
v_t':=\th_1(\th^{-1}(v_t^*)),
\
w_t':=\th_1(\th^{-1}(w_t)).
\]
The $\th_1$ is an automorphism on $\cM\otimes B(\ell^2)$
which satisfies $\mo(\theta_1)=1$.
This is verified by computing its module
with respect to a tracial weight.
Then
\begin{align*}
\lefteqn{ \theta_1\circ (\beta_t\otimes\id_{B(\ell^2)})\circ \theta_1^{-1}}
\\
&=
\Ad w_t'\circ
\theta_\alpha\circ(\id_{\cM}\otimes\theta^{-1} )\circ\theta_\alpha^{-1}
\circ
(\alpha_t\otimes\id_{B(\ell^2)})\circ
\theta_\alpha\circ(\id_{\cM}\otimes\theta )
\circ\theta_\alpha^{-1}
\\ 
&=
\Ad (w_t'v_t')
\circ
\theta_\alpha\circ(\id_{\cM}\otimes\theta^{-1} )
\circ
(\alpha_t\otimes\id_{\cM}\otimes \id_{B(\ell^2)})\circ
(\id_{\cM}\otimes\theta )\circ\theta_\alpha^{-1}
\\ 
&=
\Ad (w_t'v_t')\circ
\theta_\alpha
\circ
(\alpha_t\otimes\id_{\cM}\otimes \id_{B(\ell^2)})\circ
\theta_\alpha^{-1}
\\ 
&=
\Ad(w_t'v_t'v_t)
\circ
(\alpha_t\otimes \id_{B(\ell^2)}).
\end{align*}
Hence we may and do assume that $w$ and $\th$ satisfy
\[
\Ad w_t \circ (\alpha_t\otimes \id_{B(\ell^2)})
=\theta\circ
(\beta_t\otimes \id_{B(\ell^2)})\circ \theta^{-1},
\quad
\mo(\th)=1.
\]
By the latter condition,
we can take a unitary $u\in\cM\oti B(\ell^2)$
and $\si\in\Aut(\cM)$
such that $\Ad u\circ\theta=\sigma\otimes \id_{B(\ell^2)}$.
By setting $u_t:=uw_t(\al_t\otimes \id_{B(\ell^2)})(u^*)$, we have 
\[
 \Ad u_t\circ (\alpha_t\otimes \id_{B(\ell^2)})
=
\left(\sigma\circ \beta_t\circ
 \sigma^{-1}\right)\otimes \id_{B(\ell^2)}.
\]
From the above relation, $u_t\in \cM\oti\C$ is clear,
and we obtain
\[
\Ad u_t\circ \alpha_t
=\sigma\circ \beta_t\circ\sigma^{-1}.
\]
Therefore $\alpha$ and $\beta$ are cocycle conjugate.
\end{proof}

\begin{lem}
\label{lem:Shapiro}
Let $\th$ be an automorphism on a von Neumann algebra
$\cN$
and
$\al$ an automorphism on the crossed product
$\cN\rti_\th\Z$.
Suppose that $\th$ is ergodic
and faithful on $Z(\cN)$,
and $\al=\id$ on $\cN$.
Then
there exists a sequence of unitaries
$(v_n)_n$ in $Z(\cN)$
such that
$\al=\lim_{n\to\infty}\Ad v_n$
in $\Aut(\cM)$.
\end{lem}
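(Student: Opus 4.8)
The plan is to exploit the crossed-product structure $\cM:=\cN\rti_\th\Z$ and reduce the statement to the classical density of $1$-coboundaries for a free ergodic $\Z$-action. Throughout I write $\la$ for the implementing unitary, regard $\cN\subs\cM$, so $\la x\la^*=\th(x)$ for $x\in\cN$.

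\textbf{Step 1 (reducing $\al$ to a central unitary).} First I would observe that $\th$ being ergodic \emph{and faithful} on $Z(\cN)$ forces $\th^n$ to act freely on $Z(\cN)$ for every $n\neq0$: the largest projection $p\in Z(\cN)$ with $\th^n=\id$ on $Z(\cN)p$ is $\th$-invariant, hence $0$ or $1$ by ergodicity, and $1$ is excluded by faithfulness (in particular $Z(\cN)$ is non-atomic). A routine Fourier-coefficient argument — using the normal conditional expectations of $\cM$ onto its $\la^n$-isotypic components — then yields $\cN'\cap\cM=Z(\cN)$. Since $\al|_\cN=\id$, the unitary $\la^{-1}\al(\la)$ commutes with $\cN$ and so lies in $Z(\cN)$. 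More precisely, $G:=\{\beta\in\Aut(\cM):\beta|_\cN=\id\}$ is a closed subgroup of the Polish group $\Aut(\cM)$, and $\beta\mapsto z_\beta:=\la^{-1}\beta(\la)$ is a group isomorphism of $G$ onto $Z(\cN)^{\rm U}$ (surjectivity: for $z\in Z(\cN)^{\rm U}$ the unitary $z\la$ again implements $\th$, so there is a surjective automorphism of $\cM$ fixing $\cN$ pointwise with $\la\mapsto z\la$). Using the strong continuity of $\beta\mapsto U(\beta)$ one checks that $\beta\mapsto z_\beta$ is continuous; being a continuous bijective homomorphism of Polish groups, it is a homeomorphism by the open mapping theorem.

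\textbf{Step 2 (density of coboundaries).} For $v\in Z(\cN)^{\rm U}$ one computes $\Ad v(\la)=v\th(v)^*\la$, hence $z_{\Ad v}=\th^{-1}(v)v^*$. By Step 1 it therefore suffices to prove that $\{\th^{-1}(v)v^*:v\in Z(\cN)^{\rm U}\}$ is $\sigma$-strongly$^*$ dense in $Z(\cN)^{\rm U}$; choosing $v_n$ with $\th^{-1}(v_n)v_n^*\to\la^{-1}\al(\la)$, the homeomorphism of Step 1 then gives $\Ad v_n\to\al$ in $\Aut(\cM)$, as required. This density is a classical consequence of the Rokhlin lemma: realizing $Z(\cN)=L^\infty(X,\mu)$ with $(X,\mu)$ standard and $\th^{-1}|_{Z(\cN)}$ induced by a free ergodic transformation $T$, and fixing a target $w\in L^\infty(X,\T)$, one builds a tall Rokhlin tower $B,TB,\dots,T^{N-1}B$ over $T$ with small complement, sets $v:=1$ off the tower, $v|_{T^{N-1}B}:=1$, and defines $v$ on the remaining levels so that $v(Tx)=w(x)v(x)$ there; then $\th^{-1}(v)v^*$ coincides with $w$ off a set of small measure, so $\th^{-1}(v)v^*\to w$ $\sigma$-strongly$^*$. (If $\mu$ is infinite, one replaces it first by an equivalent probability measure; everything used depends only on the measure class.)

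\textbf{Main obstacle.} Step 1 is soft once one is careful about the two topologies; the passage from "$\Ad v_n\to\al$ on the generator $\la$" to "$\Ad v_n\to\al$ in the $u$-topology" is exactly what the open-mapping-theorem identification $G\cong Z(\cN)^{\rm U}$ supplies, sparing us a delicate estimate over the unit ball of $\cM$. The genuine content is Step 2: in the measure-preserving case it is the tower construction above, where the top level has measure $\le 1/N$; in the general non-singular case one must invoke the classical refinement of the Rokhlin lemma for aperiodic transformations in order to make the discrepancy set negligible, and that is the one nontrivial measure-theoretic input.
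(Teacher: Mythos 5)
Your argument is correct, and its substantive core coincides with the paper's: both reduce the problem, via $\cN'\cap\cM=Z(\cN)$, to approximating a $Z(\cN)$-valued $1$-cocycle of $\th$ by coboundaries, and both achieve that with a Rokhlin tower for the non-singular transformation in which the top level as well as the complement of the tower is small — the paper imports this as Lemma 10 of \cite{KawST}, and your ``classical refinement of the Rokhlin lemma'' is exactly that input, correctly identified as the only nontrivial measure-theoretic point. Where you genuinely diverge is in upgrading convergence on the generator to convergence in $\Aut(\cM)$. The paper approximates the whole cocycle $c(k)=\al(U^k)U^{-k}$ uniformly for $|k|\le n$ (the inductive estimate $|c(k)\th^k(v_n)-v_n|_\vph<|k|\vep_n$) and then verifies $\Ad v_n\to\al$ by a direct computation on finite Fourier polynomials, using that the dual state $\hvph$ is $\al$-invariant and that $v_n\in\cM_{\hvph}$, so that $\|\cdot\|_{\hvph}$-convergence on a $\si$-weakly dense subalgebra suffices. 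You instead approximate only $c(1)$ and invoke the open mapping theorem for Polish groups to see that $\beta\mapsto\la^{-1}\beta(\la)$ is a topological isomorphism of the closed subgroup $G=\{\beta\in\Aut(\cM):\beta|_\cN=\id\}$ onto $Z(\cN)^{\rm U}$; this is legitimate (continuity, injectivity from $\cN'\cap\cM=Z(\cN)$, and surjectivity via the central cocycle perturbation are all in place), and it spares you the uniform-in-$k$ estimate at the price of a descriptive-set-theoretic black box. The paper's route is more elementary and self-contained; yours is softer and makes transparent that only the single unitary $c(1)$ needs to be approximated.
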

\begin{proof}
Put $\cA:=Z(\cN)$
and $U:=\la^\th(1)$,
the implementing unitary of
$\cM:=\cN\rti_\th\Z$.
Since $Z(\cM)^{\hat{\th}}=\cA^\th=\C$,
$\hat{\th}$ is a centrally ergodic action
of $\T$.
By \cite[Corollary VI.1.3]{NT},
we have $\cA'\cap\cM=\cN$.
In particular, $Z(\cM)=\cA^\th=\C$.

Then $c(m):=\al(U^m)U^{*m}$ belongs to $\cN'\cap\cM=\cA$,
which is a $\th$-cocycle.
We will show that $c$ is approximated by
a coboundary.

Let $\vph\in \cN_*$ be a faithful state
and $\hvph$ the dual state on $\cM$.
Note that $\hvph\circ\al=\hvph$
since $\al$ fixes $\cN$.
Let $n\in\N$ and $\vep_n:=1/2n^2(2n+1)$.
Take $\de_n>0$ so that
if $x\in \cA_1$ satisfies
$|x|_\vph<\de_n$,
then $|\th^k(x)|_\vph<\vep_n$ with $|k|\leq n$.
Next take $N_n\in\N$ such that
$12/N_n<\de_n$.

By \cite[Lemma 10]{KawST},
there exists a partition of unity
$\{f\}\cup\{e_i\}_{i=0}^{N_n}$ in $\cA$
such that
$\th(e_i)=e_{i+1}$,
$i=0,\dots,N_n-1$,
$|f|_\vph<1/N_n$, $|e_0|_\vph<1/N_n$
and $|e_{N_n}|_\vph<2/N_n$.
Using the following inequalities:
\[
\th(e_{N_n})\leq e_0+f\leq\th(e_{N_n})+\th(f)+f,
\quad
\th(f)=f+e_0-\th(e_{N_n}),
\]
we get
\[
|\th(e_{N_n})|_\vph
<2/N_n,
\quad
|\th(f)|_\vph
<4/N_n
\quad
|\th^{-1}(e_0)|_\vph<3/N_n,
\quad
|\th^{-1}(f)|_\vph<6/N_n.
\]

Set
$v_n:=f+\sum_{i=0}^{N_n}c(i)e_i\in\cA$.
Then $v_n$ is a unitary,
and
\begin{align*}
c(1)\th(v_n)-v_n
&=
c(1)\th(f)
+
\sum_{i=0}^{N_n}
c(1)\th(c(i))\th(e_i)-v_n
\\
&=
c(1)\th(f)
+
\sum_{i=0}^{N_n}
c(1+i)\th(e_i)-v_n
\\
&=
c(1)\th(f)
+
\sum_{i=1}^{N_n}
c(i)e_i
+
c(1+{N_n})\th(e_{N_n})
-v_n
\\
&=
c(1)\th(f)-f
-
c(0)e_0
+
c(1+N_n)\th(e_{N_n}).
\end{align*}
Hence we have
\begin{align*}
|c(1)\th(v_n)-v_n|_\vph
&\leq
|\th(f)|_\vph+|f|_\vph+|e_0|_\vph+|\th(e_{N_n})|_\vph
\\
&<
4/N_n+1/N_n+1/N_n+2/N_n
\\
&=8/N_n<\de_n,
\end{align*}
and by $c(-1)=\th^{-1}(c(1)^*)$,
\begin{align*}
|c(-1)\th^{-1}(v_n)-v_n|_\vph
&=
|c(1)\th(v_n)-v_n|_{\th(\vph)}
\\
&\leq
|\th(f)|_{\th(\vph)}+|f|_{\th(\vph)}
+|e_0|_{\th(\vph)}+|\th(e_{N_n})|_{\th(\vph)}
\\
&=
|f|_\vph+|\th^{-1}(f)|_\vph
+
|\th^{-1}(e_0)|_\vph+|e_{N_n}|_\vph
\\
&<
1/N_n+6/N_n+3/N_n+2/N_n
\\
&=
12/N_n<\de_n.
\end{align*}

Therefore, if $|k|\leq n$,
then
\begin{equation}
\label{eq:thkcv1}
|\th^k(c(1)\th(v_n)-v_n)|_\vph
<
\vep_n,
\end{equation}
\begin{equation}
\label{eq:thkcv2}
|\th^{k}(c(-1)\th^{-1}(v_n)-v_n)|_\vph
<
\vep_n.
\end{equation}

We will prove the following inequality by induction:
\begin{equation}
\label{eq:ckthk}
|c(k)\th^k(v_n)-v_n|_\vph
<
|k|\vep_n
\quad
\mbox{if }|k|\leq n.
\end{equation}

We first consider when $k>0$.
Suppose that we have proved the inequality above
for $k-1$.
Using the cocycle identity
$c(k)=c(k-1)\th^{k-1}(c(1))$,
we obtain the following:
\begin{align*}
|c(k)\th^k(v_n)-v_n|_\vph
&=
|c(k-1)\th^{k-1}(c(1)\th(v_n))-v_n|_\vph
\\
&\leq
|\th^{k-1}(c(1)\th(v_n)-v_n)|_\vph
+
|c(k-1)\th^{k-1}(v_n)-v_n|_\vph
\\
&<
\vep_n
+
|c(k-1)\th^{k-1}(v_n)-v_n|_\vph
\quad
\mbox{by }
(\ref{eq:thkcv1})
\\
&<
k\vep_n.
\end{align*}

Next we consider when $k<0$.
Suppose that we have proved the inequality above
for $k+1$.
Using the cocycle identity
$c(k)=c(k+1)\th^{k+1}(c(-1))$,
we obtain the following:
\begin{align*}
|c(k)\th^k(v_n)-v_n|_\vph
&=
|c(k+1)\th^{k+1}(c(-1)\th^{-1}(v_n))-v_n|_\vph
\\
&\leq
|\th^{k+1}(c(-1)\th^{-1}(v_n)-v_n)|_\vph
+
|c(k+1)\th^{k+1}(v_n)-v_n|_\vph
\\
&<
\vep_n
+
|c(k+1)\th^{k+1}(v_n)-v_n|_\vph
\quad
\mbox{by }
(\ref{eq:thkcv2})
\\
&<
-k\vep_n.
\end{align*}
Hence (\ref{eq:ckthk}) holds.

We will prove that
$\al=\lim_n\Ad v_n$
in $\Aut(\cM)$.
Since $\hvph\circ\al=\hvph$ and $v_n\in \cM_{\hvph}$,
it suffices to show that
$\al(x)=\lim_n\Ad v_n(x)$
in the strong topology
for $x=\sum_{|k|\leq \ell}x_k U^k$ with $x_k\in \cN$.
When $n\geq\ell$,
we obtain
\begin{align*}
\|\al(x)-v_n xv_n^*\|_\hvph^2
&=
\Big{\|}
\sum_{|k|\leq \ell}x_k(c(k)-v_n\th^k(v_n^*))U^k
\Big{\|}_\hvph^2
\\
&<
\sum_{|k|\leq \ell}
\|x_k (c(k)-v_n\th^k(v_n^*))\|_\vph^2
\\
&\leq
\sum_{|k|\leq \ell}
\|x\|^2\cdot
2|c(k)-v_n\th^k(v_n^*)|_\vph
\\
&<
\|x\|^2\cdot 2(2\ell+1)\ell\vep_n
\quad
\mbox{by }
(\ref{eq:ckthk})
\\
&\leq
\|x\|^2/n.
\end{align*}
Thus $\al(x)=\lim_n v_nxv_n^*$,
and we are done.
\end{proof}

\begin{lem}
\label{lem:fixCartan}
Let $\al$ be a flow on an injective factor $\cM$.
Suppose that $\al$ fixes a Cartan subalgebra $\cA$
of $\cM$.
Then $\al$ is invariantly approximately inner.
\end{lem}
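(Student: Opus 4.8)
The plan is to reduce to Lemma \ref{lem:Shapiro} by means of the structure theory of von Neumann algebras with a Cartan subalgebra. Since $\cM$ is an injective factor with separable predual and $\cA\subs\cM$ is a Cartan subalgebra, the countable standard measured equivalence relation attached to the pair $(\cM,\cA)$ by the Feldman--Moore construction is amenable, hence hyperfinite by the Connes--Feldman--Weiss theorem; consequently the pair $(\cM,\cA)$ is isomorphic to $(\cA\rti_\theta\Z,\cA)$ for a suitable automorphism $\theta$ of the abelian algebra $\cA$. Because $\cM$ is a factor and the Cartan condition gives $\cA'\cap\cM=\cA$, one has $Z(\cM)=\cA^\theta$, so $\theta$ is ergodic on $\cA=Z(\cA)$, and it is of course faithful. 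Hence we may assume $\cM=\cA\rti_\theta\Z$ with $\theta$ ergodic and faithful on $\cA$.

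With this presentation, I would fix $T\in\R$ and observe that, since $\al$ fixes $\cA$ pointwise, the automorphism $\al_T$ of $\cM=\cA\rti_\theta\Z$ is the identity on $\cA$. Thus $\al_T$ satisfies the hypotheses of Lemma \ref{lem:Shapiro} with $\cN:=\cA$, and this lemma furnishes a sequence of unitaries $(v_n)_n$ in $Z(\cA)=\cA$ with $\al_T=\lim_{n\to\infty}\Ad v_n$ in $\Aut(\cM)$.

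It remains only to note that $v_n\in\cA$ and $\al$ fixes $\cA$, so $\al_t(v_n)=v_n$ for all $t\in\R$ and all $n$; hence $\|(\al_t(v_n)-v_n)\xi\|+\|\xi(\al_t(v_n)-v_n)\|=0$ identically in $t\in\R$, and the second condition in Definition \ref{defn:IAI} holds trivially. Together with $\al_T=\lim_n\Ad v_n$, the sequence $(v_n)_n$ witnesses the required approximation for this $T$; as $T$ was arbitrary, $\al$ is invariantly approximately inner. The only genuine obstacle is the structural reduction to a crossed product by $\Z$, i.e. the appeal to the Connes--Feldman--Weiss classification of amenable equivalence relations; once $\cM$ is written as $\cA\rti_\theta\Z$ with $\theta$ ergodic, the conclusion is immediate from Lemma \ref{lem:Shapiro}, the point being that the implementing unitaries may be taken in the $\al$-fixed subalgebra $\cA$.
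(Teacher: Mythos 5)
Your proof is correct and follows essentially the same route as the paper: the reduction of $(\cM,\cA)$ to $(\cA\rti_\theta\Z,\cA)$ via Connes--Feldman--Weiss, followed by an application of Lemma \ref{lem:Shapiro} with $\cN=\cA$, noting that the implementing unitaries lie in $\cA=Z(\cA)$ and are therefore $\al$-invariant. Your explicit verification that this $\al$-invariance makes the second condition of Definition \ref{defn:IAI} hold trivially is exactly the point the paper leaves implicit.
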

\begin{proof}
By \cite[Theorem 10]{CFW},
we may and do assume
that there exists a $\Z$-action
$\th$ on $\cA$ such that
$\cM=\cA\rti_\th\Z$.
The factoriality of $\cM$ implies
the ergodicity and the faithfulness
of $\th$.
Thus by the previous lemma,
$\al$ is invariantly approximately inner.
\end{proof}

In particular,
Kawahigashi's example \cite[Theorem 1.4]{Kw-Cotriv}
is invariantly approximately inner.

If a flow $\al$ fixes a Cartan subalgebra
of an injective type II$_1$ factor
and $\Ga(\al)=\R$,
then $\al$ has the Rohlin property
by Corollary \ref{cor:Rohlin-II1}
and the previous lemma.
This means the uniqueness of $\al$.
As a result,
we have proved the following main result
of \cite{Kw-cartan}.
See Example \ref{exam:ITP} for a product type
flow with the Rohlin property.

\begin{thm}[Kawahigashi]
\label{thm:Kaw-Cartan}
Let $\al$ be a flow on the injective
type II$_1$ factor $\cM$.
If $\al$ fixes a Cartan subalgebra of $\cM$
and $\Ga(\al)=\R$,
then $\al$ is cocycle conjugate
to a product type flow,
and absorbs any product type actions.
Thus such an action $\al$ is unique
up to cocycle conjugacy.
\end{thm}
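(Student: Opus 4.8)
The plan is to deduce the statement from the classification of Rohlin flows already obtained. First I would verify that $\al$ has the Rohlin property. Since $\al$ pointwise fixes a Cartan subalgebra of $\cM$, Lemma \ref{lem:fixCartan} shows that $\al$ is invariantly approximately inner. Combined with the hypothesis $\Ga(\al)=\R$, the equivalence in Corollary \ref{cor:Rohlin-II1} then yields that $\al$ is a Rohlin flow on the injective type II$_1$ factor $\cM$.

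Next I would invoke the existence, recorded in Example \ref{exam:ITP}, of a product type flow $\si$ on the injective type II$_1$ factor with the Rohlin property. Because $\Aut(\cM)=\oInt(\cM)$ for the injective type II$_1$ factor, the approximate-innerness hypothesis of Theorem \ref{thm:class2} holds automatically for any pair of flows on $\cM$; applying it to $\al$ and $\si$ (equivalently, quoting Corollary \ref{cor:II1III1}) gives $\al\sim\si$, i.e. $\al$ is cocycle conjugate to a product type flow. The same remark shows that any flow satisfying the hypotheses of the theorem is a Rohlin flow on $\cM$, hence cocycle conjugate to the unique Rohlin flow $\al^0$ of Corollary \ref{cor:II1III1}; this is the asserted uniqueness up to cocycle conjugacy.

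For the absorption statement, let $\be$ be an arbitrary product type flow on the injective type II$_1$ factor $\cN$, so that $\cM\oti\cN\cong\cM$. If $v\in\cM_{\om,\al}$ is a Rohlin unitary of $\al$ for an eigenvalue $p\in\R$, then $v\oti 1$ is a Rohlin unitary of $\al\oti\be$ for the same $p$; hence $\al\oti\be$ is again a Rohlin flow on $\cM\oti\cN\cong\cM$, and by the previous paragraph $\al\oti\be\sim\al$. Thus $\al$ absorbs any product type flow. The only substantive input beyond this bookkeeping is the already-established Corollary \ref{cor:Rohlin-II1} together with the construction of a Rohlin product type flow in Example \ref{exam:ITP}; accordingly the point to be careful about is precisely that a product type flow realizing full Connes spectrum has the Rohlin property, the rest being a formal consequence of Theorem \ref{thm:class2} and the injectivity of $\cM$.
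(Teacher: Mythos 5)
Your proposal is correct and follows essentially the same route as the paper: Lemma \ref{lem:fixCartan} gives invariant approximate innerness, Corollary \ref{cor:Rohlin-II1} (in the direction not relying on the product-type realization) upgrades this plus $\Ga(\al)=\R$ to the Rohlin property, and then uniqueness, conjugacy to the product type flow of Example \ref{exam:ITP}, and absorption all follow from Theorem \ref{thm:class2} exactly as you describe. The absorption step via the Rohlin unitary $v\oti 1$ is also the paper's argument (stated just after Corollary \ref{cor:class-inj}).
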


In \cite[Proposition 6.5, Theorem 6.6]{AY},
Aoi and Yamanouchi have generalized
the above Kawahigashi's result
to the case of
an action of a locally compact group
on injective factors by groupoid method.
We will prove their result for flows
making use of a classification of Rohlin flows.

\begin{cor}[Aoi-Yamanouchi]
Let $\al,\be$ be flows on an injective injective factor $\cM$.
Suppose that they fix a Cartan subalgebra
of $\cM$,
and both $\cM\rti_\al\R$ and $\cM\rti_\be\R$ are not
of type I.
Then $\al$ and $\be$ are cocycle conjugate
if and only if
the $\R^2$-actions
$\th_s^1\circ\mo(\hal_t)$
and
$\th_s^2\circ\mo(\hbe_t)$
on the corresponding flow spaces are conjugate.
\end{cor}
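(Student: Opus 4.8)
The plan is to deduce this corollary directly from the classification of invariantly approximately inner flows (Theorem \ref{thm:class-invapprox}) together with Lemma \ref{lem:fixCartan}. Write $\cN_1:=\cM\rti_\al\R$ and $\cN_2:=\cM\rti_\be\R$. First I would observe that, by Lemma \ref{lem:fixCartan}, both $\al$ and $\be$ are invariantly approximately inner, so Theorem \ref{thm:class-invapprox} is applicable to them. Also, since $\al$ and $\be$ are invariantly approximately inner, Theorem \ref{thm:dual} gives that $\hal$ and $\hbe$ have the Rohlin property; applying Corollary \ref{cor:relative} to $\hal$ (resp. $\hbe$) together with Takesaki duality then yields $Z(\cN_1)=\C=Z(\cN_2)$, so $\cN_1$ and $\cN_2$ are in fact factors, and by hypothesis they are not of type I.

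For the ``only if'' part: if $\al\sim\be$, then in particular $\al$ and $\be$ are stably conjugate, and the easy direction of Theorem \ref{thm:class-invapprox}(1) (``the only if part is clear'') shows that the $\R^2$-actions $\th_s^1\circ\mo(\hal_t)$ and $\th_s^2\circ\mo(\hbe_t)$ on the flow spaces $Z(\tcN_1)$ and $Z(\tcN_2)$ are conjugate. (Concretely, a cocycle conjugacy $\al_t^v=\th\circ\be_t\circ\th^{-1}$ produces an isomorphism $\cN_1\cong\cN_2$ intertwining $\hal$ and $\hbe$ by passing to crossed products and using the canonical extension of $\th$; taking cores and restricting to the centers then intertwines $\th^1$ with $\th^2$ and $\mo(\hal)$ with $\mo(\hbe)$.) This is exactly the asserted invariant.

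For the ``if'' part: assume the two $\R^2$-actions are conjugate. Restricting the conjugating isomorphism to the one-parameter subgroup $\{(s,0)\mid s\in\R\}$ gives a conjugacy of the flows of weights $\{Z(\tcN_1),\th^1\}\cong\{Z(\tcN_2),\th^2\}$. Because the flow of weights of a factor is transitive (i.e. isomorphic to the translation on $\R$) precisely when the factor is semifinite, and since $\cN_1,\cN_2$ are not of type I, it follows that $\cN_1$ and $\cN_2$ have the same type (both type II or both type III). With the types matched, Theorem \ref{thm:class-invapprox}(1) gives that $\al$ and $\be$ are stably conjugate, and since $\cN_1$ and $\cN_2$ are not of type I, Theorem \ref{thm:class-invapprox}(2) upgrades this to cocycle conjugacy, as desired.

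I do not expect a genuine obstacle here: the corollary is essentially a packaging of Theorem \ref{thm:class-invapprox}. The only point requiring care is the bookkeeping step in the ``if'' direction — extracting from the conjugacy of the two-parameter actions the equality of types of $\cN_1$ and $\cN_2$, which must be in place before Theorem \ref{thm:class-invapprox}(1) can be invoked; the hypothesis that the crossed products are not of type I is exactly what makes both this type-matching argument go through and part (2) of that theorem applicable.
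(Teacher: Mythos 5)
Your route is the same as the paper's: Lemma \ref{lem:fixCartan} gives invariant approximate innerness of $\al$ and $\be$, the conjugacy of the two $\R^2$-actions forces $\cN_1=\cM\rti_\al\R$ and $\cN_2=\cM\rti_\be\R$ to have the same type once type I is excluded, and Theorem \ref{thm:class-invapprox} then does the rest. One assertion in your setup is false, though: $\cN_1$ and $\cN_2$ need not be factors. Corollary \ref{cor:relative} applied to the Rohlin flow $\hal$ on $\cN_1$ gives $\pi_{\hal}(\cN_1)'\cap(\cN_1\rti_{\hal}\R)=\pi_{\hal}(Z(\cN_1))$, which is a statement about a relative commutant in the \emph{double} crossed product and does not yield $Z(\cN_1)=\C$. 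In fact $\cN_1$ is a factor only when $\Ga(\al)=\R$; for instance $\al=\id$ fixes a Cartan subalgebra and $\cM\rti_{\id}\R\cong\cM\oti L^\infty(\R)$ is of type II$_1$ but is not a factor, so it is a legitimate instance of the corollary's hypotheses. What is true, and what the paper actually uses, is that $\hal$ is centrally ergodic on $\cN_1$ (since $Z(\cN_1)^{\hal}\subseteq\pi_\al(\cM)\cap\pi_\al(\cM)'=\C$), so $\cN_1$ is of a single definite type, and the dichotomy ``flow of weights conjugate to a translation iff semifinite'' that you invoke for factors applies equally to such centrally ergodic algebras; Theorem \ref{thm:class-invapprox} and Lemma \ref{lem:P1P2} are formulated for exactly this non-factorial situation. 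With that correction your type-matching step and the appeal to Theorem \ref{thm:class-invapprox} go through as in the paper.
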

\begin{proof}
By Lemma \ref{lem:fixCartan},
$\al$ and $\be$ are invariantly approximately inner.
When $\cN_1$ is of type III,
so is $\cN_2$
since their flow spaces are isomorphic.
Hence when $\cN_1$ is of type II,
so is $\cN_2$.
By Theorem \ref{thm:class-invapprox},
$\al\sim\be$.
\end{proof}

\begin{rem}
\label{rem:typeI-Gamma}
If $\cN:=\cM\rti_\al\R$ is of type I,
then $\Ga(\al)=\{0\}$.
This fact is verified as follows.
Take a type I subfactor $\cP$ in $\cN$
such that $\cP'\cap \cN=Z(\cN)$.
Hence we have the tensor product
decomposition $\cN=\cP\oti Z(\cN)$.
Let $\ga_t$ be the restriction of $\hal_t$
on $Z(\cN)$.
Then $\ga$ is an ergodic flow.
Since $(\id\oti\ga_{-t})\circ\hal_t=\id$
on $Z(\cN)$,
$(\id\oti\ga_{-t})\circ\hal_t$ is inner.
Take a measurable unitary map
$\R\ni t\mapsto U_t\in\cN$
such that $\hal_t=\Ad U_t\circ(\id\oti\ga_t)$.
Then $c(t,s):=(\id\oti\ga_t)(U_s^*)U_t^*U_{t+s}$
is a 2-cocycle of $\ga$
which belongs to $\C\oti Z(\cN)$.
Thanks to \cite[Proposition A.2]{CT},
$c$ is a coboundary.
Hence we may and do assume that
$U$ is an $(\id\oti\ga)$-cocycle.
Then
$\cP\oti (Z(\cN)\rti_\ga\R)\cong\cN\rti_\hal\R
\cong \cM\oti B(L^2(\R))$
that is a factor.
In particular,
$\ga$ is faithful,
that is,
$\{0\}=\ker\ga=\Ga(\al)$.
\end{rem}

By the remark above,
we have interest in a classification
of flows with trivial Connes spectrum.

\begin{lem}
\label{lem:Connes-spectrum0}
Let $\cM$ be a factor and $\al$ a flow on $\cM$.
If $\Ga(\al)=\{0\}$,
then $\al$ is invariantly approximately inner.
\end{lem}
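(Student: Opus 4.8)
The plan is to deduce the statement from the duality, established in Theorem \ref{thm:dual}(2), between the Rohlin property and invariant approximate innerness: it suffices to prove that the dual flow $\hal$ on $\cN:=\cM\rti_\al\R$ has the Rohlin property.

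The first point I would establish is that $\hal$ is centrally ergodic on $\cN$. By Takesaki duality \cite{Tak-dual}, $\cN\rti_{\hal}\R\cong\cM\oti B(L^2(\R))$, which is a factor since $\cM$ is; and any $z\in Z(\cN)^{\hal}$ has $\pi_{\hal}(z)$ commuting both with $\pi_{\hal}(\cN)$ and with $\la^{\hal}(\R)$, hence $\pi_{\hal}(z)\in Z(\cN\rti_{\hal}\R)=\C$, so $z\in\C$. The second point is that $Z(\cN)$ sits inside $\cN_{\om,\hal}$ equivariantly: a constant sequence from $\cN$ is $\hal$-equicontinuous, and a constant sequence from $Z(\cN)$ is $\om$-central because it commutes with $\cN$ exactly. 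Now the restriction of $\hal$ to $Z(\cN)$ is the natural flow on the center of the crossed product, and by Connes' spectral theory for crossed products (cf.\ \cite{CT}) its Arveson spectrum equals the annihilator $\Ga(\al)^{\perp}$; as $\Ga(\al)=\{0\}$, this spectrum is $\R$. Since spectral subspaces restrict to invariant von Neumann subalgebras, $\R=\Sp(\hal|_{Z(\cN)})\subseteq\Sp(\hal|_{\cN_{\om,\hal}})\subseteq\R$.

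Granting these, Lemma \ref{lem:SpGa}(2) (applicable by central ergodicity) gives $\Ga(\hal|_{\cN_{\om,\hal}})=\Sp(\hal|_{\cN_{\om,\hal}})=\R$. Fixing $p\in\R$, the full Connes spectrum furnishes, inside every corner $f\cN_{\om,\hal}f$ cut by a nonzero $\hal$-invariant projection $f$, norm-one elements whose spectrum clusters at $p$; a fast reindexation (Lemma \ref{lem:FRT}) converts these into an exact eigenvector, and taking its polar part yields a partial isometry $v_f$ with $v_f^{*}v_f\le f$ and $\hal_t(v_f)=e^{ipt}v_f$. A maximal orthogonal family of such pieces exhausts $1$ — here central ergodicity of $\hal$ is used, in place of a trace, to rule out a leftover invariant corner — and the sum is a Rohlin unitary for $\hal$ at $p$. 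Hence $\hal$ is a Rohlin flow, and therefore $\al$ is invariantly approximately inner.

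The hard part is this last step. Because $\cN$ need not be a factor, $\cN_{\om,\hal}$ carries no canonical trace, so manufacturing genuine unitary eigenvectors of $\hal$ for every frequency — not merely partial isometries — forces one to run the exhaustion argument with the ergodicity of $\hal$ on $Z(\cN)$ substituting for factoriality, which is precisely the mechanism underlying the characterization of the Rohlin property in Section 7.
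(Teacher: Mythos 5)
Your overall strategy---dualize and show that $\hal$ has the Rohlin property, then invoke Theorem \ref{thm:dual}---is exactly the route of the Remark following this lemma in the paper, and your preliminary steps are correct: central ergodicity of $\hal$ via Takesaki duality, the inclusion $Z(\cN)\subseteq\cN_{\om,\hal}$, and $\Sp(\hal|_{Z(\cN)})=\R$ because $\Ga(\al)=\ker(\hal|_{Z(\cN)})=\{0\}$. Note that the paper's own proof is different and does not dualize: it uses Connes' discrete decomposition (a perturbation $\si$ of $\al\oti\id$ with $0$ isolated in $\Sp(\si)$, so that $\cM\oti B(\ell^2)=\cN^{\si}\rti_\ga\Z$ with $\ga$ faithful and ergodic on the non-atomic $Z(\cN^{\si})$) followed by the Shapiro-type Lemma \ref{lem:Shapiro}.

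The gap is in your final step. Knowing $\Ga(\hal|_{\cN_{\om,\hal}})=\Sp(\hal|_{\cN_{\om,\hal}})=\R$ does not by itself yield unitary eigenvectors: that implication is precisely Theorem \ref{thm:faithful-Rohlin}, which is stated and proved only for flows on \emph{factors}, and whose proof occupies all of Section 7 (standard forms, natural cones, the Powers--St\o rmer estimates of Lemma \ref{lem:pmuval}, and a $2\times 2$ matrix trick to upgrade partial isometries to a unitary). Your sketch---fast reindexation converts approximate eigenvectors into an exact eigenvector, take its polar part, exhaust by a maximal family---does not substitute for this: Lemma \ref{lem:FRT} does not turn approximate eigenvectors into exact ones, the polar part of an approximate eigenvector need not be a nonzero eigen-partial-isometry without estimates of the type in Lemma \ref{lem:pmuval}, and the exhaustion requires comparison theory in $\cN_{\om,\hal}$ that the paper never develops outside the factor case. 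The repair is much simpler and avoids Section 7 entirely: you have already shown that $\hal|_{Z(\cN)}$ is faithful and ergodic, hence a non-periodic ergodic flow on an abelian von Neumann algebra, so Theorem \ref{thm:propergRoh} and the corollary following it produce Rohlin unitaries for every $p\in\R$ directly inside $Z(\cN)_{\om}\subseteq\cN_{\om,\hal}$; no spectral analysis of $\cN_{\om,\hal}$ itself is needed.
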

\begin{proof}
When $\al$ is inner,
$\al$ is implemented by one-parameter unitary
group as usual.
Thus the statement is trivial.

Suppose that $\al$ is not inner.
Let $\cN:=\cM\oti B(\ell^2)$
and $\bar{\al}:=\al\oti\id_{B(\ell^2)}$.
By the same discussion as \cite[\S 5.3]{Co-une},
there exist a perturbation $\si$ of $\bar{\al}$
and $\mu>0$
such that
$0\in \Sp(\si)$ is isolated
and
$\cN=\cN^\si\rti_\ga\Z$ whose
implementing unitary $\la^\ga(1)$
satisfies
$\Sp_\si(\la^\ga(1))\subs[\mu,\infty)$.

\begin{clam}
The $Z(\cN^\si)$ is non-atomic,
and $\ga$ is a faithful ergodic action on $Z(\cN^\si)$.
\end{clam}
\begin{proof}[Proof of Claim]
The factoriality of $\cN$ implies
the central ergodicity of $\ga$.
Note that $Z(\cN^\si)\neq\C$.
If so,
then
$\Ga(\si)=\Sp(\si)\supset \Sp_\si(\la^\ga(1))\neq\{0\}$.
This is a contradiction.
Assume that $Z(\cN^\si)$ were atomic.
Take a minimal projection $e\in Z(\cN^\si)$.
By \cite[Lemme 2.3.3]{Co-une},
$\{0\}=\Ga(\si)=\Ga(\si^e)$.
However, $(\cN_e)^{\si^e}=(\cN^\si)_e$ is a factor,
and $\Ga(\si^e)=\Sp(\si^e)$.
Hence $\si^e$ is trivial.
Since $\cN$ is a factor,
$\si$ must be inner by \cite[Lemme 1.5.2]{Co-une},
and this is a contradiction.
Therefore,
$Z(\cN^\si)$ is non-atomic.

Suppose that $n>0$ is the period of
$\ga$ on $Z(\cN^\si)$.
The ergodicity of $\ga$ implies
that $Z(\cN^\si)\cong\ell^\infty(\Z/n\Z)$,
which is atomic.
Hence $\ga$ must be faithful on $\cN^\si$.
\end{proof}

By Lemma \ref{lem:Shapiro},
we can deduce the invariantly
approximate innerness of $\si$
and that of $\al$.
\end{proof}

\begin{rem}
We can also prove the previous lemma in the following way.
The condition $\Ga(\al)=\{0\}$
means that $\hal$ is faithful on $Z(\cM\rti_\al\R)$.
If we use Theorem \ref{thm:propergRoh},
then it turns out that
$\hal$ has the Rohlin property.
Thus $\al$ is invariantly approximately inner
by Theorem \ref{thm:dual}.
\end{rem}

By Theorem \ref{thm:class-invapprox}
and the previous lemma,
we obtain the following result due to
Kawahigashi for type II$_1$ case
\cite[Theorem 1.4]{Kw-cent}
and
Hui for type III case
\cite[Theorem 1.3]{Hui}.

\begin{cor}
\label{cor:Kaw-spectrum0}
Let $\al,\be$ be flows on the injective factor $\cM$.
Suppose that $\Ga(\al)=\{0\}=\Ga(\be)$.
Then the following statements hold:
\begin{enumerate}
\item
The $\al$ is stably conjugate to $\be$
if and only if
the types (I, II and III)
of $\cM\rti_\al\R$ and $\cM\rti_\be\R$
are same,
and
the $\R^2$-actions
$\th_s^1\circ\mo(\hal_t)$
and
$\th_s^2\circ\mo(\hbe_t)$
are conjugate as before;

\item
In (1),
if one of the following conditions hold,
then $\al$ and $\be$ are cocycle conjugate:
\begin{itemize}
\item
$\cM$ is infinite;

\item
$\cM\rti_\al\R$ and $\cM\rti_\be\R$ are not of type I.
\end{itemize}
\end{enumerate}
\end{cor}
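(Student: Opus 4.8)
The plan is to deduce this directly from Lemma \ref{lem:Connes-spectrum0} and Theorem \ref{thm:class-invapprox}, so the argument is essentially a short reduction. First I would apply Lemma \ref{lem:Connes-spectrum0} to each of $\al$ and $\be$: since $\cM$ is a factor and $\Ga(\al)=\{0\}=\Ga(\be)$, that lemma says both $\al$ and $\be$ are invariantly approximately inner flows on $\cM$. This is the only hypothesis of Theorem \ref{thm:class-invapprox} that needs to be verified.

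Once that is in hand, statements (1) and (2) are precisely the conclusions of Theorem \ref{thm:class-invapprox}(1) and (2) applied to the pair $(\al,\be)$. In particular, the $\R^2$-actions $\th_s^1\circ\mo(\hal_t)$ and $\th_s^2\circ\mo(\hbe_t)$ on the flow spaces of $\cM\rti_\al\R$ and $\cM\rti_\be\R$ are the very objects $\{Z(\tcN_1),\th^1,\mo(\hal)\}$ and $\{Z(\tcN_2),\th^2,\mo(\hbe)\}$ fixed in the notation preceding Theorem \ref{thm:class-invapprox}, and the side conditions in (2)---$\cM$ infinite, or $\cM\rti_\al\R$ and $\cM\rti_\be\R$ not of type I---are exactly those of Theorem \ref{thm:class-invapprox}(2). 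So there is nothing further to check.

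Consequently there is no genuine obstacle here: all the content has already been supplied by Lemma \ref{lem:Connes-spectrum0} (whose proof invokes the structure theory of \cite{Co-une} together with the Shapiro-type Lemma \ref{lem:Shapiro}) and by Theorem \ref{thm:class-invapprox}. The one point I would take care to record, in the spirit of Remark \ref{rem:typeI-Gamma}, is that when $\cM\rti_\al\R$ is of type I the condition $\Ga(\al)=\{0\}$ holds automatically, so the hypothesis of the corollary imposes no restriction in that case and the passage from stable conjugacy to cocycle conjugacy really does require one of the stated exclusions.
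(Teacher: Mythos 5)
Your proposal is correct and coincides with the paper's own (one-line) derivation: the corollary is obtained exactly by combining Lemma \ref{lem:Connes-spectrum0}, which gives the invariant approximate innerness of $\al$ and $\be$ from $\Ga(\al)=\{0\}=\Ga(\be)$, with Theorem \ref{thm:class-invapprox}. Nothing further is needed.
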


\begin{exam}[Kawahigashi]
The difference between the cocycle conjugacy and the stable conjugacy
occurs only when $\cM$ is of type II$_1$.
We let $\al^{(k)}$ be the flow on
the injective factor of type II$_1$
defined by
\[
\al_t^{(k)}
=\bigotimes_{n=1}^\infty
\Ad
\begin{pmatrix}
1&0\\
0&e^{2\pi i\cdot 3^{n+k}t}
\end{pmatrix}
.
\]
They are mutually stably conjugate,
but not cocycle conjugate.
See \cite[Theorem 2.9]{Kw-Cotriv}.
In particular,
their crossed products are of type I.
\end{exam}

\subsection{Examples of Rohlin flows on the injective factor of type II$_1$}
We recall the notion of minimality
(cf. \cite{PW}).

\begin{defn}
Let $\al$ be an action of a locally compact
group $G$ on a factor $\cM$.
We will say that $\al$ on $\cM$
is \emph{minimal}
if $\al$ is faithful and $(\cM^\al)'\cap\cM=\C$.
\end{defn}

\begin{thm}\label{thm:almost}
Let $\alpha$ be an almost periodic and minimal
flow on the injective type II$_1$ factor
$\cM$.
Then $\alpha$ has the Rohlin property. 
\end{thm}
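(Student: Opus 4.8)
The plan is to reduce everything to Corollary~\ref{cor:Rohlin-II1}: since $\cM$ is the injective type II$_1$ factor, it suffices to verify that (i) $\Ga(\al)=\R$, and (ii) $\al$ is invariantly approximately inner.

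For (i), let $\Gamma\subs\R$ be the point spectrum of $\al$, i.e.\ the set of $p$ with non-zero spectral subspace $\cM_p:=\{x\in\cM\mid \al_t(x)=e^{ipt}x\ \forall t\}$. Almost periodicity means $\cM$ is the weak closure of $\sum_{p\in\Gamma}\cM_p$, and $\Gamma$ is a subgroup of $\R$ (closed under $p\mapsto-p$ via adjoints and under addition via products of eigenoperators). Minimality makes $\cM^\al$ a factor with $(\cM^\al)'\cap\cM=\C$; this rules out $\cM^\al$ being finite dimensional (a matrix subalgebra of $\cM$ has non-trivial relative commutant), so $\cM^\al$ is an injective type II$_1$ factor. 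If $\Gamma$ were $\{0\}$ or contained in some $a\Z$ with $a>0$, then $\al_1$ (resp.\ $\al_{2\pi/a}$) would act trivially on every $\cM_p$ and hence on $\cM$, contradicting faithfulness; thus $\Gamma$ is a dense subgroup of $\R$. To see $\Gamma\subs\Ga(\al)$, fix $p\in\Gamma$, take a partial isometry eigenoperator $w_0\in\cM_p$ from a polar decomposition, and a projection $e\in\cM^\al$ dominated by the initial projection of $w_0$; since equal-trace projections of the II$_1$ factor $\cM^\al$ are unitarily conjugate there, one finds $u\in\cM^\al$ with $(u^*w_0)e(u^*w_0)^*=e$, whence $e(u^*w_0)e$ is a non-zero eigenoperator for $\al^e$ with eigenvalue $p$; subdividing an arbitrary invariant projection into such pieces gives $p\in\Sp(\al^e)$ for every $e$. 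Hence $\Ga(\al)\sups\overline{\Gamma}=\R$.

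For (ii), the crucial structural input is that almost periodicity makes $K:=\overline{\al(\R)}\subs\Aut(\cM)$ compact, so $\al$ extends to a continuous minimal action $\bar\al$ of the compact abelian group $K$, whose Pontryagin dual is $\Gamma$ with the discrete topology and into which $\R$ embeds densely. By the structure theory of minimal compact abelian group actions, $\cM$ is generated by $\cM^\al$ together with a unitary eigenoperator $u_\gamma\in\cM_\gamma$ for each $\gamma\in\Gamma$, and $\cM\cong\cM^\al\rti_\beta^{\lambda}\Gamma$, where $\beta_\gamma:=\Ad u_\gamma|_{\cM^\al}$ defines an outer (cocycle) action of the countable amenable group $\Gamma$ on the injective II$_1$ factor $\cM^\al$, outerness following from $(\cM^\al)'\cap\cM=\C$. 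Now fix $T\in\R$ and let $\chi_T\in\hat\Gamma$ be the character $\chi_T(\gamma):=e^{-i\gamma T}$. By the Rohlin property for outer actions of discrete amenable groups on the injective type II$_1$ factor (Connes, Ocneanu), there is a sequence of unitaries $(w^\nu)_\nu$ in $\cM^\al$ that is central in $\cM^\al$ and satisfies $\beta_\gamma(w^\nu)-\chi_T(\gamma)w^\nu\to0$ $*$-strongly for every $\gamma\in\Gamma$. A direct computation, using $\al_T(u_\gamma)=e^{i\gamma T}u_\gamma$ and that $\cM^\al$ and the $u_\gamma$ generate $\cM$, then gives $\Ad w^\nu\to\al_T$ in $\Aut(\cM)$; and $\al_s(w^\nu)=w^\nu$ for all $s$ because $w^\nu\in\cM^\al$. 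Hence $w:=\pi_\om((w^\nu)_\nu)$ is a unitary of $\cM_\al^\om$ with $\al_T=\Ad w$ on $\cM$ and $\al_t(w)=w$ for all $t$, so Lemma~\ref{lem:IAI-local} yields that $\al$ is invariantly approximately inner. Combining (i), (ii) and Corollary~\ref{cor:Rohlin-II1} gives the Rohlin property.

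An alternative packaging of (ii) is to note that, by uniqueness up to cocycle conjugacy of outer $\Gamma$-actions on the injective II$_1$ factor, $\bar\al$ is cocycle conjugate to a product-type model and hence $\al$ to a product-type flow with dense eigenvalue group $\Gamma$, whose Rohlin property is established in Example~\ref{exam:ITP}; since the Rohlin property is preserved under cocycle conjugacy (Lemma~\ref{lem:pertRohstab} together with conjugacy invariance), one concludes directly. Either way, the main obstacle is step (ii): ordinary approximate innerness of $\al_T$ is free because $\Aut(\cM)=\oInt(\cM)$, and the real content is that the implementing unitaries can be chosen inside the fixed-point algebra, which is exactly where the decomposition $\cM=\cM^\al\rti_\beta\Gamma$ and the classification of amenable group actions on the hyperfinite II$_1$ factor are needed; step (i) is comparatively elementary.
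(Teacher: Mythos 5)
Your proposal is sound in substance but takes a genuinely different, and considerably heavier, route than the paper. The paper's proof is a direct construction of Rohlin unitaries: by Thomsen's results each eigenspace $\cM(p)$ of the dense countable group $H=\Sp_d(\al)$ contains an exact unitary eigenvector $v(p)$; after the $2$-cohomology vanishing one applies Ocneanu's Lemma 9.2 to the resulting free $H$-action $\ga_p=\Ad v(p)|_{\cM^\al}$ to get unitaries $u(p)_n\in\cM^\al$ with $\ga_p=\lim\Ad u(p)_n$ and $\ga_p(u(q)_n)-u(q)_n\to0$, and then $w_n:=u(p)_n^*v(p)$ is an \emph{exact} eigenvector ($\al_t(w_n)=e^{ipt}w_n$) which is asymptotically central; a diagonal argument over the dense group $H$ handles arbitrary $p\in\R$. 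This uses neither the dual flow, nor Theorem \ref{thm:dual}, nor the classification theorem. You instead verify the two hypotheses of Corollary \ref{cor:Rohlin-II1}; your step (i) is essentially the same spectral analysis (and, like the paper, ultimately leans on Thomsen for the group structure and non-vanishing of products of eigenoperators), while your step (ii) replaces Ocneanu's approximate-innerness lemma by the full Rohlin tower theorem for the outer $\Gamma$-action $\be$ on $\cM^\al$, extracting central approximate eigenvectors for the character $\chi_T$ and hence invariant approximate innerness. What your route buys is conceptual transparency (the two conditions of Corollary \ref{cor:Rohlin-II1} are exactly the expected "outerness plus approximate innerness" package); what it costs is that you invoke the main classification machinery (Takesaki duality, Theorem \ref{thm:dual}, Corollary \ref{cor:class-inj}) merely to recombine the two conditions, where the paper gets the Rohlin unitary in one line from $u(p)_n^*v(p)$.

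One caveat you must address explicitly: in the paper's logical order, the implication $(2)\Rightarrow(1)$ of Corollary \ref{cor:Rohlin-II1} is proved by comparing $\hal$ with the product-type flow $\si$ of Example \ref{exam:ITP}, and the Rohlin property of $\si$ is justified there by citing Theorem \ref{thm:almost} itself (or Theorem \ref{thm:Kaw-Cartan}, which also passes through Corollary \ref{cor:Rohlin-II1}). To avoid circularity you must take the independent route the paper offers, namely Corollary \ref{cor:faithful-product} from Section 7, to establish the Rohlin property of the product-type model. The same remark applies to your "alternative packaging," which cites Example \ref{exam:ITP} directly. With that dependency repaired, your argument is correct.
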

\begin{proof}
Set $\cM(p):=\{x\in \cM\mid
\alpha_t(x)=e^{ipt}x\}$, $p\in \R$, and 
$H:=\mathrm{Sp}_d(\alpha)=\{p\in \mathbb{R}\mid \cM(p)\ne \{0\}\}$.
As shown in \cite[Lemma 2.4, Proposition 7.3]{Thom-prime},
the eigenspace $\cM(p)$ contains a unitary,
and
$H$ is a dense
countable subgroup of $\mathbb{R}$.
Take a unitary $v(p)\in \cM(p)$,
and set
$\gamma_p:=\Ad v(p)|_{\cM^\alpha}$,
$c(p,q):=v(p)v(q)v(p+q)^*$.
Then $(\ga,c)$ is a free cocycle action
on $\cM^\al$,
where we regard $H$ as a discrete group.
Since
$(\cM^{\alpha})'\cap \cM=\mathbb{C}$,
$\cM^\alpha$ is an injective subfactor of type II$_1$.
By the 2-cohomology vanishing
theorem \cite[Theorem 7.6]{Ocn-act},
we may assume
$v(p)$ is a unitary representation of $H$, and
$\gamma$ is a free action of $H$ on $\cM^\alpha$.
By \cite[Lemma 9.2]{Ocn-act},
there exists
$\{u(p)_n\}_{n=1}^\infty \subset U(\cM^\alpha)$
for each $p\in H$
such that 
for all $p,q\in H$,
\[
\gamma_p
=\lim_{n\rightarrow \infty}\Ad u(p)_n,
\quad
\lim_{n\rightarrow \infty}
\left\|\gamma_p(u(q)_n)-u(q)_n\right\|_2
=0.
\]
In fact, we may assume
$\lim_{n\rightarrow \infty}
\|u(p)_nu(q)_n-u(p+q)_n\|_2=0$ holds,
but this is unnecessary in this proof.

Fix $p\in H$, and set $w_n:=u(p)_n^*v(p)$.
Then trivially
$\alpha_t(w_n)=e^{ipt}w_n$,
and $(w_n)_n$ is a central sequence
as verified below.
For $x\in \cM^\alpha$ and $q\in H$,
we have
\begin{align*}
\|w_nxv(q)-xv(q)w_n\|_2
&=
\|u(p)_n^*v(p)xv(q)-xv(q)u(p)_n^*v(p)\|_2
\\
&=
\|u(p)_n^*\gamma_p(x)v(p+q)-x\gamma_q(u(p)_n^*)v(p+q)\|_2
\\
&=
\|\gamma_p(x)-u(p)_nx\gamma_q(u(p)_n^*)\|_2
\\
&\leq
\|\gamma_p(x)-u(p)_nxu(p)_n^*\|_2
+
\left\|x\left(\gamma_q(u(p)_n^*)-u(p)_n^*\right)\right\|_2,
\end{align*}
and the last terms converge to $0$
by the choice of $\{u(p)_n\}$.
Since the linear span of $\{\cM^\al v(q)\}_{q\in H}$
is a strongly dense
$*$-subalgebra of $\cM$, 
$(w_n)_n$ is central.
Thus
$\pi_\om((w_n)_n)\in \cM_{\om,\al}$
is a Rohlin unitary for $p\in H$.

We next show the existence of a Rohlin unitary
for an arbitrary $p\in \mathbb{R}$.
Take a strongly dense countable set
$\{x_j\}_{j=1}^\infty\subset \cM_1$.

For any $n>0$, take $q_n\in H$ such that
$|e^{ipt}-e^{iq_n t}|<1/n$, $|t|\leq n$. 
Let $w_n\in\cM$ be a unitary such that 
\[
\alpha_t(w_n)=e^{iq_n t}w_n,
\quad
\|[w_n,x_j]\|_2<1/n,
\quad 1\leq j\leq n.
\]
Then for $|t|\leq n$,
\[
\|\alpha_t(w_n)-e^{ipt}w_n\|_2
<
1/n.
\]
Thus $\pi_\om((w_n)_n)$ is a Rohlin unitary for $p$.
\end{proof}

\begin{exam}
\label{exam:ITP}
\upshape 
Let $\mu,\nu\in \R\setminus\{0\}$
be such that $\mu/\nu\nin\Q$. 
Let $\si$ be the flow on the injective
type II$_1$ factor $\cM$
as defined below
\[
\sigma_t:=\bigotimes_{n=1}^\infty 
\Ad
\begin{pmatrix}
1 & 0 &0
\\
0&e^{i\mu t} &0 
\\
0&0 &e^{i\nu t}
\\
\end{pmatrix}.
\]
It is straightforward to check that
$\si$ satisfies the condition of
Theorem \ref{thm:Kaw-Cartan}
or
Theorem \ref{thm:almost}.
Indeed, $\Sp_d(\si)=\mu\Z+\nu\Z$.
Hence $\si$ has the Rohlin property.
We can also show this fact from
Corollary \ref{cor:faithful-product}
without using the classification
result of discrete amenable group actions
due to Ocneanu.

By uniqueness of
a Rohlin flow on the injective
type II$_1$ factor,
any Rohlin flow is cocycle conjugate to $\si$.
We will study product type flows as above
in more general setting in \S\ref{subsect:itp}.
\end{exam}

\begin{exam}\label{exam:irat}
\upshape
Let $\theta\in \mathbb{R}\setminus\mathbb{Q}$
and
$A_\theta$
be the irrational
rotation C$^*$-algebra generated by $u$ and $v$
satisfying $uv=e^{2\pi i\th}vu$.
For $\mu,\nu\in\mathbb{R}\setminus\{0\}$,
we introduce the ergodic flow $\alpha$
on
$A_\theta$
defined by $\alpha_t(u)=e^{i\mu t}u$
and
$\alpha_t(v)=e^{i\nu t}v$.
If
$\mu/\nu\not\in \mathbb{Q}$
and 
$\mu/\nu\not \in GL(2,\Q)\th$,
where
each $g\in GL(2,\Q)$ acts on $\R$
as the linear fractional transformation,
then $\alpha$ is a Rohlin flow
in the C$^*$-sense by \cite[Proposition 2.5]{Kishi-CMP}.

We lift $\alpha$ to the weak
closure of $A_\theta$ with respect to the unique trace.
Then we obtain a Rohlin flow
on the injective type II$_1$ factor.
Hence Theorem \ref{thm:class2}
implies that
the flow $\si$
introduced in Example \ref{exam:ITP}
and $\al$ are mutually cocycle conjugate.
This fact is originally proved by
Kawahigashi \cite[Theorem 16]{Kw-irrat}.

This example can be generalized
to a higher-dimensional noncommutative torus
\cite[Proposition 2.5]{Kishi-CMP}.
\end{exam}

\begin{rem}
Let $A$ be a C$^*$-algebra and $\pi\col A\ra B(H)$
be a representation.
A central sequence $(x^\nu)_\nu$
in $A$
needs not to be central in $\pi(A)''$ in the von Neumann algebra sense.
Indeed,
let $\vph\in A^*$ be a state and
$\pi_\vph\col A\ra B(H_\vph)$ be the GNS representation.
Suppose that the normal extension of $\vph$ on $\cM:=\pi_\vph(A)''$
is faithful.
Then $\{\vph \pi_\vph(a)\mid a\in A\}$ is a dense subspace of $\cM_*$.

Let $(x^\nu)_\nu$ be a central sequence in $A$.
Then
we have
\[
\|[\vph \pi_\vph(a),\pi_\vph(x^\nu)]\|_{\cM_*}
\leq
\|a\|\|[\vph,\pi_\vph(x^\nu)]\|_{\cM_*}
+
\|[a,x^\nu]\|.
\]
By the Kaplansky density theorem,
we have
$\|[\vph,\pi_\vph(x^\nu)]\|_{\cM_*}
=
\|[\vph,x^\nu]\|_{A^*}$.
Therefore,
$(\pi_\vph(x^\nu))_\nu$ is central in $\cM$
if and only if
$\lim_{\nu\to\infty}\|[\vph,x^\nu]\|_{A^*}=0$.
\end{rem}

\subsection{The classification of injective factors of type III}
\label{subsect:III-flow}

We recall the following fundamental result.
A sketch of a proof is given
in order that
we can understand the outline.

\begin{thm}[Connes, Haagerup, Kawahigashi-Sutherland-Takesaki]
\label{thm:modular-appinn}
Let $\cM$ be an injective von Neumann algebra
and $\vph$ a faithful normal state on $\cM$.
Then
$\si_t^\vph\in\oInt(\cM)$
for all $t\in\R$.
\end{thm}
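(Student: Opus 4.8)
The plan is to reduce the assertion to a computation of the Connes--Takesaki module. By \cite[Theorem 1 (i)]{KawST} together with Theorem \ref{thm:genKST}, an injective von Neumann algebra $\cM$ satisfies $\oInt(\cM)=\ker(\mo)$; hence it is enough to show that $\mo(\si_t^\vph)=\id$ on $Z(\tcM)$ for every $t\in\R$. (One may first pass, if desired, to the central decomposition of $\cM$ and argue on the factorial fibres, but this is not needed for the argument below.)

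The key point is that the canonical extension of a modular flow is \emph{inner} on the core. Let $\la^\vph(t)\in\tcM$ be the canonical unitary attached to $\vph$ from the construction of the core, so that $\la^\vph(t)x\la^\vph(t)^*=\si_t^\vph(x)$ for $x\in\cM$ and $\la^\vph(t)=[D\vph:D\ps]_t\la^\ps(t)$ for all $\ps\in W(\cM)$. I would check on generators that
\[
\Ad\la^\vph(t)|_{\tcM}=\widetilde{\si_t^\vph},
\]
where $\widetilde{\si_t^\vph}$ denotes the canonical extension of $\si_t^\vph\in\Aut(\cM)$: on $\cM$ both sides equal $\si_t^\vph$, while on $\la^\ps(s)$ one has $\la^\vph(t)\la^\ps(s)\la^\vph(t)^*=\si_t^\vph\big([D\ps:D\vph]_s\big)\la^\vph(s)$, and the elementary identity $\si_t^\vph\big([D\ps:D\vph]_s\big)=[D\si_t^\vph(\ps):D\vph]_s$ --- which comes from $\vph\circ\si_{-t}^\vph=\vph$ and the transformation rule for Connes cocycles under automorphisms --- turns the right-hand side into $\la^{\si_t^\vph(\ps)}(s)=\widetilde{\si_t^\vph}(\la^\ps(s))$. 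Since $\la^\vph(t)\in\tcM$, the automorphism $\widetilde{\si_t^\vph}$ is inner, so it acts trivially on $Z(\tcM)$; therefore $\mo(\si_t^\vph)=\widetilde{\si_t^\vph}|_{Z(\tcM)}=\id$, and the first paragraph yields $\si_t^\vph\in\oInt(\cM)$ for all $t$.

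The computation itself is short: all of the analytic weight of the theorem has been displaced into the quoted identity $\oInt(\cM)=\ker(\mo)$, and that is where the genuine obstacle sits. That identity rests on Connes' theorem that injective factors are hyperfinite and on the full structure theory of injective factors --- in particular, for the type III$_1$ part it uses Haagerup's solution of the bicentralizer problem and the consequent uniqueness of the injective type III$_1$ factor. A self-contained route would instead proceed by type: when $\cM$ is semifinite $\si^\vph$ is already inner by Takesaki's theorem; the type III$_\la$ case with $0<\la<1$ is treated via Connes' discrete decomposition; the type III$_1$ case via approximate innerness of the modular flow on the unique injective III$_1$ factor; and the type III$_0$ case reproduces the fact that the modular flow of a type III$_0$ factor is approximately inner, equivalently that such factors are never full. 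In either presentation nothing beyond the results already recalled in the paper is required.
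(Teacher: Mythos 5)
Your formal derivation is valid and the module computation is correct: $\widetilde{\si_t^\vph}=\Ad\la^\vph(t)$ on the core (your verification on the generators $\la^\ps(s)$ via $\si_t^\vph([D\ps:D\vph]_s)=[D\si_t^\vph(\ps):D\vph]_s$ is exactly right), so $\mo(\si_t^\vph)=\id$, and Theorem \ref{thm:genKST} then gives $\si_t^\vph\in\oInt(\cM)$. But this is a genuinely different route from the paper's. The paper does \emph{not} invoke $\oInt(\cM)=\ker(\mo)$ here; it reduces to the factor case by disintegration (Theorem \ref{thm:appdisint} and (\ref{eq:moddisint})) and then argues type by type: semifinite is trivial, type III$_\la$ ($0<\la<1$) via the discrete decomposition and the Rohlin property of the trace-scaling $\Z$-action, type III$_0$ via Connes' result (reproved in Corollary \ref{cor:III0-modular} from Theorem \ref{thm:propergRoh}), and type III$_1$ via Haagerup's bicentralizer theorem and Connes' argument. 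What your route buys is brevity; what it costs is that, as you yourself note, all the content is displaced into the quoted identity $\oInt(\cM)=\ker(\mo)$ --- and the hard inclusion $\ker(\mo)\subseteq\oInt(\cM)$ in \cite{KawST} is itself established through the Connes--Haagerup structure theory in which the approximate innerness of modular flows is proved along the way, so as a proof it is circular at the level of the literature, even though it is not formally circular inside this paper (Theorem \ref{thm:genKST} is assembled in the appendix from the external citation plus disintegration lemmas, independently of the present theorem). This matters for the paper's program: Section \ref{subsect:III-flow} wants to exhibit exactly which deep inputs (Connes, Haagerup) the classification of injective type III factors rests on, which is why the theorem is presented with the type-by-type sketch rather than as a corollary of KST. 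Your closing sketch of the ``self-contained route'' coincides with the paper's actual proof, so you have in effect supplied both arguments.
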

\begin{proof}
By Theorem \ref{thm:appdisint} and the disintegration
of $\si^\vph$ as (\ref{eq:moddisint}),
we may and do assume that $\cM$ is a factor.
The semifinite case is trivial.

If $\cM$ is of type III$_\la$ with $0<\la<1$,
then there exists an automorphism
$\th$ on the injective type II$_1$ factor $\cN$
such that
$\cM=\cN\rti_\th\Z$ and $\ta\circ\th=\la\ta$.
Then by \cite[Lemma 5, Theorem 1.2.5]{Co-outer},
$\th$ has the Rohlin property
as a $\Z$-action,
and the dual action $\hat{\th}_t=\si_t^\vph$
is invariantly approximately inner
as a torus action.
This fact is proved in a similar way
to Theorem \ref{thm:dual}.

By \cite[Proposition 3.9]{Co-almost},
a modular automorphism
is approximately inner
for any factor of type III$_0$.
We will prove this fact
by using Rohlin flows
in Corollary \ref{cor:III0-modular}.

If $\cM$ is of type III$_1$,
then the asymptotic bicentralizer of
any faithful normal state
is trivial \cite[Corollary 2.4]{Ha-III1}.
As is proved in \cite[Theorem IV.1]{Co-III1},
the semidiscreteness
implies
the approximate innerness
of
a modular automorphism.
\end{proof}

Assuming this theorem,
we present the classification of injective
type III von Neumann algebras
from a viewpoint of Rohlin property.

\begin{thm}[Connes, Haagerup, Krieger]
\label{thm:inj-class}
Let $\cM_1$ and $\cM_2$
be injective von Neumann algebras of type III.
Then
they are isomorphic
if and only if
their flows of weights are isomorphic.
\end{thm}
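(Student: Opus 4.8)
The ``only if'' direction is immediate: an isomorphism $\pi\col\cM_1\to\cM_2$ admits a canonical extension $\tpi\col\tcM_1\to\tcM_2$ intertwining the dual flows $\th^1,\th^2$, and its restriction $\tpi|_{Z(\tcM_1)}$ then intertwines the flows of weights. For the converse the plan is to reduce the statement to Theorem \ref{thm:class2} applied to the trace-scaling flows on the cores. Fix faithful normal states $\vph_i\in W(\cM_i)$ and put $\al^i:=\widehat{\si^{\vph_i}}$, the dual flow on $\tcM_i=\cM_i\rti_{\si^{\vph_i}}\R$, so that $\al^i|_{Z(\tcM_i)}$ is the flow of weights of $\cM_i$. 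Since $\cM_i$ is injective, $\si^{\vph_i}$ is pointwise approximately inner by Theorem \ref{thm:modular-appinn}, and being an extended modular flow it follows from Proposition \ref{prop:ext-dual} that $\al^i$ has the Rohlin property. As $\R$ is amenable, $\tcM_i$ is again injective, and as $\cM_i$ is of type III, $\tcM_i$ is properly infinite (and semifinite). By Takesaki duality \cite{Tak-dual}, $\tcM_i\rti_{\al^i}\R\cong\cM_i\oti B(L^2(\R))\cong\cM_i$, so it suffices to prove $\tcM_1\rti_{\al^1}\R\cong\tcM_2\rti_{\al^2}\R$; and since a cocycle perturbation does not change a crossed product, it is enough to show that $\al^1$ and $\al^2$ are cocycle conjugate.

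To do this I would transport $\al^1$ onto $\tcM_2$. By the structure theory of injective von Neumann algebras — a direct integral decomposition into injective factors together with uniqueness of the injective semifinite factor $R_{0,1}$ — one has $\tcM_i\cong Z(\tcM_i)\oti R_{0,1}$, an isomorphism that can be arranged to be the identity on $Z(\tcM_i)$. Composing with the given isomorphism of flows of weights $Z(\tcM_1)\cong Z(\tcM_2)$, I obtain an isomorphism $\rho\col\tcM_1\to\tcM_2$ whose restriction to the centers intertwines $\al^1|_{Z(\tcM_1)}$ and $\al^2|_{Z(\tcM_2)}$. Set $\ga:=\rho\circ\al^1\circ\rho^{-1}$; this is a Rohlin flow on the injective algebra $\tcM_2$ agreeing with $\al^2$ on $Z(\tcM_2)$. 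Because $\oInt(\tcM_2)=\ker(\mo)$ for the injective algebra $\tcM_2$ (as recalled before Corollary \ref{cor:class-inj}), Theorem \ref{thm:class2} reduces the desired $\ga\sim\al^2$ to the identity $\mo(\ga_t)=\mo(\al^2_t)$ for every $t\in\R$.

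For the module computation I would invoke Theorem \ref{thm:modular-flow} with $\be=\si^{\vph_i}$: it identifies $Z(\widetilde{\tcM_i})$ with $Z(\tcM_i)\oti L^\infty(\R_+^*)$ in such a way that $\mo(\al^i_t)$ acts as the identity on the first factor and as multiplication by $e^{-t}$ on the second, while $\mo(\rho)=\widetilde{\rho}|_{Z(\widetilde{\tcM_1})}$ restricts to $\rho$ on $Z(\tcM_1)$ and, since $\rho$ carries the canonical trace of $\tcM_1$ to a trace differing from that of $\tcM_2$ only by a positive central multiplier, acts by a base-dependent translation in the $\R_+^*$-direction; conjugating the $e^{-t}$-translation by such a map leaves it unchanged, so $\mo(\ga_t)=\mo(\al^2_t)$. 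The main obstacle is exactly this last step: one must know that the module of the trace-scaling flow on the core is ``universal'' once $Z(\widetilde{\tcM})$ is identified canonically, and that a center-equivariant isomorphism of the cores respects that identification up to an affine change of the scaling coordinate — this is where Theorem \ref{thm:modular-flow} and a careful bookkeeping of the Connes cocycles $[D\vph:D\ps]$ under canonical extension are needed. The disintegration $\tcM_i\cong Z(\tcM_i)\oti R_{0,1}$ is routine given uniqueness of $R_{0,1}$, and the remaining ingredients — the Rohlin property of $\al^i$, Takesaki duality, and the passage from cocycle conjugacy of flows to isomorphism of crossed products — are standard or already in the paper.
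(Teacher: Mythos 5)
Your proposal is correct and follows essentially the same route as the paper: identify the cores $\tcM_i$ over the common flow of weights, check that the dual flows (Rohlin by Theorem~\ref{thm:modular-appinn} and Proposition~\ref{prop:ext-dual}) have equal Connes--Takesaki modules, invoke Corollary~\ref{cor:class-inj}, and finish by Takesaki duality. The one spot where you diverge --- computing $\mo(\ga_t)=\mo(\al^2_t)$ explicitly via Theorem~\ref{thm:modular-flow} --- is handled in the paper by the slightly more economical device of normalizing the identification $\tcM_1=\tcM_2$ so that both dual flows scale the same trace of the core by $e^{-t}$ (as in the proof of Lemma~\ref{lem:P1P2}), which makes the module equality automatic; both roads lead to the same application of the classification theorem.
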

\begin{proof}
The ``only if'' part is clear,
and we will show the ``if'' part.
Let $\cM_1$ and $\cM_2$ as above.
Let $\vph_1$ and $\vph_2$ be
faithful normal states
on $\cM_1$ and $\cM_2$,
respectively.
By our assumption
and the uniqueness of the injective type II$_\infty$ factor,
we may regard $\widetilde{\cM_1}=\widetilde{\cM_2}$,
and
the dual flows $\th^1:=\widehat{\si^{\vph_1}}$
and $\th^2:=\widehat{\si^{\vph_2}}$
are equal on $Z(\widetilde{\cM_1})$.
By standard prescription (see the proof of Lemma \ref{lem:P1P2}),
we may and do assume that
$\th^1$ and $\th^2$ are scaling
the same trace $\ta$ as
$\ta\circ\th_t^1=e^{-t}\ta=\ta\circ\th_t^2$.
Thus $\mo(\th_t^1)=\mo(\th_t^2)$.

By Proposition \ref{prop:ext-dual}
and the previous theorem,
it turns out that
$\th^1$ and $\th^2$
have the Rohlin property.
Thus by Corollary \ref{cor:class-inj},
they are cocycle conjugate.
In particular,
$\widetilde{\cM_1}\rti_{\th^1}\R
\cong\widetilde{\cM_2}\rti_{\th^2}\R$.
Hence $\cM_1\cong\cM_2$
by Takesaki duality.
\end{proof}

Let us denote by
$\cR_0$, $\cR_{0,1}$ and $\cR_\infty$
the injective factors of
type II$_1$, II$_\infty$ and III$_1$.
Let us focus on
the injective
type III$_1$ factor $\cR_\infty$.
The core of $\cR_\infty$ is isomorphic to $\cR_{0,1}$.
The dual flow $\th$ scales
the trace $\ta$
as $\ta\circ\th_t=e^{-t}\ta$.
As we have seen,
$\th$ has the Rohlin property.
We summarize this fact as follows.

\begin{thm}\label{thm:tracescaling}
A trace scaling flow on $\cR_{0,1}$
has the Rohlin property.
\end{thm}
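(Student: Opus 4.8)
The plan is to realise $\th$ as the dual flow of an extended modular flow on an injective von Neumann algebra, and then to feed this into the duality between the Rohlin property and invariant approximate innerness (Theorem \ref{thm:dual}). Writing $\ta$ for the trace, a trace scaling flow obeys $\ta\circ\th_t=e^{-\la t}\ta$ for a fixed $\la\neq0$; after reparametrising as $t\mapsto\th_{t/\la}$ I may assume $\la=1$, and this does not affect the Rohlin property, since a Rohlin unitary for $\th_{\la(\cdot)}$ with eigenvalue $p$ is one for $\th$ with eigenvalue $\la p$ and $p\mapsto\la p$ is a bijection of $\R$. I would set $\cM:=\cR_{0,1}\rti_\th\R$, which is injective (a crossed product of an injective von Neumann algebra by the amenable group $\R$) and has separable predual. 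Let $\hta$ be the dual weight of $\ta$ on $\cM$. Formula (\ref{eq:dual-modular}), together with $\si^\ta=\id$ (since $\ta$ is a trace), $\de_\R\equiv1$, and
\[
[D\ta\circ\th_t:D\ta]_s=[D(e^{-t}\ta):D\ta]_s=e^{-ist},
\]
yields $\si_s^{\hta}(\pi_\th(x))=\pi_\th(x)$ and $\si_s^{\hta}(\la^\th(t))=e^{-ist}\la^\th(t)$, so $\si^{\hta}$ and the dual flow $\hat{\th}$ coincide on the generators of $\cM$; hence $\hat{\th}=\si^{\hta}$ is a modular flow, in particular an extended modular flow.

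With this in hand the remaining steps are brief. Since $\cM$ is injective, Theorem \ref{thm:modular-appinn} gives $\hat{\th}_t\in\oInt(\cM)$ for every $t$, i.e. $\hat{\th}$ is pointwise approximately inner. Fixing $T\in\R$ and choosing unitaries $v^\nu\in\cM$ with $\hat{\th}_T=\lim_\nu\Ad v^\nu$ in $\Aut(\cM)$, I would invoke Lemma \ref{lem:extmod-invapprox} (applicable because $\hat{\th}$ is extended modular) to get $\hat{\th}_t(v^\nu)-v^\nu\to0$ compact uniformly in the strong$*$ topology; a suitable subsequence of $(v^\nu)_\nu$ then witnesses Definition \ref{defn:IAI}, so $\hat{\th}$ is invariantly approximately inner. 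Because $\hat{\th}$ is exactly the dual flow of $\th$ on $\cR_{0,1}\rti_\th\R=\cM$, Theorem \ref{thm:dual}(1) now forces $\th$ to have the Rohlin property.

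I do not expect a genuine obstacle here: all of the analytic weight sits in Theorem \ref{thm:modular-appinn} (Connes--Haagerup), which is available by hypothesis, and everything else is formal. The one computation that needs attention is the identification $\hat{\th}=\si^{\hta}$, which reduces to evaluating the Connes cocycle $[D\ta\circ\th_t:D\ta]_s=e^{-ist}$ for the scalar perturbation $\ta\circ\th_t=e^{-t}\ta$; one should also record that $\cM=\cR_{0,1}\rti_\th\R$ is injective, which is immediate from the amenability of $\R$.
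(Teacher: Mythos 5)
Your proposal is correct, and it rests on exactly the same pillars as the paper's argument: Connes--Haagerup approximate innerness of modular flows on injective algebras (Theorem \ref{thm:modular-appinn}), the passage from approximate innerness to invariant approximate innerness for extended modular flows (Lemma \ref{lem:extmod-invapprox}, i.e.\ Proposition \ref{prop:ext-dual}), and the duality of Theorem \ref{thm:dual}. The one organizational difference is the direction of travel. The paper starts from $\cR_\infty$, identifies $\cR_{0,1}$ with its core, and observes that the dual flow of $\si^\vph$ has the Rohlin property; its proof of the theorem then just normalizes the scaling constant and says ``we have known that $\th$ has the Rohlin property,'' which tacitly identifies an \emph{arbitrary} trace-scaling flow with that particular one. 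You instead take the given $\th$, form $\cM=\cR_{0,1}\rti_\th\R$, compute via (\ref{eq:dual-modular}) and $[D(e^{-t}\ta):D\ta]_s=e^{-ist}$ that $\hat{\th}=\si^{\hat\ta}$, and then apply the same machinery one level up. This buys you a proof that works verbatim for any trace-scaling flow without appealing to the uniqueness of $\cR_\infty$ or of trace-scaling flows, so it actually fills in the step the paper leaves implicit. Two minor points to record: Theorem \ref{thm:modular-appinn} is stated for a faithful normal state whereas $\hat\ta$ is a weight, but $\si^{\hat\ta}_t=\Ad[D\hat\ta:D\vph]_t\circ\si^\vph_t$ lies in $\oInt(\cM)$ for any faithful normal state $\vph$ since $\Int(\cM)\subs\oInt(\cM)$ and $\oInt(\cM)$ is a group; and Lemma \ref{lem:extmod-invapprox} already yields compact uniform strong$*$ convergence for the whole sequence, so no subsequence extraction is needed before invoking Definition \ref{defn:IAI}.
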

\begin{proof}
Let $\th$ be a trace scaling flow on $\cR_{0,1}$
and $\ta$ a faithful normal tracial weight on $\cR_{0,1}$.
Then there exists a non-zero $p\in\R$ such that
$\ta\circ\th_t=e^{-pt}\ta$ for $t\in\R$.
Using $\th'_t:=\th_{t/p}$ if $p\neq1$,
we may and do assume $\ta\circ\th_t=e^{-t}\ta$.
We have known that $\th$ has the Rohlin property.
\end{proof}

Let us refine
Theorem \ref{thm:inj-class}
for later use (see Theorem \ref{thm:itp-class}).
Let $\cM$ be a von Neumann algebra.
For a self-adjoint operator $h$ affiliated with
$Z(\cM)$,
we define $\th_h\in\Aut(\tcM)$ by
\begin{equation}
\label{eq:thhx}
\th_h(x)=x,
\quad
\th_h(\la^\vph(t))=e^{-ith}\la^\vph(t)
\quad
\mbox{for }
x\in\cM,
\
\vph\in W(\cM),
\
t\in\R.
\end{equation}

\begin{lem}
\label{lem:P1P2}
Let $\cP_1$ and $\cP_2$ be injective von Neumann algebras
which are one of type I, II$_1$, II$_\infty$ or III.
Let $\{Z(\widetilde{\cP_1}),\th^1\}$
and $\{Z(\widetilde{\cP_2}),\th^2\}$
be the flow of weights of
$\cP_1$ and $\cP_2$,
respectively.
Suppose that
$\widetilde{\cP_1}\cong\widetilde{\cP_2}$
as von Neumann algebras
and
there exists an isomorphism
$\Th\col Z(\widetilde{\cP_1})\ra Z(\widetilde{\cP_2})$
with $\Th\circ\th_t^1=\th_t^2\circ\Th$
on $Z(\widetilde{\cP_1})$
for all $t\in\R$.
Then
there exist
an isomorphism
$\rho\col\cP_1\ra\cP_2$
and a self-adjoint operator $h$ that is affiliated with
$Z(\cP_2)$
such that
$\Th=\th_h^2\circ\mo(\rho)=\mo(\rho)\circ\th_{\rho^{-1}(h)}^1$.
\end{lem}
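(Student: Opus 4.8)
We first dispose of the second equality in the conclusion, which is purely formal. For any isomorphism $\rho\col\cP_1\ra\cP_2$ and any self-adjoint $h$ affiliated with $Z(\cP_2)$, set $h':=\rho^{-1}(h)$, which is affiliated with $Z(\cP_1)$. Testing on the generators $x\in\cP_1$ and $\la^{\vph}(t)$, $\vph\in W(\cP_1)$, of the core $\tcP_1$ and using $\trho(\la^{\vph}(t))=\la^{\rho(\vph)}(t)$, one reads off $\trho\circ\th_{h'}^1=\th_h^2\circ\trho$; restricting to $Z(\tcP_1)$ yields $\mo(\rho)\circ\th_{\rho^{-1}(h)}^1=\th_h^2\circ\mo(\rho)$. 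Thus it suffices to construct an isomorphism $\rho\col\cP_1\ra\cP_2$ and a self-adjoint $h$ affiliated with $Z(\cP_2)$ with $\Th=\th_h^2\circ\mo(\rho)$.

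The plan is to reduce this to producing an isomorphism $\rho\col\cP_1\ra\cP_2$ whose restriction to $Z(\cP_1)$ coincides with $\Th$ (note that $\Th$ carries $Z(\cP_1)=Z(\tcP_1)^{\th^1}$ onto $Z(\cP_2)=Z(\tcP_2)^{\th^2}$ because it intertwines $\th^1$ and $\th^2$, and that $\mo(\rho)=\trho|_{Z(\tcP_1)}$ likewise intertwines them, with $\mo(\rho)|_{Z(\cP_1)}=\rho|_{Z(\cP_1)}$). Granting such a $\rho$, the automorphism $\Th\circ\mo(\rho)^{-1}$ of $Z(\tcP_2)$ commutes with $\th^2$ and is the identity on $Z(\cP_2)=Z(\tcP_2)^{\th^2}$, since $\Th$ and $\mo(\rho)$ restrict to $Z(\cP_1)$ as the same map. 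An automorphism of the flow of weights that commutes with the flow and fixes the orbit algebra pointwise is a reparametrisation of the flow by a flow-invariant function, that is, it has the form $\th_h^2|_{Z(\tcP_2)}$ for some self-adjoint $h$ affiliated with $Z(\cP_2)$ (the flows of weights occurring here act freely modulo their periods, so this standard description applies). Hence $\Th=\th_h^2\circ\mo(\rho)$, as required.

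It remains to produce $\rho$. By Theorem \ref{thm:modular-appinn} the modular flows $\si^{\vph_i}$ are pointwise approximately inner on the injective algebras $\cP_i$, so by Proposition \ref{prop:ext-dual} the dual flows $\th^i=\widehat{\si^{\vph_i}}$ have the Rohlin property on the injective von Neumann algebras $\tcP_i$. Using the isomorphism $\tcP_1\cong\tcP_2$ and the center isomorphism $\Th$, identify $\tcP_1=\tcP_2=:\cN$ in such a way that $\th^1$ and $\th^2$ act identically on $Z(\cN)$ and scale a common trace $\ta$ of $\cN$ by $\ta\circ\th^i_t=e^{-t}\ta$; this is the standard prescription used in the proof of Theorem \ref{thm:inj-class}, and it is available because a center isomorphism compatible with the flows of weights respects the type decomposition of the semifinite cores and therefore extends to an isomorphism of $\tcP_1$ onto $\tcP_2$. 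Under this identification $\mo(\th^1_t)=\mo(\th^2_t)$ for all $t$ — the module of a flow on a semifinite von Neumann algebra is determined by its action on the center together with its trace scaling — so Corollary \ref{cor:class-inj} gives $\La\in\oInt(\cN)$ and a $\th^2$-cocycle $v$ with $\Ad v(t)\circ\La\circ\th^1_t\circ\La^{-1}=\th^2_t$. One then checks that $v$ may be chosen to be a coboundary — using that $\th^2$ is a Rohlin flow so that relative commutants of $\tcP_2$ inside its crossed product are controlled (Corollary \ref{cor:relative}), together with the known structure of the $1$-cohomology of a trace-scaling flow — and after absorbing the resulting unitary into $\La$ we obtain $\mu\in\Aut(\cN)$ with $\mu\circ\th^1_t\circ\mu^{-1}=\th^2_t$ for all $t$. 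Since $\cN^{\th^i}=\cP_i$ by Takesaki duality, $\mu$ restricts to an isomorphism $\cP_1\ra\cP_2$. Finally, composing on the right with an automorphism of $\cP_1$ that realises on $Z(\cP_1)$ the (type-decomposition-preserving) discrepancy between this restriction and $\Th$ — such a lift exists because $\cP_1$ is injective — we obtain the desired $\rho$ with $\rho|_{Z(\cP_1)}=\Th|_{Z(\cP_1)}$.

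The main obstacle is this last step: converting the strong cocycle conjugacy of the dual flows $\th^1,\th^2$ on the core $\cN$ into a genuine isomorphism of $\cP_1$ and $\cP_2$ with prescribed behaviour on the centers. A cocycle perturbation of $\th^i$ changes the fixed-point algebra $\cN^{\th^i}$, so one must show that the perturbing cocycle can be removed, and one must thread the construction through Takesaki duality in order to control the restriction to $Z(\cP_1)$; this is precisely where the duality between the Rohlin property and invariant approximate innerness (Theorem \ref{thm:dual}) and the relative-commutant estimates for Rohlin flows (Corollary \ref{cor:relative}) carry the weight of the argument.
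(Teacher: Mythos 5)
Your middle section — identifying the cores, arranging a common trace scaling, and invoking Corollary \ref{cor:class-inj} plus stability of the dual flow to get an isomorphism $\mu$ of the cores with $\mu\circ\th^1_t\circ\mu^{-1}=\th^2_t$ — is essentially the paper's argument (the paper carries out the "identification" step as a separate Claim with a case analysis by type, which you should not gloss over, since $\cP_1$ need not be a factor and Theorem \ref{thm:inj-class} does not apply verbatim). The genuine gap is in your reduction at the start. You propose to produce $\rho\col\cP_1\ra\cP_2$ with $\rho|_{Z(\cP_1)}=\Th|_{Z(\cP_1)}$ only, and then assert that $\Si:=\Th\circ\mo(\rho)^{-1}$, being an automorphism of $Z(\tcP_2)$ that commutes with $\th^2$ and fixes $Z(\cP_2)=Z(\tcP_2)^{\th^2}$ pointwise, must be of the form $\th^2_h|_{Z(\tcP_2)}$. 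That "standard description" is false: it amounts to saying each ergodic component of the flow of weights has trivial centralizer. When $\cP_2$ is a type III$_0$ factor whose flow of weights is a Kronecker flow on $\T^2$ (exactly the situation of Example \ref{ex:faithful-module}), the centralizer of the flow contains all translations of $\T^2$, a two-parameter group, whereas the maps $\th^2_h$ (with $h$ affiliated with $Z(\cP_2)=\C$, i.e.\ $h$ a real scalar) form only the one-parameter time-shift group. Since your construction pins $\rho$ down only on $Z(\cP_1)$, nothing forces the resulting $\Si$ to lie in that one-parameter subgroup, and the argument does not close.

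The paper avoids this by never discarding the extension to the full core: after arranging an isomorphism $\Th\col\tcP_1\ra\tcP_2$ that restricts to the given map on $Z(\tcP_1)$ and satisfies $\Th\circ\th^1_t=\th^2_t\circ\Th$ exactly, it sets $\rho:=\Th|_{\cP_1}$ and compares $\Th$ with the canonical extension $\trho$ on the generators $\la^\vph(t)$. The discrepancy $u_t:=\Th(\la^\vph(t))\la^{\rho(\vph)}(t)^*$ is a $\si^{\rho(\vph)}$-cocycle, hence a Connes cocycle $[D\ps:D\rho(\vph)]_t$, and the identity $\si^\ps=\si^{\rho(\vph)}$ forces $u_t\in Z(\cP_2)^{\rm U}$, yielding $u_t=e^{ith}$ and $\Th=\th^2_h\circ\trho$. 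That concrete cocycle computation, not an abstract statement about the centralizer of the flow of weights, is what produces $h$; to repair your proof you would need to retain the intertwining isomorphism of the whole cores rather than only its restriction to $Z(\cP_1)$.
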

\begin{proof}
By the assumption of the injectivity,
there exist a semifinite injective factor $\cQ$,
an abelian von Neumann algebra $\cA$
and isomorphisms
$\pi_j\col \widetilde{\cP_j}\ra \cQ\oti\cA$
with $j=1,2$.
We put
$\ps_t^j:=\pi_j\circ\th_t^j\circ\pi_j^{-1}$.
By Proposition \ref{prop:ext-dual}
and Theorem \ref{thm:modular-appinn},
they are Rohlin flows.

Then
$\Th_0:=\pi_2\circ\Th\circ\pi_1^{-1}|_\cA$
is an automorphism on $\cA$
such that
$\Th_0\circ\ps_t^1=\ps_t^2\circ\Th_0$
on $\cA$.
By replacing with $\pi_1$ with
$(\id_\cQ\oti\Th_0)\circ\pi_1$,
we may and do assume that
$\ps_t^1=\ps_t^2$ on $\cA$
and
$\pi_2^{-1}\circ\pi_1|_{Z(\widetilde{\cP_1})}=\Th$.

\begin{clam}
There exists $\ga\in\Aut(\cQ\oti \cA)$
such that
$\ga|_\cA=\id_\cA$
and $\ga\circ\ps_t^1\circ\ga^{-1}=\ps_t^2$
modulo $\oInt(\cQ\oti\cA)$.
\end{clam}
\begin{proof}[Proof of Claim.]
When $\cP_1$ is of type I,
so is $\cQ$.
Since $\ps_t^1\ps_{-t}^2|_\cA=\id_\cA$
and $\Aut(\cQ)=\Int(\cQ)$,
we can deduce
$\ps_t^1\ps_{-t}^2\in \Int(\cQ\oti\cA)$
by Theorem \ref{thm:intdisint}.

When $\cP_1$ is of type II$_1$,
so is $\cQ$.
Since $\Aut(\cQ)=\oInt(\cQ)$,
$\ps_t^1\ps_{-t}^2\in \oInt(\cQ\oti\cA)$
by Theorem \ref{thm:appdisint}.

When $\cP_2$ is of type II$_\infty$
or III,
$\cQ$ is the injective factor of type II$_\infty$.
Let $\ta$ be a faithful normal semifinite tracial
weight on $\cQ$.
Take $\vph_j\in W(\cA)$
with $(\ta\oti\vph_j)\circ\ps_t^j=e^{-t}(\ta\oti\vph_j)$
for $j=1,2$.

Realize $\cA$ as $L^\infty(X,\mu_2)$
where $\vph_2(a)=\int_X a(x)\,d\mu_2(x)$.
Let $h(x)$ be a positive measurable function
such that
$\vph_1(a)=\int_X h(x)a(x)d\mu_2(x)$.
Take
$\ga\in\Aut(\cQ\oti \cA)$
of the form
$\ga=\int_X^\oplus \ga_x\,d\mu_2(x)$
such that
$\ta\circ\ga_x=h(x)\ta$.
Then
$(\ta\oti\vph_2)\circ\ga=\ta\oti\vph_1$,
and
\[
(\ta\oti\vph_2)\circ\ga\circ\ps_t^1\circ\ga^{-1}
=
e^{-t}(\ta\oti\vph_2).
\]
Since
$\ga\circ\ps_t^1\circ\ga^{-1}=\ps_t^2$
on $\cA$,
we obtain
$\mo(\ga\circ\ps_t^1\circ\ga^{-1})=\mo(\ps_t^2)$.
By Theorem \ref{thm:genKST},
$\ga\circ\ps_t^1\circ\ga^{-1}\circ\ps_{-t}^2\in\oInt(\cQ\oti\cA)$.
\end{proof}

Replacing $\pi_1$ with
$\ga\circ\pi_1$,
we may assume that
$\ps_t^1=\ps_t^2$ modulo
$\oInt(\cQ\oti\cA)$.
Then by Theorem \ref{cor:class-inj},
there exist
$\ga'\in\oInt(\cQ\oti\cA)$
and
a $\ps^2$-cocycle $w$
such that
$\ga'\circ\ps_t^1\circ\ga'^{-1}=\Ad w_t\circ\ps_t^2$.
Then
$\pi_2^{-1}\circ\ga'\circ\pi_1=\Th$
on $Z(\widetilde{\cP_1})$.
Thus
we can extend $\Th$
to the isomorphism
from $\widetilde{\cP_1}$ onto
$\widetilde{\cP_2}$
by putting
$\Th:=\pi_2^{-1}\circ\ga'\circ\pi_1$
on $\widetilde{\cP_1}$.

Put $v_t:=\pi_2^{-1}(w_t)$.
Then $v$ is a $\th^2$-cocycle
and
we obtain
$\Th\circ\th_t^1\circ\Th^{-1}
=\Ad v_t\circ\th_t^2$.
By stability of $\th^2$ \cite[Theorem III.5.1]{CT},
there exists a unitary
$w\in\widetilde{\cP_2}$
such that $v_t=w^*\th_t^2(w)$.
Set $\Th':=\Ad w\circ\Th$.
Then
$\Th'=\Th$ on $Z(\widetilde{\cP_1})$
and
$\Th'\circ\th_t^1\circ(\Th')^{-1}
=\th_t^2$ for $t\in\R$.
Thus we may and do assume
that $\Th\circ\th_t^1\circ\Th^{-1}=\th_t^2$.
This implies that $\Th(\cP_1)=\cP_2$.
Put $\rho:=\Th|_{\cP_1}$.

Let $\vph\in W(\cP_1)$.
Then
$u_t:=
\Th(\la^\vph(t))
\la^{\rho(\vph)}(t)^*$
belongs to $\widetilde{\cP_2}^{\th^2}=\cP_2$,
and
\begin{align*}
u_t\si_t^{\rho(\vph)}(u_s)
&=
\Th(\la^\vph(t))
\la^{\rho(\vph)}(t)^*
\cdot
\la^{\rho(\vph)}(t)
\Th(\la^\vph(s))
\la^{\rho(\vph)}(s)^*
\la^{\rho(\vph)}(t)^*
\\
&=
u_{t+s}.
\end{align*}
Hence there exists
$\ps\in W(\cP_2)$
such that $u_t=[D\ps:D\rho(\vph)]_t$.
For $x\in\cP_2$ and $t\in\R$,
we have
\begin{align*}
\si_t^\ps(x)
&=
u_t\si_t^{\rho(\vph)}(x)u_t^*
\\
&=
\Th(\la^\vph(t))x\Th(\la^\vph(t)^*)
=
\rho(\si_t^\vph(\rho^{-1}(x)))
\\
&=
\si_t^{\rho(\vph)}(x).
\end{align*}
Thus $u_t\in Z(\cP_2)$.
Let $h$ be a positive operator affiliated
with $Z(\cP_2)$
such that
$u_t=e^{ith}$.
The $u$ does not depend on $\vph$.
Indeed,
for another
$\chi\in \cP_1$,
we have
\begin{align*}
\Th(\la^\chi(t))
&=
\Th([D\chi:D\vph]_t\la^\vph(t))
\\
&=
[D\rho(\chi):D\rho(\vph)]_t\cdot
u_t
\la^{\rho(\vph)}(t)
\\
&=
u_t
\la^{\rho(\chi)}(t).
\end{align*}
This shows $\Th=\th_h^2\circ\trho$.
\end{proof}

\subsection{Non-fullness of type III$_0$ factors}

In \cite{Co-almost},
it is shown that
a modular automorphism group
on any type III$_0$ factor $\cM$
is approximately inner.
This implies the non-fullness
of an arbitrary type III$_0$ factors.
If we apply Lemma \ref{lem:Connes-spectrum0}
to the discrete decomposition of $\cM$,
then the invariant approximate innerness
of $\si^\vph$
is immediately obtained.
We will present another approach to that
by showing that
any non-periodic ergodic flow
on a commutative von Neumann algebra
has the Rohlin property.

Let $(X,\mu, \mathcal{F}_t)$
be a non-singular properly ergodic flow
with $\mu(X)=1$.
We can assume that 
$(X,\mu, \mathcal{F}_t)$
is given by a flow built under a ceiling
function with a base space $(Y,\nu,S)$
and
a positive function $r(y)$ on $Y$.
We represent $x\in X$ as $(\pi(x),h(x))\in Y\times\R$
with $0\leq h(x)<r(\pi(x))$.
We can assume that $0<R_1\leq r(y)<R_2 $
for some $R_i>0$.
(In fact,
we can assume that $r(y)$
takes only two values.
See \cite{Kren,Rudo}.)
Let $\mathbb{Z}\ltimes Y$
be the groupoid whose multiplication rule
is given by
$(n,S^my)(m, y):=(n+m,y)$.
For $(n,y)\in\mathbb{Z}\ltimes Y$,
we set
\[T(n,y)
:=
\begin{cases}
\sum_{k=0}^{n-1}r(S^ky)
&\mbox{if }n\geq 1,
\\
0&\mbox{if }n=0,
\\
-T(-n, S^ny)&
\mbox{if }n\leq -1.
\end{cases}
\]
Then $T\col\mathbb{Z}\ltimes Y\ra\mathbb{R}$
is a homomorphism.

Next for $(t,x)\in\R\ltimes X$,
we define $N(t,x)=m$
if $T(m,\pi(x))\leq t+h(x)< T(m+1,\pi(x))$,
where
we note that
$\lim_{n\rightarrow \infty}T(n,x)=+\infty$
since $r(y)\geq R_1>0$.
It turns out that
$N\col\R\ltimes X\ra\Z$
is a homomorphism.
The maps $T$ and $N$ are related to
each other as follows:
\[
S^{N(t,x)}\pi(x)=\pi(\mathcal{F}_t x),
\]
\begin{equation}
\label{eq:tTN}
t=T(N(t,x),\pi(x))+h(\cF_t x)-h(x)
\quad
\mbox{for all } (t,x)\in\R\ltimes X.
\end{equation}

Define a homomorphism
$p\col\R\ltimes X\ra\Z\ltimes Y$
by
$p(t,x)=(N(t,x),\pi(x))$.
This induces the group homomorphism
$p^*\col Z^1(\mathbb{Z}\ltimes Y)
\ra
Z^1(\mathbb{R}\ltimes X)$
by $p^*(c):=c\circ p$,
where each $Z^1(\cdot)$ denotes the set
of $\T$-valued cocycles.
Let us denote by $B^1(\cdot)$
the set of coboundaries.
Then $p^*(B^1(\Z\ltimes Y))\subs B^1(\R\ltimes X)$.
Hence the following group homomorphism is well-defined:
\begin{equation}
\label{eq:pH1}
p^*\col H^1(\Z\ltimes X)\ra H^1(\R\ltimes X).
\end{equation}
In fact,
$p^*$ is an isomorphism
though we do not use this fact in what follows.
See
\cite[Proposition A.2]{CT} or 
\cite[Theorem 3.1]{Su-Tak-Pac}
for its proof.

For $c_1,c_2\in Z^1(\mathbb{R}\ltimes X)$
and $d_1,d_2\in Z^1(\mathbb{Z}\ltimes Y)$,
we set the following metrics:
\[
\rho_X(c_1,c_2)
:=
\max_{0\leq t<R_1}
\int_X |c(t,x)-c'(t,x)|\,d\mu(x),
\]
\[
\rho_Y(d_1,d_2)
:=
\int_Y|d_1(1,y)-d_2(1,y)|\,d\nu(y).
\]

\begin{lem}\label{lem:ZtoR}
For $d_1,d_2\in Z^1(\mathbb{Z}\ltimes Y)$,
we have
\[
\rho_X(p^*(d_1),p^*(d_2))
\leq
R_2
\rho_Y(d_1,d_2).
\]
In particular,
$p^*$ is continuous.
\end{lem}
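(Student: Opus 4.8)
The plan is to compute $p^*(d_i)$ directly and to exploit that, for $t$ in the short range $[0,R_1)$, the discrete cocycle variable $N(t,x)$ takes only the values $0$ and $1$. Since $p^*(d_i)(t,x)=d_i(N(t,x),\pi(x))$, we have
\[
p^*(d_1)(t,x)-p^*(d_2)(t,x)=d_1(N(t,x),\pi(x))-d_2(N(t,x),\pi(x)).
\]
First I would pin down the range of $N$. Writing $x=(\pi(x),h(x))$ with $0\leq h(x)<r(\pi(x))$, for $0\leq t<R_1$ one has $T(0,\pi(x))=0\leq t+h(x)$ and, using $R_1\leq r(\,\cdot\,)$,
\[
t+h(x)<R_1+r(\pi(x))\leq r(S\pi(x))+r(\pi(x))=T(2,\pi(x)),
\]
so $N(t,x)\in\{0,1\}$. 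Because each $d_i$ is a $\T$-valued $1$-cocycle on $\Z\ltimes Y$, the cocycle identity forces $d_i(0,y)=1$ for all $y$, whence the displayed integrand vanishes on $\{x:N(t,x)=0\}$ and coincides with $d_1(1,\pi(x))-d_2(1,\pi(x))$ on $\{x:N(t,x)=1\}$.

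Next I would disintegrate over the base. By the construction of the flow built under the ceiling function $r$, Fubini's theorem gives $\int_X f\,d\mu=\int_Y\big(\int_0^{r(y)}f(y,s)\,ds\big)\,d\nu(y)$ (the base measure $\nu$ being normalized accordingly), hence for each fixed $t\in[0,R_1)$
\[
\int_X|p^*(d_1)(t,x)-p^*(d_2)(t,x)|\,d\mu(x)
=\int_Y\ell(y,t)\,|d_1(1,y)-d_2(1,y)|\,d\nu(y),
\]
where $\ell(y,t)$ denotes the Lebesgue measure of $\{s\in[0,r(y)):N(t,(y,s))=1\}$. Since $\ell(y,t)\leq r(y)<R_2$, the right-hand side is at most $R_2\int_Y|d_1(1,y)-d_2(1,y)|\,d\nu(y)=R_2\,\rho_Y(d_1,d_2)$. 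Taking the maximum over $t\in[0,R_1)$ yields $\rho_X(p^*(d_1),p^*(d_2))\leq R_2\,\rho_Y(d_1,d_2)$, and the asserted continuity of $p^*$ is immediate from this Lipschitz estimate.

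The argument is entirely elementary; the only points needing care are the verification that $N(t,\cdot)$ is $\{0,1\}$-valued on $[0,R_1)$ and the bookkeeping with the invariant measure on the flow built under a function, both of which follow at once from the definitions recalled above. I do not expect a genuine obstacle here.
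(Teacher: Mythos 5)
Your argument is correct and follows essentially the same route as the paper's proof: show that $N(t,\cdot)$ takes only the values $0$ and $1$ for $0\leq t<R_1$, observe that the integrand vanishes on the set where $N=0$ (the paper phrases this via the partition $X=X_1\sqcup X_2$ rather than via $d_i(0,y)=1$, but it is the same fact), and then disintegrate over the base and bound the fiber measure by $R_2$. No gaps.
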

\begin{proof}
Note that $N(t,x)\in \{0,1\}$ for $0\leq t<R_1$.
Indeed,
since $0\leq h(x)<r(\pi(x))$,
we obtain
\[
t\leq h(x)+t< r(\pi(x))+t<r(\pi(x))+r(S\pi(x)).
\]
If $h(x)+t<r(\pi(x))$, then $N(t,x)=0$.
If $r(\pi(x))\leq h(x)+t$, then $N(t,x)=1$.

Fix $t$ with $0\leq t<R_1$.
Let
\[
X_1:=\{x\in X\mid 0\leq h(x)<r(x)-t\},
\quad
X_2:=\{x\in X\mid r(x)-t\leq h(x)<r(x)\}.
\]
Then for $x_1\in X_1$
and $x_2\in X_2$,
we have $N(t,x_1)=0$ and $N(t,x_2)=1$.
Hence
$p^*(d_1)(t,x_1)=1=p^*(d_2)(t,x_1)$.
Then
\begin{align*}
\int_X|p^*(d_1)(t,x)-p^*(d_2)(t,x)|
\,d\mu(x)
&=
\int_{X_2}
|p^*(d_1)(t,x)-p^*(d_2)(t,x)|
\,d\mu(x)
\\
&=
\int_{X_2}
|d_1(1,\pi(x))-d_2(1,\pi(x))|
\,d\mu(x)
\\
&\leq
R_2
\int_{Y}
|d_1(1,y)-d_2(1,y)|
\,d\nu(y).
\end{align*}
\end{proof}

\begin{thm}
\label{thm:propergRoh}
Let $\mathcal{A}$
be an abelian von Neumann algebra
and $\alpha$ a non-periodic ergodic flow on $\cA$.
Then $\alpha$ has the Rohlin property.
\end{thm}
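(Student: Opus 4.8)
The plan is to produce, for each $p\in\R$, a Rohlin unitary via the sequential form of the Rohlin property: by Lemma \ref{lem:Rohlin-ptwise} it suffices to find unitaries $v^\nu\in\cA$ with $\al_t(v^\nu)-e^{ipt}v^\nu\to0$ in the strong topology, compact uniformly in $t$, as $\nu\to\infty$ (centrality being automatic since $\cA$ is abelian).

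Fix $p\in\R$ and $T>0$, and identify $\cA$ with $L^\infty(X,\mu)$, so that a unitary $v\in\cA$ is a measurable map $v\colon X\to\T$ and $\al_t$ is implemented by $\cF_t$; the required estimate is that $\|v\circ\cF_t-e^{ipt}v\|_{L^2(\mu)}$ be small for all $|t|\le T$. The idea is to realise $(X,\mu,\cF)$ as a flow built under a ceiling function that is uniformly large, and then to take $v$ to be ``the position along the flow direction''. Starting from the representation of the preamble with base $(Y,\nu,S)$ and $R_1\le r<R_2$, the aperiodicity of $S$ (valid since the flow is non-periodic and properly ergodic) lets us choose, for $m\in\N$, a set $F\subset Y$ of positive measure whose first return time under $S$ is everywhere $\ge m$ (Rokhlin lemma); inducing to $F$ produces an isomorphic representation of the flow built under a ceiling function $\ge mR_1$ over the ergodic base $(F,\nu|_F,S_F)$. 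Taking $m$ large we may therefore assume $r\ge T_0$ for a prescribed large $T_0$, with $\mu$ the normalisation of $(\nu\times\mathrm{Leb})|_{\{(y,s)\,:\,0\le s<r(y)\}}$.

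On this representation put $v(y,s):=e^{ips}$, a unitary of $\cA$. Since $\cF_t(y,s)=(y,s+t)$ whenever $0\le s+t<r(y)$, we get $v(\cF_t(y,s))=e^{ipt}v(y,s)$ for every $|t|\le T$ except on the set $B_T:=\{(y,s)\,:\,s<T\text{ or }s>r(y)-T\}$ of points lying within flow-time $T$ of a ceiling. Because the measure in the $s$-direction is genuine Lebesgue measure and $r\ge T_0$, one has $\mu(B_T)\le 2T/T_0$, with no control whatsoever on the Radon--Nikodym cocycle of $S$; this is exactly the point that makes the argument insensitive to the type III$_0$ nature of $\cA$. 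Hence $\sup_{|t|\le T}\|v\circ\cF_t-e^{ipt}v\|_{L^2(\mu)}^2\le 8T/T_0$. Choosing $T_0=T_0(\nu)\to\infty$ (say $T_0(\nu)=\nu^2$) and letting $v^\nu$ be the corresponding unitary, we obtain, for each fixed $T$ and all large $\nu$, $\sup_{|t|\le T}\|\al_t(v^\nu)-e^{ipt}v^\nu\|_{L^2(\mu)}^2\le 8T/\nu^2\to0$, which against bounded and then arbitrary vectors in $L^2(\mu)$ gives the compact-uniform strong convergence. By Lemma \ref{lem:Rohlin-ptwise}, $\pi_\om((v^\nu)_\nu)\in\cA_{\om,\al}$ is then a Rohlin unitary for $p$, and $\al$ has the Rohlin property.

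The step I expect to be the main obstacle is the reduction to a uniformly large ceiling function: it relies on the non-singular Rokhlin lemma for the aperiodic transformation $S$ together with the Kakutani induced-flow isomorphism, and one must check that the induced measure retains the product form with Lebesgue measure along the flow, so that the bound $\mu(B_T)\le 2T/T_0$ survives without any bound on the Radon--Nikodym derivatives of $S$. An alternative that stays with the original representation is to use the identity (\ref{eq:tTN}) to factor $e^{ipt}$ through the base cocycle $d_p(n,y):=e^{ip\,T(n,y)}$ on $\Z\ltimes Y$ (peeling off the exact coboundary given by the bounded function $e^{iph(x)}$), to show that $d_p$ is an approximate coboundary over $S$ by the Rokhlin lemma, and to transport this to $\R\ltimes X$ via the continuity of $p^*$ in Lemma \ref{lem:ZtoR}; but one then has to propagate the estimate from the slab $t\in[0,R_1)$ to all compact $t$-intervals, which meets essentially the same non-singularity difficulty.
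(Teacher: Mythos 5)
Your main argument has a genuine gap exactly at the step you flag as ``the main obstacle,'' and that obstacle is not a checkable detail but the point where the proof breaks in the type III case. After inducing on $F$ to get a ceiling $r_F\ge T_0$, the induced flow is again a flow built under a function, but the product measure $\nu'\times\mathrm{Leb}$ in the \emph{new} coordinates is only equivalent to, not equal to, the original $\mu=\nu\times\mathrm{Leb}$: the identification $(S^k y_0,s)\mapsto(y_0,T(k,y_0)+s)$ carries a density involving the Radon--Nikodym derivatives $dS^{-k}\nu/d\nu$ along the return orbit, and $k$ ranges up to the return time $\sim T_0/R_1$. So your bound $\mu_{T_0}(B_T)\le 2T/T_0$ holds only for a reference state $\mu_{T_0}$ that changes with $T_0$. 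To apply Lemma \ref{lem:Rohlin-ptwise} you need $\|\al_t(v^\nu)-e^{ipt}v^\nu\|_\vph\to0$ for a single fixed faithful normal state $\vph$, and since $d\vph/d\mu_{T_0}$ is unbounded uniformly in $T_0$ when $\cA$ carries no invariant measure, smallness of $\mu_{T_0}(B_T)$ gives no control on $\vph(B_T)$. (For a measure-preserving flow the induced representation preserves the invariant product measure and your argument is fine; but the whole point of the theorem is the type III$_0$ case, where it is not.) A secondary omission: the flow-built-under-function representation requires proper ergodicity, so the transitive case $\Spd(\al)=\R$ (translation on $L^\infty(\R)$) must be disposed of separately, though there the eigenfunction $e^{ipt}$ is exact.

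Your ``alternative'' at the end is in fact the paper's proof, and the difficulty you attribute to it is not there. One shows $p^*(d)=c$ in $H^1(\R\ltimes X)$ by peeling off the exact coboundary $e^{iph(x)}$ via (\ref{eq:tTN}), approximates the base cocycle $d$ by a coboundary $\partial v$ using the Shapiro-type argument of Lemma \ref{lem:Shapiro} (which only ever controls finitely many powers of $S$, hence finitely many Radon--Nikodym derivatives, at each stage), and transports the estimate through Lemma \ref{lem:ZtoR} to get $|\al_t(w_n)-e^{ipt}w_n|_\mu<1/n$ for $0\le t<R_1$ with respect to the \emph{fixed} measure $\mu$. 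The propagation to all of $\R$ costs nothing: the resulting $w=\pi_\om((w_n)_n)\in\cA_\om$ satisfies $\al_t(w)=e^{ipt}w$ exactly for $0\le t<R_1$, and the group law of $\al^\om$ on the ultraproduct then forces the same identity for every $t\in\R$; the $(\al,\om)$-equicontinuity is obtained by smoothing with $\al_f$ for $f=R_1^{-1}e^{-ipt}1_{[0,R_1]}$, again using only the slab estimate. So you should discard the induced-tower route and carry out the cocycle route in full.
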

\begin{proof}
If $\Sp_d(\al)=\R$,
then $\al$ is conjugate to the translation
on $L^\infty(\R)$.
Thus the function $t\mapsto e^{ipt}$ does the job.

We consider when $\Sp_d(\al)\neq\R$.
Let $(X,\mu,\cF_t)$
be a point realization of $\al$ with $\mu(X)=1$.
Note that $\cF$ is properly ergodic
since $\al$ is non-periodic.
Then we represent $(X,\mu,\cF_t)$
as a flow built under a ceiling function $r$
with a base space $(Y,\nu,S)$ as before.
We let $\th(f)(y):=f(S^{-1}y)$ for $f\in L^\infty(Y)$.

Let $p\in\R$.
For $(t,x)\in\R\ltimes X$
and $(n,y)\in\Z\ltimes Y$,
we set $c(t,x):=e^{ipt}$
and $d(n,y):=e^{ip T(n,y)}$
which are
cocycles of $\R\ltimes X$
and $\Z\ltimes Y$, respectively.
Then for $(t,x)\in\R\ltimes X$,
we have
\begin{align*}
p^*(d)(t,x)
&=
d(N(t,x),\pi(x))
=
e^{ipT(N(t,x),\pi(x))}
\\
&=
e^{ip(t-h(\cF_t x)+h(x))}
\quad
\mbox{by }
(\ref{eq:tTN})
\\
&=
c(t,x)e^{-ip h(\cF_t x)}e^{ip h(x)}.
\end{align*}
This means $p^*(d)=c$ in $H^1(\R\ltimes X)$.

We let $u_n(y):=d(-n,y)$ for $n\in\Z$ and $y\in Y$.
Then $u\col \Z\ra L^\infty(Y)^{\rm U}$
is a $\th$-cocycle.
By the proof of Lemma \ref{lem:Shapiro},
for any $\vep>0$,
there exists $v\in L^\infty(Y)$
such that $|u_k-v^*\th^k(v)|_\nu<\vep$
for $k$ with $|k|\leq 1$.

Let $v(y)$ be a bounded measurable function
representing $v\in L^\infty(Y)$.
We set a coboundary
$\partial v$ on $\Z\ltimes Y$ defined by
$\partial v(n,y):=v(S^n y)v(y)^*$.
Then
\[
\rho_Y(d,\partial v)
=
|u_{-1}-v^*\th^{-1}(v)|_\nu
<\vep.
\]
By the previous lemma,
we obtain
$\rho_X(p^*(d),p^*(\partial v))\leq R_2\vep$.

Therefore,
we have proved that $c$ is approximated by a coboundary.
More precisely,
for any $n\in\N$,
there exists a measurable function
$w_n\col X\ra\T$ such that
\[
\rho_X(c,\partial w_n)
=
\max_{0\leq t<R_1}
\int_X
|e^{ipt}-w_n(x)^*\al_t(w_n)(x)|\,d\mu(x)
<1/n.
\]
Regarding $w_n$ as a unitary in $L^\infty(X,\mu)$,
we obtain
\[
|\al_t(w_n)-e^{ipt}w_n|_\mu<1/n
\quad
\mbox{if }
0\leq t<R_1.
\]

Then $w:=\pi_\om((w_n)_n)\in\cA_\om$
satisfies $\al_t(w)=e^{ipt}w$ for all $t\in\R$.
We will check that
$(w_n)_n$ is $(\al,\om)$-equicontinuous.
Let $f(t):=R_1^{-1}e^{-ipt}1_{[0,R_1]}(t)$
and $w_n':=\al_f(w_n)$.
Then
\[
|\al_f(w_n)-w_n|_\mu
\leq
R_1^{-1}
\int_0^{R_1}
|\al_t(w_n)-e^{ipt}w_n|_\mu\,dt
<1/nR_1.
\]
Thus $\al_f(w_n)-w_n\to0$ in the strong$*$ topology
as $n\to\infty$.
By Lemma \ref{lem:smoothing}, $w\in\cA_{\al,\om}$.
\end{proof}

\begin{rem}
By Theorem \ref{thm:Borelnear1vanish},
any $\al$-cocycle is approximated by a coboundary.
To see this fact,
we can avoid
the Shapiro type argument for the flow $\al$
when we use the fact that
$p^*$ defined in (\ref{eq:pH1})
is isomorphism.
The surjectivity of $p^*$
implies that any $\al$-cocycle $c$
may be assumed to be of the form
$p^*(d)$ with $d$ a $\th$-cocycle.
As we have seen $d$
is approximated by a coboundary,
and so is $c$.
\end{rem}

\begin{cor}
Let $\mathcal{N}$ be a von Neumann algebra
and $\alpha$ a flow on $\mathcal{N}$.
If $\alpha$ is non-periodic and ergodic
on $Z(\mathcal{N})$,
then $\alpha$ has the Rohlin property.
\end{cor}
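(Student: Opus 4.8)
The plan is to reduce the assertion to its abelian case, which is precisely Theorem~\ref{thm:propergRoh}, by transporting a Rohlin sequence from $Z(\cN)$ up to $\cN$ through the sequential characterisation of the Rohlin property in Lemma~\ref{lem:Rohlin-ptwise}. First I would note that $\al|_{Z(\cN)}$ is a flow on the abelian von Neumann algebra $Z(\cN)$, using only that the restriction map $\Aut(\cN)\ra\Aut(Z(\cN))$ is continuous (as already invoked in the proof of Lemma~\ref{lem:flowequipart}). By hypothesis $\al|_{Z(\cN)}$ is non-periodic and ergodic, so Theorem~\ref{thm:propergRoh} applies and $\al|_{Z(\cN)}$ has the Rohlin property. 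Fixing $p\in\R$ and applying Lemma~\ref{lem:Rohlin-ptwise} with $Z(\cN)$ in place of $\cM$, the Rohlin property of $\al|_{Z(\cN)}$ yields a sequence of unitaries $(v^\nu)_\nu$ in $Z(\cN)$ such that, for each $t\in\R$, one has $\al_t(v^\nu)-e^{ipt}v^\nu\ra0$ in the strong topology of $Z(\cN)$ as $\nu\ra\infty$.

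Next I would verify that the very same sequence $(v^\nu)_\nu$, regarded in $\cN$, witnesses condition~$(3)$ of Lemma~\ref{lem:Rohlin-ptwise} for $\al$ on $\cN$. It is a central sequence in $\cN$ for the trivial reason that $[\vph,v^\nu]=0$ for every $\vph\in\cN_*$, the $v^\nu$ being central elements. Moreover $\al_t(v^\nu)-e^{ipt}v^\nu$ lies in $Z(\cN)$ since $\al_t$ preserves the centre, and strong convergence to $0$ inside $Z(\cN)$ forces strong convergence to $0$ inside $\cN$: on norm-bounded sets the strong topology agrees with the $\si$-strong one, which is determined by the predual, and every normal state of $Z(\cN)$ extends to a normal state of $\cN$ because the restriction map $\cN_*\ra Z(\cN)_*$ is onto. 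Hence condition~$(3)$ of Lemma~\ref{lem:Rohlin-ptwise} holds for $\al$ on $\cN$, and the equivalence $(3)\Leftrightarrow(1)$ of that lemma gives the Rohlin property of $\al$.

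The only delicate point — and it is minor — is why one should pass through the sequential formulation rather than simply declaring $Z(\cN)_{\om,\al}\subs\cN_{\om,\al}$ and reusing the Rohlin unitary furnished by Theorem~\ref{thm:propergRoh}: a norm-bounded sequence supported in $Z(\cN)$ need not normalise $\sT_\om(\cN)$, so it does not a priori define an element of $\cN^\om$. Lemma~\ref{lem:Rohlin-ptwise}$(3)$ sidesteps this obstruction, since it asks only for strong (not strong$^*$) convergence together with centrality, and both of these properties transfer without difficulty between $Z(\cN)$ and $\cN$ for sequences lying in the centre. I expect the write-up to be short, the bulk of the work having been done in Theorem~\ref{thm:propergRoh} and Lemma~\ref{lem:Rohlin-ptwise}.
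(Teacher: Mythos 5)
Your argument is correct and is exactly the route the paper intends: the corollary is stated there without proof as an immediate consequence of Theorem~\ref{thm:propergRoh}, the point being precisely that a Rohlin sequence for $\al|_{Z(\cN)}$ transfers verbatim to $\cN$ through the sequential characterization in Lemma~\ref{lem:Rohlin-ptwise}\,(3). One cosmetic correction: to pass from strong convergence in $Z(\cN)$ to strong convergence in $\cN$ for a bounded sequence lying in the centre, the fact you need is that the restriction to $Z(\cN)$ of a normal state of $\cN$ is normal (which is trivial), not that normal states of $Z(\cN)$ extend to $\cN$ — the surjectivity of $\cN_*\ra Z(\cN)_*$ is the converse direction and is not what this step uses.
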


Since the dual flow of a Rohlin flow
is invariantly approximately inner
by Theorem \ref{thm:dual},
we get the following
result due to Connes \cite{Co-almost}.

\begin{cor}[Connes]
\label{cor:III0-modular}
Let $\cM$ be a type III$_0$ factor.
Then any modular
automorphism group of $\cM$ is approximately inner,
and hence $\cM$ is not a full factor.
\end{cor}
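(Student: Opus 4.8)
The plan is to read the statement off the duality between the Rohlin property and invariant approximate innerness. Fix a faithful normal state $\vph$ on $\cM$ and form the core $\tcM=\cM\rti_{\si^\vph}\R$, with dual flow $\th:=\widehat{\si^\vph}$ on $\tcM$; recall that the restriction of $\th$ to $Z(\tcM)$ is the smooth flow of weights of $\cM$ and that $Z(\tcM)^\th=Z(\cM)=\C$, so $\th$ is ergodic on $Z(\tcM)$. Since $\cM$ is of type III$_0$, its flow of weights is an ergodic non-periodic flow, hence $\th$ is a flow on $\tcM$ that is non-periodic and ergodic on the center. By Theorem \ref{thm:propergRoh} and the corollary deduced from it, such a $\th$ has the Rohlin property.

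Next I would invoke Theorem \ref{thm:dual}(2): as $\th=\widehat{\si^\vph}$ has the Rohlin property, the flow $\si^\vph$ is invariantly approximately inner. Unwinding Definition \ref{defn:IAI}, for every $T\in\R$ there are unitaries $w^\nu\in\cM$ with $\si^\vph_T=\lim_{\nu\to\infty}\Ad w^\nu$ in $\Aut(\cM)$, so $\si^\vph_t\in\oInt(\cM)$ for all $t\in\R$. To cover an arbitrary modular flow, I would use the cocycle derivative: for a faithful normal semifinite weight $\ps$ one has $\si^\ps_t=\Ad([D\ps:D\vph]_t)\circ\si^\vph_t$, which lies in $\Int(\cM)\cdot\oInt(\cM)=\oInt(\cM)$ because $\oInt(\cM)$ is a subgroup of $\Aut(\cM)$. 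Thus every modular automorphism group of $\cM$ is approximately inner.

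For the non-fullness, recall that a factor with separable predual is full exactly when $\Int(\cM)=\oInt(\cM)$. Since $\cM$ is of type III it is not semifinite, so $\si^\vph$ is not an inner flow — otherwise, being implemented by a one-parameter unitary group, it would force $\cM$ to admit a faithful normal semifinite trace. Pick $t_0\in\R$ with $\si^\vph_{t_0}\notin\Int(\cM)$; by the previous paragraph $\si^\vph_{t_0}\in\oInt(\cM)$, so $\Int(\cM)\subsetneq\oInt(\cM)$ and $\cM$ is not full. The only inputs from outside this paper are the classical correspondence between the type of $\cM$ and the behaviour of its flow of weights and the fact that pointwise innerness of a modular flow forces semifiniteness; everything else is bookkeeping, and the single point to be careful about is that the flow $\th$ whose Rohlin property we exploit is genuinely the dual flow $\widehat{\si^\vph}$, so that Theorem \ref{thm:dual}(2) applies on the nose.
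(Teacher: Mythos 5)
Your proposal is correct and follows essentially the same route as the paper: apply Theorem \ref{thm:propergRoh} (and its corollary) to see that the dual flow $\th=\widehat{\si^\vph}$ on the core has the Rohlin property because the flow of weights of a type III$_0$ factor is non-periodic and ergodic, and then use Theorem \ref{thm:dual} to conclude that $\si^\vph$ is (invariantly) approximately inner. The remaining bookkeeping — passing to an arbitrary weight via the Connes cocycle and deducing non-fullness from $\si^\vph_{t_0}\in\oInt(\cM)\setminus\Int(\cM)$ — is standard and handled correctly.
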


\subsection{Product type flows}
\label{subsect:itp}
We will generalize Example \ref{exam:ITP}
to a factor of type III.
Let ${\bm\la}=(\la_1,\dots,\la_m)$ with all $\la_j>0$
and
${\bm\mu}=(\mu_1,\dots,\mu_m)\in\R^m$.
We consider the following flow:
\[
\cM_{\bm\la}:=
\bigotimes_{k=1}^\infty
(M_{m+1}(\C),\phi_{\bm \la}),
\quad
\phi_{\bm \la}
:=
\frac{1}{1+\la_1+\cdots+\la_{m}}
\Tr
\cdot
\begin{pmatrix}
1&0&\cdots&0\\
0&\la_1&\cdots&0\\
\vdots&\vdots&\ddots&\vdots\\
0&0&\cdots&\la_{m}
\end{pmatrix}
,
\]
\[
\al_t^{{\bm \la},{\bm \mu}}
:=
\bigotimes_{k=1}^\infty
\Ad
\begin{pmatrix}
1&0&\cdots&0\\
0&e^{i\mu_1 t}&\cdots&0\\
\vdots&\vdots&\ddots&\vdots\\
0&0&\cdots&e^{i\mu_{m} t}
\end{pmatrix}
,
\quad
t\in\R.
\]
Let $\vph_{\bm\la}=\bigotimes_{k=1}^\infty\ph_{\bm\la}$.
In what follows,
we simply write $\al=\al^{{\bm\la},{\bm\mu}}$,
$\cM=\cM_{\bm\la}$
and $\vph=\vph_{\bm\la}$.
It is trivial
that $\al$ is invariantly approximately inner,
and the dual flow has the Rohlin property.
This fact will enable us to classify $\al$
in terms of ${\bm\la}$ and ${\bm\mu}$.
Note that $((\cM_\vph)^{\al})'\cap\cM=\C$
because the infinite symmetric group
$\mathfrak{S}_\infty$ is represented
into $(\cM_\vph)^{\al}$ canonically.

The flow $\al$ fixes the diagonal Cartan subalgebra
of $\cM$,
and we have already classified such flows
in Theorem \ref{thm:class-invapprox}
for injective infinite factors.
However, we can easily compute the flow of weights
of $\cM\rti_\al\R$
because of the minimality of $\al$,
and we will classify $\al$
without use of the results obtained
in \S\ref{subsect:Cartan}.

By Lemma \ref{lem:can-natural},
we have the following identification:
\[
\tcN=\tcM\rti_\tal\R=(\cM\rti_{\si^\vph}\R)\rti_\tal\R.
\]
Note that $\tal_t(\la^\vph(s))=\la^\vph(s)$.
Since
$(\cM_\vph)^\al\subs\cM$ has the trivial relative commutant,
we have
$\tcM'\cap \tcN
\subs \C\oti \{\la^\vph(\R)\}''\otimes\{\la^\tal(\R)\}''$.
From the picture of the product type flows,
we can observe that
$\cM$ is strongly densely spanned
by eigenvectors for both $\si^\vph$ and $\al$.
Take a non-zero element $x_j\in \cM$
such that
$\si_s^\vph(\al_t(x_j))
=e^{i(\log\la_j s+\mu_j t)}x_j$.
Then we have
\[
\pi_\tal(\pi_{\si^\vph}(x_j))
=x_j
\oti
{\bf e}_{-\log\la_j}\oti{\bf e}_{-\mu_j},
\]
where ${\bf e}_{s}(t):=e^{ist}$ for $s,t\in\R$.
Hence we obtain the natural identification
\begin{equation}
\label{eq:ZcN}
Z(\tcN)=\tcM'\cap \tcN=(L(\R)\oti L(\R))
\cap\{{\bf e}_{-\log\la_j}\oti{\bf e}_{-\mu_j}
\mid j=1,\dots,m\}',
\end{equation}
where the flow of weights $\th_s$
and the dual flow $\widehat{\tal}_t$ are given by
the restrictions of
$\Ad {\bf e}_{-s}\oti1$ and $1\oti \Ad{\bf e}_{-t}$.
By the Fourier transform,
we have the isomorphism
$L(\R)\oti L(\R)\to L^\infty(\R)\oti L^\infty(\R)$
such that
$\la^\vph(s)\oti\la^{\tal}(t)\mapsto {\bf e}_s\oti {\bf e}_t$.
Then we have
\[
\tcM\cap(\tcM\rti_\tal\R)=Z(\tcN)\cong (L^\infty(\R)\oti L^\infty(\R))
\cap
\{\la(\log\la_j)\oti\la(\mu_j)\}'.
\]
The $\th_t$ and $\widehat{\tal}_s$ are transformed to
$\Ad\la(t)\oti1$ and $1\oti \Ad\la(s)$,
respectively.
Note that $Z(\tcM)=(\tcM'\cap(\tcM\rti_\tal\R))^{\widehat{\tal}}$,
and
\[
Z(\tcM)\cong L^\infty(\R)\cap\{\la(\log\la_j)\}'.
\]

For $s_1,\dots,s_k\in\R^2$,
we denote by $\langle s_1,\dots,s_k\rangle$
the closed subgroup generated by
them.
We put
\[
G_{{\bm \la},{\bm \mu}}
:=\langle
(\log\la_j,\mu_j),
\mid
j=1,\dots,m
\rangle
.
\]
Put $\pr_1(x,y):=x$ and $\pr_2(x,y)=y$.
Then we have
\[
\ovl{\pr}_1(G_{{\bm \la},{\bm \mu}})
=
\Ga(\si^{\vph_{\bm\la}}),
\quad
\ovl{\pr}_2(G_{{\bm \la},{\bm \mu}})
=
\Ga(\al^{{\bm\la},{\bm\mu}}),
\]
where
$\ovl{\pr}_j(G_{{\bm \la},{\bm \mu}})$
denotes
the closure of
$\pr_j(G_{{\bm \la},{\bm \mu}})$.

\begin{thm}
\label{thm:flowwt-itp}
Let $\cM_{\bm\la}$, $\al:=\al^{{\bm\la},{\bm \mu}}$
and $\cN:=\cM_{\bm\la}\rti_\al\R$ as before.
Let $(X_\cN,F^\cN)$ be the flow of weights
of $\cN$.
Then the following holds:
\begin{enumerate}
\item
One has the identification
$L^\infty(X_\cN)=L^\infty(\R^2)^{G_{{\bm \la},{\bm \mu}}}$
that is the fixed point algebra
of the translation action
of $G_{{\bm \la},{\bm \mu}}$ on $\R^2$;

\item
The flow of weights $F^\cN$ is given by
\[
f(F_{t}^\cN (r,s))=f(r+t,s)
\quad
\mbox{for }
f\in L^\infty(X_\cN),
\
r,s,t\in\R;
\]

\item
The Connes-Takesaki module of $\hal_t$ is given by
\[
(\mo(\hal_t)f)(r,s)=f(r,s-t)
\quad
\mbox{for }
f\in L^\infty(X_\cN),
\
r,s,t\in\R.
\]
\end{enumerate}
\end{thm}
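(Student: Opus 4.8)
The plan is to assemble the computations already carried out in the paragraphs preceding the theorem. By \eqref{eq:ZcN} and the subsequent Fourier transform we have the identification
\[
L^\infty(X_\cN)=Z(\tcN)\cong(L^\infty(\R)\oti L^\infty(\R))\cap\{\la(\log\la_j)\oti\la(\mu_j)\mid j=1,\dots,m\}',
\]
under which the flow of weights $\th_t$ and the dual flow $\widehat{\tal}_t$ become $\Ad\la(t)\oti1$ and $1\oti\Ad\la(t)$. The real content behind this is the minimality $((\cM_\vph)^\al)'\cap\cM=\C$, which forces $\tcM'\cap\tcN\subseteq\C\oti\{\la^\vph(\R)\}''\oti\{\la^\tal(\R)\}''$ and hence, through the eigenvector description $\pi_\tal(\pi_{\si^\vph}(x_j))=x_j\oti{\bf e}_{-\log\la_j}\oti{\bf e}_{-\mu_j}$, pins down the relative commutant; this step has been done above and is where the hypotheses on $\al$ are actually used.

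For (1), I would note that a multiplication operator $M_f$ on $L^2(\R^2)$ commutes with the translation operator by a vector $v\in\R^2$ exactly when $f$ is $v$-translation invariant. Since the closed subgroup of $\R^2$ generated by the vectors $(\log\la_j,\mu_j)$ is $G_{{\bm\la},{\bm\mu}}$, the algebra above equals $L^\infty(\R^2)^{G_{{\bm\la},{\bm\mu}}}$, which is (1). For (2) and (3), the only remaining input is that conjugation by a translation operator $\la(t)$ acts on multiplication operators as translation of the argument by $t$; restricting $\Ad\la(t)\oti1$ and $1\oti\Ad\la(t)$ to $Z(\tcN)=L^\infty(\R^2)^{G_{{\bm\la},{\bm\mu}}}$ then gives translation in the first, respectively second, coordinate. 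Choosing coordinates $(r,s)$ on $X_\cN$ and defining $F^\cN$ so that $\th_t$ implements it yields $f(F^\cN_t(r,s))=f(r+t,s)$; and since $\mo(\hal_t)$ is the restriction of $\widetilde{\hal}_t$ to $Z(\tcN)$, which by Lemma \ref{lem:can-natural} is carried to $\widehat{\tal}_t$, we obtain $(\mo(\hal_t)f)(r,s)=f(r,s-t)$.

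The step I expect to be the main obstacle is the relative-commutant bound $\tcM'\cap\tcN\subseteq\C\oti\{\la^\vph(\R)\}''\oti\{\la^\tal(\R)\}''$: one Fourier-expands an element of $\tcN=\tcM\rti_\tal\R$ over $\la^\tal(\R)$ and then over $\la^\vph(\R)$, with coefficients in $\cM$, and uses $((\cM_\vph)^\al)'\cap\cM=\C$ together with $\tal_t(\la^\vph(s))=\la^\vph(s)$ (valid since $\al$ preserves $\vph$) to kill all but the scalar coefficients. This is routine but is exactly where minimality enters; the remainder is Fourier-analytic bookkeeping and a choice of sign conventions for $F^\cN$ and $\mo(\hal_t)$.
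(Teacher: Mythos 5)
Your proposal follows essentially the same route as the paper: Theorem \ref{thm:flowwt-itp} is stated there as a summary of the computation carried out in the paragraphs immediately preceding it, which you reconstruct correctly---minimality giving the relative-commutant bound $\tcM'\cap\tcN\subs\C\oti\{\la^\vph(\R)\}''\oti\{\la^\tal(\R)\}''$, the eigenvector description pinning down $Z(\tcN)$ as in (\ref{eq:ZcN}), the Fourier transform to $L^\infty(\R^2)$, and the identification of $\th_t$ and $\widehat{\tal}_t$ with coordinate translations. The theorem is a recapitulation of that discussion, and your account matches it.
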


Thus we obtain the following characterization
of the factoriality and the type of $\cN$.
Remark \ref{rem:typeI-Gamma} states that
if $\cN$ were of type I, then $\Ga(\al)=\{0\}$.
Hence $\mu_j=0$ for all $j$,
and $\cN\cong \cM\oti L^\infty(\R)$
that is not of type I,
and this is a contradiction.
Thus
$\cN$ must be a von Neumann algebra
of type II$_1$, II$_\infty$ or III.
When $\cN$ is a factor,
its type is either II$_\infty$ or III.

\begin{cor}
\label{cor:product-flowwt}
The following statements hold:
\begin{enumerate}
\item
$\cM_{\bm\la}\rti_{\al^{{\bm \la},{\bm\mu}}}\R$
is a factor
if and only if
$\ovl{\pr}_2(G_{{\bm \la},{\bm\mu}})=\R$;

\item
$\cM_{\bm\la}\rti_{\al^{{\bm \la},{\bm\mu}}}\R$ is a factor of
\begin{itemize}
\item
type II$_\infty$
if and only if
$G_{{\bm \la},{\bm\mu}}\cong\R$
and
$G_{{\bm \la},{\bm\mu}}\neq \R\times\{0\}$;

\item
type III$_1$
if and only if
$G_{{\bm \la},{\bm\mu}}=\R^2$;

\item
type III$_0$
if and only if
$G_{{\bm \la},{\bm\mu}}\cong\Z^2$
and
$\ovl{\pr}_2(G_{{\bm \la},{\bm\mu}})=\R$;

\item
type III$_\rho$,
$0<\rho<1$
if and only if
$G_{{\bm \la},{\bm\mu}}\cong\R\oplus\Z$
and
$G_{{\bm \la},{\bm\mu}}\neq\R\times\Z$.
\end{itemize}
\end{enumerate}
\end{cor}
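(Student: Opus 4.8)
The plan is to deduce everything from the description of the flow of weights of $\cN:=\cM_{\bm\la}\rti_{\al^{{\bm\la},{\bm\mu}}}\R$ given in Theorem~\ref{thm:flowwt-itp}. Write $G:=G_{{\bm\la},{\bm\mu}}$, a closed subgroup of $\R^2$, so that $L^\infty(X_\cN)=L^\infty(\R^2)^{G}=L^\infty(\R^2/G)$ and $F^\cN_t$ is the flow induced by the translation $(r,s)\mapsto(r+t,s)$. Recall that $\cN$ is a factor precisely when its flow of weights $F^\cN$ is ergodic, and that for a factor the type is read off the ergodic flow $F^\cN$: a one-point base gives type III$_1$; a nontrivial periodic flow gives type III$_\rho$ with $0<\rho<1$ (the period being $-\log\rho$); an aperiodic properly ergodic flow gives type III$_0$; and the transitive flow $(\R,+)$ gives a semifinite algebra, which in our situation must be of type II$_\infty$ by the discussion preceding this corollary (it cannot be of type I, since then $\Ga(\al)=\{0\}$ would force $\al^{{\bm\la},{\bm\mu}}$ to be trivial, contradicting factoriality, and it cannot be finite as a crossed product by $\R$).

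For part (1): an $F^\cN$-invariant function on $\R^2/G$ is the same as a function on $\R^2$ invariant under translations by $G$ and by $\R\times\{0\}$, hence under the closed subgroup $H:=\ovl{G+\R\times\{0\}}$. Since $H\supseteq\R\times\{0\}$, one has $H=\R\times\ovl{\pr}_2(G)$, so $F^\cN$ is ergodic exactly when $H=\R^2$, i.e.\ when $\ovl{\pr}_2(G)=\R$. This proves (1).

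For part (2), assume $\ovl{\pr}_2(G)=\R$. A closed subgroup of $\R^2$ is isomorphic to one of $\{0\},\Z,\R,\Z^2,\R\oplus\Z,\R^2$, and the first two have $\ovl{\pr}_2(G)\ne\R$, so are excluded. In each remaining case one computes the induced flow $F^\cN$ on $\R^2/G$ directly:
\begin{itemize}
\item $G=\R^2$: $\R^2/G$ is a point, $F^\cN$ is trivial, so $\cN$ is of type III$_1$.
\item $G\cong\R$ (a line): ergodicity forces $G\ne\R\times\{0\}$; then $\R^2/G\cong\R$ and $F^\cN$ is the translation flow, so $\cN$ is semifinite, hence of type II$_\infty$.
\item $G\cong\R\oplus\Z$: the inclusion $\R\times\{0\}\subseteq G$ would give $G=\R\times\pr_2(G)$ with $\pr_2(G)$ discrete, contradicting $\ovl{\pr}_2(G)=\R$; thus $\R\times\{0\}\not\subseteq G$, which is exactly the condition $G\ne\R\times\Z$. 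Quotienting $\R^2$ first by the line in $G$ and then by the residual $\Z$, one sees $\R^2/G$ is a circle on which $F^\cN$ is a nontrivial rotation flow, so $\cN$ is of type III$_\rho$ for some $\rho\in(0,1)$.
\item $G\cong\Z^2$: $\R^2/G$ is the $2$-torus and $F^\cN$ is the linear flow in the direction $(1,0)$; it is ergodic by (1), and it has no closed orbit, since $(p,0)\in G$ for some $p>0$ would make $\ovl{\pr}_2(G)$ discrete. Hence $F^\cN$ is aperiodic and properly ergodic, so $\cN$ is of type III$_0$.
\end{itemize}
Because the four $G$-conditions are mutually exclusive and exhaust the factorial cases, and because the four types correspond to the four flow-of-weights behaviours above, every listed equivalence holds in both directions.

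The computations here are routine; the only points requiring a little care are distinguishing the periodic case $G\cong\R\oplus\Z$ from the aperiodic torus case $G\cong\Z^2$ (handled by the ``no closed orbit'' observation, i.e.\ by whether $(p,0)\in G$ for some $p>0$) and confirming that in the transitive case the resulting semifinite factor is of type II$_\infty$ rather than of type I or II$_1$ — which is already settled by Remark~\ref{rem:typeI-Gamma} and the remarks preceding the corollary.
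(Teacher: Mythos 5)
Your proof is correct and follows essentially the same route as the paper's: read the type off the flow of weights computed in Theorem \ref{thm:flowwt-itp}, list the closed subgroups of $\R^2$, and analyse the induced translation flow on $\R^2/G_{{\bm\la},{\bm\mu}}$ case by case, deferring the exclusion of types I and II$_1$ to the discussion preceding the corollary. Your explicit checks (e.g.\ that $(p,0)\in G_{{\bm\la},{\bm\mu}}$ would force $\ovl{\pr}_2(G_{{\bm\la},{\bm\mu}})$ to be discrete in the $\Z^2$ case) simply make precise what the paper dispatches via fundamental domains and ergodicity.
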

\begin{proof}
We let
$\cN:=\cM_{\bm\la}\rti_{\al^{{\bm \la},{\bm\mu}}}\R$.

(1).
This is because $\ovl{\pr}_2(G_{{\bm\la},{\bm\mu}})=\Ga(\al)$,
or $Z(\cN)=Z(\tcN)^\th$.

(2).
Recall that
any closed subgroup in $\R^2$
is isomorphic to one of the following:
\[
\R^2,\ \R\times\Z,\ \R,
\
\Z^2,\ \Z,\ 0.
\]
The factoriality of $\cN$
excludes
the cases $G_{{\bm\la},{\bm\mu}}\cong\Z,0$.

We know that
$\cN$ is semifinite
if and only if
the flow of weights is the translation on $\R$,
that is,
$G_{{\bm\la},{\bm\mu}}\cong\R$.

Since $\cN$ is a factor of type III$_1$
if and only if
$L^\infty(X_\cN)=\C$,
we have $G_{{\bm\la},{\bm\mu}}=\R^2$.

Let us consider the case
that $G_{{\bm\la},{\bm\mu}}\cong\Z^2$.
Then we can take a parallelogram
as a fundamental domain of
the action of $G_{{\bm\la},{\bm\mu}}$
on $\R^2$.
Then the flow of weights of $\cN$
must be non-periodic
because of the ergodicity.
Thus $\cN$ is of type III$_0$
if $\cN$ is a factor.

When $G_{{\bm\la},{\bm\mu}}\cong\R\oplus\Z$,
a fundamental domain is a segment.
By the factoriality,
we have
$\ovl{\pr}_2(G_{{\bm\la},{\bm\mu}})=\R$,
and $G_{{\bm\la},{\bm\mu}}\neq\R\times\Z$.
Then it is easy to see that
the flow of weights of $\cN$
is periodic,
that is,
$\cN$ is of type III$_\rho$
with $0<\rho<1$.
\end{proof}

\begin{exam}
\label{ex:faithful-module}
We consider the case of $m=2$.
Then $G_{{\bm\la},{\bm\mu}}$ is isomorphic to
one of $\R$, $\Z$ and $\Z^2$.
Hence if $\cN$ is a factor,
then $\cN$ must be of type II$_\infty$ or III$_0$.
The former comes from the modular flow
$\si_{at}^{\vph_{\bm\la}}$ for some $a\in\R$.
Let us consider the latter.
Namely,
$\mu_1,\mu_2\neq0$ and $\mu_1/\mu_2\nin\Q$,
and the vectors
$v_1:=(\log\la_1,\mu_1)$ and $v_2:=(\log\la_2,\mu_2)$
are linearly independent.
With this assumption,
we obtain $(\la_1,\la_2)\neq(1,1)$,
that is, $\cM_{\bm\la}$ is of type III.
Then
$G_{{\bm\la},{\bm\mu}}=\Z v_1+\Z v_2$
and
$\ovl{\pr}_2(G_{{\bm \la},{\bm\mu}})=\R$.
Denote by $\be$
the dual flow of
$\al^{{\bm \la},{\bm\mu}}$.

We let $S_t^1(r,s):=(r+t,s)$
and $S_t^2(r,s):=(r,s+t)$
for $r,s,t\in\R$.
Then $L^\infty(\R)^{G_{{\bm\la},{\bm\mu}}}$
is nothing but
the fixed point algebra
of $L^\infty(\R)$
with respect to the transformations
$S_{\log\la_j}^1S_{\mu_j}^2$,
$j=1,2$.
Recall that
the flow of weights of $\cN$
and
the Connes-Takesaki module
of $\be$
are given by $S_t^1$
and $S_t^2$ on $L^\infty(\R)^{G_{{\bm\la},{\bm\mu}}}$,
respectively.

We pull back the flows
$S^1$ and $S^2$
through the linear transformation
$T\col\R^2\ra\R^2$,
where
\[
T:=
\begin{pmatrix}
\log\la_1&\log\la_2\\
\mu_1&\mu_2
\end{pmatrix}.
\]

Then we have
\[
T^{-1}\circ S_t^1\circ T
\begin{pmatrix}
x\\
y
\end{pmatrix}
=
\begin{pmatrix}
x\\
y
\end{pmatrix}
+
t
T^{-1}
\begin{pmatrix}
1\\
0
\end{pmatrix}
,
\
T^{-1}\circ S_t^2\circ T
\begin{pmatrix}
x\\
y
\end{pmatrix}
=
\begin{pmatrix}
x\\
y
\end{pmatrix}
+
t
T^{-1}
\begin{pmatrix}
0\\
1
\end{pmatrix}
.
\]
Note that
\[
T^{-1}\circ S_{\log\la_1}^1 S_{\mu_1}^2\circ T
\begin{pmatrix}
x\\
y
\end{pmatrix}
=
\begin{pmatrix}
x\\
y
\end{pmatrix}
+
\begin{pmatrix}
1\\
0
\end{pmatrix}
,
\
T^{-1}\circ S_{\log\la_2}^1S_{\mu_2}^2\circ T
\begin{pmatrix}
x\\
y
\end{pmatrix}
=
\begin{pmatrix}
x\\
y
\end{pmatrix}
+
\begin{pmatrix}
0\\
1
\end{pmatrix}
.
\]

Thus the triple
$(X_\cN,\cF_t^\cN,\mo(\be_t))$
is conjugate
to the Kronecker flow on
$[0,1]^2$.
More precisely,
with this identification,
we have
\[
\cF_t^\cN
\begin{pmatrix}
x\\
y
\end{pmatrix}
=
\begin{pmatrix}
x\\
y
\end{pmatrix}
+
\frac{t}{\De_{{\bm\la},{\bm\mu}}}
\begin{pmatrix}
\mu_2\\
-\mu_1
\end{pmatrix}
,\quad
\mo(\be_t)
\begin{pmatrix}
x\\
y
\end{pmatrix}
=
\begin{pmatrix}
x\\
y
\end{pmatrix}
+
\frac{t}{\De_{{\bm\la},{\bm\mu}}}
\begin{pmatrix}
-\log\la_2\\
\log\la_1
\end{pmatrix}
,
\]
where
$\De_{{\bm\la},{\bm\mu}}
:=\det(T)=\mu_2\log\la_1-\mu_1\log\la_2$.
This means
that the vectors ${\bm\la}$ and ${\bm\mu}$
determine
the directions of
$\mo(\be_t)$ and $\cF_t^{\cN}$, respectively.
In particular,
$\mo(\be_t)$ is non-periodic
if and only if
$\log\la_1\Z+\log\la_2\Z$ is dense in $\R$,
that is,
$\cM_{\bm\la}$ is of type III$_1$.

We will prove in Lemma \ref{lem:modular-part}
that the modular part of $\al$
is equal to
$\pr_2(G_{{\bm\la},{\bm\mu}}^\perp)$,
where we denote by $G_{{\bm\la},{\bm\mu}}^\perp$
the annihilator group of $G_{{\bm\la},{\bm\mu}}$.
Hence $(x,y)\in G_{{\bm\la},{\bm\mu}}^\perp$
if and only if
$(x,y)T\in (2\pi\Z,2\pi\Z)$.
This implies that
$\pr_2(G_{{\bm\la},{\bm\mu}}^\perp)
=(2\pi/\Delta_{{\bm\la},{\bm\mu}})(\log\la_1\Z+\log\la_2\Z)$.
This is not equal to $\R$.
Hence $\al$ is not extended modular.
\end{exam}

Now we will prove that
the closed subgroup $G_{{\bm \la},{\bm\mu}}$
is a complete invariant of
the cocycle conjugacy class
of $\al^{{\bm\la},{\bm\mu}}$.

\begin{thm}
\label{thm:itp-class}
Let $m,n\in\N$.
Let ${\bm\la}\in\R^m$ and ${\bm\rho}\in\R^n$
be such that
$\la_j,\rho_k>0$ for all $j,k$.
Let ${\bm\mu}\in\R^m$ and ${\bm\nu}\in\R^n$.
Consider the flows
$\al^{{\bm\la},{\bm\mu}}$
and $\al^{{\bm\rho},{\bm\nu}}$
on
$\cM_{\bm\la}$ and
$\cM_{\bm\rho}$,
respectively.
Then
they are cocycle conjugate
if and only if
$G_{{\bm \la},{\bm\mu}}=G_{{\bm \rho},{\bm\nu}}$.
\end{thm}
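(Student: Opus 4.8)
Write $\al:=\al^{{\bm\la},{\bm\mu}}$ and $\be:=\al^{{\bm\rho},{\bm\nu}}$, acting on $\cM_1:=\cM_{\bm\la}$ and $\cM_2:=\cM_{\bm\rho}$, and set $\cN_1:=\cM_1\rti_\al\R$, $\cN_2:=\cM_2\rti_\be\R$, $G:=G_{{\bm\la},{\bm\mu}}$, $G':=G_{{\bm\rho},{\bm\nu}}$. The plan is to reduce everything to the classification of Rohlin flows (Corollary \ref{cor:class-inj}) applied to the dual flows $\hal,\hbe$, which are Rohlin by Theorem \ref{thm:dual} since product type flows are plainly invariantly approximately inner; the explicit computation of the flow of weights and the module in Theorem \ref{thm:flowwt-itp} is what makes all the relevant invariants computable.

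For the ``only if'' direction I would note that a cocycle conjugacy $\theta\circ\be_t\circ\theta^{-1}=\al^v_t$ yields, after the canonical identification $\cM_1\rti_{\al^v}\R\cong\cN_1$ (which leaves the dual flow unchanged), an isomorphism $(\cN_1,\hal)\cong(\cN_2,\hbe)$; passing to cores and then to centers produces an isomorphism of the triples $(Z(\tcN_1),\th^1,\mo(\hal_t))\cong(Z(\tcN_2),\th^2,\mo(\hbe_t))$. By Theorem \ref{thm:flowwt-itp} these triples are $L^\infty(\R^2)^{G}$, resp.\ $L^\infty(\R^2)^{G'}$, carrying translation in the first coordinate as $\th$ and translation in the second coordinate as $\mo$. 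The joint point spectrum of this $\R^2$-action on $L^\infty(\R^2)^{G}\cong L^\infty(\R^2/G)$ is exactly the annihilator $G^\perp\subset\R^2$ (the characters of $\R^2/G$), an intrinsic invariant; hence $G^\perp=G'^\perp$ and $G=G'$.

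For the ``if'' direction, assume $G=G'$. First I would observe $\cM_1\cong\cM_2$: the ITPFI factor $\cM_{\bm\la}$ is never of type III$_0$ (its asymptotic ratio set is a closed subgroup of $\R_+^*$), so among the types that actually occur the injective factor is unique, and the type is determined by $\ovl{\pr}_1(G)=\Ga(\si^{\vph_{\bm\la}})=\overline{\langle\log\la_j\rangle}$ (Theorem \ref{thm:inj-class} and the uniqueness of the injective semifinite factors). Identify $\cM:=\cM_1=\cM_2$. Then $\al,\be$ are invariantly approximately inner flows on the injective factor $\cM$, and $\hal,\hbe$ are Rohlin flows on the injective von Neumann algebras $\cN_1,\cN_2$, which by Theorem \ref{thm:flowwt-itp} and Corollary \ref{cor:product-flowwt} are of pure type II$_1$, II$_\infty$ or III, have the same type, and have isomorphic cores --- everything depending only on $G=G'$ --- so $\cN_1\cong\cN_2=:\cN$. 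Transporting $\hal$ to $\cN$, the isomorphism of triples $(Z(\tcN),\th,\mo(\cdot))$ supplied by Theorem \ref{thm:flowwt-itp} can, by Lemma \ref{lem:P1P2}, be written as $\th_h\circ\mo(\rho)$ with $\rho\in\Aut(\cN)$; exactly as in the proof of Theorem \ref{thm:class-invapprox}(1) this gives $\mo(\rho\circ\hal_t\circ\rho^{-1})=\mo(\hbe_t)$. Corollary \ref{cor:class-inj} then yields that $\rho\circ\hal_t\circ\rho^{-1}$ and $\hbe_t$ are strongly cocycle conjugate, so $\hal\sim\hbe$.

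Finally, Takesaki duality upgrades this to $\al\oti\id_{B(L^2(\R))}\sim\be\oti\id_{B(L^2(\R))}$, i.e.\ $\al$ and $\be$ are stably conjugate. If $\cM$ is infinite this becomes $\al\sim\be$ by Remark \ref{rem:alid}; if $\cM$ is of type II$_1$, then $\al$ and $\be$ are almost periodic and minimal, hence Rohlin flows on the injective type II$_1$ factor by Theorem \ref{thm:almost}, so $\al\sim\be$ by Corollary \ref{cor:II1III1} (when $G=\{0\}$ both flows are trivial). The main obstacle is the middle step of the ``if'' direction: realizing the abstract isomorphism of the core data as the Connes--Takesaki module of an honest isomorphism and matching the modules on the nose, which is precisely the content of Lemma \ref{lem:P1P2} and the $\th_h$-bookkeeping behind it; the ``only if'' direction and the concluding type dichotomy are routine once Theorem \ref{thm:flowwt-itp} is available. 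As an alternative to the dual-flow route, one may simply invoke Theorem \ref{thm:class-invapprox}, whose hypotheses on the crossed products are exactly the data computed in Theorem \ref{thm:flowwt-itp} and Corollary \ref{cor:product-flowwt}.
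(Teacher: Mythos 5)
Your overall strategy --- compute the invariants via Theorem \ref{thm:flowwt-itp}, pass to the Rohlin dual flows, match modules via Lemma \ref{lem:P1P2}, apply Corollary \ref{cor:class-inj}, and dualize back --- is exactly what the paper does in its Case 1 ($\Ga(\al^{{\bm\la},{\bm\mu}})=\R$), and your ``only if'' direction agrees with the paper's. The genuine gap is in your final descent from stable to cocycle conjugacy when $\cM$ is of type II$_1$. There $\la_j=1$ for all $j$, so $G_{{\bm\la},{\bm\mu}}=\{0\}\times\overline{\langle\mu_j\rangle}$, and besides the trivial case $\overline{\langle\mu_j\rangle}=\{0\}$ and the dense case $\overline{\langle\mu_j\rangle}=\R$ there is the intermediate case $\overline{\langle\mu_j\rangle}=T\Z$ with $T>0$, in which $\al$ is a non-trivial \emph{periodic} flow. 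Such a flow is not faithful, hence not minimal as an $\R$-flow, so Theorem \ref{thm:almost} does not apply; and it is not a Rohlin flow (its Connes spectrum lies in $T\Z\neq\R$), so Corollary \ref{cor:II1III1} does not apply either. Your parenthetical only covers $G=\{0\}$. The paper closes this case by regarding $\al$ and $\be$ as minimal actions of the circle $\R/(2\pi/T)\Z$ on $\cR_0$ and invoking the uniqueness of minimal torus actions on the injective type II$_1$ factor. (Your fallback of citing Theorem \ref{thm:class-invapprox}(2) would in fact repair this, since the crossed products are then of type II rather than type I; but note the paper explicitly sets out to prove Theorem \ref{thm:itp-class} without the results of \S\ref{subsect:Cartan}.)

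A second, non-fatal divergence: when $\Ga(\al^{{\bm\la},{\bm\mu}})=p\Z\neq\{0\}$ the crossed products $\cN_1,\cN_2$ are not factors, and the paper does not run the dual-$\R$-flow argument there. Instead it passes to the crossed product by the circle $\R/(2\pi/p)\Z$, checks that the dual $p\Z$-actions are centrally free with trivial modular invariant, and quotes the classification of discrete amenable group actions. Your uniform treatment instead needs Lemma \ref{lem:P1P2} and Corollary \ref{cor:class-inj} for these non-factor (though centrally ergodic, hence pure-type) injective crossed products, together with the isomorphism $\cN_1\cong\cN_2$; all of this is plausible because every relevant datum is determined by $G$, but you assert rather than verify it, and that verification is precisely the bookkeeping the paper's case split is designed to avoid.
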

\begin{proof}
Let us simply write
$\al^{\bm\mu}:=\al^{{\bm\la},{\bm\mu}}$
and
$\al^{\bm\nu}:=\al^{{\bm\rho},{\bm\nu}}$.
Let
$\be^{\bm\mu}:=\widehat{\al^{\bm\mu}}$
and
$\be^{\bm\nu}:=\widehat{\al^{\bm\nu}}$.
We put $\cP_1:=\cM_{\bm\la}\rti_{\al^{\bm\mu}}\R$
and $\cP_2:=\cM_{\bm\rho}\rti_{\al^{\bm\nu}}\R$.
Recall that we have the isomorphism
$Z(\tcP_1)
\cong
L^\infty(\R^2)^{G_{{\bm \la},{\bm\mu}}}$
and
$Z(\tcP_2)
\cong
L^\infty(\R^2)^{G_{{\bm \rho},{\bm\nu}}}$.

Suppose that
$\al^{\bm\mu}$
and $\al^{\bm\nu}$
are cocycle conjugate flows,
that is,
there exist
an $\al^{\bm\nu}$-cocycle
$v$
and
an isomorphism
$\ph\col\cM_{\bm\la}\ra\cM_{\bm\rho}$
such
that
$\Ad v(t)\circ\al_t^{\bm\nu}
=
\ph\circ\al_t^{\bm\mu}\circ\ph^{-1}$.
Then $\ph$ extends to
the isomorphism $\ph\col\cP_1\ra\cP_2$
such that
$\ph(\pi_{\al^{\bm\mu}}(x))
=\pi_{\al^{\bm\nu}}(\ph(x))$
and
$\ph(\la^{\al^{\bm\mu}}(t))
=\pi_{\al^{\bm\nu}}(v(t))\la^{\al^{\bm\nu}}(t)$
for $x\in\cM_{\bm\la}$ and $t\in\R$.
Then
$\be^{\bm\nu}_t
=
\ph\circ\be^{\bm\mu}_t\circ\ph^{-1}$.

Let
$\tph\col\tcP_1\ra\tcP_2$
be the canonical extension of $\ph$.
Let $\th^1$ and $\th^2$ be the dual flows on $\tcP_1$
and $\tcP_2$, respectively.
Then
$\th_t^2=\tph\circ\th_t^1\circ\tph^{-1}$
and
$\widetilde{\be_t^{\bm\nu}}
=\tph\circ\widetilde{\be_t^{\bm\mu}}\circ\tph^{-1}$.
Hence
we have an isomorphism
$L^\infty(\R^2)^{G_{{\bm\la},{\bm\mu}}}
\cong
L^\infty(\R^2)^{G_{{\bm\rho},{\bm\nu}}}$
preserving the translation of
$\R^2$.
Thus $G_{{\bm\la},{\bm\mu}}=G_{{\bm\rho},{\bm\nu}}$.

Next suppose
that $G_{{\bm\la},{\bm\mu}}=G_{{\bm\rho},{\bm\nu}}$.
Then
we have an isomorphism
$\si\col
Z(\tcP_1)\ra
Z(\tcP_2)$
preserving their flows of weights
and the Connes-Takesaki modules
of $\be^{\bm\mu}$ and $\be^{\bm\nu}$.
Applying $\pr_j$
to $G_{{\bm\la},{\bm\mu}}=G_{{\bm\rho},{\bm\nu}}$,
we obtain
$\Ga(\si^{\vph_{\bm\la}})=\Ga(\si^{\vph_{\bm\rho}})$
and
$\Ga(\al^{\bm\mu})=\Ga(\al^{\bm\nu})$.
In particular,
$\cM_{\bm\la}$ is semifinite
if and only if
$\la_j=1$ for all $j$.
In this case,
$\rho_j=1$ for all $j$,
and $\cM_{\bm\la}\cong\cM_{\bm\rho}$
is of type II$_1$.
When $\cM_{\bm\la}$ is of type III,
then $\cM_{\bm\rho}$ is the same type as $\cM_{\bm\la}$.
Since they are not of type III$_0$,
$\cM_{\bm\la}\cong \cM_{\bm\rho}$.

\vspace{5pt}
\noindent{\bf Case 1.}
$\Ga(\al^{\bm\mu})=\R$.
\vspace{5pt}

In this case,
$\Ga(\al^{\nu})=\ovl{\pr}_2(G_{{\bm\rho},{\bm\nu}})=\R$.
Thus
$\cP_1$ and $\cP_2$ are factors.
By Corollary \ref{cor:product-flowwt} (2),
$\tcP_1$ and $\tcP_2$ must be a von Neumann algebra
of type II$_\infty$.
Hence $\tcP_1\cong\tcP_2$.

By Lemma \ref{lem:P1P2},
there exist $s\in\R$
and an isomorphism
$\ph\col\cP_1\ra\cP_2$
such that
$\si=\mo(\ph)\circ\th_s^1$.

Consider the flow
$\ga_t:=\ph\circ\be_t^{\bm\mu}\circ\ph^{-1}$
on $\cP_2$.
Then they satisfy
\begin{align*}
\mo(\ga_t)
&=\mo(\ph)\circ\mo(\be_t^{\bm\mu})\circ\mo(\ph)^{-1}
\\
&=\si\circ \th_{-s}^1
\circ\mo(\be_t^{\bm\mu})\circ
\th_s^1\circ\si^{-1}
\\
&=
\si\circ\mo(\be_t^{\bm\mu})\circ\si^{-1}
\\
&=
\mo(\be_t^{\bm\nu}).
\end{align*}
Since they have the Rohlin property,
$\ga\sim \be^{\bm\nu}$
by Corollary \ref{cor:class-inj}.
Hence by Takesaki duality,
$\al^{\bm\mu}\oti\id_{B(L^2(\R))}
\sim\al^{\bm\nu}\oti\id_{B(L^2(\R))}$.

\vspace{5pt}
\noindent
{\bf Case 2.}
$\Ga(\al^{\bm\mu})\neq\R$.
\vspace{5pt}

When $\Ga(\al^{\bm\mu})=\{0\}$,
$\mu_j=0=\nu_j$ for all $j$.
Thus there is nothing to prove
because
$\al_t^{\bm\mu}=\id_{\cM_{\bm\la}}$
and $\al_t^{\bm\nu}=\id_{\cM_{\bm\rho}}$.

We consider the case that
$\Ga(\al^{\bm\mu})=p\Z$ for some $p>0$.
Then $\al^{\bm\mu}$ and $\al^{\bm\nu}$
have the period $T:=2\pi/p$.
When we regard them as the actions
of the torus $\R/T\Z$,
we denote them by $\ga^1$ and $\ga^2$,
respectively.
Note that they are minimal.

Let $\cQ_1:=\cM_{\bm\la}\rti_{\ga^1}\R/T\Z$
and $\cQ_2:=\cM_{\bm\rho}\rti_{\ga^2}\R/T\Z$.
Let $\de^1$ and $\de^2$
be the dual $p\Z$-actions
of $\ga^1$ and $\ga^2$, respectively.
By a similar computation to (\ref{eq:ZcN}),
we obtain the following
natural isomorphism:
\[
Z(\tcQ_1)\cong L^\infty(\R\times p\Z)^{G_{{\bm\la},{\bm\mu}}},
\quad
Z(\tcQ_2)\cong L^\infty(\R\times p\Z)^{G_{{\bm\rho},{\bm\nu}}},
\]
where the flows of weights act on the first
coordinate,
and the Connes-Takesaki modules of
$\de^1$ and $\de^2$
do on the second.
Define an isomorphism
$\si\col Z(\tcQ_1)\ra Z(\tcQ_2)$
by this expression.

Since $\tcQ_1$ and $\tcQ_2$
must be of type II$_\infty$,
there exist an isomorphism
$\ph\col\cQ_1\ra\cQ_2$
and
$s\in\R$
such that
$\si=\mo(\ph)\circ\th_s^1$
by Lemma \ref{lem:P1P2},
where $\th^1$ denotes the flow of weights of $\cQ_1$.
Putting $\de':=\ph\circ\de^1\circ\ph^{-1}$,
we obtain $\mo(\de')=\mo(\de^2)$.

We compute the modular invariants of
$\de^1$ and $\de^2$.
Suppose that
$\widetilde{\de}_{pn}^1=\Ad u$ for some $n\in\Z$
and $u\in\tcQ_1^{\rm U}$.
Since
$\widetilde{\de}^1$ fixes
$\widetilde{\cM_{\bm\la}}$
and
$\widetilde{\cM_{\bm\la}}'\cap \tcQ_1=Z(\tcQ_1)$,
we must have
$u\in Z(\tcQ_1)$,
that is,
$\widetilde{\de}_{pn}^1=\id_{\tcQ_1}$.
Thus $n=0$.
Likewise,
it turns out that
the modular part of $\de^2$ is trivial.
Hence
$\de^1$ and $\de^2$ are centrally free.

Therefore, $\de^1\sim\de^2$ as $p\Z$-actions
by \cite[Theorem 6.1]{Kat-S-T}
or \cite[Theorem 20]{KawST}
(see \cite[Theorem 3.1]{Ma-class} for a simple proof).
Thus
$\ga^1\oti\id_{B(\ell^2)}\sim \ga^2\oti\id_{B(\ell^2)}$
by Takesaki duality.
This implies that
$\al^{\bm\mu}\oti\id_{B(\ell^2)}
\sim\al^{\bm\nu}\oti\id_{B(\ell^2)}$
as $\R$-actions.

\vspace{9pt}
Hence in either case,
$\al^{\bm\mu}$ is stably conjugate to
$\al^{\bm\nu}$.
If $\cM$ is infinite,
this implies
$\al^{\bm\mu}\sim\al^{\bm\nu}$
as shown in Remark \ref{rem:alid}.
When $\cM_{\bm\la}\cong \cM_{\bm\rho}$ is finite,
$\{0\}=\Ga(\si^{\vph_{\bm\la}})=\Sp(\si^{\vph_{\bm\la}})$.
Thus $\la_j=1$ for all $j$.
Likewise, $\rho_k=1$ for all $k$.
Then
the condition
$G_{{\bm\la},{\bm\mu}}=G_{{\bm\rho},{\bm\nu}}$
implies
that
$H:=
\langle \mu_j\mid j\rangle
=\langle \nu_j\mid j\rangle$.

When $H=T\Z$ for some $T>0$,
$\al^{\bm\mu}$ and $\al^{\bm\nu}$
have the period $2\pi/T$.
Then they are regarded
as minimal actions of the torus
$\R/(2\pi/T)\Z$
on the injective type II$_1$ factor.
Hence
$\al^{\bm\mu}$ is conjugate to $\al^{\bm\nu}$
by the uniqueness of a minimal action
of the torus on the injective type II$_1$
factor.

When $H=\R$,
the both actions have the Rohlin property
by Theorem \ref{thm:almost} or \ref{thm:itp-Rohlin}.
Therefore,
they are cocycle conjugate
by Corollary \ref{cor:II1III1}.
\end{proof}

Next
we describe the modular part $\La(\al^{{\bm\la},{\bm\mu}})$
of $\al:=\al^{{\bm\la},{\bm\mu}}$
on $\cM:=\cM_{\bm\la}$.
By Lemma \ref{lem:cnt-borel},
$\La(\al^{{\bm\la},{\bm\mu}})$
is a Borel subgroup
of $\R$ which consists of elements $t\in\R$
such that $\tal_t$ are inner.

\begin{lem}
One has
$\La(\al)=\Sp_d(\widehat{\tal}|_{Z(\tcM\rti_\tal\R)})$.
\end{lem}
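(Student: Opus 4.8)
The plan is to identify both sides of the asserted equality by working directly inside $\tcN:=\tcM\rti_\tal\R$, which by Lemma \ref{lem:can-natural} is the core of $\cN:=\cM\rti_\al\R$ and carries $\widehat{\tal}$ as the canonical extension of $\widehat{\al}$. Since $\al$ is invariantly approximately inner, so is $\tal$ (the Remark after Corollary \ref{cor:relative-appinn}); hence $\pi_\tal(\tcM)'\cap\tcN=Z(\tcN)$ and $Z(\tcN)^{\widehat{\tal}}=Z(\tcM)$ by Corollary \ref{cor:relative-appinn}, and $\tcN^{\widehat{\tal}}=\pi_\tal(\tcM)$ by Takesaki duality \cite{Tak-dual}. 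I will also use: $\cM=\cM_{\bm\la}$ is an injective factor, so $\al_t\in\oInt(\cM)=\ker(\mo)$ (as in Corollary \ref{cor:class-inj}), whence $\tal_t$ acts trivially on $Z(\tcM)$; the flow of weights $\th$ is ergodic on $Z(\tcM)$, i.e.\ $Z(\tcM)^\th=Z(\cM)=\C$; $\th$ commutes with $\tal$; and $\la^\tal(t)$ implements $\tal_t$ on $\pi_\tal(\tcM)$ with $\widehat{\tal}_p(\la^\tal(t))=e^{-ipt}\la^\tal(t)$. Recall finally that $\La(\al)=\{t\in\R:\tal_t\in\Int(\tcM)\}$ and that $\Spd(\widehat{\tal}|_{Z(\tcN)})$ is a symmetric subset of $\R$ (if $w$ is an eigenvector, so is $w^*$, with the negated frequency).

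First I would prove $\La(\al)\subseteq\Spd(\widehat{\tal}|_{Z(\tcN)})$. If $\tal_t=\Ad u$ with $u\in\tcM^{\rm U}$, put $z:=\pi_\tal(u)^*\la^\tal(t)$. A one-line computation shows $z$ is a unitary commuting with $\pi_\tal(\tcM)$, hence $z\in Z(\tcN)$, and $\widehat{\tal}_p(z)=e^{-ipt}z$ for all $p$. Thus $-t$, and so $t$, lies in $\Spd(\widehat{\tal}|_{Z(\tcN)})$.

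Next I would prove the reverse inclusion. Let $t\in\Spd(\widehat{\tal}|_{Z(\tcN)})$ and pick a nonzero $w\in Z(\tcN)$ with $\widehat{\tal}_p(w)=e^{ipt}w$. Taking the polar decomposition inside the abelian algebra $Z(\tcN)$ yields a partial isometry eigenvector $v$ with the same eigenvalue, and $e:=vv^*=v^*v$ is a nonzero projection fixed by $\widehat{\tal}$, hence $e\in Z(\tcM)$. Since $v\la^\tal(t)$ is $\widehat{\tal}$-fixed it equals $\pi_\tal(\tilde a)$ for some $\tilde a\in\tcM$, and one checks $\tilde a\tilde a^*=e$, $\tilde a^*\tilde a=\tal_{-t}(e)=e$, and $\Ad\tilde a=\tal_t$ on $\tcM_e$ (using $\tal_{-t}|_{Z(\tcM)}=\id$ so that $\tcM_e$ is $\tal_t$-invariant). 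Hence the set $P:=\{f\in Z(\tcM)^{\rm P}:\tal_t|_{\tcM_f}\in\Int(\tcM_f)\}$ is nonempty. It is an order ideal of projections, so $q:=\sup P$ again belongs to $P$; and because $\th_s\tal_t=\tal_t\th_s$ carries an implementing unitary for $\tal_t|_{\tcM_f}$ to one for $\tal_t|_{\tcM_{\th_s(f)}}$, the set $P$, and thus $q$, is $\th$-invariant. By ergodicity of $\th$ on $Z(\tcM)$ and $q\neq0$ we conclude $q=1$, i.e.\ $\tal_t\in\Int(\tcM)$, so $t\in\La(\al)$.

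The main obstacle is exactly the reverse inclusion: one eigenvector of $\widehat{\tal}$ in $Z(\tcN)$ only produces a unitary in $\tcM$ implementing $\tal_t$ on a central corner $\tcM_e$, and $\tcM$ is typically non-factorial (e.g.\ in the type III$_0$ cases of Corollary \ref{cor:product-flowwt}). The point that removes this gap is that the ``inner locus'' $P$ is invariant under the flow of weights $\th$, which acts ergodically on $Z(\tcM)$ precisely because $\cM_{\bm\la}$ is a factor; the auxiliary input $\mo(\al_t)=\id$, ensuring $\tcM_e$ is $\tal_t$-stable, is where injectivity of $\cM$ is used.
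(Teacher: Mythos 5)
Your proof is correct and follows essentially the same route as the paper: the forward inclusion via $v_p:=\pi_\tal(u_p^*)\la^\tal(p)\in Z(\tcM\rti_\tal\R)$ is identical, and for the reverse inclusion the paper likewise extracts from an eigenvector $z$ the element $a:=z^*\la^\tal(p)\in\pi_\tal(\tcM)$ intertwining $\tal_p$ and then upgrades it to a unitary using ergodicity of $\th$ on $Z(\tcM)$. The only difference is that the paper cites \cite[Proposition 3.4]{Iz-Can} for that last promotion step, whereas you carry out the maximality-plus-$\th$-invariance argument explicitly.
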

\begin{proof}
Let $p\in \La(\al)$.
Take a unitary $u_p\in \tcM$ such that
$\tal_p=\Ad u_p$.
Then $v_p:=\pi_\tal(u_p^*)\la^\tal(p)$ belongs to
$\tcM'\cap(\tcM\rti_\tal\R)=Z(\tcM\rti_\tal\R)$,
and $\widehat{\tal}_t(v_p)=e^{-ipt}v_p$.
Thus $-p$, and hence, $p$
belong to $\Sp_d(\widehat{\tal}|_{Z(\tcM\rti_\tal\R)})$.

Suppose that
$-p\in \Sp_d(\widehat{\tal}|_{Z(\tcM\rti_\tal\R)})$.
Take
a non-zero $z\in Z(\tcM\rti_\tal\R)$
such that
$\widehat{\tal}_t(z)=e^{-ipt}z$.
We put $a:=z^*\la^\tal(p)$.
Then $a$ is fixed by $\widehat{\tal}$,
and $a\in \pi_\tal(\tcM)$.
Let $b$ be such that $a=\pi_\tal(b)$.
For $x\in \tcM$,
we have $bx=\tal_p(x)b$.
Then by ergodicity of $\th$ on $Z(\tcM)$,
we can take a unitary $u\in\tcM$
such that $ux=\tal_p(x)u$ for all $x\in\tcM$
(see the proof of \cite[Proposition 3.4]{Iz-Can}).
Hence $p\in\La(\al)$.
\end{proof}

Let $p\in\La(\al)$
and take $u_p,v_p$ as given in the proof above.
Since $Z(\tcM\rti_\tal\R)$ is included
in $\C\oti L(\R)\oti L(\R)$,
there exists $w_p\in L(\R)=\{\la^\vph(\R)\}''$
such that
$v_p=1\oti w_p\oti \la^\tal(p)=\pi_{\tal}(1\oti w_p)\la^\tal(p)$.
Thus $u_p=1\oti w_p^*$.
In particular,
$\tal_t(u_p)=u_p$.

Next we compute the modular invariant of $u_p$.
By (\ref{eq:ZcN}),
$v_p$ commutes with
$1\oti {\bf e}(\log\la_j)\oti {\bf e}(\mu_j)$
for all $j$.
Hence
\[
\th_{\log\la_j}(w_p)=e^{ip\mu_j}w_p.
\]
Then we obtain
the character
$\langle \log\la_j\mid j\rangle=\Ga(\si^\vph)
\ni t\mapsto \th_t(w_p)w_p^*\in\T$.
Thus there exists $q$
such that
$e^{iq\log\la_j}=e^{ip\mu_j}$
for all $j$.

\begin{lem}
An element
$p\in\R$ belongs to $\La(\al)$
if and only if
there exists $q\in\R$
such that
$e^{iq\log\la_j}=e^{ip\mu_j}$
for all $j=1,\dots,m$.
\end{lem}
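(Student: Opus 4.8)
The plan is to read the ``only if'' direction off the computation carried out just before the statement, and to prove the ``if'' direction by exhibiting an explicit unitary of $\tcM$ implementing $\tal_p$.

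For the ``only if'' direction essentially nothing remains: if $p\in\La(\al)$ then, as shown above, one gets $w_p\in L(\R)=\{\la^\vph(\R)\}''$ with $\th_{\log\la_j}(w_p)=e^{ip\mu_j}w_p$ for all $j$, and since $\th$ is a flow the map $t\mapsto\th_t(w_p)w_p^*$ is a continuous character of the closed subgroup $\Ga(\si^{\vph_{\bm\la}})=\ovl{\langle\log\la_j\mid j\rangle}$ whose value at $\log\la_j$ is $e^{ip\mu_j}$. Every such character has the form $t\mapsto e^{iqt}$ with $q\in\R$: when $\langle\log\la_j\mid j\rangle$ is dense this is immediate, and when it is cyclic, say generated by $\de>0$, I would simply pick $q$ with $e^{iq\de}$ equal to the value of the character at $\de$. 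Then $e^{iq\log\la_j}=e^{ip\mu_j}$ for all $j$.

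For the ``if'' direction I would start from $q\in\R$ with $e^{iq\log\la_j}=e^{ip\mu_j}$ for all $j$ and put $u_p:=\la^\vph(q)\in\tcM^{\rm U}$; the goal is then $\tal_p=\Ad u_p$ on $\tcM$, which gives $p\in\La(\al)$. Since each $\al_t$ is an infinite tensor product of adjoints by diagonal unitaries, $\al$ commutes with $\si^{\vph_{\bm\la}}$ and $\cM$ is the $\si$-weak closure of the span of joint eigenvectors: elements $x$ with $\si_s^\vph(\al_t(x))=e^{i(as+bt)}x$, where $(a,b)\in G_{{\bm\la},{\bm\mu}}$, so $(a,b)=\sum_j n_j(\log\la_j,\mu_j)$ for some $n_j\in\Z$. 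For such an $x$ the hypothesis on $q$ gives $e^{iqa}=\prod_j(e^{iq\log\la_j})^{n_j}=\prod_j(e^{ip\mu_j})^{n_j}=e^{ipb}$, hence
\[
\Ad u_p(x)=\la^\vph(q)\,x\,\la^\vph(q)^*=\si_q^\vph(x)=e^{iqa}x=e^{ipb}x=\al_p(x)=\tal_p(x).
\]
By normality this extends to $\Ad u_p=\tal_p$ on all of $\cM$. On the remaining generators $\Ad u_p(\la^\vph(t))=\la^\vph(t)$ since $\la^\vph$ is a one-parameter group, and $\tal_p(\la^\vph(t))=\la^{\al_p(\vph)}(t)=\la^\vph(t)$ because $\al_p$ fixes $\vph_{\bm\la}$ (each diagonal unitary commutes with the density matrix of $\ph_{\bm\la}$). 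As $\cM$ and $\{\la^\vph(\R)\}''$ generate $\tcM$, this forces $\tal_p=\Ad u_p$.

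The only step needing genuine care is the extraction of $q$ in the ``only if'' direction, namely that the character of $\langle\log\la_j\mid j\rangle$ attached to $w_p$ is the restriction of a character of $\R$; this is exactly what the dense/cyclic dichotomy above takes care of. The rest is a routine verification built on the eigenvector decomposition of $\cM$ and the $\al$-invariance of $\vph_{\bm\la}$.
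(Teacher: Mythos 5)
Your proof is correct, but the ``if'' direction takes a genuinely different route from the paper's. The ``only if'' half coincides with the paper's argument: both of you read the relation $\th_{\log\la_j}(w_p)=e^{ip\mu_j}w_p$ off the computation preceding the lemma and then extend the resulting continuous character of $\overline{\langle\log\la_j\mid j\rangle}=\Ga(\si^{\vph})$ to a character of $\R$; your dense/cyclic dichotomy just makes explicit the extension step the paper leaves implicit. For the ``if'' half the paper sets $u:=\pi_\tal(\la^\vph(q)^*)\la^\tal(p)$, checks that it is fixed by the $G_{{\bm\la},{\bm\mu}}$-action so that $u\in Z(\tcM\rti_\tal\R)$ with $\widehat{\tal}_t(u)=e^{-ipt}u$, and then invokes the preceding lemma $\La(\al)=\Sp_d(\widehat{\tal}|_{Z(\tcM\rti_\tal\R)})$ --- whose converse direction itself needs an ergodicity argument to upgrade an intertwiner to a unitary. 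You instead verify directly on the joint $(\si^\vph,\al)$-eigenvectors of $\cM$ and on the generators $\la^\vph(t)$ that $\tal_p=\Ad\la^\vph(q)$, which gives $\tal_p\in\Int(\tcM)$, i.e.\ $p\in\La(\al)$, without ever passing to the crossed product. Your route is more elementary and self-contained (it exhibits the implementing unitary explicitly), while the paper's is shorter on the page because it reuses machinery already established. One cosmetic point: the joint eigenvalues realized by finite products of matrix units lie in the algebraic subgroup $\sum_j\Z(\log\la_j,\mu_j)$ rather than in the full closed group $G_{{\bm\la},{\bm\mu}}$, and that is in fact exactly what your computation $e^{iqa}=e^{ipb}$ requires, so no harm is done.
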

\begin{proof}
The ``only if'' part has been shown.
We show the ``if'' part.
Suppose that
$q$ satisfies
$e^{iq\log\la_j}=e^{ip\mu_j}$
for all $j=1,\dots,m$.
Put $u:=\pi_\tal(\la^\vph(q)^*)\la^\tal(p)$.
Then $u$ is fixed by the action
of $G_{{\bm\la},{\bm\mu}}$,
and it turns out that $u\in Z(\tcM\rti_\tal\R)$.
Thus we are done from the previous lemma.
\end{proof}

We let $G_{{\bm\la},{\bm\mu}}^\perp$ be the annihilator
group of $G_{{\bm\la},{\bm\mu}}$
with respect to the pairing of $\R^2$,
that is,
$(a,b)\in G_{{\bm\la},{\bm\mu}}^\perp$
if and only if
$e^{i(ax+by)}=1$
for all $(x,y)\in G_{{\bm\la},{\bm\mu}}$.
Then the following lemma is an immediate
consequence of the previous result.

\begin{lem}
\label{lem:modular-part}
The modular part $\La(\al)$ coincides
with $\pr_2(G_{{\bm\la},{\bm\mu}}^\perp)$.
\end{lem}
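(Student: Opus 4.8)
The plan is to unwind the definition of the annihilator group, reduce the defining condition to the finitely many generators of $G_{{\bm\la},{\bm\mu}}$, and then quote the characterization of $\La(\al)$ obtained in the lemma immediately preceding this statement. First I would note that, for fixed $(a,b)\in\R^2$, the map $\R^2\ni(x,y)\mapsto e^{i(ax+by)}\in\T$ is a continuous character of $\R^2$, so it equals $1$ on all of $G_{{\bm\la},{\bm\mu}}$ as soon as it equals $1$ on a set that generates $G_{{\bm\la},{\bm\mu}}$ as a closed subgroup. Since by definition $G_{{\bm\la},{\bm\mu}}=\langle(\log\la_j,\mu_j)\mid j=1,\dots,m\rangle$, this yields
\[
(a,b)\in G_{{\bm\la},{\bm\mu}}^\perp
\iff
e^{i(a\log\la_j+b\mu_j)}=1
\quad\text{for all }j=1,\dots,m.
\]

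Next I would translate the statement $p\in\pr_2(G_{{\bm\la},{\bm\mu}}^\perp)$. By definition this means that there is some $a\in\R$ with $(a,p)\in G_{{\bm\la},{\bm\mu}}^\perp$, i.e.\ $e^{ia\log\la_j}=e^{-ip\mu_j}$ for every $j$. Setting $q:=-a$, this is exactly the existence of $q\in\R$ with $e^{iq\log\la_j}=e^{ip\mu_j}$ for $j=1,\dots,m$, which by the previous lemma is equivalent to $p\in\La(\al)$. Conversely, if $p\in\La(\al)$, that lemma produces such a $q$, and taking $a:=-q$ shows $(a,p)\in G_{{\bm\la},{\bm\mu}}^\perp$, hence $p\in\pr_2(G_{{\bm\la},{\bm\mu}}^\perp)$. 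This gives the desired equality $\La(\al)=\pr_2(G_{{\bm\la},{\bm\mu}}^\perp)$.

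There is no real obstacle here: the substantive work has already been carried out in establishing the characterization $p\in\La(\al)\iff\exists q\in\R,\ e^{iq\log\la_j}=e^{ip\mu_j}\ (j=1,\dots,m)$ via the analysis of $\tal$ and the identification (\ref{eq:ZcN}) of $Z(\tcM\rti_\tal\R)$, and the present statement is a purely formal reformulation of that characterization in terms of the annihilator of the closed subgroup $G_{{\bm\la},{\bm\mu}}$. The only mild point deserving explicit mention is the reduction of the annihilator condition from all of $G_{{\bm\la},{\bm\mu}}$ to the generating vectors $(\log\la_j,\mu_j)$, which is justified by the continuity of characters of $\R^2$ together with the fact that the kernel of a continuous character is a closed subgroup.
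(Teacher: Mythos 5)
Your proof is correct and follows exactly the route the paper intends: the paper dismisses this as "an immediate consequence of the previous result," and your argument simply spells out that consequence (reduction of the annihilator condition to the generators $(\log\la_j,\mu_j)$ by continuity of characters, then the substitution $q=-a$ to match the characterization of $\La(\al)$). No issues.
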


Now we will characterize when
$\al^{\bm\mu}$ has the Rohlin property.

\begin{thm}
\label{thm:itp-Rohlin}
Let $\cM:=\cM_{\bm\la}$
and $\al:=\al^{{\bm\la},{\bm\mu}}$
as before.
Then the following statements are equivalent:
\begin{enumerate}
\item 
The flow $\al$ has the Rohlin property;

\item
$\tcM'\cap(\tcM\rti_\tal\R)=Z(\tcM)$;

\item
$\Ga(\al)=\R$ and $\al_t\nin\Cnt(\cM)$
for all $t\neq0$;

\item
$\{0\}\times\R\subs G_{{\bm \la},{\bm\mu}}$.
\end{enumerate}
In this case,
$G_{{\bm \la},{\bm\mu}}=\Ga(\si^\vph)\times\R$.
Moreover,
$\cM\rti_\al\R$ is an injective infinite factor
of the same type as $\cM\oti B(L^2(\R))$.
\end{thm}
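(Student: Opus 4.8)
The plan is to establish $(1)\Rightarrow(2)\Leftrightarrow(4)\Rightarrow(1)$ together with $(1)\Rightarrow(3)\Rightarrow(4)$, and then read off the two closing assertions. Implication $(1)\Rightarrow(2)$ is immediate from Corollary \ref{cor:relative}. For $(2)\Leftrightarrow(4)$ I would use the description already recorded above: by (\ref{eq:ZcN}) and Theorem \ref{thm:flowwt-itp}, $\tcM'\cap(\tcM\rti_\tal\R)=Z(\tcN)$ is identified with $L^\infty(\R^2)^{G_{{\bm\la},{\bm\mu}}}$, on which the flow of weights $\th$ and the module $\mo(\hal)=\widehat{\tal}$ act by translation in the first and the second coordinate respectively, while $Z(\tcM)$ is the $\widehat{\tal}$-fixed subalgebra. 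Thus $(2)$, namely $Z(\tcN)=Z(\tcM)$, says exactly that $\widehat{\tal}$ acts trivially on $L^\infty(\R^2)^{G_{{\bm\la},{\bm\mu}}}$; since $G_{{\bm\la},{\bm\mu}}$ is closed, this is equivalent to $\{0\}\times\R\subseteq G_{{\bm\la},{\bm\mu}}$, which is $(4)$. (In the nonobvious direction, if $\{0\}\times\R\not\subseteq G_{{\bm\la},{\bm\mu}}$ then by Pontryagin duality $G_{{\bm\la},{\bm\mu}}^\perp\not\subseteq\R\times\{0\}$, so some character $(x,y)\mapsto e^{i(ax+by)}$ with $b\neq0$ lies in $L^\infty(\R^2)^{G_{{\bm\la},{\bm\mu}}}$ but is moved by the second translation.) The same bookkeeping yields the penultimate claim: a closed subgroup of $\R^2$ containing $\{0\}\times\R$ is of the form $H\times\R$, and here $H=\ovl{\pr_1(G_{{\bm\la},{\bm\mu}})}=\Ga(\si^\vph)$, so $(4)$ forces $G_{{\bm\la},{\bm\mu}}=\Ga(\si^\vph)\times\R$.

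For $(4)\Rightarrow(1)$, put $\si:=\al^{{\bm 1},{\bm\nu}}$ with ${\bm\nu}=(1,\sqrt2)$, a product type flow on $\cM_{{\bm 1}}\cong\cR_0$. It is almost periodic and minimal, hence has the Rohlin property by Theorem \ref{thm:almost}, and $G_{{\bm 1},{\bm\nu}}=\{0\}\times\R$. Rearranging tensor factors exhibits $\al\oti\si=\al^{{\bm\la},{\bm\mu}}\oti\al^{{\bm 1},{\bm\nu}}$ as a product type flow $\al^{{\bm\kappa},{\bm\mu}'}$ on the ITPFI factor $\cM_{{\bm\la}}\oti\cR_0$, with $G_{{\bm\kappa},{\bm\mu}'}=\ovl{G_{{\bm\la},{\bm\mu}}+(\{0\}\times\R)}$, which equals $G_{{\bm\la},{\bm\mu}}$ by $(4)$. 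Since the tensor product of an arbitrary flow with a Rohlin flow is Rohlin, $\al\oti\si$ is a Rohlin flow; by Theorem \ref{thm:itp-class} (whose proof uses only Theorem \ref{thm:almost}, not the present statement, so there is no circularity) $\al$ and $\al\oti\si$ are cocycle conjugate; and the Rohlin property is a cocycle conjugacy invariant (Lemma \ref{lem:pertRohstab}). Hence $\al$ is Rohlin.

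For $(1)\Rightarrow(3)$: a Rohlin flow on a factor has $\Ga(\al)=\R$, and by the very definition of the Rohlin property $\al_t$ is centrally nontrivial for every $t\neq0$. For $(3)\Rightarrow(4)$ I argue the contrapositive. If $(4)$ fails, then $\{0\}\times\R\not\subseteq G_{{\bm\la},{\bm\mu}}$, so by Pontryagin duality $\pr_2(G_{{\bm\la},{\bm\mu}}^\perp)\neq\{0\}$; by Lemma \ref{lem:modular-part} this says $\La(\al)\neq\{0\}$, so there is $t_0\neq0$ with $\tal_{t_0}$ inner. Then $\tal_{t_0}\in\Int(\tcM)\subseteq\Cnt(\tcM)$, and restricting along $\cM_\om\subseteq\tcM_\om$ gives $\al_{t_0}\in\Cnt(\cM)$, so $(3)$ fails. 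This completes the equivalence $(1)\Leftrightarrow(2)\Leftrightarrow(3)\Leftrightarrow(4)$.

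Finally, assume the equivalent conditions hold. By Corollary \ref{cor:relative}, $\pi_\al(\cM)'\cap(\cM\rti_\al\R)=\C$, so $\cN:=\cM\rti_\al\R$ is a factor; it is injective because $\cM$ is injective and $\R$ is amenable, and it is infinite because a crossed product by the non-compact group $\R$ is never finite. Since $\al$ is invariantly approximately inner, Corollary \ref{cor:relative-appinn} gives that $\pi_\al(\cM)\subseteq\cN$ has the common flow of weights, so $\cN$ is of the same type as $\cM$, which is also the type of $\cM\oti B(L^2(\R))$. The genuine obstacle is $(4)\Rightarrow(1)$: no direct construction of Rohlin unitaries is available for a type III product type flow, so one is forced to route through the classification Theorem \ref{thm:itp-class} (and to check that that theorem's proof does not already invoke this result); by contrast the remaining implications are essentially formal consequences of Corollary \ref{cor:relative}, Lemma \ref{lem:modular-part}, and the identification of $Z(\tcN)$.
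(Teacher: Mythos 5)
Your proof is correct, but it reorganizes the cycle and, more importantly, replaces the paper's argument for $(4)\Rightarrow(1)$ by a genuinely different one. The paper proves $(1)\Rightarrow(2)\Rightarrow(3)\Rightarrow(4)\Rightarrow(1)$: for $(2)\Rightarrow(3)$ it reads off outerness of $\tal$ and factoriality of $\cM\rti_\al\R$ from the relative commutant condition, for $(3)\Rightarrow(4)$ it uses Lemma \ref{lem:modular-part} exactly as you do (in contrapositive form), and for $(4)\Rightarrow(1)$ it passes to the dual flow: $\mo(\hal_t)=\id$, so $\hal$ is a Rohlin flow with trivial module, hence $\hal\sim\al^0\oti\id_\cN$ by Theorem \ref{thm:class2}, and Takesaki duality together with Theorem \ref{thm:dual}, Corollary \ref{cor:Rohlin-II1} and Lemma \ref{lem:tensorBH} brings the Rohlin property back to $\al$. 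You instead prove $(2)\Leftrightarrow(4)$ directly from the identification $\tcM'\cap\tcN=Z(\tcN)\cong L^\infty(\R^2)^{G_{{\bm\la},{\bm\mu}}}$ (which is legitimate and arguably more transparent than the paper's detour through $(3)$), and for $(4)\Rightarrow(1)$ you tensor with the standard Rohlin flow on $\cR_0$, observe that the tensor product is again of product type with the same invariant $G_{{\bm\la},{\bm\mu}}$, and absorb it via Theorem \ref{thm:itp-class}. Your circularity check is well taken and necessary: the proof of Theorem \ref{thm:itp-class} does cite the present theorem in its finite case, but only as an alternative to Theorem \ref{thm:almost}, so your route is sound; note also that in the case you actually need (hypothesis $(4)$ gives $\Ga(\al)=\R$, so Case 1 of that proof applies, or Theorem \ref{thm:almost} in the finite case), no appeal to the present theorem occurs. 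Both routes ultimately rest on the main classification theorem, yours via Theorem \ref{thm:itp-class} and the paper's via Theorem \ref{thm:class2} applied to the dual; the paper's version is the one that survives outside the product-type setting, while yours makes the absorption of a Rohlin flow explicit. One small caveat: your justification that $\cN$ is infinite ("a crossed product by the non-compact group $\R$ is never finite") is false as a general statement about von Neumann algebras (e.g.\ $\C\rti\R\cong L^\infty(\R)$); for the factor case at hand one should either invoke Corollary \ref{cor:product-flowwt}(2), which the paper has already established, or note that the dual weight of the $\al$-invariant trace is an infinite trace on $\cN$. This is cosmetic, since the paper itself asserts the same fact at that level of detail.
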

\begin{proof}
(1)$\Rightarrow$(2).
This is proved in Corollary \ref{cor:relative}.

(2)$\Rightarrow$(3).
The relative commutant property
$\tcM'\cap(\tcM\rti_\tal\R)=Z(\tcM)$
implies that
$\tal$ is an outer flow,
and $\al_t\nin\Cnt(\cM)$ for $t\neq0$.
By assumption,
we have $Z(\cM\rti_\al\R)=Z(\tcM\rti_\tal\R)^\th=\C$.
Thus $\Ga(\al)=\R$.

(3)$\Rightarrow$(4).
Since $\Ga(\al)=\R$,
$\cM\rti_\al\R$ is a factor.
By the previous lemma,
we obtain $\pr_2(G_{{\bm\la},{\bm\mu}}^\perp)=\{0\}$.
Thus $G_{{\bm\la},{\bm\mu}}^\perp$
is of the form $H\times\{0\}$
for a unique closed subgroup $H\subs\R$.
Then $G_{{\bm\la},{\bm\mu}}=H^\perp\times\R$,
and (4) follows.

(4)$\Rightarrow$(1).
The assumption of (4)
implies
that
$G_{{\bm \la},{\bm\mu}}=\Ga(\si^\vph)\times \R$.
This implies
the factoriality of $\cM\rti_\al\R$
and
$\mo(\hal_t)=\id$
for all $t\in\R$.
Hence
$\hal$ is pointwise approximately inner.
Since
$G_{{\bm \la},{\bm\mu}}=\Ga(\si^\vph)\times \R$,
we have
$Z(\tcN)=L^\infty(\R)^{\Ga(\si^\vph)}$.
Thus $\cN$ is an injective infinite factor
of the same type as $\cM\oti B(L^2(\R))$.

As remarked before,
$\hal$ has the Rohlin property.
Hence $\hal\sim\al^{0}\oti\id_{\cN}$
by Theorem \ref{thm:class2},
where $\al^0$ is a Rohlin flow
on the injective type II$_1$ factor.
Then the Takesaki duality
implies that $\al$ has the Rohlin property
by Theorem \ref{thm:dual}
and Corollary \ref{cor:Rohlin-II1}.
\end{proof}

\begin{rem}
When $m=2$,
$G_{{\bm \la},{\bm\mu}}$
is generated by two vectors
$(\log\la_j,\mu_j)$.
Hence $G_{{\bm \la},{\bm\mu}}$
is isomorphic to one of
$\Z^2$, $\Z$ and $\R$.
Then
$G_{{\bm \la},{\bm\mu}}=\Ga(\si^\vph)\times\R$
if and only if
$\Ga(\si^\vph)=\{0\}$.
This means that
$\vph$ is tracial
and $\langle\mu_1,\mu_2\rangle=\R$,
that is,
$\al^{{\bm\la},{\bm\mu}}$
is the flow given in Example \ref{exam:ITP}.
\end{rem}

The following lemma is also used in the next subsection.

\begin{lem}
\label{lem:torus-minimal}
Let $\cM$ be an injective factor
and $\ga$ a minimal action of $\T^m$.
Let $E\col\cM\ra\cM^\ga$
be the faithful normal conditional expectation.
Suppose that $\cM^\ga$ has the faithful normal
tracial state $\ta$.
Let $\vph:=\ta\circ E$.
Then the following statements hold:
\begin{enumerate}
\item 
There exists ${\bm\la}=(\la_1,\dots,\la_m)$
such that
$\la_j>0$
and
$\si_t^\vph
=\ga_{(e^{i\log\la_1 t},\dots,e^{i\log\la_m t})}$;

\item
$\ga_{(z_1,\dots,z_m)}\sim
\ga_{z_1}^{\la_1}\oti\cdots\oti\ga_{z_m}^{\la_m}$,
where
the $\T$-action
$\ga^{\la_j}$ is defined as
\[
\cM_{\la_j}
:=
\bigotimes_{n=1}^\infty
(M_2(\C),\ph_j)'',
\quad
\ph_{\la_j}
:=
\frac{1}{1+\la_j}
\Tr
\cdot
\begin{pmatrix}
1&0\\
0&\la_j
\end{pmatrix}
,
\]
\[
\ga_z^{\la_j}
=
\bigotimes_{n=1}^\infty
\Ad
\begin{pmatrix}
1&0\\
0&z
\end{pmatrix}
.
\]
\end{enumerate}
\end{lem}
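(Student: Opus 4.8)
The plan is to prove (1) by computing the modular automorphism group $\si^\vph$ on the spectral subspaces of $\ga$, and then to deduce (2) by recognising both $\ga$ and the model product type action as minimal actions of $\T^m$ on the injective factor with the same invariant and quoting the classification of minimal actions of a compact abelian group with amenable dual (\cite{Ma-T,Ma-TIII}). For (1), note first that $E\circ\ga_z=E$ for all $z\in\T^m$, so $\vph=\ta\circ E$ is $\ga$-invariant and $\si_t^\vph$ commutes with every $\ga_z$; moreover $E$ is the $\vph$-preserving conditional expectation onto $\cM^\ga$, hence $\si_t^\vph(\cM^\ga)=\cM^\ga$ and, since $\vph|_{\cM^\ga}=\ta$ is tracial, $\si_t^\vph|_{\cM^\ga}=\id$. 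For $\chi\in\widehat{\T^m}$ put $\cM_\chi:=\{x\in\cM\mid\ga_z(x)=\chi(z)x\}$; by Peter--Weyl $\bigcup_\chi\cM_\chi$ is $\si$-weakly dense in $\cM$. Minimality gives $Z(\cM^\ga)\subset(\cM^\ga)'\cap\cM=\C$, so $\cM^\ga$ is a factor, and (the Connes spectrum of a minimal action of a compact abelian group being full) every $\cM_\chi$ contains a unitary $u_\chi$, with $\cM^\ga u_\chi=u_\chi\cM^\ga=\cM_\chi$. Since $\si_t^\vph$ preserves $\cM_\chi$ and is trivial on $\cM^\ga$, the unitary $a:=\si_t^\vph(u_\chi)u_\chi^*\in\cM^\ga$ commutes with all of $\cM^\ga$, hence is a scalar $c_t(\chi)\in\T$ independent of the choice of $u_\chi$; from $\cM_\chi\cM_{\chi'}\subset\cM_{\chi\chi'}$ and $\si_{t+s}^\vph=\si_t^\vph\si_s^\vph$ one reads off that $c_t\in\mathrm{Hom}(\widehat{\T^m},\T)\cong\T^m$ and that $t\mapsto c_t$ is a strongly continuous one-parameter subgroup of $\T^m$, so $c_t=(e^{i\theta_1t},\dots,e^{i\theta_mt})$ for some $\theta_j\in\R$. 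Setting $\la_j:=e^{\theta_j}>0$ and using density of $\bigcup_\chi\cM_\chi$ yields $\si_t^\vph=\ga_{(e^{i\log\la_1t},\dots,e^{i\log\la_mt})}$, which is (1).

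For (2), let $\cP:=\bigotimes_{j=1}^m\cM_{\la_j}$, $\psi:=\bigotimes_j\ph_{\la_j}$, and let $\ga^{\bm\la}$ be the action $\ga^{\bm\la}_{(z_1,\dots,z_m)}:=\ga^{\la_1}_{z_1}\oti\cdots\oti\ga^{\la_m}_{z_m}$ of $\T^m$ on $\cP$; this is a minimal action on the injective factor $\cP$, its fixed point algebra $\cP^{\ga^{\bm\la}}$ is the injective type II$_1$ factor, and the same computation as in (1), performed directly on the model, gives $\si_t^\psi=\ga^{\bm\la}_{(e^{i\log\la_1t},\dots,e^{i\log\la_mt})}$. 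On the other side $\cM$ is injective and, by minimality together with (1), $\cM^\ga$ is an injective, infinite dimensional, finite factor, i.e.\ the injective type II$_1$ factor; thus $\cM^\ga\cong\cP^{\ga^{\bm\la}}$, and under this identification the modular automorphism groups $\si^\vph$ and $\si^\psi$ correspond to one and the same one-parameter subgroup $t\mapsto(e^{i\log\la_jt})_j$ of $\T^m$. Since the invariant of a minimal action of $\T^m$ on the injective factor is carried by the fixed point algebra together with this modular data, the classification theorem for minimal actions of a compact Kac (in particular compact abelian) group with amenable dual \cite{Ma-T,Ma-TIII} shows that $\ga$ and $\ga^{\bm\la}$ are cocycle conjugate, giving (2).

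The main obstacle is not the computation in (1) but the structural input it rests on and the hypothesis-checking for (2): that a minimal action of a compact abelian group has full Connes spectrum and unitary-spanned spectral subspaces (so that each $\cM_\chi$ is an invertible $\cM^\ga$-bimodule and $\si_t^\vph|_{\cM_\chi}$ is forced to be scalar), that $\cM^\ga$ is genuinely of type II$_1$ rather than type I, and---most substantively---that the classification invariant of these minimal $\T^m$-actions really reduces to the pair consisting of $\cM^\ga$ and the restriction of $\si^\vph$, with no residual characteristic or cohomological obstruction, so that matching the $\la_j$ alone suffices to conclude cocycle conjugacy.
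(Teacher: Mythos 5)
Your proof of (1) is a genuine alternative to the paper's, which simply cites \cite[Proposition 5.2 (5)]{Iz-Can}: your spectral-subspace computation (each $\cM_\chi$ contains a unitary, $\si_t^\vph(u_\chi)u_\chi^*$ lands in $(\cM^\ga)'\cap\cM=\C$ by minimality, and the resulting scalars form a continuous one-parameter subgroup of $\T^m$) is correct and self-contained, modulo the standard fact that spectral subspaces of a minimal compact abelian action contain unitaries.

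For (2), however, the step you yourself flag as the main obstacle is a genuine gap, not a technicality: you conclude by asserting that the classification invariant of minimal $\T^m$-actions on injective factors ``reduces to the pair $(\cM^\ga,\si^\vph|)$'' and citing \cite{Ma-T,Ma-TIII}, but you never verify that those theorems are stated with that invariant, and as written the entire content of (2) rests on this unverified reduction. The paper avoids the issue by working on the dual side, where the invariant is explicit: it forms $\cN:=\cM\rti_\ga\T^m$, observes that $\si^{\hvph}$ is implemented by $\la^\ga((e^{i\log\la_j t})_j)$ so that $\cN$ is the injective type II$_\infty$ factor, computes the trace-scaling module of the dual $\Z^m$-action to be $(k_1,\dots,k_m)\mapsto\la_1^{-k_1}\cdots\la_m^{-k_m}$, and invokes Ocneanu's classification \cite[Theorem 2.9]{Ocn-act} of free $\Z^m$-actions on $\cR_{0,1}$ by their module to split $\hga$ as $\th_1^{k_1}\oti\cdots\oti\th_m^{k_m}$; Takesaki duality then gives $\ga\oti\id_{B(\ell^2)}\approx\widehat{\th_1}\oti\cdots\oti\widehat{\th_m}$, each $\widehat{\th_j}$ is identified with $\ga^{\la_j}$ (treating $\la_j\neq1$ and $\la_j=1$ separately), and finally the $\id_{B(\ell^2)}$ factor is removed (via Remark \ref{rem:alid} when $\cM$ is infinite, and via uniqueness of minimal $\T^m$-actions on $\cR_0$ when all $\la_j=1$). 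If you want to keep your formulation, you must either prove that your claimed invariant is complete --- which in effect amounts to redoing this duality argument --- or restate (2) on the dual side where \cite[Theorem 2.9]{Ocn-act} applies verbatim; note also that whichever classification you use may only yield stable conjugacy, so the final removal of the $B(\ell^2)$ tensor factor (which your proposal omits entirely) still has to be addressed.
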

\begin{proof}
(1).
See \cite[Proposition 5.2 (5)]{Iz-Can}.

(2).
Let $\cN:=\cM\rti_\ga\T^m$.
Then $\si_t^{\hvph}$ is implemented by
$\la^\ga((e^{i\log\la_1 t},\dots,e^{i\log\la_m t}))$.
Thus $\cN$ is the injective factor of
type II$_\infty$.
Let $h$ be a positive operator affiliated with
$\cN$ such that
\[
h^{it}
=\la^\ga((e^{i\log\la_1 t},\dots,e^{i\log\la_{m}t})).
\]
Then $\ta:=\hvph_{h^{-1}}$
is a faithful normal tracial weight on $\cN$.
We compute the module of $\hga$ as follows.
Let $(k_1,\dots,k_m)\in\Z^m$.
By definition of the dual action,
we get
\[
\hga_{(k_1,\dots,k_m)}(h^{it})
=
e^{-i(k_1\log\la_1+\cdots+k_m\log\la_m)t}h^{it},
\]
and
\[
\hga_{(k_1,\dots,k_m)}(h)
=
e^{-(k_1\log\la_1+\cdots+k_m\log\la_m)}h.
\]
Hence
\[
\ta\circ\hga_{(k_1,\dots,k_m)}
=
\hvph_{\hga_{(-k_1,\dots,-k_m)}(h)^{-1}}
=
e^{-(k_1\log\la_1+\cdots+k_m\log\la_m)}
\ta
=
\la_1^{-k_1}\cdots\la_m^{-k_m}
\ta.
\]

Thanks to \cite[Theorem 2.9]{Ocn-act},
we have the cocycle conjugacy
as the $\Z^m$-actions:
\[
\hga_{(k_1,\dots,k_m)}\sim \th_1^{k_1}\oti\cdots\oti\th_m^{k_m},
\quad
(k_1,\dots,k_m)\in\Z^m,
\]
where $\th_j$ is an aperiodic automorphism on the injective factor
$\cR_{0,1}$
such that $\ta_j\circ\th_j=\la_j^{-1}\ta_j$
for a faithful semifinite normal trace $\ta_j$ on $\cR_{0,1}$.
By Takesaki duality,
$\widehat{\hga}\sim \ga\oti\id_{B(\ell^2)}$.
Thus we have the following conjugacy:
\[
\ga_{(z_1,\dots,z_m)}\oti\id_{B(\ell^2)}
\approx
\widehat{\th_1}_{z_1}
\oti\cdots
\oti\widehat{\th_m}_{z_m},
\quad
(z_1,\dots,z_m)\in\T^m.
\]

First we consider $j$
such that $\la_j\neq1$.
Let $\widehat{\ta_j}$ be the dual weight
of $\ta_j$
on $\cR_{0,1}\rti_{\th_j}\Z$.
Then we have
$(\widehat{\th_j})_{\la_j^{it}}
=
\si_t^{\widehat{\ta_j}}$,
which is cocycle conjugate
to
$\si_t^{\ph_{\la_j}}$
because
$\cR_{0,1}\rti_{\th_j}\Z$
is isomorphic to $\cM_{\la_j}$.
Thus
$(\widehat{\th_j})_{e^{it}}
\sim
\si_{t/\log\la_j}^{\ph_{\la_j}}
=\ga_{e^{it}}^{\la_j}$.

Next we suppose that $\la_j=1$.
In this case,
$\th_j$ comes from an aperiodic
automorphism on $\cR_0$
that is unique up to cocycle conjugacy.
Thus $\widehat{\th_j}\sim \be\oti\id_{B(\ell^2))}$,
where $\be$ is a minimal action of
$\T$ on $\cR_0$.
By uniqueness of $\be$,
$\be_{z}$ is conjugate to $\ga_z^{\la_j}$.
Hence
$(\widehat{\th_j})_{z}
\sim \ga_z^{\la_j}\oti\id_{B(\ell^2)}$.
Therefore,
\begin{equation}
\label{eq:al-decomp}
\ga_{(z_1,\dots,z_m)}\oti\id_{B(\ell^2)}
\sim
\ga_{z_1}^{\la_1}\oti\cdots\oti\ga_{z_m}^{\la_m}
\oti\id_{B(\ell^2)}.
\end{equation}

We will remove $\id_{B(\ell^2)}$ as follows.
Recall that
$\Ga(\si^{\vph})
=\Sp(\si^{\vph})
=\langle \log\la_j\mid j\rangle$.
Hence $\cM$ is infinite,
then $\la_j\neq1$ for some $j$.
By Remark \ref{rem:alid} (to locally compact abelian groups),
$\ga\sim\ga\oti\id_{B(\ell^2)}$
and
$\ga^{\la_j}\sim
\ga^{\la_j}\oti\id_{B(\ell^2)}$.
Thus we are done.

Let us consider the case
that $\cM$ is finite,
that is,
$\la_j=1$ for all $j$.
Then
$\ga_{(z_1,\dots,z_m)}$
and $\ga_{z_1}^{\la_1}\oti\cdots\oti\ga_{z_m}^{\la_m}$
are minimal actions of $\T^m$
on $\cR_0$,
and they are conjugate.
\end{proof}

Let ${\bm\la},{\bm\mu}\in\R^m$ as before.
Regarding
$\la_j,\mu_j\in\R^1$,
we obtain the the flow $\al^{\la_j,\mu_j}$
as follows:
\[
\cM_{\la_j}
:=
\bigotimes_{n=1}^\infty
(M_2(\C),\ph_j)'',
\quad
\ph_{\la_j}
:=
\frac{1}{1+\la_j}
\Tr
\cdot
\begin{pmatrix}
1&0\\
0&\la_j
\end{pmatrix}
,
\]
\[
\al_t^{\la_j,\mu_j}
=
\bigotimes_{n=1}^\infty
\Ad
\begin{pmatrix}
1&0\\
0&e^{i\mu_j t}
\end{pmatrix}
.
\]

Let us consider
the gauge action $\ga$
of $\T^{m}$ on $\cM_{\bm\la}$:
\begin{equation}
\label{eq:gauge}
\ga_z
:=
\bigotimes_{k=1}^\infty
\Ad
\begin{pmatrix}
1&0&\cdots&0\\
0&z_1&\cdots&0\\
\vdots&\vdots&\ddots&\vdots\\
0&0&0&z_{m}
\end{pmatrix}
,
\quad
(z_1,\dots,z_{m})\in \T^{m}.
\end{equation}
Then
\[
\al_t^{{\bm\la},{\bm\mu}}
=\ga_{(e^{i\mu_1 t},\dots,e^{i\mu_{m}t})},
\quad
\si_t^{\vph_{\bm\la}}
=
\ga_{(e^{i\log\la_1 t},\dots,e^{i\log\la_{m}t})}.
\]
Employing Lemma \ref{lem:torus-minimal},
we obtain the following result.

\begin{thm}
\label{thm:allamu-prod}
Let ${\bm\la},{\bm\mu}$ be as before.
Then
$\al^{{\bm\la},{\bm\mu}}
\sim \al^{\la_1,\mu_1}\oti\cdots \oti\al^{\la_m,\mu_m}$.
\end{thm}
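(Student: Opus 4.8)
The plan is to view $\al^{{\bm\la},{\bm\mu}}$ as the gauge action $\ga$ of the torus $\T^m$ on $\cM_{\bm\la}$ evaluated along the one-parameter subgroup $t\mapsto (e^{i\mu_1 t},\dots,e^{i\mu_m t})$, as recorded in equation~\eqref{eq:gauge} and the formulas immediately following it. The key point is that $\ga$ is a \emph{minimal} action of $\T^m$: the fixed point algebra $\cM_{\bm\la}^\ga$ contains a canonical copy of the infinite symmetric group, so $(\cM_{\bm\la}^\ga)'\cap\cM_{\bm\la}=\C$, and $\cM_{\bm\la}^\ga$ carries a faithful normal tracial state (the restriction of $\vph_{\bm\la}$). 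Moreover $\si_t^{\vph_{\bm\la}}=\ga_{(e^{i\log\la_1 t},\dots,e^{i\log\la_m t})}$, so $\vph_{\bm\la}$ is precisely the weight $\ta\circ E$ appearing in Lemma~\ref{lem:torus-minimal}.

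First I would invoke Lemma~\ref{lem:torus-minimal}(2) directly: it gives the cocycle conjugacy
\[
\ga_{(z_1,\dots,z_m)}\sim \ga_{z_1}^{\la_1}\oti\cdots\oti\ga_{z_m}^{\la_m}
\quad\text{as }\T^m\text{-actions},
\]
where each $\ga^{\la_j}$ is the two-dimensional gauge action on $\cM_{\la_j}=\bigotimes(M_2(\C),\ph_{\la_j})$. Next I would restrict both sides to the one-parameter subgroup of $\T^m$ determined by ${\bm\mu}$; restriction of a $\T^m$-action to a continuous one-parameter subgroup sends cocycle conjugacy to cocycle conjugacy (the cocycle and the conjugating isomorphism simply get reparametrized), so
\[
\al_t^{{\bm\la},{\bm\mu}}=\ga_{(e^{i\mu_1 t},\dots,e^{i\mu_m t})}
\sim
\bigotimes_{j=1}^m \ga_{e^{i\mu_j t}}^{\la_j}
=
\bigotimes_{j=1}^m \al_t^{\la_j,\mu_j},
\]
which is exactly the claimed statement. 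The identification $\ga_{e^{i\mu_j t}}^{\la_j}=\al_t^{\la_j,\mu_j}$ is immediate from the definitions of $\ga^{\la_j}$ and $\al^{\la_j,\mu_j}$ given just before Theorem~\ref{thm:allamu-prod}.

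The main obstacle, and the step deserving care in the write-up, is the passage from $\T^m$-cocycle conjugacy to $\R$-cocycle conjugacy under restriction along the homomorphism $\R\to\T^m$, $t\mapsto(e^{i\mu_1 t},\dots,e^{i\mu_m t})$. One must check that if $\th\in\Aut(\cM_{\bm\la})$ and $w$ is a $\ga$-cocycle (a Borel/continuous map $\T^m\to\cM_{\bm\la}^{\rm U}$ satisfying the $2$-cocycle identity over $\T^m$) implementing the conjugacy, then the pullback $v(t):=w(e^{i\mu_1 t},\dots,e^{i\mu_m t})$ is a genuine $\al^{{\bm\la},{\bm\mu}}$-cocycle and strong continuity is preserved — this is routine but should be stated. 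A secondary caveat is that Lemma~\ref{lem:torus-minimal}(2) already absorbs the stable/finite dichotomy inside its proof (via Remark~\ref{rem:alid} in the infinite case and uniqueness of minimal $\T^m$-actions on $\cR_0$ in the finite case), so no separate case analysis is needed here; I would simply remark that the dichotomy is handled upstream. With these points noted, the proof is a short deduction.

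\begin{proof}
Recall the gauge action $\ga$ of $\T^m$ on $\cM_{\bm\la}$ defined in \eqref{eq:gauge}.
As noted in \S\ref{subsect:itp},
$\ga$ is minimal:
the infinite symmetric group $\mathfrak{S}_\infty$ embeds canonically into
$\cM_{\bm\la}^\ga\cap\cM_{\vph_{\bm\la}}$,
whence $(\cM_{\bm\la}^\ga)'\cap\cM_{\bm\la}=\C$,
and $\cM_{\bm\la}^\ga$ carries the faithful normal tracial state
$\ta:=\vph_{\bm\la}|_{\cM_{\bm\la}^\ga}$ with $\vph_{\bm\la}=\ta\circ E$
for the $\ga$-invariant conditional expectation $E$.
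Moreover
$\si_t^{\vph_{\bm\la}}=\ga_{(e^{i\log\la_1 t},\dots,e^{i\log\la_m t})}$,
so the hypotheses of Lemma \ref{lem:torus-minimal} are satisfied with this
${\bm\la}$.
By Lemma \ref{lem:torus-minimal} (2),
there exist a $\ga$-cocycle $w\col\T^m\ra\cM_{\bm\la}^{\rm U}$
and an isomorphism $\th\col\cM_{\bm\la}\ra\bigotimes_{j=1}^m\cM_{\la_j}$
such that
\[
\Ad w_z\circ\ga_z
=
\th^{-1}\circ
\big{(}\ga_{z_1}^{\la_1}\oti\cdots\oti\ga_{z_m}^{\la_m}\big{)}
\circ\th
\quad
\mbox{for all }
z=(z_1,\dots,z_m)\in\T^m.
\]
Let $q\col\R\ra\T^m$ be the continuous homomorphism
$q(t)=(e^{i\mu_1 t},\dots,e^{i\mu_m t})$
and set $v(t):=w_{q(t)}$.
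Then $v$ is a strongly continuous unitary path in $\cM_{\bm\la}$,
and since $\ga_{q(s)}\circ\ga_{q(t)}=\ga_{q(s+t)}$,
the $\ga$-cocycle identity for $w$ gives
$v(s)\ga_{q(s)}(v(t))=v(s+t)$,
that is,
$v$ is an $\al^{{\bm\la},{\bm\mu}}$-cocycle
because $\al_t^{{\bm\la},{\bm\mu}}=\ga_{q(t)}$.
Evaluating the displayed identity at $z=q(t)$ yields
\[
\Ad v(t)\circ\al_t^{{\bm\la},{\bm\mu}}
=
\th^{-1}\circ
\big{(}\al_t^{\la_1,\mu_1}\oti\cdots\oti\al_t^{\la_m,\mu_m}\big{)}
\circ\th
\quad
\mbox{for all }
t\in\R,
\]
where we used
$\ga_{e^{i\mu_j t}}^{\la_j}=\al_t^{\la_j,\mu_j}$,
which is immediate from the definitions.
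Hence $\al^{{\bm\la},{\bm\mu}}\sim
\al^{\la_1,\mu_1}\oti\cdots\oti\al^{\la_m,\mu_m}$.
\end{proof}
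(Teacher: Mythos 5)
Your proof is correct and takes essentially the same approach as the paper, which simply says ``Employing Lemma \ref{lem:torus-minimal}, we obtain the following result.'' You have usefully spelled out the routine but necessary verification that a cocycle conjugacy of $\T^m$-actions pulls back, along the continuous homomorphism $t\mapsto(e^{i\mu_1 t},\dots,e^{i\mu_m t})$, to a cocycle conjugacy of the induced $\R$-flows.
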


This implies the following result.

\begin{cor}
Let ${\bm\la},{\bm\mu}\in\R^m$
and ${\bm\rho},{\bm\nu}\in\R^n$
with $\la_j>0$ and $\rho_k>0$
for all $j=1,\dots,m$ and $k=1,\dots,n$.
Then we have
$\al^{{\bm\la}\oplus{\bm\rho},{\bm\mu}\oplus{\bm\nu}}
\sim
\al^{{\bm\la},{\bm\mu}}
\oti\al^{{\bm\rho},{\bm\nu}}$.
\end{cor}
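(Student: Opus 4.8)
The plan is to deduce this immediately from Theorem~\ref{thm:allamu-prod}, which already decomposes each product type flow into a tensor product of the elementary $2\times2$ flows $\al^{\la_j,\mu_j}$. First I would apply Theorem~\ref{thm:allamu-prod} to the concatenated $(m+n)$-tuples $({\bm\la}\oplus{\bm\rho},{\bm\mu}\oplus{\bm\nu})\in\R^{m+n}\times\R^{m+n}$, which yields
\[
\al^{{\bm\la}\oplus{\bm\rho},{\bm\mu}\oplus{\bm\nu}}
\sim
\al^{\la_1,\mu_1}\oti\cdots\oti\al^{\la_m,\mu_m}
\oti
\al^{\rho_1,\nu_1}\oti\cdots\oti\al^{\rho_n,\nu_n}.
\]
Separately, the same theorem applied to $({\bm\la},{\bm\mu})$ and to $({\bm\rho},{\bm\nu})$ gives $\al^{{\bm\la},{\bm\mu}}\sim\al^{\la_1,\mu_1}\oti\cdots\oti\al^{\la_m,\mu_m}$ and $\al^{{\bm\rho},{\bm\nu}}\sim\al^{\rho_1,\nu_1}\oti\cdots\oti\al^{\rho_n,\nu_n}$.

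The remaining point is the elementary fact that cocycle conjugacy is stable under tensor products: if $\al\sim\al'$ on $\cM$, witnessed by an $\al$-cocycle $v$ and $\th\in\Aut(\cM)$, and $\be\sim\be'$ on $\cN$, witnessed by a $\be$-cocycle $w$ and $\eta\in\Aut(\cN)$, then $t\mapsto v(t)\oti w(t)$ is an $(\al\oti\be)$-cocycle and $(\al\oti\be)^{v\oti w}_t=(\th\oti\eta)\circ(\al'\oti\be')_t\circ(\th\oti\eta)^{-1}$, so $\al\oti\be\sim\al'\oti\be'$; this one checks directly. Applying it to the two conjugacies above and using transitivity and symmetry of $\sim$ gives
\[
\al^{{\bm\la},{\bm\mu}}\oti\al^{{\bm\rho},{\bm\nu}}
\sim
\al^{\la_1,\mu_1}\oti\cdots\oti\al^{\la_m,\mu_m}
\oti
\al^{\rho_1,\nu_1}\oti\cdots\oti\al^{\rho_n,\nu_n}
\sim
\al^{{\bm\la}\oplus{\bm\rho},{\bm\mu}\oplus{\bm\nu}},
\]
which is the assertion.

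There is essentially no serious obstacle: the statement is a bookkeeping consequence of Theorem~\ref{thm:allamu-prod} once the stability of $\sim$ under $\oti$ is recorded. The only mild care needed is that the ambient algebras be matched correctly --- the identification $\cM_{{\bm\la}\oplus{\bm\rho}}\cong\cM_{\bm\la}\oti\cM_{\bm\rho}$ is not literal at the level of the defining infinite tensor products, but it is supplied automatically by the cocycle conjugacies above, since both sides are thereby identified with $\bigotimes_{j=1}^m\cM_{\la_j}\oti\bigotimes_{k=1}^n\cM_{\rho_k}$.
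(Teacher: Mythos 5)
Your proposal is correct and is exactly the argument the paper intends: the corollary is stated as an immediate consequence of Theorem~\ref{thm:allamu-prod}, obtained by applying that theorem to the concatenated tuple and to each factor separately and then using the (routine) stability of cocycle conjugacy under tensor products. Your remark about matching the ambient algebras is a sensible clarification but raises no real issue, since cocycle conjugacy is defined between flows on possibly different von Neumann algebras via an isomorphism.
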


\begin{rem}
If we put $n=1$, $\rho=1$ and $\nu=0$
in the above corollary,
we have
$\al^{{\bm\la}\oplus{\bm\rho},{\bm\mu}\oplus{\bm\nu}}
\sim
\al^{{\bm\la},{\bm\mu}}\oti\id_{\cR_0}$.
Since
$G_{{\bm\la}\oplus{\bm\rho},{\bm\mu}\oplus{\bm\nu}}
=
G_{{\bm\la},{\bm\mu}}$,
$\al^{{\bm\la}\oplus{\bm\rho},{\bm\mu}\oplus{\bm\nu}}
\sim
\al^{{\bm\la},{\bm\mu}}$
by Theorem \ref{thm:itp-class}.
Thus we have
$\al^{{\bm\la},{\bm\mu}}
\sim
\al^{{\bm\la},{\bm\mu}}\oti\id_{\cR_0}$
\end{rem}

As an application of
Theorem \ref{thm:itp-Rohlin} and \ref{thm:allamu-prod},
we will give an example of Rohlin flows on
the injective type III$_1$ factor.
Let $\cP$ be the injective type III$_1$ factor
and $\vph_1,\vph_2\in W(\cP)$.
Let $\cM:=\cP\oti\cP$.
We study when
the flow $\si_{\mu t}^{\vph_1}\oti\si_{\nu t}^{\vph_2}$
on $\cM$
has the Rohlin property
for given $\mu$ and $\nu$.

Thanks to the Connes cocycle and the uniqueness
of an injective type III$_1$ factor,
we may and do assume that
the both $\vph_1$ and $\vph_2$ are equal to
the following product state $\chi=\ph_\la\oti\ph_\rho$,
where $\la,\rho$ satisfy
$0<\la,\rho<1$,
$\log\la/\log\rho\nin\Q$
and
\[
\ph_\la:=
\bigotimes_{n=1}^\infty
\frac{1}{1+\la}
\Tr\cdot
\begin{pmatrix}
1&0\\
0&\la
\end{pmatrix}
,
\quad
\ph_\rho:=
\bigotimes_{n=1}^\infty
\frac{1}{1+\rho}
\Tr\cdot
\begin{pmatrix}
1&0\\
0&\rho
\end{pmatrix}
.
\]
Then trivially we have
\[
\si_{\mu t}^{\vph_1}\oti\si_{\nu t}^{\vph_2}
\sim
\si_{\mu t}^{\ph_\la}\oti\si_{\nu t}^{\ph_\la}
\oti
\si_{\mu t}^{\ph_\rho}\oti\si_{\nu t}^{\ph_\rho}.
\]
Thus
letting
\[
{\bm\la}=(\la,\la,\rho,\rho),
\quad
{\bm\mu}=(\mu\log\la,\nu\log\la,\mu\log\rho,\nu\log\rho),
\]
we have
$\si_{\mu t}^{\vph_1}\oti\si_{\nu t}^{\vph_2}
\sim\al_t^{{\bm\la},{\bm\mu}}$.
Then $G_{{\bm\la},{\bm\mu}}$
is the closure of
\[
(\Z\log\la+\Z\log\rho)(1,\mu)+(\Z\log\la+\Z\log\rho)(1,\nu).
\]
Since $\log\la/\log\rho$ is irrational,
$G_{{\bm\la},{\bm\mu}}$ is the closure of
$\R(1,\mu)+\R(1,\nu)$.
Thus if $\mu\neq\nu$,
then $G_{{\bm\la},{\bm\mu}}=\R^2$,
which is also equivalent to
say
$\{0\}\times\R\subs G_{{\bm\la},{\bm\mu}}$.

\begin{prop}
\label{prop:modular-tensor}
The $\si_{\mu t}^{\vph_1}\oti\si_{\nu t}^{\vph_2}$
has the Rohlin property
if and only if
$\mu\neq\nu$.
In this case,
$\si_{\mu t}^{\vph_1}\oti\si_{\nu t}^{\vph_2}\sim
\al_t^0\oti\id_{\cR_\infty}$,
where $\al^0$ is a (unique) Rohlin flow on
$\cR_0$.
\end{prop}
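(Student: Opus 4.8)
The plan is to deduce the proposition directly from Theorem \ref{thm:itp-Rohlin} and Corollary \ref{cor:II1III1}, using the reductions already performed just before the statement. First I would recall that, by the Connes cocycle trick and Haagerup's uniqueness of the injective type III$_1$ factor, we may assume $\vph_1=\vph_2=\ph_\la\oti\ph_\rho$ with $0<\la,\rho<1$ and $\log\la/\log\rho\notin\Q$, and that in this case $\si_{\mu t}^{\vph_1}\oti\si_{\nu t}^{\vph_2}\sim\al_t^{{\bm\la},{\bm\mu}}$ for ${\bm\la}=(\la,\la,\rho,\rho)$ and ${\bm\mu}=(\mu\log\la,\nu\log\la,\mu\log\rho,\nu\log\rho)$, as was computed above. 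Since $\cP\oti\cP$ is an injective factor of type III$_1$ and the Rohlin property is invariant under cocycle conjugacy (conjugacy transports a Rohlin unitary through the induced map on the $(\al,\om)$-equicontinuous part, and perturbation is handled by Lemma \ref{lem:pertRohstab}), the flow $\si_{\mu t}^{\vph_1}\oti\si_{\nu t}^{\vph_2}$ has the Rohlin property if and only if $\al^{{\bm\la},{\bm\mu}}$ does.

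Next I would invoke Theorem \ref{thm:itp-Rohlin}, which says $\al^{{\bm\la},{\bm\mu}}$ has the Rohlin property exactly when $\{0\}\times\R\subs G_{{\bm\la},{\bm\mu}}$. Because $\log\la/\log\rho$ is irrational, $\Z\log\la+\Z\log\rho$ is dense in $\R$, so $G_{{\bm\la},{\bm\mu}}$ is the closed subgroup of $\R^2$ generated by the two lines $\R(1,\mu)$ and $\R(1,\nu)$. If $\mu\ne\nu$, these two vectors are linearly independent, hence $G_{{\bm\la},{\bm\mu}}=\R^2\supseteq\{0\}\times\R$ and the flow is Rohlin. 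If $\mu=\nu$, then $G_{{\bm\la},{\bm\mu}}=\R(1,\mu)$ (which collapses to $\{0\}$ when $\mu=\nu=0$), and in either situation $(0,1)$ is not contained in $G_{{\bm\la},{\bm\mu}}$, so the flow does not have the Rohlin property. This establishes the equivalence.

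For the final assertion, assume $\mu\ne\nu$. Then $\si_{\mu t}^{\vph_1}\oti\si_{\nu t}^{\vph_2}$ is a Rohlin flow on the injective type III$_1$ factor $\cP\oti\cP$, so Corollary \ref{cor:II1III1} gives $\si_{\mu t}^{\vph_1}\oti\si_{\nu t}^{\vph_2}\sim\id_{\cP\oti\cP}\oti\al_t^0$, where $\al^0$ is the unique Rohlin flow on the injective type II$_1$ factor $\cR_0$. Since $\cP\oti\cP\cong\cR_\infty$, identifying the tensor factors yields $\si_{\mu t}^{\vph_1}\oti\si_{\nu t}^{\vph_2}\sim\al_t^0\oti\id_{\cR_\infty}$, as claimed.

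There is no genuine obstacle here: the proposition is essentially a corollary of Theorem \ref{thm:itp-Rohlin} and Corollary \ref{cor:II1III1}, and all of the von Neumann algebra identifications ($\cP\cong\cM_\la\oti\cM_\rho$ and $\cP\oti\cP\cong\cM_{{\bm\la}}$, both resting on the uniqueness of the injective type III$_1$ factor) were already recorded in the discussion preceding the statement. The only point deserving an extra line of care is the degenerate case $\mu=\nu=0$, in which $G_{{\bm\la},{\bm\mu}}$ degenerates to $\{0\}$ rather than to a line; but the conclusion $\{0\}\times\R\not\subs G_{{\bm\la},{\bm\mu}}$, hence the failure of the Rohlin property, is unaffected.
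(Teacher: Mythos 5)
Your argument is correct and follows the paper's own route almost verbatim: reduce via the Connes cocycle and uniqueness of $\cR_\infty$ to the product-type flow $\al^{{\bm\la},{\bm\mu}}$ with ${\bm\la}=(\la,\la,\rho,\rho)$ and ${\bm\mu}=(\mu\log\la,\nu\log\la,\mu\log\rho,\nu\log\rho)$, apply Theorem \ref{thm:itp-Rohlin} together with the computation that $G_{{\bm\la},{\bm\mu}}$ is the closure of $\R(1,\mu)+\R(1,\nu)$, and finish the identification with Corollary \ref{cor:II1III1}. One small factual slip: when $\mu=\nu=0$, the group $G_{{\bm\la},{\bm\mu}}=\R(1,0)$ is $\R\times\{0\}$, still a line, not the trivial group $\{0\}$ as you say in two places. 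The conclusion $\{0\}\times\R\not\subseteq G_{{\bm\la},{\bm\mu}}$ and hence the failure of the Rohlin property is of course unaffected, so the error is harmless, but the parenthetical should be corrected.
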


Therefore,
for any $\vph\in W(\cR_\infty)$,
$\si_{\mu t}^\vph\oti\id_{\cR_\infty}$
is a Rohlin flow unless $\mu=0$
though this sounds a little strange
since the modular
flow is centrally trivial.

\subsection{Quasi-free flows on Cuntz algebras}
We recall basic facts on a Cuntz algebra
and a quasi-free flow.
Our standard references
are \cite{Cu,E-On,Iz-WatI}.

Let $2\leq n<\infty$ and $\cO_n$
the Cuntz algebra generated by isometries
$s_1,\dots,s_n$ satisfying
$\sum_j s_js_j^*=1$.
Then
$\cO_n$ admits the canonical action of
the unitary group $U(n)$,
that is,
each unitary $u=(u_{ij})_{i,j}\in U(n)$
maps the generator $s_k$
to $\sum_{j}u_{jk}s_j$.
We embed $\T^n$ into $U(n)$ diagonally.
Denote by $\ga$ the action
of $\T^n$ on $\cO_n$,
that is,
\[
\ga_{(z_1,\dots,z_n)}(s_j)=z_j s_j,
\quad
j=1,\dots,n.
\]
We regard $\T$ as a closed subgroup
of $\T^n$ via the map
$z\mapsto (z,\dots,z)$.
Denote by $\cO_{U(n)}$ and $\cF^{\,n}$
the fixed point algebras
by $U(n)$ and $\T$,
respectively.
Let us denote by $A_n$ the fixed point algebra
$(\cF^{\,n})^\ga$.
Then it is trivial that
\[
\cO_{U(n)}\subs A_n=\cO_n^\ga\subs \cF^{\,n}\subs\cO_n.
\]
It is known that
$\cF^{\,n}$ is canonically isomorphic to
$\bigotimes_\N M_n(\C)$.
Let us
denote by $F$
the conditional expectation
from $\cO_n$ onto
$\cF^{\,n}$
given by averaging the action of $\T$.
For ${\bm \mu}:=(\mu_1,\dots,\mu_n)\in\R^n$,
we introduce the quasi-free flow $\al^{\bm \mu}$
as follows:
\[
\al_t^{\bm \mu}(s_j)=e^{i\mu_j t}s_j,
\quad
j=1,\dots,n.
\]
Then we have
$\cO_{U(n)}\subs \cO_n^\ga\subs\cO_n^{\al^{\bm \mu}}$.
Put $\btr:=(1,\dots,1)$.
Then $\al^\btr$ is nothing but
the action of $\T$ as stated above.
Thus $F(x)=\int_0^{2\pi}\al_t^\btr(x)\,dt$
for $x\in\cO_n$.
Note that
the restriction of $\ga_{(1,z_1,\dots,z_{n-1})}$
on $\cF^{\,n}$ for $z_j\in\T$
is of the form
defined in (\ref{eq:gauge}).

By \cite[Proposition 2.2]{E-On}
(and also \cite[Theorem 2]{Ol-Ped}
in the case of $\mu_j=1$),
$\al^{\bm \mu}$ has a KMS state
if and only if
$\mu_i\mu_j>0$ for all $i,j$.
In fact,
a KMS state $\vph^{\bm \mu}$
and an inverse temperature $\be\in\R$
are unique,
and given by
\begin{equation}
\label{eq:inversetemp}
\vph^{\bm \mu}=\ps^{\bm \mu}\circ F,
\quad
\sum_{j=1}^n e^{-\be\mu_j}=1,
\end{equation}
where
\[
\ps^{\bm \mu}
:=
\bigotimes_{k=1}^\infty
\Tr
\cdot
\begin{pmatrix}
e^{-\be\mu_1}&0&\cdots&0\\
0&e^{-\be\mu_2}&\cdots&0\\
\vdots&\vdots&\ddots&\vdots\\
0&0&\cdots&e^{-\be\mu_n}
\end{pmatrix}
.
\]

Let $\pi_\mu\col\cO_n\ra B(H_\mu)$
be the GNS representation
with respect to the KMS state
$\vph^{\bm \mu}$.
We simply write $\cM$ and $\cN$
for $\pi_\mu(\cO_n)''$
and $\pi_\mu(A_n)''$,
respectively.

The modular automorphism of $\vph^{\bm \mu}$
is given by
$\si_t^{\vph^{\bm \mu}}=\al_{-\be t}^{\bm \mu}$.
Hence
we have
$\pi_{\bm\mu}(\cO_{U(n)})''\subs \cN
\subs \cM_{\vph^{\bm\mu}}$.
Thanks to
\cite[Proposition 4.5]{Iz-WatI},
we have
$\pi_{\bm\mu}(\cO_{U(n)})'\cap\cM=\C$.
In particular,
$\cM$ is a type III factor,
which is of type III$_\la$
($0<\la<1$)
if $\mu_i/\mu_j\in\Q$ for all $i,j$,
and 
of type III$_1$ otherwise
\cite[Theorem 4.7]{Iz-WatI}.

Since the $\T^n$-action
$\ga$ preserves $\vph^{\bm\mu}$,
it extends to $\cM$.
Thus so does $\al^{\bm\om}$
for any ${\bm \om}\in\R^n$.
Note $\cN=\cM^\ga$
and the following formulae:
\[
\al_t^{\bm\om}
=
\ga_{(e^{i\om_1 t},\dots,e^{i\om_n t})},
\quad
\si_t^{\vph^{\bm \mu}}
=
\ga_{(e^{-i\be\mu_1 t},\dots,e^{-i\be\mu_n t})}.
\]

Since $\pi_{\bm\mu}(\cO_{U(n)})'\cap\cM=\C$,
$\ga$ is a minimal action of $\T^n$
on an injective factor $\cM$.
Applying Lemma \ref{lem:torus-minimal}
to $\vph:=\vph_{\bm\mu}$,
we have
\[
\ga_{(z_1,\dots,z_n)}
\sim \ga_{z_1}^{\la_1}\oti\cdots\oti\ga_{z_n}^{\la_n},
\]
where $\la_j:=e^{-\be\mu_j}<1$.
Recall $\al^{{\bm\la},{\bm\om}}$
defined in the previous subsection.
Then by Theorem \ref{thm:allamu-prod},
we obtain
\[
\al_t^{\bm\om}
=
\ga_{(e^{i\om_1 t},\dots,e^{i\om_n t})}
\sim
\ga_{e^{i\om_1 t}}^{\la_1}\oti\cdots\oti\ga_{e^{i\om_n t}}^{\la_n}
=
\al_t^{\la_1,\om_1}\oti\cdots\oti\al_t^{\la_n,\om_n}
\sim
\al_t^{{\bm\la},{\bm\om}}.
\]
Lemma \ref{lem:invappinn-stab} implies
that $\al^{\bm\om}$ is invariantly approximately inner.

We also get the following results
by Corollary \ref{cor:product-flowwt},
Theorem \ref{thm:itp-class}
and Theorem \ref{thm:itp-Rohlin}
putting
\[
H_{{\bm\mu},{\bm\om}}
:=
\langle(-\be\mu_j,\om_j)\mid j=1,\dots,n\rangle.
\]
Note that $\be$ depends on ${\bm \mu}$
as (\ref{eq:inversetemp}).

\begin{thm}
\label{thm:QF-class}
Let ${\bm\mu}\in\R^m$ and ${\bm\nu}\in\R^n$
with $\mu_i\mu_j>0$
for all $i,j$
and
$\nu_k\nu_{\ell}>0$
for all $k,\ell$.
Let
${\bm\om}\in\R^m$ and ${\bm\eta}\in\R^n$.
Then
the flows
$\al^{\bm\om}$ on $\pi_{\bm\mu}(\cO_m)''$
and
$\al^{\bm\eta}$ on $\pi_{\bm\nu}(\cO_n)''$
are cocycle conjugate
if and only if
$H_{{\bm\mu},{\bm\om}}
=
H_{{\bm\nu},{\bm\eta}}$.
\end{thm}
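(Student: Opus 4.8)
The plan is to reduce Theorem~\ref{thm:QF-class} to the classification of product type flows proved in Theorem~\ref{thm:itp-class}. The main point, already isolated in the discussion preceding the statement, is that for a quasi-free flow $\al^{\bm\om}$ on $\pi_{\bm\mu}(\cO_m)''$ one has the cocycle conjugacy
\[
\al_t^{\bm\om}\sim\al_t^{{\bm\la},{\bm\om}},
\qquad
\la_j:=e^{-\be\mu_j},
\]
where $\be$ is the (unique) inverse temperature determined by $\sum_j e^{-\be\mu_j}=1$. This comes from Lemma~\ref{lem:torus-minimal} applied to the minimal $\T^m$-action $\ga$ on the injective factor $\cM=\pi_{\bm\mu}(\cO_m)''$ with $\vph:=\vph^{\bm\mu}$, together with Theorem~\ref{thm:allamu-prod}. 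Likewise $\al_t^{\bm\eta}\sim\al_t^{{\bm\rho},{\bm\eta}}$ on $\cM_{\bm\rho}$ with $\rho_k:=e^{-\be'\nu_k}$, $\be'$ the inverse temperature attached to ${\bm\nu}$. Cocycle conjugacy being an equivalence relation, $\al^{\bm\om}\sim\al^{\bm\eta}$ if and only if $\al^{{\bm\la},{\bm\om}}\sim\al^{{\bm\rho},{\bm\eta}}$.

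First I would invoke Theorem~\ref{thm:itp-class}: the latter holds if and only if $G_{{\bm\la},{\bm\om}}=G_{{\bm\rho},{\bm\eta}}$, where by definition
\[
G_{{\bm\la},{\bm\om}}
=\langle(\log\la_j,\om_j)\mid j=1,\dots,m\rangle
=\langle(-\be\mu_j,\om_j)\mid j=1,\dots,m\rangle
=H_{{\bm\mu},{\bm\om}},
\]
and similarly $G_{{\bm\rho},{\bm\eta}}=H_{{\bm\nu},{\bm\eta}}$. Here I use $\log\la_j=-\be\mu_j$ and $\log\rho_k=-\be'\nu_k$, which is exactly the definition of $H_{{\bm\mu},{\bm\om}}$ recorded in the statement; note that the dependence of $\be$ on ${\bm\mu}$ is already built into that definition. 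Chaining these equivalences gives: $\al^{\bm\om}\sim\al^{\bm\eta}$ iff $H_{{\bm\mu},{\bm\om}}=H_{{\bm\nu},{\bm\eta}}$, which is the claim.

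One detail I would spell out is that Theorem~\ref{thm:itp-class} is stated for product type flows $\al^{{\bm\la},{\bm\mu}}$ built from $(m+1)\times(m+1)$ matrix algebras, so I must check that the $(-\be\mu_j,\om_j)$ appearing here match the generators $(\log\la_j,\mu_j)$ in the definition of $G_{{\bm\la},{\bm\mu}}$ after the obvious relabelling ${\bm\mu}\leftrightarrow{\bm\om}$ of the ``angle'' vector. This is purely notational once $\la_j=e^{-\be\mu_j}$ is granted. I would also remark, for completeness, that the hypotheses $\mu_i\mu_j>0$ and $\nu_k\nu_\ell>0$ are precisely what guarantee, via \cite[Proposition 2.2]{E-On}, that the KMS state $\vph^{\bm\mu}$ (resp.\ $\vph^{\bm\nu}$) exists and is unique, so that $\cM$ is the injective factor on which Lemma~\ref{lem:torus-minimal} applies; the argument does not use them otherwise.

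I do not expect a genuine obstacle here: all the heavy lifting — the cocycle vanishing used in Lemma~\ref{lem:torus-minimal}, the classification of product type flows in Theorem~\ref{thm:itp-class}, and the tensor-product identity of Theorem~\ref{thm:allamu-prod} — has already been done. The only thing requiring a little care is the bookkeeping that turns the quasi-free data $({\bm\mu},{\bm\om})$ into product-type data $({\bm\la},{\bm\om})$ with $\la_j=e^{-\be\mu_j}$, and the verification that the invariant $H_{{\bm\mu},{\bm\om}}$ is literally $G_{{\bm\la},{\bm\om}}$ under this dictionary.
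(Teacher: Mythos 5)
Your proposal is correct and is essentially the paper's own argument: the paper derives Theorem \ref{thm:QF-class} precisely by establishing $\al^{\bm\om}\sim\al^{{\bm\la},{\bm\om}}$ with $\la_j=e^{-\be\mu_j}$ via Lemma \ref{lem:torus-minimal} and Theorem \ref{thm:allamu-prod}, and then quoting Theorem \ref{thm:itp-class}, with $H_{{\bm\mu},{\bm\om}}$ defined exactly so that it equals $G_{{\bm\la},{\bm\om}}$. Your bookkeeping of the dictionary $({\bm\mu},{\bm\om})\mapsto({\bm\la},{\bm\om})$ and the role of the hypothesis $\mu_i\mu_j>0$ match the paper's treatment.
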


\begin{thm}
\label{thm:QF-Rohlin}
Let $\al^{\bm\om}$ be
the quasi-free flow
on $\cM:=\pi_{\bm\mu}(\cO_n)''$
as before.
Then the following conditions are equivalent:
\begin{enumerate}
\item
$\al^{\bm\om}$ has the Rohlin property;

\item
$\tcM'\cap(\tcM\rti_{\widetilde{\al^{\bm\om}}}\R)=Z(\tcM)$;

\item
$\Ga(\al^{\bm\om})=\R$
and
$\al_t^{\bm\om}\nin\Cnt(\cM)$;

\item
$\{0\}\times\R\subs H_{{\bm\mu},{\bm\om}}$.
\end{enumerate}
Moreover,
$\cM\rti_{\al^{\bm\om}}\R$ is an injective type III factor
of the same type as $\cM$.
\end{thm}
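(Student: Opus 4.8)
The plan is to reduce everything to the product type flow treated in Theorem~\ref{thm:itp-Rohlin}. Recall from the discussion preceding the statement that, with $\la_j:=e^{-\be\mu_j}$ and $\be$ the inverse temperature of~(\ref{eq:inversetemp}), one has the cocycle conjugacy $\al^{\bm\om}\sim\al^{{\bm\la},{\bm\om}}$ of the quasi-free flow with the product type flow on $\cM_{\bm\la}$; in particular this cocycle conjugacy provides an isomorphism $\cM\cong\cM_{\bm\la}$. Since $\log\la_j=-\be\mu_j$, the closed subgroup $H_{{\bm\mu},{\bm\om}}=\langle(-\be\mu_j,\om_j)\mid j\rangle$ literally coincides with $G_{{\bm\la},{\bm\om}}=\langle(\log\la_j,\om_j)\mid j\rangle$, so condition~(4) above is exactly condition~(4) of Theorem~\ref{thm:itp-Rohlin} for $\al^{{\bm\la},{\bm\om}}$.

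Next I would observe that each of the conditions (1), (2), (3) is invariant under cocycle conjugacy, so that it is equivalent to the corresponding condition of Theorem~\ref{thm:itp-Rohlin} for $\al^{{\bm\la},{\bm\om}}$. For (1) this is Lemma~\ref{lem:pertRohstab} together with the obvious invariance under conjugacy. For (3), the Connes spectrum $\Ga(\cdot)$ and the group $\Cnt(\cM)$ are standard cocycle conjugacy invariants. For (2) one passes to canonical extensions: the cocycle conjugacy $\al^{\bm\om}\sim\al^{{\bm\la},{\bm\om}}$ lifts to a cocycle conjugacy $\widetilde{\al^{\bm\om}}\sim\widetilde{\al^{{\bm\la},{\bm\om}}}$ of the associated flows on the cores, hence induces an isomorphism $\tcM\rti_{\widetilde{\al^{\bm\om}}}\R\cong\widetilde{\cM_{\bm\la}}\rti_{\widetilde{\al^{{\bm\la},{\bm\om}}}}\R$ carrying $\pi(\tcM)$ onto $\pi(\widetilde{\cM_{\bm\la}})$ and $Z(\tcM)$ onto $Z(\widetilde{\cM_{\bm\la}})$, so that (2) holds for $\al^{\bm\om}$ precisely when it holds for $\al^{{\bm\la},{\bm\om}}$. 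Granting these remarks, the equivalence (1)$\Leftrightarrow$(2)$\Leftrightarrow$(3)$\Leftrightarrow$(4) is immediate from Theorem~\ref{thm:itp-Rohlin}. Alternatively, one can reprove the chain directly in the same order as there: (1)$\Rightarrow$(2) is Corollary~\ref{cor:relative}; (2)$\Rightarrow$(3) because the relative commutant condition forces $\widetilde{\al^{\bm\om}}$ to be outer, hence $\al_t^{\bm\om}\nin\Cnt(\cM)$ for $t\neq0$, and also $Z(\cM\rti_{\al^{\bm\om}}\R)=Z(\tcM\rti_{\widetilde{\al^{\bm\om}}}\R)^\th=\C$, so $\Ga(\al^{\bm\om})=\R$; (3)$\Rightarrow$(4) uses $\La(\al^{\bm\om})=\La(\al^{{\bm\la},{\bm\om}})=\pr_2(H_{{\bm\mu},{\bm\om}}^\perp)$ via Lemma~\ref{lem:modular-part} exactly as in the proof of Theorem~\ref{thm:itp-Rohlin}; and (4)$\Rightarrow$(1) is the reduction above.

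For the last assertion, a cocycle conjugacy induces an isomorphism of the crossed products by $\R$, so $\cM\rti_{\al^{\bm\om}}\R\cong\cM_{\bm\la}\rti_{\al^{{\bm\la},{\bm\om}}}\R$. When the equivalent conditions hold, Theorem~\ref{thm:itp-Rohlin} identifies the right-hand side as an injective infinite factor of the same type as $\cM_{\bm\la}\oti B(L^2(\R))$; since $\cM\cong\cM_{\bm\la}$ is a type III factor (as established earlier in the subsection, type III$_\la$ with $0<\la<1$ when $\mu_i/\mu_j\in\Q$ for all $i,j$, and type III$_1$ otherwise), tensoring by $B(L^2(\R))$ leaves its type unchanged, whence $\cM\rti_{\al^{\bm\om}}\R$ is an injective type III factor of the same type as $\cM$.

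I do not expect a genuine obstacle: the substantive work is already contained in Theorem~\ref{thm:itp-Rohlin} and in the earlier reduction $\al^{\bm\om}\sim\al^{{\bm\la},{\bm\om}}$, which itself rests on Lemma~\ref{lem:torus-minimal} and Theorem~\ref{thm:allamu-prod}. The only point that demands a little care is the verification that condition~(2) transports under cocycle conjugacy, that is, tracking the canonical-extension functor and the identification of crossed products of cores; this is routine given Lemma~\ref{lem:can-natural}.
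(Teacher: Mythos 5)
Your proof is correct and takes essentially the same approach as the paper: after the cocycle conjugacy $\al^{\bm\om}\sim\al^{{\bm\la},{\bm\om}}$ and the identification $H_{{\bm\mu},{\bm\om}}=G_{{\bm\la},{\bm\om}}$ are established, the paper simply cites Theorem~\ref{thm:itp-Rohlin} (together with Corollary~\ref{cor:product-flowwt} and Theorem~\ref{thm:itp-class}). Your added remarks about transporting each of conditions (1)--(3) under cocycle conjugacy, in particular condition (2) via canonical extensions, make explicit what the paper leaves implicit, but do not change the argument.
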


When $n=2$,
$H_{{\bm\mu},{\bm\om}}$
never fulfills the fourth condition above.
Thus any quasi-free flow
on $\pi_{\bm\mu}(\cO_2)''$
does not have the Rohlin property.

\begin{exam}
Put $n=3$
and
$\mu_1=\mu_2=\mu_3=1$.
Then the inverse temperature
$\be$ equals $\log 3$.
Put $\om_1=1$, $\om_2=2$ and $\om_3=\sqrt{2}$.
The group
$G_{{\bm\mu},{\bm\om}}$
contains
$(0,1)=(-\be\mu_2,\om_2)-(-\be\mu_1,\om_1)$
and
$(0,\sqrt{2})=(-\be\mu_3,\om_3)-(-\be\mu_1,\om_1)$.
Thus
$\al^{\bm\om}$ on $\pi_{\bm\mu}(\cO_3)''$
has the Rohlin property.
\end{exam}

So far,
we have obtained the classification
of $\al^{\bm\om}$
by using product type flows.
Let us prove the invariant approximate innerness
of $\al^{\bm\om}$ in another way
which is motivated
by Kishimoto's results \cite{Kishi-O2,Kishi-One}.
In those works,
he has shown,
in the C$^*$-algebra level,
that
$\al^{\bm \om}$
has the Rohlin property
as a flow on $\cO_n$
if and only if
the semigroup generated
by $\om_j$, $j=1,\dots,n$,
is dense in $\R$.
In his proof,
a sequence of endomorphisms
$\ph_k\col\cO_n\ra\cO_n$
plays a crucial role.
Those are based on the 1-cocycle property
of the shift automorphism on $\bigotimes_\Z M_n(\C)$,
which is proved by deeply understanding
its gauge invariant C$^*$-subalgebra.

However,
it turns out not so involved
to prove that the 1-cocycle property
in the von Neumann algebra level.
For this,
we should understand
how the shift endomorphism $\si$
on
$\cF^{\,n}=\bigotimes_\N M_n(\C)$
acts
on $\cN$,
and furthermore,
the ultraproduct von Neumann algebras
$\cN^\om$ and $\cN_\om$
(one should not confuse the vector ${\bm\om}$
with a free ultrafilter $\om$).

By definition of $\si$,
we have $\ps^{\bm\mu}\circ\si=\ps^{\bm\mu}$
and $\si(\ga_z(x))=\ga_z(\si(x))$
for $z\in \T^n$
and $x\in\cF^{n}$.
Thus
$\si$ extends to $\cR:=\pi_{\bm\mu}(\cF^{\,n})''$,
and $\si(\cN)\subs\cN\subs\cR$.

Let us denote by $\cR'\cap\cN^\om$
the subalgebra of $\cN^\om$
which consists of
$\pi_\om((x^\nu)_\nu)\in\cN^\om$
such that
$\|yx^\nu-x^\nu y\|_{\ps^{\bm\mu}}^\sharp\to0$
as $\nu\to\infty$
for all $y\in\cR$.
Since there exists a faithful normal
conditional expectation from $\cR$
onto $\cN$ by averaging $\ga$,
we obtain
$\cN^\om\subs\cR^\om$,
and
$\cR'\cap\cN^\om$ is a von Neumann subalgebra
of $\cN^\om$.
Moreover,
$\ps^{\bm\mu}$ is a trace
on $\cN$,
and $\cR'\cap\cN^\om\subs\cN_\om$.

\begin{lem}
The $\cR'\cap\cN^\om$ is a type II$_1$ factor.
\end{lem}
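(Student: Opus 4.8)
The goal is to show that $\cR'\cap\cN^\om$ is a type II$_1$ factor, where $\cN=\pi_{\bm\mu}(A_n)''$, $\cR=\pi_{\bm\mu}(\cF^{\,n})''$, and $A_n=\cO_n^\ga=(\cF^{\,n})^\ga$. Since $\ps^{\bm\mu}$ restricts to a faithful normal tracial state on $\cN$ (as observed right before the statement, $\cN\subs\cM_{\vph^{\bm\mu}}$ and $\ps^{\bm\mu}$ is a trace there), it induces a faithful normal tracial state on $\cN_\om$, and hence on the subalgebra $\cR'\cap\cN^\om\subs\cN_\om$. Thus finiteness is immediate, and $\cR'\cap\cN^\om$ is type II$_1$ as soon as it is a diffuse factor. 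The plan is therefore: (i) produce enough central-type sequences in $\cN$ relative to $\cR$ to see diffuseness, and (ii) prove factoriality by a spectral-gap / relative-commutant argument.

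First I would set up the structural picture. The algebra $\cR\cong\bigotimes_\N M_n(\C)$ is the weak closure of $\cF^{\,n}$, and $\cN$ sits inside as the fixed points of the product-type action $\ga$ of $\T^n$ on $\cR$; equivalently, $\cN$ is the weak closure of $A_n$. Because $\pi_{\bm\mu}(\cO_{U(n)})'\cap\cM=\C$ (from \cite{Iz-WatI}, quoted above) and $\cO_{U(n)}\subs A_n$, one gets $\cR'\cap\cM=\C$, so in particular $\cR$ is irreducible in $\cM$ and $\cN$ itself is a factor of type II$_1$ with the trace $\ps^{\bm\mu}$. For diffuseness of $\cR'\cap\cN^\om$, I would exhibit, for each $k\in\N$, a copy of the diagonal subalgebra of the $k$-th tensor leg of $\cR$ that lies in $A_n$ but whose ``tail'' becomes $\cR$-central in the limit: concretely, using the $\ga$-invariance, elements of $A_n$ built out of far-out tensor legs are arbitrarily close (in $\|\cdot\|_{\ps^{\bm\mu}}^\sharp$) to commuting with any fixed finite-leg element of $\cR$. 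More precisely, pick mutually commuting copies of $M_n(\C)^\ga$-type matrix units sitting in $A_n$ and supported on disjoint blocks of tensor legs; a standard central-sequence construction then yields a diffuse abelian subalgebra of $\cR'\cap\cN^\om$, so the latter is not atomic.

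The heart of the argument is factoriality, i.e.\ $Z(\cR'\cap\cN^\om)=\C$. Here I would follow the relative McDuff / spectral-gap philosophy adapted to the present inclusion. Suppose $z=\pi_\om((z^\nu)_\nu)$ is a central projection of $\cR'\cap\cN^\om$; then $z$ commutes with $\cR$ and with all of $\cR'\cap\cN^\om$. The key input is that $\cR\vee(\cR'\cap\cN)=\cR\vee\cN$ generates a large enough algebra: using $\cR'\cap\cM=\C$ together with the shift endomorphism $\si$ on $\cR$ (which satisfies $\si(\cN)\subs\cN$, $\ps^{\bm\mu}\circ\si=\ps^{\bm\mu}$), one can translate $\cN$-central sequences along $\si^k$ to produce an abundance of mutually commuting copies of $\cR'\cap\cN^\om$; a fast-reindexation / diagonal argument then forces $z$ to be in the center of a type II$_1$ factor that is irreducibly generated, hence scalar. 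The cleanest route is probably to invoke the minimality statement: $\ga$ is a minimal action of $\T^n$ on the injective factor $\cM$ (as noted, $\pi_{\bm\mu}(\cO_{U(n)})'\cap\cM=\C$ and $\cN=\cM^\ga$), and for minimal compact-group actions the relative commutant $\cM^\ga{}'\cap(\cM^\ga)^\om$ inside $\cM^\om$ is known to be a factor by the same reasoning as in the Masuda--Tomatsu theory of compact-group-dual actions \cite{Ma-T,Ma-TIII}; the role of $\cR$ versus $\cM$ is harmonized because averaging over $\T$ gives a conditional expectation $\cM\to\cR$ and averaging over $\T^n$ gives $\cR\to\cN$.

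\textbf{Expected main obstacle.} The delicate point is not finiteness or diffuseness but factoriality: one must rule out a nontrivial central projection in $\cR'\cap\cN^\om$ even though $\cN$ is only the \emph{gauge-invariant} part of $\cR$, so a priori $\cR'\cap\cN^\om$ could be larger than $\cR'\cap\cR^\om=\C$ in a way that destroys factoriality. The resolution should come from combining (a) the irreducibility $\pi_{\bm\mu}(\cO_{U(n)})'\cap\cM=\C$, which pins down the center very tightly, with (b) the ergodicity of the shift $\si$ on the tail of $\cR$ restricted to $\cN$ — i.e.\ $\bigcap_k\si^k(\cN)''=\C$ up to the obvious corrections — so that any $\cR$-central, $(\cR'\cap\cN^\om)$-central element is also asymptotically fixed by $\si$ and hence scalar. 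I would expect the technical write-up of (b), controlling $\si^k(\cN)$ inside $\cN^\om$ uniformly, to be where the real work lies; everything else is a routine application of the trace $\ps^{\bm\mu}$ and the conditional expectations coming from averaging the torus actions.
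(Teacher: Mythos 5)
Your reduction of the problem to factoriality is right, and the finiteness and diffuseness parts of the plan are unobjectionable, but the factoriality argument — which you correctly identify as the heart of the matter — has a gap, and the routes you propose do not close it. First, the Masuda--Tomatsu-type statement you want to cite concerns $\cN'\cap\cN^\om=\cN_\om$, whereas the algebra at hand is $\cR'\cap\cN^\om$, i.e.\ the part of $\cN_\om$ that asymptotically commutes with the strictly larger algebra $\cR$; minimality of $\ga$ gives no direct handle on this smaller relative commutant. Second, the inference in your ``obstacle'' paragraph — that a central element of $\cR'\cap\cN^\om$ is asymptotically fixed by the shift $\si$ and ``hence scalar'' — is invalid in the ultraproduct: \emph{every} element of $\cR'\cap\cN^\om$ can be represented by sequences living in the tail algebras $\bigotimes_{k=m}^\infty M_n(\C)''$ (this is exactly what commuting with $\cR$ buys you), and yet the algebra is a nontrivial II$_1$ factor rather than $\C$. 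Triviality of $\bigcap_k\si^k(\cR)$ says nothing about ultraproduct elements, so this step would prove too much and is therefore wrong.

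The missing idea is to use the tail localization \emph{positively}, which is what the paper does and which makes the proof short and elementary. Given a nonzero $x\in\cR'\cap\cN^\om$ with $\ta_\om(x)=0$, one chooses a representing sequence with $\ps^{\bm\mu}(x^\nu)=0$ and $x^\nu\in\bigotimes_{k=m}^\infty M_n(\C)''$ for $\nu$ in a decreasing family $W_m\in\om$; this is possible because $x$ commutes with each finite head $\bigotimes_{k<m}M_n(\C)$, so the trace-preserving conditional expectation onto the tail (which maps $\cN$ into itself, being gauge-equivariant) moves $x^\nu$ by an amount tending to $0$ in $\|\cdot\|_2$. Since the gauge-invariant part of each tail is again a II$_1$ factor with trace $\ps^{\bm\mu}$, a Dixmier-averaging argument produces unitaries $u^\nu$ in the same tail with $\|[u^\nu,x^\nu]\|_2\geq\|x^\nu\|_2/2$; then $u:=\pi_\om((u^\nu)_\nu)$ lies in $\cR'\cap\cN^\om$ and fails to commute with $x$. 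This shows directly that no nonzero trace-zero element is central, which is all of factoriality; no spectral gap, fast reindexation, or minimality input is needed.
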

\begin{proof}
Let $x$ be a non-zero element in $\cR'\cap\cN^\om$
with $\ta_\om(x)=0$.
We will show that $x$ is not central.
We can take a representing sequence
$(x^\nu)_\nu$ of $x$
and
$W_m\in\om$ for $m\in\N$
such that
$[m,\infty)\supset W_m\supsetneq W_{m+1}$,
$x^\nu\in \bigotimes_{k=m}^\infty M_n(\C)''$
for $\nu\in W_m$
and
$\ps^{\bm\mu}(x^\nu)=0$.

Since $\ps^{\bm\mu}$ is the tracial state
on the type II$_1$ factor $\cN$,
there exists a sequence of unitaries
$(u^\nu)_\nu$ in $\cN$
such that
$u^\nu\in \bigotimes_{k=m}^\infty M_n(\C)''$
and
$\|[u^\nu,x^\nu]\|_2\geq\|x^\nu\|_2/2$.
Putting $u:=\pi_\om((u^\nu)_\nu)$,
we have $u\in\cR'\cap\cN^\om$ and $ux\neq xu$.
\end{proof}

\begin{lem}
The $\si$ is an aperiodic automorphism on $\cR'\cap\cN^\om$.
\end{lem}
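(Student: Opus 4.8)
The plan is to show that $\si$, viewed as an automorphism of the type II$_1$ factor $\cR'\cap\cN^\om$, has no nontrivial periodic points, i.e.\ for each $k\geq1$ the automorphism $\si^k$ is outer (properly outer, which for a factor is the same as outer). Since $\cR'\cap\cN^\om$ is a factor, it suffices to produce, for each fixed $k$, a unitary $u\in\cR'\cap\cN^\om$ and a sufficiently ``random'' pattern showing that no unitary in $\cR'\cap\cN^\om$ can implement $\si^k$; the standard way to do this in an ultraproduct is to exhibit a central sequence argument, but here we argue more directly using the spatial structure of the shift.

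First I would recall the concrete picture: $\cR=\pi_{\bm\mu}(\cF^{\,n})''=\bigotimes_{j\in\N}(M_n(\C),\om_j)$ with $\om_j$ the state given by the diagonal matrix $\diag(e^{-\be\mu_1},\dots,e^{-\be\mu_n})$, and $\si$ is the tensor shift $j\mapsto j+1$. The subalgebra $\cN$ is the gauge-fixed part $\cR^\ga$, and $\ps^{\bm\mu}$ restricts to the (normalized, hence tracial) trace on $\cN$. An element of $\cR'\cap\cN^\om$ has a representing sequence $(x^\nu)_\nu$ that can be arranged so that $x^\nu$ lies in the tail algebra $\bigotimes_{j\geq m_\nu}M_n(\C)$ with $m_\nu\to\infty$; in particular such a sequence is $\si$-asymptotically supported far out, so $\si^k$ acts on $\cR'\cap\cN^\om$ but its restriction is ``local shift by $k$ on the tail.''

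Next I would prove outerness of $\si^k$ by a $2$-norm/fullness-style argument. Suppose $\si^k=\Ad w$ for a unitary $w\in\cR'\cap\cN^\om$. Choose a unitary $u\in\cN$ supported in a single tensor factor $M_n(\C)$, say the $j$-th, and more precisely take $u$ to be (the image under $\pi_{\bm\mu}$ of) a gauge-invariant unitary in $M_n(\C)\oti M_n(\C)$ occupying factors $j$ and $j+k$ that exchanges them (such gauge-invariant unitaries exist since the gauge action is the same on every tensor leg). Shifting $j$ to infinity along $\om$, this produces a unitary $U=\pi_\om((u^\nu)_\nu)\in\cR'\cap\cN^\om$ with $\si^k(U)=U^*$ or, more to the point, with $\|\si^k(U)-U\|_{\ta_\om}$ bounded below by a fixed positive constant while $U$ asymptotically commutes with $w$ (because $w$'s representing sequence lives in tails and $U^\nu$ is built from the swap of two fixed-distance legs, so $[w^\nu,U^\nu]\to0$ in $2$-norm). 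Then $\si^k(U)=wUw^*$ forces $\si^k(U)=U$ in $\cR'\cap\cN^\om$, contradicting the lower bound. Carrying out the leg-swap construction carefully so that (i) $U$ genuinely lies in $\cR'\cap\cN^\om$ (commutes asymptotically with all of $\cR$ in the $\ps^{\bm\mu}$-norm), and (ii) $\ta_\om(\si^k(U)U^*)$ stays bounded away from $1$ uniformly, is the main technical point.

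I expect the main obstacle to be verifying condition (i): one must check that the swap-type unitaries, although they move $\ps^{\bm\mu}$-mass between two fixed legs, still asymptotically commute with an arbitrary $y\in\cR$ once the legs are pushed to infinity, and that they are $\ga$-invariant so that they land in $\cN$ rather than merely in $\cR$. This is where the explicit description of $\cF^{\,n}$ as an ITPFI factor and the homogeneity of the gauge action across legs is used; everything else (that $\cR'\cap\cN^\om$ is a II$_1$ factor by the previous lemma, that outer $=$ aperiodic for automorphisms of a factor, and the $2$-norm estimates) is routine. An alternative, perhaps cleaner, route would be to invoke the previous lemma together with a Rohlin-type argument: find for each $k$ a partition of unity in $\cR'\cap\cN^\om$ permuted freely by $\si$, using the fact that $\cR'\cap\cN^\om$ absorbs a copy of the hyperfinite II$_1$ factor on which $\si$ looks like a Bernoulli shift; but the direct outerness computation above is shorter.
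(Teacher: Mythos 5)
There is a genuine gap: you never prove that $\si$ is an \emph{automorphism} of $\cR'\cap\cN^\om$, which is half of the statement and is where the paper spends half of its proof. The shift $\si$ is a proper (non-surjective) endomorphism of $\cN$ and of $\cR$, so surjectivity on the relative commutant is not automatic. The paper establishes it by introducing the left inverse $\ph(x):=n^{-1}\sum_j s_j^*xs_j$, checking that both $\si$ and $\ph$ preserve $\cR'\cap\cN^\om$, and — the key point — that $\ph$, which is only a unital completely positive map on $\cM$, becomes \emph{multiplicative} on $\cR'\cap\cN^\om$ (because $x\in\cR'\cap\cN^\om$ asymptotically commutes with the $s_is_j^*\in\cR$, so $n^{-2}\sum_{i,j}s_i^*xs_is_j^*ys_j\to\ph(xy)$). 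Then $\ph\circ\si=\id$ forces $\si$ to be surjective, hence an automorphism. Without some version of this your proposal only addresses outerness of the powers of an endomorphism.

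For the aperiodicity half, your strategy is essentially the paper's: exhibit a gauge-invariant, asymptotically $\cR$-central element that $\si^k$ genuinely moves in $\|\cdot\|_2$, and decouple it from the would-be implementing element. The paper does this with the single diagonal matrix unit $e_{11}$ in one tensor leg (computing $\|\si^k(x)-x\|_2^2=2e^{-\be\mu_1}(1-e^{-\be\mu_1})>0$), which is simpler than your two-leg swap, and — importantly — it formalizes the decoupling with the fast reindexation map of Lemma \ref{lem:FRT}. Your justification that $[w^\nu,U^\nu]\to0$ ``because $w$'s representing sequence lives in tails and $U^\nu$ is built from the swap of two fixed-distance legs'' is not correct as stated: two sequences that both escape to the tail need not asymptotically commute (indeed, if they did automatically, every element of $\cR'\cap\cN^\om$ would be central there). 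What makes it work is that the positions $j_\nu$ of your swap must be chosen \emph{after} fixing a tail representative of $w$, so that $j_\nu+k$ stays below $w^\nu$'s starting leg while still tending to infinity along $\om$ — i.e., exactly a reindexation argument. Also note the paper proves the stronger proper outerness (no nonzero $a$ with $ax=\si^k(x)a$), which is equivalent to your $\Ad w$ formulation only once factoriality of $\cR'\cap\cN^\om$ (the previous lemma) and the automorphism property are in hand.
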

\begin{proof}
Let $\ph(x):=n^{-1}\sum_j s_j^* xs_j$ for $x\in\cM$.
Then $\ph\circ\si=\id_{\cM}$.
Since $\ph(\cN)\subs\cN$,
$\ph$ extends to $\cN^\om$
(see \cite[Lemma 3.2]{Ma-T}).
Using $\cR=\{s_i s_j^*\}_{i,j}''\vee \si(\cR)$,
we obtain $\si(\cR'\cap\cN^\om)\subs \cR'\cap\cN^\om$.
Since $a\ph(x)b=\ph(\si(a)x\si(b))$
for all $x,a,b\in \cM$,
we have $\ph(\cR'\cap\cN^\om)\subs\cR'\cap\cN^\om$.
Take any $x=\pi_\om((x^\nu)_\nu)$
and $y=\pi_\om((y^\nu)_\nu)$
in $\cR'\cap\cN^\om$.
Then in the strong$*$ topology,
we have
\[
\ph(x^\nu)\ph(x^\nu)
=
\frac{1}{n^2}
\sum_{i,j}
s_i^* x^\nu s_is_j^*y^\nu s_j
\to
\ph(x^\nu y^\nu),
\quad
\nu\to\om.
\]
Hence $\ph(xy)=\ph(x)\ph(y)$,
that is,
$\ph$ is a faithful endomorphism on $\cR'\cap\cN^\om$.
Since $\ph\circ\si=\id_{\cR'\cap\cN^\om}$,
$\ph$ is a surjection, and an automorphism.
Thus so is $\si=\ph^{-1}$.

Suppose that for some non-zero
element
$a\in\cR'\cap\cN^\om$
and $k\in\N$,
we have $ax=\si^k(x)a$
for all $x\in\cR'\cap\cN^\om$.
Let us denote by $\{e_{ij}^\nu\}_{i,j}$
a system of $n\times n$-matrix units
in the $\nu$-th matrix algebra
$M_n(\C)$ in $\cF^{\,n}$.
We let $x:=\pi_\om((e_{11}^\nu)_\nu)$.
Then $x\in\cR'\cap\cN^\om$
and
\begin{align}
\|\si^k(x)-x\|_2^2
&=\lim_{\nu\to\om}
\left(
\ps^{\bm\mu}(e_{11}^{k+\nu}+e_{11}^\nu)
-
2\ps^{\bm\mu}(e_{11}^{k+\nu}e_{11}^\nu)
\right)
\notag
\\
&=
2e^{-\be\mu_1}(1-e^{-\be\mu_1})>0.
\label{eq:sikx}
\end{align}

Since $\cR'\cap\cN^\om\subs\cR_\om$,
we have a fast reindexation map
$\Ph\col \{\si^\ell(x)\}_{\ell\in\Z}''\ra \{a\}'\cap\cR_\om$
such that
$\Ph(\si(y))=\si(\Ph(y))$
and
$\ta^\om(\Ph(y)aa^*)=\ta^\om(y)\ta^\om(aa^*)$
for all $y\in \{\si^\ell(x)\}_{\ell\in\Z}''$.
By constructing method of $\Ph$,
it turns out that $\Ph(x)\in \cR'\cap \cN^\om$.
On the one hand,
we have
\begin{align*}
\ta^\om
\left(|\Ph(x)-\si^k(\Ph(x))|^2 aa^*
\right)
&=
\ta^\om
\left((\Ph(x^*)-\si^k(\Ph(x^*)))
\cdot
(\Ph(x)-\si^k(\Ph(x)))aa^*
\right)
\\
&=
\ta^\om
\left((\Ph(x^*)-\si^k(\Ph(x^*)))
\cdot
(a\Ph(x)-\si^k(\Ph(x))a)a^*
\right)
\\
&=0.
\end{align*}
On the other hand,
\begin{align*}
\ta^\om
\left(|\Ph(x)-\si^k(\Ph(x))|^2 aa^*\right)
&=
\ta^\om\left(\Ph(|x-\si^k(x)|^2) aa^*\right)
\\
&=
\|x-\si^k(x)\|_2^2\ta^\om(aa^*)
\\
&>0
\quad
\mbox{by }
(\ref{eq:sikx}).
\end{align*}
This is a contradiction.
\end{proof}

Thanks to \cite[Theorem 1.2.5]{Co-outer},
we can prove the following 1-cohomology
almost vanishing
by Shapiro's lemma
(see Lemma \ref{lem:Shapiro}).

\begin{lem}
For any unitary $v\in \cN$ and $\vep>0$,
there exists a unitary $\cN$
such that
$\|v-w\si(w^*)\|_2<\vep$,
where $\|\cdot\|_2=\|\cdot\|_{\ps^{\bm\mu}}$.
\end{lem}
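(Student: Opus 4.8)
The plan is to reduce the assertion to a Rohlin--tower averaging of the type carried out in the proof of Lemma~\ref{lem:Shapiro}, performed now inside the type~II$_1$ factor $\cR'\cap\cN^\om$, on which $\si$ acts as an aperiodic automorphism according to the two preceding lemmas. Since $\si$ maps $\cN$ into itself, I first regard the given unitary $v\in\cN$ as the value at $m=1$ of a $\si$--cocycle $c$: set $c(0):=1$ and $c(m):=v\,\si(v)\cdots\si^{m-1}(v)$ for $m\ge1$, so that $c(m+1)=v\,\si(c(m))$. The task is to approximate $v=c(1)$, within $\vep$ in $\|\cdot\|_{\ps^{\bm\mu}}$, by a coboundary $w\,\si(w^*)$ with $w\in\cN$.

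The key step is to produce a Rohlin tower for $\si$ in $\cR'\cap\cN^\om$. Recall that $\si$ extends to $\cN^\om$ (it is $\ps^{\bm\mu}$--preserving, hence normalizes the trivial sequences, sending $\pi_\om((x^\nu)_\nu)$ to $\pi_\om((\si(x^\nu))_\nu)$) and restricts there to the aperiodic automorphism of the type~II$_1$ factor $\cR'\cap\cN^\om$ isolated in the previous lemma. Invoking Connes' Theorem~1.2.5 of \cite{Co-outer} for this aperiodic automorphism --- after, if necessary, a fast reindexation inside $\cR'\cap\cN^\om$, whose central sequence algebra provides ample room --- one obtains, for any $N\in\N$ and $\delta>0$, a partition of unity $\{f\}\cup\{e_j\}_{j=0}^{N}$ in $\cR'\cap\cN^\om$ with $\si(e_j)=e_{j+1}$ for $0\le j\le N-1$ and with $\ta_\om(f),\ \ta_\om(e_0),\ \ta_\om(e_N)<\delta$. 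Because $\cN\subset\cR$, all of these projections commute with $v$ and with every $c(m)$, so $w_N:=f+\sum_{i=0}^{N}c(i)\,e_i$ is a unitary of $\cN^\om$. Using $v\,\si(c(i))=c(i+1)$ and $\si(e_i)=e_{i+1}$, the same computation as in the proof of Lemma~\ref{lem:Shapiro} gives
\[
v\,\si(w_N)-w_N=v\,\si(f)-f-e_0+c(N+1)\,\si(e_N),
\]
whence, using that the $2$--norm of $\cN^\om$ is trace--invariant and $\si$ is trace--preserving, $\|v-w_N\si(w_N)^*\|_2=\|v\,\si(w_N)-w_N\|_2\le 2\ta_\om(f)^{1/2}+\ta_\om(e_0)^{1/2}+\ta_\om(e_N)^{1/2}<\vep$ once $N$ is large (equivalently $\delta$ small).

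It remains to descend from $\cN^\om$ to $\cN$: choose a representing sequence $(w^\nu)_\nu$ of $w_N$ consisting of unitaries of $\cN$; since $(\si(w^\nu))_\nu$ represents $\si(w_N)$ and the trace of $\cN^\om$ is the $\om$--limit of $\ps^{\bm\mu}$ along representing sequences, one has $\|v-w^\nu\si(w^\nu)^*\|_{\ps^{\bm\mu}}<\vep$ for $\nu$ sufficiently close to $\om$, and $w:=w^\nu$ does the job. The main obstacle is the Rohlin--tower step: Connes' Theorem~1.2.5 is stated in \cite{Co-outer} for the hyperfinite II$_1$ factor, whereas here it must be applied to $\si$ on the non-hyperfinite ultraproduct factor $\cR'\cap\cN^\om$; this is exactly why the two preparatory lemmas were isolated, and the reindexation trick inside $\cR'\cap\cN^\om$ is what makes Connes' argument go through. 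Everything else is the routine bookkeeping already present in the proof of Lemma~\ref{lem:Shapiro}.
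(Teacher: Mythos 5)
Your argument is exactly the one the paper intends: the paper's entire proof is the citation of Connes' non-commutative Rokhlin theorem \cite[Theorem 1.2.5]{Co-outer}, applied to the aperiodic automorphism $\si$ of the type II$_1$ factor $\cR'\cap\cN^\om$ furnished by the two preceding lemmas, followed by the averaging computation of Lemma \ref{lem:Shapiro} and a descent along a representing sequence, which is precisely what you carry out. The only correction concerns your closing caveat: \cite[Theorem 1.2.5]{Co-outer} is stated for a general finite von Neumann algebra with an automorphism whose non-trivial powers are properly outer, not merely for the hyperfinite II$_1$ factor, so the ``main obstacle'' you identify is not one; moreover the approximate cyclic tower it yields (rather than the exact tower with leftover $f$ that you posit) already suffices, since the errors $\|\si(e_j)-e_{j+1}\|_2$ enter your estimate linearly with unitary coefficients $c(i)$.
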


This is the von Neumann algebra version
of \cite[Theorem 1.1 (1)]{Kishi-One}.
We apply this not to the permutation unitary
$\sum_{i,j}s_is_js_i^*s_j^*$
but to
$u_t:=\sum_{j}e^{i\om_j t}s_js_j^*\in\cN$.

\begin{lem}
The flow $\al^{\bm\om}$ is invariantly approximately inner
on $\cM$.
\end{lem}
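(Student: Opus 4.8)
The plan is to mimic Kishimoto's strategy from \cite{Kishi-One} but in the von Neumann algebra setting, using the ultraproduct machinery developed in this paper. The key device is the sequence of endomorphisms built from the shift $\si$ on $\cR$ together with the almost 1-cohomology vanishing just established. Fix $T>0$. Starting from the unitary $u_t:=\sum_j e^{i\om_j t}s_js_j^*\in\cN$, which satisfies $\al_t^{\bm\om}=\Ad u_t\circ\si\circ\Ad u_t^*\circ\si^{-1}$ on $\cM$ in the appropriate sense (since $\al_t^{\bm\om}(s_j)=e^{i\om_j t}s_j$ and $\si(s_j)=\sum_k s_ks_js_k^*$), I would iterate: applying the previous lemma repeatedly to $u_t$ along the tower of shifts produces, for each $k$, a unitary $w_k(t)\in\cN$ such that the products $w_k(t)\si(w_k(t)^*)$ approximate $u_t$ better and better in $\|\cdot\|_{\ps^{\bm\mu}}$, uniformly for $|t|\le T$, and such that the ``telescoped'' unitary $W_k(t):=w_k(t)\si(w_{k-1}(t))\si^2(w_{k-2}(t))\cdots$ converges. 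The limit unitary $W(t)$ will implement $\al_t^{\bm\om}$ on a large part of $\cM$, and the point of the telescoping is that $W(t)$ differs from $\al_t^{\bm\om}$-conjugation only through a shift that has been pushed off to infinity, hence becomes asymptotically trivial on $\cM$.

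The main structural input is that $\si$ is an aperiodic automorphism of the type II$_1$ factor $\cR'\cap\cN^\om$ (the two lemmas just above). This is precisely what lets Shapiro's lemma (Lemma \ref{lem:Shapiro}, via Connes' \cite[Theorem 1.2.5]{Co-outer}) apply, giving the almost-coboundary approximation with uniform control in $t$ over the compact interval $[-T,T]$. Concretely, I would package the output as follows: produce a sequence of unitaries $(v^\nu)_\nu$ in $\cM$ with $\al_T^{\bm\om}=\lim_{\nu\to\infty}\Ad v^\nu$ in $\Aut(\cM)$ and with $\al_t^{\bm\om}(v^\nu)-v^\nu\to0$ compact uniformly in the strong$*$ topology. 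To get the commutation estimate $\al_t^{\bm\om}(v^\nu)\approx v^\nu$, I would use that the $v^\nu$ are built inside $\cN\subset\cM_{\vph^{\bm\mu}}$, that $\al^{\bm\om}$ commutes with the gauge action and hence the relevant estimates transfer, and that the telescoped shift structure forces $\si^m(\cdot)$-tails to commute with any fixed finite collection of generators once $m$ is large — this is exactly where the description of $\cN^\om$, $\cN_\om$ and the fast reindexation map on $\cR'\cap\cN^\om$ does the work. By Definition \ref{defn:IAI} (or its local reformulation Lemma \ref{lem:IAI-local}), this establishes invariant approximate innerness.

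I expect the main obstacle to be the uniformity in $t\in[-T,T]$ of the cohomological approximations and their compatibility with the telescoping: each application of the almost-vanishing lemma loses a little, and one must arrange the error budget (geometric in $k$) so that the infinite product $W(t)=\lim_k W_k(t)$ converges in $\|\cdot\|_{\vph^{\bm\mu}}$ \emph{uniformly} on the compact interval, and simultaneously so that the residual shift becomes negligible on a fixed finite set of test vectors and functionals. A secondary subtlety is that $\vph^{\bm\mu}$ is not tracial on $\cM$ (only on $\cN$), so the estimates must be carried out in $\cN$ with $\ps^{\bm\mu}$ and only afterwards transported to $\cM$; here the conditional expectation $F$ from $\cM$ onto $\cR$ (and the further averaging onto $\cN$), both of which are $\al^{\bm\om}$-equivariant, are the tool. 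Once these uniformities are in hand the conclusion is immediate from Lemma \ref{lem:IAI-local}. As an alternative, entirely non-constructive route, one could simply invoke the product type decomposition $\al^{\bm\om}\sim\al^{\la_1,\om_1}\oti\cdots\oti\al^{\la_n,\om_n}$ obtained above together with Lemma \ref{lem:invappinn-stab}, which already gives invariant approximate innerness; I would present the Shapiro-lemma argument as the conceptually informative proof and remark that the product type route gives a quick second proof.
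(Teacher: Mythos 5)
Your proposal correctly identifies the key input (the almost vanishing of the $1$-cohomology of the shift $\si$ on $\cN$, obtained from Shapiro's lemma applied to the aperiodic automorphism $\si$ of $\cR'\cap\cN^\om$) and also correctly notes the quick alternative via the product type decomposition and Lemma \ref{lem:invappinn-stab}. However, the main argument you sketch is built around a telescoped infinite product $W_k(t)=w_k(t)\si(w_{k-1}(t))\cdots$ with uniform-in-$t$ error budgets, and you flag the uniformity on $[-T,T]$ and the convergence of the infinite product as the main obstacles. None of this is needed, and the plan as written does not close. The actual proof is a single application of the coboundary approximation to the \emph{one} unitary $u_T=\sum_j e^{i\om_j T}s_js_j^*$ for the fixed $T$: choose $w_k\in\cN$ with $\|u_T-w_k\si(w_k)^*\|_2\to0$. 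Since $s_jx=\si(x)s_j$, one has $w_ks_jw_k^*=w_k\si(w_k^*)s_j\to u_Ts_j=\al_T^{\bm\om}(s_j)$ in the strong$*$ topology, hence $\Ad w_k\to\al_T^{\bm\om}$ on $\cO_n$ and then in $\Aut(\cM)$ using $w_k\in\cM_{\vph^{\bm\mu}}$. No iteration, no limit unitary $W(t)$, and no uniformity of the cohomological approximation over $t\in[-T,T]$ is required. (Your displayed identity $\al_t^{\bm\om}=\Ad u_t\circ\si\circ\Ad u_t^*\circ\si^{-1}$ is also not meaningful: $\si$ is a proper endomorphism of $\cM$, so $\si^{-1}$ does not exist there; the usable relation is $\al_t^{\bm\om}(s_j)=u_ts_j$ together with $s_jx=\si(x)s_j$.)

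The more substantive miss is the second condition in Definition \ref{defn:IAI}. You propose to establish $\al_t^{\bm\om}(w_k)\approx w_k$ by transferring estimates through the gauge action and by pushing shift tails to infinity via reindexation. But $w_k\in\cN=\cM^\ga\subs\cM^{\al^{\bm\om}}$, so $\al_t^{\bm\om}(w_k)=w_k$ \emph{exactly} for all $t$; the invariance condition holds with zero error and the compact uniformity in $t$ is automatic. This is precisely why the coboundary approximation is carried out inside $\cN$ rather than in $\cM$ or $\cR$, and it is the observation that collapses your entire error-budget analysis. With that in hand, your secondary worry about $\vph^{\bm\mu}$ not being tracial on $\cM$ also dissolves: one only needs $w_k\in\cM_{\vph^{\bm\mu}}$ to pass from strong$*$ convergence on $\cO_n$ to convergence $\Ad w_k\to\al_T^{\bm\om}$ in the $u$-topology, via $w_k(\vph^{\bm\mu}a)w_k^*=\vph^{\bm\mu}\,w_kaw_k^*$.
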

\begin{proof}
Fix $T\in\R$.
By the previous lemma,
we obtain a sequence of unitaries
$(w_k)_k$ in $\cN$
such that
$\|u_T-w_k\si(w_k)^*\|_2\to0$
as $k\to\infty$.
Then we have
\[
\al_T^{\bm\om}(s_j)=u_Ts_j=\lim_{k\to\infty}w_ks_jw_k^*
\]
in the strong$*$ topology.
Thus for all $x\in\cO_n$,
we have $\al_T^{\bm\om}(x)=\lim_{k\to\infty}w_kxw_k^*$.
Let $a\in\cO_n$.
Since $w_k\in \cN\subs\cM_{\vph^{\bm\mu}}$,
$w_k(\vph^{\bm\mu} a)w_k^*=\vph^{\bm\mu}w_kaw_k^*$,
which converges to
$\vph^{\bm\mu}\al_T^{\bm\om}(a)=\al_T^{\bm\om}(\vph^{\bm\mu}a)$
in the norm topology of $\cM_*$.
Hence $\lim_{k\to\infty}\Ad w_k=\al_T^{\bm\om}$
in $\Aut(\cM)$.
Then the statement is clear
because $w_k\in\cN\subs \cM^{\al^{\bm\om}}$.
\end{proof}

With this fact,
we can proceed to compute the flow of weights
of $\cM\rti_{\al^{\bm\om}}\R$
as shown in the previous subsection,
and obtain Theorem \ref{thm:QF-class} and \ref{thm:QF-Rohlin}.

\begin{rem}
The pointwise approximate innerness of $\al^{\bm\om}$
is easily verified.
Indeed,
if $\cM$ is of type III$_1$,
then it is trivial because $\Aut(\cM)=\oInt(\cM)$.
When $\cM$ is of type III$_\la$ with $0<\la<1$,
then $\vph^{\bm\mu}$ is the periodic state
which is invariant under $\al^{\bm\om}$.
Thus we have $\mo(\al_t^{\bm\om})=\id$
for all $t\in\R$.
\end{rem}

\begin{rem}
If we apply the 1-cohomology
almost vanishing to
the permutation
$\sum_{i,j}s_is_js_i^*s_j^*$,
then by the same argument as
that of \cite{Kishi-One},
we can show that
there exist endomorphisms
$\{\rho_k\}_{k\in\N}$ on $\cM$
with the following conditions:
\begin{enumerate}
\item 
There exists a unitary
$u_k\in \cN$
such that
$\rho_k(s_j)=u_k s_j$
for all $j$;

\item
$\rho_k\circ\ga_z=\ga_z\circ\rho_k$
for all $z\in\T^n$;

\item
$\vph^{\bm \mu}\circ\rho_k=\vph^{\bm \mu}$;

\item
$\lim_{k\to\infty}[\rho_k(x),y]=0$
in the strong$*$ topology
for all $x,y\in\cM$.
\end{enumerate}

On the last condition,
$(\rho_k(x))_k$ is central in $\cM$
if and only if
$x\in \cM_\vph$.
Indeed,
since $\rho_k(\cM)\subs\cM$ and
$\vph^{\bm\mu}\circ\rho_k=\vph^{\bm\mu}$,
we have
$\|[\rho_k(x),\vph^{\bm\mu}]\|
\geq\|[x,\vph^{\bm\mu}]\|$
for all $k$.
\end{rem}

\section{A characterization of Rohlin property}
The main purpose of this section
is to show
Theorem \ref{thm:faithful-Rohlin}
which states
that
a flow $\al$ on a factor $\cM$
has the Rohlin property
if and only if
$\al$ faithfully acts on $\cM_{\om,\al}$.

We begin with the following lemma.
\begin{lem}
\label{lem:pmuval}
Let $p\in\Sp(\al|_{\cM_{\om,\al}})$,
and $\mu_1,\dots,\mu_n$ be
finite Borel measures on $\R$.
Then for any $\vep>0$,
there exists a non-zero partial isometry $v\in \cM_{\om,\al}$
such that
\[
\int_\R\|\al_t(v)-e^{ipt}v\|_2\,d\mu_j(t)
<\vep\|v\|_2\sum_{j=1}^m \mu_j(\R).
\]
\end{lem}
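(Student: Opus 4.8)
The plan is to read this off from the non-emptiness of Arveson spectral subspaces together with the finiteness of the canonical trace $\ta_\om$ on $\cM_{\om,\al}$ (recall that in this section $\cM$ is a factor, so $\ta_\om$ is a faithful normal tracial state and $\|\cdot\|_2=\|\cdot\|_{\ta_\om}$). Put $\be_t:=e^{-ipt}\al_t$, which is again a flow on $\cM_{\om,\al}$ by Lemma~\ref{lem:flowequipart}, and note $\|\al_t(v)-e^{ipt}v\|_2=\|\be_t(v)-v\|_2$. Writing $\mu:=\sum_j\mu_j$, it is enough to find, for a given $\vep>0$, a non-zero partial isometry $v\in\cM_{\om,\al}$ with $\int_\R\|\be_t(v)-v\|_2\,d\mu(t)<\vep\|v\|_2\,\mu(\R)$, since $\mu_j\le\mu$ and $\mu_j(\R)\le\mu(\R)$. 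First I would dispose of the tails: choose $T>0$ with $\mu(\R\setminus[-T,T])<\vep\mu(\R)/4$ and bound the integrand there by $2\|v\|_2$; it then remains to produce $v$ with $\sup_{|t|\le T}\|\be_t(v)-v\|_2<\tfrac{\vep}{2}\|v\|_2$.

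Next I would use $p\in\Sp(\al|_{\cM_{\om,\al}})$: for any window $\eta>0$ the spectral subspace $\cM_{\om,\al}\bigl((p-\eta,p+\eta)\bigr)$ is non-zero, so there is $x$ in it with $\|x\|=1$, i.e.\ $\Sp_\be(x)\subset(-\eta,\eta)$. Such an $x$ is entire analytic for $\be$ with $\|\delta_\be^n(x)\|\le\eta^n$, hence $\|\be_t(x)-x\|\le e^{|t|\eta}-1$ and, applying the same to $h:=x^*x$ (whose $\be$-spectrum lies in $(-2\eta,2\eta)$), $\|\be_t(h)-h\|\le e^{2|t|\eta}-1=:\kappa_T$ for all $|t|\le T$. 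To turn the almost-eigenvector $x$ into a partial isometry I would cut $h$ down by a spectral projection. Since $\ta_\om(h)=:r>0$ one has $\ta_\om\bigl(1_{(\la,1]}(h)\bigr)\ge r/2$ for every $\la\in(0,r/2)$, while the sandwich $1_{(\la+\kappa_T,1]}(h)\le 1_{(\la,1]}(\be_t h)\le 1_{(\la-\kappa_T,1]}(h)$ and Fubini give the averaging bound $\int_0^1\sup_{|t|\le T}\bigl\|\be_t\bigl(1_{(\la,1]}(h)\bigr)-1_{(\la,1]}(h)\bigr\|_2^2\,d\la\le 2\kappa_T$. Hence for $\eta$ small enough one can pick $\la\in(r/4,r/2)$ with $\sup_{|t|\le T}\|\be_t(e)-e\|_2$ as small as desired, $e:=1_{(\la,1]}(h)$; then $v:=x\,e\,(ehe)^{-1/2}$ is a partial isometry in $\cM_{\om,\al}$ with $v^*v=e$, so $\|v\|_2^2=\ta_\om(e)\ge r/2$, and expanding $\be_t(v)=\be_t(x)\,\be_t(e)\,\bigl(\be_t(ehe)\bigr)^{-1/2}$ and using $\|(ehe)^{-1/2}\|\le 2/\sqrt{r}$ together with the three estimates above bounds $\sup_{|t|\le T}\|\be_t(v)-v\|_2$ by a quantity controlled by $r$, $T$ and $\eta$.

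The main obstacle is the book-keeping of the last step: the tolerance required on $\eta$ (for the term $e^{2T\eta}-1$ and the spectral-projection modulus of continuity of $1_{(\la,1]}(\cdot)$) depends on $r=\ta_\om(x^*x)$, whereas $r$ is only available after a window $\eta$, hence the element $x$, has been chosen. I would handle this by not freezing $\eta$ prematurely: fix $T$ and $\vep$, extract a provisional $x$ and read off $r$, and then — using that $p\in\Sp(\al|_{\cM_{\om,\al}})$ makes spectral subspaces available in every smaller window, and, if needed, using a fast reindexation map (Lemma~\ref{lem:FRT}) to place a fresh copy with the same Arveson spectrum and the same $\ta_\om$-value inside the relative commutant so that a lower bound $\|v\|_2\ge\sqrt{r/2}$ can be preserved — replace $x$ by an element of a window so narrow that $e^{2T\eta}-1$ is dominated by the relevant power of this fixed lower bound. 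The finiteness of $\ta_\om$ is what makes the required $\|\cdot\|_2$-continuity of the cut-off projection uniform over $|t|\le T$. Once $v$ is in hand, combining the uniform estimate on $[-T,T]$ with the tail bound yields $\int_\R\|\be_t(v)-v\|_2\,d\mu_j(t)<\vep\|v\|_2\,\mu(\R)$ for every $j$, which is the assertion.
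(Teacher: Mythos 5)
Your overall strategy---shrink the Arveson spectral window, cut $x^*x$ down by a spectral projection to manufacture a partial isometry, and average over the cut level to find a good one---is the same circle of ideas as the paper's proof, which takes an $\|\cdot\|_2$-almost-eigenvector, passes to $|x|$ via the polar decomposition and the Powers--St{\o}rmer inequality, and locates a good level $b$ by a Chebyshev argument over the averaged estimates for $u_{\sqrt a}$ from \cite[Lemmas 1.2.5, 1.2.6]{Co-inj}. However, two steps of your plan fail as stated. First, the sandwich $1_{(\la+\kappa_T,1]}(h)\le 1_{(\la,1]}(\be_t(h))\le 1_{(\la-\kappa_T,1]}(h)$ is false: $\be_t(h)$ and $h$ need not commute, and $\|a-b\|\le\kappa$ yields at best a comparison of traces (or subequivalence) of spectral projections, not an operator inequality; indeed, at a \emph{fixed} level $\la$ the projections $1_{(\la,1]}(a)$ and $1_{(\la,1]}(b)$ of operator-norm-close positive elements can be far apart in $\|\cdot\|_2$ (take $a=\la\cdot 1$ and $b$ a small non-commuting perturbation pushing half the spectrum above $\la$). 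The correct tool is the averaged inequality $\int_0^\infty\|1_{(s,\infty)}(a)-1_{(s,\infty)}(b)\|_2^2\,ds\le\|a-b\|_2\|a+b\|_2$, which for each \emph{fixed} $t$ controls the $\la$-integral; placing $\sup_{|t|\le T}$ inside that integral is not justified. One must instead integrate over $t$ against $\nu$ before applying Chebyshev in $\la$, and settle for the $t$-integrated conclusion (which is all the lemma asserts), not the uniform bound $\sup_{|t|\le T}\|\be_t(v)-v\|_2<\tfrac{\vep}{2}\|v\|_2$ that your reduction demands.

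Second, the circularity you flag between the window $\eta$ and $r=\ta_\om(x^*x)$ is genuine, and the proposed repair does not resolve it: a fast reindexation map preserves $\ta^\om$ and commutes with $\al$, so $\Ph(x)$ has the same window and the same value of $r$, while shrinking the window forces you to pick a fresh element of a smaller spectral subspace whose ratio $\|x\|_2/\|x\|$ is again uncontrolled---there is no a priori lower bound for this ratio on $\cM_{\om,\al}\bigl((p-\eta,p+\eta)\bigr)$. The paper avoids the problem by never normalizing in operator norm: it starts from $\|\al_t(x)-e^{ipt}x\|_2\le\eta\|x\|_2/2$ on a long interval and keeps every subsequent error term (Powers--St{\o}rmer, the $u_{\sqrt a}$-averages, the selection of the level $b$) proportional to $\|x\|_2^2$, so that the resulting partial isometry satisfies $\|x-v|x|\|_2<\|x\|_2$, hence $v\neq0$, with all estimates already relative to the correct normalization. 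I would reorganize your argument along those lines rather than patch the operator-norm normalization.
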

\begin{proof}
Fix a small positive number $\eta$
such that $\eta<\min(\vep^2,1/400)$.
Let $\mu:=\sum_{j=1}^n\mu_j$ and $\nu:=\mu(\R)^{-1}\mu$.
Note that $\nu$ is a regular Borel measure with $\nu(\R)=1$.
Take $R>0$ so that $\nu(\R\setminus [-R,R])\leq\eta/4$.

Since $\al$ preserves the tracial state $\ta_\om$ on $\cM_{\om,\al}$,
there exists a non-zero element $x\in\cM_{\om,\al}$
such that
\[
\|\al_t(x)-e^{ipt}x\|_2 \leq\eta\|x\|_2/2
\quad\mbox{for all }
t\in[-R,R].
\]
Then
\begin{align*}
\int_\R\|\al_t(x)-e^{ipt}x\|_2\,d\nu(t)
&=
\int_{-R}^R\|\al_t(x)-e^{ipt}x\|_2\,d\nu(t)
\\
&\quad+
\int_{\R\setminus[-R,R]}\|\al_t(x)-e^{ipt}x\|_2\,d\nu(t)
\\
&\leq
\eta\|x\|_2/2\cdot \nu([-R,R])
+
2\|x\|_2\cdot \nu(\R\setminus[-R,R])
\\
&\leq
\eta\|x\|_2.
\end{align*}
Since $\cM_{\om,\al}$ is finite,
we can take a unitary $w\in \cM_{\om,\al}$
with $x=w|x|$.
Put $y_t:=e^{-ipt}w^*\al_t(x)$.
Then we have
\[
\|y_t^*-|x|\|_2
=
\|y_t-|x|\|_2
=
\|\al_t(x)-e^{ipt}x\|_2.
\]
By the Powers-St\o rmer inequality,
we have
\[
\|\al_t(|x|)-|x|\|_2^2
\leq2\|y_t-|x|\|_2\|x\|_2.
\]
Thus
\begin{align*}
\int_\R
\|\al_t(|x|)-|x|\|_2^2
\,d\nu(t)
&\leq
\int_\R
2\|y_t-|x|\|_2\|x\|_2
\,d\nu(t)
\\
&\leq
2\eta\|x\|_2^2.
\end{align*}
Next we have
\begin{align*}
\int_\R
\|y_t-|y_t|\|_2^2
\,d\nu(t)
&\leq
\int_\R
2\|y_t-|x|\|_2^2
\,d\nu(t)
+
\int_\R
2\||x|-\al_t(|x|)\|_2^2
\,d\nu(t)
\\
&\leq
4\|x\|_2\cdot\eta\|x\|_2
+
2\cdot2\eta\|x\|_2^2
\\
&=
8\eta\|x\|_2^2.
\end{align*}
Using Fubini's theorem
and \cite[Lemma 1.2.5, 1.2.6]{Co-inj},
we have
\[
\int_{\R_+^*}
\left(
\int_\R\|u_{\sqrt{a}}(y_t)-u_{\sqrt{a}}(|y_t|)\|_2^2\,d\nu(t)
\right)
\,da
=
\int_\R\|y_t-|y_t|\|_2^2\,d\nu(t)
\leq
8\eta\|x\|_2^2,
\]
and
\begin{align*}
\int_{\R_+^*}
\left(
\int_\R
\|u_{\sqrt{a}}(|y_t|)-u_{\sqrt{a}}(|x|)\|_2^2\,d\nu(t)
\right)
\,da
&\leq
\int_\R
\|y_t-|x|\|_2\|y_t+|x|\|_2
\,d\nu(t)
\\
&\leq
2\eta\|x\|_2^2.
\end{align*}
Thus we obtain
\[
\int_{\R_+^*}
\left(
\int_\R\|u_{\sqrt{a}}(y_t)-u_{\sqrt{a}}(|x|)\|_2^2\,d\nu(t)
\right)
\,da
\leq
2(8\eta+2\eta)\|x\|_2^2
=20\eta\|x\|_2^2.
\]

Now let $G(a)=\|u_{\sqrt{a}}(|x|)\|_2^2$.
Then $\int_{\R_+^*}G(a)\,da=\|x\|_2^2$.
We let
\[
A:=\left\{
b>0\,\middle|\,
\int_\R
\|u_{\sqrt{b}}(y_t)-u_{\sqrt{b}}(|x|)\|_2^2
\,d\nu(t)
>\eta^{1/2}\|u_{\sqrt{b}}(|x|)\|_2^2
\right\}.
\]
Then
\[
\int_{A}G(a)\,da
<\eta^{-1/2}\cdot 20\eta\|x\|_2^2
=20\eta^{1/2}\|x\|_2^2.
\]
Since the measure $G(a)\,da$ is normal,
we can take an open set $U\subs \R_+$
such that
$0\in U$, $A\subs U$ and
$\int_{U\cap\R_+^*}G(a)\,da
<
20\eta^{1/2}\|x\|_2^2$.
Take the smallest $b>0$ such that $b\in U^c$ satisfies
$(0,b)\subs U$.
Then
\[
\int_{(0,b)}G(a)\,da
\leq
\int_U G(a)\,da
<
20\eta^{1/2}\|x\|_2^2<\|x\|_2^2.
\]
Hence $b<\infty$.
Then we have
\[
\int_\R
\|u_{\sqrt{b}}(y_t)-u_{\sqrt{b}}(|x|)\|_2^2
\,d\nu(t)
\leq\eta^{1/2}\|u_{\sqrt{b}}(|x|)\|_2^2,
\]
and
\begin{align*}
\|x-u_{\sqrt{b}}(x)|x|\|_2^2
&=
\ta(|x|^2(1-E_b(|x|^2)))
\leq
\int_0^b \ta(E_s(|x|^2))\,ds
\\
&=
\int_0^b G(s)\,ds
<
20\eta^{1/2}\|x\|_2^2
\\
&<\|x\|_2^2.
\end{align*}
Hence $v:=u_{\sqrt{b}}(x)$
is a non-zero partial isometry.
Trivially, $u_{\sqrt{b}}(y_t)=\al_t(v)$,
and we are done.
\end{proof}

We consider the standard Hilbert space $H\oti L^2(\R)$
of the crossed product $\cM\rti_\al\R$.
Recall that for $x\in\cM$,
the right action of $\pi_\al(x)$
on this Hilbert space is nothing but $Jx^*J\oti1$.
Hence
\[
(\zeta\oti f)\pi_\al(x)=\zeta x\oti f
\quad\mbox{for all }
\zeta\in H,\ f\in L^2(\R).
\]
Note that the one-parameter unitary group
associated with $\hal$ is $1\oti {\bm e}_{-\cdot}$,
that is,
\[
\hal_p(\xi)=(1\oti {\bm e}_{-p})\xi
\quad\mbox{for all }
p\in\R,\ \xi\in H\oti L^2(\R).
\]

\begin{lem}
\label{lem:SLIAI}
Let $p\in \Sp(\al|_{\cM_{\om,\al}})$
and $f\in\cM^{\rm P}$.
For any $\vep>0$
and $\xi_1,\dots,\xi_n\in \pi_\al(f)(H\oti L^2(\R))\pi_\al(f)$,
there exists a non-zero $x\in \cM_f$
such that
\[
\|\pi_\al(x)\xi_j
-\hal_p(\xi_j)\pi_\al(x)\|^2
\leq\vep\sum_{j=1}^n\|\pi_\al(x)\xi_j\|^2
\quad\mbox{for all }
j=1,\dots,n.
\]
\end{lem}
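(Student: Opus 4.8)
The plan is to reduce the statement to Lemma \ref{lem:pmuval} by expressing the vectors $\xi_j$ in terms of a single vector of the form $\zeta\oti f$ acted on by suitable operators, and then translating the intertwining condition for $\pi_\al(x)$ with $\hal_p$ into an $L^2$-norm estimate involving $\al_t(x)-e^{ipt}x$ integrated against finite Borel measures. First I would fix a cyclic and separating vector for $\cM$ and note that, since $\pi_\al(f)(H\oti L^2(\R))\pi_\al(f)$ is spanned by vectors of the form $\eta\oti g$ with $\eta\in fHf$ and $g\in L^2(\R)$ (after the appropriate identifications via the standard form of $\cM\rti_\al\R$), it suffices by a standard density and convexity argument to prove the estimate for such elementary vectors. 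So I would assume $\xi_j=\eta_j\oti g_j$ with $\eta_j\in fHf$ and $g_j\in C_c(\R)$.

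Next I would compute $\pi_\al(x)\xi_j$ and $\hal_p(\xi_j)\pi_\al(x)$ explicitly. Using $(\pi_\al(x)(\eta\oti g))(s)=\al_{-s}(x)\eta\,g(s)$ and the formula $(\zeta\oti g)\pi_\al(x)=\zeta x\oti g$, together with $\hal_p(\xi_j)=(1\oti{\bm e}_{-p})\xi_j$, the difference $\pi_\al(x)\xi_j-\hal_p(\xi_j)\pi_\al(x)$ becomes, fiberwise at $s\in\R$, something proportional to $(\al_{-s}(x)-e^{ips}x)$ applied appropriately, weighted by $g_j(s)$. Taking norms and integrating in $s$, the squared norm $\|\pi_\al(x)\xi_j-\hal_p(\xi_j)\pi_\al(x)\|^2$ is bounded by $\int_\R \|\al_{-s}(x)\eta_j - e^{ips}x\eta_j\|^2|g_j(s)|^2\,ds$ up to bookkeeping, and a further bound of $\|\al_{-s}(x)-e^{ips}x\|_{\vph_j}^2$-type for suitable normal functionals $\vph_j$ coming from the $\eta_j$. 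Since $x\in\cM_f\subseteq\cM_{\om,\al}$ and $\al$ preserves $\ta_\om$, I can pass to the Hilbert--Schmidt norm $\|\cdot\|_2$ with respect to $\ta_\om$, controlling the $\vph_j$-norms by a constant multiple of $\|\cdot\|_2$ because $\vph_j^\om$ is a positive multiple of $\ta_\om$ on $\cM_\om$ when $\cM$ is a factor. Likewise $\|\pi_\al(x)\xi_j\|^2$ is comparable to $\|x\|_2^2$ times a positive constant depending on $\eta_j,g_j$ (nonzero after discarding degenerate $\xi_j$).

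I would then set $d\mu_j(s):=|g_j(-s)|^2\,ds$ (a finite Borel measure since $g_j\in C_c(\R)$, after the change of variable $s\mapsto -s$ forced by the fiber formula), choose $\vep'>0$ small relative to $\vep$ and to the comparison constants, and invoke Lemma \ref{lem:pmuval} applied to $\al^f$ on $\cM_f$ — noting $p\in\Sp(\al|_{\cM_{\om,\al}})$ restricts appropriately, or more simply noting that $\cM_f$ is itself a factor and $(\cM_f)_{\om,\al^f}=(\cM_{\om,\al})_f$ by Corollary \ref{cor:reduced}, so $p$ remains in the relevant spectrum after passing to the corner. Lemma \ref{lem:pmuval} produces a non-zero partial isometry $v\in(\cM_{\om,\al})_f$ with $\int_\R\|\al_t(v)-e^{ipt}v\|_2\,d\mu_j(t)<\vep'\|v\|_2\sum_j\mu_j(\R)$; lifting to a representing sequence and re-centering, this gives the desired $x\in\cM_f$.

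The main obstacle I anticipate is the careful handling of the two different right actions and the change of variables in the fiber formulas: getting the precise relationship between $\pi_\al(x)(\eta\oti g)$, the right-multiplication $(\zeta\oti g)\pi_\al(x)=\zeta x\oti g$, and $\hal_p$ right, so that the difference genuinely collapses to an integral of $\|\al_{-s}(x)-e^{ips}x\|$-type quantities against $|g_j|^2\,ds$ rather than something messier mixing left and right. A secondary technical point is justifying the reduction to elementary vectors $\eta\oti g$ and the passage from $\|\cdot\|_{\vph}$-estimates on $\cM$ to $\|\cdot\|_2$-estimates on $\cM_{\om,\al}$ uniformly in the (finitely many) vectors; this uses that $\cM$ is a factor so that $\vph^\om\!\restriction_{\cM_\om}$ is proportional to $\ta_\om$, and Lemma \ref{lem:cpcttriv}-type compactness to control cross terms when lifting. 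Once the fiberwise computation is pinned down, everything else is a routine application of Lemma \ref{lem:pmuval} with the measures $\mu_j$ read off from the $g_j$.
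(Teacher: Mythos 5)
Your overall strategy is the same as the paper's: truncate/approximate the $\xi_j$, read off finite Borel measures from the fibers ($\|\xi_j(-t)\|^2\,dt$, matching your $|g_j(-s)|^2\,ds$), feed them into Lemma \ref{lem:pmuval}, and use $\om$-centrality together with the asymptotic trace identity $\|\pi_\al(v^\nu)\xi\|\to\|v\|_2\|\xi\|$ to control all the error terms. However, two concrete steps fail as written. First, $\pi_\al(f)(H\oti L^2(\R))\pi_\al(f)$ is \emph{not} spanned by vectors $\eta\oti g$ with $\eta\in fHf$: the left action of $\pi_\al(f)$ is fiberwise $\al_{-s}(f)$, not $f$, so such vectors are in general not even contained in that subspace. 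Second, and more seriously, your appeal to the reduced flow $\al^f$ on $\cM_f$ and to Corollary \ref{cor:reduced} (to keep $p$ in the spectrum after passing to the corner) requires $f$ to be fixed by $\al$ — see the hypothesis of that corollary — whereas the lemma assumes only $f\in\cM^{\rm P}$. (Indeed, in the application the projections are refined inside a maximality argument and are certainly not $\al$-invariant.) Relatedly, the inclusion ``$x\in\cM_f\subseteq\cM_{\om,\al}$'' you invoke is false. The correct route, which is what the paper does, is to apply Lemma \ref{lem:pmuval} in the full algebra (the hypothesis $p\in\Sp(\al|_{\cM_{\om,\al}})$ already lives there), obtain a partial isometry $v\in\cM_{\om,\al}$ with representing sequence $(v^\nu)_\nu$, and only at the very end compress to $x:=fv^\nu f$, using $[v^\nu,f]\to0$ in the strong$^*$ topology and $\ta^\om(fvv^*f)=\ta_\om(vv^*)f\neq0$ (factoriality of $\cM$) to see that nothing is lost.

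A further point of order: the reduction to elementary tensors cannot be performed as a self-contained preliminary step before $x$ is chosen, because the inequality is not stable under perturbing the $\xi_j$ for a fixed $x$ unless one already knows $\|\pi_\al(x)\zeta\|\approx\|x\|_2\|\zeta\|$ uniformly in $\zeta$ — and that is exactly the $\om$-centrality of the representing sequence of $v$. You do flag this dependence in your last paragraph, but the write-up places the density reduction first; in a correct proof the truncation $\eta_j:=(1\oti1_{[-R,R]})\xi_j$ and the approximation by $\sum_k\zeta_k\oti f_k$ must be estimated \emph{after} Lemma \ref{lem:pmuval} has produced $v$, using $(\al,\om)$-equicontinuity of $(v^\nu)_\nu$ to get uniform convergence of the fiberwise norms on $[-R,R]$. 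With those corrections your argument reproduces the paper's proof.
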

\begin{proof}
Note that for $\xi\in H\oti L^2(\R)$,
the functionals $\cM\ni x\mapsto \langle \pi_\al(x)\xi,\xi\rangle$
and $\cM\ni x\mapsto \langle (x\oti1)\xi,\xi\rangle$
are normal.
Hence for $x=\pi_\om((x^\nu)_\nu)\in\cM_\om$,
we obtain
\begin{equation}
\label{eq:pialx}
\lim_{\nu\to\om}\|\pi_\al(x^\nu)\xi\|=\|x\|_2\|\xi\|
=\lim_{\nu\to\om}\|(x^\nu\oti1)\xi\|.
\end{equation}

Let $p,\vep$ and $\xi_j$ be given as in the statement above.
Take $\de>0$ and $R>0$ with $8\de^2+42\de(1-\de)^{-2}<\vep$
and $\|\xi_j-\eta_j\|<\de\|\xi_j\|$,
where we have put $\eta_j:=(1\oti 1_{[-R,R]})\xi_j$.
By Lemma \ref{lem:pmuval},
there exists a non-zero partial isometry $v\in \cM_{\om,\al}$
such that
\[
\int_{-R}^R\|\al_t(v)-e^{ipt}v\|_2\|\eta_j(-t)\|^2\,dt
<\de\|v\|_2\sum_{j=1}^n\|\eta_j\|^2.
\]
From the inequality $\|\al_t(v)-e^{ipt}v\|_2\leq 2\|v\|_2$,
we obtain
\begin{equation}
\label{eq:alveta}
\int_{-R}^R\|\al_t(v)-e^{ipt}v\|_2^2\|\eta_j(-t)\|^2\,dt
<2\de\|v\|_2^2\sum_{j=1}^n\|\eta_j\|^2.
\end{equation}

Take a representing sequence $(v^\nu)_\nu$ of $v$
such that each $v^\nu$ is a non-zero partial isometry.
To proceed a proof, we need the following claim.

\begin{clam}
For each $j=1,\dots,n$, the following holds:
\begin{equation}
\lim_{\nu\to\om}
\|\pi_\al(v^\nu)\eta_j-(v^\nu\oti {\bm e}_{-p})\eta_j\|^2
<
21\de\|v\|_2^2\sum_{j=1}^n\|\eta_j\|^2.
\label{eq:6deltav}
\end{equation}
\end{clam}
\begin{proof}[Proof of Claim]
Let $\ka>0$ so that $8\ka^2<\de$.
We fix $j$.
Take $\zeta_1,\dots,\zeta_m\in H$ and continuous functions
$f_1,\dots,f_m$ such that
$\supp f_k\subs[-R,R]$ for all $k$,
and
$\eta_j^o:=\sum_{k=1}^m \zeta_k\oti f_k$
satisfies
$\|\eta_j-\eta_j^o\|<\ka\|\eta_j\|$.
Then
\begin{align*}
\|\pi_\al(v^\nu)\eta_j^o-(v^\nu\oti {\bm e}_{-p})\eta_j^o\|^2
&=
\int_{-R}^R
\|(\al_{-t}(v^\nu)-e^{-ipt}v^\nu)\eta_j^o(t)\|^2\,dt
\\
&=
\int_{-R}^R
\|\sum_{k=1}^m f_k(t)
(\al_{-t}(v^\nu)-e^{-ipt}v^\nu)\zeta_k\|^2\,dt.
\end{align*}

Note that $(v^\nu)_\nu$ is $(\al,\om)$-equicontinuous.
Hence the following functions
\[
\|\sum_{k=1}^m f_k(t)
(\al_{-t}(v^\nu)-e^{-ipt}v^\nu)\zeta_k\|^2
=
\sum_{k,\ell=1}^m
f_k(t)\overline{f_\ell(t)}
\langle|\al_{-t}(v^\nu)-e^{-ipt}v^\nu|^2\zeta_k,\zeta_\ell\rangle
\]
converge to
\[
\sum_{k,\ell=1}^m
f_k(t)\overline{f_\ell(t)}
\|\al_{-t}(v)-e^{-ipt}v\|_2^2
\langle\zeta_k,\zeta_\ell\rangle
=
\|\al_{-t}(v)-e^{-ipt}v\|_2^2
\|\eta_j^o(t)\|^2.
\]
uniformly on $[-R,R]$ as $\nu\to\om$.
Thus we have
\begin{align}
\lim_{\nu\to\om}
\|\pi_\al(v^\nu)\eta_j^o-(v^\nu\oti {\bm e}_{-p})\eta_j^o\|^2
&=
\int_{-R}^R
\|\al_{-t}(v)-e^{-ipt}v\|_2^2\|\eta_j^o(t)\|^2\,dt
\notag
\\
&\leq
\int_{-R}^R
2\|\al_{-t}(v)-e^{-ipt}v\|_2^2\|\eta_j^o(t)-\eta_j(t)\|^2\,dt
\notag
\\
&\quad+
\int_{-R}^R
2\|\al_{-t}(v)-e^{-ipt}v\|_2^2\|\eta_j(t)\|^2\,dt
\notag
\\
&\leq
8\|v\|_2^2\|\eta_j^o-\eta_j\|^2
+
4\de\|v\|_2^2
\sum_{j=1}^n\|\eta_j\|^2
\quad
\mbox{by }(\ref{eq:alveta})
\notag
\\
&\leq
(8\ka^2+4\de)\|v\|_2^2\sum_{j=1}^n\|\eta_j\|^2
\notag
\\
&<5\de\|v\|_2^2\sum_{j=1}^n\|\eta_j\|^2.
\label{eq:5deltav}
\end{align}
From (\ref{eq:pialx}), (\ref{eq:5deltav})
and the following inequality:
\begin{align*}
\|\pi_\al(v^\nu)\eta_j-(v^\nu\oti {\bm e}_{-p})\eta_j\|^2
&\leq
4\|\pi_\al(v^\nu)(\eta_j-\eta_j^o)\|^2
+
4\|(v^\nu\oti {\bm e}_{-p})(\eta_j-\eta_j^o)\|^2
\\
&\quad+
4\|\pi_\al(v^\nu)\eta_j^o-(v^\nu\oti {\bm e}_{-p})\eta_j^o\|^2,
\end{align*}
we obtain
\begin{align*}
\lim_{\nu\to\om}
\|\pi_\al(v^\nu)\eta_j-(v^\nu\oti {\bm e}_{-p})\eta_j\|^2
&\leq
8\|v\|_2^2\|\eta_j-\eta_j^o\|^2
+
5\de\|v\|_2^2\sum_{j=1}^n\|\eta_j\|^2
\\
&\leq
(8\ka^2+20\de)\|v\|_2^2\sum_{j=1}^n\|\eta_j\|^2
\\
&<
21\de\|v\|_2^2\sum_{j=1}^n\|\eta_j\|^2.
\end{align*}
Hence Claim follows.
\end{proof}

In the following inequality:
\begin{align*}
\|\pi_\al(v^\nu)\xi_j-\hal_p(\xi_j)\pi_\al(v^\nu)\|
&\leq
\|\pi_\al(v^\nu)(\xi_j-\eta_j)\|
+\|\hal_p(\eta_j-\xi_j)\pi_\al(v^\nu)\|
\\
&\quad+
\|\pi_\al(v^\nu)\eta_j-\hal_p(\eta_j)\pi_\al(v^\nu)\|,
\end{align*}
we let $\nu\to\om$ and then
\begin{align}
\lim_{\nu\to\om}\|\pi_\al(v^\nu)\xi_j-\hal_p(\xi_j)\pi_\al(v^\nu)\|
&\leq
2\|v\|_2\|\xi_j-\eta_j\|
+
\lim_{\nu\to\om}\|\pi_\al(v^\nu)\eta_j-\hal_p(\eta_j)\pi_\al(v^\nu)\|
\notag\\
&<
2\de\|v\|_2\|\xi_j\|
+
\lim_{\nu\to\om}\|\pi_\al(v^\nu)\eta_j
-\hal_p(\eta_j)\pi_\al(v^\nu)\|.
\label{eq:pialveta}
\end{align}
On the last term, we have
$\hal_p(\eta_j)\pi_\al(v^\nu)=(1\oti {\bm e}_{-p})\eta_j(v^\nu\oti1)$,
where $\eta_j(v^\nu\oti1)$ means that $(J (v^\nu)^*J\oti1)\eta_j$.
Thus
\begin{align*}
\|\pi_\al(v^\nu)\eta_j-\hal_p(\eta_j)\pi_\al(v^\nu)\|
&\leq
\|\pi_\al(v^\nu)\eta_j-(v^\nu\oti {\bm e}_{-p})\eta_j\|
+
\|[v^\nu\oti1,\eta_j]\|.
\end{align*}
Since $(v^\nu)_\nu$ is $\om$-central,
we have
\begin{align}
\lim_{\nu\to\om}
\|\pi_\al(v^\nu)\eta_j-\hal_p(\eta_j)\pi_\al(v^\nu)\|^2
&\leq
21\de\|v\|_2^2\sum_{j=1}^n\|\eta_j\|^2
\quad
\mbox{by }(\ref{eq:6deltav})
\notag
\\
&<
21\de(1-\de)^{-2}\|v\|_2^2\sum_{j=1}^n\|\xi_j\|^2.
\label{eq:pialdev}
\end{align}

By (\ref{eq:pialveta}) and (\ref{eq:pialdev}),
we get
\begin{align*}
\lim_{\nu\to\om}\|\pi_\al(v^\nu)\xi_j-\hal_p(\xi_j)\pi_\al(v^\nu)\|^2
&\leq
(8\de^2+42\de(1-\de)^{-2})
\|v\|_2^2\sum_{j=1}^n\|\xi_j\|^2
\\
&<
\vep\|v\|_2^2\sum_{j=1}^n\|\xi_j\|^2
\\
&=
\lim_{\nu\to\om}
\vep\sum_{j=1}^n\|\pi_\al(v^\nu)\xi_j\|^2.
\end{align*}
Since $[v^\nu,f]\to0$ as $\nu\to\om$ in the strong$*$ topology
and $\xi_j=\pi_\al(f)\xi_j\pi_\al(f)$,
we have
\[
\lim_{\nu\to\om}\|\pi_\al(f v^\nu f)\xi_j-\hal_p(\xi_j)\pi_\al(f v^\nu f)\|^2
<
\lim_{\nu\to\om}
\vep\sum_{j=1}^n\|\pi_\al(f v^\nu f)\xi_j\|^2.
\]
Note that $fvf=fv\neq0$ in $\cM^\om$.
Hence $x:=fv^\nu f$ does the job
for a sufficiently large $\nu$.
\end{proof}

\begin{lem}\label{lem:uax}
Let $\cN$ be a von Neumann algebra
and $L^2(\cN)$ its standard Hilbert space.
Then for $x,y\in \cN$
and $\xi,\eta\in L^2(\cN)$,
one has
\begin{align*}
&\int_0^\infty
\|u_{\sqrt{a}}(x)\xi-\eta u_{\sqrt{a}}(y)\|^2\,da
\\
&\leq
4(\|x\xi-\eta y\|+\|x^* \eta-\xi y^*\|)
(\|x\xi\|+\|x^*\eta\|+\|\xi y^*\|+\|\eta y\|).
\end{align*}
\end{lem}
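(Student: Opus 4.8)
\emph{Setup.} Write $x=v|x|$ and $y=w|y|$ for the polar decompositions, so that by the definition of the $u$-function of \cite{Co-inj} one has $u_{\sqrt a}(x)=v\,E_{(\sqrt a,\infty)}(|x|)$, similarly for $y$, and $u_{\sqrt a}(x^*)=u_{\sqrt a}(x)^*$ (the latter since the polar decomposition of $x^*$ is $v^*|x^*|$ with $|x^*|=v|x|v^*$). The only analytic input needed about these objects is the layer-cake identity $\int_0^\infty E_{(\sqrt a,\infty)}(|x|)\,da=|x|^2$; combined with $v^*v\ge E_{(\sqrt a,\infty)}(|x|)$ for $a>0$, it gives
\[
\int_0^\infty\|u_{\sqrt a}(x)\xi\|^2\,da=\langle x^*x\xi,\xi\rangle=\|x\xi\|^2,\qquad \int_0^\infty\|\eta\,u_{\sqrt a}(y)\|^2\,da=\|\eta y\|^2,
\]
the second by the same computation performed for the right action. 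These handle the ``diagonal'' terms of $\|u_{\sqrt a}(x)\xi-\eta u_{\sqrt a}(y)\|^2=\|u_{\sqrt a}(x)\xi\|^2+\|\eta u_{\sqrt a}(y)\|^2-2\Re\langle u_{\sqrt a}(x)\xi,\eta u_{\sqrt a}(y)\rangle$.

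\emph{Reduction to a balanced inequality.} The plan is to pass to $\cP:=M_4(\cN)$, with standard Hilbert space $L^2(\cP)=M_4(L^2(\cN))$ on which $\cP$ acts by matrix multiplication from both sides, and to set
\[
X:=e_{13}\oti x+e_{31}\oti x^*+e_{24}\oti y+e_{42}\oti y^*\in\cP,\qquad \Xi:=e_{12}\oti\eta+e_{34}\oti\xi\in L^2(\cP),
\]
where $e_{ij}$ are the matrix units of $M_4$. Then $X=X^*$, $|X|=\diag(|x^*|,|y^*|,|x|,|y|)$, and computing the polar decomposition entrywise yields $u_{\sqrt a}(X)=e_{13}\oti u_{\sqrt a}(x)+e_{31}\oti u_{\sqrt a}(x)^*+e_{24}\oti u_{\sqrt a}(y)+e_{42}\oti u_{\sqrt a}(y)^*$. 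Multiplying out the blocks one finds
\[
u_{\sqrt a}(X)\Xi-\Xi u_{\sqrt a}(X)=e_{14}\oti\big(u_{\sqrt a}(x)\xi-\eta u_{\sqrt a}(y)\big)+e_{32}\oti\big(u_{\sqrt a}(x)^*\eta-\xi u_{\sqrt a}(y)^*\big),
\]
and likewise $X\Xi-\Xi X=e_{14}\oti(x\xi-\eta y)+e_{32}\oti(x^*\eta-\xi y^*)$, $X\Xi=e_{14}\oti(x\xi)+e_{32}\oti(x^*\eta)$, $\Xi X=e_{14}\oti(\eta y)+e_{32}\oti(\xi y^*)$. Hence $\|X\Xi-\Xi X\|\le\|x\xi-\eta y\|+\|x^*\eta-\xi y^*\|$, $\|X\Xi\|+\|\Xi X\|\le\|x\xi\|+\|x^*\eta\|+\|\xi y^*\|+\|\eta y\|$, and, reading off the $(1,4)$-entry, $\|u_{\sqrt a}(x)\xi-\eta u_{\sqrt a}(y)\|\le\|u_{\sqrt a}(X)\Xi-\Xi u_{\sqrt a}(X)\|$. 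So the lemma follows once one knows, for any von Neumann algebra $\cP$, any $X\in\cP$ and $\Xi\in L^2(\cP)$,
\[
\int_0^\infty\|u_{\sqrt a}(X)\Xi-\Xi u_{\sqrt a}(X)\|^2\,da\le 4\,\|X\Xi-\Xi X\|\,(\|X\Xi\|+\|\Xi X\|).
\]

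\emph{Proof of the balanced inequality and the main obstacle.} First I would expand the square and apply the diagonal identities of the setup (now with $X$ in place of $x$, for left and right actions on $\Xi$): the left-hand side equals $\|X\Xi\|^2+\|\Xi X\|^2-2\Re\int_0^\infty\langle u_{\sqrt a}(X)\Xi,\Xi u_{\sqrt a}(X)\rangle\,da$. It remains to estimate this ``defect'' by $4\|X\Xi-\Xi X\|(\|X\Xi\|+\|\Xi X\|)$. Writing $X=U|X|$ and using the layer-cake identity in the form $\int_0^\infty u_s(X)\,ds=X$ with $u_s(X):=U\,E_{(s,\infty)}(|X|)$, so that $X\Xi-\Xi X=\int_0^\infty\big(u_s(X)\Xi-\Xi u_s(X)\big)\,ds$, the family $s\mapsto E_{(s,\infty)}(|X|)$ is decreasing and the required bound is the classical two-sided Powers--St\o rmer/spectral estimate: in the commuting case it rests on the scalar identity $\int_0^\infty\big(1_{(s,\infty)}(t)-1_{(s,\infty)}(t')\big)^2\,ds=|t-t'|$ together with $\langle|h-k|\zeta,\zeta\rangle\le\|(h-k)\zeta\|\,\|\zeta\|$ for positive $h,k$, and this is precisely the content of \cite[Lemma 1.2.5, Lemma 1.2.6]{Co-inj}. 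I expect the genuine difficulty of the whole argument to be concentrated here: once one leaves the abelian situation, the left action of $u_{\sqrt a}(X)$ and its right action no longer admit a joint spectral decomposition, so the scalar computation must be replaced by the noncommutative spectral inequality of \cite{Co-inj}; one then reproduces or cites that two-line argument and tracks the constant $4$ through the elementary bounds $\sqrt{a^2+b^2}\le a+b$. Everything else — the polar-decomposition identities and the $4\times4$ bookkeeping — is routine.
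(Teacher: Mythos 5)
Your proposal is correct and follows essentially the same route as the paper: the paper symmetrizes via $H=\bigl(\begin{smallmatrix}0&x^*\\ x&0\end{smallmatrix}\bigr)$, $K=\bigl(\begin{smallmatrix}0&y^*\\ y&0\end{smallmatrix}\bigr)$, $\zeta=\mathrm{diag}(\xi,\eta)$ and then passes to $H'=\mathrm{diag}(H,K)$, $\zeta'=e_{12}\otimes\zeta$ to turn the mixed expression into a genuine commutator, which is exactly your $4\times4$ pair $(X,\Xi)$ up to a permutation of the matrix units. The key input is the same in both cases, namely the commutator estimate $\int_0^\infty\|[u_{\sqrt a}(X),\Xi]\|^2\,da\le 4\|[X,\Xi]\|(\|X\Xi\|^2+\|\Xi X\|^2)^{1/2}$ from the proof of \cite[Proposition IX.1.22]{TaII}, which you correctly identify as the only nontrivial analytic step.
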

\begin{proof}
Let
\[
H:=
\begin{pmatrix}
0&x^*\\
x & 0
\end{pmatrix}
,
\quad
K:=
\begin{pmatrix}
0&y^*\\
y & 0
\end{pmatrix}
,\quad
\zeta
:=
\begin{pmatrix}
\xi&0\\
0&\eta
\end{pmatrix}
.
\]
Then we have
\[
u_{\sqrt{a}}(H)
=
\begin{pmatrix}
0& u_{\sqrt{a}}(x)^*\\
u_{\sqrt{a}}(x)&0
\end{pmatrix}
,
\quad
u_{\sqrt{a}}(K)
=
\begin{pmatrix}
0& u_{\sqrt{a}}(y)^*\\
u_{\sqrt{a}}(y)&0
\end{pmatrix}
,
\]
and
\[
H\zeta-\zeta K
=
\begin{pmatrix}
0& x^* \eta-\xi y^*\\
x\xi-\eta y&0
\end{pmatrix}
.
\]

\begin{clam}
\begin{align*}
\int_0^\infty
\|u_{\sqrt{a}}(H)\zeta-\zeta u_{\sqrt{a}}(K)\|^2\,da
&\leq
4\|H\zeta-\zeta K\|(\|H\zeta\|+\|\zeta K\|).
\end{align*}
\end{clam}
\begin{proof}[Proof of Claim]
Applying the proof of \cite[Proposition IX.1.22]{TaII} to
\[
H':=
\begin{pmatrix}
H&0\\
0&K
\end{pmatrix}
,\quad
\zeta'
:=
\begin{pmatrix}
0&\zeta\\
0&0
\end{pmatrix}
,
\]
we obtain
\[
\int_0^\infty
\|[u_{\sqrt{a}}(H'),\zeta']\|^2\,da
\leq
4\|[H',\zeta']\|(\|H'\zeta'\|^2+\|\zeta'H'\|^2)^{1/2}.
\]
This proves the claim.
\end{proof}

By the claim above,
we have
\begin{align*}
&\int_0^\infty
\|u_{\sqrt{a}}(x)\xi-\eta u_{\sqrt{a}}(y)\|^2
+
\|u_{\sqrt{a}}(x)^*\eta-\xi u_{\sqrt{a}}(y)^*\|^2
\,da
\\
&=
\int_0^\infty
\|u_{\sqrt{a}}(H)\zeta-\zeta u_{\sqrt{a}}(K)\|^2
\,da
\\
&\leq
4\|H\zeta-\zeta K\|(\|H\zeta\|+\|\zeta K\|)
\\
&\leq
4(\|x\xi-\eta y\|+\|x^* \eta-\xi y^*\|)
(\|x\xi\|+\|x^*\eta\|+\|\xi y^*\|+\|\eta y\|)
.
\end{align*}
\end{proof}

\begin{lem}
Let $p\in \Sp(\al|_{\cM_{\om,\al}})$
and $f\in\cM^{\rm P}$.
For any $\vep>0$
and $\xi_1,\dots,\xi_n\in \pi_\al(f)(H\oti L^2(\R))\pi_\al(f)$
with $\xi_j=\tJ\xi_j$,
there exists a non-zero partial isometry $v\in \cM$
such that
$v=fvf$ and
\[
\|\pi_\al(v)\xi_j
-\hal_p(\xi_j)\pi_\al(v)\|^2
\leq\vep\sum_{j=1}^n\|\pi_\al(v)\xi_j\|^2
\quad\mbox{for all }
j=1,\dots,n.
\]
\end{lem}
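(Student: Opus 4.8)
The plan is to upgrade the near‑intertwiner supplied by Lemma \ref{lem:SLIAI} to a partial isometry by feeding it into Lemma \ref{lem:uax}, in the same spirit in which Lemma \ref{lem:pmuval} was used inside the proof of Lemma \ref{lem:SLIAI}. First I would apply Lemma \ref{lem:SLIAI} with the same $p$ and $f$, and with an auxiliary $\vep_0>0$ to be fixed later in terms of $\vep$ and $n$, to the vectors $\xi_1,\dots,\xi_n$: this produces a non‑zero $x\in\cM_f$ with $\|\pi_\al(x)\xi_j-\hal_p(\xi_j)\pi_\al(x)\|^2\le\vep_0\Sigma^2$ for every $j$, where $\Sigma^2:=\sum_{k=1}^n\|\pi_\al(x)\xi_k\|^2$. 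Put $X:=\pi_\al(x)$ and $\eta_j:=\hal_p(\xi_j)$, and note that, since $\pi_\al$ is a normal injective $*$‑homomorphism, it commutes with polar decomposition and Borel functional calculus, so $\pi_\al(u_{\sqrt a}(x))=u_{\sqrt a}(X)$ and $u_{\sqrt a}(x)=fu_{\sqrt a}(x)f\in\cM_f$.

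The crux is a modular‑conjugation symmetry that makes the hypothesis $\xi_j=\tJ\xi_j$ usable. The implementing unitary of $\hal_p$ on $H\oti L^2(\R)$ is $1\oti{\bm e}_{-p}$, and the formula $(\tJ\zeta)(s)=J\al_{-s}(\zeta(-s))$ for the modular conjugation of $\cN:=\cM\rti_\al\R$ gives at once $\tJ(1\oti{\bm e}_{-p})\tJ=1\oti{\bm e}_{-p}$; hence $\tJ$ commutes with $\hal_p$ and $\tJ\eta_j=\eta_j$. Because conjugation by $\tJ$ interchanges the left and right actions of $\cN$ on $H\oti L^2(\R)$, this yields $\|X^*\eta_j-\xi_j X^*\|=\|X\xi_j-\eta_j X\|$, $\|\xi_j X^*\|=\|X\xi_j\|$ and $\|\eta_j X\|=\|X^*\eta_j\|$, so that \emph{both} error terms appearing in Lemma \ref{lem:uax} are bounded by the single quantity $\|X\xi_j-\eta_j X\|=\|\pi_\al(x)\xi_j-\hal_p(\xi_j)\pi_\al(x)\|$ controlled in the first step. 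Applying Lemma \ref{lem:uax} in $\cN$ with $x=y=X$, $\xi=\xi_j$, $\eta=\eta_j$, then summing over $j$ and invoking these identities together with Cauchy--Schwarz, I would obtain $\int_0^\infty\sum_j\|u_{\sqrt a}(X)\xi_j-\eta_j u_{\sqrt a}(X)\|^2\,da\le C_n\sqrt{\vep_0}\,\Sigma^2$ for a constant $C_n$ depending only on $n$.

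To conclude I would repeat the Chebyshev/pigeonhole selection from the end of the proof of Lemma \ref{lem:pmuval}. The measure $\mu(da):=\bigl(\sum_k\|u_{\sqrt a}(X)\xi_k\|^2\bigr)\,da$ is a finite normal measure of total mass $\sum_k\|X\xi_k\|^2=\Sigma^2$, since $\int_0^\infty 1_{(\sqrt a,\infty)}(|X|)\,da=|X|^2$. Choosing $\vep_0$ with $C_n\sqrt{\vep_0}<\vep$ and applying Chebyshev to the bad set $\{a:\sum_j\|u_{\sqrt a}(X)\xi_j-\eta_j u_{\sqrt a}(X)\|^2>\vep\sum_k\|u_{\sqrt a}(X)\xi_k\|^2\}$, then enlarging it to an open set exactly as in Lemma \ref{lem:pmuval}, gives $b>0$ lying outside the bad set with $\mu\bigl((0,b)\bigr)<\Sigma^2$, which forces $u_{\sqrt b}(x)\neq0$. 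Then $v:=u_{\sqrt b}(x)$ satisfies $v=fvf$ and, since each summand is at most the whole sum, $\|\pi_\al(v)\xi_j-\hal_p(\xi_j)\pi_\al(v)\|^2\le\vep\sum_k\|\pi_\al(v)\xi_k\|^2$ for every $j$; the degenerate possibility $\Sigma=0$ is harmless, for then $X\xi_j=0$ forces $u_{\sqrt a}(X)\xi_j=0$ and, by the symmetry above, $\eta_j u_{\sqrt a}(X)=0$ as well, so any non‑zero $v=u_{\sqrt b}(x)$ works. The delicate point is the bookkeeping in the second paragraph: the argument succeeds only because $\tJ$ commutes — rather than anticommutes — with $\hal_p$, so that $\eta_j$ stays $\tJ$‑fixed and the adjoint error term in Lemma \ref{lem:uax} collapses onto the one already estimated.
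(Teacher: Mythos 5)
Your proof is correct and follows essentially the same route as the paper's: apply Lemma \ref{lem:SLIAI} with a small auxiliary $\vep_0$, feed the resulting $x$ into Lemma \ref{lem:uax}, use the $\tJ$-fixedness of $\xi_j$ (and of $\hal_p(\xi_j)$, since $\tJ$ commutes with $1\oti{\bm e}_{-p}$) to collapse the adjoint error term onto the controlled one, and average over $a$ to extract $v=u_{\sqrt{a}}(x)$. The only difference is that you spell out two points the paper leaves implicit — the commutation $\tJ\hal_p=\hal_p\tJ$ behind the norm identities, and the Chebyshev selection guaranteeing that the chosen $u_{\sqrt{b}}(x)$ is non-zero — which are correct refinements rather than a different argument.
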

\begin{proof}
Let $\de>0$ with $32\de^{1/2}+16\de\leq\vep$.
By Lemma \ref{lem:SLIAI},
we have a non-zero $x\in \cM_f$ such that
\[
\|\pi_\al(x)\xi_j
-\hal_p(\xi_j)\pi_\al(x)\|^2
\leq\de\sum_{j=1}^n\|\pi_\al(x)\xi_j\|^2
\quad\mbox{for all }
j=1,\dots,n.
\]
Employing the previous lemma,
we obtain
\begin{align*}
&\int_0^\infty
\|\pi_\al(u_{\sqrt{a}}(x))\xi_j
-\hal_p(\xi_j)\pi_\al( u_{\sqrt{a}}(x))\|^2\,da
\\
&\leq
4(\|\pi_\al(x)\xi_j-\hal_p(\xi_j) \pi_\al(x)\|
+\|\pi_\al(x^*)\hal_p(\xi_j)-\xi_j \pi_\al(x^*)\|)
\\
&\quad\cdot
(\|\pi_\al(x)\xi_j\|+\|\pi_\al(x^*)\hal_p(\xi_j)\|
+\|\xi_j \pi_\al(x^*)\|+\|\hal_p(\xi_j)\pi_\al(x)\|)
\\
&\leq
8\|\pi_\al(x)\xi_j-\hal_p(\xi_j) \pi_\al(x)\|
\cdot
2(\|\pi_\al(x)\xi_j\|+\|\hal_p(\xi_j)\pi_\al(x)\|)
\\
&\leq
16
\|\pi_\al(x)\xi_j-\hal_p(\xi_j) \pi_\al(x)\|
\\
&\quad\cdot
(2\|\pi_\al(x)\xi_j\|+\|\pi_\al(x)\xi_j-\hal_p(\xi_j) \pi_\al(x)\|)
\\
&\leq
32\de^{1/2}\sum_{j=1}^n\|\pi_\al(x)\xi_j\|^2
+
16\de\sum_{j=1}^n\|\pi_\al(x)\xi_j\|^2
\\
&\leq
\vep \|\pi_\al(x)\xi_j\|^2
=
\vep\int_0^\infty\|\pi_\al(u_{\sqrt{a}}(x))\xi_j\|^2\,da.
\end{align*}
Thus for some $a>0$,
$v:=u_{\sqrt{a}}(x)$ does the job.
\end{proof}

We can show the following lemma
in the same way as in \cite[Lemma III.3]{Co-III1}.

\begin{lem}
Let $p\in \Sp(\al|_{\cM_{\om,\al}})$
and $f\in\cM^{\rm P}$.
For any $\vep>0$ and
$\xi_1,\dots,\xi_n\in \pi_\al(f)(H\oti L^2(\R))\pi_\al(f)$
with $\tJ\xi_j=\xi_j$,
there exist a non-zero projection $E\in \cM_f$
and a non-zero $x\in \cM_f$ such that
\begin{enumerate}
\item
$\|x\|\leq1$, $x=ExE$;

\item
$\sum_{j=1}^n \|\pi_\al(x)\xi_j\|^2
\geq
\frac{1}{12800}
\sum_{j=1}^n\|\pi_\al(E)\xi_j\|^2$;

\item
$\|[\pi_\al(E),\xi_k]\|^2\leq\vep^2\sum_{j=1}^n\|\pi_\al(E)\xi_j\|^2$,
\quad
$1\leq k\leq n$;

\item
$\|\pi_\al(x)\xi_k-\hal_p(\xi_k)\pi_\al(x)\|^2
\leq \vep^2\sum_{j=1}^n\|\pi_\al(x)\xi_j\|^2$,
\quad
$1\leq k\leq n$.
\end{enumerate}
\end{lem}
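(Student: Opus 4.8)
The plan is to transport the proof of \cite[Lemma III.3]{Co-III1} to the present setting, working in the standard form $H\oti L^2(\R)$ of $\cM\rti_\al\R$ with the dual flow $\hal$ in the role played there by the modular automorphism group. First I would fix a small $\de>0$, to be pinned down at the end in terms of $\vep$, $n$, and the universal constant $12800$, and apply the previous lemma with $\de$ in place of $\vep$ to get a non-zero partial isometry $v\in\cM$ with $v=fvf$ and
\[
\|\pi_\al(v)\xi_k-\hal_p(\xi_k)\pi_\al(v)\|^2\le\de\sum_{j=1}^n\|\pi_\al(v)\xi_j\|^2,\qquad 1\le k\le n.
\]
Put $S^2:=\sum_j\|\pi_\al(v)\xi_j\|^2>0$; since $\pi_\al(v)$ is a partial isometry with initial projection $\pi_\al(v^*v)$ one has $\|\pi_\al(v)\xi_j\|=\|\pi_\al(v^*v)\xi_j\|$. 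Applying $\tJ$ to the displayed estimate, and using $\tJ\xi_j=\xi_j$, that $\tJ\pi_\al(v)\tJ$ is the right action of $v^*$, and $\tJ\hal_p\tJ=\hal_{-p}$, I would record the companion inequalities controlling $\|\pi_\al(v^*)\hal_{-p}(\xi_k)-\xi_k\pi_\al(v^*)\|$ and $\bigl|\,\|\pi_\al(v)\xi_j\|-\|\xi_j\pi_\al(v)\|\,\bigr|$ by $O(\sqrt\de\,S)$; these let one pass freely between the left and right actions of $v$ on the $\xi_j$, and, via the identity $\|[\pi_\al(F),\xi_k]\|^2=2\|\pi_\al(1-F)\,\xi_k\,\pi_\al(F)\|^2$ valid for any projection $F$ because $\tJ\xi_k=\xi_k$, between commutator estimates and off-diagonal estimates.

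The heart of the matter is the production of $E$. Following \cite{Co-III1}, I would apply a spectral-localization argument to a suitable positive contraction $h\in\cM_f$ adapted to $v$ and to the vectors $\xi_j$: writing $h=\int_0^1\la\,dE_\la$ and $F_\la:=E_{[\la,1]}(h)$, I would combine the elementary integrated commutator bound $\int_0^1\|[\pi_\al(F_\la),\eta]\|^2\,d\la\le\|[\pi_\al(h),\eta]\|\cdot\|\eta\|$ with the estimates of the first step to see that $\sum_k\int_0^1\|[\pi_\al(F_\la),\xi_k]\|^2\,d\la$ and $\int_0^1\bigl(\sum_j\|\pi_\al(F_\la)\xi_j\|^2\bigr)\,d\la$ are suitably controlled in terms of $S^2$; a pigeonhole argument over finitely many spectral bands then yields a single band projection $E\in\cM_f$ which simultaneously satisfies (3) and the bound $\sum_j\|\pi_\al(E)\xi_j\|^2\le 12800\,S^2$ and which dominates the relevant corner of the support of $v$. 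This is precisely the step where the constant $1/12800$ is accumulated, and it is the main obstacle: one must balance choosing $E$ small enough that its mass is comparable to $S^2$ against keeping $E$ from badly failing to commute with the $\xi_k$, and then track how all the errors survive the passage from $v$ to $E$.

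Finally, with $E$ fixed, I would take $x$ to be the normalized corner $EvE$, so that $\|x\|\le1$ and $x=ExE$, which is (1). Property (2) then follows since $E$ dominates the support corner of $v$, so $\sum_j\|\pi_\al(x)\xi_j\|^2$ stays comparable to $S^2$ after absorbing the error coming from $E$ failing to commute with the $\xi_j$ (controlled by (3)), and $S^2\ge\tfrac1{12800}\sum_j\|\pi_\al(E)\xi_j\|^2$ by the choice of $E$. Property (4) is obtained by cutting the near-eigenvector estimate for $v$ on both sides by $\pi_\al(E)$, using $\hal_p(\pi_\al(E))=\pi_\al(E)$ and the near-commutation (3) to absorb the resulting errors; choosing $\de$ small enough at the outset that every accumulated error is at most $\vep^2\sum_j\|\pi_\al(x)\xi_j\|^2$ finishes the proof.
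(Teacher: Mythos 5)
Your route is the paper's route: the paper proves this lemma by simply invoking \cite[Lemma III.3]{Co-III1}, i.e. by transporting Connes' spectral-localization argument with $\hal_p$ and $\tJ$ in place of the modular flow and modular conjugation, and your bookends (apply the preceding lemma with a small $\de$ to get a partial isometry $v=fvf$ satisfying the near-intertwining estimate; use the $\tJ$-symmetry identities; finish with $x=EvE$ and absorb errors into $\vep^2$) are all consistent with that.

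The problem is that the one step carrying the content of the lemma is left as a black box. You never say what the positive contraction $h$ is, and the two requirements on the band projection $E=F_\la$ pull against each other in a way that no ``suitable'' but unspecified $h$ resolves: the integrated bound $\int\|[\pi_\al(F_\la),\eta]\|^2\,d\la\le\|[\pi_\al(h),\eta]\|\,\|\eta\|$ only yields (3) on a large set of $\la$'s if $[h,\xi_k]$ is itself already controlled (which for $h$ built from $v$ must be extracted from the intertwining estimate and its $\tJ$-conjugate, since $v$ only \emph{nearly intertwines} $\xi_k$ with $\hal_p(\xi_k)$ rather than nearly commuting with it), while (2) needs both $\sum_j\|\pi_\al(E)\xi_j\|^2\lesssim\sum_j\|\pi_\al(v)\xi_j\|^2$ and the survival of that mass after cutting $v$ on both sides by $E$. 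Your justification for (2) is the assertion that $E$ ``dominates the relevant corner of the support of $v$,'' but a generic spectral band of a generic $h$ has no such property, and the naive candidates fail: $E=v^*v$ does not give $x=ExE$, and $E=v^*v\vee vv^*$ destroys the commutator control since $\|[p\vee q,\xi]\|$ is not bounded by $\|[p,\xi]\|+\|[q,\xi]\|$. It is precisely in reconciling these constraints -- Connes' particular choice of operator, the comparison $\|v\xi_j\|^2\ge\la\|F_\la\xi_j\|^2$ coming from $v^*v\ge\la F_\la$, and the Chebyshev selection of a $\la$ bounded below by an absolute constant -- that the factor $1/12800$ arises, and without exhibiting that construction the pigeonhole you describe does not close.
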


Let $\cP_\cN^\natural$ be the natural cone of
$\cN:=\cM\rti_\al\R$ in the standard Hilbert space
$H\oti L^2(\R)$.

\begin{lem}
Let  $p\in\Sp(\al|_{\cM_{\om,\al}})$
and $\xi_0$ be
a cyclic and separating unit vector for $\cM\rti_\al\R$.
Then for any $\vep>0$ and $\xi_1,\dots,\xi_n\in \cP_\cN^\natural$,
there exists a non-zero $x\in \cM$
such that
\begin{itemize}
\item
$\|x\|\leq1$;

\item
$\|\pi_\al(x)\xi_0\|^2
\geq
1/102400
$;

\item
$\|\pi_\al(x)\xi_k-\hal_p(\xi_k)\pi_\al(x)\|
\leq
\vep\sum_{j=1}^n\|\xi_j\|^2$
for all $k=1,\dots,n$.
\end{itemize}
\end{lem}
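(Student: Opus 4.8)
The plan is to run an exhaustion argument in the spirit of Connes' work on type III$_1$ factors, using the preceding lemma as the single building block. Throughout write $\cN=\cM\rti_\al\R$, and recall that $\hal_p$ is spatially implemented by $1\oti{\bf e}_{-p}$ and that $\tJ\pi_\al(e)\tJ=(JeJ)\oti1$ for $e\in\cM^{\rm P}$, so that $\hal_p$ commutes with every $\tJ\pi_\al(e)\tJ$. If $\xi_1=\dots=\xi_n=0$ there is nothing to prove, so set $M:=(\sum_{k=1}^n\|\xi_k\|^2)^{1/2}>0$; the $\xi_k$ will be fed into the preceding lemma only after rescaling by a large factor $LM$ (with $L$ to be fixed), so that the auxiliary vectors $(LM)^{-1}\xi_k$ have total norm-square $L^{-2}$, while $\xi_0$ is kept as is. Fix a small error budget $\eta_0>0$ depending on $\vep$, $M$, $L$.

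Next I would build, by a maximality argument, a family $\{(E_i,x_i)\}_i$ of pairwise orthogonal nonzero projections $E_i\in\cM^{\rm P}$ together with $x_i\in\cM$ satisfying $x_i=E_ix_iE_i$, $\|x_i\|\le1$, and — writing $G_i$ for the sum of the $E_j$ preceding $i$ and $\xi^{(i)}_\ell:=\pi_\al(1-G_i)\xi_\ell\pi_\al(1-G_i)$, a $\tJ$-fixed vector localized under $\pi_\al(1-G_i)$ — the four conclusions of the preceding lemma for the data $(f,E,x)=(1-G_i,E_i,x_i)$, the vectors $\bigl(\xi_0,(LM)^{-1}\xi_1,\dots,(LM)^{-1}\xi_n\bigr)$ localized as above, and a parameter $\vep_i>0$ with $\sum_i\vep_i<\eta_0$. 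Since the preceding lemma produces such $(E_i,x_i)$ inside $\cM_{1-G_i}$ whenever $1-G_i\neq0$, maximality forces $\sum_iE_i=1$; here one needs the usual care to keep the $\vep_i$ summable through the (at most countable, by separability) transfinite step.

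The technical heart is transferring the localized estimates back to the original $\xi_\ell$. The key point is the identity $\pi_\al(E_i)\,\xi_\ell\,\pi_\al(E_j)=-\pi_\al(E_i)\,[\pi_\al(E_j),\xi^{(j)}_\ell]$ and its mirror $\pi_\al(E_j)\,\xi_\ell\,\pi_\al(E_i)=[\pi_\al(E_j),\xi^{(j)}_\ell]\,\pi_\al(E_i)$, valid whenever $j$ precedes $i$ (both follow from $\xi^{(j)}_\ell=\pi_\al(1-G_j)\xi_\ell\pi_\al(1-G_j)$, $E_i,E_j\le 1-G_j$, and $E_iE_j=0$). Combining these with conclusion (3) — which bounds $\|[\pi_\al(E_j),\xi^{(j)}_\ell]\|$ by $\vep_j$ times a factor at most $\sqrt2$ (for $\ell=0$) or $\sqrt2\,LM$ (for $\ell\ge1$) — a weighted Cauchy--Schwarz estimate $\sum_i\bigl\|\sum_{j<i}a_{ji}\bigr\|^2\le(\sum_j\vep_j)\sum_j\vep_j^{-1}\sum_{i>j}\|a_{ji}\|^2$, and the orthogonality relations $\sum_i\|\pi_\al(E_i)\zeta\|^2=\sum_i\|\zeta\pi_\al(E_i)\|^2=\|\zeta\|^2$, one gets $\sum_i\|\pi_\al(E_i)(\xi_\ell-\xi^{(i)}_\ell)\|^2$ and $\sum_i\|(\xi_\ell-\xi^{(i)}_\ell)\pi_\al(E_i)\|^2$ bounded by $O(\eta_0^2)$ (times $L^2M^2$ for $\ell\ge1$).

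Finally I would put $x:=\sum_i x_i$: since $x_i^*x_j=x_ix_j^*=0$ for $i\neq j$ this converges $*$-strongly, $\|x\|\le1$, and for any $\eta$ the families $\{\pi_\al(x_i)\eta\}_i$ and $\{\tJ\pi_\al(x_i^*)\tJ\,\eta\}_i$ are orthogonal. Hence $\|\pi_\al(x)\xi_0\|^2=\sum_i\|\pi_\al(x_i)\xi_0\|^2$; using $\pi_\al(x_i)(\xi_0-\xi^{(i)}_0)=\pi_\al(x_i)\pi_\al(E_i)(\xi_0-\xi^{(i)}_0)$, the transfer bound, conclusion (2), the inequality $\sum_i\|\pi_\al(x_i)\xi^{(i)}_\ell\|^2\le\|\xi_\ell\|^2$ for the auxiliary vectors (from $x_i^*x_i\le E_i$), and $\sum_i\|\pi_\al(E_i)\xi_0\|^2=\|\xi_0\|^2=1$, this exceeds $1/(8\cdot12800)=1/102400$ once $\eta_0$ and $L^{-1}$ are small. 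For the commutator estimate I would decompose $\pi_\al(x_i)\xi_k-\hal_p(\xi_k)\pi_\al(x_i)$ as the sum of (i) $\pi_\al(x_i)(\xi_k-\xi^{(i)}_k)=\pi_\al(x_i)\pi_\al(E_i)(\xi_k-\xi^{(i)}_k)$, (ii) $LM$ times the error of conclusion (4) for the rescaled vector, and (iii) $\bigl(\hal_p(\xi^{(i)}_k-\xi_k)\pi_\al(E_i)\bigr)\pi_\al(x_i)$, where for (iii) one uses that $\hal_p$ commutes with $\tJ\pi_\al(E_i)\tJ$ to reduce to $\|(\xi_k-\xi^{(i)}_k)\pi_\al(E_i)\|$; summing over $i$ and invoking the orthogonality of the three corresponding families, the transfer bounds, and $\sum_i\vep_i<\eta_0$ yields the bound $\vep\sum_{j=1}^n\|\xi_j\|^2$. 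The main obstacle is precisely this last round of bookkeeping — fitting all the accumulated transfer errors and the rescaling factor $LM$ inside $\vep M^2$ while keeping the universal lower bound intact — together with the routine but delicate summability maneuver in the maximality step.
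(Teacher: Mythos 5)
Your architecture — exhaust $1$ by orthogonal projections $E_i$ obtained from the preceding lemma in successive corners, assemble $x=\sum_i x_i$, and transfer the localized estimates back via the off-diagonal blocks $\pi_\al(E_i)\xi_k\pi_\al(E_j)$ — is essentially the paper's argument reorganized: the paper runs the same exhaustion but maintains cumulative invariants (its conditions $(1)$--$(5)$ on $(x,E,\be_1,\dots,\be_n)$, with the accumulated errors stored as the vectors $\be_j$) instead of summing increments at the end, and it extracts the lower bound on $\|\pi_\al(x)\xi_0\|^2$ by padding the vector list with $n^2$ copies of $\xi_0$ rather than by your rescaling of $\xi_1,\dots,\xi_n$ by $(LM)^{-1}$. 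Both of those variations are fine, and your commutator identities, the orthogonality of the families $\{\pi_\al(x_i)\zeta_i\}_i$ and $\{\eta_i\pi_\al(x_i)\}_i$, and the use of $[\hal_p,\tJ\pi_\al(E_i)\tJ]=0$ all check out.

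The genuine gap is in the exhaustion step, and it is caused by your choice of bookkeeping. With a pre-assigned summable budget $\sum_i\vep_i<\eta_0$, neither Zorn's lemma nor a countable greedy induction forces $\sum_iE_i=1$: the union of a chain of admissible families may satisfy $\sum_i\vep_i=\eta_0$ exactly, leaving no budget to extend a maximal family whose projections do not sum to $1$; and a greedy induction with $\vep_m=\eta_0 2^{-m}$ runs for countably many steps but nothing prevents $\sum_m E_m\neq1$ at the end (the lemma gives \emph{some} nonzero $E$ in each corner, with no lower bound on its size). This is precisely why the preceding lemma is stated in mass-weighted form — its conclusions bound the errors by $\vep^2\sum_j\|\pi_\al(E)\xi_j\|^2$ resp.\ $\vep^2\sum_j\|\pi_\al(x)\xi_j\|^2$ — and why the paper's partial order carries the increment condition $\|\be_k'-\be_k\|^2\le\vep^2\sum_j\|\pi_\al(E'-E)\xi_j\|^2$: with the \emph{same} $\vep$ at every stage the extension is always possible whenever the corner is nonzero, and the total error is automatically $\le\vep^2\sum_j\|\xi_j\|^2$ because the masses of the increments telescope. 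Your scheme is repairable by adopting this: keep a fixed $\vep'$ and record only that $\|[\pi_\al(E_j),\xi_k^{(j)}]\|^2\le\vep'^2 m_j$ with $m_j:=\sum_\ell\|\pi_\al(E_j)\xi_\ell^{(j)}\|^2$ and $\sum_j m_j\le C\sum_\ell\|\xi_\ell\|^2$. Note, however, that your weighted Cauchy--Schwarz then no longer closes (it needs $\sum_j\vep_j<\infty$, i.e.\ an $\ell^1$ bound on the commutator norms, which the mass-weighted form does not give). The correct transfer estimate is to observe that the blocks $\pi_\al(E_i)\xi_k\pi_\al(E_j)$, $j<i$, are \emph{pairwise orthogonal} (orthogonal left supports in $i$, orthogonal right supports in $j$), so that
\[
\sum_i\Bigl\|\sum_{j<i}\pi_\al(E_i)\xi_k\pi_\al(E_j)\Bigr\|^2
=\sum_{j}\sum_{i>j}\|\pi_\al(E_i)\xi_k\pi_\al(E_j)\|^2
\le\sum_j\|[\pi_\al(E_j),\xi_k^{(j)}]\|^2,
\]
which the mass-weighted bounds control directly. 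With these two adjustments the proof goes through; as written, the maximality step does not.
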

\begin{proof}
We prove this lemma in a similar way to that of 
\cite[Lemma III.4]{Co-III1}.
Put $d=12800^{-1}$.
Let $\sJ$ be the set of all
$(n+2)$-tuples $(x,E,\beta_1,\dots,\beta_n)$
in $\cM\times\cM^{\rm P}\times (H\oti L^2(\R))^n$
such that
\begin{enumerate}
\item
$\|x\|\leq1$, $x=ExE$;

\item
$\pi_\al(E)\be_j=\be_j$,
$\eta_j:=\xi_j-\be_j-\tJ\be_j\in \cP_\cN^\natural$
and $[\pi_\al(E),\eta_j]=0$,
\quad
$1\leq j\leq n$;

\item
$\|\be_k\|^2\leq \vep^2\sum_{j=1}^n\|E\xi_j\|^2$;

\item
$\sum_{j=1}^n \|\pi_\al(x)\eta_j\|^2
\geq
d/2
\sum_{j=1}^n\|\pi_\al(E)\eta_j\|^2$;

\item
$\|\pi_\al(x)\eta_k-\hal_p(\eta_k)\pi_\al(x)\|^2
\leq \vep^2\sum_{j=1}^n\|\pi_\al(x)\xi_j\|^2$,
\quad
$1\leq k\leq n$.
\end{enumerate}

We will equip $\sJ$ with a partial order as
$(x,E,\beta_1,\dots,\beta_n)\leq(x',E',\beta_1',\dots,\beta_n')$
if
\begin{itemize}
\item
$E\leq E'$;

\item
$x=x'E$;

\item
$\pi_\al(E)\be_j'=\be_j$,
\quad
$1\leq j\leq n$;

\item
$\|\be_k'-\be_k\|^2\leq\vep^2\sum_{j=1}^n\|\pi_\al(E'-E)\xi_j\|^2$,
\quad
$1\leq k\leq n$.
\end{itemize}

Take a maximal element $(x,E,\beta_1,\dots,\beta_n)$.
It suffices to show that $E=1$.
Indeed, suppose that we have proved $E=1$.
Let $\vep>0$ and $\xi_1,\dots,\xi_n\in\cP_\cN^\natural$
with $\|\xi_j\|=1$.
Put $\zeta_j=\xi_0$ for $j=1,\dots,n^2$
and $\zeta_{n^2+k}=\xi_k$ for $k=1,\dots,n$.
Then for $\vep$ and $\{\zeta_j\}_{j=1}^{n^2+n}$,
we have non-zero $x\in\cM_1$ such that
\[
\sum_{j=1}^{n^2+n} \|\pi_\al(x)\zeta_j\|^2
\geq
d/4
\sum_{j=1}^{n^2+n}\|\zeta_j\|^2
=
(n^2+n)d/4,
\]
\[
\|\pi_\al(x)\zeta_k-\hal_p(\zeta_k)\pi_\al(x)\|^2
\leq \vep^2\sum_{j=1}^n\|\pi_\al(x)\zeta_j\|^2.
\]
Readers are referred to
the proof of \cite[Lemma B.1]{Masuda-CH} for a detail.
Then we have
\[
\|\pi_\al(x)\xi_0\|^2
\geq
\left((n^2+n)d/4-n\right)/n^2
=
d/4-(1-d/4)/n.
\]
If we take a sufficiently large $n$,
then we have $\|\pi_\al(x)\xi_0\|^2\geq d/8$.

Suppose on the contrary that $f:=1-E$ is not equal to 0.
Then $\pi_\al(f)\beta_j=0=(J\beta_j)\pi_\al(f)$.
Hence $\pi_\al(f)\eta_j \pi_\al(f)=\pi_\al(f)\xi_j \pi_\al(f)$.
Take a non-zero projection $F\in \cM_f$ and a non-zero $y\in \cM_F$
which satisfy the conditions in the previous lemma
for $\pi_\al(f)\xi_1 \pi_\al(f),\dots,\pi_\al(f)\xi_n \pi_\al(f)$.

We set
\[
x':=x+y,
\quad
E':=E+F,
\quad
\be_j':=\be_j+\pi_\al(F)\eta_j\pi_\al(f-F).
\]
We will check that $(x',E',\beta_1',\dots,\beta_n')$ belongs to $\sJ$.
The condition (1) is trivial.
On the condition (2),
we put $\eta_j':=\xi_j-\be_j'-J\be_j'$.
Then
\begin{align*}
\eta_j'
&=\eta_j-\pi_\al(F)\eta_j\pi_\al(f-F)-\pi_\al(f-F)\eta_j\pi_\al(F)
\\
&=
\pi_\al(E)\eta_j \pi_\al(E)+\pi_\al(F)\eta_j \pi_\al(F)
+\pi_\al(f-F)\eta_j\pi_\al(f-F)\\
&=
\pi_\al(F)\eta_j \pi_\al(F)+\pi_\al(1-F)\eta_j\pi_\al(1-F)
\in\cP_\cN^\natural,
\end{align*}
and
$[\pi_\al(E'),\eta_j']=0$.

Next, the condition (3) is verified as follows:
\begin{align*}
\|\be_k'\|^2
&=
\|\be_k\|^2+\|\pi_\al(F)\eta_j\pi_\al(f-F)\|^2
\\
&\leq
\vep^2\sum_{j=1}^n\|\pi_\al(E)\xi_j\|^2
+
\|\pi_\al(F)[\pi_\al(f)\eta_j\pi_\al(f),\pi_\al(F)]\|^2
\\
&\leq
\vep^2\sum_{j=1}^n\|\pi_\al(E)\xi_j\|^2
+
\vep^2\sum_{j=1}^n\|\pi_\al(F)\pi_\al(f)\xi_j\pi_\al(f)\|^2
\\
&\leq
\vep^2\sum_{j=1}^n\|\pi_\al(E')\xi_j\|^2.
\end{align*}

We will verify the condition (4).
Since $x=xE$ and $[\pi_\al(E),\eta_j]=0$, we have
\begin{align}
\pi_\al(x')\eta_j'
&=
\pi_\al(x+y)(\pi_\al(F)\eta_j \pi_\al(F)+\pi_\al(1-F)\eta_j\pi_\al(1-F))
\notag\\
&=\pi_\al(x)\eta_j\pi_\al(1-F)
+
\pi_\al(y)\eta_j\pi_\al(F)
\notag\\
&=
\pi_\al(x)\eta_j
+
\pi_\al(y)\eta_j\pi_\al(F),
\label{eq:xeta}
\end{align}
and
\begin{align}
\hal_p(\eta_j')\pi_\al(x')
&=
\pi_\al(1-F)\hal_p(\eta_j)\pi_\al(x)+\pi_\al(F)\hal_p(\eta_j)\pi_\al(y)
\notag\\
&=
\hal_p(\eta_j)\pi_\al(x)+\pi_\al(F)\hal_p(\eta_j)\pi_\al(y).
\label{eq:halpetax}
\end{align}

Since
\begin{align*}
\|\pi_\al(y)\eta_j\pi_\al(f)\|^2
&=
\|\pi_\al(y)\eta_j\pi_\al(F)\|^2
+
\|\pi_\al(y)\eta_j\pi_\al(f-F)\|^2
\\
&=
\|\pi_\al(y)\eta_j\pi_\al(F)\|^2
+
\|\pi_\al(y)[\pi_\al(f)\eta_j\pi_\al(f),\pi_\al(F)\|^2
\\
&\leq
\|\pi_\al(y)\eta_j\pi_\al(F)\|^2
+
\vep^2\sum_{j=1}^n
 \|\pi_\al(F)\pi_\al(f)\eta_j\pi_\al(f)\|^2,
\end{align*}
we have 
\[
\|\pi_\al(y)\eta_j\pi_\al(F)\|^2
\geq
\|\pi_\al(y)\eta_j\pi_\al(f)\|^2
-
\vep^2\sum_{j=1}^n \|\pi_\al(F)\pi_\al(f)\eta_j\pi_\al(f)\|^2.
\]

Then it follows that 
\begin{align*}
\|\pi_\al(x')\eta_j'\|^2
&=
\|\pi_\al(x)\eta_j\|^2
+
\|\pi_\al(y)\eta_j\pi_\al(F)\|^2
\\
&\geq
d/2\sum_{j=1}^n\|\pi_\al(E)\eta_j\|^2
+
\|\pi_\al(y)\eta_j\pi_\al(f)\|^2
\\
&\quad-
\vep^2\sum_{j=1}^n \|\pi_\al(F)\pi_\al(f)\eta_j\pi_\al(f)\|^2
\\
&\geq
d/2\sum_{j=1}^n\|\pi_\al(E)\eta_j\|^2
+
(d-\vep^2)\sum_{j=1}^n\|\pi_\al(F)\pi_\al(f)\eta_j\pi_\al(f)\|^2
\\
&\geq
d/2\sum_{j=1}^n\|\pi_\al(E)\pi_\al(1-F)\eta_j\pi_\al(1-F)\|^2
\\
&\quad+
d/2\sum_{j=1}^n\|\pi_\al(F)\eta_j\pi_\al(F)\|^2
\\
&=
d/2\sum_{j=1}^n\|\pi_\al(E)\pi_\al(1-F)\eta_j\pi_\al(1-F)
+\pi_\al(F)\eta_j\pi_\al(F)\|^2
\\
&=
d/2\sum_{j=1}^n
\|\pi_\al(E+F)(\pi_\al(1-F)\eta_j\pi_\al(1-F)
+\pi_\al(F)\eta_j\pi_\al(F))\|^2
\\
&=
d/2\sum_{j=1}^n
\|\pi_\al(E')\eta_j'\|^2,
\end{align*}
which shows (4).

Using $\pi_\al(F)\be_j=0=(\tJ\be_j)\pi_\al(F)$
in (\ref{eq:xeta}) and (\ref{eq:halpetax}),
we can verify (5) as follows:
\begin{align*}
&\|\pi_\al(x')\eta_k'-\hal_p(\eta_k')\pi_\al(x')\|^2
\\
&=
\|\hal_p(\eta_k)\pi_\al(x)-\hal_p(\eta_k)\pi_\al(x)\|^2
\\
&\quad+
\|\pi_\al(y)\eta_k\pi_\al(F)-\pi_\al(F)\hal_p(\eta_k)\pi_\al(y)\|^2
\\
&\leq
\|\hal_p(\eta_k)\pi_\al(x)-\hal_p(\eta_k)\pi_\al(x)\|^2
\\
&\quad+
\|\pi_\al(y)\cdot\pi_\al(f)\xi_k\pi_\al(f)-\pi_\al(f)\hal_p(\xi_k)\pi_\al(f)
\cdot\pi_\al(y)\|^2
\\
&\leq
\vep^2\sum_{j=1}^n\|\pi_\al(x)\xi_j\|^2
+
\vep^2\sum_{j=1}^n\|\pi_\al(y)\xi_j\|^2
\\
&=
\vep^2\sum_{j=1}^n\|\pi_\al(x')\xi_j\|^2.
\end{align*}

Therefore, $(x',E',\beta_1',\dots,\beta_n')$ belongs to $\sJ$,
and is strictly larger than $(x,E,\beta_1,\dots,\beta_n)$.
This is a contradiction.
Hence $E=1$.
\end{proof}

Thus we have proved
there exists a sequence $(x_n)_n$ in $\cM_1$
which does not converge to 0 in the strong$*$
topology, and
\[
\lim_{n\to\infty}\|\pi_\al(x_n)\xi-\hal_p(\xi)\pi_\al(x_n)\|=0
\quad
\mbox{for all }
\xi\in H\oti L^2(\R).
\]
Note that $(x_n)_n$ is a central sequence in $\cM$.
Indeed, let $\xi\in H\oti L^2(\R)$
and set $\vph(a):=\langle \pi_\al(a)\xi,\xi\rangle$
for $a\in \cM$.
Then for $y\in\cM_1$,
\begin{align*}
[x_n,\vph](y)
&=
\langle \pi_\al(yx_n)\xi,\xi\rangle
-
\langle \pi_\al(x_ny)\xi,\xi\rangle
\\
&=
\langle \pi_\al(y)
\left(\pi_\al(x_n)\xi-\hal_p(\xi)\pi_\al(x_n)\right),\xi\rangle
\\
&\quad+
\langle \pi_\al(y)\hal_p(\xi)\pi_\al(x_n),\xi\rangle
-
\langle \pi_\al(x_ny)\xi,\xi\rangle
\\
&=
\langle \pi_\al(y)
\left(\pi_\al(x_n)\xi-\hal_p(\xi)\pi_\al(x_n)\right),\xi\rangle
\\
&\quad+
\langle \pi_\al(y)\hal_p(\xi),\xi\pi_\al(x_n^*)\rangle
-
\langle \pi_\al(x_ny)\xi,\xi\rangle
\\
&=
\langle \pi_\al(y)
\left(\pi_\al(x_n)\xi-\hal_p(\xi)\pi_\al(x_n)\right),\xi\rangle
\\
&\quad+
\langle \pi_\al(y)\hal_p(\xi),
\left(\xi\pi_\al(x_n^*)-\pi_\al(x_n^*)\hal_p(\xi)\right)\rangle
\\
&\quad
+
\langle \pi_\al(y)\hal_p(\xi),
\pi_\al(x_n^*)\hal_p(\xi)\rangle
-
\langle \pi_\al(x_ny)\xi,\xi\rangle
\\
&=
\langle \pi_\al(y)
\left(\pi_\al(x_n)\xi-\hal_p(\xi)\pi_\al(x_n)\right),\xi\rangle
\\
&\quad+
\langle \pi_\al(y)\hal_p(\xi),
\left(\xi\pi_\al(x_n^*)-\pi_\al(x_n^*)\hal_p(\xi)\right)\rangle
\end{align*}
holds, 
since
\begin{align*}
\langle \pi_\al(y)\hal_p(\xi),
\pi_\al(x_n^*)\hal_p(\xi)\rangle
&=
\langle \pi_\al(x_ny)\hal_p(\xi),\hal_p(\xi)\rangle
=
\langle \hal_p(\pi_\al(x_ny)\xi),\hal_p(\xi)\rangle
\\
&=
\langle \pi_\al(x_ny)\xi,\xi\rangle.
\end{align*}
Thus
\[
\|[x_n,\vph]\|_{\cM_*}
\leq
\|\pi_\al(x_n)\xi-\hal_p(\xi)\pi_\al(x_n)\|
+
\|\xi\pi_\al(x_n^*)-\pi_\al(x_n^*)\hal_p(\xi)\|,
\]
and the right hand side
converges to 0 as $n\rightarrow \infty$.

\begin{thm}
Suppose that $p\in\Sp(\al|_{\cM_{\om,\al}})$.
Then there exists a unitary central sequence $(u_n)_n$
in $\cM$ such that
$\hal_p=\lim_{n\to\infty}\Ad\pi_\al(u_n)$.
\end{thm}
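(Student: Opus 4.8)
The plan is to reduce the statement to the existence of a single unitary $u\in\cM_{\om,\al}$ with $\al_t(u)=e^{ipt}u$ for all $t\in\R$. Granting such a $u$, the argument in the proof of Lemma \ref{lem:Rohlin-ptwise} (applied to $u$ in place of a Rohlin unitary) produces a unitary central sequence $(u_n)_n$ in $\cM$ with $\al_t(u_n)-e^{ipt}u_n\to0$ compact uniformly in the strong topology, and then Lemma \ref{lem:Rohdual} gives $\hal_p=\lim_n\Ad\pi_\al(u_n)$. So the whole point is to manufacture a unitary $\al$-eigenvector for $p$ inside $\cM_{\om,\al}$ out of the contractive, non-null, central intertwiner $(x_n)_n$ already constructed above.

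First I would push the data of $(x_n)_n$ into $\cM_{\om,\al}$. Testing the relation $\|\pi_\al(x_n)\xi-\hal_p(\xi)\pi_\al(x_n)\|\to0$ against vectors $\xi=\eta\oti g$ with $\eta\in H$, $g\in L^2(\R)$, and using centrality of $(x_n)_n$ (Lemma \ref{lem:centralcpct}) to trade the right action of $x_n$ for its left action, one obtains $\|(\al_{-s}(x_n)-e^{-ips}x_n)\eta\|\to0$ in $L^2_{\mathrm{loc}}(\R,ds)$, and symmetrically for adjoints. Consequently, for $f(s):=e^{ips}1_{[-1,0]}(s)$ the smoothed sequence $(\al_f(x_n))_n$ differs from $(x_n)_n$ by a strong$*$-null sequence, while it lies in $\sE_\al^\om\cap\sC_\om$ by Lemma \ref{lem:smoothing}. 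Hence $x:=\pi_\om((x_n)_n)$ is a nonzero element of $\cM_{\om,\al}$ (nonzero because $(x_n)_n$ does not tend to $0$ strong$*$), and since $\al$ is an honest flow on $\cM_{\om,\al}$ (Lemma \ref{lem:flowequipart}), together with Lemma \ref{lem:alfxnu} and an approximate-identity argument one checks $\al_t(x)=e^{ipt}x$ for every $t$. Taking the polar decomposition $x=v|x|$ inside the von Neumann algebra $\cM_{\om,\al}$ and invoking uniqueness of polar decomposition, $v\in\cM_{\om,\al}$ is a nonzero partial isometry with $\al_t(v)=e^{ipt}v$, and $e:=v^*v$, $f:=vv^*$ are $\al$-invariant projections with equal central trace.

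It remains to complete $v$ to a \emph{unitary} $\al$-eigenvector, and this is where I expect the main difficulty to lie. I would argue by maximality: call an $\al$-invariant projection $g\in\cM_{\om,\al}$ \emph{good} if $g\cM_{\om,\al}g$ contains a unitary (of $g\cM_{\om,\al}g$) fixed up to the scalar $e^{ipt}$ by $\al$; since goodness is stable under orthogonal sums, Zorn gives a maximal good projection $g_0$, and it suffices to prove $g_0=1$. If $g_0\ne1$, put $h_0=1-g_0$; applying the fast reindexation trick (Lemma \ref{lem:FRT}) with $N=W^*(v,v^*)$ and $F\ni g_0$ transports $v$ to a copy commuting with $g_0$, and cutting by $h_0$ yields a nonzero partial isometry $\al$-eigenvector living in $h_0\cM_{\om,\al}h_0$ whose initial and final projections have trace $\ta_\om(e)\,\ta_\om(h_0)>0$. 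Because $\ta_\om$ is an $\al$-invariant trace on $\cM_{\om,\al}$, complementary $\al$-invariant subprojections of $h_0$ are Murray--von Neumann equivalent; exploiting this together with further reindexed copies (one also needs eigenvectors for $2p,4p,\dots$, obtained as products of reindexed copies) and a suitable limiting/rounding procedure in the $2\times2$ picture, one should organize this eigenvector into a unitary $\al$-eigenvector on a nonzero $\al$-invariant subprojection $g'\le h_0$, contradicting the maximality of $g_0$. Thus $g_0=1$, yielding the desired $u$. The delicate point throughout this last step is the bookkeeping needed to make the initial and final projections of the accumulated partial isometry match up, since these need not commute; the $\al$-invariant trace and the freedom in the reindexation are exactly what is needed to push it through.
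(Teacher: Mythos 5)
Your first two steps are sound and essentially reproduce arguments already in the paper: testing the relation $\|\pi_\al(x_n)\xi-\hal_p(\xi)\pi_\al(x_n)\|\to0$ against elementary tensors, trading right for left actions by centrality, smoothing with $\al_f$, and invoking Lemmas \ref{lem:smoothing}, \ref{lem:Ealom} and \ref{lem:alfxnu} does produce a non-zero $x\in\cM_{\om,\al}$ with $\al_t(x)=e^{ipt}x$ (take care that you also need the adjoint relation $\|\xi\pi_\al(x_n^*)-\pi_\al(x_n^*)\hal_p(\xi)\|\to0$, obtained by applying $\tJ$, to get strong$*$ rather than strong convergence, and that non-nullity must hold along $\om$ -- which it does, since the construction gives the uniform bound $\|\pi_\al(x_n)\xi_0\|^2\geq d/8$). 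Polar decomposition then yields a non-zero eigen-partial-isometry $v$ with $\al$-invariant support and range. The reduction at the other end is also fine: a unitary eigenvector in $\cM_{\om,\al}$ yields the claimed central sequence via Lemmas \ref{lem:Rohlin-ptwise} and \ref{lem:Rohdual}. The gap is your step 3. Passing from a non-zero eigen-partial-isometry to a unitary eigenvector is precisely the hard content of Theorem \ref{thm:faithful-Rohlin}, and your maximality scheme does not supply it. The reindexation trick does give, under any non-zero $\al$-invariant $h_0$, a non-zero eigen-partial-isometry $w$ with $w^*w,ww^*\leq h_0$ of equal trace; but to produce a \emph{good} subprojection of $h_0$ you must find an eigen-partial-isometry whose initial and final projections \emph{coincide} (and are invariant), and equality of traces in $\cM_{\om,\al}$ only gives Murray--von Neumann equivalence of $w^*w$ and $ww^*$ by some partial isometry that need not be an eigenvector. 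Your proposed fixes -- products of reindexed copies giving eigenvectors for $2p,4p,\dots$, and an unspecified ``limiting/rounding procedure'' -- do not address this; eigenvectors for multiples of $p$ are of no evident use, and the matching problem you yourself flag is exactly where the argument stops being an argument.

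The paper avoids this obstruction entirely by running the comparison in an auxiliary algebra rather than in $\cM_{\om,\al}$. It sets $\cQ=M_2(\C)\oti\cM$ and forms $\cP_\om$ from the sequences $(X_n)_n$ with $[(\id\oti\pi_\al)(X_n),\vph\oplus\hal_p(\vph)]\to0$ for all $\vph\in\cN_*$; membership of an off-diagonal corner $e_{21}\oti u$ encodes exactly the asymptotic intertwining $\pi_\al(u_n)\vph\approx\hal_p(\vph)\pi_\al(u_n)$. The non-zero intertwiner $x=\pi_\om((x_n)_n)$ is used only once, together with the fast reindexation trick, to force any central $z=e_{11}\oti a+e_{22}\oti b$ of $\cP_\om$ to satisfy $a=b$, i.e.\ $Z(\cP_\om)\subs\C\oti Z(\cM_\om)$. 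Since $\tr\oti\ta_\om$ assigns the two complementary projections $e_{11}\oti1$ and $e_{22}\oti1$ the same mass against every central projection $1\oti q_0$, they have the same central trace and are therefore equivalent in the finite algebra $\cP_\om$; the implementing partial isometry is automatically of the form $e_{21}\oti u$ with $u$ a unitary of $\cM_\om$, which is the desired central sequence. In short, the paper's $2\times2$ trick converts the ``complete a partial isometry to a unitary'' problem into a comparison of two complementary projections, where equivalence is automatic once the center is pinned down -- no maximality argument and no bookkeeping of supports is needed. If you want to salvage your route, you would essentially have to import this same $2\times2$ comparison into $\cM_{\om,\al}$ (twisting the flow on $M_2(\C)\oti\cM_{\om,\al}$ by $\mathrm{diag}(1,e^{-ipt})$ and controlling the center of the fixed-point algebra), at which point you have reproduced the paper's argument in a different place.
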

\begin{proof}
Set $\cQ:=M_2(\C)\oti\cM$.
Let
$A_\om$ be the set of all
$(X_n)_n$ in $\ell^\infty(\cQ)$
such that $[(\id\oti\pi_\al)(X_n),\vph\oplus \hal_p(\vph)]\to0$
as $\nu\to\om$ for all $\vph\in \cN_*$.
Then $\cP_\om:=A_\om/\sT_\om(\cQ)$ is a von Neumann algebra.
By the remark above,
$\cP_\om\subs M_2(\C)\oti\cM_\om$ naturally.

We will show that $e:=e_{11}\oti1$ is equivalent to $f:=e_{22}\oti1$
in $\cP_\om$.
Let $z$ be a central element of $\cP_\om$.
Clearly, $z=e_{11}\oti a+e_{22}\oti b$
for some $a,b\in \cM_\om$.
Take $(x_n)_n$ as above.
Then $(x_n)_n$ defines a non-zero element $x$ of $\cM_\om$
and $e_{21}\oti x\in e\cP_\om f$.
Then $[z,e_{21}\oti x]=0$ implies $xa=bx$.
Since $x$ is central,
we may assume that $xa=ax$, and moreover
$\|(a-b)x\|_2=\|a-b\|_2\|x\|_2$
by the fast reindexation trick.
Thus we have $a=b$.
Hence $Z(\cP_\om)\subs \C\oti Z(\cM_\om)$.
This implies that $e\sim f$.
\end{proof}

By Lemma \ref{lem:invdual},
we obtain the following result.

\begin{thm}
\label{thm:faithful-Rohlin}
Let $\al$ be a flow on a factor $\cM$.
Then $p\in\Sp(\al|_{\Meq})$
if and only if
there exists a unitary $v\in\Meq$
such that $\al_t(v)=e^{ipt}v$ for all $t\in\R$.
In particular,
the following statements are equivalent:
\begin{enumerate}
\item
$\al$ is faithful on $\Meq$;

\item
$\Sp(\al|_{\Meq})=\Gamma(\al|_{\Meq})=\R$;

\item
$\al$ is a Rohlin flow on $\cM$.
\end{enumerate}
\end{thm}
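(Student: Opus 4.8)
The plan is to prove the displayed spectral characterization first and then read off the equivalence of (1), (2), (3) as a short corollary via Lemma~\ref{lem:SpGa}. One implication of the displayed statement is immediate: if $v\in\cM_{\om,\al}$ is a unitary with $\al_t(v)=e^{ipt}v$ for all $t$, then $v$ is a nonzero eigenvector of $\al|_{\cM_{\om,\al}}$, so $p$ lies in its point spectrum and hence in $\Sp(\al|_{\cM_{\om,\al}})$. For the converse the decisive input is the theorem immediately above: from $p\in\Sp(\al|_{\cM_{\om,\al}})$ it produces a unitary central sequence $(u_n)_n$ in $\cM$ with $\hal_p=\lim_n\Ad\pi_\al(u_n)$ in $\Aut(\cM\rti_\al\R)$. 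Granting that, what remains is a duality manipulation parallel to the proof of Theorem~\ref{thm:dual}(1).

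For this, put $\cN:=\cM\rti_\al\R$. Since $\hal$ fixes $\pi_\al(\cM)$ pointwise, each $\pi_\al(u_n)$ is $\hal$-invariant, while $\hal_p=\lim_n\Ad\pi_\al(u_n)$; so Lemma~\ref{lem:invdual}, applied to the flow $\hal$ on $\cN$ at parameter $p$, yields a unitary $V$ in the $(\om,\widehat{\hal})$-equicontinuous central part of $\cN\rti_{\hal}\R$ with $\widehat{\hal}_t(V)=e^{ipt}V$. Takesaki duality identifies $\{\cN\rti_{\hal}\R,\widehat{\hal}\}$ with $\{\cM\oti B(L^2(\R)),\al\oti\Ad\rho\}$, and $\al\oti\Ad\rho$ is the perturbation of $\al\oti\id$ by the $(\al\oti\id)$-cocycle $1\oti\rho$; hence by Lemma~\ref{lem:cocpertstab} one has $(\cM\oti B(L^2(\R)))_{\om,\al\oti\Ad\rho}=(\cM\oti B(L^2(\R)))_{\om,\al\oti\id}$, and by Lemma~\ref{lem:tensorBH} (together with the identification of the equicontinuous part across a type~I tensor factor, as in the proof of Theorem~\ref{thm:dual}(2)) this algebra is $\cM_{\om,\al}\oti\C$. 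On this subalgebra the cocycle $1\oti\rho$ acts trivially, so there $\al\oti\Ad\rho$ restricts to $\al\oti\id$; writing $V=v\oti1$ the eigenvalue equation becomes $\al_t(v)\oti1=e^{ipt}(v\oti1)$, so $v\in\cM_{\om,\al}$ is the desired unitary with $\al_t(v)=e^{ipt}v$.

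The equivalence of (1), (2), (3) then follows quickly. A factor is centrally ergodic, so Lemma~\ref{lem:SpGa}(2) gives $\Sp(\al|_{\cM_{\om,\al}})=\Ga(\al|_{\cM_{\om,\al}})$, and this common group is the annihilator of $\ker(\al|_{\cM_{\om,\al}})$; hence $\al$ acts faithfully on $\cM_{\om,\al}$ if and only if $\Sp(\al|_{\cM_{\om,\al}})=\R$, which is (1)$\Leftrightarrow$(2). By the displayed characterization, $\Sp(\al|_{\cM_{\om,\al}})=\R$ says exactly that every $p\in\R$ admits a unitary Rohlin eigenvector in $\cM_{\om,\al}$, i.e.\ that $\al$ is a Rohlin flow, which is (2)$\Leftrightarrow$(3).

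The genuine difficulty is not in this final assembly but in the preceding theorem, equivalently in the chain of Lemmas~\ref{lem:pmuval}, \ref{lem:SLIAI}, \ref{lem:uax} and their successors. There the obstacle is a local-intertwining construction in the spirit of Connes' treatment of the injective type~III$_1$ factor: one must pass from a single nonzero partial isometry in $\cM_{\om,\al}$ transforming approximately like $e^{ipt}$ against finitely many finite measures (Lemma~\ref{lem:pmuval}) to an honest element $x\in\cM$ whose image $\pi_\al(x)$ approximately intertwines $\hal_p$ on the standard Hilbert space of $\cM\rti_\al\R$ (via the natural cone together with a maximality/exhaustion argument, as in Lemmas~\ref{lem:SLIAI}, \ref{lem:uax} and what follows them), and then upgrade the resulting nontrivial central sequence to a unitary one through the $M_2(\C)\oti\cM$ trick, i.e.\ by proving $e_{11}\oti1\sim e_{22}\oti1$ in the pertinent central-sequence algebra. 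Once those estimates are in hand, the theorem above, and hence the present one, follows with only bookkeeping.
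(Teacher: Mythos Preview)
Your proposal is correct and follows essentially the same route as the paper: the paper's proof is the one line ``By Lemma~\ref{lem:invdual}, we obtain the following result,'' and you have correctly unpacked what that entails (preceding theorem $\Rightarrow$ Lemma~\ref{lem:invdual} applied to $\hal$ $\Rightarrow$ Takesaki duality plus Lemmas~\ref{lem:cocpertstab} and~\ref{lem:tensorBH}, exactly as in the proof of Theorem~\ref{thm:dual}(1)). Your derivation of the equivalence (1)$\Leftrightarrow$(2)$\Leftrightarrow$(3) from Lemma~\ref{lem:SpGa}(2) is also the intended one, and your account of where the real work lies (the Connes-style local intertwining chain from Lemma~\ref{lem:pmuval} through the $M_2$ trick) is accurate.
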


This result immediately implies the following.

\begin{cor}\label{cor:faithful-product}
Let $\al$ be a faithful flow on a finite factor.
Then the product type flow $\bigotimes_{n=1}^\infty\al_t$
has the Rohlin property.
\end{cor}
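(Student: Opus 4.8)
The plan is to deduce this directly from Theorem~\ref{thm:faithful-Rohlin}. Write $\cM_0$ for the finite factor in question, $\tau_0$ for its normalized trace, $\cM:=\bigotimes_{n=1}^\infty(\cM_0,\tau_0)$ (again a II$_1$ factor with separable predual, carrying the product trace $\tau=\bigotimes_n\tau_0$), and $\beta:=\bigotimes_{n=1}^\infty\al$ for the product type flow on $\cM$. By Theorem~\ref{thm:faithful-Rohlin} it suffices to prove that $\beta$ acts faithfully on $\cM_{\om,\beta}$, i.e.\ that $\beta_t|_{\cM_{\om,\beta}}\ne\id$ whenever $t\ne0$.

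The key step is to realize a $\beta$-equivariant copy of $(\cM_0,\al)$ inside $\cM_{\om,\beta}$ by the ``tensor shift to infinity'' trick. For $x\in\cM_0$ and $\nu\in\N$, let $x^\nu\in\cM$ denote the element that equals $x$ in the $\nu$-th tensor leg and $1$ in all the others. First I would check that $(x^\nu)_\nu$ is a central sequence: if $\varphi\in\cM_*$ is carried by the first $m$ legs, then $[\varphi,x^\nu]=0$ as soon as $\nu>m$ (here traciality of $\tau_0$ is used), and such $\varphi$ are norm dense in $\cM_*$, so $\|[\varphi,x^\nu]\|\to0$ as $\nu\to\infty$ for every $\varphi\in\cM_*$. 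Thus $(x^\nu)_\nu\in\sC_\om(\cM)\subset\sN_\om(\cM)$, and $\iota(x):=\pi_\om((x^\nu)_\nu)$ defines a map $\iota\colon\cM_0\to\cM_\om$. Since $\tau((x^\nu)^*x^\nu)=\tau_0(x^*x)$ for every $\nu$, $\iota$ is $\|\cdot\|_{\tau_0}$-isometric, hence injective, with $\tau_\om\circ\iota=\tau_0$; and $\beta_t(x^\nu)=(\al_t(x))^\nu$ shows $\beta_t^\om\circ\iota=\iota\circ\al_t$ for all $t\in\R$.

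Next I would verify that $\iota(\cM_0)\subset\cM_{\om,\beta}=\cM_\beta^\om\cap\cM_\om$. We already have $\iota(\cM_0)\subset\cM_\om$, so it remains to see that each $(x^\nu)_\nu$ is $(\beta,\om)$-equicontinuous. By Proposition~\ref{prop:flow-equi} it is enough to test against the canonical trace vector $\xi_\tau$, and since $\beta_t(x^\nu)-x^\nu=(\al_t(x)-x)^\nu$ one computes $\|(\beta_t(x^\nu)-x^\nu)\xi_\tau\|=\|\al_t(x)-x\|_{\tau_0}$ and $\|\xi_\tau(\beta_t(x^\nu)-x^\nu)\|=\|(\al_t(x)-x)^*\|_{\tau_0}$; these quantities are independent of $\nu$ and tend to $0$ as $t\to0$ because $\al$ is a flow. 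Hence $(x^\nu)_\nu\in\sE_\beta^\om$, and therefore $\iota(\cM_0)\subset\cM_{\om,\beta}$, with $\beta|_{\iota(\cM_0)}$ conjugate, via $\iota$, to $\al$ on $\cM_0$.

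Finally, faithfulness of $\al$ closes the argument: if $t\ne0$, there is $y\in\cM_0$ with $\al_t(y)\ne y$, so $\beta_t^\om(\iota(y))=\iota(\al_t(y))\ne\iota(y)$ while $\iota(y)\in\cM_{\om,\beta}$; thus $\beta_t|_{\cM_{\om,\beta}}\ne\id$. By Theorem~\ref{thm:faithful-Rohlin} (equivalence of (1) and (3)), $\beta=\bigotimes_{n=1}^\infty\al$ has the Rohlin property. There is no serious obstacle here; the only point requiring a little care is the $(\beta,\om)$-equicontinuity of the tensor-shift sequences, and this reduces immediately, via Proposition~\ref{prop:flow-equi}, to an estimate that is uniform in (in fact independent of) the index $\nu$.
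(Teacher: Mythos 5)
Your proof is correct and fills in exactly the argument the paper leaves implicit (the paper simply states the corollary is "immediate" from Theorem~\ref{thm:faithful-Rohlin}). The tensor-shift embedding $\iota$ into $\cM_{\om,\beta}$, the observation that traciality of $\tau_0$ makes the shifted sequences central, and the $\nu$-uniform $(\beta,\om)$-equicontinuity estimate via the trace vector are all the right mechanism, so there is nothing to add beyond confirming that this matches the paper's approach.
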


\section{Concluding remarks and Problems}
\label{sect:concl}
In this paper,
we have studied Rohlin flows on von Neumann algebras.
We can generalize the Rohlin property
for actions of a locally compact abelian group
by using the dual group.
Hence it is natural attempt
to extend our work more general.
In studying this,
one needs to think of the map $\Th$
that is introduced in Lemma \ref{lem:theta}.

\begin{prob}
Classify actions with the Rohlin property
of a locally compact abelian group
on von Neumann algebras.
\end{prob}

Our main theorem is applicable
to non-McDuff factors.
When its central sequence algebra
is non-trivial and commutative,
there is a chance that they have a Rohlin flow.

\begin{prob}
Characterize a factor admitting a Rohlin flow.
\end{prob}

We should notice that
the classification of ``outer actions'' of $\R$
on injective factors
has not yet been completely finished.
Indeed,
we do not know a characterization
of the Rohlin property
without using a central sequence algebra.
In the light of classification results
for amenable discrete or compact group actions
obtained so far,
we will pose the following plausible conjecture.

\begin{conj}
\label{conj:Rohlin-outer}
Let $\al$ be a flow on an injective factor $\cM$.
Then the following conditions are equivalent:
\begin{enumerate}
\item
$\al$ has the Rohlin property;

\item
$\pi_\tal(\widetilde{\cM})'\cap (\tcM\rti_\tal\R)
=\pi_\tal(Z(\widetilde{\cM}))$.
\end{enumerate}
\end{conj}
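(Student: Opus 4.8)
The implication $(1)\Rightarrow(2)$ is not open: it is precisely the second displayed equality in Corollary~\ref{cor:relative}, which holds for every Rohlin flow. So the content of Conjecture~\ref{conj:Rohlin-outer} is the converse $(2)\Rightarrow(1)$, and the plan is to reduce it, via the machinery of Section~7, to an ``internalization of the Connes spectrum''. Since $\cM$ is a factor, $\al$ is centrally ergodic, so by Lemma~\ref{lem:SpGa}~(2) one has $\Sp(\al|_{\Meq})=\Ga(\al|_{\Meq})$; hence by Theorem~\ref{thm:faithful-Rohlin} it suffices to prove $\Ga(\al|_{\Meq})=\R$ under the hypothesis $(2)$. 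Before attacking this, one extracts from $(2)$ two consequences that cost little. First, $\La(\al)=\{0\}$: if $\tal_{t_0}=\Ad u$ with $u\in\tcM$ and $t_0\ne0$, then $v:=\pi_\tal(u)^*\la^\tal(t_0)$ commutes with $\pi_\tal(\tcM)$, so $(2)$ puts $v$ in $\pi_\tal(Z(\tcM))$, forcing $\la^\tal(t_0)\in\pi_\tal(\tcM)$, which is impossible because $\widehat{\tal}_p$ fixes $\pi_\tal(\tcM)$ pointwise but scales $\la^\tal(t_0)$; hence $\tal$, and a fortiori $\al$, is pointwise outer. Second, $\cM\rti_\al\R$ is a factor, and therefore $\Ga(\al)=\R$: using $(\cM\rti_\al\R)\,\widetilde{}\,\cong\tcM\rti_\tal\R$ (Lemma~\ref{lem:can-natural}) and that modular automorphisms fix the centre, $Z(\cM\rti_\al\R)\subseteq Z(\tcM\rti_\tal\R)\subseteq\pi_\tal(\tcM)'\cap(\tcM\rti_\tal\R)=\pi_\tal(Z(\tcM))$; since $\cM\rti_\al\R$ meets $\pi_\tal(\tcM)$ in exactly $\pi_\tal(\cM)$ (its $\widehat{\tal}$-fixed points) and $Z(\tcM)\cap\cM=Z(\cM)=\C$, this pins $Z(\cM\rti_\al\R)$ down to $\C$.

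Thus the genuine task is: $\cM$ an injective factor, $\al$ a flow with $\Ga(\al)=\R$, $\La(\al)=\{0\}$ and $\tal$ pointwise outer — all consequences of $(2)$ — show $\Ga(\al|_{\Meq})=\R$. The plan is a transfer-of-centrality argument. Condition~$(2)$ says exactly that $\tal$ has no partial inner part and full Connes spectrum in the $Z(\tcM)$-bundle sense; one wants to convert this, using the injectivity of $\cM$ together with the structure theory of injective factors (the discrete and continuous decompositions, Connes's analysis, and Ocneanu-type arguments on central sequences), into the production, for each $p\in\R$, of unitaries $v^\nu\in\cM^{\rm U}$ with $(v^\nu)_\nu$ central and $\al_t(v^\nu)-e^{ipt}v^\nu\to0$ for each $t$; by Lemma~\ref{lem:smoothing} and Lemma~\ref{lem:Rohlin-ptwise} such a sequence, after smoothing, represents a Rohlin unitary in $\Meq$. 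Because $\cM$ is a factor one always has $\Spd(\al|_{Z(\cM)})=\{0\}\ne\R$, so the Rohlin-tower constructions and the averaging map $\Th$ of Sections~5.1--5.2 (Lemma~\ref{lem:theta}) are available as soon as a single Rohlin unitary has been found; the difficulty is concentrated in producing that first unitary from $(2)$.

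The main obstacle — and the reason this is posed as a conjecture rather than a theorem — is that there is no known intrinsic, central-sequence-free description of the Rohlin property, whereas $(2)$ is a single von~Neumann-algebraic relative-commutant identity about the core crossed product and the Rohlin property is a \emph{uniform} statement about $\cM_{\om,\al}$. The enemy is the discontinuity of $\al$ on $\cM_\om$, which is precisely why one must work in $\cM_{\om,\al}$ and which obstructs feeding ``full Connes spectrum of $\tal$'' into an equicontinuous central sequence: the equicontinuity estimates of Section~3 are delicate and it is unclear how to generate them from $(2)$ alone. One expects the resolution to require either an $\R$-analogue of Ocneanu's central-sequence arguments carried out uniformly in the group parameter, or a reduction through the discrete decomposition of $\cM$ to the known classification of sufficiently outer actions of discrete amenable groups. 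A natural intermediate target, already of interest, is to establish $(2)\Rightarrow(1)$ under the extra assumption that $\al$ is almost periodic, where Theorem~\ref{thm:almost}-type arguments and the rigidity of the eigenvalue group make the transfer feasible.
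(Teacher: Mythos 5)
This statement is posed in the paper as a \emph{conjecture}: the paper proves only the implication $(1)\Rightarrow(2)$ (it is the second displayed identity of Corollary \ref{cor:relative}) and explicitly leaves $(2)\Rightarrow(1)$ open, so there is no proof in the paper to compare yours against. Your write-up is consistent with this: you correctly locate $(1)\Rightarrow(2)$, and your preliminary deductions from $(2)$ are sound. The argument that $\tal$ is pointwise outer (if $\tal_{t_0}=\Ad u$ then $\pi_\tal(u)^*\la^\tal(t_0)$ lands in $\pi_\tal(Z(\tcM))$, forcing $\la^\tal(t_0)\in\pi_\tal(\tcM)$, contradicting the scaling by $\widehat{\tal}$) is correct, and the chain $Z(\cM\rti_\al\R)\subseteq Z(\tcM\rti_\tal\R)\subseteq\pi_\tal(Z(\tcM))$, cut down by invariance under the appropriate flow, does yield factoriality of $\cM\rti_\al\R$ and hence $\Ga(\al)=\R$. (One small imprecision: inside $\tcM\rti_\tal\R\cong(\cM\rti_\al\R)\,\widetilde{}\,$, the subalgebra $\cM\rti_\al\R$ is the fixed-point algebra of the dual $\th$ of the modular flow, not of $\widehat{\tal}$; the conclusion is unaffected.) Your reduction of the remaining task to $\Ga(\al|_{\Meq})=\R$ via Lemma \ref{lem:SpGa} and Theorem \ref{thm:faithful-Rohlin} is exactly the route the paper's Section 8 indicates, where condition $(2)$ is compared with the condition that $\al$ be pointwise centrally non-trivial with $\Ga(\al)=\R$.

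The gap you name is real and is precisely the open problem: neither the paper nor your proposal supplies a mechanism converting the relative-commutant identity $(2)$, which lives in $\tcM\rti_\tal\R$, into the existence of a single $(\al,\om)$-equicontinuous central eigenunitary for some $p\neq0$. This is the paper's unresolved question of whether $\Ga(\al|_{\Meq})=\Ga(\al)$ for pointwise centrally non-trivial flows on injective factors. So your submission should be understood as a correct reduction and an accurate assessment of what is known, not as a proof; no step you assert is wrong, but the conjecture remains unproven.
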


We have seen
the implication (1)$\Rightarrow$(2)
holds
for a general von Neumann algebra
in Corollary \ref{cor:relative}.

\begin{exam}
Let
$\al$ be a flow on an injective type III$_0$ factor $\cM$
such that $\al$ is ergodically and faithfully acting on the
space of the flow of weights.
Let $\cN=\cM\rti_\al\R$.
Then $\tcN=\tcM\rti_\tal\R$ canonically.
Since the action $\tal$ on $Z(\tcM)$
is faithful,
we have $\pi_\tal(Z(\tcM))'\cap\tcN=\pi_\tal(\tcM)$
by \cite[Corollary VI.1.3]{NT}.
Hence $\al$ satisfies (2)
in Conjecture \ref{conj:Rohlin-outer}.
The dual flow
of an extended modular flow has such property
(see Theorem \ref{thm:modular-flow}).
\end{exam}

Thus it must be interesting to consider
the weak version of
Conjecture \ref{conj:Rohlin-outer}.

\begin{prob}
Suppose that $\al$ is a flow on an injective
type III$_0$ factor
such that $\mo(\al)$ is faithful and ergodic.
Then does $\al$ have the Rohlin property?
\end{prob}

Readers are referred to \cite{Iz-Can}
for a faithful action of a compact group on a flow space.
The following problem is related with Lemma \ref{lem:SpGa}.

\begin{prob}
Assume that $\cM$ is an injective factor
and $\al$ is a pointwise centrally non-trivial flow.
Then $\Ga(\al|_{\Meq})=\Ga(\al)$?
\end{prob}

Since we always have
$\Ga(\al|_{\Meq})\subs\Ga(\al)$
by Lemma \ref{lem:SpGa},
what we must do is to study the case
when $\Ga(\al)\neq\{0\}$.
If this problem is affirmatively solved,
the following condition,
which looks much weaker than (2)
in the above conjecture,
could imply the Rohlin property of $\al$
by Theorem \ref{thm:faithful-Rohlin}:
{\it
\begin{enumerate}
\addtocounter{enumi}{+2}
\item
$\al$ is pointwise centrally non-trivial
and
$\Ga(\al)=\R$.
\end{enumerate}
}

Indeed
in Theorem \ref{thm:itp-Rohlin}
and \ref{thm:QF-Rohlin},
we have shown that the three conditions mentioned above
are equivalent for product type flows
and quasi-free flows induced from Cuntz algebras.
We should note that
if $\al$ is an action of a compact abelian group,
then
the above condition (3) indeed implies the Rohlin property.

\begin{prob}
Let $\al$ be a flow on the injective type II$_1$ factor $\cR_0$.
Suppose that $\al\oti\id_{\cR_0}$ has the Rohlin property.
Then does $\al$ also have?
\end{prob}

This problem is a direct consequence of the above conjecture.
Proposition \ref{prop:modular-tensor} shows
that the assumption on the type is necessary.

In Theorem \ref{thm:tracescaling},
we have proved
that
a trace scaling flow on the injective
type II$_\infty$ factor
has the Rohlin property
by using Connes-Haagerup theory.
Another proof of that theorem
will yield the uniqueness of
the injective type III$_1$ factor.
Hence we have interest in the following
problem.

\begin{prob}
Prove Theorem \ref{thm:tracescaling}
without using the fact that
$\si_t^\vph\in\oInt(\cM)$ for
an injective type III$_1$ factor.
\end{prob}

\section{Appendix}
\subsection{Basic measure theoretic results}
We recall the following elementary result
of measure theory.

\begin{lem}
\label{lem:Lusin}
Let $X$ be a Polish space
and $f\col \R^n\ra X$
be a Borel map.
Let $E\subs\R^n$ be a Borel set
with $0<\mu(E)<\infty$,
where $\mu$ denotes the Lebesgue measure on $\R^n$.
Then the following statements hold:
\begin{enumerate}
\item
For any $\vep>0$,
there exists a compact set $K\subs E$
such that
$\mu(E\setminus K)<\vep$
and $f$ is continuous on $K$;

\item
Let $d$ be a complete metric
on the Polish space $X$.
For any $\vep_1,\vep_2>0$,
there exist disjoint Borel sets
$A_0,A_1,\dots,A_N\subs E$
and $x_1,\dots,x_N\in X$
such that
$\sum_{j=0}^N A_j=E$,
$\mu(A_0)<\vep_1$,
$d(f(t),x_j)<\vep_2$
for all $t\in A_j$,
$j=1,\dots,N$.
\end{enumerate}
\end{lem}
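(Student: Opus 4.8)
The plan is to treat part (1) as a version of Lusin's theorem, proved by combining the inner and outer regularity of the Lebesgue measure with the second countability of the Polish space $X$, and then to deduce part (2) directly from separability of $X$ together with the finiteness of $\mu(E)$.

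For (1), I would first invoke inner regularity of $\mu$ to pick a compact set $K_0\subs E$ with $\mu(E\setminus K_0)<\vep/2$; since $K_0$ is compact it is bounded, so $\mu$ restricted to $K_0$ is a finite Borel measure on a compact metric space and is therefore regular. Next, fixing a countable base $\{U_k\}_{k\in\N}$ of the topology of $X$, for each $k$ the set $A_k:=f^{-1}(U_k)\cap K_0$ is Borel in $K_0$, and regularity yields a relatively open set $O_k$ and a relatively closed set $C_k$ in $K_0$ with $C_k\subs A_k\subs O_k$ and $\mu(O_k\setminus C_k)<\vep\,2^{-k-2}$. I would then set $K:=K_0\setminus\bigcup_k(O_k\setminus C_k)$; this is relatively closed in $K_0$, hence compact, and satisfies $\mu(E\setminus K)<\vep$ by a geometric-series estimate together with $\mu(E\setminus K_0)<\vep/2$. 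The key point is that on $K$ one has $A_k\cap K=O_k\cap K=C_k\cap K$ because $K$ is disjoint from $O_k\setminus C_k$, so $f^{-1}(U_k)\cap K$ is relatively open in $K$; since every open subset of $X$ is a union of some of the $U_k$, it follows that the preimage under $f|_K$ of every open set is relatively open in $K$, i.e. $f|_K$ is continuous, which is exactly the assertion of (1).

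For (2), I would use only the separability of $X$ (the completeness of $d$ in the hypothesis is never needed): cover $X$ by countably many open balls $B_j=\{y\in X\mid d(y,y_j)<\vep_2\}$, pull them back to $E_j:=f^{-1}(B_j)\cap E$, and disjointify by $A_j':=E_j\setminus\bigcup_{i<j}E_i$ to obtain a countable Borel partition $\{A_j'\}$ of $E$. Since $\mu(E)<\infty$ we have $\sum_j\mu(A_j')<\infty$, so there is $N$ with $\sum_{j>N}\mu(A_j')<\vep_1$; taking $A_0:=\bigcup_{j>N}A_j'$, $A_j:=A_j'$ and $x_j:=y_j$ for $1\le j\le N$ finishes the argument, because $t\in A_j'$ forces $f(t)\in B_j$, i.e. $d(f(t),x_j)<\vep_2$.

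I do not anticipate a serious obstacle here: the only delicate points are carrying out the regularity approximation uniformly over the countable base $\{U_k\}$ inside the compact set $K_0$ and verifying that the surviving set $K$ is simultaneously compact and a set of continuity for $f$; the remaining manipulations, including the geometric-series bookkeeping, are routine.
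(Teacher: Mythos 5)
Your proof is correct, but it takes a genuinely different route from the paper on both parts. For (1) the paper simply cites Lusin's theorem (Kechris, Theorem 17.12), whereas you supply the standard proof from scratch: inner regularity to pass to a compact $K_0\subs E$, then the countable-base/regularity argument to excise a set of measure $<\vep/2$ outside of which all preimages $f^{-1}(U_k)$ become relatively open. Your argument is sound — the key observations that each $O_k\setminus C_k$ is relatively open (so $K$ is relatively closed in $K_0$, hence compact) and that $A_k\cap K=O_k\cap K$ are exactly what is needed. For (2) the paper deduces the statement from (1): it takes $K$ as in (1) with $\vep=\vep_1$, uses uniform continuity of $f|_K$ to produce a finite Borel partition of $K$ on which $f$ oscillates by less than $\vep_2$, and sets $A_0:=E\setminus K$. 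You instead bypass (1) entirely, covering the separable space $X$ by countably many $\vep_2$-balls, pulling back, disjointifying, and truncating the resulting countable partition using $\mu(E)<\infty$. Your version is more elementary (it uses only separability and $\si$-additivity, and as you note the completeness of $d$ is irrelevant), and it makes clear that (2) is logically independent of (1); the paper's version is shorter once (1) is granted. One cosmetic remark: the statement only asks for points $x_j\in X$ with $d(f(t),x_j)<\vep_2$ on $A_j$, so taking the ball centers as your $x_j$ is perfectly adequate, even though the paper's proof happens to choose $x_j\in f(A_j)$.
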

\begin{proof}
(1).
This is shown by using Lusin's theorem.
See \cite[Theorem 17.12]{Kech}

(2).
Let $K$ be as above with $\vep=\vep_1$.
Since $f|_K$ is uniformly continuous,
for any $\vep_2>0$,
there exists a Borel partition
$\{A_j\}_{j=1}^N$ of $K$
such that any $s,t\in A_j$ satisfy $d(f(s),f(t))<\vep_2$.
Put $A_0:=E\setminus K$,
and take an element $x_j\in f(A_j)$ for each $j$.
Then these $x_j$ have the required property.
\end{proof}

Employing the previous lemma,
we have the following result.

\begin{lem}\label{lem:simple}
Let $\cM$ be a separable finite von Neumann algebra
with a faithful normal tracial state $\ta$.
Let us write $\|x\|_2:=\ta(x^*x)^{1/2}$ as usual.
Let $w\col [0,1]\ra \cM^{\rm U}$ be a Borel map.
Then for any $\vep_1,\vep_2>0$,
there exist disjoint Borel sets
$A_0,A_1,\dots,A_N$ of $[0,1]$
and $u_1,\dots,u_N\in \cM^{\rm U}$
such that
$\mu(A_0)<\vep_1$,
$\|w_t-u_j\|_2<\vep_2$
for all $t\in A_j$,
$j=1,\dots,N$.
\end{lem}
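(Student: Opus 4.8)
The plan is to reduce the statement to Lemma~\ref{lem:Lusin} by viewing $w$ through the standard Hilbert space. Let $\Omega\in L^2(\cM)$ be the vector implementing $\ta$, so that $\|x\|_2=\|x\Omega\|$ for all $x\in\cM$; since $\cM$ has separable predual, $L^2(\cM)$ is a separable Hilbert space, hence a Polish space. The first step is to check that the map $[0,1]\ni t\mapsto w_t\Omega\in L^2(\cM)$ is Borel. For each $a\in\cM$ the function $t\mapsto\langle w_t\Omega,a\Omega\rangle=\ta(a^*w_t)$ is Borel because $w$ is a Borel map into $\cM^{\rm U}$; expanding $w_t\Omega$ along an orthonormal basis of $L^2(\cM)$ exhibits $t\mapsto w_t\Omega$ as a norm-limit of Borel functions, so it is Borel.

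Next I would apply Lemma~\ref{lem:Lusin}~(1) with $X=L^2(\cM)$, $f(t)=w_t\Omega$ and $E=[0,1]$: there is a compact set $K\subseteq[0,1]$ with $\mu([0,1]\setminus K)<\vep_1$ on which $t\mapsto w_t\Omega$ is continuous, that is, $t\mapsto w_t$ is continuous for the $\|\cdot\|_2$-metric. Then $\{w_t\Omega:t\in K\}$ is a compact subset of $L^2(\cM)$, so $K\to L^2(\cM)$, $t\mapsto w_t\Omega$, is uniformly continuous. Hence there is $\de>0$ such that $\|w_s-w_t\|_2<\vep_2$ whenever $s,t\in K$ and $|s-t|<\de$.

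Finally I would finish by a finite partition. Cover $K$ by finitely many subintervals of $[0,1]$ of length $<\de$ and intersect them with $K$ to obtain disjoint Borel sets whose union is $K$, each of diameter $<\de$; discarding empty pieces we get $A_1,\dots,A_N$ with $A_j\neq\emptyset$ and $\bigcup_{j=1}^N A_j=K$. Choose $t_j\in A_j$ and set $u_j:=w_{t_j}\in\cM^{\rm U}$, and put $A_0:=[0,1]\setminus K$. Then $A_0,A_1,\dots,A_N$ are disjoint Borel sets with $\sum_{j=0}^N A_j=[0,1]$ and $\mu(A_0)<\vep_1$, and for $t\in A_j$ with $1\le j\le N$ we have $|t-t_j|<\de$, so $\|w_t-u_j\|_2=\|w_t-w_{t_j}\|_2<\vep_2$, as required.

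There is no serious obstacle here; the only points needing a little care are the passage from ``$w$ Borel into $\cM^{\rm U}$'' to ``$t\mapsto w_t\Omega$ Borel into $L^2(\cM)$'' and the observation that continuity (and uniform continuity on the compact set $K$) in the $\|\cdot\|_2$-metric is precisely what makes Lemma~\ref{lem:Lusin}~(2), or simply its proof, apply in this setting. In particular the statement is really just a concrete instance of Lemma~\ref{lem:Lusin}~(2) with $X=L^2(\cM)$ and $x_j=u_j\Omega$.
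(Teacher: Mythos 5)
Your proof is correct and follows exactly the route the paper intends: the paper gives no written proof beyond ``Employing the previous lemma,'' i.e.\ it is a direct application of Lemma~\ref{lem:Lusin} with the Polish space taken to be ($\cM^{\rm U}$ or, equivalently as you do) $L^2(\cM)$ equipped with the $\|\cdot\|_2$-metric, and with the representatives chosen in the image $w(A_j)$ so that they are unitaries. Your extra care about the Borel measurability of $t\mapsto w_t\Omega$ and the choice $u_j=w_{t_j}$ is exactly the right way to fill in the details the paper leaves implicit.
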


\subsection{Disintegration of automorphisms}

Let $(X,\sB)$ be a standard Borel space
and $\mu$ a $\si$-finite Borel measure.
Recall the following basic result.

\begin{lem}
\label{lem:meas-Borel}
Let $f\col X\ra \R$ be a $\mu$-measurable function.
Then there exists a Borel $\mu$-null set $N\subs X$
such that the restriction
$f\col X\setminus N\ra \R$ is Borel.
\end{lem}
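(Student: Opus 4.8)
The statement to prove is Lemma \ref{lem:meas-Borel}: a $\mu$-measurable function $f\colon X\to\R$ on a standard Borel space $(X,\sB)$ with $\si$-finite Borel measure $\mu$ agrees, off a Borel $\mu$-null set, with a Borel function. The plan is to reduce to the finite-measure case, then approximate $f$ by simple $\mu$-measurable functions, then replace the (possibly non-Borel) measurable level sets by Borel sets that differ by null sets, and finally assemble a Borel function agreeing with $f$ off a single Borel null set.

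First I would reduce to $\mu(X)<\infty$: write $X=\bigsqcup_k X_k$ with $X_k\in\sB$ and $\mu(X_k)<\infty$, prove the lemma on each $X_k$, and glue, since a countable union of Borel null sets is a Borel null set. Next, recall that $\mu$-measurability of $f$ means $f^{-1}(B)$ is in the $\mu$-completion of $\sB$ for every Borel $B\subseteq\R$; equivalently, for each rational $q$, the set $\{f>q\}$ lies in the completion, so there is a Borel set $E_q$ with $\mu(E_q\,\triangle\,\{f>q\})=0$. I would then build a Borel function $g$ from the $E_q$'s in the standard way — for instance, set $g(x):=\sup\{q\in\Q: x\in \widetilde{E}_q\}$ where $\widetilde{E}_q:=\bigcup_{q'\le q,\,q'\in\Q}E_{q'}$ is arranged to be monotone in $q$, with the convention giving a Borel real-valued function (truncating or handling the empty/full cases with another Borel null set on which $f$ itself is $\pm\infty$, which cannot happen as $f$ is real-valued, so this is harmless). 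Then $g$ is Borel, and on the complement of the Borel null set $N_0:=\bigcup_{q\in\Q}(E_q\,\triangle\,\{f>q\})$ one checks directly that $\{g>q\}=\{f>q\}$ for all rational $q$, hence $g=f$ pointwise off $N_0$.

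The routine verification is that $g$ is Borel (a countable supremum of indicator-type expressions built from Borel sets) and that the level sets of $g$ and $f$ coincide off $N_0$; both are standard manipulations with countable unions and the monotone family $\widetilde E_q$. The only mild subtlety — and the place I would be slightly careful — is bookkeeping the null sets: one must ensure the approximating Borel sets $\widetilde E_q$ are genuinely monotone nondecreasing in $q$ (replace $E_q$ by $\bigcup_{q'\le q}E_{q'}$, still Borel, still a.e.\ equal to $\{f>q\}$ since $\{f>q\}$ is itself monotone and the symmetric-difference null sets add up countably), and that the final exceptional set is a single Borel set, namely the countable union $N_0$ above together with the null $X_0$ coming from the $\si$-finite decomposition. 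I do not expect any real obstacle here; this is a classical fact (it is essentially the statement that Lusin-measurable functions are Borel modulo null sets), and the argument is entirely elementary once the reduction to finite measure and the enumeration over rationals are set up. An alternative, even shorter route I might mention is to invoke Lusin's theorem (Lemma \ref{lem:Lusin}(1) in the form valid for $\si$-finite measures): get compact $K_n\subseteq X$ with $f|_{K_n}$ continuous and $\mu(X\setminus K_n)\to 0$, set $K:=\bigcup_n K_n$ (Borel, $\mu(X\setminus K)=0$), and define $g:=f$ on $K$ and $g:=0$ off $K$; then $g$ is Borel because $g^{-1}(B)\cap K_n$ is relatively Borel in the Borel set $K_n$ for each $n$, hence $g^{-1}(B)$ is Borel, and $g=f$ off the Borel null set $X\setminus K$. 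I would likely present this second argument as the main proof, since it is shortest and directly uses a lemma already available in the paper.
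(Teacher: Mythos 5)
Your first argument is essentially the paper's own proof. The paper enumerates the rationals, takes the $\mu$-measurable level sets $A_n=\{f>q_n\}$, sandwiches each between Borel sets $B_n\subset A_n\subset B_n'$ with $\mu(B_n'\setminus B_n)=0$, and sets $N:=\bigcup_n (B_n'\setminus B_n)$; off $N$ the level sets of $f$ coincide with Borel sets, so $f|_{X\setminus N}$ is Borel. Building an everywhere-defined Borel $g$ with $g=f$ off a Borel null set, as you do, is an equivalent formulation. One detail in your write-up would fail as stated: $\widetilde E_q:=\bigcup_{q'\le q,\,q'\in\Q}E_{q'}$ is \emph{increasing} in $q$ and is a.e.\ equal to $\bigcup_{q'\le q}\{f>q'\}$, which for real-valued $f$ is all of $X$, not $\{f>q\}$; since $\{f>q\}$ is \emph{decreasing} in $q$ you need $\bigcup_{q'\ge q}E_{q'}$ instead — or simply drop the monotonization, set $g(x):=\sup\{q\in\Q: x\in E_q\}$, and note $\{g>a\}=\bigcup_{q\in\Q,\,q>a}E_q$ is Borel while $g=f$ off $\bigcup_q\bigl(E_q\,\triangle\,\{f>q\}\bigr)$. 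This is a one-line fix, not a gap in the idea.

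Your preferred second route via Lusin is genuinely different from the paper's, but the citation does not quite work: Lemma \ref{lem:Lusin}(1) is stated for \emph{Borel} maps on $\R^n$, whereas here $f$ is merely $\mu$-measurable on a general standard Borel space, and the standard way to extend Lusin's theorem from Borel to measurable functions is precisely to first replace $f$ by an a.e.\ equal Borel function — i.e.\ to use the lemma being proved. It is not irreparably circular (Lusin for measurable $f$ can be proved directly from simple functions, inner regularity of the completed measure, and Egorov), but as written the appeal to Lemma \ref{lem:Lusin} is not available, so the first, elementary argument is the one to present as the main proof.
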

\begin{proof}
Take a sequence $\{q_n\}_n$
whose union equals $\Q$.
Then $A_n:=\{x\mid f(x)>q_n\}$ is $\mu$-measurable.
Since $A_n$ is $\mu$-measurable,
we can take Borel sets
$B_n,B_n'\subs X$
such that
$B_n\subs A_n\subs B_n'$
and
$\mu(B_n'\setminus B_n)=0$.
We put $N_n:=B_n'\setminus B_n$.
Then we have
$A_n\setminus N_n
=B_n$
that is Borel.
We let $N:=\bigcup_{k=1}^\infty N_n$.
Then
\[
\{x\in X\mid x\nin N,\ f(x)>q_n\}
=A_n\cap N_n^c\cap
\bigcap_{k\neq n}N_k^c,
\]
and this set is Borel.
Since $\Q$ is dense in $\R$,
$f\col X\setminus N\ra \R$ is Borel.
\end{proof}

Let $\{H_x\}_{x\in X}$ and
$\{\cM_x\}_{x\in X}$ be measurable fields
of separable Hilbert spaces
and separable von Neumann algebras, respectively,
such that $\cM_x\subs B(H_x)$.
Let $\{\al_x\}_{x\in X}$ be a measurable
field of automorphisms
with $\al_x\in\Aut(\cM_x)$.
We set
\[
\cM:=\int_X^\oplus\cM_x\,d\mu(x),
\quad
H:=\int_X^\oplus H_x\,d\mu(x),
\quad
\al:=\int_X^\oplus\al_x\,d\mu(x).
\]

\begin{thm}
\label{thm:appdisint}
Let $\al$ and $\al^x$ be as above.
Then
$\al\in\oInt(\cM)$ if and only if
$\al_x\in\oInt(\cM_x)$ for almost every $x\in X$;
\end{thm}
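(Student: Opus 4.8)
The plan is to prove the two directions separately, with the nontrivial one being "$\al_x\in\oInt(\cM_x)$ for a.e. $x$ implies $\al\in\oInt(\cM)$''. The other direction is routine: if $\al=\lim_{n\to\infty}\Ad u_n$ in $\Aut(\cM)$ with $u_n\in\cM^{\rm U}$, then writing $u_n=\int_X^\oplus u_n^x\,d\mu(x)$, convergence in the $u$-topology means $\|\al(\vph)-\Ad u_n(\vph)\|_{\cM_*}\to 0$ for all $\vph\in\cM_*$; choosing a suitable countable dense family of such $\vph$ built from a measurable field of states and passing to a subsequence, one gets $\Ad u_n^x\to\al_x$ in $\Aut(\cM_x)$ for a.e.\ $x$, using the fact that $\cM_*=\int_X^\oplus(\cM_x)_*\,d\mu(x)$ and Fatou/dominated convergence on the fibrewise norms.

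For the hard direction, first I would reduce to the case $\mu(X)<\infty$ (standard, by partitioning $X$), and fix a faithful normal state $\vph=\int_X^\oplus\vph_x\,d\mu(x)$ on $\cM$ with cyclic separating vector $\xi=\int_X^\oplus\xi_x$ in the standard form $H$. The key is a measurable selection argument: for a.e.\ $x$, the set of unitaries $u\in\cM_x^{\rm U}$ witnessing a given finite-approximation condition ``$\|\al_x(\vph_x^k)-\Ad u(\vph_x^k)\|<\vep$ for $k=1,\dots,m$'' is nonempty (by hypothesis $\al_x\in\oInt(\cM_x)$) and, crucially, one can choose such a $u=u(x)$ measurably in $x$. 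This is where I would invoke a Jankov--von Neumann / measurable-selection theorem applied to the Borel set $\{(x,u)\in X\times\mathscr U : \|\al_x(\vph_x^k)-\Ad u(\vph_x^k)\|<\vep,\ k\le m\}$, where $\mathscr U$ is the unitary group of the relevant measurable field with its Polish structure (the arguments near Lemma~\ref{lem:meas-Borel} and Lemma~\ref{lem:Lusin} give the tools to pass between measurable and Borel). Assembling $u=\int_X^\oplus u(x)\,d\mu(x)\in\cM^{\rm U}$ and letting $m\to\infty$, $\vep\to 0$ along a sequence produces $u_n\in\cM^{\rm U}$ with $\|\al(\vph^k)-\Ad u_n(\vph^k)\|_{\cM_*}\to 0$ for all $k$; since $\{\vph^k\}$ can be arranged to be total in $\cM_*$ and the $u_n$ are uniformly bounded, a $2/3$-$\vep$ argument upgrades this to $\Ad u_n\to\al$ in the $u$-topology, i.e.\ $\al\in\oInt(\cM)$.

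The main obstacle I anticipate is the measurability of the fibrewise selection: one must set things up so that $(x,u)\mapsto \|\al_x(\vph_x^k)-\Ad u(\vph_x^k)\|$ is genuinely Borel on $X\times\mathscr U$ for a fixed Borel structure on the field of unitary groups, and so that the selected $u(x)$ glues into an element of $\cM$ rather than merely a measurable field of unitaries in the ambient $B(H_x)$. A clean way around this is to work with a countable dense (in $u$-topology, fibrewise) Borel family $\{w_i(x)\}_{i\in\N}$ of unitaries in $\cM_x$ — whose existence follows from separability of $\cM_*$ and Lemma~\ref{lem:meas-Borel} — and then select $i=i(x)$ minimal with $w_{i(x)}(x)$ satisfying the approximation inequalities; minimality makes $i(\cdot)$ automatically measurable, and $u(x):=w_{i(x)}(x)$ lies in $\cM_x$ by construction. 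With that device the rest is bookkeeping: choose the thresholds so that the resulting sequence in $\cM^{\rm U}$ converges to $\al$, completing the proof.
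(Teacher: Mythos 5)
Your proposal is correct and follows essentially the same route as the paper: the easy direction by a subsequence argument over a countable dense family of functionals, and the hard direction by a measurable selection applied to the Borel set of pairs $(x,v)$ satisfying finitely many approximation inequalities, with explicit care that the selected unitary lies in $\cM_x$ (the paper encodes this via commutation with a Borel choice function for $\cM_x'$, while your minimal-index selection from a Borel dense family of unitaries in $\cM_x$ is a minor variant of the same device). No gaps.
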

\begin{proof}
We may and do assume that
$H_x=H_0$ with $H_0\cong \ell^2$ for all $x\in X$,
and $\mu(X)<\infty$.
Let $\vN(H_0)$ be the set
of von Neumann algebras on $H_0$.
We equip $\vN(H_0)$ with the Effros Borel structure
as usual \cite{Ef}.
Then we have a $\mu$-measurable map
$X\ni x\mapsto \cM_x\in\vN(H_0)$.
Using the previous lemma and choice functions
of $\vN(H_0)$,
we may and do assume that the maps
$x\mapsto \cM_x$,
$x\mapsto \cM_x'$
and
$x\mapsto \al_x$ are Borel.

Suppose that $\al\in\oInt(\cM)$.
Then there exists a sequence of unitaries
$\{v^\nu\}_\nu$ in $\cM$
such that $\al=\lim_\nu\Ad v^\nu$
in the $u$-topology.
Then we obtain
\[
\|\al(\vph)-v^\nu\vph v^{\nu *}\|
=
\int_X\|\al_x(\vph_x)-v_x^\nu\vph_x v_x^{\nu *}\|_{(\cM_x)_*}
\,d\mu(x)
\quad
\mbox{for all }\vph\in\cM_*.
\]
Thus
there exists a subsequence
$\{v^{\nu_k}\}_k$
such that
for all $\vph\in\cM_*$,
we have
$\|\al_x(\vph_x)-v_x^{\nu_k}\vph_x v_x^{\nu_k *}\|\to0$
as $k\to\infty$
for almost all $x$.
Hence $\al_x\in\oInt(\cM_x)$ for almost all $x$
since $\{\vph_x\mid\vph\in\cM_*\}$
is dense in $(\cM_x)_*$
for almost every $x$.

Suppose conversely that
$\al_x\in\oInt(\cM_x)$ for almost every $x$.
For an integrable Borel map
$X\ni x\mapsto \vph_x\in B(H_0)_*$,
we set
\[
F_\vph\col X\times B(H_0)^{\rm U}
\ni (x,v)\mapsto \|\al_x(\vph_x|_{M_x})-v\vph_x v^*|_{M_x}\|_{(\cM_x)_*}\in\R,
\]
where $B(H_0)^{\rm U}$ denotes the unitary group
that is Polish with respect to the strong* topology.
The function $F_\vph$ is Borel.
Indeed, let $\{a_x\}_x$ be a Borel operator field
such that $a_x\in\cM_x$.
Then
\begin{equation}
\label{eq:alxph}
(\al_x(\vph_x|_{M_x})-v\vph_x v^*)(a_x)
=
\vph(\al_x^{-1}(a_x))-v\vph_x v^*(a_x).
\end{equation}
It is trivial that $x\mapsto \vph_x(\al_x^{-1}(a_x))$ is Borel.
Since the maps
$B(H_0)^{\rm U}\times B(H_0)_*\ni(v,\vph)
\mapsto v\vph v^*\in B(H_0)_*$
and
the coupling $B(H_0)_*\times B(H_0)\to \C$
are both continuous,
the second term in (\ref{eq:alxph})
is Borel.
Hence $F_\vph$ is Borel.

Let $b^j\col \vN(H_0)\ra B(H_0)_1$ be a Borel choice function
such that $\{b^j(\cN)\}_{j=1}^\infty$
is strongly dense in $\cN_1$
for all $\cN\in\vN(H_0)$.
We let $b_x^j:=b^j(\cM_x')$
that is a Borel function from $X$ into $B(H_0)_1$.
Let $\{\vph^k\}_{k=1}^\infty$ be a norm dense set
in $L_{B(H_0)_*}^1(X,\mu)$.
We set the function
\[
G_n\col X\times B(H_0)^{\rm U}\ni (x,v)
\mapsto
\sup_{1\leq j,j'\leq n}\|[v,b_x^j]\vph_x^{j'}\|\in\R.
\]
Then $G_n$ is Borel
since
the left multiplication
$B(H_0)\times B(H_0)_*\to B(H_0)_*$
is continuous.
For $m,n\in\N$,
we set the following Borel subset:
\[
Z_m:=
\bigcap_{k=1}^m
F_{\vph^k}^{-1}([0,1/m])\cap
\bigcap_{n=1}^\infty
G_{n}^{-1}(\{0\}).
\]
Note that
\[
Z_m=
\left\{(x,v)\in X\times B(H_0)^{\rm U}
\,\middle|\,
\sup_{1\leq k\leq m}
\|\al_x(\vph_x^k|_{M_x})-v\vph_x^k v^*|_{M_x}\|_{(\cM_x)_*}
\leq1/m,
\
v\in M_x
\right\}.
\]

Let $\pr_1\col X\times B(H_0)^{\rm U}\to X$ be the projection.
By approximate innerness of $\al_x$,
it turns out that $\pr_1|_{Z_m}\col Z_m\ra X$ is surjective.
Thanks to the measurable cross section theorem
(see \cite[Theorem 3.2.4]{Arv-inv} or \cite[Theorem A.16]{TaI}),
we have a $\mu$-measurable map
$f\col X\ra Z_m$
such that $\pr_1\circ f=\id_X$.

Let $\pr_2\col X\times B(H_0)^{\rm U}\ra B(H_0)^{\rm U}$
be the projection.
Since $\pr_2$ is Borel,
$\pr_2\circ f\col X\ra B(H_0)^{\rm U}$ is $\mu$-measurable.
We set $v_x:=\pr_2(f(x))\in \cM_x$
and
\[
v:=\int_X^\oplus v_x\,d\mu(x)
\in\cM.
\]
Then for all
$k=1,\dots,m$,
\[
\|\al(\vph^k)-v\vph^k v^*\|_{\cM_*}
=
\int_X
\|\al_x(\vph_x^k|_{\cM_x})-v_x\vph_x^k v_x^*|_{\cM_x}\|_{(\cM_x)_*}
\,d\mu(x)
\leq\mu(X)/m.
\]
This means that $\al\in\oInt(\cM)$.
\end{proof}

In the proof above,
we have implicitly proved
the following result
which has been proved by Lance \cite[Theorem 3.4]{La}.

\begin{thm}
\label{thm:intdisint}
Let $\al=\int_X^\oplus \al^x\,d\mu(x)$ be an automorphism
on $\cM$ as before.
Then
$\al\in\Int(\cM)$
if and only if
$\al_x\in\Int(\cM_x)$
for almost every $x$.
\end{thm}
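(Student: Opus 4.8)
This is Theorem \ref{thm:intdisint} (Lance's theorem): for $\al=\int_X^\oplus\al_x\,d\mu(x)$, one has $\al\in\Int(\cM)$ iff $\al_x\in\Int(\cM_x)$ for almost every $x$.

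The plan is to run essentially the same two-directional argument as in the proof of Theorem \ref{thm:appdisint}, with $\Int$ in place of $\oInt$; the only genuinely new ingredient is that ``approximation by inner automorphisms up to $1/m$'' must be replaced by ``exact implementation by a single unitary'', which removes the index $k$ and the $1/m$-tolerance but otherwise keeps the measurable-selection machinery intact. As there, I would first reduce to the case $H_x=H_0\cong\ell^2$ for all $x$ and $\mu(X)<\infty$, equip $\vN(H_0)$ with the Effros Borel structure, and use Lemma \ref{lem:meas-Borel} together with Borel choice functions to arrange that $x\mapsto\cM_x$, $x\mapsto\cM_x'$ and $x\mapsto\al_x$ are genuinely Borel (not merely $\mu$-measurable).

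For the ``only if'' direction: if $\al=\Ad v$ for a single unitary $v\in\cM$, then for each Borel operator field $\{a_x\}$ with $a_x\in\cM_x$ and each integrable Borel functional field $\{\vph_x\}$ the identity $\al_x(\vph_x|_{\cM_x})=v_x\vph_x v_x^*|_{\cM_x}$ holds for almost every $x$; running over a countable norm-dense family of such $\vph$ and using that $\{\vph_x\}$ is dense in $(\cM_x)_*$ for a.e.\ $x$, we conclude $\al_x=\Ad v_x\in\Int(\cM_x)$ a.e. For the ``if'' direction, I would imitate the Borel-selection argument: fix a norm-dense sequence $\{\vph^k\}_k$ in $L^1_{B(H_0)_*}(X,\mu)$ and a Borel choice function $b^j\colon\vN(H_0)\to B(H_0)_1$ with $\{b^j(\cN)\}_j$ strongly dense in $\cN_1$; set $b_x^j:=b^j(\cM_x')$, and consider the Borel function
\[
F\colon X\times B(H_0)^{\rm U}\ni(x,v)\mapsto
\Big(\textstyle\sum_{k}2^{-k}\min\{1,\|\al_x(\vph_x^k|_{\cM_x})-v\vph_x^k v^*|_{\cM_x}\|_{(\cM_x)_*}\},\ \sup_{j,j'\le n}\|[v,b_x^j]\vph_x^{j'}\|\Big),
\]
Borel-ness of each component being checked exactly as in Theorem \ref{thm:appdisint} from continuity of the left/right multiplication maps on $B(H_0)_*$ and of $(v,\vph)\mapsto v\vph v^*$. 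Let
\[
Z:=\{(x,v)\mid v\in\cM_x,\ \al_x(\vph_x^k|_{\cM_x})=v\vph_x^k v^*|_{\cM_x}\ \text{for all }k\},
\]
a Borel set cut out by $\{[v,b_x^j]=0\ \forall j\}$ and countably many equalities $\|\al_x(\vph_x^k)-v\vph_x^kv^*\|=0$; by hypothesis $\pr_1|_Z\colon Z\to X$ is onto, so the measurable cross-section theorem (\cite[Theorem 3.2.4]{Arv-inv} or \cite[Theorem A.16]{TaI}) gives a $\mu$-measurable $f\colon X\to Z$ with $\pr_1\circ f=\id_X$. Putting $v_x:=\pr_2(f(x))\in\cM_x^{\rm U}$ and $v:=\int_X^\oplus v_x\,d\mu(x)$, the unitary $v$ satisfies $\al(\vph^k)=v\vph^k v^*$ for every $k$, hence $\al=\Ad v$ on $\cM$ by density, so $\al\in\Int(\cM)$.

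The main obstacle is purely measure-theoretic bookkeeping rather than operator-algebraic: one must make sure that the set $Z$ above is honestly Borel (equivalently, that $\|\al_x(\vph_x^k|_{\cM_x})-v\vph_x^kv^*|_{\cM_x}\|_{(\cM_x)_*}$, which involves restricting a functional on $B(H_0)$ to the moving algebra $\cM_x$, depends Borel-measurably on $(x,v)$) so that the cross-section theorem applies and produces a measurable, not merely ``pointwise existing'', field of implementing unitaries. This is handled exactly as in the proof of Theorem \ref{thm:appdisint} by expressing the relevant norm via evaluation against the countable strongly-dense family $\{b_x^j\}$ and invoking Lemma \ref{lem:meas-Borel} once more to absorb any residual $\mu$-null defect; no further ideas are needed.
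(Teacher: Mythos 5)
Your proof is correct and is essentially the paper's own argument: the paper does not write out a separate proof of Theorem \ref{thm:intdisint} but observes that it is implicit in the proof of Theorem \ref{thm:appdisint}, and your proposal is exactly that proof with the $1/m$-tolerance removed (the Borel set $Z$ replacing the sets $Z_m$, followed by the same measurable cross-section argument). No gaps.
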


Next we study centrally trivial automorphisms.

\begin{lem}
Let us fix a faithful normal state $\ps$ on $\cM$.
An automorphism $\al$ on $\cM$
is centrally trivial
if and only if
for any $\vep>0$,
there exist $\de>0$ and a finite set
$F\subs\cM_*$
such that
if $a\in\cM_1$ satisfies
$\|[a,\vph]\|<\de$ for all $\vph\in F$,
then
$\|\al(a)-a\|_\ps^\sharp<\vep$.
\end{lem}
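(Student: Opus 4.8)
The plan is to prove the equivalence by unwinding the definition of central triviality in terms of $\omega$-central sequences, and then transporting the quantifier structure through the ultraproduct. Recall that $\al\in\Cnt(\cM)$ means $\al^\om=\id$ on $\cM_\om=\sC_\om/\sT_\om$, i.e.\ for every $\om$-central bounded sequence $(a^\nu)_\nu$ one has $\al(a^\nu)-a^\nu\to 0$ as $\nu\to\om$ in the strong$*$ topology, which can be measured by $\|\cdot\|_\ps^\sharp$ since $\ps$ is faithful.

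For the ``if'' direction, suppose the $\vep$--$\de$--$F$ condition holds. Let $(a^\nu)_\nu\in\sC_\om$; by rescaling assume $\|a^\nu\|\le 1$. Fix $\vep>0$ and take the corresponding $\de>0$ and finite $F\subs\cM_*$. By $\om$-centrality, $\|[a^\nu,\vph]\|<\de$ for all $\vph\in F$ once $\nu$ is close enough to $\om$, so $\|\al(a^\nu)-a^\nu\|_\ps^\sharp<\vep$ for such $\nu$; hence $\al(a^\nu)-a^\nu\to 0$ as $\nu\to\om$, which gives $\al^\om=\id$ on $\cM_\om$.

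For the ``only if'' direction I would argue by contradiction. Suppose $\al\in\Cnt(\cM)$ but the condition fails: there is $\vep_0>0$ such that for every $n\in\N$ and every finite $F\subs\cM_*$ there exists $a\in\cM_1$ with $\|[a,\vph]\|<1/n$ for all $\vph\in F$ yet $\|\al(a)-a\|_\ps^\sharp\ge\vep_0$. Enumerate a norm-dense sequence $\{\vph_k\}_k$ in $\cM_*$ and apply this with $F=\{\vph_1,\dots,\vph_n\}$ and $1/n$ to produce $a^n\in\cM_1$ with $\|[a^n,\vph_k]\|<1/n$ for $k\le n$ and $\|\al(a^n)-a^n\|_\ps^\sharp\ge\vep_0$. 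Then for every $\vph\in\cM_*$ one has $\|[a^n,\vph]\|\to 0$ as $n\to\infty$ by a routine $\vep/3$-estimate using density of $\{\vph_k\}$, so $(a^n)_n\in\sC_\om$ (this uses that centrality along the whole sequence implies $\om$-centrality, and bounded sequences with $\|[a^n,\vph]\|\to0$ for all $\vph$ are exactly the central ones). Setting $a:=\pi_\om((a^n)_n)\in\cM_\om$, central triviality gives $\al^\om(a)=a$, i.e.\ $\al(a^n)-a^n\to 0$ as $n\to\om$ in $\|\cdot\|_\ps^\sharp$; but this contradicts $\|\al(a^n)-a^n\|_\ps^\sharp\ge\vep_0$ for \emph{all} $n$, since any $W\in\om$ is infinite.

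The main obstacle — really the only subtle point — is the interplay between ``convergence along $\om$'' and ``convergence of the whole sequence'': in the ``only if'' part one manufactures a sequence that is genuinely central (norm-central, hence $\om$-central for every free ultrafilter), so that its class lies in $\cM_\om$ and central triviality applies, while the failure of the $\vep$--$\de$--$F$ condition has been arranged to hold for \emph{every} index $n$, which is incompatible with convergence to $0$ along $\om$ because $\om$ contains only infinite sets. One should also take a little care that the seminorm $\|\cdot\|_\ps^\sharp$ genuinely metrizes the strong$*$ topology on bounded sets (true since $\ps$ is faithful normal), so that the condition stated with $\|\cdot\|_\ps^\sharp$ is equivalent to the strong$*$-topological formulation of $\al^\om=\id$ on $\cM_\om$; this is where faithfulness of $\ps$ is used.
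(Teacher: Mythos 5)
The paper states this lemma without giving a proof, so there is nothing to compare against directly; your argument is a correct proof. Both directions are sound: the ``if'' direction is a direct unwinding of the definition of $\cM_\om$ using that $\|\cdot\|_\ps^\sharp$ metrizes the strong$*$ topology on bounded sets when $\ps$ is faithful normal, and the ``only if'' direction correctly negates the quantifier string, uses the separability of $\cM_*$ (a standing assumption in the paper) to build a \emph{genuinely} central sequence along which the defect stays $\ge \vep_0$, and derives the contradiction from the fact that $\om$ is a free ultrafilter (so convergence to $0$ along $\om$ would force some index with defect $<\vep_0$). The one cosmetic point worth tightening is the rescaling in the ``if'' direction: you should scale by the fixed constant $C:=\sup_\nu\|a^\nu\|$ so the rescaled sequence stays in $\cM_1$ uniformly and the conclusion transfers back by homogeneity. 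Also, the two directions are not symmetric in their hypotheses: the ``if'' direction works for any $\cM$, while the ``only if'' direction genuinely needs separable predual, which matches the paper's standing assumption.
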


\begin{lem}
\label{lem:cnt-borel}
The subgroup $\Cnt(\cM)$ is Borel in $\Aut(\cM)$.
\end{lem}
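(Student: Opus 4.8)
The plan is to show $\Cnt(\cM)$ is Borel by exhibiting it as a countable intersection/union of Borel subsets of $\Aut(\cM)$, using the quantitative characterization of central triviality from the preceding lemma. Fix once and for all a faithful normal state $\ps\in\cM_*$ and a countable norm-dense subset $\{\vph_k\}_{k\in\N}$ of the unit ball of $\cM_*$. By the previous lemma, an automorphism $\al$ lies in $\Cnt(\cM)$ if and only if for every $\vep>0$ there exist $\de>0$ and a finite set $F\subs\cM_*$ so that $\|[a,\vph]\|<\de$ for all $\vph\in F$ and $a\in\cM_1$ forces $\|\al(a)-a\|_\ps^\sharp<\vep$. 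Since $\{\vph_k\}$ is dense, one may take $F$ to be of the form $\{\vph_1,\dots,\vph_m\}$, and $\vep,\de$ may be restricted to reciprocals of positive integers. Thus
\[
\Cnt(\cM)=\bigcap_{n\in\N}\ \bigcup_{m\in\N}\ \bigcup_{p\in\N}\ C_{n,m,p},
\]
where $C_{n,m,p}$ is the set of $\al\in\Aut(\cM)$ such that every $a\in\cM_1$ with $\|[a,\vph_k]\|\le 1/p$ for $k=1,\dots,m$ satisfies $\|\al(a)-a\|_\ps^\sharp\le 1/n$.

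It then remains to check that each $C_{n,m,p}$ is Borel in $\Aut(\cM)$. First I would reduce the quantifier ``for all $a\in\cM_1$'' to a countable one: the set $D_{m,p}:=\{a\in\cM_1\mid \|[a,\vph_k]\|\le 1/p,\ k\le m\}$ is $\si$-weakly closed and bounded, hence $\si$-weakly compact and $\si$-weakly metrizable (as $\cM$ has separable predual), so it has a countable $\si$-weakly dense subset $\{a_j\}_{j\in\N}$. Because $\al\mapsto \al(a)$ is strong$*$-continuous and $\|\cdot\|_\ps^\sharp$ is $\si$-strong$*$ lower semicontinuous, for a fixed $\al$ the function $a\mapsto \|\al(a)-a\|_\ps^\sharp$ is $\si$-weakly lower semicontinuous on $D_{m,p}$, so its supremum over $D_{m,p}$ equals its supremum over $\{a_j\}$. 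Hence
\[
C_{n,m,p}=\bigcap_{j\in\N}\ \{\al\in\Aut(\cM)\mid \|\al(a_j)-a_j\|_\ps^\sharp\le 1/n\},
\]
and it suffices to see that each set on the right is Borel. For fixed $j$, the map $\al\mapsto \al(a_j)\in\cM$ is continuous from $\Aut(\cM)$ with the $u$-topology into $\cM$ with the $\si$-strong$*$ topology (this is part of the standard setup: $\al\mapsto U(\al)$ is strongly continuous and $\al(a_j)=U(\al)a_jU(\al)^*$), and on bounded sets of $\cM$ the seminorm $x\mapsto\|x\|_\ps^\sharp$ is $\si$-strong$*$ continuous; composing, $\al\mapsto \|\al(a_j)-a_j\|_\ps^\sharp$ is continuous, so its sublevel set is closed, in particular Borel.

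Assembling the pieces, $\Cnt(\cM)$ is a countable combination of Borel sets, hence Borel. The one point requiring genuine care—the main obstacle—is the interchange of the uncountable supremum over $D_{m,p}$ with the countable supremum over a dense subset; this is where one must invoke $\si$-weak compactness of the bounded set $D_{m,p}$ together with the lower semicontinuity of $a\mapsto\|\al(a)-a\|_\ps^\sharp$, and metrizability of the predual (valid since $\cM$ has separable predual, which is our standing assumption). Everything else is a routine unwinding of the quantitative criterion in the previous lemma together with standard continuity properties of the $u$-topology.
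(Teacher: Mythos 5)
Your proof is correct and follows essentially the same route as the paper's: both rewrite $\Cnt(\cM)$, via the quantitative criterion of the preceding lemma, as a countable Boolean combination of sublevel sets of the $u$-continuous functions $\al\mapsto\|\al(a_j)-a_j\|_\ps^\sharp$, with the $a_j$ running over countable families of approximately central contractions. The only real difference is that the paper uses a single strongly dense sequence in $\cM_1$ together with the index sets $J_m$, whereas you justify the passage from all of $D_{m,p}$ to a countable subfamily via $\si$-weak compactness and lower semicontinuity of $a\mapsto\|\al(a)-a\|_\ps^\sharp$ — a step the paper leaves implicit.
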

\begin{proof}
Note that if $\al\in\Cnt(\cM)$ and
$(a^\nu)_\nu$ is central,
then $\|\al(a^\nu)-a^\nu\|_\ps^\sharp\to0$
as $\nu\to\infty$.
Let $\{F_m\}_m$ be an increasing sequence
of finite subsets in $\cM_*$
such that their union is norm dense in $\cM_*$.
Let $\{a_j\}_j$ be a strongly dense sequence
in $\cM_1$.
For $m\in\N$,
we let $J_m:=\{j\in\N\mid \|[a_j,\vph]\|<1/m,\ \vph\in F_m\}$.

Then the previous lemma implies
that
$\al\in\Aut(\cM)$ is centrally trivial
if and only if
\[
\inf_{m\in\N}\sup_{j\in J_m}\|\al(a_j)-a_j\|_\ps^\sharp=0.
\]
Since $\|\al(a_j)-a_j\|_\ps^\sharp$
is continuous with respect to $\al\in\Aut(\cM)$,
the function
$\al\mapsto \inf_m\sup_{j\in J_m}\|\al(a_j)-a_j\|_\ps^\sharp$
is Borel.
In particular,
$\Cnt(\cM)$ is a Borel subset in $\Aut(\cM)$.
\end{proof}

\begin{lem}
\label{lem:centralseq-disint}
If a sequence $a^\nu:=\int_X^\oplus a_x^\nu\,d\mu(t)$
is central in $\cM$,
then a subsequence
$(a_x^{\nu_m})_m$ is central in $\cM_x$
for almost every $x\in X$.
\end{lem}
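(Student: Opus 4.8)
The plan is to disintegrate the defining condition of centrality fiber by fiber, using a countable separating family of normal functionals together with a diagonal extraction of subsequences.

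First I would fix a countable family $\{\vph^k\}_{k\in\N}\subs\cM_*$ adapted to the direct integral decomposition: choosing countably many measurable vector fields $x\mapsto\xi_x^j\in H_x$ whose values span a dense subspace of $H_x$ for $\mu$-a.e.\ $x$, and adjoining to the enumeration $\{\vph^k\}$ all $\Q+i\Q$-linear combinations of the associated vector functionals $x\mapsto\om_{\xi^j,\xi^{j'}}$, one obtains a family for which the fiberwise restrictions $\{\vph_x^k\}_k$ are norm-dense in $(\cM_x)_*$ for $\mu$-a.e.\ $x$. Since $(a^\nu)_\nu\in\ell^\infty(\cM)$, we also have $C:=\sup_\nu\|a^\nu\|<\infty$ and $\|a_x^\nu\|\le C$ for a.e.\ $x$, for each fixed $\nu$.

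Next, for each fixed $k$ the function $x\mapsto\|[a_x^\nu,\vph_x^k]\|_{(\cM_x)_*}$ is $\mu$-measurable (a standard fact about measurable fields, in the spirit of Lemma \ref{lem:meas-Borel}), and by the defining property of the direct integral
\[
\int_X\|[a_x^\nu,\vph_x^k]\|_{(\cM_x)_*}\,d\mu(x)=\|[a^\nu,\vph^k]\|_{\cM_*},
\]
where the right-hand side tends to $0$ as $\nu\to\infty$ because $(a^\nu)_\nu$ is central. Hence $\|[a_\cdot^\nu,\vph_\cdot^k]\|\to0$ in $L^1(X,\mu)$ for every $k$. Then I would run the usual diagonal argument: $L^1$-convergence yields an a.e.-convergent subsequence, and diagonalizing over $k\in\N$ produces a single subsequence $(\nu_m)_m$, independent of $x$, such that for every $k$ one has $\|[a_x^{\nu_m},\vph_x^k]\|_{(\cM_x)_*}\to0$ as $m\to\infty$ for $\mu$-a.e.\ $x$.

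Finally, let $N$ be the union of the countably many exceptional null sets: those from the diagonal extraction, the one where $\{\vph_x^k\}_k$ fails to be norm-dense in $(\cM_x)_*$, and the ones where $\|a_x^\nu\|>C$. For $x\notin N$ the sequence $(a_x^{\nu_m})_m$ lies in the ball of radius $C$ of $\cM_x$ and satisfies $\|[a_x^{\nu_m},\vph_x^k]\|\to0$ for all $k$; given $\ps\in(\cM_x)_*$ and $\vep>0$, pick $\vph_x^k$ with $\|\ps-\vph_x^k\|<\vep$, and using $\|[a_x^{\nu_m},\,\cdot\,]\|\le 2C\|\cdot\|$ obtain $\limsup_m\|[a_x^{\nu_m},\ps]\|\le 2C\vep$, hence $\|[a_x^{\nu_m},\ps]\|\to0$. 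Thus $(a_x^{\nu_m})_m$ is central in $\cM_x$ for every $x\notin N$, which is the assertion. The only delicate point is the construction of the adapted family $\{\vph^k\}$ with the fiberwise density property, together with the measurability of the fiberwise commutator norms; both are standard properties of measurable fields of von Neumann algebras, so the remainder of the argument is bookkeeping with null sets.
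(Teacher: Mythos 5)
Your proposal is correct and follows essentially the same route as the paper: the paper's proof simply records the identity $\|[a^\nu,\vph^k]\|=\int_X\|[a_x^\nu,\vph_x^k]\|\,d\mu(x)$ for a countable dense family, deduces $L^1$-convergence, and leaves the a.e.-convergent diagonal subsequence, the fiberwise density of $\{\vph_x^k\}_k$, and the null-set bookkeeping implicit. You have merely filled in those (correct) details.
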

\begin{proof}
Let $\{\vph^k\}_{k\in\N}$ be a dense sequence
of $\cM_*$.
Then $(a^\nu)_\nu$ is central
if and only if
\[
\|[a^\nu,\vph^k]\|
=
\int_X
\|[a_x^\nu,\vph_x^k]\|\,d\mu(x)
\to0
\quad
\mbox{as }
\nu\to\infty
\]
for all $k$.
This means
$\|[a_x^\nu,\vph_x^k]\|\to0$
in $L^1(X,\mu)$.
Hence we are done.
\end{proof}

\begin{thm}
\label{thm:centdisint}
Let $\al=\int_X^\oplus \al^x\,d\mu(x)$ be an automorphism
on $\cM$ as before.
Then
$\al\in\Cnt(\cM)$ if and only if
$\al_x\in\Cnt(\cM_x)$ for almost every $x\in X$.
\end{thm}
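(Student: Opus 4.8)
The plan is to imitate the arguments already used for $\oInt$ (Theorem~\ref{thm:appdisint}) and $\Int$ (Theorem~\ref{thm:intdisint}), using the characterization of central triviality given in the lemma before Lemma~\ref{lem:cnt-borel}: $\al$ is centrally trivial iff for every $\vep>0$ there are $\de>0$ and a finite $F\subs\cM_*$ such that $a\in\cM_1$, $\|[a,\vph]\|<\de$ for all $\vph\in F$, implies $\|\al(a)-a\|_\ps^\sharp<\vep$. As in the proof of Theorem~\ref{thm:appdisint}, I would first reduce to the case $H_x=H_0\cong\ell^2$ for all $x$, $\mu(X)<\infty$, and (via Lemma~\ref{lem:meas-Borel} and Borel choice functions for $\vN(H_0)$) arrange that $x\mapsto\cM_x$, $x\mapsto\cM_x'$, $x\mapsto\al_x$ are genuinely Borel. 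I would also fix a Borel field of faithful normal states $x\mapsto\ps_x$ and let $\ps=\int_X^\oplus\ps_x\,d\mu$.

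For the ``only if'' direction (the easy one), suppose $\al\in\Cnt(\cM)$. Take any Borel choice functions $b^j$ so that $\{b^j(\cM_x')\}_j$ is strongly dense in $(\cM_x')_1$; then central sequences in $\cM_x$ can be built from these, and the $L^1(X,\mu)$-convergence trick of Lemma~\ref{lem:centralseq-disint} applies: if $(a^\nu)_\nu$ is central in $\cM$ then a subsequence is central in $\cM_x$ a.e., so $\al_x^\om$ acts trivially on those classes. More efficiently, I would argue by contradiction: if $\al_x\nin\Cnt(\cM_x)$ on a non-null Borel set, the negation of the $\vep$--$\de$ criterion produces, measurably in $x$ (again via the measurable cross-section theorem applied to the Borel set $\{(x,a):a\in(\cM_x)_1,\ \|[a,\vph_x]\|<\de,\ \|\al_x(a)-a\|_{\ps_x}^\sharp\ge\vep\}$), a uniformly bounded field $a_x$ that is ``$F$-central'' but moved by $\al_x$ by at least $\vep$ in $\|\cdot\|_{\ps_x}^\sharp$; integrating and letting $F$ exhaust a dense sequence in $\cM_*$ and $\de\to0$ contradicts central triviality of $\al$ on $\cM$.

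For the ``if'' direction, suppose $\al_x\in\Cnt(\cM_x)$ for a.e.\ $x$. Fix $\vep>0$. For each $x$, by the criterion there exist $\de(x)>0$ and a finite set of indices $I(x)\subs\N$ (referring to a fixed dense sequence $\{\vph^k\}$ in $\cM_*$) with the stated implication. The main point is to extract a single $\de>0$ and a single finite $F$ that work for $\al$ on $\cM$. Since $\mu(X)<\infty$, I would use a measurable-selection / exhaustion argument: the sets $X_{N,m}:=\{x: \de(x)>1/m,\ I(x)\subs\{1,\dots,N\}\}$ are $\mu$-measurable (measurability being checked exactly as the Borelness of $F_\vph$ and $G_n$ in the proof of Theorem~\ref{thm:appdisint}, using continuity of left/right multiplication $B(H_0)\times B(H_0)_*\to B(H_0)_*$ and of $\|\cdot\|_{\ps_x}^\sharp$ in $\al$), and $X=\bigcup_{N,m}X_{N,m}$ up to a null set. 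Choose $N,m$ with $\mu(X\setminus X_{N,m})$ small; put $F=\{\vph^1,\dots,\vph^N\}$ and $\de=1/m$. Then for $a=\int_X^\oplus a_x\,d\mu\in\cM_1$ with $\|[a,\vph^k]\|<\de'$ ($k\le N$) for suitably small $\de'$, one gets $\|[a_x,\vph_x^k]\|<1/m$ for $x$ in a large-measure subset, hence $\|\al_x(a_x)-a_x\|_{\ps_x}^\sharp<\vep$ there, and $\|\al(a)-a\|_\ps^\sharp=\int_X\|\al_x(a_x)-a_x\|_{\ps_x}^{\sharp}\,d\mu(x)$ is then controlled by $\vep\cdot\mu(X)$ plus a small boundary term from $X\setminus X_{N,m}$. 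This yields the $\vep$--$\de$ criterion for $\al$, so $\al\in\Cnt(\cM)$.

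The main obstacle I anticipate is the uniformity extraction in the ``if'' direction: the constants $\de(x)$ and finite sets $I(x)$ depend on $x$ with no a priori control, so one genuinely needs the measurability of the auxiliary sets $X_{N,m}$ and a clean way to pass from the pointwise-$x$ criterion to an $L^1$-integrated statement. This is exactly the kind of book-keeping done in Theorem~\ref{thm:appdisint}, and I expect it to go through verbatim once the fields $x\mapsto\cM_x,\cM_x',\al_x,\ps_x$ are arranged to be Borel; a minor care point is that $\|[a_x,\vph_x^k]\|$ small ``in $L^1$'' only forces it small pointwise along a subsequence, so one must phrase the final estimate with an $L^1$-Chebyshev argument rather than a pointwise one, which is harmless since everything is integrated against $\mu$ anyway.
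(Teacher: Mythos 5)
Your proposal is essentially correct, but the two directions split differently from the paper, and one of them is handled by the paper in a much softer way. For the direction $\al\in\Cnt(\cM)\Rightarrow\al_x\in\Cnt(\cM_x)$ a.e., your contradiction argument (measurably select, on a positive-measure set, fields $a_x$ with small commutators against finitely many $\vph_x^k$ but displaced by $\al_x$ by a fixed $\vep_1$, then integrate) is exactly what the paper does with its sets $A_{j,m}$, $B_{j,m}$ and the field $c^m$; note, however, that your first sketch of this direction, via Lemma \ref{lem:centralseq-disint} alone, does not suffice, since that lemma only says that central sequences of $\cM$ disintegrate into central sequences of the $\cM_x$, not that every central sequence of $\cM_x$ arises this way -- only your ``more efficient'' contradiction version works. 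For the converse direction the paper avoids your uniformization entirely: it takes a central sequence $(a^\nu)_\nu$ in $\cM$ with $\inf_\nu\|\al(a^\nu)-a^\nu\|_\ps^\sharp\geq\vep_0$, passes to a subsequence that is central fibre-wise by Lemma \ref{lem:centralseq-disint}, applies $\al_x\in\Cnt(\cM_x)$ pointwise, and concludes by dominated convergence; no extraction of uniform $\de(x)$, $I(x)$, no Chebyshev argument, and no measurable selection are needed there. Your quantitative route does go through, but it carries extra burdens you should make explicit: the sets $X_{N,m}$ must be defined through a countable strongly dense Borel choice $\{a_x^j\}_j$ of $(\cM_x)_1$, and to pass from that dense set back to all of $(\cM_x)_1$ you need that $a\mapsto\|[a,\vph]\|$ and $a\mapsto\|\al_x(a)-a\|_{\ps_x}^\sharp$ are strong$*$ continuous on bounded sets; moreover the disintegration identity should read $\|\al(a)-a\|_\ps^{\sharp\,2}=\int_X\|\al_x(a_x)-a_x\|_{\ps_x}^{\sharp\,2}\,d\mu(x)$ (the $\sharp$-seminorm disintegrates in square, not linearly), which is harmless but changes the constants in your final estimate.
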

\begin{proof}
We may and do assume that
all relevant maps such as $x\mapsto \cM_x$ and
$x\mapsto \al_x$ are Borel as before.
By replacing $\mu$ if necessary,
$\mu$ is assumed to be finite.
Let $\vph_0\in B(H_0)_*$ be a faithful state
and $\ps:=\vph_0\otimes\mu$.

Suppose that
$\al_x\in\Cnt(\cM_x)$ for almost every $x\in X$.
If $\al$ were not centrally trivial,
there exist $\vep_0>0$
and a central sequence $(a^\nu)_\nu$
in $\cM$ such that
$\inf_\nu\|\al(a^\nu)-a^\nu\|_\ps^\sharp\geq\vep_0$.
This implies for all $\nu$,
\[
\int_X
\|\al_x(a_x^\nu)-a_x^\nu\|_{\vph_0}^{\sharp\,2}\,d\mu(t)
\geq\vep_0^2.
\]
By the previous lemma,
a subsequence $(a_x^{\nu_m})_m$ is central
for almost every $x\in X$.
Hence $\|\al_x(a_x^{\nu_m})-a_x^{\nu_m}\|_{\vph_0}^\sharp
\to0$
as $m\to\infty$.
Then by the dominated convergence theorem,
the left hand side above converges to 0,
and this is a contradiction.

Suppose conversely
$\al\in\Cnt(\cM)$.
We let $a^j\col X\ra B(H_0)_1$ be a choice function
such that $\{a_x^j\}_j$ is strongly dense in $(\cM_x)_1$.
By discarding $\mu$-null sets,
we may take a norm dense sequence $\{\vph^k\}_k$
in $L_{B(H_0)_*}(X,\mu)$
such that
$\{\vph_x^k|_{\cM_x}\}_k$ is norm dense in $(\cM_x)_*$
for all $x\in X$.
We set
\[
A_{j,m}:=\{x\in X\mid
\|[a_x^j,\vph_x^k]\|_{(\cM_x)_*}<1/m,\ k=1,\dots,m\},
\]
which is
a Borel subset of $X$
since the map
$B(H_0)_*\times \vN(H_0)\ni (\vph,\cM)\mapsto \|\vph|_\cM\|$
is Borel.
Then
\[
\{x\in X\mid \al_x\in\Cnt(\cM_x)\}
=
\left
\{x\in X
\,\middle|\, 
\inf_{m\in\N}\sup_{j\in\N} 1_{A_{j,m}}(x)
\|\al_x(a_x^j)-a_x^j\|_{\vph_0}^\sharp=0
\right\}.
\]
Since
$x\mapsto \|\al_x(a_x^j)-a_x^j\|_{\vph_0}^\sharp$
is Borel,
the set $\{x\in X\mid \al_x\in\Cnt(\cM_x)\}$
is Borel.

Suppose that
$N:=\{x\in X\mid \al_x\nin\Cnt(\cM_x)\}$
satisfied
$\mu(N)>0$.
Since the positive function
$g\col N\ni x\mapsto
\inf_m\sup_j 1_{A_{j,m}}(x)
\|\al_x(a_x^j)-a_x^j\|_{\vph_0}$
is Borel,
there exists $\vep_1>0$
such that
$N_1:=\{x\in N\mid g(x)>\vep_1\}$ satisfies $\mu(N_1)>0$.
Thus for all $m\in\N$,
we obtain $j_m\in\N$ such that
\[
\mu\Big{(}
\bigcup_{j=1}^{j_m}
\{x\in A_{j,m}\mid
\|\al_x(a_x^j)-a_x^j\|_{\vph_0}^\sharp\geq\vep_1
\}\Big{)}
\geq\mu(N_1)/2.
\]

Let
$B_{j,m}:=
\{x\in A_{j,m}\mid
\|\al_x(a_x^j)-a_x^j\|_{\vph_0}^\sharp\geq\vep_1
\}$
and
$X_m:=B_{1,m}\cup\cdots\cup B_{j_m,m}$.
We set $c_x^m$ as follows:
\[
c_x^m
:=
\begin{cases}
a_x^j&
\mbox{if }
x\in B_{j,m}\cap B_{1,m}^c
\cap\cdots\cap B_{j-1,m}^c,
\\
0&
\mbox{if }
x\in X\setminus X_m.
\end{cases}
\]

Then $\|[c_x^m,\vph_x^k]\|_{(\cM_x)_*}\leq1/m$
for all $x\in X$
and $k=1,\dots,m$,
and $\|\al_x(c_x^m)-c_x^m\|_{\vph_0}^\sharp\geq\vep_1$
for all $x\in X_m$.
Put
$c^m:=\int_X^\oplus c_x^m\,d\mu(x)$.
Then we obtain
\[
\|[c^m,\vph^k]\|_{\cM_*}
=\int_X\|[c_x^m,\vph_x^k]\|_{(\cM_x)_*}\,d\mu(x)\leq\mu(X)/m
\quad\mbox{for all }
k=1,\dots,m,
\]
and
\[
\|\al(c^m)-c^m\|_\ps^{\sharp\,2}
=
\int_{X_m}
\|\al_x(c_x^m)-c_x^m\|_{\vph_0}^{\sharp\, 2}\,d\mu(t)
\geq
\vep_1^2\mu(X_m)
\geq
\vep_1^2\mu(N_1)/2.
\]
Then $(c^m)_m$ is central,
but $\liminf_m\|\al(c^m)-c^m\|_\ps^{\sharp\,2}\geq \vep_1^2\mu(N_1)/2$.
This is a contradiction.
\end{proof}

Let $\cM$ be a von Neumann algebra
and $\vph$ a faithful normal state on $\cM$.
Then we obtain the central decompositions
of $\cM$ and $\vph$ as follows:
\[
\cM=\int_X^\oplus \cM_x\,d\mu(x),
\quad
\vph=\int_X^\oplus \vph_x\,d\mu(x),
\]
where $Z(\cM)$ is identified with $L^\infty(X,\mu)$.

We may and do assume that
all $\cM_x$ are von Neumann subalgebra
acting on a common Hilbert space $H_0$ as before.
Thus $\cM$ acts on
$H:=L^2(X,\mu)\oti H_0$.

Let $\cN_x:=\cM_x\rti_{\si^{\vph_x}}\R$
and $K_x:=H_0\oti L^2(\R)$.
We will show that
$\{\cN_x,K_x\}_x$ is a measurable field of
von Neumann algebras
with respect to
$\int_X^\oplus K_x\,d\mu(x)
=L^2(X)\oti H_0\oti L^2(\R)$.

Let $a^j\col X\ra B(H_0)_1$
be a $\mu$-measurable choice function
such that
$\{a_x^j\}_{j=1}^\infty$
is strongly dense in $(\cM_x)_1$ for almost every $x$.
Then $x\mapsto \pi_{\si^{\vph_x}}(a_x^j)$
is $\mu$-measurable.
Indeed,
let
$\xi,\eta\in \int_X^\oplus K_x\,d\mu(x)$.
Then
\[
\langle\pi_{\si^{\vph_x}}(a_x^j)\xi_x,
\eta_x\rangle
=
\int_\R
\langle \si_{-t}^{\vph_x}(a_x^j)\xi_x(t),
\eta_x(t)\rangle
\,d\mu(t).
\]
Since
$(x,t)\mapsto
\langle \si_{-t}^{\vph_x}(a_x^j)\xi_x(t),
\eta_x(t)\rangle$
is $\mu$-measurable,
$x\mapsto
\langle\pi_{\si^{\vph_x}}(a_x^j)\xi_x,
\eta_x\rangle$
is $\mu$-measurable
by Fubini's theorem.

Note that
$\{\pi_{\si^{\vph_x}}(a_x^j)\}_j$
is strongly dense in $\pi_{\si^{\vph_x}}(\cM_x)_1$.
Each $\cN_x$ contains the left regular
representation $\la^{\vph_x}(t)=1\oti\la(t)$.
Thus
$\{\cN_x,K_x\}$ is a $\mu$-measurable field.
Let $\cN$ be the disintegration of $\cN_x$,
that is,
\[
\cN:=\int_X^\oplus\cN_x\,d\mu(x),
\]
which acts on the Hilbert space
$L^2(X,\mu)\oti H_0\oti L^2(\R)$.
Using
\begin{equation}
\label{eq:moddisint}
\si_t^\vph=\int_X^\oplus \si_t^{\vph_x}\,d\mu(t),
\end{equation}
we obtain
\[
\int_X^\oplus \pi_{\si^{\vph_x}}(a_x^j)\,d\mu(x)
=
\pi_{\si^\vph}(a^j).
\]
It is trivial that
$\int_X^\oplus \la^{\vph_x}(t)\,d\mu(x)=\la^\vph(t)$.
Thus
$\cN$ is nothing but $\cM\rti_{\si^\vph}\R$.
Summarizing this discussion,
we obtain the following result.

\begin{lem}
In the above setting,
one has the following natural identification:
\[
\cM\rti_{\si^\vph}\R
=
\int_X^\oplus
\cM_x\rti_{\si^{\vph_x}}\R\,d\mu(x).
\]
\end{lem}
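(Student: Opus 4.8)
The plan is to turn the construction sketched just above the statement into a genuine proof that $x\mapsto\cM_x\rti_{\si^{\vph_x}}\R$ is a measurable field of von Neumann algebras and that its direct integral is exactly $\cM\rti_{\si^\vph}\R$. First I would normalize the central disintegration so that every $\cM_x$ is a von Neumann subalgebra of one fixed separable Hilbert space $H_0$, $Z(\cM)$ is identified with $L^\infty(X,\mu)$, and $\cM$ acts on $H:=L^2(X,\mu)\oti H_0$; then each core $\cN_x:=\cM_x\rti_{\si^{\vph_x}}\R$ acts on $K_x:=H_0\oti L^2(\R)$, and the candidate total Hilbert space is $\int_X^\oplus K_x\,d\mu(x)=L^2(X,\mu)\oti H_0\oti L^2(\R)$. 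The one external input I would record at the outset is the disintegration of the modular flow, $\si_t^\vph=\int_X^\oplus\si_t^{\vph_x}\,d\mu(x)$, i.e. (\ref{eq:moddisint}), since this is what links the fibrewise modular automorphisms to the global one.

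The core technical step is measurability. Fix a $\mu$-measurable choice function $a^j\col X\ra B(H_0)_1$ with $\{a_x^j\}_j$ strongly dense in $(\cM_x)_1$ for almost every $x$; such a function exists from the Effros Borel structure on $\vN(H_0)$ together with Lemma \ref{lem:meas-Borel}. For $\xi,\eta\in\int_X^\oplus K_x\,d\mu(x)$ the function $(x,t)\mapsto\langle\si_{-t}^{\vph_x}(a_x^j)\xi_x(t),\eta_x(t)\rangle$ is $\mu$-measurable, so Fubini's theorem gives measurability of $x\mapsto\langle\pi_{\si^{\vph_x}}(a_x^j)\xi_x,\eta_x\rangle$, hence of $x\mapsto\pi_{\si^{\vph_x}}(a_x^j)$. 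Since $x\mapsto\la^{\vph_x}(t)=1\oti\la(t)$ is trivially measurable and $\{\pi_{\si^{\vph_x}}(a_x^j)\}_j\cup\{\la^{\vph_x}(t)\mid t\in\Q\}$ generates $\cN_x$, the collection $\{\cN_x,K_x\}_x$ is a measurable field of von Neumann algebras, and $\cN:=\int_X^\oplus\cN_x\,d\mu(x)$ is a well-defined von Neumann algebra acting on $L^2(X,\mu)\oti H_0\oti L^2(\R)$.

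Finally I would identify $\cN$ with $\cM\rti_{\si^\vph}\R$ by matching generators. From (\ref{eq:moddisint}) one gets $\int_X^\oplus\pi_{\si^{\vph_x}}(a_x^j)\,d\mu(x)=\pi_{\si^\vph}(a^j)$, and obviously $\int_X^\oplus\la^{\vph_x}(t)\,d\mu(x)=\la^\vph(t)$. Thus $\cN$ contains $\pi_{\si^\vph}(\cM)$ and $\la^\vph(\R)$, so $\cN\supseteq\cM\rti_{\si^\vph}\R$; conversely the decomposable operators $\pi_{\si^\vph}(a^j)$ and $\la^\vph(t)$ generate each fibre $\cN_x$ strongly, hence generate $\cN$, which yields the reverse inclusion and the claimed identification. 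The main obstacle I anticipate is purely measure-theoretic bookkeeping: arranging the common Hilbert space $H_0$ and the choice functions so that the Effros--Borel measurability of $x\mapsto\cN_x$ is literally verified, and making sure the realization of $\cM$ used for the disintegration of $\si^\vph$ is set up compatibly with that of the $\cM_x$; once this is in place, the generator matching in the last step is immediate.
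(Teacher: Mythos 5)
Your proposal is correct and follows essentially the same route as the paper: fix a common fibre Hilbert space $H_0$, use choice functions $a^j$ and Fubini to verify that $x\mapsto\pi_{\si^{\vph_x}}(a_x^j)$ is measurable (so that $\{\cM_x\rti_{\si^{\vph_x}}\R,\,H_0\oti L^2(\R)\}_x$ is a measurable field), and then match generators via $\si_t^\vph=\int_X^\oplus\si_t^{\vph_x}\,d\mu(x)$ and $\la^\vph(t)=\int_X^\oplus\la^{\vph_x}(t)\,d\mu(x)$. The only detail you spell out beyond the paper is the two-sided inclusion at the end (where one implicitly uses that the diagonal algebra $L^\infty(X,\mu)=Z(\cM)$ sits inside $\pi_{\si^\vph}(\cM)$ since $\si^\vph$ is trivial on the center), which is fine.
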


Let $\al\in\Aut(\cM)$
which fixes any element of $Z(\cM)$.
Then $\al$ is described as
\[
\al=\int_X^\oplus\al_x\,d\mu(x).
\]
By the previous lemma,
we have
\[
\tal=\int_X^\oplus
\widetilde{\al_x}\,d\mu(x),
\quad
\mo(\al)=\int_X^\oplus
\mo(\al_x)\,d\mu(x).
\]

Combining Theorem \ref{thm:appdisint},
\ref{thm:intdisint},
\ref{thm:centdisint}
and Kawahigashi-Sutherland-Takesaki's
result \cite[Theorem 1]{KawST},
we obtain the following.

\begin{thm}
\label{thm:genKST}
Let $\cM$ be an injective von Neumann algebra
with separable predual.
Then the following statements hold:
\begin{enumerate}
\item
$\oInt(\cM)=\ker(\mo)$;

\item
$\Cnt(\cM)=\{\al\in\Aut(\cM)\mid \tal\in\Int(\tcM)\}$.
\end{enumerate}
\end{thm}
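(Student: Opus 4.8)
The plan is to reduce both assertions to the case of an injective factor, where they are exactly the content of the Kawahigashi--Sutherland--Takesaki theorem \cite[Theorem 1]{KawST}, and then to transport them to the general case by central disintegration.

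First I would observe that it suffices to treat automorphisms fixing the center, since all four sets $\oInt(\cM)$, $\ker(\mo)$, $\Cnt(\cM)$ and $\{\al\in\Aut(\cM)\mid\tal\in\Int(\tcM)\}$ are contained in $\Aut(\cM,Z(\cM))$. Indeed, an approximately inner automorphism fixes $Z(\cM)$ because inner ones do and the restriction $\Aut(\cM,Z(\cM))\to\Aut(Z(\cM))$ is continuous; if $\mo(\al)=\id$ then $\tal$ fixes $Z(\tcM)$, hence its subalgebra $Z(\cM)=Z(\tcM)^\th$, hence $\al=\tal|_\cM$ fixes $Z(\cM)$; if $\al\in\Cnt(\cM)$ then $\al^\om=\id$ on $\cM_\om$, which contains the constant sequences from $Z(\cM)$, so $\al$ fixes $Z(\cM)$; and if $\tal=\Ad u$ with $u\in\tcM^{\rm U}$ then $\tal$, a fortiori $\al$, fixes $Z(\cM)$.

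So fix $\al\in\Aut(\cM,Z(\cM))$, write $Z(\cM)=L^\infty(X,\mu)$, and take the central disintegrations $\cM=\int_X^\oplus\cM_x\,d\mu(x)$ and $\al=\int_X^\oplus\al_x\,d\mu(x)$, so that $\cM_x$ is an injective factor for almost every $x$ (injectivity passes to the direct-integral fibres). Using the identifications recorded in the appendix, namely $\tcM=\int_X^\oplus\tcM_x\,d\mu(x)$, $\tal=\int_X^\oplus\widetilde{\al_x}\,d\mu(x)$ and $\mo(\al)=\int_X^\oplus\mo(\al_x)\,d\mu(x)$, I would assemble the chains of equivalences. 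For (1): by Theorem \ref{thm:appdisint}, $\al\in\oInt(\cM)$ iff $\al_x\in\oInt(\cM_x)$ for a.e.\ $x$; by the factor case of \cite[Theorem 1]{KawST}, this holds iff $\mo(\al_x)=\id$ for a.e.\ $x$; and the decomposable automorphism $\int_X^\oplus\mo(\al_x)\,d\mu(x)$ of $Z(\tcM)$ is the identity iff almost every component is, i.e.\ iff $\mo(\al)=\id$. For (2): by Theorem \ref{thm:centdisint}, $\al\in\Cnt(\cM)$ iff $\al_x\in\Cnt(\cM_x)$ for a.e.\ $x$; by the factor case of \cite[Theorem 1]{KawST}, this holds iff $\widetilde{\al_x}\in\Int(\tcM_x)$ for a.e.\ $x$; and by Theorem \ref{thm:intdisint} applied to the measurable field $\{\tcM_x\}_x$ (which has separable predual since $\cM$ does), the latter holds iff $\tal=\int_X^\oplus\widetilde{\al_x}\,d\mu(x)\in\Int(\tcM)$.

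The argument is essentially bookkeeping; the only points requiring a little care are the preliminary reduction to $\Aut(\cM,Z(\cM))$, the fact that injectivity disintegrates to the fibres, and the elementary observation that a decomposable automorphism of an abelian von Neumann algebra is trivial precisely when almost every fibre component is. None of these presents a genuine obstacle, and the substantive input — the factor case — is imported directly from \cite{KawST}, so I do not anticipate any serious difficulty at this step.
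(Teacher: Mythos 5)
Your argument is exactly the paper's: the paper proves this theorem by combining Theorem \ref{thm:appdisint}, Theorem \ref{thm:intdisint}, Theorem \ref{thm:centdisint} and the factor case from \cite[Theorem 1]{KawST}, which is precisely the central-disintegration bookkeeping you carry out (the paper merely leaves the reduction to centre-fixing automorphisms and the disintegration of the core implicit, citing the appendix identifications). Your proposal is correct and fills in those routine details.
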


We need the following lemma.

\begin{lem}
\label{lem:appinner-weak}
Let $\cM$ be a von Neumann algebra
and $\th\in\Aut(\cM)$.
Then $\th\in\oInt(\cM)$
if and only if
for any $\vep>0$ and a finite $\Ph\subs \cM_*^+$,
there exists $a\in\cM_1$
such that
\[
\|\th(\vph)-a\vph a^*\|<\vep,
\quad
\|a^*a-1\|_\vph+\|aa^*-1\|_\vph<\vep.
\]
\end{lem}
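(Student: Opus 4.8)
The ``only if'' direction is immediate: if $\th=\lim_\la\Ad u_\la$ in $\Aut(\cM)$ with $u_\la\in\cM^{\rm U}$, then for any $\vep>0$ and finite $\Ph\subs\cM_*^+$ we have $\|\th(\vph)-u_\la\vph u_\la^*\|<\vep$ for $\la$ large, and each $u_\la$ is a contraction with $\|u_\la^*u_\la-1\|_\vph=0=\|u_\la u_\la^*-1\|_\vph$. For the converse, the plan is to produce, for each finite $\Ph\subs\cM_*^+$ and $\vep>0$, a unitary $u\in\cM^{\rm U}$ with $\|\th(\vph)-u\vph u^*\|<\vep$ for all $\vph\in\Ph$; indexing by such pairs $(\Ph,\vep)$ these unitaries form a net along which $\Ad u\to\th$, and since positive normal functionals span $\cM_*$ with norm control this suffices. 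So fix $\Ph$ and $\vep$, set $d:=1+\max_{\vph\in\Ph}\|\vph\|^{1/2}$, and apply the hypothesis with the same $\Ph$ and a small $\vep'>0$ (to be fixed as a function of $\vep$ and $d$) to get a contraction $a\in\cM_1$ with $\|\th(\vph)-a\vph a^*\|<\vep'$ and $\|a^*a-1\|_\vph+\|aa^*-1\|_\vph<\vep'$ for $\vph\in\Ph$.

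The first step is a polar-decomposition reduction. Write $a=v|a|$ with $v$ a partial isometry and $b:=|a|=(a^*a)^{1/2}$, so $0\le b\le1$, $v^*v$ is the support projection $s(b)$, and $vv^*=v\,s(b)\,v^*=s(vbv^*)$. From $(1-b)^2\le(1-b^2)^2$ one gets $\|b-1\|_\vph\le\|a^*a-1\|_\vph<\vep'$. Since $1-v^*v=\chi_{\{0\}}(b)\le\chi_{[0,1/2]}(b)$ and $(1-\la^2)^2\ge1/4$ on $[0,1/2]$, one gets $\vph(1-v^*v)\le4\|a^*a-1\|_\vph^2$, and symmetrically $\vph(1-vv^*)\le4\|aa^*-1\|_\vph^2$, so both are $<4\vep'^2$ on $\Ph$. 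A Cauchy--Schwarz estimate then gives $\|v\vph v^*-a\vph a^*\|\le2\|b-1\|_\vph\,\|\vph\|^{1/2}<2d\vep'$, hence $\|v\vph v^*-\th(\vph)\|<(1+2d)\vep'$ for $\vph\in\Ph$. Thus the problem reduces to the following: given a partial isometry $v$ whose domain and range defect projections $e:=1-v^*v$ and $f:=1-vv^*$ have $\vph(e)+\vph(f)$ small for all $\vph\in\Ph$, replace $v$ by a unitary $u$ with $\|u-v\|_\vph$ controlled on $\Ph$; once this is achieved, one more Cauchy--Schwarz estimate yields $\|u\vph u^*-v\vph v^*\|\le2\|\vph\|^{1/2}\|u-v\|_\vph$, and choosing $\vep'$ small enough finishes the argument.

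This completion step is the heart of the matter, and the obstruction is that $e$ and $f$ need not be Murray--von Neumann equivalent. The plan is to split $\cM=\cM_{\mathrm{fin}}\oplus\cM_{\mathrm{pi}}$ into its finite and properly infinite central summands. On $\cM_{\mathrm{fin}}$, since $v^*v\sim vv^*$ via $v$ and finite von Neumann algebras have cancellation, we get $e\sim f$, so $v$ extends to a unitary $u_0\in\cM_{\mathrm{fin}}$ with $\|u_0-v\|_\vph=\vph(e)^{1/2}$. On $\cM_{\mathrm{pi}}$ the discrepancy between $e$ and $f$ has to be absorbed by a wrap-around: an orthogonal family of projections each equivalent to the identity provides projections $\sim 1$ of arbitrarily small $\vph$-measure on $\Ph$, and combining one such absorbing projection with the comparison theorem one modifies $v$ on a set of small $\vph$-measure into a genuine unitary $u_1\in\cM_{\mathrm{pi}}$ with $\|u_1-v\|_\vph$ small. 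Setting $u:=u_0\oplus u_1$ and assembling all the estimates (after fixing $\vep'$ appropriately) produces the required unitary.

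I expect the wrap-around construction in the properly infinite case to be the only genuinely delicate point; the finite summand is handled cleanly by cancellation, and all the seminorm inequalities (the spectral estimates for the defects, and the two Cauchy--Schwarz bounds comparing $v\vph v^*$, $a\vph a^*$, $u\vph u^*$) are routine. If desired, one can avoid the finite/properly infinite dichotomy entirely by invoking the known fact that a partial isometry with small defect projections in a finite set of normal states is $\|\cdot\|$-close in the relevant seminorms to a unitary, but it seems cleaner to carry out the absorption argument directly.
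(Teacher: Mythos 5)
Your route is genuinely different from the paper's. The paper never touches polar decompositions of the $a$'s: it shows the sequence $(a_n)_n$ normalizes $\sT_\om$, so it defines a \emph{unitary} $u$ in the ultraproduct $\cM^\om$, and then invokes the standard fact that a unitary of $\cM^\om$ admits a unitary representing sequence (e.g.\ write $u=e^{ih}$ with $h$ self-adjoint and lift $h$); the sequence $(u_n)$ then satisfies $u_n-a_n\to0$ strongly$*$ along $\om$. This converts the awkward ``complete a partial isometry with small defects to a nearby unitary'' problem into a clean lifting problem. Your preparatory steps are all correct: the only-if direction, the net formulation, the spectral bounds on $\vph(1-v^*v)$ and $\vph(1-vv^*)$, the two Cauchy--Schwarz comparisons of $a\vph a^*$, $v\vph v^*$, $u\vph u^*$, and the finite central summand (cancellation gives $e\sim f$ there).

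The gap is exactly where you suspect it, and your sketch of the properly infinite absorption does not go through as described. Cutting $v$ down by $e_1:=e\vee p$ with $p\sim1$ of small measure does make the new initial defect $e_1\sim1$, but the new final defect is $f_1=f+v(e_1-e)v^*$, and $e_1-e=e\vee p-e$ need not be equivalent to $1$ --- it vanishes entirely if $p\le e$ --- so there is no reason to have $f_1\succeq1$, hence no reason to have $e_1\sim f_1$, and the completion $u=v(1-e_1)+w$ fails. The repair needs one more central reduction, on $v^*v$ rather than on $\cM$. On the central summand where $v^*v$ is finite, $e$ and $f$ are complements of finite projections in a properly infinite algebra, hence both properly infinite with full central support, hence $e\sim f$ and no cutting is needed at all. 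On the summand where $v^*v$ is properly infinite, choose the absorbing projection $q\sim1$ of small measure \emph{underneath} $v^*v$ (this is where proper infiniteness of $v^*v$ is used), controlling also $\vph(vqv^*)=(v^*\vph v)(q)$ --- legitimate, since $v$ is already fixed when $q$ is chosen, so one simply enlarges the test set by the functionals $v^*\vph v$. Then $e_1:=e+q$ and $f_1:=f+vqv^*$ are projections each dominating a copy of $1$, so $e_1\sim f_1$ and $u:=v(1-e_1)+w$ is a unitary with $\|u-v\|_\vph\le\vph(q)^{1/2}+\vph(e_1)^{1/2}$. Note finally that the ``known fact'' you offer as an escape hatch is precisely the statement whose proof is missing, so citing it does not discharge the obligation; with the reduction above, your argument closes.
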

\begin{proof}
By assumption,
we obtain a sequence $(a_n)_n$
in $\cM_1$ such that for all positive $\vph\in\cM_*$,
\[
\th(\vph)=\lim_{n\to\infty}a_n\vph a_n^*,
\quad
\lim_{n\to\infty}
\|a_n^*a_n-1\|_\vph+\|a_na_n^*-1\|_\vph=0.
\]
This also implies that
$\lim_n a_n^* \vph a_n=\th^{-1}(\vph)$.

It turns out that $(a_n)_n$ belongs to $\sN_\om(\cM)$.
Indeed, let $(x_n)_n\in \sT_\om(\cM)$
with $\sup_n\|x_n\|\leq1$.
Then we have
\begin{align*}
\|x_n a_n \vph\|
&\leq
\|x_n a_n \vph\cdot (1-a_n^*a_n)\|
+
\|x_n a_n \vph a_n^*a_n\|
\\
&\leq
\|\vph\cdot (1-a_n^*a_n)\|
+
\|x_n (a_n \vph a_n^*-\th(\vph))a_n\|
+
\|x_n \th(\vph)a_n\|
\\
&\leq
\|1-a_n^*a_n\|_\vph
+
\|a_n \vph a_n^*-\th(\vph)\|
+
\|x_n\th(\vph)\|,
\end{align*}
and
\begin{align*}
\|\vph a_n x_n\|
&\leq
\|(1-a_n a_n^*)\vph a_n x_n\|
+
\|a_na_n^*\vph a_n x_n\|
\\
&\leq
\|1-a_na_n^*\|_\vph
+
\|a_n(a_n^*\vph a_n-\th{-1}(\vph))x_n\|
+
\|a_n\th^{-1}(\vph)x_n\|
\\
&\leq
\|1-a_na_n^*\|_\vph
+
\|a_n^*\vph a_n-\th{-1}(\vph)\|
+
\|a_n\th^{-1}(\vph)x_n\|.
\end{align*}
Thus $\|x_n a_n\vph\|\to0$ and $\|\vph a_n x_n\|\to0$
as $n\to\infty$.
Hence $(a_n)_n$ normalizes $\sT_\om(\cM)$.

We let $u:=\pi_\om((a_n)_n)\in\cM^\om$
that is a unitary.
Take a unitary representing sequence $(u_n)_n$
of $u$.
This satisfies $u_n-a_n\to0$ in the strong$*$ topology
as $n\to\om$,
and we are done.
\end{proof}

Let $\cM$ be a von Neumann algebra and
$\cA$ a von Neumann subalgebra of $Z(\cM)$.
Let $(X,\mu)$ be the measure theoretic spectrum
of $\cA$.
Let $\cH$ be the standard Hilbert space of $\cM$.
Then we have the following disintegrations
putting $\cA=L^\infty(X,\mu)$ as usual:
\[
\cM=\int_X^\oplus\cM_x\,d\mu(x),
\quad
\cH=\int_X^\oplus \cH_x\,d\mu(x).
\]
We may assume that $\dim\cH_x$ is constant
in what follows.
Let $\cK$ be a Hilbert space with
$\dim\cK=\dim\cH_x$ for all $x$.
Then $\{\cH_x\}_x$ is regarded
as a constant field $\{\cK\}_x$,
and we obtain the natural identification
\[
\cH=
\int_X^\oplus \cK\,d\mu(x)
=L^2(X,\mu)\oti \cK.
\]
Note that any automorphism $\th$ on $\cM_x$
is implemented by a unitary on $\cK$.

Now let $\al$ and $\be$ be
actions of a locally compact group $G$
on $\cM$ which are fixing $\cA$.
Then they are written as follows:
\[
\al_t=\int_X^\oplus \al_t^x\,d\mu(x),
\quad
\be_t=\int_X^\oplus \be_t^x\,d\mu(x)
\quad
\mbox{for all }
t\in G.
\]

\begin{thm}
\label{thm:cocdisint}
Let $\al$, $\be$ be as above.
Then the following statements hold:
\begin{enumerate}
\item 
They are cocycle conjugate
if and only if
$\al^x$ and $\be^x$ are for almost every $x\in X$;

\item 
They are strongly cocycle conjugate
if and only if
$\al^x$ and $\be^x$ are for almost every $x\in X$.
\end{enumerate}
\end{thm}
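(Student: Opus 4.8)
The plan is to establish (1) and (2) simultaneously, treating the forward implications as routine disintegration and concentrating on the converses, which are measurable-selection arguments modelled on Theorem \ref{thm:appdisint}. Throughout I would use the normalization already set up before the statement: realize each $\cH_x$ as a fixed separable Hilbert space $\cK$, so that $\cM\subseteq L^2(X,\mu)\oti\cK$ and every $\al^x_t$, every $\be^x_t$, and every automorphism of every $\cM_x$ is implemented by a unitary of $\cK$.

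For the forward implications, suppose $\al^v_t=\th\circ\be_t\circ\th^{-1}$ for an $\al$-cocycle $v$ and an isomorphism $\th$. Since $\cA\subseteq Z(\cM)$, each $\Ad v(t)$ fixes $\cA$ pointwise; disintegrating $\th$ and $v=\int_X^\oplus v_x\,d\mu(x)$ (in the situations where the statement is applied the conjugating isomorphism respects the decomposition over $X$, after the standard reductions) one reads off, for a.e.\ $x$, an $\al^x$-cocycle $v_x$ and an isomorphism $\th_x\colon\cM_x\to\cM_x$ with $\al^{x,v_x}_t=\th_x\circ\be^x_t\circ\th_x^{-1}$, which gives the ``only if'' part of (1); for (2) one additionally invokes Theorem \ref{thm:appdisint} to see that $\th\in\oInt(\cM)$ forces $\th_x\in\oInt(\cM_x)$ for a.e.\ $x$.

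The substance is the converse of (1). Assume $\al^x\sim\be^x$ for a.e.\ $x$. After discarding a $\mu$-null set I would arrange (using Lemma \ref{lem:meas-Borel} and the Effros Borel structure on $\vN(\cK)$) that $x\mapsto\cM_x$, $x\mapsto\cM_x'$, $x\mapsto\al^x$, $x\mapsto\be^x$ are Borel, fix Borel choice functions $a^j\colon X\to B(\cK)_1$ with $\{a^j_x\}_j$ strongly dense in $(\cM_x)_1$, and fix a norm-dense sequence $\{\vph^k\}$ in $L^1_{B(\cK)_*}(X,\mu)$. The key technical move is to parametrize $\al^x$-cocycles by a Polish space: an $\al^x$-cocycle $w$ corresponds to the one-parameter (more generally $G$-parametrized) unitary group $t\mapsto w(t)\la^{\al^x}(t)$ inside $\cM_x\rti_{\al^x}G$, so ``$w$ is a cocycle'' becomes the condition that a map $G\to(\,\cdot\,)^{\rm U}$ is a continuous homomorphism, a condition defining a closed subset of a Polish space $P$. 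I would then form the set $Z$ of triples $(x,U,p)\in X\times B(\cK)^{\rm U}\times P$ such that $\Ad U(\cM_x)=\cM_x$ and $\Ad w_p(t)\circ\al^x_t=\Ad U\circ\be^x_t\circ\Ad U^{-1}$ for all $t\in G$, where $w_p$ is the cocycle coded by $p$. Each defining relation is Borel exactly as in the proof of Theorem \ref{thm:appdisint} (the condition $\Ad U(\cM_x)=\cM_x$ because $x\mapsto\cM_x$ is Borel for the Effros structure; the cocycle identity and the intertwining relation because multiplication, $\Ad$, evaluation $P\times G\to(\,\cdot\,)^{\rm U}$ and the coupling $B(\cK)_*\times B(\cK)\to\C$ are continuous). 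Thus $Z$ is Borel, and by hypothesis its projection to $X$ is onto modulo $\mu$; the measurable cross-section theorem (as cited for Theorem \ref{thm:appdisint}) yields a $\mu$-measurable section $x\mapsto(U_x,p_x)$. Setting $\th:=\int_X^\oplus\Ad U_x\,d\mu(x)$ and $v(t):=\int_X^\oplus w_{p_x}(t)\,d\mu(x)$, joint $(x,t)$-measurability (from measurability of $x\mapsto p_x$ and continuity of evaluation on $P$) makes $v$ a Borel, hence strongly continuous, $\al$-cocycle (by the automatic-continuity remark following Lemma \ref{lem:uvphal} and its analogue for $G$), and $\al^v_t=\th\circ\be_t\circ\th^{-1}$ holds a.e.\ $x$, hence as automorphisms of $\cM$; so $\al\sim\be$.

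For the converse of (2) I would simply enlarge the selection criterion by also demanding $\Ad U|_{\cM_x}\in\oInt(\cM_x)$; this is again Borel in $(x,U)$ since $\oInt(\cM_x)=\overline{\Int(\cM_x)}$ is closed, hence Borel, in $\Aut(\cM_x)$. The hypothesis of (2) makes the projection of this enlarged Borel set onto $X$ still onto, so the section produces $\th_x\in\oInt(\cM_x)$ for a.e.\ $x$, and Theorem \ref{thm:appdisint} then gives $\th=\int_X^\oplus\th_x\,d\mu(x)\in\oInt(\cM)$, upgrading the cocycle conjugacy to a strong one. The main obstacle I expect is precisely the measurability bookkeeping of the cocycle variable: choosing the parameter space $P$ so that ``being an $\al^x$-cocycle'' is a Borel condition and the evaluation map is jointly continuous, and then checking that the disintegrated path $t\mapsto v(t)$ is genuinely a strongly continuous $G$-cocycle of $\al$. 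Once these points are pinned down, the assembly of $\th$ and $v$ and the appeals to Theorem \ref{thm:appdisint} are immediate.
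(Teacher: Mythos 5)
Your overall strategy is the paper's: after normalizing all fibres to a fixed Hilbert space $\cK$ and making the relevant fields Borel, encode the data of a fibrewise cocycle conjugacy as a Borel subset of $X\times(\text{Polish})$, apply the measurable cross-section theorem, and integrate the selection; for (2), enlarge the Borel set by the condition $\Ad U|_{\cM_x}\in\oInt(\cM_x)$ and finish with Theorem \ref{thm:appdisint}. The one place where the paper's route genuinely differs from yours is exactly the point you flag as ``the main obstacle'': rather than parametrizing cocycles by a Polish space of continuous maps $G\to(\cdot)^{\rm U}$, the paper works in the Kac-algebra picture. It takes diagonalizable implementing unitaries $U=\int_X^\oplus U^x\,d\mu(x)$ and $V=\int_X^\oplus V^x\,d\mu(x)$ in $B(\cH)\oti L^\infty(G)$ and encodes an $\al^x$-cocycle as a single unitary $v\in\cM_x\oti L^\infty(G)$ such that $vU^x$ is a representation, i.e.\ $(\id\oti\de)(vU^x)=(vU^x)_{12}(vU^x)_{13}$. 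The cocycle identity is then one algebraic relation on one element of the fixed Polish group $(B(\cK)\oti L^\infty(G))^{\rm U}$, membership in $\cM_x\oti L^\infty(G)$ is checked against the choice functions for $\cM_x'$, and strong continuity of the assembled cocycle is automatic for measurable cocycles (the remark after Lemma \ref{lem:Borelapproby1cocycle}). This buys you a clean Borel set $Y=\bigcap_m Y_m$ without having to topologize spaces of continuous unitary paths whose ambient algebra varies with $x$.

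There is one genuine gap in your treatment of (2). You justify Borelness of $\{(x,U)\mid \Ad U|_{\cM_x}\in\oInt(\cM_x)\}$ by saying that $\oInt(\cM_x)$ is closed in $\Aut(\cM_x)$; that does not suffice, because $\Aut(\cM_x)$ is not a fixed Polish space and the assignment $(x,U)\mapsto\Ad U|_{\cM_x}$ is not a map into one, so closedness fibrewise gives no Borel structure on the total set. The paper has to work for this: it first proves Lemma \ref{lem:appinner-weak}, a quantitative characterization of $\th\in\oInt(\cM_x)$ by the existence of contractions $a$ with $\|\th(\vph)-a\vph a^*\|<\vep$ and $\|a^*a-1\|_\vph+\|aa^*-1\|_\vph<\vep$, and then expresses the set as a countable intersection $\bigcap_m Z_m$ of conditions quantified over the Borel choice functions $a^j_x$ and the dense functionals $\vph^k_x$. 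Your own machinery (choice functions plus approximate conditions) is exactly what is needed, but the step from ``closed in each fibre'' to ``Borel in $(x,U)$'' must be made via such an explicit countable description, not by appealing to closedness.
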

\begin{proof}
(1).
It is useful to consider those actions
in terms of a Kac algebra \cite{ES}.
Namely,
$\al, \be$ are regarded as
the faithful normal $*$-homomorphisms
$\al,\be\col \cM\ra\cM\oti L^\infty(G)$
by putting $(\al(a)\xi)(t)=\al_t(a)\xi(t)$,
$(\be(a)\xi)(t)=\be_t(a)\xi(t)$
for all $\xi\in \cH\oti L^2(G)$
and $t\in G$.
Then we obtain
\[
(\al\oti\id)\circ\al=(\id\oti\de)\circ\al,
\quad
(\be\oti\id)\circ\be=(\id\oti\de)\circ\be,
\]
where the coproduct
$\de\col L^\infty(G)\ra \lG\oti\lG$ is defined by
$\de(f)(r,s):=f(rs)$ for $f\in\lG$ and $r,s\in G$.

Take unitary representations
$U,V\col G\ra B(\cH)$ such that
$\al_t=\Ad U_t$ and $\be_t=\Ad V_t$
on $\cM$.
Regarding $U,V\in B(\cH)\oti\lG$,
we have
\[
(\id\oti\de)(U)
=
U_{12}U_{13},
\quad
(\id\oti\de)(V)
=
V_{12}V_{13},
\]
and
\[
\al(a)=U(a\oti1)U^*,
\quad
\be(a)=V(a\oti1)V^*
\quad
\mbox{for all }
a\in\cM.
\]

Since $\cA$ is fixed by $\al$ and $\be$,
$U$ and $V$ are diagonalizable,
that is,
\[
U=\int_X^\oplus U^x\,d\mu(x),
\quad
V=\int_X^\oplus V^x\,d\mu(x),
\]
where
$U^x,V^x\in B(\cK)\oti \lG$
and we have used the following identification:
\[
\cH\oti L^2(G)
=\int_X^\oplus \cK\oti L^2(G)\,d\mu(x).
\]

Then $U^x$ and $V^x$ implement
$\al^x$ and $\be^x$, respectively,
for almost every $x$.
Note that a unitary
$v\in B(\cK)\oti\lG$
is an $\al^x$-cocycle
if and only if
$v\in \cM_x\oti\lG$
and
$vU^x$ is a unitary representation,
that is,
it satisfies
$(\id\oti\de)(vU^x)=(vU^x)_{12}(vU^x)_{13}$.

By Lemma \ref{lem:meas-Borel},
we may and do assume that
all the relevant measurable maps in what follows
are in fact Borel.
Take Borel maps
$a^j\col X\ra (\cM_x)_1$ and $b^k\col X\ra(\cM_x')_1$
for $j,k\in\N$.

Let $\{\xi_i\}_{i\in\N}$ be a dense
sequence of $K\oti L^2(G)$.
Let $Y_m$ be the subset of
$X\times (B(\cK)\oti\lG)^{\rm U}\times B(\cK)^{\rm U}$
which consists of elements $(x,v,w)$
such that for $i,j,k=1,\dots,m$,
\[
\|[v,b_x^k\oti1]\xi_i\|<1/m,
\quad
(\id\oti\de)(vU^x)=(vU^x)_{12}(vU^x)_{13},
\]
\[
\|[wa_x^j w^*,b_x^k]\xi_i\|+
\|[w^*a_x^j w,b_x^k]\xi_i\|<1/m,
\]
\[
\|\left(
v\al^x(a_x^j)v^*
-(w\oti1)U^x(w^* a_x^j w\oti1)(U^x)^*(w^*\oti1)
\right)\xi_i\|<1/n.
\]
We can show that $Y_m$ is Borel as before.
Thus $Y:=\bigcap_m Y_m$ is Borel.
Then
$(x,v,w)\in Y$
if and only if
$v\in\cM_x\oti\lG$,
$w\cM_x w^*=\cM_x$
and
\[
(\id\oti\de)(v)=(v\oti1)(\al^x\oti\id)(v),
\quad
\Ad v\circ\al^x=(\th\oti\id)\circ\be^x\circ\th^{-1},
\]
where we have put $\th:=\Ad w|_{\cM_x}$.
By our assumption,
$Y$ is non-empty, and
we get the Borel projection $\pr_X\col Y\ra X$.

Then there exists
a measurable cross section
$s\col X\ra Y$ with $\pr_X\circ s=\id_X$.
We let $s(x)=(x,v^x,w^x)$.
Hence $\{v^x\}_x$ and $\{w^x\}_x$ are measurable,
and we set
\[
v:=\int_X^\oplus v^x\,d\mu(x)\in \cM\oti\lG,
\quad
w:=\int_X^\oplus w^x\,d\mu(x)
\in \int_X^\oplus B(\cK)\,d\mu(x).
\]
Put $\th:=\Ad w|_\cM$ and we obtain
\[
(\id\oti\de)(v)=(v\oti1)(\al\oti\id)(v),
\quad
\Ad v\circ\al=(\th\oti\id)\circ\be\circ\th^{-1}.
\]
Thus we are done.

(2).
We first show that
the following set is Borel:
\[
Z:=\{(x,w)\in X\times B(\cK)^{\rm U}
\mid
\Ad w|_{\cM_x}\in\oInt(\cM_x)
\}.
\]
Take a norm dense sequence $\{\vph^k\}_k$
in $L_{B(\cK)}^1(X,\mu)$ such that
$\{\vph_x^k|_{\cM_x}\}_k$ is norm dense
in $\cM_x$ for almost every $x$.
For $m\in\N$,
we define $Z_m\subs X\times B(\cK)^{\rm U}$ which consists of
elements $(x,w)$ such that
there exists $\ell\in\N$ satisfying
the following conditions for all $i,j,k=1,\dots,m$:
\[
\|[wa_x^jw^*,b_x^k]\xi_i\|+\|[w^*a_x^jw,b_x^k]\xi_i\|<1/m,
\]
\[
\|w\vph_x^i w^*|_{\cM_x}
-a_x^{\ell}\vph_x^i (a_x^{\ell})^*|_{\cM_x}\|_{(\cM_x)_*}
<1/m,
\]
\[
\|((a_x^\ell)^*a_x^\ell-1)\xi_i\|+\|(a_x^\ell(a_x^\ell)^*-1)\xi_i\|<1/m.
\]
Then $Z_m$ is Borel.

We will show that $Z=\bigcap_m Z_m$.
Let $(x,w)\in \bigcap_m Z_m$.
Then $\th=\Ad w|_{\cM_x}\in \Aut(\cM_x)$,
and
for any $\vep>0$ and
a finite set $\Ph\subs (\cM_x)_*^+$,
there exists an element $a\in(\cM_x)_1$
such that
\[
\|\th(\vph)-a\vph a^*\|<\vep,
\]
\[
\|a^*a-1\|_\vph+\|aa^*-1\|_\vph<\vep
\quad
\mbox{for all }\vph\in\Ph.
\]
This implies that $\th$ is approximately inner
by Lemma \ref{lem:appinner-weak}.
Hence $(x,w)\in Z$.

Suppose conversely that
$(x,w)\in Z$.
Put $\th:=\Ad w|_{\cM_x}\in\oInt(\cM_x)$.
Then for any $\vep>0$ and
a finite $\Ph\subs(\cM_x)_*^+$,
there exists a unitary $u\in \cM_x$
such that
$\|\th(\vph)-u\vph u^*\|<\vep$
for $\vph\in\Ph$.
We can take a subsequence
$\{a_x^{k_n}\}_n$ which converging to
$u$ in the strong$*$ topology.
Thus $(x,w)\in\bigcap_m Z_m$.

Therefore $Z=\bigcap_m Z_m$,
which is Borel.
We modify $Y$ defined above as follows:
\[
Y':=Y\cap \{(x,v,w)\mid (x,w)\in Z\}.
\]
By our assumption,
$Y'$ is non-empty, and we obtain the projection
$\pr_X\col Y'\ra X$.
Then we are done in a similar way to the above.
\end{proof}

As an application,
we obtain the following result due to
Kallman and Moore.
See \cite[Theorem 0.1]{Kal}
or \cite[Theorem 5]{MooreIV}.

\begin{cor}[Kallman, Moore]
\label{cor:ptwiseinner}
Any inner flow on a separable von Neumann algebra
is implemented by a one-parameter unitary group.
\end{cor}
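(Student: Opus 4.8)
The plan is to recognize that an inner flow being spatially implemented by a one-parameter unitary group is the same thing as being cocycle conjugate to the trivial flow, and then to deduce the statement from the disintegration of cocycle conjugacy (Theorem~\ref{thm:cocdisint}). First I would record the elementary equivalence: a flow $\al$ on $\cM$ is implemented by a strongly continuous one-parameter unitary group $u\col\R\ra\cM^{\rm U}$ if and only if $\al\sim\id_\cM$ as flows. Indeed, if $\al_t=\Ad u(t)$ with $u$ a group, then $v(t):=u(t)^*$ is an $\al$-cocycle with $\al_t^v=\id$; conversely, if $\Ad v(t)\circ\al_t=\th\circ\id\circ\th^{-1}=\id$ for an $\al$-cocycle $v$ and some $\th\in\Aut(\cM)$, then the cocycle identity $v(s)\al_s(v(t))=v(s+t)$ forces $u(t):=v(t)^*$ to be a strongly continuous one-parameter unitary group implementing $\al$. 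Since every inner automorphism fixes $Z(\cM)$ pointwise, an inner flow fixes $\cA:=Z(\cM)$, so I would disintegrate over $\cA$, writing $\cM=\int_X^\oplus\cM_x\,d\mu(x)$ and $\al_t=\int_X^\oplus\al_t^x\,d\mu(x)$ with $\cM_x$ factors. By Theorem~\ref{thm:intdisint} and a routine measure-theoretic argument, together with continuity of $t\mapsto\al_t^x$ for a.e.\ $x$, the fibre $\al^x$ is an inner flow on the factor $\cM_x$ for almost every $x$. By Theorem~\ref{thm:cocdisint}~(1) it then suffices to treat the case where $\cM$ is a separable factor.

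So suppose $\cM$ is a separable factor and $\al$ an inner flow. Using that $w\mapsto\Ad w$ is a continuous open surjective homomorphism $\cM^{\rm U}\ra\Int(\cM)$ of Polish groups, a Borel selection theorem yields a Borel unitary path $t\mapsto w_t$ in $\cM^{\rm U}$ with $\al_t=\Ad w_t$. Then $\Ad(w_sw_tw_{s+t}^*)=\al_s\al_t\al_{s+t}^{-1}=\id$, so $c(s,t):=w_sw_tw_{s+t}^*$ lies in $\T$, and one checks directly that $c\col\R^2\ra\T$ is a Borel $2$-cocycle. Since the Borel $2$-cohomology of $\R$ with coefficients in $\T$ vanishes, $c$ is a coboundary, $c(s,t)=\lambda(s)\lambda(t)\overline{\lambda(s+t)}$ for a Borel map $\lambda\col\R\ra\T$. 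Setting $u(t):=\overline{\lambda(t)}\,w_t$ gives a Borel homomorphism $u\col\R\ra\cM^{\rm U}$ with $\Ad u(t)=\al_t$; as a Borel homomorphism between Polish groups is automatically continuous, $u$ is strongly continuous, finishing the proof.

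A cleaner variant avoids the reduction to factors altogether: for general $\cM$, choose a Borel path $w_t$ with $\al_t=\Ad w_t$; then $c(s,t):=w_sw_tw_{s+t}^*$ lies in $Z(\cM)^{\rm U}$ and is a Borel $2$-cocycle for the trivial action of $\R$ on $Z(\cM)$, hence a coboundary by \cite[Proposition~A.2]{CT}, and dividing it out followed by the same automatic-continuity argument produces the required one-parameter unitary group. In either route the substantive ingredients are the Borel selection of implementing unitaries, the vanishing of the relevant Borel $2$-cohomology, and automatic continuity for Borel homomorphisms of Polish groups. I expect the main nuisance, rather than any deep obstruction, to be the measure-theoretic bookkeeping in the disintegration step---checking that the chosen path $t\mapsto w_t$ is Borel and that the fibre maps $\al^x$ are genuine flows for almost every $x$---all of which is standard.
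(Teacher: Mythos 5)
Your proof is correct and takes essentially the same route as the paper's: disintegrate over $Z(\cM)$ so that each fibre flow is inner on a factor $\cM_x$, use the vanishing of Borel $\T$-valued $2$-cohomology of $\R$ to deduce $\al^x\sim\id_{\cM_x}$, and conclude $\al\sim\id$ by Theorem~\ref{thm:cocdisint}. Your expanded treatment of the factor case (Borel cross-section for $w\mapsto\Ad w$, absorbing the $\T$-valued coboundary, automatic continuity of a Borel homomorphism between Polish groups) simply unpacks the single sentence the paper devotes to that step.
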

\begin{proof}
Let $\al$ be such a flow on a von Neumann algebra $\cM$.
Since $\al$ fixes $Z(\cM)$,
we obtain the central decomposition
$\al_t=\int_X^\oplus \al_t^x\,d\mu(x)$.
Then $\al^x$ is an inner flow on a factor $\cM_x$
for almost every $x$.
Since a $\T$-valued 2-cocycle of $\R$ is a coboundary,
$\al^x$ is cocycle conjugate to the trivial flow $\id_{\cM_x}$.
The previous result implies $\al\sim\id$.
\end{proof}

\subsection{Perturbation by continuous unitary path}
Let $\varphi$ be a normal state on a von Neumann algebra
$\cM$.
We need the following basic inequalities:
\[
\|x\varphi\|\leq \|x\|_\varphi,
\
\|\varphi x\|\leq \|x^*\|_\varphi,
\
\|x\varphi\|+\|\varphi x\|
\leq
\|x\|_\varphi+\|x^*\|_\varphi
\leq
2\|x\|_\varphi^{\sharp}.
\]
\[
\|x\|_\varphi^2\leq \|x\varphi\|\|x\|,
\
\|x^*\|_\varphi^2\leq \|\varphi x\|\|x\|,
\
\|x\|_\varphi^{\sharp2}
\leq \left(\|x\varphi\|
+\|\varphi x\|\right)\|x\|/2.
\]

For $\psi\in \cM_*$,
let $\psi=w_l|\psi|$ and $\psi=|\psi^*|w_r$ be
the left and right polar decompositions,
respectively.
Then 
\[
\|\psi x \|=\||\psi| x\|\leq \|x^*\|_{|\psi|},
\
\|x\psi \|=\|x|\psi^*|\|\leq \|x\|_{|\psi^*|},
\]

In what follows, we assume that
$\cM=\bigotimes_{k=1}^\infty(L_k,\rho_k)$
and
$\varphi_0:=\bigotimes_{k=1}^\infty\rho_k$  is 
lacunary, i.e., 
1 is isolated in $\mathrm{Sp}(\Delta_{\varphi_0})$, 
where $L_k$ is a finite
dimensional type I factor,
and $\rho_k$ is a faithful normal state on
$L_k$.
Denote $\hat{L}_k:=L_1\otimes \cdots \otimes L_k$.
Note that
any injective type II and III$_\la$ factors
with $0<\la<1$
have such form
(see \cite{AW}).
Then we can strengthen Lemma \ref{lem:adBorel} as follows.

\begin{prop}\label{prop:appro}
Let $\cM$ be an ITPFI factor as above.
Let $\alpha$ and $\beta$ be flow
on
$\cM$
with $\mathrm{mod}(\alpha_t)=\mathrm{mod}(\beta_t)$
for all $t\in\R$.
Then for any $T>0$, $\varepsilon>0$
and a finite set $\Phi\subset \cM_*$,
there exists a continuous unitary path
$\{u(t)\}_{|t|\leq T}$ such that
\[
\|\Ad u(t)\circ\alpha_t(\varphi)
-\beta_t(\varphi)\|<\varepsilon,
\quad
\mbox{for all }
\varphi\in \Phi,
\
t\in [-T,T].
\]
\end{prop}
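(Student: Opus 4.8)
The module hypothesis $\mo(\al_t)=\mo(\be_t)$ together with Theorem \ref{thm:genKST} gives $\al_t\be_t^{-1}\in\oInt(\cM)$, hence $\de_t:=\be_t\circ\al_t^{-1}\in\oInt(\cM)$ for every $t$; since $\al$ and $\be$ are flows, $t\mapsto\de_t$ is a $u$-topology continuous path in $\oInt(\cM)$ with $\de_0=\id$. Because $\de_t(\al_t(\vph))=\be_t(\vph)$, the desired conclusion $\|\Ad u(t)\circ\al_t(\vph)-\be_t(\vph)\|<\vep$ for $\vph\in\Ph$ is exactly $\sup_{\psi\in\Psi}\|\Ad u(t)(\psi)-\de_t(\psi)\|<\vep$ where $\Psi:=\{\al_t(\vph):|t|\le T,\ \vph\in\Ph\}$ is compact. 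So the statement reduces to: given a norm-continuous path $\de\colon[-T,T]\to\oInt(\cM)$ with $\de_0=\id$, a finite $\Psi\subset\cM_*$ and $\vep>0$, produce a norm-continuous unitary path $u(t)$ with $u(0)=1$ and $\sup_{\psi\in\Psi}\|\Ad u(t)(\psi)-\de_t(\psi)\|<\vep$.

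I would construct $u$ incrementally on $[0,T]$ (and symmetrically on $[-T,0]$), in the spirit of the intertwining argument of Section 5. Fix a summable sequence of tolerances $\eta_j$ and a partition $0=s_0<s_1<\dots<s_M=T$ fine enough that, via the flow identity $\de_s=\be_{s-s_j}\circ\de_{s_j}\circ\al_{s-s_j}^{-1}$, the "increment" $\de_r$ ($r=s-s_j\in[0,s_{j+1}-s_j]$) stays within $u$-topology tolerance $\eta_j$ of $\id$. Suppose $u$ has been built, continuous on $[0,s_j]$, with $\Ad u(s_j)$ close to $\de_{s_j}$ on the relevant transported finite set. Then for $s\in[s_j,s_{j+1}]$ one has $\de_s\approx\Ad\!\big(\be_{s-s_j}(u(s_j))\big)\circ\de_{s-s_j}$; if one can produce a continuous unitary path $r\mapsto w_r$ with $w_0=1$, $w_r$ close to $1$, and $\Ad w_r$ close to $\de_r$, then setting $u(s):=\be_{s-s_j}(u(s_j))\,w_{s-s_j}$ extends $u$ continuously across $[s_j,s_{j+1}]$ (the matching at $s=s_j$ and continuity follow from $\be_0=\id$, $w_0=1$ and continuity of the flow $\be$), while keeping $\Ad u(s)$ close to $\de_s$. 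The $\eta_j$ are chosen so that the error accumulated over the $\le 2M$ steps stays below $\vep$; this bookkeeping is routine.

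The substantive point, and the only place the hypothesis that $\cM$ is an ITPFI factor with a \emph{lacunary} product state is used, is the lifting step: a norm-continuous family $r\mapsto\de_r\in\oInt(\cM)$ with $\de_0=\id$ and $\de_r$ uniformly close to $\id$ admits a norm-continuous unitary lift $r\mapsto w_r$ with $w_0=1$, $w_r$ close to $1$, and $\Ad w_r$ close to $\de_r$. Here I would exploit the layered structure $\cM=\hat L_n\otimes\cM_n$ with $\cM_n:=\bigotimes_{j>n}(L_j,\rho_j)$ and $\vph_0=(\vph_0|_{\hat L_n})\otimes(\vph_0|_{\cM_n})$. Since $\bigcup_n\hat L_n^{\rm U}$ is strongly$*$ dense in $\cM^{\rm U}$, the groups $\Ad(\hat L_n^{\rm U})$ increase to a $u$-dense subgroup of $\Int(\cM)$, so $\oInt(\cM)=\overline{\bigcup_n\Ad(\hat L_n^{\rm U})}$; a compactness argument on $[0,\delta]$ then yields a single $N$ and, at grid points, unitaries $w_j\in\hat L_N^{\rm U}$ implementing $\de_{r_j}$ to within tolerance. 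Lacunarity — i.e. $0$ isolated in $\Sp(\Delta_{\vph_0})$, giving a bounded $\vph_0$-preserving conditional expectation onto the centralizer — is what lets one further arrange that these $w_j$ are genuinely close to $1$ in $\|\cdot\|_{\vph_0}^\sharp$ and that consecutive ones are $\|\cdot\|_{\vph_0}^\sharp$-close. Lemma \ref{lem:pathdist} then connects them by a continuous path controlled in $\|\cdot\|_{\vph_0}^\sharp$, and Lemma \ref{lem:Log} together with finite-dimensionality of $\hat L_N$ upgrades $\|\cdot\|_{\vph_0}^\sharp$-smallness along this path to norm-smallness, so that $\Ad w_r$ stays uniformly close to $\id$.

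The hard part is precisely this last lifting step. The naive attempt — pick unitaries at the grid points implementing $\de_{s_j}$ and interpolate — fails outright, because two unitaries implementing nearly the same automorphism of a II$_1$ (or McDuff) factor need not be close even after a phase correction, the obstruction being nontrivial central sequences. The lacunary-ITPFI hypothesis is what defeats this: it confines the relevant approximants to a fixed finite-dimensional subfactor and makes the expectation onto the centralizer bounded, yielding the true $\|\cdot\|_{\vph_0}^\sharp$-control that Lemma \ref{lem:pathdist} requires. Everything else — the reduction via Theorem \ref{thm:genKST}, the incremental bookkeeping, and the closing triangle-inequality estimates — is standard.
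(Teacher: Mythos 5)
Your reduction (via Theorem \ref{thm:genKST}) and your incremental scheme $u(s)=\be_{s-s_j}(u(s_j))w_{s-s_j}$ are sound, and you correctly locate the entire difficulty in the lifting step: producing a \emph{continuous} unitary path $w_r$ with $w_0=1$ and $\Ad w_r$ tracking a near-identity path in $\oInt(\cM)$ with summable error. But your resolution of that step has a genuine gap. You propose to pick approximants $w_{j,i}\in\hat{L}_N^{\rm U}$ at sub-grid points and interpolate via Lemma \ref{lem:pathdist}, claiming that lacunarity and finite-dimensionality force consecutive $w_{j,i}$ to be $\|\cdot\|_{\vph_0}^\sharp$-close (up to phase) and even close to $1$. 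This is exactly the central-sequence obstruction you yourself name, and confining the approximants to $\hat{L}_N$ does not defeat it: $N$ must be chosen large \emph{after} the finite set $\Psi$ of functionals and the tolerance are fixed, and for large $N$ the subfactor $\hat{L}_N$ contains unitaries that almost commute with every element of $\Psi$ while being far from the scalars in $\|\cdot\|_{\vph_0}^\sharp$. Hence two admissible choices of $w_{j,i}$ can differ by such a unitary, and nothing in your selection procedure makes the choices coherent. The claimed upgrade from $\|\cdot\|_{\vph_0}^\sharp$-smallness to norm-smallness is also circular (the comparison constant blows up with $N$) and is in any case stronger than what is needed or true.

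The paper's proof (Lemma \ref{lem:appro}) avoids ever requiring the implementing unitary to be close to $1$. It first chooses an exact $\al_{t_0}$-cocycle $\{w_n\}_{n\in\Z}$ over the grid (so no error accumulates across grid points), and then, for the single step unitary $w=w_1$, exploits the fact that $w$ almost commutes with $\psi_0=\al_{t_0}(\vph_0)$ and with the matrix units of $B=\al_{t_0}(\hat{L}_k)$: averaging over the matrix units and applying the (lacunarity-controlled) expectation onto the centralizer (Lemma \ref{lem:centralizer}) and the unitary correction of Lemma \ref{lem:approunitary} produces a unitary $v\in(B'\cap\cM)_{\psi_0}$ with $\|w-v\|_{\psi_0}^\sharp$ small. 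The point is that $\Ad$ of \emph{any} unitary in $(B'\cap\cM)_{\psi_0}$ acts trivially on the functionals $\psi_0 b$, $b\psi_0$ ($b\in B$), so the uncontrollable "almost central" part of $w$ is split off into $v$ and connected to $1$ by an arbitrary continuous path in $(B'\cap\cM)_{\psi_0}$ without affecting the estimates, while the remainder $wv^*$ is close to $1$ \emph{relative to the functionals} in the sense of Lemma \ref{lem:unitarypath} and is connected to $1$ by that lemma. This splitting $w\approx(wv^*)\cdot v$ is the missing idea in your argument; without it, or some substitute for it, the interpolation step cannot be carried out.
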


We first recall some results proved in \cite{Co-almost}.
\begin{lem}[{\cite[Proposition 3.2, Lemma 2.7]{Co-almost}}]
\label{lem:centralizer}
The following statements hold:
\begin{enumerate}
\item 
There exists a universal constant
$C_0>0$ such that for any von Neumann
algebra $\mathcal{M}$
and its faithful normal state $\varphi$, 
\[
\|\sigma_t^\varphi(x)-x\|_\varphi^\sharp
<C_0(1+|t|)\|[x,\varphi]\|^{\frac{1}{2}},
\quad
t\in\R.
\] 
\item
Let $\mathcal{M}$ be a von Neumann algebra
and $\varphi\in\cM_*$ a faithful lacunary state.
Let $E_\varphi$ be
the $\varphi$-preserving conditional expectation from
$\mathcal{M}$ onto $\mathcal{M}_\varphi$.
Then there exists a constant $C_\varphi$,
which depends only on $\varphi$,
such that for all $x\in\cM$,
\[
\|E_{\varphi}(x)-x\|_\varphi^\sharp
<C_\varphi\|[x,\varphi]\|^{\frac{1}{2}}.
\]
\end{enumerate}
\end{lem}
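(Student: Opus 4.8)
\textbf{Proof proposal for Proposition~\ref{prop:appro}.}

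The plan is to combine the Borel approximation of Lemma~\ref{lem:adBorel} with a partitioning argument in the ITPFI structure, and then to connect the resulting piecewise-constant unitaries into a genuinely continuous path using the lacunary hypothesis. Fix $T,\vep$ and the finite set $\Ph$; by replacing each $\vph\in\Ph$ by its real and imaginary parts and normalizing, we may assume $\Ph\subset(\cM_*)_+$ and each $\vph$ has $\|\vph\|\le 1$. First I would invoke Lemma~\ref{lem:adBorel} to get a Borel unitary path $\{u_0(t)\}_{|t|\le T}$ with $\|\Ad u_0(t)\circ\al_t(\vph)-\be_t(\vph)\|<\vep/4$ for all $\vph\in\Ph$ and all $t\in[-T,T]$. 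The issue is that $u_0$ is merely Borel, so the perturbed flow $\Ad u_0(t)\circ\al_t$ need not even be weakly continuous in $t$; I must replace $u_0$ by a continuous path while keeping the approximation essentially intact.

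The key device is the lacunary ITPFI structure. Since $\cM=\bigotimes_k(L_k,\rho_k)$ with $\vph_0=\bigotimes_k\rho_k$ lacunary, the algebras $\hat L_k$ form an increasing sequence of finite-dimensional subfactors whose union is weakly dense, and $\vph_0$ restricts to a faithful state on each. The second step is to discretize: choose $N$ large and partition $[-T,T]$ into $2N$ intervals $I_j=[t_{j-1},t_j]$ of length $T/N$. On each interval, using weak continuity of $\al$ and $\be$ and density of $\bigcup_k\hat L_k$ in $\cM_1$, I can find a single unitary $w_j$ in some $\hat L_{k(j)}$ with $\|w_j u_0(t_j)^{-1}$-close behavior, i.e.\ $\|\Ad w_j\circ\al_{t_j}(\vph)-\be_{t_j}(\vph)\|<\vep/3$ for $\vph\in\Ph$; by refining $N$ we keep $\|\al_t(\vph)-\al_{t_j}(\vph)\|$ and $\|\be_t(\vph)-\be_{t_j}(\vph)\|$ below $\vep/12$ on $I_j$, so $\Ad w_j\circ\al_t$ stays $\vep$-close to $\be_t$ throughout $I_j$. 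The third step is to connect consecutive $w_j$ and $w_{j+1}$ by a continuous unitary path inside $\cM$. For this I would control $\|[w_j^* w_{j+1},\vph_0]\|$: the composite $\Ad(w_j^* w_{j+1})$ is close to $\al_{t_j}\be_{-t_{j+1}}\cdots$ which, since $\mo(\al_t)=\mo(\be_t)$, is approximately inner with module $1$, and along the ITPFI tower this forces $w_j^*w_{j+1}$ to nearly commute with $\vph_0$. Then Lemma~\ref{lem:centralizer}(2) lets me push $w_j^*w_{j+1}$ close to the centralizer $\cM_{\vph_0}$, where $\vph_0$ is a trace, and a unitary path between two elements of a finite von Neumann algebra that is short in $\|\cdot\|_2$ can be built via the logarithm as in Lemma~\ref{lem:pathdist}; Lemma~\ref{lem:centralizer}(1) then bounds how far $\si_t^{\vph_0}$, hence $\al_t$ and $\be_t$, move along this short path, so the interpolation does not spoil the $\vep$-approximation.

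The main obstacle will be the third step: producing a \emph{continuous} interpolating path whose endpoints are $w_j$ and $w_{j+1}$ while simultaneously (a) staying close to the centralizer $\cM_{\vph_0}$ so that Lemma~\ref{lem:pathdist} applies in $\|\cdot\|_{\vph_0}$, and (b) keeping $\|\Ad(\cdot)\al_t(\vph)-\be_t(\vph)\|$ small for \emph{all} $t\in I_j$, not just at the endpoints. The quantitative heart is the estimate that $w_j^*w_{j+1}$ almost commutes with $\vph_0$; this relies on the ITPFI product structure and on the module equality $\mo(\al_t)=\mo(\be_t)$ being used to kill the only obstruction (the trace-scaling part). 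Once that estimate is in hand, Lemma~\ref{lem:centralizer} converts an almost-commutation bound into an $\|\cdot\|_{\vph_0}^\sharp$-closeness to $\cM_{\vph_0}$, and the rest is the now-routine logarithm interpolation together with the $(1+|t|)$-type bound on modular motion, all absorbed into the $\vep$ budget by taking $N$ large and the intermediate errors polynomially small in a single auxiliary parameter $\delta$, exactly as in the proof of Lemma~\ref{lem:Borelasymprep}. Concatenating the interpolations over $j=1,\dots,2N$ yields the desired continuous unitary path $\{u(t)\}_{|t|\le T}$.
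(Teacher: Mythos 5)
Your proposal does not address the statement at hand. The statement to be proved is Lemma \ref{lem:centralizer}, namely the two quantitative Connes estimates: (1) the universal bound $\|\sigma_t^\varphi(x)-x\|_\varphi^\sharp<C_0(1+|t|)\|[x,\varphi]\|^{1/2}$, and (2) the bound $\|E_\varphi(x)-x\|_\varphi^\sharp<C_\varphi\|[x,\varphi]\|^{1/2}$ for a lacunary state. What you have written is instead a proof sketch of Proposition \ref{prop:appro} (approximation of one flow by a continuous unitary perturbation of another on a lacunary ITPFI factor) --- a different result which, as your own sketch makes clear, \emph{uses} Lemma \ref{lem:centralizer} as an ingredient. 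Nothing in your text establishes either inequality of the lemma, so as a proof of the stated lemma it is vacuous.

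For the record, the intended argument is short. Part (1) is quoted directly from Connes (\cite[Proposition 3.2]{Co-almost}) and is not reproved. For part (2), the lacunarity of $\varphi$ (i.e.\ $1$ being isolated in $\mathrm{Sp}(\Delta_\varphi)$) allows one to choose a positive $f\in L^1(\R)$ with $\int_\R f(t)\,dt=1$, $\int_\R |t|\,|f(t)|\,dt<\infty$, and $\mathrm{supp}(\hat f)\cap\mathrm{Sp}(\Delta_\varphi)=\{1\}$, which forces $\sigma_f^\varphi(x)=E_\varphi(x)$. Then
\[
\|E_\varphi(x)-x\|_\varphi^\sharp
\leq
\int_\R |f(t)|\,\|\sigma_t^\varphi(x)-x\|_\varphi^\sharp\,dt
\leq
\Bigl(\int_\R C_0(1+|t|)\,|f(t)|\,dt\Bigr)\|[x,\varphi]\|^{1/2},
\]
so $C_\varphi:=\int_\R C_0(1+|t|)|f(t)|\,dt$ works. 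If you intend to submit a proof of Proposition \ref{prop:appro}, that is a separate task; but it cannot be accepted as a proof of Lemma \ref{lem:centralizer}.
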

\begin{proof}
(1). See \cite{Co-almost}. 
(2). Choose a positive $f\in L^1(\mathbb{R})$ as follows;
\[
\int_{\mathbb{R}}f(t)dt=1,
\
\int_{\mathbb{R}}|t||f(t)|dt<\infty,
\
\mathrm{supp}(\hat{f})\cap \mathrm{Sp}(\Delta_{\varphi})=\{1\},
\]
where $\hat{f}(\lambda)=\int_{\mathbb{R}}\lambda^{it}f(t)dt$.
Then $\sigma_{f}^\varphi(x)=E_\varphi(x)$.
Set $C_\varphi:=\int_{\mathbb{R}}C_0(1+|t|)|f(t)|dt$
and we are done.
\end{proof}

The following result can be
found in \cite[Lemma 3.2.1]{J-act}.

\begin{lem}\label{lem:approunitary}
Let $\mathcal{M}$ be a finite von Neumann algebra
with a normal tracial state $\tau$,
and $a\in \mathcal{M}$
such that $\|a^*a-1\|_\tau<\delta$.
Then there exists a unitary
$v\in \mathcal{M}$ such that
$\|a-v\|_\tau<(3+\|a\|)\delta$.
\end{lem}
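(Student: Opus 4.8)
The plan is to use the polar decomposition of $a$ and estimate separately the $L^2$-distance from $a$ to its partial-isometry part and the deficiency of that partial isometry from being a unitary. Write $a=v_0|a|$ with $|a|=(a^*a)^{1/2}$ and $v_0$ the canonical partial isometry, so that $p:=v_0^*v_0$ is the support projection of $|a|$ (equivalently of $a^*a$) and $q:=v_0v_0^*$ is the range projection of $a$. Since $\cM$ is finite and $p\sim q$ via $v_0$, the complements satisfy $1-p\sim 1-q$, so there is a partial isometry $w\in\cM$ with $w^*w=1-p$ and $ww^*=1-q$; then $v:=v_0+w$ is a unitary in $\cM$, because the cross terms $v_0^*w$ and $w^*v_0$ vanish ($v_0^*=v_0^*q$ while $w=(1-q)w$), whence $v^*v=p+(1-p)=1$ and similarly $vv^*=1$.

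Next I would bound $\|a-v_0\|_\tau$. Using $v_0=v_0p$ and that $p$ commutes with $|a|$ with $p|a|=|a|$, a short computation gives $(a-v_0)^*(a-v_0)=|a|^2-2|a|+p=p(|a|-1)^2p$, hence $\|a-v_0\|_\tau=\|(|a|-1)p\|_\tau$. The crucial point — which is what makes the final bound linear in $\delta$ rather than of order $\delta^{1/2}$ — is the functional-calculus identity $(|a|-1)p=(a^*a-1)(|a|+1)^{-1}p$. Since $g:=(|a|+1)^{-1}p$ has norm at most $1$ and $\|xy\|_\tau\le\|x\|_\tau\|y\|$, this yields $\|a-v_0\|_\tau\le\|a^*a-1\|_\tau<\delta$.

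Then I would bound $\|v-v_0\|_\tau=\|w\|_\tau=\tau(1-p)^{1/2}$. As $1-p$ is the kernel projection of $a^*a$, we have $a^*a(1-p)=0$, and the trace identity $\tau((1-p)x)=\tau((1-p)x(1-p))$ gives $\tau(1-p)=-\tau\big((1-p)(a^*a-1)(1-p)\big)$; the Cauchy--Schwarz inequality for $\tau$ then produces $\tau(1-p)\le\tau(1-p)^{1/2}\,\|a^*a-1\|_\tau$, i.e. $\tau(1-p)^{1/2}\le\|a^*a-1\|_\tau<\delta$. Combining the two estimates, $\|a-v\|_\tau\le\|a-v_0\|_\tau+\|v_0-v\|_\tau<2\delta\le(3+\|a\|)\delta$, which is in fact stronger than the stated inequality.

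The only delicate points are routine: keeping the support and range projections straight in the polar decomposition and invoking finiteness of $\cM$ to complete $v_0$ to a unitary, and spotting the factorization $|a|-1=(a^*a-1)(|a|+1)^{-1}$, without which the naive estimate would only give order $\delta^{1/2}$. Everything else reduces to short functional-calculus and trace manipulations.
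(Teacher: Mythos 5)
Your proof is correct. The paper does not prove this lemma itself; it simply cites Jones, \cite[Lemma 3.2.1]{J-act}, and your argument is the standard one behind that citation: take the polar decomposition $a=v_0|a|$, use finiteness of $\cM$ to complete the partial isometry $v_0$ to a unitary $v=v_0+w$, and estimate $\|a-v_0\|_\tau$ and $\|w\|_\tau$ separately. All the individual steps check out: the algebraic identity $(a-v_0)^*(a-v_0)=p(|a|-1)^2p$, the factorization $|a|-1=(a^*a-1)(|a|+1)^{-1}$ combined with $\|xy\|_\tau\le\|x\|_\tau\|y\|$ (valid here since $a^*a-1$ is self-adjoint and $\tau$ is a trace), and the estimate $\tau(1-p)^{1/2}\le\|a^*a-1\|_\tau$ obtained from $(1-p)(a^*a-1)(1-p)=-(1-p)$ and Cauchy--Schwarz. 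Your factorization trick even yields the bound $2\delta$, which is sharper than the stated $(3+\|a\|)\delta$; the cruder constant in the literature comes from estimating $\||a|-p\|_\tau$ less efficiently.
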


\begin{lem}\label{lem:unitarypath}
Let $\Phi\subset \cM_*$ be a finite set,
and assume $v\in U(\cM)$ satisfies 
\[
\|\varphi\cdot (v-1)\|<\varepsilon,
\
\|(v-1)\varphi\|<\varepsilon,
\quad
\varphi\in \Phi.
\] 
Then there exists a continuous unitary path
$v(t)$, $0\leq t\leq 1$,
such that 
$v(0)=1$, $v(1)=v$
and
\[
\|\varphi\cdot(v(t)-1)\|<\sqrt{2\varepsilon},
\
\|(v(t)-1)\varphi\|<\sqrt{2\varepsilon}, 
\quad
\varphi\in \Phi.
\] 
\end{lem}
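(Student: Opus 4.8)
The plan is to realize the path by functional calculus: put $h:=-i\Log v$ using the principal branch of the logarithm (so that $h=h^*\in\cM$ with $\Sp(h)\subseteq[-\pi,\pi)$ and $\|h\|\le\pi$), and set $v(t):=e^{ith}$ for $0\le t\le1$. Then $v(0)=1$, $v(1)=e^{ih}=v$, and since $h$ is a bounded self-adjoint element of $\cM$ the map $t\mapsto e^{ith}$ is norm continuous, hence in particular a continuous unitary path. So the only work is the two estimates on $v(t)-1$.

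The engine of the proof is the operator identity $v(t)-1=(v-1)\,b_t=b_t\,(v-1)$, where $b_t:=k_t(h)\in\cM$ and $k_t$ is the continuous function on $[-\pi,\pi]$ given by $k_t(\lambda)=(e^{it\lambda}-1)/(e^{i\lambda}-1)$ for $\lambda\ne0$ and $k_t(0)=t$; the identity holds on $\Sp(h)$ because $k_t(\lambda)(e^{i\lambda}-1)=e^{it\lambda}-1$ there. The key point is $\|b_t\|\le1$: one computes $|k_t(\lambda)|=|\sin(t\lambda/2)|/|\sin(\lambda/2)|$, and since $0\le t\le1$ and $|\lambda|\le\pi$ one has $|t\lambda/2|\le|\lambda/2|\le\pi/2$, so monotonicity of $\sin$ on $[0,\pi/2]$ gives $|\sin(t\lambda/2)|\le|\sin(\lambda/2)|$. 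This is exactly the elementary inequality already used in Lemma~\ref{lem:Log}, and using the principal branch is what guarantees $\Sp(h)\subseteq[-\pi,\pi]$, which is precisely what the estimate needs.

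Given this, the conclusion is immediate: for $\varphi\in\Phi$, writing $v(t)-1=(v-1)b_t$ and using the (positivity-free) bounds $\|\psi a\|\le\|\psi\|\,\|a\|$, $\|a\psi\|\le\|\psi\|\,\|a\|$ for $\psi\in\cM_*$, $a\in\cM$, one gets $\|\varphi\cdot(v(t)-1)\|=\|(\varphi\cdot(v-1))\,b_t\|\le\|\varphi\cdot(v-1)\|\,\|b_t\|<\vep$, and symmetrically, via $v(t)-1=b_t(v-1)$, $\|(v(t)-1)\cdot\varphi\|\le\|b_t\|\,\|(v-1)\cdot\varphi\|<\vep$. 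Thus one in fact obtains the slightly stronger bound $\vep$, which is a fortiori $<\sqrt{2\vep}$. (If one prefers to land exactly on the stated $\sqrt{2\vep}$ and $\varphi$ is a state, an equivalent route is to note that unitarity gives $(v(t)-1)(v(t)-1)^*=2-v(t)-v(t)^*$, apply $4\sin^2(t\lambda/2)\le4\sin^2(\lambda/2)$ to get $\varphi\big((v(t)-1)(v(t)-1)^*\big)\le\varphi\big((v-1)(v-1)^*\big)=\big(\varphi\cdot(v-1)\big)((v-1)^*)\le2\|\varphi\cdot(v-1)\|<2\vep$, and then dominate $\|\varphi\cdot(v(t)-1)\|$ and $\|(v(t)-1)\cdot\varphi\|$ by $\|(v(t)-1)^*\|_\varphi$ and $\|v(t)-1\|_\varphi$ respectively.)

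There is no serious obstacle here; the proof is essentially a one-line application of functional calculus. The only points requiring care are (i) choosing the principal branch so that $\Sp(h)\subseteq[-\pi,\pi]$, without which $\|b_t\|\le1$ can fail, and (ii) keeping track of which side of the functional the contraction $b_t$ lands on so that both estimates in the statement are covered; neither requires $\varphi$ to be positive, so the argument applies to arbitrary $\varphi\in\cM_*$.
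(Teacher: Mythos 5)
Your proof is correct, and while the path itself is the same as in the paper (the paper also takes $v(t)=\int_{-\pi}^{\pi}e^{it\lambda}\,de_\lambda$, i.e.\ $e^{t\Log(v)}$ in the notation of Lemma~\ref{lem:Log}), your estimates run along a genuinely different route. The paper passes through the polar decompositions $\varphi=w_l|\varphi|=|\varphi^*|w_r$ and the Cauchy--Schwarz-type inequalities $\|x^*\|_{|\varphi|}^2\le\||\varphi|\cdot x\|\,\|x\|$, $\|x\|_{|\varphi^*|}^2\le\|x\cdot|\varphi^*|\|\,\|x\|$, which is exactly where the square root and the factor $2$ in $\sqrt{2\varepsilon}$ come from: one dominates $\|\varphi\cdot(v(t)-1)\|$ by $\|v(t)^*-1\|_{|\varphi|}$ and compares the spectral integrals of $|e^{it\lambda}-1|^2$ and $|e^{i\lambda}-1|^2$. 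Your factorization $v(t)-1=b_t(v-1)=(v-1)b_t$ with the contraction $b_t=k_t(h)$ short-circuits all of this: it transfers the hypothesis to the path directly, needs neither positivity nor polar decomposition, and yields the sharper bound $\varepsilon$ in place of $\sqrt{2\varepsilon}$. Both arguments ultimately rest on the same elementary inequality $|\sin(t\lambda/2)|\le|\sin(\lambda/2)|$ for $0\le t\le1$, $|\lambda|\le\pi$, hence on taking the principal branch so that $\Sp(h)\subseteq[-\pi,\pi]$. One pedantic caveat: the phrase ``a fortiori $<\sqrt{2\varepsilon}$'' presupposes $\varepsilon<2$; in the degenerate range $\varepsilon\ge2$ your main estimate does not literally imply the stated bound, but this is harmless --- either your parenthetical Cauchy--Schwarz variant, or a single application of the paper's inequality $\|v(t)^*-1\|_{|\varphi|}^2\le\||\varphi|\cdot(v(t)-1)\|\,\|v(t)-1\|\le 2\varepsilon$ starting from your bound, lands exactly on $\sqrt{2\varepsilon}$, and in every application of the lemma $\varepsilon$ is small anyway.
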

\begin{proof}
Let $v=\int_{-\pi}^\pi e^{i\lambda}\,de_\lambda$
be the spectral decomposition of $v$.
We set
$v(t):=\int_{-\pi}^\pi e^{it\lambda}\,de_\lambda$.
Then for all
$\varphi\in \Phi$, we have
\[
\int_{-\pi}^{\pi}|e^{-i\lambda}-1|^2
\,d|\varphi|(e_\lambda)
=
\|v^*-1\|_{|\varphi|}^2
\leq
\||\varphi|\cdot(v-1)\|\|v-1\|
\leq 2\varepsilon.
\]
Thus 
\[
\|v(t)^*-1\|_{|\varphi|}^2
=\int_{-\pi}^{\pi}|e^{-it\lambda}-1|^2
\,d|\varphi|(e_\lambda)
\leq \int_{-\pi}^{\pi}|e^{-i\lambda}-1|^2
\,d|\varphi|(e_\lambda)
\leq 2\varepsilon
\]
holds for $0\leq t\leq 1$.
Hence
\[
\|\varphi\cdot (v(t)-1)\|
\leq 
\||\varphi|\cdot (v(t)-1)\|
\leq \|v(t)^*-1\|_{|\varphi|}
\leq \sqrt{2\varepsilon}.
\]
If we replace $|\varphi|$ with $|\varphi^*|$ above,
then
we get 
\[
\|(v(t)-1)\varphi\|\leq \sqrt{2\varepsilon}.
\]
\end{proof}

\begin{lem}\label{lem:appro}
Let $\alpha$ and $\beta$
be as in Proposition \ref{prop:appro}.
For any $k\in \mathbb{N}$,  a finite set $F\subset \hat{L}_k$,
$T>0$, and $\varepsilon>0$, 
 there exists a continuous unitary path
$\{u(t)\}_{|t|\leq T}$ such that 
\[
\|\Ad u(t)\circ\alpha_t(\varphi_0a)
-\beta_t(\varphi_0a)\|<\varepsilon,
\quad
a\in F,
\
t\in [-T,T].
\]
\end{lem}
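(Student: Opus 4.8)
The plan is to discretize $[-T,T]$, produce approximating unitaries at the sample times from the equality of modules, and then correct each ``jump'' so that it can be joined to $1$ inside a subalgebra which commutes outright with the functionals $\varphi_0 a$, $a\in F$. So fix a large $N$, put $\delta:=T/N$ and $t_j:=j\delta$ for $-N\le j\le N$. Since $\cM$ is injective, Theorem \ref{thm:genKST} gives $\oInt(\cM)=\ker(\mathrm{mod})$; as $\mathrm{mod}(\beta_{t_j}\alpha_{-t_j})=\mathrm{mod}(\beta_{t_j})\mathrm{mod}(\alpha_{t_j})^{-1}=\mathrm{id}$ by hypothesis, we get $\beta_{t_j}\alpha_{-t_j}\in\oInt(\cM)$, so for each $j$ there is a unitary $w_j\in\cM$ with $\Ad w_j\circ\alpha_{t_j}$ as close as we please to $\beta_{t_j}$ on a fixed finite set $\Phi_0\subset\cM_*$. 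I choose $\Phi_0$ to contain $\varphi_0$, all $\varphi_0 a$ with $a\in F$, and enough auxiliary functionals built from a spanning set of $\hat{L}_k$ (of the forms $b\varphi_0$, $\varphi_0 b$, $b^{*}\varphi_0 b$, $b\varphi_0 b'$) that functional estimates upgrade to $L^2$-operator estimates in the standard way, and I fix $N$ so large that $\alpha_{\pm\delta},\beta_{\pm\delta}$ move every functional in the compact set $\{\alpha_s(\psi),\beta_s(\psi):|s|\le T,\ \psi\in\Phi_0\}$ only slightly.

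Next I analyse the jump. Writing $v_j:=w_{j+1}\widetilde{w}_j^{*}$ (inductively, with $\widetilde{w}_0:=w_0$ and $\widetilde{w}_{j+1}$ the corrected node constructed below), the node estimates together with $\alpha_{\pm\delta},\beta_{\pm\delta}\approx\id$ give $\Ad v_j\approx\id$ on $\beta_{t_j}(\Phi_0)$; in particular $v_j$ almost commutes with the state $\varphi_0^{(j)}:=\beta_{t_j}(\varphi_0)$ and, in $L^2(\varphi_0^{(j)})$, with the finite-dimensional factor $\hat{L}_k^{(j)}:=\beta_{t_j}(\hat{L}_k)$. Using $\cM=\hat{L}_k^{(j)}\otimes\big(\hat{L}_k^{(j)}{}'\cap\cM\big)$ with $\hat{L}_k^{(j)}{}'\cap\cM=\beta_{t_j}(\cM^{(k)})$, $\cM^{(k)}:=\bigotimes_{i>k}(L_i,\rho_i)$, and $\varphi_0^{(j)}$ a product state for this splitting, the $\varphi_0^{(j)}$-preserving conditional expectation onto $\beta_{t_j}(\cM^{(k)})$ moves $v_j$ only a little in $L^2(\varphi_0^{(j)})$; and since $\varphi_0^{(j)}$ restricted to $\beta_{t_j}(\cM^{(k)})$ equals $\beta_{t_j}(\varphi_0^{(k)})$, $\varphi_0^{(k)}:=\bigotimes_{i>k}\rho_i$, which is again lacunary, Lemma \ref{lem:centralizer}(2) lets me move the result a little further into the centralizer $\beta_{t_j}(\cM^{(k)}_{\varphi_0^{(k)}})$ --- a finite von Neumann algebra on which $\varphi_0^{(j)}$ is a trace. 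By Lemma \ref{lem:approunitary} this perturbed element may then be taken to be a unitary $\widetilde{v}_j\in\beta_{t_j}(\cM^{(k)}_{\varphi_0^{(k)}})$ which is $L^2(\varphi_0^{(j)})$-close to $v_j$.

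Now I connect and assemble. The point of the correction is that $\beta_{t_j}(\cM^{(k)}_{\varphi_0^{(k)}})$ lies in the centralizer of $\varphi_0^{(j)}$ and in the commutant of $\hat{L}_k^{(j)}$, hence commutes \emph{exactly} with every $\varphi_0^{(j)}a=\beta_{t_j}(\varphi_0 a)$, $a\in F$. Taking a strongly continuous spectral-logarithm path $r_j\colon[0,1]\to\beta_{t_j}(\cM^{(k)}_{\varphi_0^{(k)}})$ with $r_j(0)=1$, $r_j(1)=\widetilde{v}_j$ (cf.\ Lemmas \ref{lem:Log}, \ref{lem:pathdist}; alternatively Lemma \ref{lem:unitarypath}) and setting $\widetilde{w}_{j+1}:=\widetilde{v}_j\widetilde{w}_j$ (which is $\approx w_{j+1}$, hence still a good node approximant at $t_{j+1}$), define $u$ on $[t_j,t_{j+1}]$ by $u(t):=r_j(s(t))\,\widetilde{w}_j$ with $s$ an affine reparametrisation; this gives a continuous unitary path on $[-T,T]$. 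The required estimate $\|\Ad u(t)\circ\alpha_t(\varphi_0 a)-\beta_t(\varphi_0 a)\|<\varepsilon$ then follows by factoring $\beta_{-t}\circ\Ad u(t)\circ\alpha_t$ through $\Ad(\beta_{-t_j}(r_j(s)))$, the node approximant $\beta_{-t_j}\circ\Ad\widetilde{w}_j\circ\alpha_{t_j}$, and the small commutators $\alpha_t\alpha_{-t_j}$, $\beta_t\beta_{-t_j}$ --- the conjugation by $r_j(s)$ costing nothing on $\{\varphi_0 a:a\in F\}$ by the exact commutation just noted.

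I expect the main obstacle to be the quantitative bookkeeping: one must fix $N$ first and then the nodal tolerances so that the two rounds of correction (the finite-dimensional one and the lacunary one via Lemma \ref{lem:centralizer}(2)) together with the unitarisation of Lemma \ref{lem:approunitary} stay small and do not accumulate over the $N$ nodes, and one must check that $\Phi_0$ is really large enough to pass from the functional estimate $\Ad v_j\approx\id$ to the $L^2$-operator statements ``$v_j$ almost commutes with $\hat{L}_k^{(j)}$ and with $\varphi_0^{(j)}$'' that precede the correction. The conceptual heart --- which is what makes continuity essentially free once these corrections are arranged --- is the observation that the corrected jump can be pushed into $\beta_{t_j}(\cM^{(k)}_{\varphi_0^{(k)}})$, simultaneously inside the centralizer of the lacunary state and inside the commutant of the finite-dimensional block $\hat{L}_k^{(j)}$.
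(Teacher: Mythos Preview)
Your correction mechanism --- push the jump into $(\hat L_k^{(j)})'\cap\cM$, then into the centralizer via the lacunary estimate of Lemma~\ref{lem:centralizer}(2), unitarise via Lemma~\ref{lem:approunitary}, and connect by a path in that subalgebra which commutes \emph{exactly} with each $\beta_{t_j}(\varphi_0 a)$ --- is precisely the heart of the paper's proof.  The difference lies in how the single-interval construction is propagated to all of $[-T,T]$, and there your inductive scheme runs into a genuine accumulation problem that the bookkeeping caveat does not cover.

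Write $q_j:=\max_{a\in F}\|\Ad\widetilde w_j\circ\alpha_{t_j}(\varphi_0 a)-\beta_{t_j}(\varphi_0 a)\|$.  Because $v_j=w_{j+1}\widetilde w_j^{\,*}$ is built from the \emph{corrected} node $\widetilde w_j$, one only gets $\|[v_j,\varphi_0^{(j)}]\|\lesssim q_j+\varepsilon'+2\omega(T/N)$, where $\omega$ is the joint modulus of continuity of $\alpha,\beta$ on the relevant compact set of functionals.  The lacunary correction then yields $\|\widetilde v_j-v_j\|^\sharp_{\varphi_0^{(j)}}\lesssim C_{\varphi_0}\sqrt{q_j+\varepsilon'+2\omega(T/N)}$, so the refreshed bound reads
\[
q_{j+1}\ \lesssim\ \varepsilon'+C_{\varphi_0}\sqrt{\,q_j+\varepsilon'+2\omega(T/N)\,}.
\]
This square-root recursion drives $q_j$ toward the fixed point $\sim C_{\varphi_0}^2$ regardless of $\varepsilon'$; forcing $q_N<\varepsilon$ would require $\varepsilon'+2\omega(T/N)<C_{\varphi_0}^{2}(\varepsilon/C_{\varphi_0}^{2})^{2^{N}}$, and while $\varepsilon'$ is at your disposal, $\omega(T/N)$ is dictated by $N$ and need not decay that fast for a merely strongly continuous flow.  (The alternative bound $q_{j+1}\le q_j+2\omega(T/N)$ coming from exact commutation of $\widetilde v_j$ does not see the $w_j$'s at all and gives $q_N\le 2N\,\omega(T/N)$, which likewise need not be small.)

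The paper avoids this entirely by choosing the node unitaries as an $\alpha_{t_0}$-\emph{cocycle}: pick a single $w$ with $\Ad w\circ\alpha_{t_0}$ sufficiently close to $\beta_{t_0}$ on the finite set $\{\beta_{jt_0}(\varphi):|j|\le N,\ \varphi\in\Phi\}$, set $w_n:=w\,\alpha_{t_0}(w)\cdots\alpha_{(n-1)t_0}(w)$, and perform the correction \emph{once}, on $w=w_1$, producing a path $u(s)$ on $[0,t_0]$ from $1$ to $w$.  The global path is $u(t):=w_n\alpha_{nt_0}(u(s))$; it is continuous because $w_n\alpha_{nt_0}(w)=w_{n+1}$, and the estimate at level $n$ reduces to the single-interval estimate plus the fixed cocycle quality $\varepsilon'/4m$, with no feedback into further corrections.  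Grafting this cocycle device onto your outline removes the accumulation and makes the argument go through.
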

\begin{proof}
Let $m:=\big{(}\dim\hat{L}_k\big{)}^{1/2}$
and
$\{e_{ij}\}_{1\leq i,j\leq m}$ a system
of matrix units of $\hat{L}_k$.
We may assume that 
$\{1\}\cup\{e_{ij}\}_{i,j}\subset F\subs \cM_1$.
Set
\[
\Phi_0
:=
\{\varphi_0a\mid a\in F\},
\
\Phi
:=
\{a\varphi_0\mid a\in F\}
\cup\{\varphi_0 a\mid a\in F\}.
\]

Let $\vep'>0$.
Fix $N\in \mathbb{N}$ with $N\geq m$
so that
if $|t|\leq T/N$ and $\varphi\in \Phi$,
\begin{equation}
\label{eq:altvph}
\|\alpha_t(\varphi)-\beta_t(\varphi)\|
<
\varepsilon'/4m,
\
\|\alpha_t(\varphi)-\varphi\|<\varepsilon'/4m,
\
\|\beta_t(\varphi)-\varphi\|<\varepsilon'/4m.
\end{equation}

Set $t_0:=T/N$.
Since $\al_t\be_t^{-1}\in\oInt(\cM)$ for all $t\in\R$,
we can take an $\alpha_{t_0}$-cocycle $\{w_n\}_{n\in\Z}$
such that
\begin{equation}
\label{eq:wnal}
\|\Ad w_{n}\circ \alpha_{nt_0}(\varphi)
-\beta_{nt_0}(\varphi)\|
\leq
\varepsilon'/4m,
\quad
|n|\leq N,
\
\varphi\in \Phi.
\end{equation}

Set $w:=w_1$, 
$B:=\alpha_{t_0}(\hat{L}_k)$,
$\psi_0:=\alpha_{t_0}(\varphi_0)$,
$\Psi:=\alpha_{t_0}(\Phi)$
and $f_{ij}:=\alpha_{t_0}(e_{ij})$. 
We will find a continuous unitary path $w(t)$,
$0\leq t\leq t_0$
so that
$w(t)$ connects $1$ and $w$,
and $\Ad w(t)\circ \alpha_t$ approximates $\beta_t$ on $F$.
Note
$\mathrm{Sp}(\Delta_{\psi_0})
=\mathrm{Sp}(\Delta_{\varphi_0})$.
Thus we can
assume $C_{\psi_0}=C_{\varphi_0}$
in Lemma \ref{lem:centralizer}.
Using (\ref{eq:wnal}),
we have 
\[
\|\Ad w(\psi)-\beta_{t_0}\alpha_{t_0}^{-1}(\psi)\|
\leq
\varepsilon'/4m,
\quad
\psi\in \Psi.
\]
Thus for $\psi=\alpha_{t_0}(\varphi)\in \Psi$,
we have
\begin{align}
\|[w,\psi]\|
&=\|\Ad w(\psi)-\psi\|
\notag\\
&\leq 
\|\Ad w(\psi)-\beta_{t_0}\alpha_{t_0}^{-1}(\psi)\|
+
\|\beta_{t_0}(\varphi)-\alpha_{t_0}(\varphi)\|
\notag\\
&\leq
\varepsilon'/2m.
\label{eq:wpsial}
\end{align}

Since $\psi_0, \psi_0 f_{ij}\in \Psi$,
we have
\begin{align*}
\|\psi_0\cdot(wf_{ij}-f_{ij}w)\|
&\leq
\|\psi_0 wf_{ij}-w \psi_0 f_{ij}\|
+\|[w,\psi_0 f_{ij}]\|\\
&\leq
\|\psi_0 w-w \psi_0\|
+\|[w,\psi_0 f_{ij}]\|\\
&\leq
\varepsilon'/m.
\end{align*}
In the same way,
we have
$\|(wf_{ij}-f_{ij}w)\psi_0\|<\varepsilon'/m$.

Let
$E(x)
=m^{-1}
\sum_{i,j}
f_{ij}xf_{ji}$.
Then $E$
is a conditional expectation
from $\cM$ onto $B'\cap \cM$.
Set
$c:=E(w)$,
and then
\[
\|\psi_0\cdot (c-w)\|
\leq \frac{1}{m}
\sum_{i,j}
\|\psi_0\cdot (f_{ij}wf_{ji}-wf_{ij}f_{ji})\|
\leq \frac{1}{m}
\sum_{i,j}
\|\psi_0 [w,f_{ij}]\|<\varepsilon'.
\]
We also have $\|(c-w) \psi_0\|<\varepsilon'$.
Using (\ref{eq:wpsial}),
we obtain
\[
\|[c,\psi_0]\|
\leq \|[w,\psi_0]\|+
\|\psi_0\cdot(c-w)\|+
\|(c-w) \psi_0\|
<3\varepsilon'.
\]
Since
$\|w-c\|_{\psi_0}<\varepsilon'$ and
$\|(w-c)^*\|_{\psi_0}<\varepsilon'$,
we have
\begin{equation}
\label{wcsharp}
\|w-c\|_{\psi_0}^{\sharp}<\sqrt{\varepsilon'}.
\end{equation}

Set $d:=E_{\psi_0}(c)$.
Note that $d\in (B'\cap \cM)_{\psi_0}$, 
and
$\mathrm{Sp}(\Delta_{\psi_0|B'\cap \cM})
\subset \mathrm{Sp}(\Delta_{\varphi_0})$.
Thus we can assume $C_{\psi_0|B'\cap \cM}=C_{\varphi_0}$
by the definition
of $C_{\varphi_0}$ in Lemma \ref{lem:centralizer}.
By the lemma,
\[
\|d-c\|_{\psi_0}^\sharp
\leq C_{\varphi_0}\|[c,\psi_0]\|^{\frac{1}{2}}
<C_{\varphi_0} \sqrt{3\varepsilon'}.
\]
Hence by (\ref{wcsharp}),
we get
\[
\|w-d\|_{\psi_0}^{\sharp}
<\sqrt{3\varepsilon'}(1+C_{\varphi_0}).
\]

We should note that 
$[x,\psi]=0$ for $x\in (B'\cap \cM)_{\psi_0}$
and $\psi\in \Psi$
since
$\Psi
=\{b\psi_0, \psi_0 b\mid b\in \alpha_{t_0}(F)\}$
and
$\alpha_{t_0}(F)\subset \alpha_{t_0}(\hat{L_k})=B$.
We have 
\begin{align*}
\|d^*d-1\|_{\psi_0}
&\leq
\|d^*d-w^*d\|_{\psi_0}+\|w^*d-w^*w\|_{\psi_0}
\\
&\leq
\|d^*-w^*\|_{\psi_0}+\|d-w\|_{\psi_0}\\
&\leq2\sqrt{3\varepsilon'}(1+C_{\varphi_0}).
\end{align*}
Thus by Lemma \ref{lem:approunitary},
we can take a unitary $v\in (B'\cap \cM)_{\psi_0}$
such that 
\[
\|d-v\|_{\psi_0}
=
\|d-v\|_{\psi_0}^{\sharp}
< 
8\sqrt{3\varepsilon'}(1+C_{\varphi_0}).
\]
Then
setting $\varepsilon'':=
\varepsilon'+8\sqrt{3\varepsilon'}(1+C_{\varphi_0})$,
we obtain
\begin{equation}
\label{eq:wvpsi}
\|w-v\|_{\psi_0}^{\sharp}
<\vep''.
\end{equation}

Let
$v(t)\in (B'\cap \cM)_{\psi_0}$,
$t_0/2\leq t\leq t_0$, 
be a continuous path of
unitaries with $v(t_0/2)=v$
and $v(t_0)=1$.
Set $u(t):=wv(t)^*$.
Note that
$(B'\cap \cM)_{\psi_0}
\subset
\cM_{\psi_0}$. 
Since $[v(t),\psi]=0$ for all $\psi\in \Psi$,
for all $\varphi\in \Phi$,
we have
\begin{equation}
\label{eq:ut02}
\|\Ad u(t)\circ \alpha_{t_0}(\varphi)
-\beta_{t_0}(\varphi)\|
=
\|\Ad w\circ\alpha_{t_0}(\varphi)
-\beta_{t_0}(\varphi)\|
<
\varepsilon'/4m.
\end{equation}

We will find a path which connects
$1$ and $wv^*$.
For $b\in \alpha_{t_0}(F)$,
we obtain
\begin{align*}
\|(\psi_0 b)\cdot (wv^*-1)\|
&=
\|\psi_0 bw-\psi_0 bv\|
\\
&\leq\|[\psi_0 b, w]\|+
\|w\psi_0 b-\psi_0 vb\|
\\
&\leq\|[\psi_0 b, w]\|+
\|w \psi_0 -\psi_0 v\|
\\
&\leq\|[\psi_0 b, w]\|+\|[w,\psi_0]\|+
\|\psi_0 w -\psi_0 v\|
\\
&\leq\varepsilon'+2\varepsilon''
\quad
\mbox{by }
(\ref{eq:wnal}),
\
(\ref{eq:wpsial}),
\
(\ref{eq:wvpsi}),
\end{align*}
and by (\ref{eq:wvpsi}),
\[
\|(wv^*-1) \psi_0 b\|
=
\|(w-v) \psi_0 vb\|
\leq
\|(w-v) \psi_0\|\leq 2\varepsilon''.
\]
Hence by Lemma
\ref{lem:unitarypath},
there exists a continuous unitary path $u(t)$,
$\displaystyle{0\leq t \leq t_0/2}$
such that
$u(0)=1$
,
$u(t_0/2)=wv^*$
and 
\[
\|(u(t)-1) \psi_0 a\|
\leq
\sqrt{2(\varepsilon'+2\varepsilon'')},
\
\|(\psi_0 a)\cdot (u(t)-1)\|
\leq
\sqrt{2(\varepsilon'+2\varepsilon'')},
\quad a\in \alpha_{t_0}(F).
\]
Summarizing the inequalities
(\ref{eq:altvph})
and
(\ref{eq:ut02}),
we get a
continuous unitary path $u(t)$,
$0\leq t\leq t_0$, 
such that $u(0)=1$, $u(t_0)=w$,
and
\[
\|\Ad u(t)\circ \alpha_{t_0}(\varphi)
-\beta_{t_0}(\varphi)\|
<
2\sqrt{2(\varepsilon'+2\varepsilon'')}
+\varepsilon',
\quad
\varphi\in \Phi_0,
\
0\leq t\leq t_0.
\]

Then we have
\[
\|\Ad u(t)\circ \alpha_{t}(\varphi)
-\beta_{t}(\varphi)\|
<
2\sqrt{2(\varepsilon'+2\varepsilon'')}
+2\varepsilon',
\quad
\varphi\in \Phi_0,
\
0\leq t\leq t_0.
\]

For $t\in [-T,T]$,
let $t=nt_0+s$, $0\leq s< t_0$,
and define
$u(t):=w_n\alpha_{nt_0}(u(s))$.
Then $u(t)$ is a continuous unitary path
and for $\varphi\in \Phi_0$,
\begin{align*}
&\|\Ad u(t)\circ \alpha_t(\varphi)-\beta_t(\varphi)\|
\\
&=
\|\Ad w_n\circ \alpha_{nt_0}
\circ \Ad u(s)\circ\alpha_s(\varphi)
-
\beta_{nt_0}\circ \beta_s(\varphi)\|
\\
&\leq
\|\Ad w_n\circ \alpha_{nt_0}
\circ \Ad u(s)\circ\alpha_s(\varphi)
-
\Ad w_n\circ \alpha_{nt_0}\circ \beta_s(\varphi)
\|\\
&
\quad+
\|\Ad w_n\circ \alpha_{nt_0}\circ \beta_s(\varphi)
-
\beta_{nt_0}\circ \beta_s(\varphi)\|\\
&\leq
\|\Ad u(s)\circ\alpha_s(\varphi)-\beta_s(\varphi)\|
+
\varepsilon'/m
+
\|\Ad w_n\circ \alpha_{nt_0}(\varphi)
-
\beta_{nt_0}(x)\|\\
&< 2\sqrt{2(\varepsilon'+2\varepsilon'')}
+3\varepsilon'
\quad
\mbox{by }
(\ref{eq:wnal}).
\end{align*}
For any given $\varepsilon>0$,
we choose $\varepsilon'$ as 
$2\sqrt{2(\varepsilon'+2\varepsilon'')}
+3\varepsilon'<\varepsilon$,
and we get the conclusion.
\end{proof}

\begin{proof}[Proof of Proposition \ref{prop:appro}.]
Let $\Phi:=\{\psi_i\}_{i=1}^n$
be a finite set of $\cM_*$.
Since the set
$\{\varphi_0 a
\mid a\in \bigcup_{k=1}^\infty \hat{L}_k\}$
is dense in $\cM_*$, 
there exist $k>0$
and $\{a_i\}_{i=1}^{n}\subset \hat{L}_k$
such that
$\|\psi_i-\varphi_0 a_i\|<\vep/3$. 
By Lemma \ref{lem:appro},
there exists a continuous path
of unitaries $u(t)$, $|t|\leq T$,
such that 
\[
\|\Ad u(t)\circ \alpha_t(\varphi_0  a_i)
-\beta_t(\varphi_0 a_i)\|<\vep/3,
\quad
1\leq i\leq n,
\
|t|\leq T.
\]  
Then we obtain
\[
\|\Ad u(t)\circ \alpha_t(\psi_i)-\beta_t(\psi_i)\|
<\varepsilon,
\quad
1\leq i\leq n,
\
|t|\leq T.
\]  
\end{proof}

\end{document}